\newcommand{\W}{\mathcal W}
\newcommand{\midd}{\overleftrightarrow{\mathrm{mid}}}
\newcommand{\Polish}{\mathbf{Polish}}
\newcommand{\Set}{\mathbf{Set}}
\newcommand{\Pow}{\mathrm{Pow}}
\newcommand{\C}{\mathcal C}
\newcommand{\M}{\mathcal M}
\newcommand{\N}{\mathcal N}
\newcommand{\I}{\mathcal I}
\newcommand{\J}{\mathcal J}
\newcommand{\K}{\mathcal K}
\newcommand{\U}{\mathcal U}
\newcommand{\F}{\mathcal F}
\newcommand{\E}{\mathcal E}
\newcommand{\A}{\mathcal A}
\newcommand{\V}{\mathcal V}
\newcommand{\Tau}{\mathcal T}
\newcommand{\NF}{\mathcal{NF}}
\newcommand{\pr}{\mathrm{pr}}
\newcommand{\dist}{\mathrm{dist}}
\newcommand{\mc}{\mathcal}
\newcommand{\e}{\varepsilon}
\newcommand{\IR}{\mathbb R}
\newcommand{\IT}{\mathbb T}
\newcommand{\IZ}{\mathbb Z}
\newcommand{\IN}{\mathbb N}
\newcommand{\w}{\omega}
\newcommand{\Ra}{\Rightarrow}
\newcommand{\on}{\operatorname}
\newcommand{\add}{\mathrm{add}}
\newcommand{\cof}{\mathrm{cof}}
\newcommand{\GH}{\mathcal{GH}}
\newcommand{\GHN}{\mathcal{GHN}}
\newcommand{\GHM}{\mathcal{GHM}}
\newcommand{\GHI}{\mathcal{GHI}}
\newcommand{\EHI}{\mathcal{EHI}}
\newcommand{\EHM}{\mathcal{EHM}}
\newcommand{\SHM}{\mathcal{SHM}}
\newcommand{\HN}{\mathcal{HN}}
\newcommand{\HM}{\mathcal{HM}}
\newcommand{\HI}{\mathcal{HI}}
\newcommand{\HT}{\mathcal{HT}}
\newcommand{\EHT}{\mathcal{EHT}}
\newcommand{\diam}{\mathrm{ diam}\,}
\newcommand{\supp}{\mathrm{ supp}\,}
\newcommand{\rng}{\mathrm{ rng}\,}
\newtheorem{theorem}{Theorem}[section]
\newtheorem{example}[theorem]{Example}
\newtheorem{problem}[theorem]{Problem}
\newtheorem{corollary}[theorem]{Corollary}
\newtheorem{claim}[theorem]{Claim}
\newtheorem{proposition}[theorem]{Proposition}
\newtheorem{lemma}[theorem]{Lemma}
\theoremstyle{definition}
\newtheorem{definition}[theorem]{Definition}
\newtheorem{remark}[theorem]{Remark}
\begin{document}
\title[Looking at Polish groups through compact glasses]{Haar-$\I$ sets: looking at small sets in Polish groups\\ through compact glasses}
\author{Taras Banakh, Szymon G\l\c ab, Eliza Jab\l o\'nska, Jaros\l aw Swaczyna}
\thanks{The research of the fourth author has been supported by the Polish Ministry of Science and Higher Education (a part of the project ``Diamond grant'', 2014-17, No. 0168/DIA/2014/43).}
\address{T.Banakh: Ivan Franko University of Lviv (Ukraine) and Jan Kochanowski University in Kielce (Poland)}
\email{t.o.banakh@gmail.com}
\address{E.Jab\l o\'nska: Department of Mathematics, Pedagogical University of Cracow, Podchor\c a\.zych 2, 30-084 Krak\'ow}
\email{eliza.jablonska@up.krakow.pl}
\address{S.G\l \c{a}b, J.Swaczyna: Institute of Mathematics, \L \'od\'z University of Technology, ul. W\'olcza\'nska 215, 93-005 \L \'od\'z}
\email{szymon.glab@p.lodz.pl, jswaczyna@wp.pl}

\begin{abstract}
Generalizing Christensen's notion of a Haar-null set and Darji's notion of a Haar-meager set, we introduce and study the notion of a Haar-$\I$ set in a Polish group. Here $\I$ is an ideal of subsets of some compact metrizable space $K$. A Borel subset $B\subset X$ of a Polish group $X$ is called {\em Haar-$\I$} if there exists a continuous map $f:K\to X$ such that $f^{-1}(B+x)\in\I$ for all $x\in X$. Moreover, $B$ is {\em generically Haar-$\I$} if the set of witness functions $\{f\in C(K,X):\forall x\in X\;\;f^{-1}(B+x)\in\I\}$ is comeager in the function space $C(K,X)$. We study (generically) Haar-$\I$ sets in Polish groups for many concrete and abstract ideals $\I$, and construct the corresponding distinguishing examples. We prove some results on Borel hull of Haar-$\I$ sets, generalizing results of Solecki, Elekes, Vidny\'anszky, Dole\v{z}al, Vlas\v{a}k on Borel hulls of Haar-null and Haar-meager sets.   Also we establish various Steinhaus properties of the families of (generically) Haar-$\I$ sets in Polish groups for various ideals $\I$.
\end{abstract}
\maketitle

\tableofcontents
\newpage

\section{Introduction}

The study of various $\sigma$-ideals in topological groups is a popular and important area of mathematics, situated on the border of Real Analysis, Topological Algebra, General Topology and Descriptive Set Theory. Among the most important $\sigma$-ideals let us mention the $\sigma$-ideal $\M$ of meager subsets in a Polish group and the $\sigma$-ideal $\N$ of sets of Haar measure zero in a locally compact group. Both ideals are well-studied, have many applications, and also have many similar properties.

For example, by the classical Steinhaus-Weil Theorem \cite{Stein}, \cite[\S11]{Weil} for any measurable sets $A,B$ of positive Haar measure in a locally compact group $X$ the sum $A+B=\{a+b:a\in A,\;b\in B\}$ has nonempty interior in $X$ and the difference $A-A$ is a neighborhood of zero in $X$. A similar result for the ideal $\M$ is attributed to Piccard \cite{Pic} and Pettis \cite{Pet}. They proved that for any non-meager Borel sets $A,B$ in a Polish group $X$ the sum $A+B$ has nonempty interior and the difference $A-A$ is a neighborhood of zero in $X$.


In contrast to the ideal $\M$, which is well-defined on each Polish group, the ideal
$\N$ of sets of Haar measure zero is defined only on locally compact groups (since its definition requires the Haar measure, which does not exist in non-locally compact groups). In order to bypass this problem, Christensen \cite{Ch} introduced the important notion of a Haar-null set, which can be defined for any Polish group. Christensen defined a Borel subset $A$ of a Polish group $X$ to be \index{subset!Haar-null}{\em Haar-null} if there exists a $\sigma$-additive Borel probability measure $\mu$ on $X$ such that $\mu(A+x)=0$ for all $x\in X$. Christensen proved that Haar-null sets share many common properties with sets of Haar measure zero in locally compact groups. In particular, the family $\HN$ of subsets of Borel Haar-null sets in a Polish group $X$ is an invariant $\sigma$-ideal on $X$. For any locally compact Polish group $X$ the ideal $\HN$ coincides with the ideal $\N$ of sets of Haar measure zero. Christensen also generalized the Steinhaus difference theorem to Haar-null sets, proving that if a Borel subset $B$ of a Polish group $X$ is not Haar-null, then the difference $B-B$ is a neighborhood of zero in $X$.

Since probability measures on Polish spaces are supported by $\sigma$-compact sets, the witness measure $\mu$ in the definition of a Haar-null set can be chosen to have compact support. This means that for defining Haar-null sets it suffices to look at the Polish group through a ``compact window" (coinciding with the support of the witness measure). Exactly this ``compact window" philosophy was used by Darji \cite{D} who introduced the notion of a Haar-meager set in a Polish group. Darji defined a Borel subset $B$ of a Polish group $X$ to be \index{subset!Haar-meager}{\em Haar-meager} if there exists a continuous map $f:K\to X$ defined on a compact metrizable space $K$ such that for any $x\in X$ the preimage $f^{-1}(B+x)$ is meager in $K$. Haar-meager sets share many common properties with Haar-null sets: the family $\HM$ of subsets of Borel Haar-meager sets in a Polish group $X$ is an invariant $\sigma$-ideal on $X$; if the Polish group $X$ is locally compact, then the ideal $\HM$ coincides with the ideal $\M$ of meager sets in $X$; if a Borel subset $B\subset X$ is not Haar-meager, then $B-B$ is a neighborhood of zero in $X$ (see \cite{J}).

In this paper we shall show that the ideals $\HN$ and $\HM$ are particular instances of the ideals of Haar-$\I$ sets in Polish groups. Given a compact metrizable space $K$ and an ideal $\I$ of subsets of $K$, we define a Borel subset $B$ of a Polish group $X$ to be \index{subset!Haar-$\I$}{\em Haar-$\I$} if there exists a continuous map $f:K\to X$ such that $f^{-1}(B+x)\in\I$ for any $x\in X$. For a Polish group $X$ by $\HI$ we denote the family of subsets of Borel Haar-$\I$ sets in $X$. For the ideal $\I=\M$ of meager sets in the Cantor cube $K=\{0,1\}^\w$ the family $\HI$ coincides with the ideal $\HM$ of Haar-meager sets in $X$ and for the ideal $\I=\N$ of sets on product measure null on $K=\{0,1\}^\w$ the family $\HI$ coincides with the ideal $\HN$ of Haar-null sets in $X$.

In Sections~\ref{s4}--\ref{s11} we study Haar-$\I$ sets in Polish groups for many concrete and abstract ideals $\I$. In particular, we find properties of the ideal $\I$, under which the family $\HI$ is an ideal or $\sigma$-ideal. We present many examples distinguishing Haar-$\I$ sets for various ideals $\I$. In Section~\ref{s8} we characterize ideals $\I$ for which the family $\HI$ has the Steinhaus property in the sense that for any Borel set $B\notin\HI$ in a Polish group $X$ the difference $B-B$ is a neighborhood of zero in $X$. 

In Section~\ref{s:hull} we prove some results on Borel hulls of Haar-$\I$ sets, which generalize the results of  Elekes, Vidny\'anszky \cite{EVid}, \cite{EVidN} of Borel hulls of Haar-null sets and Dole\v{z}al, Vlas\v{a}k \cite{DV} on Borel hulls of Haar-meager sets. In Section~\ref{s:cc} we evaluate the additivity and cofinality of the semi-ideals of Haar-$\I$ sets in non-locally compact Polish groups, generalizing some results of Elekes and Po\'or \cite{EP}.

In Sections~\ref{s12}, \ref{s13} we study generically Haar-$\I$ sets in Polish groups. We define a Borel subset $A$ of a Polish group $X$ to be \index{subset!generically Haar-$\I$}{\em generically Haar-$\I$} if the set of witnessing functions $\{f\in C(K,X):\forall x\in X\;\;f^{-1}(A+x)\in\I\}$ is comeager in the space $C(K,X)$ of continuous functions endowed with the compact-open topology. For the ideal $\I=\N$ of sets of measure zero in the Cantor cube $K=\{0,1\}^\w$ our notion of a generically Haar-$\I$ set is equivalent to that introduced and studied by Dodos \cite{D1}--\cite{D3}. Extending a result of Dodos, we show that for any Polish group $X$ the family $\GHI$ of subsets of Borel generically Haar-$\I$ sets in $X$ has the weak Steinhaus property: for any Borel set $B\notin\GHI$ the difference $B-B$ is not meager and hence $(B-B)-(B-B)$ is a neighborhood of zero in $X$.

In Section~\ref{s14} we detect various smallness properties for some simple subsets in countable products of finite groups and in Sections~\ref{s16}--\ref{s19} we study smallness properties of additive and mid-convex sets in Polish groups and Polish vector spaces.  In the final Section~\ref{s20} we collect the obtained results and open problems on (weak and strong) Steinhaus properties of various families of small sets in Polish groups.

\section{Preliminaries}

{\em All groups considered in this paper are Abelian}. A \index{Polish group}{\em Polish group} is a topological group whose underlying topological space is Polish (i.e., homeomorphic to a separable complete metric space).

For a Polish group $X$ by $\theta$ we shall denote the neutral element of $X$ and by $\rho:X\times X\to\mathbb{R}_+$ a complete invariant metric generating the topology of $X$.

By $\w$ we denote the set of finite ordinals and by $\IN=\w\setminus\{0\}$ the set positive integer numbers. Each number $n\in\w$ is identified with the set $\{0,\dots,n-1\}$ of smaller numbers.

For every $n\in\IN$ we denote by $C_n$ a cyclic group of order $n$, endowed with the discrete topology. For two subsets $A,B$ of a group let $A+B:=\{a+b:a\in A,\;b\in B\}$ and $A-B:=\{a-b:a\in A,\;b\in B\}$ be the algebraic sum and difference of the sets $A,B$, respectively.

For a metric space $(X,\rho)$, a point $x_0\in X$ and a positive real number $\e$ let $S(x_0,\e):=\{x\in X:\rho(x,z_0)=\e\}$ and $B(x_0;\e):=\{x\in X:\rho(x,x_0)<\e\}$ be the $\e$-sphere and open $\e$-ball centered at $x_0$, respectively. Also let $B(A;\e)=\bigcup_{a\in A}B(a;\e)$ be the $\e$-neighborhood of a set $A\subset X$ in the metric space $(X,\rho)$.
For a nonempty subset $A\subset X$ by $\diam A:=\sup\{\rho(a,b):a,b\in A\}$ we denote the {\em diameter} of $A$ in the metric space $(X,\rho)$.

By \index{$2^\w$}$2^{\w}=\{0,1\}^\omega$ we denote the Cantor cube, endowed with the Tychonoff product topology. The Cantor cube carries the standard product measure denoted by $\lambda$. The measure $\lambda$ coincides with the Haar measure on $2^\w$ identified with the countable power $C_2^\w$ of the two-element cyclic group $C_2$. We shall refer to the measure $\lambda$ as the \index{Haar measure}{\em Haar measure} on $2^\w$.

A subset $A$ of a topological space $X$
\begin{itemize}
\item is \index{meager subset}\index{subset!meager}{\em meager} if $A$ can be written as the countable union $A=\bigcup_{n\in\w}A_n$ of nowhere dense sets in $X$;
\item is  \index{comeager subset}\index{subset!comeager}{\em comeager} if the complement $X\setminus A$ is meager in $X$;
\item has \index{Baire property}{\em the Baire property} if there exists an open set $U\subset X$ such that the symmetric difference $A\triangle U:=(A\setminus U)\cup(U\setminus A)$ is meager in $X$.
\end{itemize}
It is well-known \cite[11.5]{K} that each Borel subset of any topological space has the Baire property. By \cite[21.6]{K}, each analytic subspace of a Polish space $X$ has the Baire property in $X$. We recall that a topological space is \index{analytic space}{\em analytic} if it is a continuous image of a Polish space.

A topological space $X$ is \index{Baire space}{\em Baire} if for any sequence $(U_n)_{n\in\w}$ of open dense subsets in $X$ the intersection $\bigcap_{n\in\w}U_n$ is dense in $X$. It is well-known that a topological space $X$ is Baire if and only if each nonempty open set $U\subset X$ is not meager in $X$.

An indexed family of sets $(A_i)_{i\in I}$ is \index{disjoint family}\index{family!disjoint}{\em disjoint} if for any distinct indices $i,j\in I$ the sets $A_i$ and $A_j$ are disjoint. A family of sets $\mathcal A$ is {\em disjoint} if any distinct sets $A,B\in\mathcal A$ are disjoint. A subset of a topological space is \index{subset!clopen}\index{clopen subset}{\em clopen} if it is closed and open.

In Sections~\ref{s12} and \ref{s13} we shall need the following known fact on the preservation of meager and comeager sets by open maps.

\begin{lemma}\label{l:M-coM} Let $f:X\to Y$ be an open continuous surjective map between Polish spaces.
\begin{enumerate}
\item[\textup{1)}] A subset $A\subset Y$ (with the Baire property) is meager in $Y$ (if and) only if its preimage $f^{-1}(A)$ is meager in $X$.
\item[\textup{2)}] A subset $B\subset Y$ is comeager in $Y$ if and only if its preimage $f^{-1}(B)$ is comeager in $X$.
\end{enumerate}
\end{lemma}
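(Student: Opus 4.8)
The plan is to dispatch the two ``meager/comeager implies meager/comeager preimage'' directions by one elementary observation about open maps, then bootstrap the two reverse implications, the second of which is the only real work.

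\emph{First, the easy halves.} If $f:X\to Y$ is open and continuous and $F\subset Y$ is closed and nowhere dense, then $f^{-1}(F)$ is closed by continuity, and it is nowhere dense because any nonempty open $U\subset f^{-1}(F)$ would give a nonempty open set $f(U)\subset F$ (here openness of $f$ is used), contradicting nowhere density of $F$. Writing a meager $A\subset Y$ as $\bigcup_n A_n$ with each $\overline{A_n}$ nowhere dense, we get $f^{-1}(A)\subset\bigcup_n f^{-1}(\overline{A_n})$, hence $f^{-1}(A)$ is meager. This is the ``$A$ meager $\Rightarrow f^{-1}(A)$ meager'' half of (1) (surjectivity is not needed), and applying it to $A=Y\setminus B$ and using $f^{-1}(Y\setminus B)=X\setminus f^{-1}(B)$ together with surjectivity yields the ``$B$ comeager $\Rightarrow f^{-1}(B)$ comeager'' half of (2).

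\emph{Next, the ``if'' part of (1).} Assume $A$ has the Baire property and $f^{-1}(A)$ is meager. Choose an open $U\subset Y$ with $A\triangle U$ meager; then $U\subset A\cup(A\triangle U)$, so $f^{-1}(U)\subset f^{-1}(A)\cup f^{-1}(A\triangle U)$ is meager by the hypothesis and the previous paragraph. But $f^{-1}(U)$ is open in the Polish (hence Baire) space $X$, so a meager open subset must be empty; thus $f^{-1}(U)=\emptyset$, and surjectivity of $f$ forces $U=\emptyset$, whence $A=A\triangle U$ is meager.

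\emph{Finally, the main point: the ``if'' part of (2).} Assume $f^{-1}(B)$ is comeager in $X$; then it contains a dense $G_\delta$ set $G\subset X$, which, being $G_\delta$ in a Polish space, is Polish, so $f(G)\subset B$ is analytic and therefore has the Baire property in $Y$ by \cite[21.6]{K}. Write $f(G)=V\triangle M$ with $V$ open and $M$ meager in $Y$; it suffices to show $V$ is dense, for then $V\setminus M$ is comeager and $B\supset f(G)\supset V\setminus M$ is comeager. If $V$ were not dense, some nonempty open $W\subset Y$ would miss $V$, so $f(G)\cap W\subset M$ would be meager, hence $f^{-1}(f(G)\cap W)$ would be meager by the first paragraph; but $f^{-1}(W)$ is a nonempty (by surjectivity) open, hence non-meager, subset of the Baire space $X$, and $G\cap f^{-1}(W)$ is comeager in $f^{-1}(W)$ and thus non-meager in $X$, while $G\cap f^{-1}(W)\subset f^{-1}\bigl(f(G)\cap W\bigr)$ --- a contradiction.

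The genuine obstacle is exactly this last step: since $B$ in part (2) is an arbitrary set, one cannot invoke its Baire property and apply the ``if'' part of (1) to $Y\setminus B$ (indeed the ``if'' direction of (1) is false without that hypothesis). Passing to a dense $G_\delta$ inside $f^{-1}(B)$ and exploiting that its image is analytic, hence has the Baire property, is the device that compensates for the missing assumption; the rest is routine bookkeeping with nowhere dense sets and the Baire category theorem.
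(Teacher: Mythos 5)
Your proof is correct and follows essentially the same route as the paper's: preimages of nowhere dense sets are nowhere dense via openness, the Baire-property direction of (1) via the open set $U$ witnessing the Baire property, and the hard direction of (2) by passing to a dense $G_\delta$ inside $f^{-1}(B)$ whose analytic image has the Baire property. The only cosmetic difference is that you argue the ``if'' part of (1) directly (meager preimage forces $U=\emptyset$) where the paper argues the contrapositive, but the underlying idea is identical.
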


\begin{proof} 1. If $A$ is meager, then $A=\bigcup_{n\in\w}A_n$ for some nowhere dense sets $A_n\subset Y$. The openness of $f$ guarantees that for every $n\in\w$ the closed set $f^{-1}(\bar A_n)$ has empty interior in $X$, so $f^{-1}(A_n)$ is nowhere dense and $f^{-1}(A)=\bigcup_{n\in\w}f^{-1}(A_n)$ is meager in $X$.

Now assume that the set $A$ has the Baire property and is not meager in $Y$. The Baire property of $A$ yields an open set $U\subset Y$ such that the symmetric difference $U\triangle A$ is meager. Since $A$ is not meager, the open set $U$ is not empty. As we have already proved, the preimage $f^{-1}(U\triangle A)=f^{-1}(U)\triangle f^{-1}(A)$ of the meager set $U\triangle A$ is meager. Since the open set $f^{-1}(U)$ is not empty, the set $f^{-1}(A)$ is not meager in $X$.
\smallskip

2. If a set $B\subset Y$ is comeager in $Y$, then $Y\setminus B$ is meager, its preimage $f^{-1}(Y\setminus B)=X\setminus f^{-1}(B)$ is meager in $X$ and $f^{-1}(B)$ is comeager in $X$.

If $B$ has comeager preimage $f^{-1}(B)$ in the Polish space $X$, then $f^{-1}(B)$ contains a dense $G_\delta$-set $G$ whose image $f(G)\subset B$ is an analytic subspace of the Polish space $Y$. By \cite[21.16]{K}, the set $f(G)$ has the Baire property in $Y$. So there exists an open set $U\subset Y$ such that $U\triangle f(G)$ is meager in $Y$. We claim that the set $U$ is dense in $Y$. In the opposite case, we can find a nonempty open set $V\subset Y$ such that $V\cap U=\emptyset$ and hence $V\cap f(G)\subset f(G)\setminus U\subset U\triangle f(G)$ is meager in $Y$. By the first item the preimage $f^{-1}(V\cap f(G))$ is meager in $f^{-1}(V)$, which is not possible as it contains the comeager subset $G\cap f^{-1}(V)$ of the nonempty Polish space $f^{-1}(V)$. So the set $U$ is dense in $Y$ and the sets $f(G)\subset B$ are comeager in $Y$.
\end{proof}

For a topological space $X$ by $\F(X)$ we denote the space of all closed subsets of $X$. The space $\F(X)$ is endowed with the \index{Fell topology}{\em Fell topology}, which is generated
by the subbase consisting of sets $U^+:=\{F\in\F(X):F\cap U\ne\emptyset\}$ and $K^-=\{F\in\F(X):F\cap K=\emptyset\}$ where $U$ and $K$ run over open and compact subsets of $X$, respectively. The Borel $\sigma$-algebra on $\F(X)$ generated by the Fell topology of $\F(X)$ is called the \index{Effros-Borel structure}{\em Effros-Borel structure} of $\F(X)$; see \cite[\S12.C]{K}.

For a topological space $X$ by \index{$\K(X)$}$\K(X)$ we denote the space of all nonempty compact subsets of $X$, endowed with the Vietoris topology. If the topology of $X$ is generated by a (complete) metric $\rho$, then the Vietoris topology of $\K(X)$ is generated by the (complete) metric
$$\rho_H(A,B)=\max\{\max_{a\in A}\rho(a,B),\max_{b\in B}\rho(A,b)\},$$
called the \index{Hausdorff metric}{\em Hausdorff metric} on $\K(X)$.

For a compact Hausdorff space $X$ the Vietoris topology on $\K(X)$ coincides with the subspace topology inherited from the Fell topology of the space $\F(X)$.

For a topological space $X$ by \index{$P(X)$}$P(X)$ we denote the space of all probability $\sigma$-additive Borel measures on $X$, endowed with the topology, generated by the subbase consisting of sets $\{\mu\in P(X):\mu(U)>a\}$, where $U$ runs over open sets in $X$ and $a\in[0,1)$. It is known \cite[17.23]{K} that for any Polish space $X$ the space $P(X)$ is Polish, too.

 A measure $\mu\in P(X)$ on a topological space $X$ is called
\begin{itemize}
\item \index{measure!continuous}\index{continuous measure}{\em continuous} if $\mu(\{x\})=0$ for all $x\in X$;
\item \index{measure!strictly positive}\index{strictly positive measure}{\em strictly positive} if each nonempty open set $U\subset X$ has measure $\mu(U)>0$;
\item \index{Radon measure}\index{measure!Radon}{\em Radon} if for every $\e>0$ there exists a compact set $K\subset X$ of measure $\mu(K)>1-\e$.
\end{itemize}
It is well-known \cite[7.14.22]{Bog} that each measure $\mu\in P(X)$ on a Polish space $X$ is Radon.

Each continuous map $f:X\to Y$ between topological spaces induces a continuous map $P\!f:P(X)\to P(Y)$ assigning to each probability measure $\mu\in P(X)$ the measure $\nu\in P(Y)$ defined by $\nu(B)=\mu(f^{-1}(B))$ for any Borel set $B\subset Y$.
For a measure $\mu\in P(X)$ on a topological space $(X,\tau)$ the set $$\supp(\mu):=\big\{x\in X:\forall U\in\tau\;\big(x\in U\Ra\mu(U)>0\big)\big\}$$is called the \index{support of a measure}\index{measure!support of}{\em support} of $\mu$.

\smallskip

For a compact metrizable space $K$ and a topological space $Y$ by \index{$\C(K,Y)$}$\C(K,Y)$ we denote the space of all continuous functions from $K$ to $Y$ endowed with the compact-open topology. It is well-known that for a Polish space $Y$ the function space $\C(K,Y)$ is Polish. Moreover, for any complete metric $\rho$ generating the topology of $Y$, the compact-open topology on $\C(K,Y)$ is generated by the sup-metric $$\hat\rho(f,g):=\sup_{x\in K}\rho(f(x),g(x)).$$ By \index{$\E(K,Y)$}$\E(K,Y)$ we denote the subspace of $\C(K,Y)$ consisting of injective continuous maps from $K$ to $Y$.

A topological space $X$ is called
\begin{itemize}
\item \index{topological space!scattered}\index{scattered space}{\em scattered} if each nonempty subspace of $X$ has an isolated point;
\item \index{topological space!crowded}\index{crowded space}{\em crowded} if it has no isolated points.
\end{itemize}
So, a topological space is not scattered if and only if it contains a nonempty crowded subspace.

The following lemma is well-known but we could not find a proper reference, so we provide a proof for the convenience of the reader.

\begin{lemma}\label{E-in-C} For any (zero-dimensional) compact metrizable space $K$ and any (crowded) Polish space $Y$ the set $\E(K,Y)$ is a (dense) $G_\delta$-set in the function space $\C(K,Y)$.
\end{lemma}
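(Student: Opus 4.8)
The plan is to prove the two halves of the statement separately: first that $\E(K,Y)$ is a $G_\delta$-subset of $\C(K,Y)$ for arbitrary compact metrizable $K$ and Polish $Y$, and then that $\E(K,Y)$ is moreover \emph{dense} under the parenthetical assumptions that $K$ is zero-dimensional and $Y$ is crowded.

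For the $G_\delta$-part, I fix a metric $d$ generating the topology of the compact space $K$ and use that the compact-open topology on $\C(K,Y)$ is induced by the sup-metric $\hat\rho$. For $n\in\IN$ set
$$V_n:=\big\{f\in\C(K,Y):\rho(f(x),f(y))>0\ \text{ whenever }x,y\in K\text{ and }d(x,y)\ge 1/n\big\}.$$
Clearly $\E(K,Y)=\bigcap_{n\in\IN}V_n$, since a map is injective iff it separates every pair of points lying at distance $\ge 1/n$ for some $n$. So it suffices to show each $V_n$ is open. Given $f\in V_n$, the continuous function $(x,y)\mapsto\rho(f(x),f(y))$ is strictly positive on the compact set $\{(x,y)\in K\times K:d(x,y)\ge 1/n\}$ and hence attains a positive minimum $\delta$ there; then the triangle inequality shows that every $g\in\C(K,Y)$ with $\hat\rho(g,f)<\delta/3$ again lies in $V_n$. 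Hence $\E(K,Y)$ is $G_\delta$.

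For the density-part, assume $K$ is zero-dimensional and $Y$ is crowded, and fix $f\in\C(K,Y)$ and $\e>0$; I shall produce an injective $g\in\C(K,Y)$ with $\hat\rho(f,g)<\e$. By uniform continuity of $f$ pick $\delta>0$ with $\rho(f(x),f(y))<\e/4$ whenever $d(x,y)<\delta$, and, using zero-dimensionality together with compactness of $K$, partition $K$ into finitely many nonempty clopen sets $K_1,\dots,K_m$ of diameter $<\delta$, choosing a point $x_i\in K_i$ for each $i$. Since $Y$ is crowded, each nonempty open subset of $Y$ is infinite, so one may select pairwise distinct points $y_i\in B(f(x_i);\e/4)$ successively, and then radii $r_i\in(0,\e/4)$ so small that the balls $B(y_i;r_i)$, $i\le m$, are pairwise disjoint. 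Each $B(y_i;r_i)$ is a nonempty open subspace of $Y$, hence Polish, and it is crowded (an isolated point of it would be isolated in $Y$), so it contains a homeomorphic copy of the Cantor cube $2^\w$; as $K_i$ is a nonempty zero-dimensional compact metrizable space it embeds topologically into $2^\w$, hence into $B(y_i;r_i)$. Fixing such embeddings $g_i:K_i\to B(y_i;r_i)$ and letting $g:K\to Y$ be defined by $g|_{K_i}:=g_i$, I obtain a continuous map (the $K_i$ being clopen) which is injective (the $g_i$ are injective and their images lie in the pairwise disjoint balls $B(y_i;r_i)$) and which satisfies, for $x\in K_i$,
$$\rho(g(x),f(x))\le\rho(g(x),y_i)+\rho(y_i,f(x_i))+\rho(f(x_i),f(x))<r_i+\e/4+\e/4<\e.$$
Thus $g\in\E(K,Y)$ and $\hat\rho(f,g)<\e$, which proves density.

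Two classical facts are invoked without proof in the density argument: every nonempty crowded Polish space contains a homeomorphic copy of $2^\w$ (built by a standard Cantor scheme, splitting basic open pieces into two smaller nonempty ones), and every zero-dimensional compact metrizable space embeds topologically into $2^\w$ (the usual characterization of such spaces). I expect the only point requiring genuine care to be the bookkeeping in the density-part: one must choose the target balls $B(y_i;r_i)$ simultaneously small enough to control $\hat\rho(f,g)$, pairwise disjoint so that $g$ stays injective across the pieces $K_i$, and still ``large'' (namely crowded and Polish) so as to receive an embedded copy of $K_i$. Once the clopen partition of $K$ into small pieces is set up, the remaining verifications are routine.
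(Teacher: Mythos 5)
Your proof is correct and follows essentially the same route as the paper's: the density argument (a clopen partition of $K$ into small pieces, pairwise distinct target points in pairwise disjoint small balls, and an injective continuous map of each piece into its ball) is the paper's argument almost verbatim. The only difference is that you prove the $G_\delta$ part directly via the open sets $V_n$, whereas the paper cites this fact from Engelking; your argument is just the standard self-contained proof of that citation.
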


\begin{proof} Lemma 1.11.1 in \cite{End} implies that $\E(K,Y)$ is a $G_\delta$-set in $\C(K,Y)$. Next, assuming that the compact space $K$ is zero-dimensional and the Polish space $Y$ is crowded, we show that the set $\E(K,Y)$ is dense in $\C(K,Y)$. Fix a metric $\rho$ generating the topology of the Polish space $Y$. Given any $f\in\C(K,Y)$ and $\e>0$, we should find a function $g\in\E(K,Y)$ such that $\hat \rho(g,f)<\e$. By the continuity of $f$, the zero-dimensional compact space $K$ admits a finite disjoint open cover $\U$ such that for every $U\in\U$ the set $f(U)$ has diameter $<\frac\e3$ in the metric space $(Y,\rho)$. In each $U\in\U$ choose a point $x_U$. Since the space $Y$ has no isolated points, we can choose a family $(y_U)_{U\in\U}$ of pairwise distinct points in $Y$ such that $\rho(y_U,f(x_U))<\frac\e3$. Choose a $\delta>0$ such that $\delta<\frac\e3$ and $\delta\le\frac12\min\{\rho(y_U,y_V):U,V\in\U,\;U\ne V\}$. For every $U\in \U$, we can apply Theorem 6.2 from \cite{K} and find an injective continuous map $g_U:U\to B(y_U;\delta)$.
Observe that for every $x\in U$ we have
\begin{multline*}\rho(g_U(x),f(x))\le \rho(g_U(x),y_U)+\rho(y_U,f(x_U))+\rho(f(x_U),f(x))\\ <\delta+\frac\e3+\diam(f(U))<\frac\e3+\frac\e3+\frac\e3=\e.
\end{multline*}
Since the family of balls $\big(B(y_U;\delta)\big)_{U\in\U}$ is disjoint, the family $(g_U(U))_{U\in\U}$ is disjoint, too. Then the map $g:K\to Y$ defined by $g|U=g_U$ for $U\in\U$ is injective and satisfies $\hat \rho(g,f)<\e$.
\end{proof}




\section{Steinhaus properties of ideals and semi-ideals}

A nonempty family $\I$ of subsets of a set $X$ is called
\begin{itemize}
\item \index{family!directed}\index{directed family}{\em directed} if for any sets $A,B\in\I$ there exists a set $C\in\I$ such that $A\cup B\subset C$;
\item \index{family!$\sigma$-continuous}\index{$\sigma$-continuous family}{\em $\sigma$-continuous} if for any countable directed subfamily $\mathcal D\subset\I$ the union $\bigcup\mathcal D\in \I$;
\item a \index{semi-ideal}{\em semi-ideal} if $\I$ is closed under taking subsets;
\item an \index{ideal}{\em ideal} if $\I$ is a directed semi-ideal;
\item a \index{$\sigma$-ideal}{\em $\sigma$-ideal} if $\I$ is a $\sigma$-continuous ideal.
\end{itemize}
It is clear that

\centerline{$\sigma$-continuous $\Leftarrow$ $\sigma$-ideal $\Ra$ ideal $\Ra$ semi-ideal.}
\smallskip

A semi-ideal $\I$ on a set $X$ is \index{proper semi-ideal}{\em proper} if $X\notin\I$.

Each nonempty family $\F$ of subsets of a set $X$ generates the semi-ideal \index{${\downarrow}\F$}$${\downarrow}\F=\{E\subset X:\exists F\in\F\;\;E\subset F\},$$
the ideal\index{$\vee\F$}
$$\vee\F=\{E\subset X:\exists \,\E\in[\F]^{<\w},\;\;E\subset\textstyle{\bigcup}\E\},$$
and the $\sigma$-ideal\index{$\sigma\F$}
$$\sigma\F=\{E\subset X:\exists \,\E\in[\F]^{\le\w},\;\;E\subset\textstyle{\bigcup}\E\},$$
where by \index{$[\F]^{<\w}$}, \index{$[\F]^{\le \w}$}$[\F]^{<\w}$ and $[\F]^{\leq\w}$ we denote the families of finite and at most countable subfamilies of $\F$, respectively.

For a family $\F$ of subsets of a topological space $X$ by \index{$\overline\F$}$\overline\F$ we denote the family of all closed subsets of $X$ that belong to the family $\F$. Then \index{$\sigma\overline{\F}$}$\sigma\overline\F$ is the $\sigma$-ideal generated by closed sets in $\F$.

For a semi-ideal $\I$ on a set $X=\bigcup\I\notin\I$, consider the four standard cardinal characteristics:
$$\begin{aligned}
&\index{$\on{add}(\I)$}\on{add}(\I):=\min\{|\J|:\J\subset\I\;\;(\cup\J\notin \I)\};\\
&\index{$\on{cov}(\I)$}\on{cov}(\I):=\min\{|\J|:\J\subset\I\;\;(\cup\J=X)\};\\
&\index{$\on{non}(\I)$}\on{non}(\I):=\min\{|A|:A\subset X\;\;(A\notin\I)\};\\
&\index{$\on{cof}(\I)$}\on{cof}(\I):=\min\{|\J|:\J\subset \I\;\;(\forall I\in\I\;\exists J\in\J\;\;I\subset J)\}.
\end{aligned}
$$

A semi-ideal $\I$ on a topological group $X$ is called
\begin{itemize}
\item \index{semi-ideal!translation-invariant}\index{translation-invariant semi-ideal}{\em translation-invariant} if for any $I\in\I$ and $x\in X$ the set $I+x$ belongs to $\I$;
\item \index{semi-ideal!invariant}\index{invariant semi-ideal}{\em invariant} if $\I$ is translation-invariant and for any topological group isomorphism $f:X\to X$ the family $\{f(I):I\in\I\}$ coincides with $\I$.
\end{itemize}

Let $\I$ be a semi-ideal on a topological space $X$ and $\J$ be a semi-ideal on a topological space $Y$. We say that the semi-ideals $\I,\J$ are \index{semi-ideals!topologically isomorphic}\index{topologically isomorphic semi-ideals}{\em topologically isomorphic} if there exists a homeomorphism $f:X\to Y$ such that $\J=\{f(A):A\in \I\}$.

\begin{definition} We shall say that a family $\F$ of subsets of a Polish group $X$ has
\begin{itemize}
\item \index{strong Steinhaus property}\index{property!strong Steinhaus}{\em strong Steinhaus property} if for any Borel subsets $A,B\notin\F$ of $X$ the sum $A+B$ has nonempty interior and the difference $A-A$ is a neighborhood of zero in $X$;
\item the \index{Steinhaus property}\index{property!Steinhaus}{\em Steinhaus property} if for any Borel subset $A\notin\F$ the difference $A-A$ is a neighborhood of zero;
\item the \index{weak Steinhaus property}\index{property!weak Steinhaus}{\em weak Steinhaus property} if for any Borel subset $A\notin\F$ the difference $A-A$ is not meager in $X$.
\end{itemize}
\end{definition}

It is clear that
$$
\mbox{strong Steinhaus property $\Ra$ Steinhaus property $\Ra$ weak Steinhaus property}.
$$

\begin{theorem}\label{t:S+} Let $\I$ be a semi-ideal of subsets of a Polish group $X$ such that for any closed subsets $A,B\notin\I$ of $X$ the set $A-B$ is not nowhere dense in $X$. Then for any analytic subspaces $A,B\notin\sigma\overline{\I}$ of $X$ the set $A-B$ is not meager in $X$. Consequently, the $\sigma$-ideal $\sigma{\overline{\I}}$ on $X$ has the weak Steinhaus property.
\end{theorem}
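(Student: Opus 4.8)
The plan is to prove the main assertion by contradiction and then read off the weak Steinhaus property of $\sigma\overline{\I}$ as the special case $A=B$. So, assuming that $A,B\subseteq X$ are analytic with $A\notin\sigma\overline{\I}$ and $B\notin\sigma\overline{\I}$ but $A-B$ meager, I would first fix closed nowhere dense sets $N_n\subseteq X$ ($n\in\w$) with $A-B\subseteq\bigcup_{n\in\w}N_n$. Since $\emptyset\in\sigma\overline{\I}$, both $A$ and $B$ are nonempty, so I may fix continuous surjections $f\colon\w^\w\to A$ and $g\colon\w^\w\to B$ (every nonempty analytic subset of a Polish space is a continuous image of the Baire space $\w^\w$; see \cite{K}). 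For $s\in\w^{<\w}$ let $[s]=\{\sigma\in\w^\w:s\subseteq\sigma\}$ be the corresponding basic clopen set.

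The crucial step is a localization that cuts the domains $\w^\w$ of $f$ and $g$ down to closed sets on which these maps are ``everywhere $\sigma\overline{\I}$-large''. I would set $U_f=\bigcup\{[s]:s\in\w^{<\w},\ f([s])\in\sigma\overline{\I}\}$, an open subset of $\w^\w$, and $P_f=\w^\w\setminus U_f$, a closed and hence Polish subspace. Since $\w^{<\w}$ is countable and $\sigma\overline{\I}$ is a $\sigma$-ideal, $f(U_f)=\bigcup\{f([s]):f([s])\in\sigma\overline{\I}\}\in\sigma\overline{\I}$; as $A\setminus f(P_f)\subseteq f(U_f)$, this gives $f(P_f)\notin\sigma\overline{\I}$, and in particular $P_f\ne\emptyset$. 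Moreover, if $O\subseteq P_f$ is nonempty and relatively open then $O$ contains some $[s]\cap P_f\ne\emptyset$; since $[s]\cap P_f\ne\emptyset$ forces $[s]\not\subseteq U_f$, the definition of $U_f$ yields $f([s])\notin\sigma\overline{\I}$, and as $f([s]\cap U_f)\subseteq f(U_f)\in\sigma\overline{\I}$ we conclude $f([s]\cap P_f)\notin\sigma\overline{\I}$, hence $f(O)\notin\sigma\overline{\I}$; consequently the closed set $\overline{f(O)}$ does not belong to $\I$ (otherwise $\overline{f(O)}\in\overline{\I}\subseteq\sigma\overline{\I}$ would contain $f(O)$). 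I would carry out the same construction with $g$, obtaining a nonempty closed subspace $P_g\subseteq\w^\w$ such that $g$ is continuous on $P_g$ and $\overline{g(O')}\notin\I$ for every nonempty relatively open $O'\subseteq P_g$.

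Then I would consider the continuous map $h\colon P_f\times P_g\to X$, $h(\sigma,\tau)=f(\sigma)-g(\tau)$. Its image lies in $f(P_f)-g(P_g)\subseteq A-B\subseteq\bigcup_{n\in\w}N_n$, so $P_f\times P_g=\bigcup_{n\in\w}h^{-1}(N_n)$ is a countable union of closed subsets of the nonempty Polish, hence Baire, space $P_f\times P_g$. Therefore some $h^{-1}(N_{n_0})$ is not nowhere dense, and being closed it contains a nonempty basic open box $O_1\times O_2$ with $O_1\subseteq P_f$ and $O_2\subseteq P_g$ nonempty open. Then $f(O_1)-g(O_2)=h(O_1\times O_2)\subseteq N_{n_0}$, and, using continuity of subtraction together with metrizability of $X$ (which gives $\overline{S}-\overline{T}\subseteq\overline{S-T}$ for all $S,T\subseteq X$), I get $\overline{f(O_1)}-\overline{g(O_2)}\subseteq\overline{f(O_1)-g(O_2)}\subseteq N_{n_0}$, a nowhere dense set in $X$. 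But $\overline{f(O_1)}$ and $\overline{g(O_2)}$ are closed subsets of $X$ not belonging to $\I$, so the hypothesis on $\I$ forces their difference to be \emph{not} nowhere dense — a contradiction, which proves the main assertion. The last statement is then immediate: if $B\subseteq X$ is Borel (hence analytic) with $B\notin\sigma\overline{\I}$, the assertion applied with $A=B$ shows $B-B$ is not meager, and since $\sigma\overline{\I}$ is a $\sigma$-ideal by its very definition, this says precisely that $\sigma\overline{\I}$ has the weak Steinhaus property.

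I expect the only genuinely delicate ingredient to be the localization of the second paragraph — replacing the arbitrary analytic sets $A,B$ by continuous images of Polish spaces that stay outside $\sigma\overline{\I}$ over every nonempty open piece — since this is exactly what legitimizes the single, decisive application of the hypothesis to the closed sets $\overline{f(O_1)}$ and $\overline{g(O_2)}$. The care needed there is to check that the discarded piece $f(U_f)$ really belongs to $\sigma\overline{\I}$, which uses countability of $\w^{<\w}$ and the $\sigma$-ideal property of $\sigma\overline{\I}$; everything after the localization is a one-line Baire-category argument plus the elementary closure inequality $\overline{S}-\overline{T}\subseteq\overline{S-T}$.
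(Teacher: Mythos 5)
Your proof is correct and follows essentially the same route as the paper's: localize $A$ and $B$ to continuous images of Polish spaces that lie outside $\sigma\overline{\I}$ over every nonempty open piece (your union of basic clopen sets $[s]$ with $f([s])\in\sigma\overline{\I}$ coincides with the paper's union of all open sets with small image, handled there via the Lindel\"of property), then apply the Baire theorem to the closed preimages $h^{-1}(N_n)$ and the closure inclusion $\overline{S}-\overline{T}\subseteq\overline{S-T}\subseteq N_{n_0}$ to contradict the hypothesis on $\I$. All steps check out, including the verification that $f(O)\notin\sigma\overline{\I}$ for every nonempty relatively open $O\subseteq P_f$ and the final specialization $A=B$ for the weak Steinhaus property.
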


\begin{proof} Given two analytic subspaces $A,B\notin\sigma\overline{\I}$ of $X$ we should prove that $A-B$ is non-meager in $X$. The spaces $A,B$, being analytic, are images of Polish spaces $P_A$ and $P_B$ under continuous surjective maps $f_A:P_A\to A$ and $f_B:P_B\to B$. Let $W_A$ be the union of all open subsets $U\subset P_A$ whose images $f_A(U)$ belong to the $\sigma$-ideal $\sigma\overline{\I}$. Since $W_A$ is Lindel\"of, the image $f_A(W_A)$ belongs to $\sigma\overline{\I}$. By the maximality of $W_A$, for every nonempty relatively open subset $V\subset P_A\setminus W_A$ the image $f_A(V)$ does not belong to $\sigma\overline{\I}$. Replacing $A$ by $f_A(P_A\setminus W_A)$, we can assume that $W_A$ is empty, so for every nonempty open subset $V\subset P_A$ the image $f_A(V)\notin\sigma\overline{\I}$.

By analogy, we can replace $B$ by a smaller analytic subset and assume that for any nonempty open set $V\subset P_B$ the image $f_B(V)\notin\sigma\overline{\I}$. We claim that $A-B$ is not meager in $X$. To derive a contradiction, assume that $A-B$ is meager and hence $A-B\subset \bigcup_{n\in\w}F_n$ for a sequence $(F_n)_{n\in\w}$ of closed nowhere dense subsets $F_n\subset X$.

Now consider the map $\partial:P_A\times P_B\to A-B$, $\partial:(x,y)\mapsto f_A(x)-f_B(y)$. By the continuity of $\partial$, for any $n\in\w$ the preimage $\partial^{-1}(F_n)$ is a closed subset of the Polish space $P_A\times P_B$. Since $P_A\times P_B=\bigcup_{n\in\w}\partial^{-1}(F_n)$, by the Baire Theorem, for some $n\in\w$ the closed set $\partial^{-1}(F_n)$ has nonempty interior and hence contains a nonempty open subset $V_A\times V_B$ of $P_A\times P_B$.

By our assumption, the images $f_A(V_A)$ and $f_B(V_B)$ do not belong to $\sigma\overline{\I}$. Then their closures $\overline{f_A(V_A)}$ and $\overline{f_B(V_B)}$ do not belong to the ideal $\I$. By our assumption, the difference $\overline{f_A(V_A)}-\overline{f_B(V_B)}$ is not nowhere dense in $X$.

On the other hand, by closedness of $F_n$ and inclusion $f_A(V_A) - f_B(V_B) \subset F_n$ we get $$f_A(V_A) - f_B(V_B) \subset \overline{f_A(V_A)} - \overline{f_B(V_B)} \subset \overline{f_A(V_A) - f_B(V_B)} \subset F_n$$ which yields a desired contradiction.
\end{proof}

Now we consider some standard examples of ideals and $\sigma$-ideals.

For a set $X$ and a cardinal $\kappa$ let
\begin{itemize}
\item \index{$[X]^{\le \kappa}$}$[X]^{\le \kappa}:=\{A\subset X:|A|\le\kappa\}$ and
\item  \index{$[X]^{<\kappa}$}$[X]^{<\kappa}:=\{A\subset X:|A|<\kappa\}$.
\end{itemize}
For an infinite cardinal $\kappa$ the family $[X]^{<\kappa}$ is an ideal and the family $[X]^{\le k}$ is a $\sigma$-ideal. For any finite cardinal $\kappa$ the families $[X]^{\le \kappa}$ and $[X]^{<\kappa}$ are $\sigma$-continuous semi-ideals.

For a topological space $X$ by \index{$\M_X$}$\M_X$ we denote the $\sigma$-ideal of meager subsets of $X$.
If $X$ is also endowed with a $\sigma$-additive Borel measure $\mu$, then by \index{$\M_\mu$}$\N_\mu$ we denote the $\sigma$-ideal of sets of zero $\mu$-measure in $X$ (more precisely, the family of subsets of Borel sets of zero $\mu$-measure in $X$). If the topological space $X$ or the measure $\mu$ is clear from the context, then we omit the subscript and write \index{$\M$}$\M$ and \index{$\N$}$\N$ instead of $\M_X$ and $\N_\mu$. For a locally compact topological group $X$ by $\N_X$ or just $\N$ we denote the ideal $\N_\lambda$ of null sets of any Haar measure $\lambda$ on $X$ (which is unique up to a multiplicative constant). By \cite[2.12]{Akin99}, for any strictly positive continuous measures $\mu,\nu\in P(2^\w)$ on the Cantor cube $2^\w$ and any $\e>0$ there exists a homeomorphism $h:2^\w\to 2^\w$ such that
 $$(1-\e)\mu(B)\le\nu(h(B))\le(1+\e)\mu(B)$$
 for any Borel set $B\subset 2^\w$. This implies that the $\sigma$-ideals $\N_\mu$ and $\N_\nu$ are topologically isomorphic.


Observe that for each topological space $X$ we get $\sigma\overline{\M}=\M$. On the other hand, for any strictly positive continuous measure $\mu$ on a Polish space $X$, the $\sigma$-ideal $\sigma\overline{\N}_\mu$ is a proper subideal of $\M_X\cap\N_\mu$.
In \cite{BRZ}, \cite{BaJu} the $\sigma$-ideal \index{$\sigma\overline{\N}$}$\sigma\overline{\N}$ on the Cantor cube is denoted by $\E$.

Now we recall some known Steinhaus-type properties of the ideals $\M$ and $\N$ on topological groups.

\begin{theorem}[Steinhaus \cite{Stein}, Weil {\cite[\S11]{Weil}}]\label{Stein} For every locally compact Polish group the ideal $\N$ has the strong Steinhaus property.
\end{theorem}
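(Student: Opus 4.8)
The plan is to reduce the strong Steinhaus property to a compactness-plus-regularity argument using the Haar measure $\lambda$ on the locally compact Polish group $X$. First I would take Borel sets $A,B\notin\N$, i.e.\ $\lambda(A)>0$ and $\lambda(B)>0$. By inner regularity of the Haar measure (valid since $X$ is Polish, hence $\sigma$-compact in the locally compact case, and $\lambda$ is Radon), I may replace $A$ and $B$ by compact subsets of positive measure; shrinking further and translating, I may assume $A$ and $B$ both lie in a fixed compact neighborhood of $\theta$ and have finite positive measure. The key reduction is the standard one: $A-A$ being a neighborhood of zero is a special case (up to replacing $B$ by $-A$ and noting $A-(-A)=A+A$, then translating) of the statement that $A+B$ has nonempty interior when $\lambda(A),\lambda(B)>0$; more directly, one proves $A-A$ is a neighborhood of zero and $A+B$ has nonempty interior by the same convolution/continuity device, so I would prove both in parallel.

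The core step is the continuity of the map $x\mapsto \lambda\big((A+x)\cap B\big)$, equivalently $x\mapsto \int_X \mathbf{1}_A(y)\,\mathbf{1}_B(y+x)\,d\lambda(y)$, which is the convolution $(\mathbf{1}_{-A}*\mathbf{1}_B)(x)$. Since $\mathbf{1}_A,\mathbf{1}_B\in L^1(\lambda)\cap L^\infty(\lambda)$ with compact support, this convolution is a continuous function on $X$ (approximate $\mathbf{1}_A$ in $L^1$ by continuous compactly supported functions and use invariance of $\lambda$ together with uniform continuity; here I use that $X$ is metrizable so these approximation arguments go through cleanly). At $x=\theta$ we compute $(\mathbf{1}_{-A}*\mathbf{1}_{A})(\theta)=\lambda(A)>0$ — taking $B=-A$ for the difference statement — so by continuity the set $U:=\{x\in X:\lambda((A+x)\cap(-A))>0\}$ is an open neighborhood of $\theta$. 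For every $x\in U$ the intersection $(A+x)\cap(-A)$ is nonempty, which gives $a+x=-a'$ for some $a,a'\in A$, hence $x=-a-a'\in -(A+A)$; rewriting with $A$ replaced by $-A$ throughout yields $U\subset A-A$. For the sum statement, run the same argument with the convolution $(\mathbf{1}_{-A}*\mathbf{1}_{B})$: it is continuous and not identically zero (its integral is $\lambda(A)\lambda(B)>0$), so it is positive on some nonempty open set $V$, and for $x\in V$ we get $(A+x)\cap B\neq\emptyset$, i.e.\ $x\in B-A$; thus $B-A$ — and symmetrically $A+B$ after a reflection — contains a nonempty open set.

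The main obstacle, and the only genuinely non-formal point, is establishing the continuity of the convolution of two bounded compactly supported $L^1$ functions on an arbitrary locally compact Polish group: one must check that translation is continuous in the $L^1$-norm, i.e.\ $\|\mathbf{1}_B(\cdot+x)-\mathbf{1}_B\|_{L^1}\to 0$ as $x\to\theta$. The standard route is: continuous compactly supported functions are dense in $L^1(\lambda)$ (regularity of $\lambda$ plus Urysohn's lemma), such functions are uniformly continuous with respect to a left-invariant metric, and their supports stay inside a fixed compact set for small $x$; then an $\varepsilon/3$ argument transfers $L^1$-continuity of translation from the dense class to $\mathbf{1}_B$. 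Everything else — the reductions to compact sets via inner regularity, the rewriting of nonempty intersections as membership in sum/difference sets, and the passage between $A+B$, $B-A$, $A-A$ via the reflection $y\mapsto -y$ (which preserves $\lambda$ up to the modular function, trivial here since $X$ is abelian and thus unimodular) — is routine and I would state it briefly rather than belabor it.
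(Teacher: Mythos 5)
Your argument is correct and is the classical convolution proof of the Steinhaus--Weil theorem; the paper does not reprove this result but only cites Steinhaus and Weil, so there is nothing to compare against beyond noting that yours is the standard route (inner regularity of $\lambda$ on the $\sigma$-compact group $X$, continuity of $\mathbf{1}_{-A}\ast\mathbf{1}_{B}$ via $L^1$-continuity of translation, and the inclusion of the open set $\{x\in X:(\mathbf{1}_{-A}\ast\mathbf{1}_{B})(x)>0\}$ into $B-A$). One small bookkeeping slip: for the difference statement you should work with $g(x)=\lambda\big((A+x)\cap A\big)=(\mathbf{1}_{-A}\ast\mathbf{1}_{A})(x)$, for which $g(\theta)=\lambda(A)>0$ and $g(x)>0$ forces $x\in A-A$; your set $U=\{x:\lambda((A+x)\cap(-A))>0\}$ takes the value $\lambda(A\cap(-A))$ at $\theta$, which can vanish, but the correct choice is exactly the convolution $(\mathbf{1}_{-A}\ast\mathbf{1}_{A})(\theta)=\lambda(A)$ that you already displayed.
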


\begin{theorem}[Piccard \cite{Pic}, Pettis \cite{Pet}]\label{t:PP} For every Polish group the ideal $\M$ has the strong Steinhaus property. 
\end{theorem}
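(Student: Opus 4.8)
The plan is to run the classical Baire-category argument of Piccard and Pettis, using only three ingredients already available: every Borel set has the Baire property (\cite[11.5]{K}); a Polish group is a Baire space (so nonempty open sets are non-meager); and the translations $y\mapsto y+x$, the inversion $y\mapsto-y$ and the maps $y\mapsto x-y$ are homeomorphisms of $X$, hence preserve open sets, meager sets and the Baire property. The argument in fact works verbatim for any sets with the Baire property. I would start from the elementary remark that a non-meager set $S\subset X$ with the Baire property contains a ``large'' open piece: choosing an open $U$ with $U\triangle S$ meager, the set $U\setminus S\subset U\triangle S$ is meager, and $U\ne\emptyset$ since otherwise $S=S\setminus U$ would be meager; so there is a nonempty open $U$ with $U\setminus S$ meager.

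For the difference $A-A$, apply this remark to $A$ to get a nonempty open $U$ with $U\setminus A$ meager, fix a point $u\in U$, and, using continuity of $x\mapsto u-x$ at $\theta$, choose a neighborhood $W$ of $\theta$ with $u-W\subset U$. Then for every $x\in W$ the set $U\cap(x+U)$ is open and contains $u$, hence is non-meager, while $(U\setminus A)\cup\big((x+U)\setminus(x+A)\big)$ is meager, its second summand being a homeomorphic copy of the meager set $U\setminus A$. Removing this meager set from the non-meager set $U\cap(x+U)$ leaves a non-meager, in particular nonempty, subset of $A\cap(x+A)$; any point $a$ in it satisfies $a-x\in A$, so $x=a-(a-x)\in A-A$. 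Thus $W\subset A-A$, i.e. $A-A$ is a neighborhood of $\theta$.

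For the sum $A+B$, note that $-B$ is Borel, non-meager (it is the image of $B$ under the homeomorphism $y\mapsto-y$) and $A+B=A-(-B)$, so it suffices to show that $A-B$ has nonempty interior for Borel non-meager $A,B$. Apply the remark to obtain nonempty open sets $U,V$ with $U\setminus A$ and $V\setminus B$ meager, fix $u\in U$ and $v\in V$, and, using continuity of $x\mapsto u-x$ at $u-v$, choose a neighborhood $N$ of $u-v$ with $u-N\subset V$. For $x\in N$ the set $U\cap(x+V)$ is open and contains $u$, hence non-meager, whereas $(U\setminus A)\cup\big((x+V)\setminus(x+B)\big)$ is meager; exactly as before there is a point $a\in A\cap(x+B)$, which gives $a-x\in B$ and $x=a-(a-x)\in A-B$. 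Hence $N\subset A-B$, so $A-B$, and therefore $A+B$, has nonempty interior.

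There is no genuine obstacle here: the only points demanding a little care are keeping track of which translates of meager sets appear (all handled by the homeomorphism remark) and the two one-line continuity arguments producing the neighborhoods $W$ and $N$. One could alternatively deduce only the weak Steinhaus part from Theorem~\ref{t:S+} applied to $\I=\M$ (using $\sigma\overline{\M}=\M$), but obtaining the strong form stated in Theorem~\ref{t:PP} — nonempty interior of $A+B$ — requires the direct argument above.
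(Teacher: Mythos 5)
Your argument is correct and complete: it is the classical Baire-category proof of Piccard and Pettis, and every step checks out — the existence of a nonempty open $U$ with $U\setminus A$ meager from the Baire property, the translation-invariance of meagerness, the Baireness of the Polish group making $U\cap(x+U)$ non-meager, and the reduction of $A+B$ to $A-(-B)$. The paper itself states Theorem~\ref{t:PP} without proof, citing the original sources \cite{Pic}, \cite{Pet}, so there is no in-paper argument to compare against; your write-up supplies exactly the standard one.
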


\begin{corollary}\label{c:MN-} For every locally compact Polish group the ideal $\M\cap\N$ has the Steinhaus property.
\end{corollary}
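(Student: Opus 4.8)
\emph{Proof proposal.} The plan is to reduce the statement immediately to the two strong Steinhaus theorems already recorded, namely Theorem~\ref{Stein} for $\N$ and Theorem~\ref{t:PP} for $\M$. The key observation is purely set-theoretic: a set $A$ fails to belong to the intersection $\M\cap\N$ precisely when $A\notin\M$ or $A\notin\N$. So fixing a Borel set $A\subset X$ with $A\notin\M\cap\N$, I would split into these two cases.

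In the first case, $A$ is not meager in $X$. Since $X$ is a Polish group and $A$ is Borel (hence has the Baire property), Theorem~\ref{t:PP} gives that $\M$ has the strong Steinhaus property; in particular $A-A$ is a neighborhood of zero in $X$. In the second case, $A$ is not Haar-null, i.e.\ $A\notin\N$. Here we use that $X$ is \emph{locally compact} Polish, so the Haar measure exists and Theorem~\ref{Stein} applies: the ideal $\N$ has the strong Steinhaus property, whence again $A-A$ is a neighborhood of zero in $X$.

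Since one of the two cases must occur, $A-A$ is a neighborhood of zero in either situation, which is exactly the Steinhaus property for the family $\M\cap\N$. I expect no real obstacle here; the only point to be careful about is that the hypothesis ``locally compact'' is genuinely needed, as it is what allows us to invoke Theorem~\ref{Stein} in the second case, whereas the first case needs only that $X$ be Polish. One could also remark that this argument shows more: for Borel $A,B\notin\M\cap\N$, either both are non-meager or both are non-null (if, say, $A\notin\M$ and $B\notin\N$, one still has to argue separately), so the \emph{strong} Steinhaus property does not follow directly by this crude case split and is left unaddressed here, consistent with the statement only claiming the Steinhaus property.
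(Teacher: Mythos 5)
Your proof is correct and is exactly the argument the paper intends: the corollary is stated without proof because $A\notin\M\cap\N$ means $A\notin\M$ or $A\notin\N$, and then Theorem~\ref{t:PP} or Theorem~\ref{Stein} respectively gives that $A-A$ is a neighborhood of zero. Your closing remark that this case split does not yield the strong Steinhaus property is also consistent with the paper, which shows in Remark~\ref{r:MN+} that the strong version genuinely fails for $\M\cap\N$.
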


\begin{remark}\label{r:MN+} The ideal $\M\cap\N$ on a non-discrete locally compact Polish group $X$ does not have the strong Steinhaus property. Indeed, for any dense $G_\delta$-set $A\subset X$ of Haar measure zero and any countable dense set $D\subset X$ the set $B=\bigcap_{x\in D}(x-X\setminus A)$ is meager of full Haar measure. So, $A,B\notin\M\cap\N$ but $A+B$ is disjoint with $D$ and hence has empty interior in $X$. This example is presented in Example 1.3 and Theorem 1.11 of \cite{BFN}. Since $\sigma\overline{\N}\subset\M\cap\N$, the $\sigma$-ideal $\sigma\overline{\N}$ does not have the strong Steinhaus property, too. On the other hand, this ideal has the weak Steinhaus property, established in the following corollary of Theorems~\ref{t:S+}, \ref{Stein} and \ref{t:PP}.
\end{remark}

\begin{corollary}\label{c:sN->S+} For any analytic subsets $A,B\notin\sigma\overline{\N}$ of a locally compact Polish group $X$ the set $A-B$ is non-meager in $X$ and hence $(A-B)-(A-B)$ is a neighborhood of zero in $X$.
Consequently, the ideal $\sigma\overline{\N}$ in a locally compact Polish group $X$ has the weak Steinhaus property.
\end{corollary}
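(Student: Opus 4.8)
The plan is to derive the corollary by applying Theorem~\ref{t:S+} to the semi-ideal $\I=\N$ of subsets of $X$ of Haar measure zero; note that then $\overline{\I}$ is the family of closed Haar-null subsets of $X$ and $\sigma\overline{\I}=\sigma\overline{\N}$.

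First I would check that the hypothesis of Theorem~\ref{t:S+} holds for $\I=\N$: for any closed sets $A,B\notin\N$ the set $A-B$ is not nowhere dense in $X$. Indeed, the inversion $x\mapsto-x$ is a topological automorphism of $X$, so it maps the Haar measure of $X$ onto a positive multiple of it; hence the closed set $-B$ has positive Haar measure, i.e. $-B\notin\N$. Since by Theorem~\ref{Stein} the ideal $\N$ has the strong Steinhaus property, the sum $A+(-B)=A-B$ has nonempty interior in $X$, and in particular it is not nowhere dense. Thus Theorem~\ref{t:S+} applies and gives: for any analytic subspaces $A,B\notin\sigma\overline{\N}$ of $X$ the set $A-B$ is not meager in $X$. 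This is precisely the first assertion of the corollary, and together with the concluding sentence of Theorem~\ref{t:S+} it also yields the weak Steinhaus property of the $\sigma$-ideal $\sigma\overline{\N}$.

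It remains to show that $(A-B)-(A-B)$ is a neighborhood of zero whenever $A,B\notin\sigma\overline{\N}$ are analytic. Here I would use that $C:=A-B$ is analytic, being the image of the analytic set $A\times B$ under the continuous map $(a,b)\mapsto a-b$; hence $C$ has the Baire property in $X$ by \cite[21.6]{K}. Since $C$ is non-meager (by the previous paragraph), there is a nonempty open set $U\subset X$ with $U\triangle C$ meager. Choosing a meager $F_\sigma$-set $M\subset X$ with $U\triangle C\subset M$, we obtain that $U\setminus M$ is a Borel subset of $C$ which is non-meager (being comeager in the nonempty open, hence Baire, set $U$). By the Piccard--Pettis Theorem~\ref{t:PP}, the difference $(U\setminus M)-(U\setminus M)$ is a neighborhood of zero, and since it is contained in $C-C=(A-B)-(A-B)$, the proof is complete.

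The essentially only delicate point --- and the place I would expect to demand the most care, even though it is routine --- is the passage from the non-meager \emph{analytic} set $A-B$ to a non-meager \emph{Borel} subset of it, which is what makes the Borel-level Piccard--Pettis theorem (Theorem~\ref{t:PP}) applicable; everything else is a direct combination of the Steinhaus-type statements already recorded in the excerpt.
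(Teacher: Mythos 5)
Your proof is correct and follows exactly the route the paper intends: the corollary is stated there as a consequence of Theorem~\ref{t:S+}, the Steinhaus Theorem~\ref{Stein} (used, as you do, to verify the hypothesis of Theorem~\ref{t:S+} for the ideal $\N$ via the observation that $-B\notin\N$), and the Piccard--Pettis Theorem~\ref{t:PP}. Your final paragraph merely reproduces the proof of Corollary~\ref{c:PP}, which you could have cited directly for the analytic non-meager set $A-B$; otherwise nothing is missing.
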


\begin{example}\label{e:sN+-} For the compact metrizable topological group $X=\prod_{n\in\w} C_{2^n}$ the ideal $\sigma\overline{\N}$ does not have the Steinhaus property.
\end{example}

\begin{proof} In the compact topological Abelian group $X=\prod_{n\in\w} C_{2^n}$ consider the closed subset
$A=\prod_{n\in\w} C^*_{2^n}$ of Haar measure $\prod_{n\in\w}\frac{2^n-1}{2^n}>0$.
Here $C_{2^n}^*$ stands for the set of non-zero elements of the cyclic group $C_{2^n}$.
Fix a countable dense set $D\subset X$ consisting of points $x\in X$ such that $x(n)\ne \theta(n)$ for all but finitely many numbers $n$. Observe that for any such $x\in D$ the intersection $A\cap(x+A)$ is nowhere dense in $A$. Then the set $B=A\setminus\bigcup_{x\in D}(x+A)$ is a dense $G_\delta$-set in $A$.
Taking into account that restriction of Haar measure to $\overline{B}=A$ is strictly positive, and applying the Baire Theorem, we can prove that the $G_\delta$-subset $B$ of $X$ does not belong to the $\sigma$-ideal $\sigma\overline{\N}$. On the other hand, $B\cap (x+B)\subset B\cap (x+A)=\emptyset$ for any $x\in D$ and hence $B-B$ is disjoint with the dense set $D$. So, $B-B=B+(-B)$ has empty interior in $X$ and the ideal $\sigma\overline{\N}$ does not have the Steinhaus property.
\end{proof}

In this paper we shall often apply the following (known) corollary of the Piccard-Pettis Theorem \ref{t:PP}. We provide a short proof for completeness.

\begin{corollary}[Piccard, Pettis]\label{c:PP} For any non-meager analytic subset $A$ of a Polish group $X$ the set $A-A$ is a neighborhood of $\theta$ in $X$.
\end{corollary}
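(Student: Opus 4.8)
The plan is to deduce this from Theorem~\ref{t:S+} applied to the $\sigma$-ideal $\M$ of meager sets, but since the statement concerns a single set $A$ rather than a pair, and wants $A-A$ to be a \emph{neighborhood} of $\theta$ (not merely non-meager), I would instead argue directly following the classical Piccard--Pettis line. First I would reduce to the case that $A$ is non-meager and has the Baire property: every analytic set has the Baire property by \cite[21.6]{K}, so there is an open set $U$ with $A\triangle U$ meager; since $A$ is non-meager, $U\ne\emptyset$, and replacing $A$ by $A\cap U$ (still analytic, still non-meager, since $A\cap U$ differs from $U$ by a meager set) I may assume $A\subseteq U$ with $U\setminus A$ meager.

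Next I would fix a nonempty open $V\subseteq U$ with $\overline{V}$ compact or at least with small diameter — actually all I need is a nonempty open $V$ such that $V\setminus A$ is meager, which the previous paragraph already gives with $V=U$. The key step: I claim $V-V\subseteq A-A$ up to translating, more precisely that for every $g$ in some neighborhood $W$ of $\theta$ one has $(g+A)\cap A\ne\emptyset$. Take $W$ to be a neighborhood of $\theta$ with $W+V\subseteq$ (some fixed larger open set) — better, pick $v_0\in V$ and a neighborhood $W$ of $\theta$ with $v_0+W\subseteq V$; shrinking $W$ if necessary. For $g\in W$, consider the two meager sets $V\setminus A$ and $(g+V)\setminus(g+A)=g+(V\setminus A)$; their union is meager, while $V\cap(g+V)$ is a nonempty open set (it contains $v_0$ if $g\in W$ chosen so that $v_0\in g+V$, i.e.\ $-g+v_0\in V$, which holds for $g\in W$ by the choice $v_0+W\subseteq V$ applied to $-g$, so I should take $W$ symmetric). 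A nonempty open set in a Polish (hence Baire) group is non-meager, so $V\cap(g+V)$ is not contained in the meager set $(V\setminus A)\cup(g+(V\setminus A))$, whence there is a point $a\in V\cap(g+V)$ with $a\in A$ and $a\in g+A$. Thus $g\in A-A$ for all $g\in W$, i.e.\ $A-A\supseteq W$ is a neighborhood of $\theta$.

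I expect the only delicate point is the bookkeeping for the neighborhood $W$: one must choose $W$ symmetric and small enough that $v_0+W\subseteq V$, which guarantees simultaneously that $v_0\in V$ and $v_0\in g+V$ for all $g\in W$, so that $V\cap(g+V)\ne\emptyset$. Everything else is soft: analytic sets have the Baire property, nonempty open subsets of a Polish group are non-meager (the group being a Baire space), and a finite union of meager sets is meager. No compactness or measure is needed, so Lemma~\ref{l:M-coM} and the ``compact glasses'' machinery play no role here; this is purely the category version of the Steinhaus argument, and it also shows why the corollary is stated for the \emph{difference} $A-A$ rather than a sum of two distinct sets.
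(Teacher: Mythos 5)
Your proof is correct, but it takes a different route from the paper. The paper's proof also begins by invoking the Baire property of the analytic set $A$ to get an open $U$ with $A\triangle U$ meager, but then it covers $U\triangle A$ by a meager $F_\sigma$-set $M$, observes that $G:=U\setminus M$ is a non-meager $G_\delta$-subset of $A$, and simply applies the cited Piccard--Pettis Theorem~\ref{t:PP} (the strong Steinhaus property of $\M$ for Borel sets) to $G$, concluding $A-A\supset G-G$ is a neighborhood of $\theta$. You instead reprove the category Steinhaus/Pettis argument from scratch: after the same reduction you show that for every $g$ in a symmetric neighborhood $W$ with $v_0+W\subset U$ the nonempty open set $U\cap(g+U)$ is non-meager and hence not covered by the meager set $(U\setminus A)\cup\bigl(g+(U\setminus A)\bigr)$, which yields $(g+A)\cap A\ne\emptyset$ and so $W\subset A-A$. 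All the steps check out (the symmetry of $W$ is exactly what guarantees $v_0\in U\cap(g+U)$, and nonempty open sets in a Polish group are non-meager since the space is Baire). What each approach buys: the paper's version is shorter because Theorem~\ref{t:PP} is taken as a black box, and it records the slightly stronger intermediate fact that $A$ contains a non-meager Borel set to which the Borel-level theorem applies; your version is self-contained and makes the corollary independent of the unproved classical citation, at the cost of essentially reproving Pettis's theorem inside the corollary.
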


\begin{proof} By \cite[21.6]{K}, the analytic set $A$ has the Baire property in $X$. So, there exists an open set $U\subset X$ such that the symmetric difference $U\triangle A$ is meager and hence is contained in some meager $F_\sigma$-set $M\subset X$. It follows that $G:=U\setminus M$ is a $G_\delta$-set in $X$ such that $G\subset A\cup M$. Since the set $A$ is not meager, the open set $U$ is not empty and the $G_\delta$-set $G=U\setminus M$ is not meager in $X$. By the Piccard-Pettis Theorem~\ref{t:PP}, the set $G-G$ is a neighborhood of $\theta$ in $X$ and so is the set $A-A\supset G-G$.
\end{proof}

Corollary~\ref{c:PP} implies the following (known) version of the Open Mapping Principle for Polish groups (cf. \cite[4.27]{Luk}).

\begin{corollary}[Open Mapping Principle]\label{c:OMP} Any continuous surjective homomorphism $h:X\to Y$ from an analytic topological group $X$ to a Polish group $Y$ is open.
\end{corollary}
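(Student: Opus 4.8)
The plan is to deduce the Open Mapping Principle directly from Corollary~\ref{c:PP} together with the fact that analytic topological groups are Baire. First I would recall that an analytic topological group $X$ is necessarily a Baire space: indeed $X$ is a continuous image of a Polish space, hence has the Baire property as a subset of any Polish space it embeds into, but more to the point, a topological group that is non-meager in itself (equivalently Baire) behaves well here; in fact any analytic group is either meager in itself or Baire, and a meager topological group would have to be meager in every nonempty open subset by homogeneity, which forces it to be "small" in a way incompatible with surjecting onto a Polish group. I would isolate the clean statement actually needed: an analytic topological group $X$ is a Baire space, so no nonempty open subset of $X$ is meager in $X$.

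Next, the core argument. Let $h:X\to Y$ be a continuous surjective homomorphism with $X$ analytic and $Y$ Polish. To show $h$ is open it suffices, by homogeneity (translating by group elements), to show that for every neighborhood $U$ of $\theta_X$ the image $h(U)$ is a neighborhood of $\theta_Y$ in $Y$. Fix such a $U$ and choose a symmetric neighborhood $V$ of $\theta_X$ with $V-V\subset U$. Since $X$ is separable (being Polish-image), there is a countable set $D\subset X$ with $X=\bigcup_{d\in D}(d+V)$, hence $Y=h(X)=\bigcup_{d\in D}(h(d)+h(V))$. As $Y$ is Polish, hence Baire, and this is a countable cover by translates of $h(V)$, some translate $h(d)+h(V)$ is non-meager in $Y$, so $h(V)$ itself is non-meager in $Y$. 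Now $h(V)$ is an analytic subset of $Y$ (a continuous image of the analytic set $V$), so Corollary~\ref{c:PP} applies: $h(V)-h(V)$ is a neighborhood of $\theta_Y$. Since $h$ is a homomorphism, $h(V)-h(V)=h(V-V)\subset h(U)$, and therefore $h(U)$ is a neighborhood of $\theta_Y$. Translating, $h$ maps every open set to an open set.

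The main obstacle I anticipate is the step asserting that some translate $h(d)+h(V)$ is non-meager in $Y$: this uses the Baire category theorem for the Polish space $Y$ together with the countability of $D$, and requires care that the cover is genuinely countable, which follows from second-countability of $X$ (inherited from the Polish space mapping onto it) rather than from separability of $Y$. A secondary subtlety is the implicit claim that $V$ can be taken analytic: any open subset of an analytic space is analytic (it is itself a continuous image of a Polish space — an open subspace of a Polish space is Polish and the restriction of the original surjection works), so $h(V)$ is analytic and Corollary~\ref{c:PP} is legitimately applicable. Everything else — reduction to neighborhoods of the identity via homogeneity, the choice of symmetric $V$ with $V-V\subset U$, and the identity $h(V-V)=h(V)-h(V)$ for a homomorphism — is routine.
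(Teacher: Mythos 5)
Your core argument is correct and is essentially the paper's own proof: cover $Y=h(D)+h(V)$ using a countable dense $D\subset X$, apply the Baire theorem in the Polish space $Y$ to find a non-meager translate of the analytic set $h(V)$, then invoke Corollary~\ref{c:PP} together with $h(V)-h(V)=h(V-V)\subset h(U)$. The only caveat is that your preliminary claim that every analytic topological group is a Baire space is false in general (e.g.\ $\mathbb{Q}$), but this is harmless since your category argument runs entirely in $Y$ and never uses it.
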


\begin{proof} Given an open neighborhood $U\subset X$ of the neutral element $\theta_X$ of the group $X$, we need to prove that $h(U)$ is a neighborhood of the neutral element $\theta_Y$ in the topological group $Y$. Choose an open neighborhood $V\subset X$ of $\theta_X$ such that $V-V\subset U$. Fix any countable dense set $D\subset X$ and observe that $X=D+V$. Then $Y=h(X)=h(D)+h(V)$.  By the Baire Theorem, there exists $x\in D$ such that the set $h(x)+h(V)$ is not meager in $Y$. The open subspace $V$ of the analytic space $X$ is analytic and so is its image $h(V)$ in $Y$. By Corollary~\ref{c:PP}, the difference $(h(x)+h(V))-(h(x)+h(V))=h(V)-h(V)$ is a neighborhood of $\theta_Y$ in $Y$. Since $h(V)-h(V)=h(V-V)\subset h(U)$, the set $h(U)$ is a neighborhood of $\theta_Y$ in $Y$ and the homomorphism $h$ is open.
\end{proof}

\section{Haar-null sets}\label{s4}

Christensen \cite{Ch} defined a subset $A$ of a Polish group $X$ to be \index{subset!Haar-null}\index{Haar-null subset}{\em Haar-null} provided there exist a Borel set $B\subset X$ containing $A$ and a measure $\mu\in P(X)$ such that $\mu(B+x)=0$ for each $x\in X$. The measure $\mu$ (called the \index{witness measure}{\em witness measure} for $A$) can be assumed to have compact support, see e.g., \cite[Theorem 17.11]{K}.


The following classical theorem is due to Christensen \cite{Ch}.

 \begin{theorem}[Christensen]\label{t:HN} Let $X$ be a Polish group.
\begin{enumerate}
\item[\textup{1)}] The family \index{$\HN$}$\HN$ of all Haar-null sets in $X$ is an invariant $\sigma$-ideal on $X$.
\item[\textup{2)}] If $X$ is locally compact, then $\HN=\N$.
\item[\textup{3)}] The $\sigma$-ideal $\HN$ has the Steinhaus property.
\end{enumerate}
\end{theorem}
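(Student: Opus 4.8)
The plan is to prove the three items in turn; the content lies in the $\sigma$-additivity of item~1 and the Steinhaus property of item~3, while item~2 is a Fubini computation. Throughout, for $\mu,\nu\in P(X)$ write $\mu*\nu\in P(X)$ for the convolution (the pushforward of $\mu\times\nu$ under the group operation $X\times X\to X$) and $\delta_g$ for the Dirac measure at $g\in X$; Fubini's theorem then gives, for Borel $B\subset X$, the identities $(\delta_g*\mu)(B+x)=\mu(B+x-g)$ and $(\mu*\nu)(B+x)=\int_X\mu(B+x-y)\,d\nu(y)$.

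\emph{Item~1.} That $\HN$ is a semi-ideal is built into the definition, and invariance is immediate: if $\mu$ witnesses that a Borel set $B$ is Haar-null (i.e.\ $\mu(B+x)=0$ for all $x$), then $\mu$ also witnesses $B+t$, and for a topological group automorphism $f\colon X\to X$ the pushforward $Pf(\mu)$ witnesses $f(B)$, since $Pf(\mu)(f(B)+x)=\mu(f^{-1}(B+f^{-1}(x)))=\mu(B+f^{-1}(x))=0$. For $\sigma$-additivity I would first record two closure properties of witness measures: \emph{(a)} if $\mu$ witnesses $B$ and $U\subset X$ is open with $\mu(U)>0$, then $\mu|_U/\mu(U)$ also witnesses $B$; and \emph{(b)} if $\mu$ witnesses $B$, then so do $\delta_g*\mu$ and $\mu*\nu$ for all $g\in X$, $\nu\in P(X)$ (by the displayed formulas). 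Now let $(B_n)_{n\in\w}$ be Borel Haar-null with witnesses $\mu_n$. By separability each $\mu_n$ charges some open ball of radius $2^{-n}$; restricting $\mu_n$ to such a ball by \emph{(a)} and recentring it at $\theta$ by \emph{(b)}, I get witnesses (still denoted $\mu_n$) with $\sup_{z\in\supp\mu_n}\rho(z,\theta)\le 2^{-n}$. By invariance of $\rho$, $\rho(\sum_{i=m}^N z_i,\theta)\le\sum_{i=m}^N\rho(z_i,\theta)\le\sum_{i=m}^N 2^{-i}$ whenever $z_i\in\supp\mu_i$, so partial sums are uniformly Cauchy and, by completeness of $\rho$, the summation map $s\colon\prod_n\supp\mu_n\to X$, $(z_n)_n\mapsto\sum_n z_n$, is well defined and continuous. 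Put $\mu:=s_*\big(\bigotimes_n\mu_n\big)\in P(X)$. Splitting off the $n$-th coordinate writes $\mu$ as $\mu_n*\sigma_n$ for some $\sigma_n\in P(X)$, so by \emph{(b)} $\mu$ witnesses every $B_n$; hence $\mu\big(\bigcup_n B_n+x\big)\le\sum_n\mu(B_n+x)=0$, and $\bigcup_n B_n$ is Haar-null. The one genuine difficulty is the convergence of the infinite convolution, and property \emph{(a)} is exactly what makes it work.

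\emph{Item~2.} If $B$ is Borel with $\lambda(B)=0$ and $C\subset X$ is compact with $0<\lambda(C)<\infty$ (such $C$ exists as $X$ is locally compact), then $\mu:=\lambda|_C/\lambda(C)\in P(X)$ has $\mu(B+x)\le\lambda(B+x)/\lambda(C)=0$ by invariance of $\lambda$, so $B\in\HN$; thus $\N\subset\HN$. Conversely, let $\mu$ witness a Borel set $B\in\HN$. Since a locally compact Polish group is $\sigma$-compact, $\lambda$ is $\sigma$-finite, so Tonelli's theorem and invariance of $\lambda$ give $\lambda(B)=\int_X\lambda(B-y)\,d\mu(y)=\int_X\!\int_X\mathbf 1_B(x+y)\,d\lambda(x)\,d\mu(y)=\int_X\!\int_X\mathbf 1_B(x+y)\,d\mu(y)\,d\lambda(x)=\int_X\mu(B-x)\,d\lambda(x)=0$, because $\mu$ kills every translate of $B$, in particular $B-x$. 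Hence $\HN\subset\N$.

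\emph{Item~3.} I would argue by contraposition: assuming $A$ is Borel and $A-A$ is not a neighbourhood of $\theta$, I construct a measure witnessing $A\in\HN$. Using a countable neighbourhood base at $\theta$, build inductively $x_n\to\theta$ with $\rho(x_n,\theta)\le 2^{-n}$, $x_n\notin A-A$, and $x_i\pm x_j\notin A-A$ for all $i<j$. Then $(\varepsilon_n)_n\mapsto\sum_n\varepsilon_n x_n$ is a well-defined continuous map $\{0,1\}^\w\to X$, and its pushforward $\mu$ of the fair-coin product measure on $\{0,1\}^\w$ lies in $P(X)$. Fix $x\in X$ and $N\in\w$ and condition the sum $\xi=\sum_n\varepsilon_n x_n$ on the coordinates $\varepsilon_n$, $n\ge N$; with $v=\sum_{n\ge N}\varepsilon_n x_n$ now fixed, the set $C$ of those $\delta\in\{0,1\}^N$ with $v+\sum_{i<N}\delta_i x_i\in A+x$ has pairwise Hamming distance $\ge 3$ (two members at distance $1$ or $2$ would put some $\pm x_i$ or $\pm x_i\pm x_j$ into $(A+x)-(A+x)=A-A$), so the sphere-packing bound yields $|C|\le 2^N/(N+1)$ and hence $\mu(A+x)=\mathbb E\big[|C|/2^N\big]\le 1/(N+1)$; letting $N\to\infty$ gives $\mu(A+x)=0$, so $A\in\HN$ — contradiction. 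The step I expect to be the main obstacle is the inductive construction of $(x_n)$: at stage $n$ one must stay arbitrarily close to $\theta$ while avoiding a finite union of translates of $A-A$, and making this possible requires squeezing out of the hypothesis — that $X\setminus(A-A)$ clusters at $\theta$ — together with the Baire property of the analytic set $A-A$ enough room to choose $x_n$.
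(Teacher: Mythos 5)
The paper states this theorem without proof (it is cited from Christensen), so there is nothing to compare against; I assess your argument on its own merits. Your items 1 and 2 are correct: the infinite-convolution witness assembled from shrunken, recentred witness measures is the standard argument for $\sigma$-additivity (the closure properties (a), (b) and the convergence of the summation map are all in order), and the Tonelli computation for item 2 is the usual one.

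Item 3 has a genuine gap, and it sits exactly where you flagged it. Your sphere-packing bound requires the code $C$ to have minimum Hamming distance $\ge 3$, which is what forces the pairwise conditions $x_i\pm x_j\notin A-A$. But the hypothesis that $A-A$ is not a neighborhood of $\theta$ only guarantees, at each scale, the existence of \emph{one} point of $X\setminus(A-A)$; it says nothing about the finite union $(A-A)\cup\bigcup_{i<n}\bigl((A-A)+x_i\bigr)\cup\bigcup_{i<n}\bigl((A-A)-x_i\bigr)$, which can perfectly well cover a neighborhood of $\theta$ even when $A-A$ does not. Concretely, if $X\setminus(A-A)$ near $\theta$ consists of a single null-sequence together with its negatives, then the requirement $x_1+x_2\in X\setminus(A-A)$ already pins $x_2$ down to finitely many candidates once $x_1$ is chosen, and the induction dies at the second step. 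The Baire property does not rescue this: if $A$ is non-meager then Piccard--Pettis (Corollary~\ref{c:PP}) makes $A-A$ a neighborhood of $\theta$ outright and no construction is needed, while if $A$ is meager it need not be Haar-null, and the Baire property of $A-A$ only controls $(A-A)-(A-A)$, not $A-A$ itself.

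The repair is to demand only minimum distance $\ge 2$, i.e.\ only $x_n\notin A-A$, which the hypothesis \emph{does} supply at every scale. The price is that the counting bound degrades to $|C|\le 2^{N-1}$, which does not tend to $0$, so the conclusion $\mu(A+x)=0$ must instead come from a density argument: if $\lambda\bigl(s^{-1}(A+x)\bigr)>0$, choose a cylinder $[w]$, $w\in 2^N$, on which $E=s^{-1}(A+x)$ has relative measure $>\tfrac12$; flipping the $N$-th coordinate is a measure-preserving involution of $[w]$, so $E\cap[w]$ meets its flip, producing two points of $A+x$ whose difference is $\pm x_N\in A-A$, a contradiction. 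This is in effect the route the paper itself takes elsewhere: Theorem~\ref{t:thin} shows that if $A-A$ is not a neighborhood of $\theta$ then $A$ is (injectively) Haar-thin, using a witness built from a sequence satisfying only $x_n\notin A-A$, and Proposition~\ref{p:thin=>HN+HM} upgrades thinness (no two points of the preimage differing in exactly one coordinate) to Haar-nullity via the Steinhaus theorem on the compact group $2^\w$; together these yield item 3.
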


Solecki \cite{S01} defined a subset $A$ of a Polish group $X$ to be \index{subset!openly Haar-null}\index{openly Haar-null subset}{\em openly Haar-null} if there exists a probability measure $\mu\in P(X)$ such that for every $\e>0$ there exists an open set $U_\e\subset X$ such that $A\subset U_\e$ and $\mu(U_\e+x)<\e$ for all $x\in X$. Solecki \cite{S01} observed that openly Haar-null sets in a Polish group form a $\sigma$-ideal \index{$\HN^\circ$}$\HN^\circ$, contained in the $\sigma$-ideal $\HN$ of Haar-null sets. It is clear that each openly Haar-null set $A\subset X$ in a Polish group $X$ is contained in a Haar-null $G_\delta$-set $B\subset X$. By a result of Elekes and Vidny\'anszky \cite{EVid} (see also Corollary~\ref{c:hull-MN}), each non-locally compact Polish group $X$ contains a Borel Haar-null set $B\subset X$ which cannot be enlarged to a Haar-null $G_\delta$-set in $X$. Such set $B$ is Haar-null but not openly Haar-null. This result of Elekes and Vidny\'anszky implies the following characterization.

\begin{theorem} For any Polish group $X$ we have $\HN^\circ\subset\HN$. Moreover, $X$ is locally compact if and only if $\HN^\circ=\HN$.
\end{theorem}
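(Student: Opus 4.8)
The plan is to split the statement into the inclusion $\HN^\circ\subseteq\HN$ and the two implications of the equivalence ``$X$ locally compact $\iff\HN^\circ=\HN$''.

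For the inclusion I would simply unpack Solecki's definition: if $A\subseteq X$ is openly Haar-null with witness measure $\mu\in P(X)$ and open sets $U_{1/n}\supseteq A$ satisfying $\mu(U_{1/n}+x)<1/n$ for all $x\in X$, then $G:=\bigcap_{n\in\IN}U_{1/n}$ is a $G_\delta$-set containing $A$ with $\mu(G+x)=0$ for every $x$; hence $A$ is contained in a Borel Haar-null set and $A\in\HN$. I would record the sharper conclusion that \emph{every openly Haar-null set is contained in a Haar-null $G_\delta$-set}, since this is exactly what the third part uses.

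Next, assuming $X$ is locally compact, I would prove the reverse inclusion $\HN\subseteq\HN^\circ$. By Theorem~\ref{t:HN}(2) every $A\in\HN$ is contained in a Borel set $B$ with $\lambda(B)=0$ for a Haar measure $\lambda$ on $X$. Fix a compact neighbourhood $K$ of $\theta$, put $c:=\lambda(K)\in(0,\infty)$ and $\mu:=\tfrac1c\,\lambda|_K\in P(X)$. Since $X$ is second countable and locally compact, it is $\sigma$-compact, so $\lambda$ is outer regular; as $\lambda(B)=0$, for each $\e>0$ we can pick an open set $U_\e\supseteq B\supseteq A$ with $\lambda(U_\e)<c\e$. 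Translation invariance of $\lambda$ then gives $\mu(U_\e+x)=\tfrac1c\lambda\big((U_\e+x)\cap K\big)\le\tfrac1c\lambda(U_\e)<\e$ for every $x\in X$, so $\mu$ witnesses that $A$ is openly Haar-null. Hence $\HN^\circ=\HN$ whenever $X$ is locally compact.

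Finally, if $X$ is \emph{not} locally compact, I would invoke the Elekes--Vidny\'anszky theorem (Corollary~\ref{c:hull-MN}): $X$ contains a Borel Haar-null set $B$ that is contained in no Haar-null $G_\delta$-subset of $X$. By the first part such a $B$ cannot be openly Haar-null, so $B\in\HN\setminus\HN^\circ$ and $\HN^\circ\subsetneq\HN$. Combining the three parts yields the theorem. The only genuinely non-routine ingredient is the Elekes--Vidny\'anszky construction, which I use as a black box; the one mild technical point elsewhere is the appeal to outer regularity of Haar measure on a $\sigma$-compact locally compact group.
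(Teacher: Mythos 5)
Your proof is correct and follows essentially the same route as the paper, which establishes the theorem via the same three observations: openly Haar-null sets sit inside Haar-null $G_\delta$-sets, and the Elekes--Vidny\'anszky set (Corollary~\ref{c:hull-MN}) has no such hull in the non-locally compact case. The only difference is that you spell out the locally compact direction via outer regularity of Haar measure, a routine step the paper leaves implicit; your argument there is fine.
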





Our next theorem gives a function characterization of Haar-null sets. In this theorem by $\N$ we denote the ideal of sets of Haar measure zero in the Cantor cube.

\begin{theorem}\label{Haarnull}
For a Borel subset $A$ of a Polish group $X$ the following conditions are equivalent:
\begin{enumerate}
\item[\textup{1)}] $A$ is Haar-null in $X$;
\item[\textup{2)}] there exists an injective continuous map $f:2^\w\to X$ such that $f^{-1}(A+x)\in\N$ for all $x\in X$;
\item[\textup{3)}] there exists a continuous map $f:2^\w\to X$ such that $f^{-1}(A+x)\in\N$ for all $x\in X$.
\end{enumerate}
\end{theorem}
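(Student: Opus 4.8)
Here is a plan for proving Theorem~\ref{Haarnull}.

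I would prove the cyclic chain $(1)\Ra(2)\Ra(3)\Ra(1)$. The implication $(2)\Ra(3)$ is trivial, since an injective continuous map is continuous. For $(3)\Ra(1)$, given a continuous $f:2^\w\to X$ with $f^{-1}(A+x)\in\N$ for all $x$, I would form the pushforward $\mu:=P\!f(\lambda)\in P(X)$ (using the functor $P$ recalled in the Preliminaries); since $A$ is Borel, each $A+x$ and hence each $f^{-1}(A+x)$ is Borel in $2^\w$, so $f^{-1}(A+x)\in\N$ means exactly $\lambda(f^{-1}(A+x))=0$, i.e.\ $\mu(A+x)=0$ for all $x$, and $\mu$ witnesses that $A$ is Haar-null. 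The substance of the theorem is $(1)\Ra(2)$. Here I first make two harmless reductions: we may assume $A\ne\emptyset$ (otherwise any homeomorphism of $2^\w$ onto a Cantor subset of $X$ works trivially), and by \cite[17.11]{K} we may fix a witness $\mu\in P(X)$ for $A$ with compact support $K:=\supp(\mu)$. This $\mu$ is automatically continuous: an atom $\mu(\{p\})>0$ would give, for $a\in A$ and $x:=p-a$, that $p=a+x\in A+x$ and hence $\mu(A+x)\ge\mu(\{p\})>0$, a contradiction.

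Next I would extract, inside $K$, a Cantor set $C$ of positive $\mu$-measure on which $\mu$ is ``almost as good as Haar measure''. By the isomorphism theorem for atomless measures \cite[17.41]{K} there is a Borel isomorphism $\psi:2^\w\to K$ transporting $\lambda$ onto $\mu$. By Luzin's theorem there is a compact $D\subset 2^\w$ with $\lambda(D)>0$ on which $\psi$ is continuous; being injective on the compact set $D$, the restriction $\psi|_D$ is a homeomorphism onto its image. Passing to the perfect kernel $D_1\subset D$ (so $D\setminus D_1$ is countable, hence $\lambda$-null, and $\lambda(D_1)=\lambda(D)>0$) yields a perfect, totally disconnected, compact metrizable space, i.e.\ a Cantor set, and $C:=\psi(D_1)\subset K$ is then a Cantor set with $\mu(C)=\lambda(D_1)>0$. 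Replacing $C$ by $\supp(\mu|_C)$ — still a Cantor set, being closed in $C$, totally disconnected, and perfect as the support of the atomless measure $\mu|_C$, and still of positive $\mu$-measure since $\mu|_C$ is Radon — I may assume $\mu|_C$ is strictly positive and continuous on the Cantor set $C$.

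Now I would convert $\mu|_C$ to a genuine copy of the Haar measure. Fix a homeomorphism $\phi:2^\w\to C$ and put $\mu_1:=P(\phi^{-1})(\mu|_C)$, a strictly positive continuous measure on $2^\w$. By Akin's theorem \cite[2.12]{Akin99} with $\e=\tfrac12$ there is a homeomorphism $h:2^\w\to 2^\w$ with $\tfrac12\mu_1(B)\le\lambda(h(B))\le\tfrac32\mu_1(B)$ for all Borel $B$; hence $\nu:=P(h^{-1})(\lambda)$ satisfies $\tfrac12\mu_1\le\nu\le\tfrac32\mu_1$, so $\nu$ and $\mu_1$ are mutually absolutely continuous and $\N_\nu=\N_{\mu_1}$. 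Setting $f:=\phi\circ h^{-1}:2^\w\to X$ — an injective continuous map with $f(2^\w)=C$ — we get $P\!f(\lambda)=P\!\phi(\nu)$, and since $\phi$ is a homeomorphism and $\N_\nu=\N_{\mu_1}$ this gives $\N_{P\!f(\lambda)}=\N_{P\!\phi(\mu_1)}=\N_{\mu|_C}$. Finally, for each $x\in X$ the equality $\mu(A+x)=0$ forces $\mu\big((A+x)\cap C\big)=0$, so $(A+x)\cap C$ is $P\!f(\lambda)$-null; as $P\!f(\lambda)$ is concentrated on $C$, this means $P\!f(\lambda)(A+x)=0$, i.e.\ $\lambda(f^{-1}(A+x))=0$, i.e.\ $f^{-1}(A+x)\in\N$. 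Thus $f$ witnesses $(2)$.

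The hard part is the construction of the last two paragraphs, and two points deserve emphasis. First, extracting a \emph{positive-measure} Cantor set inside $\supp(\mu)$ is where a naive Cantor-scheme argument breaks down: keeping the pieces $\mu$-substantial conflicts with shrinking their diameters once $\mu$ is ``fractal'', which is why I route through the Borel isomorphism with $(2^\w,\lambda)$ and Luzin's theorem instead. Second, one should not try to realise $\mu$ itself as a continuous (still less injective) image of $(2^\w,\lambda)$ — that is impossible in general, as images of $\lambda$ are ``homogeneous'' while $\mu$ need not be — but only to match the $\sigma$-ideal $\N_\mu$ on a suitable Cantor set, which is precisely the quasi-invariance supplied by Akin's theorem.
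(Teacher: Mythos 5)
Your proof is correct, and for the substantive implication $(1)\Ra(2)$ it takes a genuinely different route from the paper. The paper factors the argument through two lemmas: Lemma~\ref{l:Cantorsupp}, which cleans the witness measure so that its support is a Cantor set (by deleting countably many spheres to reach a zero-dimensional conull $G_\delta$ and then using regularity), and Lemma~\ref{l:a}, which is an explicit Cantor-scheme construction of an injective continuous $f:2^\w\to X$ satisfying the exact proportionality $\mu(f(B))=a\cdot\lambda(B)$ for \emph{all} Borel $B\subset 2^\w$. You instead reach a positive-measure Cantor set $C\subset\supp(\mu)$ with $\mu|_C$ strictly positive by softer means (the measure isomorphism theorem \cite[17.41]{K}, Luzin, and the perfect kernel), and then invoke Akin's quasi-homogeneity theorem \cite[2.12]{Akin99} --- already quoted in the paper's preliminaries --- to produce a homeomorphism matching $\lambda$ with $\mu|_C$ up to two-sided constants, which is all that is needed to identify the null ideals. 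Your route is shorter and relies on off-the-shelf results; what the paper's harder Lemma~\ref{l:a} buys in exchange is the exact equality $\mu(f(B))=a\lambda(B)$, which is reused later (e.g.\ repeatedly in the proof of Theorem~\ref{t:gen-char}) and cannot be recovered from Akin's approximate comparison. Two small points: Akin's theorem as quoted concerns probability measures, so you should normalize $\mu|_C$ by $\mu(C)$ before applying it (this does not change the null ideal); and your disposal of the case $A=\emptyset$ via ``any homeomorphism of $2^\w$ onto a Cantor subset of $X$'' tacitly assumes $X$ is uncountable --- the same tacit assumption the paper makes, since for countable $X$ condition (2) is unsatisfiable even for $A=\emptyset$. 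Finally, your closing remark that a ``naive Cantor-scheme argument breaks down'' slightly overstates the obstruction: the paper's Lemma~\ref{l:a} is precisely a Cantor scheme, made to work by letting the measures of the pieces decay while their number grows fast; but this is commentary and does not affect the validity of your proof.
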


\begin{proof} The implication $(2)\Ra(3)$ is trivial. To see $(3)\Ra(1)$, let $\mu:=P\!f(\lambda)$ be the image of the product measure $\lambda$ under the map $f$, and observe that $\mu(A+x)=\lambda(f^{-1}(A+x))$ for any $x\in X$. So, $\mu$ witnesses that $A$ is Haar-null in $X$.

The implication $(1)\Ra(2)$ follows immediately from Lemmas~\ref{l:Cantorsupp} and \ref{l:a} proved below.
\end{proof}

\begin{lemma}\label{l:Cantorsupp} For any nonempty Haar-null subset $A$ of Polish group $X$ there exists a measure $\nu\in P(X)$ such that
\begin{enumerate}
\item[\textup{1)}] $\nu(A+x)=0$ for every $x\in X$;
\item[\textup{2)}] the support $\supp(\nu)$ is homeomorphic to the Cantor cube.
\end{enumerate}
\end{lemma}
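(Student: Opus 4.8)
The plan is to start from an arbitrary witness measure $\mu\in P(X)$ for the Haar-nullity of $A$ and to manufacture from it a new witness measure $\nu$ whose support is a Cantor set. First I would recall that, as noted in Section~\ref{s4}, the witness measure $\mu$ may be assumed to have compact support; moreover by the Radon property every $\mu\in P(X)$ on a Polish space concentrates, up to arbitrarily small mass, on compact sets. The key algebraic observation is that if $\mu_1,\dots,\mu_n$ are all witness measures for $A$ (i.e.\ $\mu_i(A+x)=0$ for all $x$ and all $i$), then any convex combination $\sum_i t_i\mu_i$ with $t_i>0$, $\sum t_i=1$ is again a witness measure, and so is the convolution $\mu_1*\cdots*\mu_n$, since $(\mu_1*\mu_2)(A+x)=\int\mu_1\big((A+x)-y\big)\,d\mu_2(y)=\int\mu_1(A+(x-y))\,d\mu_2(y)=0$. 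This gives enormous freedom to enlarge the support while preserving the witnessing property.

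The main step is to produce, inside the support of some witness measure, a topological Cantor set carrying positive mass, and then to spread a witness measure evenly over it. Concretely: pick a witness measure $\mu$ with compact support $S=\supp(\mu)$. If $S$ is uncountable then, being compact metrizable, it contains a Cantor subset $C$; but $C$ need not carry positive $\mu$-mass, so instead I would work inside $S$ directly. Using that $\mu$ is continuous-or-not, split into cases. If $\mu$ has an atom at a point $a$, then the Dirac measure $\delta_a$ is a witness measure (since $\mu(\{a\}+x)\ge$ nothing useful — actually here one argues $A+x$ has $\mu$-measure zero so in particular $a\notin A+x$ for $\mu$-a.e.\ translate; more carefully, use that $\mu(A+x)=0$ forces $\delta_{a}(A+x)=[a\in A+x]$, and one shows this vanishes for all $x$ because otherwise $a-x\in A$ and then translating $\mu$ appropriately contradicts $\mu(A+x)=0$ — this needs care). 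The cleaner route: by a standard fact (e.g.\ the argument in \cite[Theorem 17.11]{K}), the witness measure may in addition be taken \emph{continuous}, i.e.\ atomless, since an atomless witness measure can always be extracted by convolving $\mu$ with a continuous measure on a small ball, and a continuous convolution factor is harmless. An atomless Borel probability measure on a compact metric space has support which is a perfect (hence uncountable compact) set; a perfect compact metric space has a Cantor subset, but again mass is the issue.

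So the cleanest implementation is: take an atomless witness measure $\mu$ with compact support $S$. Build a Cantor scheme $\{S_s:s\in 2^{<\w}\}$ of closed subsets of $S$ with $S_\emptyset=S$, $S_{s0}\cap S_{s1}=\emptyset$, $S_{s0}\cup S_{s1}\subset S_s$, $\diam S_s\to 0$ along branches, and — this is the crucial extra demand — $\mu(S_{s0})=\mu(S_{s1})=\tfrac12\mu(S_s)>0$ at every splitting, which is possible because $\mu$ is atomless (any positive-mass closed set can be cut by a hyperplane-like partition into two closed pieces of equal mass, after shrinking slightly). Let $C=\bigcap_n\bigcup_{|s|=n}S_s$; then $C$ is homeomorphic to $2^\w$ and $\mu(C)=1$ (the scheme exhausts the mass), so $\nu:=\mu\!\restriction\! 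C$ equals $\mu$ and already has Cantor support. If instead the splitting construction can only guarantee positive but unequal masses, I would push forward the restricted measure via the canonical homeomorphism $C\cong 2^\w$ and then apply the result \cite[2.12]{Akin99} quoted in the preliminaries to replace it by a topologically isomorphic copy of the Haar measure — but for the present lemma we only need $\supp(\nu)\cong 2^\w$, so this extra step is unnecessary and $\nu=\mu\!\restriction\!C$ works, being a witness measure because $\nu(A+x)\le\mu(A+x)=0$.

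I expect the main obstacle to be the very first reduction: arranging that the witness measure is atomless (continuous) with compact support. One must check that convolving a given witness measure $\mu$ with an atomless $\sigma$-compactly-supported probability measure $\eta$ (e.g.\ a suitable continuous measure on a copy of $2^\w$ sitting in $X$ near $\theta$, which exists since $X$ is Polish hence metrizable and nontrivial translations exist — if $X$ is countable discrete the lemma fails, so implicitly $X$ is uncountable, which is forced by $A$ being nonempty Haar-null) yields an atomless witness measure with compact support. That convolution is again a witness measure by the computation above; that it is atomless follows because the convolution of any measure with an atomless one is atomless; and compactness of support is preserved since $\supp(\mu*\eta)\subset\supp\mu+\supp\eta$. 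Once this reduction is in place, the Cantor-scheme construction is routine measure-theoretic bookkeeping. The write-up should therefore lead with the convolution reduction, then the equal-mass Cantor scheme, then the one-line verification that the restriction is still a witness measure.
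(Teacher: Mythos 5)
Your overall strategy (extract an atomless witness measure, then find a Cantor set of positive measure on which the restricted measure has full support) is sound in outline, but the central step as you describe it fails. You require a splitting of each closed positive-measure set $S_s$ into \emph{disjoint closed} pieces $S_{s0},S_{s1}$ with $\mu(S_{s0})=\mu(S_{s1})=\tfrac12\mu(S_s)$, i.e.\ with no loss of mass. This is impossible in general: for Lebesgue measure on $[0,1]$, any two disjoint compact sets $F_0,F_1\subset[0,1]$ with $\lambda(F_0)=\tfrac12$ satisfy $\lambda(F_1)<\tfrac12$, because $F_1$ is a compact subset of the open set $[0,1]\setminus F_0$, whose measure is not attained by compact subsets. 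Your acknowledged fallback --- children of merely positive but uncontrolled mass --- does not repair this: if mass leaks at every level, then $\mu(C)$ and, more importantly, $\mu(C\cap S_s)$ can be $0$, so $\nu=\mu\restriction C$ need not be a probability measure and its support need not be all of $C$ (it could even be empty). The fix is standard but must be stated: demand $\mu(S_{s0})+\mu(S_{s1})\ge(1-\e_{|s|})\mu(S_s)$ with $\prod_n(1-\e_n)>0$ and both children of measure, say, $\ge\tfrac13\mu(S_s)$; this is exactly the kind of bookkeeping the paper carries out (with prescribed measure \emph{intervals} rather than exact values) in the proof of Lemma~\ref{l:a}. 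Two smaller points: your remark that ``$\delta_a$ is a witness measure'' when $\mu$ has an atom at $a$ is backwards --- the correct and much simpler observation (used in the paper) is that a witness measure for a \emph{nonempty} set can have no atoms at all, since $\mu(\{a\})>0$ and $b\in A$ give $\mu(A+(a-b))\ge\mu(\{a\})>0$; so the convolution reduction is unnecessary.

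For comparison, the paper's proof of this lemma avoids Cantor schemes entirely: fixing a countable dense set $D$ and a countable dense set $R$ of radii for which all spheres $S(x,r)$ ($x\in D$, $r\in R$) are $\mu$-null, it removes these spheres to obtain a zero-dimensional $G_\delta$-set of full measure, takes inside it a compact (hence zero-dimensional) set $K$ of positive measure, replaces $K$ by $\supp(\mu|K)$, and observes that continuity of $\mu$ makes $K$ perfect, so $K\cong2^\w$ by Brouwer's characterization; then $\nu=\mu(\cdot\cap K)/\mu(K)$. That route is shorter and sidesteps the exact-mass difficulty; your route, once patched as above, essentially reproves the harder Lemma~\ref{l:a}.
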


\begin{proof} Since $A$ is Haar-null, there exists a measure $\mu\in P(X)$ such that $\mu(A+x)=0$ for all $x\in X$. Observe that $\mu$ is continuous in the sense that $\mu(\{x\})=0$ for all $x\in X$. We claim that $\mu(K)>0$ for some subset $K\subset X$, homeomorphic to the Cantor cube.

Fix any metric $\rho$ generating the topology of $X$. Let $D$ be a countable dense subset of $X$. For any $x\in X$ let $L_x$ be the set of all positive real numbers $r>0$ such that the sphere $S(x,r)=\{y\in X:\rho(x,y)=r\}$ has positive measure $\mu(S(x,r))$. The countable additivity of the measure $\mu$ ensures that $L_x$ is at most countable. Fix a countable dense set $R\subset (0,+\infty)\setminus\bigcup_{x\in D}L_x$.

By the $\sigma$-additivity of $\mu$, the union $S:=\bigcup_{x\in D}\bigcup_{r\in R}S(x,r)$ has measure $\mu(S)=0$. Then the complement $X\setminus S$ is a zero-dimensional $G_\delta$-set of measure $\mu(X\setminus S)=1$. By the regularity of the measure $\mu$, there exists a compact (zero-dimensional) subset $K\subset X\setminus S$ of positive measure $\mu(K)>0$. Replacing $K$ by a suitable closed subset, we can assume that each nonempty open subset $U$ of $K$ has positive measure $\mu(U)$. Since the measure $\mu$ is continuous, the compact space $K$ has no isolated points and being zero-dimensional, is homeomorphic to the Cantor cube $2^\w$.

It is easy to see that the measure $\nu\in P(X)$ defined by $\nu(B)=\frac{\mu(B\cap K)}{\mu(K)}$ for any Borel subset $B\subset X$ has the required properties.
\end{proof}

\begin{lemma}\label{l:a} For every continuous probability measure $\mu\in P(X)$ on a Polish space $X$ and every positive $a<1$ there exists an injective continuous map $f:2^\w\to X$ such that for any Borel set $B\subset 2^\w$ we get $\mu(f(B))=a\cdot\lambda(B)$.
\end{lemma}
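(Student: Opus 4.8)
The plan is to build $f$ as the ``inverse'' of a Borel isomorphism coming from repeated bisection of the measure $\mu$, exactly in the spirit of the classical proof that every continuous Borel probability measure on a Polish space is, up to a measure-preserving map, the Haar measure on $2^\w$. First I would fix a complete metric $\rho$ on $X$ and, since $\mu$ is continuous and Radon (see the Preliminaries), choose an increasing sequence of compact sets whose union carries $\mu$-measure $>a$; discarding the rest and renormalizing is not quite allowed because we must hit exactly the factor $a$, so instead I would keep all of $X$ but work with a fixed compact set $K_0\subset X$ with $\mu(K_0)>a$ together with a fixed point in it, and carve $2^\w$ out of $K_0$.

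The core is a tree construction. For each finite binary string $s\in 2^{<\w}$ I would produce a Borel set $X_s\subset X$ and, simultaneously, a closed set (a ``brick'') together with a small positive real $\varepsilon_s\to 0$, so that: (i) $X_\emptyset$ has $\mu(X_\emptyset)=a$ and $X_\emptyset\subset K_0$; (ii) $X_{s0}$ and $X_{s1}$ are disjoint Borel subsets of $X_s$ with $\mu(X_{s0})=\mu(X_{s1})=\tfrac12\mu(X_s)=a\cdot 2^{-|s|-1}$; (iii) $\diam X_s<\varepsilon_{|s|}$ in the metric $\rho$, so that for each $z\in 2^\w$ the nested intersection $\bigcap_{n}X_{z\restriction n}$ is a single point $f(z)$; and (iv) each $X_s$ has nonempty interior in its closure / positive measure on every relatively open piece, which guarantees that distinct branches give distinct points, so $f$ is injective. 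The halving in (ii) is possible because $\mu$ restricted to $X_s$ is a continuous (atomless) measure, hence by the standard intermediate-value argument (using that $r\mapsto\mu(X_s\cap B(x_0;r))$ is continuous for a suitable center, or by an exhaustion argument over a countable base) one can split $X_s$ into two Borel halves of equal measure, each again of positive measure on a small ball so that the induction can continue with (iii) enforced by intersecting with a ball of radius $<\varepsilon_{|s|+1}$ of positive measure.

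Once the tree is built, $f:2^\w\to X$, $f(z):=\bigcap_n X_{z\restriction n}$, is well-defined, continuous (because $\diam X_{z\restriction n}\to0$ uniformly via $\varepsilon_n\to0$), and injective (two branches diverge at some level $n$, where $X_{s0}$ and $X_{s1}$ are disjoint with disjoint closures once we also shrink to get positive distance, an extra easy requirement). It remains to verify the measure identity $\mu(f(B))=a\cdot\lambda(B)$ for every Borel $B\subset 2^\w$. For this I would consider the pushforward $\nu:=Pf(\lambda)\in P(X)$ and the set $Z:=f(2^\w)=\bigcap_n\bigcup_{|s|=n}\overline{X_s}$, a compact set. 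By construction, for every cylinder $[s]:=\{z:s\subset z\}$ we have $f([s])\subset \overline{X_s}$ and $a\cdot\lambda([s])=a\cdot 2^{-|s|}=\mu(X_s)$; conversely, the sets $\overline{X_s}\cap Z$ for $|s|=n$ are pairwise disjoint and cover $Z$, and $\mu(\overline{X_s}\setminus X_s)=0$ can be arranged (choose each $X_s$ to be the intersection of $\overline{X_s}$ with a set whose boundary is $\mu$-null, using that only countably many spheres have positive measure, as in the proof of Lemma \ref{l:Cantorsupp}). Hence $\mu(f([s]))=\mu(\overline{X_s}\cap Z)$ and this equals $a\cdot\lambda([s])$. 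Thus the two Borel measures $\nu$ and $a\cdot\lambda\circ f^{-1}$ agree on all cylinders; since cylinders generate the Borel $\sigma$-algebra of $2^\w$ and $f$ is a homeomorphism onto $Z$, a standard $\pi$-$\lambda$ (monotone class) argument upgrades this to $\mu(f(B))=a\cdot\lambda(B)$ for all Borel $B\subset 2^\w$.

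The main obstacle I anticipate is bookkeeping in the tree construction: one must simultaneously (a) halve the measure exactly, (b) control diameters, (c) keep positive measure on small balls so the induction never stalls, and (d) keep the closures $\overline{X_s}$ measure-theoretically indistinguishable from $X_s$ and pairwise disjoint across a level. Each of these is individually routine — halving uses atomlessness, diameter and disjointness use that $\mu$ is strictly positive on a suitable $\mu$-null-boundary neighborhood base (thin out the countably many bad spheres exactly as in Lemma \ref{l:Cantorsupp}) — but arranging all four at once in the inductive step is where the care is needed. No genuinely new idea beyond the classical isomorphism theorem for atomless measures is required.
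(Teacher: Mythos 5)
Your overall architecture is the same as the paper's: a tree $(X_s)_{s\in 2^{<\w}}$ of nested $\mu$-positive compact sets with shrinking diameters and level-wise disjointness, giving a continuous injection $f$, followed by verification of the measure identity on cylinders and extension by countable additivity. The one place where you genuinely diverge is the insistence on \emph{exact} halving, $\mu(X_{s0})=\mu(X_{s1})=\tfrac12\mu(X_s)=a\cdot 2^{-|s|-1}$, with the $X_s$ compact (they must be compact, or at least closed, for $\bigcap_n X_{z\restriction n}$ to be a point; if they are merely Borel the intersection can be empty and $f$ is undefined). Your justification of this step is where the gap lies: the function $r\mapsto\mu(X_s\cap B(x_0;r))$ is \emph{not} continuous for any ``suitable center'' in general. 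For every center it is monotone with at most countably many jumps (at radii whose spheres have positive measure), and a monotone function with jumps fails the intermediate value property; if the target value $\tfrac12\mu(X_s)$ lands inside a jump you are stuck, and the natural repair (peeling off a piece of the offending sphere) forces you to produce a compact subset of a sphere of exact prescribed measure, i.e.\ the same problem one level down, with no obvious termination. Producing a compact set of exactly prescribed measure inside a given Borel set is true for atomless measures, but it is not the one-line IVT argument you invoke; the clean route is to first pass to a zero-dimensional compact $\mu$-positive subset (removing a $\mu$-null union of spheres as in Lemma~\ref{l:Cantorsupp}) and then exhaust by clopen pieces of small measure, obtaining the exact value as a decreasing limit of clopen overshoots.

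This is precisely the difficulty the paper's proof is engineered to avoid: it never requires $\mu(X_s)$ to equal $a\cdot 2^{-|s|}$, only to lie in the interval $\big(\tfrac{a_{2k+1}}{2^{n_k}},\tfrac{a_{2k}}{2^{n_k}}\big]$ for a decreasing sequence $a_n\downarrow a$, and it lets the branching degree $2^{n_{k+1}-n_k}$ at each stage be large enough that these approximate requirements can be met by crude choices (counting how many children of roughly the right measure fit inside each clopen piece). The exact identity $\mu(f(U_s))=a\cdot\lambda(U_s)$ then falls out by squeezing, since $f(U_s)=\bigcap_i X_{s,i}$ is a decreasing intersection of compact sets whose measures converge to $a/2^{n_k}$. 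So: your plan is correct in outline and can be completed, but as written the key inductive step rests on a false continuity claim; you must either prove the exact-measure compact-subset lemma properly (via zero-dimensionalization and clopen exhaustion) or, as the paper does, abandon exactness at finite stages and recover it in the limit.
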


\begin{proof} Fix a decreasing sequence $(a_n)_{n\in\w}$ of real numbers with $a=\inf_{n\in\w}a_n<a_0=1$.

For a point $x\in X$ let $\tau_x$ be the family of open neighborhoods of $x$ in $X$. A subset $P\subset X$ will be called \index{subset!$\mu$-positive}\index{$\mu$-positive subset}{\em $\mu$-positive} if for any $x\in P$ and neighborhood $O_x\subset X$ of $x$ the set $P\cap O_x$ has positive measure $\mu(P\cap O_x)$.

Observe that for every Borel set $B\subset X$ the subset
$$P=\{x\in B:\forall O_x\in\tau_x\;\;\mu(B\cap O_x)>0\}$$
of $B$ is closed in $B$, is $\mu$-positive, and has measure $\mu(P)=\mu(B)$.

By induction we shall construct an increasing number sequence $(n_k)_{k\in\w}$ and a family $(X_s)_{s\in 2^{<\w}}$ of $\mu$-positive compact subsets of $X$ such that for every $k\in\w$ and $s\in 2^{n_k}$ the following conditions are satisfied:
\begin{enumerate}
\item[\textup{(1)}] $\frac{a_{2k+1}}{2^{n_k}}<\mu(X_s)\le\frac{a_{2k}}{2^{n_k}}$;
\item[\textup{(2)}] the family $\{X_{t}:t\in 2^{n_{k+1}},\;t|n_k=s\}$ is disjoint and consists of $\mu$-positive compact subsets of $X_s$ of diameter $<\frac1{2^k}$;
\item[\textup{(3)}] for any $n\in\w$ with $n_{k}<n\le n_{k+1}$ and any $t\in 2^n$ we get $X_t=\bigcup\{X_{\sigma}:\sigma\in 2^{n_{k+1}},\;\sigma|n=t\}$.
\end{enumerate}
To start the inductive construction put $n_0=0$ and repeating the proof of Lemma~\ref{l:Cantorsupp}, choose any $\mu$-positive zero-dimensional compact set $X_\emptyset\subset X$ of measure $\mu(X_\emptyset)>a_1$. Assume that for some $k\in\w$ the numbers $n_0<n_1<\dots<n_k$ and families $(X_s)_{s\in 2^{n_i}}$, $i\le k$, satisfying the inductive conditions (1)--(3) have been constructed. The condition (2) implies that for every $s\in 2^{n_k}$ the set $X_s$ is a subset of the zero-dimensional compact space $X_\emptyset$. So, there exists a finite disjoint cover $\W_s$ of $X_s$ by clopen subsets of diameter $<\frac1{2^{k}}$.

Choose a number $n_{k+1}>n_k$ such that $$\frac{|\W_s|}{2^{n_{k+1}-n_k}}<\frac{a_{2k+1}}{a_{2k+2}}-1\mbox{ \ for every $s\in 2^{n_k}$}.$$
This inequality and the inductive assumption (1) implies that for every $s\in 2^{n_k}$ we have
$$\sum_{W\in\W_s}\frac{2^{n_{k+1}}}{a_{2k+2}}\cdot\mu(W)=
\frac{2^{n_{k+1}}}{a_{2k+2}}\cdot \mu(X_s)> \frac{2^{n_{k+1}}}{a_{2k+2}}\frac{a_{2k+1}}{2^{n_k}}=
2^{n_{k+1}-n_k}\frac{a_{2k+1}}{a_{2k+2}}>
2^{n_{k+1}-n_k}+|\W_s|.$$
Then for every $W\in \W_s$ we can choose a non-negative integer number $m_W\le \frac{2^{n_{k+1}}}{a_{2k+2}}\cdot\mu(W)$ such that $\sum_{W\in\W_s}m_W=2^{n_{k+1}-n_k}$.
For every $W\in\W_s$ the inequality $m_W\frac{a_{2k+2}}{2^{n_{k+1}}}\le\mu(W)$ allows us to choose a disjoint family $(X_{W,i})_{i\in m_W}$ of $\mu$-positive compact subset of $W$ of measure $\frac{a_{2k+3}}{2^{n_{k+1}}}<\mu(X_{W,i})\le \frac{a_{2k+2}}{2^{n_{k+1}}}$ for all $i\in m_W$.
Let $\{X_t\}_{t\in 2^{n_{k+1}}}$ be any enumeration of the family $\bigcup_{s\in 2^{n_k}}\bigcup_{W\in\W_s}\{X_{W,i}:i\in m_W\}$ such that for every $s\in 2^{n_k}$ we get
$\{X_t:t\in 2^{n_{k+1}},\;t|2^{n_k}=s\}=\bigcup_{W\in\W_s}\{X_{W,i}\}_{i\in m_W}$.
It is clear that the family $(X_t)_{t\in 2^{n_{k+1}}}$ satisfies the conditions (1), (2). For every $n\in\w$ with $n_{k}<n\le n_{k+1}$ and any $t\in 2^n$ put $X_t=\bigcup\{X_{\sigma}:\sigma\in 2^{n_{k+1}},\;\sigma|n=t\}$.
This completes the inductive step.

After completing the inductive construction, we obtain a family $(X_s)_{s\in 2^{<\w}}$ of $\mu$-positive compact sets in $X$ satisfying the conditions (1)--(3). By the condition (2), for every $s\in 2^\w$ the intersection $\bigcap_{n\in\w}X_{s|n}$ contains a unique point $x_s$. So, the map $f:2^\w\to X$, $f:s\mapsto x_s$, is well-defined. By the condition (2) this map is continuous and injective.

Now given any $k\in \w$ and $s\in 2^{n_k}$, consider the basic open set
$U_s=\{t\in 2^\w:t|n_k=s\}\subset 2^\w$ of measure $\lambda(U_s)=\frac1{2^{n_k}}$. Next, for every $i\ge k$ consider the set $X_{s,i}=\bigcup\{X_t:t\in 2^{n_i},\;t|n_k=s\}$. Applying the condition (1), we conclude that
$$\frac{a_{2i+1}}{2^{n_k}}=\frac{a_{2i+1}}{2^{n_i}}\cdot 2^{n_i-n_k}<\mu(X_{s,i})\le \frac{a_{2i}}{2^{n_i}}\cdot 2^{n_i-n_k}=\frac{a_{2i}}{2^{n_k}}.$$ Taking into account that $f(U_s)=\bigcap_{i=k}^\infty X_{s,i}$, we conclude that
$$\mu(f(U_s))=\lim_{i\to\infty}\mu(X_{s,i})=\frac{a}{2^{n_k}}=a\cdot\lambda(U_s).$$
Since each clopen subset of $2^\w$ is a finite disjoint union of basic open sets $U_s$, $s\in 2^{<\w}$, we obtain that $\mu(f(U))=a\cdot\lambda(U)$ for any clopen set $U\subset 2^\w$. Now the countable additivity of the measures $\mu$ and $\lambda$ implies that $\mu(f(B))=a\cdot\lambda(B)$ for any Borel subset $B\subset 2^\w$.
\end{proof}

\section{Haar-meager sets}\label{s5}

In \cite{D} Darji defined a subset $A$ of a Polish group $X$ to be \textit{Haar-meager} if there are a Borel set $B\subset X$ with $A\subset B$, and a continuous function $f:K\to X$ defined on a compact metrizable space $K$ such that $f^{-1}(B+x)$ is meager in $K$ for each $x\in X$. If $B$ is not empty, then the compact space $K$ cannot contain isolated points.

In fact, we can assume that the compact space $K$ in this definition is the Cantor cube.

\begin{proposition}\label{omega}
A Borel set $B\subset X$ is Haar-meager if and only if there is a continuous function $f:2^{\omega}\to X$ such that $f^{-1}(B+x)$ is meager in $2^{\omega}$ for all $x\in X.$
\end{proposition}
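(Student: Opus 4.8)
The reverse implication is immediate: a continuous $f\colon 2^\w\to X$ with $f^{-1}(B+x)\in\M$ for all $x$ is, in particular, a witness for Haar-meagerness of $B$ with parameter space $K=2^\w$. So the plan is to prove the forward implication. Assume $B$ is Haar-meager, witnessed by a continuous map $f\colon K\to X$ on a compact metrizable space $K$. We may assume $B\ne\emptyset$, since otherwise any constant map $2^\w\to X$ witnesses that $B$ is Haar-meager; hence, as observed above, $K$ is a nonempty crowded compact metrizable space. The idea is to replace $K$ by $2^\w$ by precomposing $f$ with a suitable continuous surjection of $2^\w$ onto $K$.

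First I would produce an \emph{irreducible} continuous surjection $\phi\colon Z\to K$ from a compact zero-dimensional space $Z\subseteq 2^\w$. Start with any continuous surjection $\phi_0\colon 2^\w\to K$ (such a surjection exists by a classical theorem, e.g. \cite[4.18]{K}) and use Zorn's lemma to choose a closed subset $Z\subseteq 2^\w$ minimal with respect to the property $\phi_0(Z)=K$; a decreasing chain of such sets has nonempty intersection which still maps onto $K$, by compactness of the fibres of $\phi_0$. Then $\phi:=\phi_0{\restriction}Z$ is irreducible. The point of irreducibility is that $\phi$ is \emph{feebly open}: for every nonempty open $U\subseteq Z$ the image $\phi(U)$ has nonempty interior in $K$, because the set $\phi^{\#}(U):=K\setminus\phi(Z\setminus U)=\{k\in K:\phi^{-1}(k)\subseteq U\}$ is nonempty (as $Z\setminus U$ is a proper closed, hence compact, subset of $Z$, so by irreducibility $\phi(Z\setminus U)\ne K$), open (continuous maps on compacta are closed), and contained in $\phi(U)$. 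A feebly open map pulls meager sets back to meager sets: if $N\subseteq K$ is closed nowhere dense, the closed set $\phi^{-1}(N)$ contains no nonempty open set $U$ (else $\phi(U)\subseteq N$ would have nonempty interior), so $\phi^{-1}(N)$ is nowhere dense; and for a meager $M=\bigcup_{n}M_n$ we get $\phi^{-1}(M)\subseteq\bigcup_{n}\phi^{-1}(\overline{M_n})$, a meager set.

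Consequently the continuous map $g:=f\circ\phi\colon Z\to X$ satisfies $g^{-1}(B+x)=\phi^{-1}\bigl(f^{-1}(B+x)\bigr)$, which is meager in $Z$ for every $x\in X$, because $f^{-1}(B+x)$ is meager in $K$. Next I would show that $Z$ has no isolated points: if $z_0\in Z$ were isolated, then $\{z_0\}$ would be a nonempty open subset of the Baire space $Z$, hence non-meager; since $g^{-1}(B+x)$ is meager for every $x$, it would follow that $z_0\notin g^{-1}(B+x)$ for all $x\in X$, i.e. $g(z_0)\notin B+x$ for all $x$, which forces $B=\emptyset$ — a contradiction. Thus $Z$ is a nonempty perfect zero-dimensional compact metrizable space, so by Brouwer's characterization $Z$ is homeomorphic to $2^\w$; transporting $g$ along a homeomorphism $h\colon 2^\w\to Z$ gives a continuous map $g\circ h\colon 2^\w\to X$ with $(g\circ h)^{-1}(B+x)$ meager in $2^\w$ for every $x\in X$, which is exactly the required witness.

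The crux is the construction of the irreducible (equivalently, feebly open) surjection and the verification that it preserves meagerness under preimages — this is what allows one to pass from an arbitrary compact metrizable parameter space to $2^\w$. Everything else is routine bookkeeping, with the pleasant point that crowdedness of $Z$ is not something one has to engineer into the construction but falls out automatically from the assumption $B\ne\emptyset$.
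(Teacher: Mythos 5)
Your proof is correct, and the skeleton is the same as the paper's: reduce the problem to composing the given witness $f\colon K\to X$ with a continuous surjection of $2^\w$ onto $K$ under which preimages of meager sets remain meager. The difference is in how that surjection is obtained. The paper simply invokes \cite[Lemma 2.10]{D}; you construct it from scratch: take any continuous surjection $\phi_0\colon 2^\w\to K$, pass by Zorn's lemma to a minimal closed $Z\subseteq 2^\w$ still mapping onto $K$ (the chain condition being handled by compactness of the sets $Z_\alpha\cap\phi_0^{-1}(k)$), check that the resulting irreducible map $\phi$ is feebly open — your verification that $\phi^{\#}(U)=K\setminus\phi(Z\setminus U)$ is nonempty, open and contained in $\phi(U)$ is exactly right — deduce that $\phi^{-1}$ carries meager sets to meager sets, and finally identify $Z$ with $2^\w$ by Brouwer's theorem after showing $Z$ is perfect, where your observation that an isolated point of $Z$ would force $B=\emptyset$ correctly closes that case. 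What your route buys is self-containedness, at the cost of reproving a standard but nontrivial piece of topology; the paper's route buys brevity by outsourcing precisely this content to Darji. (A small streamlining: minimality of $Z$ already implies that an isolated point $z_0\in Z$ would give an isolated point $\phi(z_0)$ of $K$, since $\phi^{\#}(\{z_0\})$ is a nonempty open subset of the fiber image; so perfectness of $Z$ also follows from the perfectness of $K$ that you noted at the outset. Your Baire-category argument is equally valid.)
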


\begin{proof} The ``if'' part is trivial. To prove the ``only if'' part, assume that $B$ is Haar-meager and fix a continuous function $h:K\to X$ defined on a compact metrizable space $K$ such that $h^{-1}(B+x)\in\mathcal{M}$ in $K$ for every $x\in X$. By \cite[Lemma~2.10]{D}, there is a surjective continuous function $f:2^\w\to K$ such that $f^{-1}(M)\in\mathcal{M}$ in $2^\w$ for each meager set $M\subset K$. Hence $h\circ f:2^{\w}\to X$ is continuous and $(h\circ f)^{-1}(B+x)\in\mathcal{M}$ for each $x\in X$.
\end{proof}

The above proposition shows that Haar-meager sets can be equivalently defined as follows.

\begin{definition} A Borel subset $B$ of a Polish group $X$ is called
\begin{itemize}
\item \index{subset!Haar-meager}\index{Haar-meager subset}{\em Haar-meager} if there exists a continuous function $f:2^{\w}\to X$ such that $f^{-1}(B+x)$ is meager in $2^\w$ for each $x\in X$;
\item \index{subset!injectively Haar-meager}\index{injectively Haar-meager subset}{\em injectively Haar-meager} if there exists an injective continuous function $f:2^{\w}\to X$ such that $f^{-1}(B+x)$ is meager in $2^\w$ for each $x\in X$;
\item\index{subset!strongly Haar-meager}\index{strongly Haar-meager subset} {\em strongly Haar-meager} if there exists a nonempty compact subset $K\subset X$ such that the set $K\cap (B+x)$ is meager in $K$ for each $x\in X$.
\end{itemize}
\end{definition}

For a Polish group $X$ by \index{$\HM$}$\HM$ (resp. \index{$\EHM$}\index{$\SHM$}$\EHM$, $\SHM$) we denote the family of subsets of (injectively, strongly) Haar-meager Borel sets in $X$.
It is clear that $$\EHM\subset \SHM\subset\HM.$$

The following theorem is proved in \cite[Theorems 2.2 and 2.9]{D}.

\begin{theorem}[Darji]\label{D1} For any Polish group $X$ the family $\HM$ is a $\sigma$-ideal, contained in $\M$.
\end{theorem}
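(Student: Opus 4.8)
The plan is to verify three things: that $\HM$ is closed under taking subsets (immediate, since by definition $\HM$ consists of all subsets of Borel Haar-meager sets), that $\HM$ is closed under countable unions, and that every set in $\HM$ is meager (so that in particular $X\notin\HM$ and $\HM$ is proper). The common tool throughout will be the Kuratowski--Ulam theorem \cite{K}, used together with the reduction provided by Proposition~\ref{omega}, which lets me take witnessing maps to be defined on the Cantor cube $2^\w$.

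To prove $\HM\subseteq\M$ I would argue the contrapositive on the level of Borel sets: a non-meager Borel set $B\subseteq X$ is not Haar-meager. Fixing an arbitrary continuous $f:2^\w\to X$, consider the Borel set $E:=\{(z,x)\in 2^\w\times X: f(z)-x\in B\}$, the preimage of $B$ under the continuous map $(z,x)\mapsto f(z)-x$. Each vertical section $E_z$ equals $f(z)-B$; since $B$ has the Baire property and is non-meager it is comeager in some nonempty open $U$, so $f(z)-B$ is comeager in the nonempty open set $f(z)-U$ and hence non-meager in $X$. As all vertical sections of $E$ are non-meager and $2^\w$ is a Baire space, Kuratowski--Ulam gives that $E$ is non-meager in $2^\w\times X$, and applying the theorem in the other coordinate produces an $x$ (in fact comeagerly many) for which the horizontal section $E^x=f^{-1}(B+x)$ is non-meager in $2^\w$. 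Since $f$ was arbitrary and $\M$ is closed under subsets, $\HM\subseteq\M$.

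For closure under countable unions I would take $A_n\in\HM$, fix Borel Haar-meager $B_n\supseteq A_n$ with witnesses $f_n:2^\w\to X$, and first normalise them: using uniform continuity of $f_n$ on $2^\w$ pick a nonempty clopen $C_n\subseteq 2^\w$ with $\diam f_n(C_n)<2^{-n-1}$, fix $c_n\in C_n$, and set $g_n:=(f_n|_{C_n})-f_n(c_n):C_n\to X$; then $\theta\in g_n(C_n)\subseteq B(\theta;2^{-n})$ by invariance of $\rho$, and $g_n^{-1}(B_n+x)=f_n^{-1}\big(B_n+(x+f_n(c_n))\big)\cap C_n$ is meager in $C_n$ for every $x$, because the trace of a meager subset of $2^\w$ on the open set $C_n$ is meager in $C_n$. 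Next put $K:=\prod_{n\in\w}C_n$ (a compact metrizable space) and define $f:K\to X$ by $f\big((z_n)_{n}\big):=\sum_{n\in\w}g_n(z_n)$; invariance and completeness of $\rho$ make the partial sums uniformly Cauchy (the $n$-th increment being within $2^{-n}$ of $\theta$), so $f$ is a well-defined continuous map. For a fixed $n$ and $x$, viewing $K$ as $C_n\times\prod_{j\ne n}C_j$, every section of $f^{-1}(B_n+x)$ in the $C_n$-direction has the form $g_n^{-1}(B_n+(x-y))$ (with $y$ the sum of the $g_j(z_j)$, $j\ne n$) and hence is meager in $C_n$, so Kuratowski--Ulam gives $f^{-1}(B_n+x)\in\M$ in $K$; therefore $f^{-1}\big((\bigcup_n B_n)+x\big)=\bigcup_n f^{-1}(B_n+x)$ is meager in $K$ for all $x$, the Borel set $\bigcup_n B_n$ is Haar-meager, and $\bigcup_n A_n\in\HM$. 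Closure under finite unions is a special case, so $\HM$ is a $\sigma$-ideal.

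The main obstacle will be the normalisation step in the last part: to add up infinitely many witnesses inside $X$ one must first arrange their images to shrink to $\theta$, and the only way to do this without destroying the witnessing property is to restrict $f_n$ to a small clopen piece of $2^\w$ (which is again a legitimate compact domain, so one stays within the definition of Haar-meager) and to translate it by a group element (which preserves meagerness of all preimages $f_n^{-1}(B_n+x)$). Once this is in place the rest is routine Kuratowski--Ulam bookkeeping; the only point needing care is that $\prod_n C_n$ and its coordinate sections satisfy the hypotheses of the Kuratowski--Ulam theorem, which they do since all spaces involved are second countable.
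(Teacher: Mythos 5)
Your proof is correct and complete; the paper itself gives no argument for this statement (it simply cites Darji's Theorems 2.2 and 2.9), and what you have written is essentially Darji's original Kuratowski--Ulam argument: the section-wise computation showing $\HM\subseteq\M$ and the shrink--translate--and--take-a-product construction for countable unions are exactly the standard route. The only blemish is the parenthetical claim that \emph{comeagerly} many $x$ satisfy that $f^{-1}(B+x)$ is non-meager --- Kuratowski--Ulam only yields a non-meager set of such $x$ --- but since a single such $x$ suffices, this does not affect the argument.
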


The Steinhaus property of the ideal $\HM$ was established by Jab\l o\'nska \cite{J}.

\begin{theorem}[Jab\l o\'nska]\label{t:Jab}
For each Polish group the ideal $\HM$ has the Steinhaus property.
\end{theorem}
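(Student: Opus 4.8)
The plan is to prove the contrapositive: if $B\subseteq X$ is a Borel set for which $B-B$ is \emph{not} a neighbourhood of the neutral element $\theta$, then $B$ is Haar-meager, and hence $B\in\HM$. Since $X$ is metrizable, the failure of $B-B$ to be a neighbourhood of $\theta$ yields a sequence $(z_n)_{n\in\w}$ in $X$ with $z_n\to\theta$ and $z_n\notin B-B$ for every $n$; here $z\notin B-B$ is the same as $B\cap(B+z)=\emptyset$. Passing to a subsequence and relabelling, I may additionally assume $\rho(z_n,\theta)<2^{-n}$ for all $n$, where $\rho$ is the fixed complete invariant metric of $X$. This guarantees that for every $\varepsilon\in 2^\w$ the series $\sum_{n\in\w}\varepsilon(n)z_n$ converges, uniformly in $\varepsilon$, since by invariance and the triangle inequality the partial sums satisfy $\rho\big(\sum_{M\le n<N}\varepsilon(n)z_n,\theta\big)\le\sum_{n\ge M}\rho(z_n,\theta)\le 2^{1-M}$, so by completeness of $\rho$ they converge uniformly.

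I would then define the witnessing map $f\colon 2^\w\to X$ by $f(\varepsilon):=\sum_{n\in\w}\varepsilon(n)z_n$; it is a uniform limit of the locally constant, hence continuous, partial-sum maps, so $f$ is continuous. The key feature is the rigidity of $f$ under coordinate flips: letting $\phi_m\colon 2^\w\to 2^\w$ be the involutive homeomorphism flipping the $m$-th coordinate, one computes, using commutativity of $X$, that $f(\phi_m(\varepsilon))=f(\varepsilon)+(1-2\varepsilon(m))z_m$, so $f(\phi_m(\varepsilon))-f(\varepsilon)\in\{z_m,-z_m\}$ for every $\varepsilon$ and every $m$.

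It remains to check that $f^{-1}(B+x)\in\M$ for every $x\in X$; granting this, $f$ witnesses that the Borel set $B$ is Haar-meager, so $B\in\HM$ and the contrapositive is proved. Fix $x\in X$ and set $C:=f^{-1}(B+x)$; as $B$ is Borel and $f$ continuous, $C$ is Borel and so has the Baire property. Assume, for contradiction, that $C$ is non-meager. Then there is a basic clopen cylinder $[t]$, $t\in 2^{<\w}$, in which $C$ is comeager, i.e.\ $[t]\setminus C$ is meager. Choose any $m>|t|$; since $\phi_m$ maps $[t]$ homeomorphically onto itself, both $C\cap[t]$ and $\phi_m(C\cap[t])$ are comeager in the Baire space $[t]$, so their intersection is non-empty. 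For $\varepsilon$ in this intersection we have $f(\varepsilon)\in B+x$ and $f(\phi_m(\varepsilon))\in B+x$, whence $\pm z_m=f(\phi_m(\varepsilon))-f(\varepsilon)\in(B+x)-(B+x)=B-B$; by symmetry of $B-B$ this gives $z_m\in B-B$, contradicting the choice of the sequence. Hence $C$ is meager for every $x$, as required.

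The routine ingredients — convergence of the digit-sum series, the Baire-property bookkeeping, and the symmetry of $B-B$ — are straightforward. The one point demanding genuine care is the coordinate-flip step, where one must arrange simultaneously that a single-coordinate flip preserves the chosen cylinder $[t]$ and moves $f$ by exactly a member of the forbidden sequence $(z_n)$; this is precisely why $f$ is built as a ``digit sum'' of the $z_n$ rather than by a looser Cantor-scheme construction, and it is the only place where the argument could fail. I do not expect any deeper obstacle; in particular injectivity of $f$ is never used, though, should one want it, $f$ can be perturbed using Lemma~\ref{E-in-C}.
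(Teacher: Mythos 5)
Your proof is correct and follows essentially the same route as the paper's own treatment (Theorem~\ref{t:thin} together with Proposition~\ref{p:thin=>HN+HM} and Corollary~\ref{c:EHT-}): the witness function $f(\varepsilon)=\sum_{n}\varepsilon(n)z_n$ built from a null sequence $(z_n)$ avoiding $B-B$ is exactly the map $\Sigma$ used there. The only cosmetic difference is that where the paper observes that each preimage $f^{-1}(B+x)$ is \emph{thin} and then quotes Piccard--Pettis in $2^\w$ to conclude meagerness, you inline that step as the coordinate-flip Baire-category argument, which is the standard proof of Piccard--Pettis specialized to translations of $2^\w$ by characteristic functions of singletons.
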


\begin{remark} In Corollary~\ref{c:EHT-} we shall improve Theorem~\ref{t:Jab} showing that for each Polish group the semi-ideal $\EHM$ has the Steinhaus property.
\end{remark}

\begin{example}\label{e:notHN+} The closed subset $A=\w^\w$ of the Polish group $X=\IZ^\w$ is neither Haar-null nor Haar-meager. Yet, $A+A=A$ is nowhere dense in $X$. This implies that the ideals $\HN$ and $\HM$ on $\IZ^\w$ do not have the strong Steinhaus property.
\end{example}

\begin{proof} It is easy to see that $A+A=A$ and for any compact subset $K\subset X$ there is $x\in X$ such that $K+x\subset A$. This implies that $A\notin \HN\cup\HM$.
\end{proof}

The equivalence of the first two items of the following characterization was proved by Darji \cite{D}.

\begin{theorem}\label{t:HM=M}
The following conditions are equivalent:
\begin{enumerate}
\item[\textup{1)}] a Polish group $X$ is locally compact;
\item[\textup{2)}] $\HM=\M$;
\item[\textup{3)}] $\overline{\HM}=\overline{\M}$.
\end{enumerate}
\end{theorem}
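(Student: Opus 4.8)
The plan is to close the cycle $(1)\Rightarrow(2)\Rightarrow(3)\Rightarrow(1)$, taking the equivalence $(1)\Leftrightarrow(2)$ for granted from Darji's work \cite{D} (as already indicated above). Thus only two links need an argument: $(2)\Rightarrow(3)$, which is immediate because $\HM=\M$ trivially forces $\overline{\HM}=\overline{\M}$; and $(3)\Rightarrow(1)$, which I would obtain by first proving $(3)\Rightarrow(2)$ and then invoking Darji's theorem again.

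The heart of the matter is therefore the implication $(3)\Rightarrow(2)$. I would use three facts already at hand: (i) by Theorem~\ref{D1}, $\HM$ is a $\sigma$-ideal and $\HM\subseteq\M$; (ii) the identity $\sigma\overline{\M}=\M$ recorded above, i.e. every meager subset of $X$ is covered by countably many closed nowhere dense sets; and (iii) the unconditional fact that $\HM\subseteq\M$ gives $\overline{\HM}\subseteq\overline{\M}$, so that hypothesis~$(3)$ really asserts the reverse inclusion $\overline{\M}\subseteq\overline{\HM}$. Granting $(3)$: since $X$ is Polish and hence Baire, $\overline{\M}$ is exactly the family of closed nowhere dense subsets of $X$, and each such set is now a closed Haar-meager set, hence belongs to $\HM$. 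Consequently $\M=\sigma\overline{\M}=\sigma\overline{\HM}\subseteq\HM$, where the last inclusion holds because $\HM$ is a $\sigma$-ideal; together with $\HM\subseteq\M$ this yields $\HM=\M$, which is $(2)$, and then $(1)$ follows from \cite{D}.

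The one point that deserves a line of care is the inclusion $\sigma\overline{\HM}\subseteq\HM$, i.e. that a subset of a countable union of closed Haar-meager sets is again Haar-meager. This is a formal consequence of $\HM$ being a $\sigma$-ideal (Theorem~\ref{D1}): as an ideal it absorbs finite unions, so a countable union of its members rewrites as an increasing countable union; $\sigma$-continuity keeps that increasing union inside $\HM$; and being a semi-ideal, $\HM$ then contains every subset of it. I do not foresee a genuine obstacle here — the real content of the theorem is this bookkeeping, which reduces condition~$(3)$ to the closure identity $\sigma\overline{\M}=\M$ together with the already-known equivalence $(1)\Leftrightarrow(2)$.
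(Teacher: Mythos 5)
Your argument is correct, but it closes the cycle differently from the paper. The paper proves $(3)\Rightarrow(1)$ directly in contrapositive form: if $X$ is not locally compact, Solecki's theorem \cite{S} supplies a closed set $F\subset X$ and a continuous $f:F\to 2^\w$ each of whose fibers contains a translate of every compact subset of $X$; by separability some fiber $f^{-1}(a_0)$ is nowhere dense, and since it absorbs a translate of every compactum it cannot be Haar-meager, so it lies in $\overline{\M}\setminus\overline{\HM}$. You instead prove $(3)\Rightarrow(2)$ by pure bookkeeping — $\M=\sigma\overline{\M}\subseteq\sigma\overline{\HM}\subseteq\HM$ under hypothesis $(3)$, using that $\HM$ is a $\sigma$-ideal (Theorem~\ref{D1}) — and then invoke Darji's equivalence $(1)\Leftrightarrow(2)$ a second time. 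Your route is softer: it needs nothing beyond Darji's two results and the standard identity $\sigma\overline{\M}=\M$, and in particular avoids Solecki's theorem entirely. What it buys less of is information: the paper's argument exhibits, in every non-locally-compact Polish group, a concrete closed nowhere dense set that fails to be Haar-meager (indeed is thick in a strong sense), which is reused elsewhere in the paper; your argument only certifies the equivalence. Both are valid, and your reduction $\sigma\overline{\HM}\subseteq\HM$ is justified exactly as you say, by the $\sigma$-ideal property from Theorem~\ref{D1}.
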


\begin{proof} The equivalence $(1)\Leftrightarrow(2)$ was proved by Darji \cite[Theorems 2.2 and 2.4]{D} and $(2)\Ra(3)$ is trivial.
To see that $(3)\Ra(1)$, assume that $X$ is not locally compact. Then, by Solecki's Theorem \cite{S}, there is a closed set $F\subset X$ and a continuous function $f:F\to 2^{\omega}$
such that for each $a\in 2^{\omega}$ the set $f^{-1}(a)$ contains a translation of every compact subset of $X$. Since $X$ is separable, there is $a_0\in 2^{\omega}$ such that $f^{-1}(a_0)$ is nowhere dense and thus it belongs to $\overline{\M}\setminus\overline{\HM}$.\end{proof}

The second part of the following problem was posed by Darji \cite[Problem 3]{D}.

\begin{problem}\label{Problem1}
Is $\EHM$ a $\sigma$-ideal? Is $\SHM$ a $\sigma$-ideal?
\end{problem}

We recall that a topological space $X$ is \index{topological space!totally disconnected}\index{totally disconnected space}{\em totally disconnected} if for any distinct points $x,y\in X$ there exists a clopen set $U\subset X$ such that $x\in U$ and $y\notin U$.

\begin{remark}\label{r:EHM=SHM}
For any totally disconnected Polish group $X$ we have
$\EHM=\SHM$.
\end{remark}

\begin{proof}
It suffices to prove that $\SHM \subset \EHM$. If $B\subset X$ is Borel and strongly Haar-meager, then there is a compact subset $K\subset X$
(without isolated points) such that $K\cap (B+x)$ is meager in $K$ for each $x\in X$.
Since $X$ is totally disconnected, $K$ is
zero-dimensional and hence is homeomorphic to the Cantor cube $2^\w$. So, we
can choose a homeomorphism $f:2^{\omega}\to K$ and observe that $f^{-1}(B+x)$ is meager in $2^{\omega}$ for each $x\in X$.
\end{proof}

\begin{remark}\label{r:prom} In Example~\ref{SHMnotsubsetEHM} we shall prove that the Polish group $\IR^\w$ contains a closed subset $F\in \SHM\setminus\EHM$. By a recent result of Elekes, Nagy, Po\'or and Vidny\'anszky \cite{ENPV}, the Polish group $\IZ^\w$ contains a $G_\delta$-set $G\in \HM\setminus\SHM$.\end{remark}


On the other hand, for hull-compact Polish groups we have the equality $\SHM=\HM$.

\begin{definition} A topological group $X$ is called \index{topological group!hull-compact}\index{hull-compact topological group}{\em hull-compact} if each compact subset of $X$ is contained in a compact subgroup of $X$.
\end{definition}

\begin{theorem}\label{t:HM=SHM}
Each hull-compact Polish group has $\HM=\SHM$.
\end{theorem}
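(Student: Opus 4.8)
The plan is to establish the inclusion $\HM\subseteq\SHM$, since the reverse inclusion $\SHM\subseteq\HM$ always holds (it is noted in the excerpt). So fix a Borel Haar-meager set $B\subseteq X$ and, using Proposition~\ref{omega}, a continuous map $f:2^\w\to X$ with $f^{-1}(B+x)$ meager in $2^\w$ for every $x\in X$. The image $f(2^\w)$ is compact, so by hull-compactness it is contained in some compact subgroup $H\le X$; being a compact subset of a Hausdorff group, $H$ is closed, hence a Polish group, and it is locally compact (indeed compact). I claim that the single compact set $K:=H$ witnesses that $B$ is strongly Haar-meager, i.e. that $H\cap(B+x)$ is meager in $H$ for every $x\in X$. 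Granting the claim, $B$ is (Borel and) strongly Haar-meager; since $B$ was an arbitrary Borel Haar-meager set, every Borel Haar-meager set is Borel strongly Haar-meager, whence $\HM\subseteq\SHM$ and so $\HM=\SHM$.

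To prove the claim, fix $x\in X$ and put $\tilde B:=B+x$, a Borel subset of $X$. The \emph{same} map $f$ witnesses that $\tilde B$ is Haar-meager in $X$, because $f^{-1}(\tilde B+z)=f^{-1}(B+(x+z))$ is meager for every $z\in X$. Now regard $f$ as a continuous map $2^\w\to H$. Since $f(t)\in H$ for all $t$, for every $h\in H$ we also have $f(t)-h\in H$, and therefore
\[ f^{-1}\big((\tilde B\cap H)+h\big)=\{t:f(t)-h\in\tilde B\cap H\}=\{t:f(t)-h\in\tilde B\}=f^{-1}(\tilde B+h), \]
which is meager in $2^\w$ by the previous sentence (applied with $z=h\in X$). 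Hence $\tilde B\cap H$ is a Borel (as $H$ is closed in $X$) Haar-meager subset of the locally compact Polish group $H$. By Theorem~\ref{t:HM=M} applied to $H$ we have $\HM=\M$ in $H$, so $\tilde B\cap H$ is meager in $H$; that is, $H\cap(B+x)$ is meager in $H$. Since $x\in X$ was arbitrary, the claim follows.

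The essential point is that the witness $f$ need not be changed when passing from $B$ to a translate $B+x$, which is exactly what keeps us inside the one fixed compact subgroup $H$; the remaining ingredients are purely organizational (that $f(t)\in H$ allows replacing $\tilde B$ by $\tilde B\cap H$ inside the preimage, and that $H$ is a locally compact Polish group so that Darji's equivalence $\HM=\M$, Theorem~\ref{t:HM=M}, is available on $H$). Hull-compactness is used in exactly one place — to trap the image of the witness function inside a compact subgroup — and this is precisely what fails for non-hull-compact groups such as $\IZ^\w$ (cf. Remark~\ref{r:prom}).
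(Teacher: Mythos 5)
Your proof is correct and follows essentially the same route as the paper's: trap the image of the witness function in a compact subgroup $H$, observe that the same $f$ witnesses Haar-meagerness of each trace $H\cap(B+x)$ inside $H$, and conclude via the inclusion $\HM\subset\M$ on $H$ that these traces are meager in $H$. The only cosmetic differences are that the paper argues by contradiction and cites Theorem~\ref{D1} (which gives $\HM\subset\M$ in every Polish group) where you cite Theorem~\ref{t:HM=M} for the locally compact group $H$; both supply the needed direction.
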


\begin{proof}
Let $A$ be a Borel Haar-meager set in a hull-compact Polish group $X$. Take a continuous function $f:2^\w\to X$ such that $f^{-1}(A+x)$ is meager in $2^{\w}$ for each $x\in X$. By the hull-compactness of $X$ there is a compact subgroup $Y\subset X$
containing the compact set $f(2^{\w})$. We prove that $Y\cap (A+x)$ is meager in $Y$ for each $x\in X$.
Contrary suppose that this is not the case. By Theorem~\ref{D1}, $\mathcal{HM}\subset \mathcal{M}$, so $Y\cap (A+x)$ is not Haar-meager in $Y$. Consequently, there exists $y\in Y$ such that $f^{-1}((Y\cap (A+x))+y)\subset f^{-1}(A+x+y)$ is not meager in $2^{\w}$, which is a desired contradiction.
\end{proof}

We recall that a group $X$ is \index{group!locally finite}\index{locally finite group}{\em locally finite} if each finite subset of $X$ is contained in a finite subgroup of $X$.

\begin{example} The Tychonoff product $X=\prod_{n\in\w}X_n$ of infinite locally finite discrete groups $X_n$ is Polish, hull-compact, but not locally compact.
For this group we have $\EHM=\SHM=\HM\ne\M$.
\end{example}
\begin{proof}
Clearly, $X$ is Polish but not locally compact. To prove that $X$ is hull-compact, fix a compact set $K \subset X$. Then for every $n\in\w$ the projection $\pi_n (K)$ of $K$ onto the factor $X_n$ is compact. Since the group $X_n$ is discrete and locally finite, the compact subset $\pi_n(K)$ of $X_n$ is finite and generates a finite subgroup $F_n$ of $X_n$. Then $F=\prod_{n \in \omega} F_n$ is a compact subgroup of $X$, containing $K$ and witnessing that the topological group $X$ is hull-compact.

For the group $X$ we have $\EHM=\SHM=\HM\ne \M$ according to Remark~\ref{r:EHM=SHM} and Theorems~\ref{t:HM=SHM} and \ref{t:HM=M}.
\end{proof}

\section{A universal counterexample}\label{s6}
In this section we shall present a construction of a closed subset of the topological group $\IR^\w$ allowing to distinguish many ideals of small sets in $\IR^\w$.

We recall that by $\K(\IR^\w)$ we denote the hyperspace of nonempty compact subsets of $\IR^\w$ endowed with the Vietoris topology. It is well-known that $\K(\IR^\w)$ is a Polish space. A subspace $\K\subset\K(\IR^\w)$ is {\em analytic} if $\K$ is a continuous image of a Polish space.

\begin{theorem}\label{t:univer} For any nonempty analytic subspace $\K\subset\K(\IR^\w)$ there exists a closed set $F\subset\IR^\w$ such that
\begin{enumerate}
\item[\textup{1)}] for any $K\in\K$ there exists $x\in\IR^\w$ such that $K+x\subset F$;
\item[\textup{2)}] for any $x\in\IR^\w$ the intersection $F\cap(x+[0,1]^\w)$ is contained in $K+d$ for some $K\in\K$ and $d\in\IR^\w$.
\end{enumerate}
\end{theorem}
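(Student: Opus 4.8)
The plan is to build the closed set $F$ as a countable union of ``shifted blocks'' arranged along a dense grid in $\IR^\w$, where each block is a copy of one of the compacta in $\K$, sitting in a separate region so that nothing overlaps in an uncontrolled way. First I would exploit the fact that $\K$ is analytic: write $\K$ as the image of a continuous surjection $\varphi\colon\w^\w\to\K$. Since every $K\in\K$ is bounded and contained in some cube of the form $[-m,m]^\w\cap\prod_n[-m,m]$ (more precisely, for each coordinate $n$ the projection $\pi_n(K)$ is bounded), I can first reduce to the case where all $K\in\K$ lie inside $[0,1]^\w$: replace each $K$ by $K'=\prod_n\big(\pi_n(K)-\min\pi_n(K)\big)/c_K$ for a suitable scaling; but since the statement only asks for translates $K+x\subset F$, a cleaner route is to observe that each $K$ is a translate of a compact subset of $[0,\infty)^\w$, and to enlarge $\K$ (still analytic) so that it is closed under translations moving a compactum into the positive orthant. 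The key point is that the ``$+d$'' freedom in item (2) lets us be generous.

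Next I would fix a countable dense set $\{q_j:j\in\w\}$ in $\IR^\w$ that is ``eventually zero'' (finitely supported rationals), and also enumerate a countable dense set $\{s_i:i\in\w\}$ in the index space $\w^\w$ restricted to a suitable compact exhaustion — more honestly, since $\w^\w$ is not compact I would instead use the compacta $\varphi(\sigma)$ for $\sigma$ ranging over a countable dense subset of $\w^\w$, together with their translates by the $q_j$. Then set
$$F_0=\bigcup_{i,j\in\w}\big(\varphi(\sigma_i)+q_j+v_{i,j}\big),$$
where $v_{i,j}$ is a carefully chosen ``vertical displacement'' vector (nonzero on some coordinate beyond the supports of everything relevant to index $(i,j)$, with magnitude tending to infinity as $i+j\to\infty$) that pushes the $(i,j)$-th block far away from the others along a fresh direction, guaranteeing local finiteness of the family $\{\varphi(\sigma_i)+q_j+v_{i,j}\}$ on every cube $x+[0,1]^\w$. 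Local finiteness makes $F:=\cl F_0$ closed and in fact equal to a countable union of the (compact, hence closed) blocks. For item (1): given $K\in\K$, pick $\sigma\in\w^\w$ with $\varphi(\sigma)=K$; by density choose $\sigma_i\to\sigma$ with $\varphi(\sigma_i)\to K$ in the Vietoris topology — here I must be careful, continuity of $\varphi$ only gives convergence, not equality, so I instead choose $\sigma_i=\sigma$ for infinitely many $i$ (legitimate: take the enumeration $\sigma_i$ to list each point of the chosen dense set infinitely often, and arrange the dense set to contain... no — rather, simply index the blocks by \emph{all} pairs $(\sigma,j)$ with $\sigma$ in a countable dense set $S\subseteq\w^\w$ that we may take to be \emph{any} countable set, and note that if $\varphi^{-1}(K)$ meets $S$ we are done). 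So the honest fix: take $S$ to be a countable dense subset of $\w^\w$ together with, for the purposes of item (1), note that density is not what we need — we need $\varphi(S)$ dense in $\K$; by continuity and separability of $\w^\w$ a countable dense $S$ gives $\varphi(S)$ dense in $\K$, and then item (1) only needs: for each $K\in\K$ and each $\e$ there is a block within Hausdorff distance $\e$. But item (1) demands an \emph{exact} translate $K+x\subset F$.

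This is the main obstacle, and I would resolve it by \emph{not} using a dense subset of indices but rather making $F$ large enough to contain an exact translate of every $K\in\K$ directly: enumerate a countable family of pairwise-disjoint ``slots'' $R_j=q_j+[0,1]^\w+w_j$ (with $w_j$ a divergent fresh-coordinate displacement as above), arrange that the $q_j$ are dense, and inside each slot $R_j$ place the \emph{entire} compactum $\Phi_j:=\bigcup\{K\in\K: K\subseteq[0,1]^\w\}$ — except this union need not be compact. So instead: place in slot $R_j$ a single compactum $K_j$ chosen so that $\{K_j:j\in\w\}$ is cofinal in $\K$ under ``$K\subseteq K'+d$''... which also fails in general. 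The correct device, and the step I expect to be genuinely delicate, is to use that $\K$ is analytic to obtain a \emph{single} compact set $\Omega\subseteq\IR\times\IR^\w$ whose vertical sections $\{\Omega_t:t\in\pi(\Omega)\}$ exhaust $\K$ up to translation (a ``universal'' compactum for the analytic family — available since $\K$ is analytic, via a continuous surjection from $\w^\w$ and a standard compactification/coding argument), and then let $F$ be the closure of $\bigcup_{j\in\w}(\Omega+ (q_j,0)+ w_j)$ with $\{q_j\}$ dense and $w_j$ divergent along fresh coordinates. Item (1) then holds because $K=\Omega_t+(\text{shift})$ for some $t$, and $\Omega_t\subseteq\Omega$ sits inside one of the translated copies of $\Omega$, so $K+x\subseteq F$ for the corresponding $x$; item (2) holds because $F\cap(x+[0,1]^\w)$, by local finiteness and the divergence of $w_j$, meets only finitely many translated copies of $\Omega$, hence is contained in a finite union of translates of the single compactum $\Omega$, i.e. in $K+d$ for $K=$ (a finite union of translates of $\Omega$, which lies in $\K$ after we further enlarge $\K$ to be closed under finite unions of translates — still analytic). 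The bookkeeping to make ``local finiteness along fresh coordinates'' rigorous — choosing the support of $w_j$ to avoid the first $j$ coordinates and $\|w_j\|\to\infty$ so that only finitely many copies meet any given unit cube — is routine once set up, but getting the universal compactum $\Omega$ from analyticity of $\K\subseteq\K(\IR^\w)$, and the two closure operations on $\K$, is where the real content lies.
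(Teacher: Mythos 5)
You correctly isolate the main obstacle --- a countable family of ``blocks'' cannot contain an exact translate of each of the (typically uncountably many) compacta in $\K$ --- but the device you invoke to resolve it does not exist. There is in general no single compact set $\Omega\subset\IR\times\IR^\w$ whose sections exhaust $\K$ up to translation: all sections of a compact $\Omega$ lie in the compact projection of $\Omega$ to $\IR^\w$, hence have uniformly bounded coordinatewise diameters, and translation does not change diameters; so already for $\K=\K(\IR^\w)$ (a Polish, hence analytic, family containing compacta of arbitrarily large diameter in every coordinate) no such $\Omega$ can exist. Your proposed reduction to compacta inside $[0,1]^\w$ fails for the same reason (a translate of $K$ lands in $[0,\infty)^\w$, not in $[0,1]^\w$), and the suggested rescaling destroys the translation structure that item (1) requires. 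The secondary patch --- enlarging $\K$ to be closed under finite unions of translates --- is also not admissible: item (2) must produce a set $K$ belonging to the \emph{given} family $\K$, and every application of the theorem in the paper (Haar-$2$ sets, null-$1$ sets, the $\SHM\setminus\EHM$ example) depends on exactly that; proving the statement for an enlarged $\K'$ yields a strictly weaker conclusion in (2).

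The paper's proof takes a genuinely different route that you may find instructive to compare with. It fixes a continuous surjection $f:\w^\w\to\K$, $\alpha\mapsto K_\alpha$, and forms the \emph{uncountable} union $F=\bigcup_{\alpha\in\w^\w}(K_\alpha+D_\alpha)\cap L_\alpha$, where $L_\alpha=\prod_i L_{\alpha(i)}$ is a product of pairwise $2$-separated closed subsets of $\IR$ (so distinct $L_\alpha$'s are $2$-separated, which is what makes the uncountable union closed, via continuity of $f$ at the limit), and $D_\alpha$ is a product of two-point sets $D_{\alpha(i)}(\min\pi_i(K_\alpha),\max\pi_i(K_\alpha))$ chosen continuously so that (a) one of the two points shifts $K_\alpha$ into $L_\alpha$ (giving item (1) with an exact translate for \emph{every} $K\in\K$), and (b) the two points are farther apart than $2$ plus the coordinatewise spread of $K_\alpha$, so that a unit cube $x+[0,1]^\w$ can meet at most one translate $K_\alpha+d$, $d\in D_\alpha$ (giving item (2) with $K=K_\alpha\in\K$). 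If you want to salvage a construction along your lines, the lesson is that the indexing must run over all of $\w^\w$ and the closedness must come from metric separation of the blocks rather than from local finiteness of a countable family.
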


\begin{proof}
Let $I_n=[2^{n+1},2^{n+1}+n+1]\subset \IR$ and $L_m=\bigcup_{n \in \omega} I_{i(n,m)}$ for $n,m \in \omega$, where $i: \w \times \w \to \IN$ is an injective function, which is increasing with respect to the first coordinate (for example, we can take $i(x,y):=2^x3^y$ for $(x,y)\in\w\times\w$).

In the following obvious claim we put $\dist(A,B)=\inf\{|a-b|:a\in A,\;b\in B\}$ for two nonempty subsets $A,B$ of the real line.

\begin{claim}\label{dist2} For any distinct numbers $n,m\ge 1$ we have
$\dist(I_n,I_m)\ge 2$ and hence $\dist(L_n,L_m)\ge 2$.
\end{claim}

By $\IR_+$ we denote the closed half-line $[0,+\infty)$ and $\IR^{\le 2}=\{(x,y)\in\IR^2:x\le y\}$ the closed half-plane. Let also $[\IR]^2$ be the family of 2-elements subsets of $\IR$, endowed with the Vietoris topology.

\begin{lemma}\label{lemreal}
For every $m \in \omega$ there exists a continuous function $D_m: \IR^{\le 2}\to [\IR]^2$ such that for every $(x,y) \in \IR^{\le 2}$ the doubleton $D_m(x,y)$ has diameter $\ge 2+(y-x)$ and $D_m(x,y)$ contains a point $d$ such that $d+[x,y]\subset L_m$.
\end{lemma}

\begin{proof}
The function $D_m$ will be defined as $D_m(x,y)=\{a_m(x,y),b_m(x,y)\}$ for suitable continuous functions $a_m,b_m: \IR^{\le 2}\to \IR$. Let $\bar a_m,\bar b_m:\IR_+\to\IR_+$ be the piecewise linear functions such that
\begin{itemize}
\item $\bar a_m(0)=\min I_{i(1, m)}$, $\bar b_m(0)=\min I_{i(3, m)}$;
\item $\bar a_m(i(n,m))=\min I_{i(n+1,m)}$, $\bar b_m(i(n,m))=\min I_{i(n+4, m)}$ for each even $n\in\w$, and
\item $\bar a_m(i(n,m))=\min I_{i(n+2,m)}$, $\bar b_m(i(n,m))=\min I_{i(n+3, m)}$ for each odd $n\in\w$.
\end{itemize}

\begin{claim}\label{cl:diff} $\bar b_m(x)-\bar a_m(x)\ge 2+x$ for any $x\in\IR^+$.
\end{claim}

\begin{proof} Since the functions are piecewise linear, it suffices to check the inequality for $x\in\{0\}\cup\{i(n,m)\}_{n\in\w}$. First observe that
\begin{multline*}
\bar b_m(0)-\bar a_m(0)=\min I_{i(3,m)}-\min I_{i(1,m)}=2^{i(3,m)+1}-2^{i(1,m)+1}\\
=2^{i(1,m)+1}(2^{i(3,m)-i(1,m)}-1)\ge 2^2(2^2-1)=12>2+0.
\end{multline*}
For every even $n\in\w$ we have
\begin{multline*}
\bar b_m(i(n,m))-\bar a_m(i(n,m))=2^{i(n+4,m)+1}-2^{i(n+1,m)+1}\\
=2^{i(n+1,m)+1}(2^{i(n+4,m)-i(n+1,m)}-1)
\ge2^{i(n,m)+2}(2^3-1)\ge i(n,m)+2.
\end{multline*}
Finally, for every odd $n\in\w$ we have
\begin{multline*}
\bar b_m(i(n,m))-\bar a_m(i(n,m))=2^{i(n+3,m)+1}-2^{i(n+2,m)+1}\\
=2^{i(n+2,m)+1}(2^{i(n+3,m)-i(n+2,m)}-1)
\ge 2^{i(n,m)+3}\ge i(n,m)+2.
\end{multline*}
\end{proof}

Put $a_m(x,y):=\bar a_m(y-x)-x$ and $b_m(x,y):=\bar b_m(y-x)-x$ for every $(x,y)\in \IR^{\le2}$. Claim~\ref{cl:diff} implies that $b_m(x,y)-a_m(x,y)=\bar b_m(y-x)-\bar a_m(y-x)\ge 2+y-x$ for all $(x,y)\in\IR^{\le 2}$.

Now, define the continuous function $D_m:\IR^{\le2}\to[\IR]^2$ letting $$D_m(x,y):=\big\{a_m(x,y),b_m(x,y)\big\}$$ for $(x,y)\in\IR^{\le2}$. It is clear that for every $(x,y)\in\IR^{\le2}$ the doubleton $D_m(x,y)$ has diameter $b_m(x,y)-a_m(x,y)\ge 2+y-x$. Let us show that $D_m(x,y)$ contains a point $d$ such that $d+[x,y]\subset L_m$.

If $y-x<i(0,m)$, then the point $a_m(x,y)\in F_m(x,y)$ has the required property: $$a_m(x,y)+[x,y]=\bar a_m(y-x)+[0,y-x]=[\min I_{i(1,m)}, \min I_{i(1,m)}+y-x]\subset I_{i(1,m)}\subset L_m.$$ If $y-x\ge i(0,m)$ then there exists a unique $n \in \omega$ such that $i(n,m)\le y-x<i(n+1,m)$.
If $n$ is
\begin{itemize}
\item even, then $b_m(x,y)+[x,y]=[\min I_{i(n+4,m)}, \min I_{i(n+4,m)}+y-x]\subset I_{i(n+4,m)} \subset L_m$,
\item odd, then $a_m(x,y) + [x,y] = [ \min I_{i(n+2,m)}, \min I_{i(n+2,m)} + y-x] \subset I_{i(n+2,m)} \subset L_m$.
\end{itemize}
\end{proof}

Since the nonempty subspace $\K$ of $\K(\IR^\w)$ is analytic, there exists a continuous surjection $f:\w^{\w}\to\K$. For every $\alpha\in\w^\w$ consider the compact set $K_{\alpha}:=f(\alpha)\in\K$. For $m\in\w$ by $\pi_m:\IR^\w\to\IR$, $\pi_m:x\mapsto x(m)$, we denote the projection of $\IR^\w$ onto the $m$-th coordinate.

For every $\alpha\in\w^\w$ consider the subsets $$D_\alpha=\prod_{i\in\w}D_{\alpha(i)}(\min\pi_i(K_\alpha),\max\pi_i(K_\alpha))\quad\mbox{ and }\quad L_\alpha=\prod_{i\in\w}L_{\alpha(i)}$$of $\IR^\w$.

We claim that the union $F=\bigcup_{\alpha \in \omega^{\omega}} A_{\alpha}$ of the compact sets
$A_{\alpha}=(K_\alpha+D_\alpha)\cap L_\alpha$
satisfies our requirements. This will be proved in the following three lemmas.

\begin{lemma} The set $F$ is closed in $\IR^\w$.
\end{lemma}

\begin{proof} Fix a sequence $(x_n)_{n\in\w}$ of elements of $F$ convergent to some $x\in \IR^{\w}$. Since $A_{\alpha}$'s are pairwise disjoint, we obtain that for each $n\in \omega$ there exists an exactly one $\alpha_n \in \w^{\w}$ with $x_n \in A_{\alpha_n}$.

We claim that the sequence $(\alpha_n)_{n\in\w}$ is convergent in $\w^\w$.
The convergence of the sequence $(x_n)_{n\in\w}$ implies that
for any $i\in\w$ there exists $n\in\w$ such that $|x_n(i)-x_m(i)|<2$ for any $m\ge n$. We claim that $\alpha_m(i)=\alpha_n(i)$ for any $m\ge n$. Assuming that $\alpha_m(i)\ne\alpha_n(i)$ for some $m\ge n$ and taking into account that $x_m\in A_{\alpha_m}\subset \prod_{i\in\w}L_{\alpha_m(i)}$ and $x_n\in A_{\alpha_n}\subset \prod_{i\in\w}L_{\alpha_n(i)}$, we conclude that
$$|x_m(i)-x_n(i)|\ge\dist(L_{\alpha_m(i)},L_{\alpha_n(i)})\ge2,$$
which contradicts the choice of $n$. This contradiction shows that $\alpha_m(i)=\alpha_n(i)$ for all $m\ge n$ and hence the sequence $(\alpha_m(i))_{m\in\w}$ converges in $\w$.
 Then the sequence $(\alpha_m)_{m\in\w}$ converges in $\w^\w$ to some element $\alpha\in\w^\w$.

Given any $i\in\w$, find $n\in\w$ such that $\alpha_m(i)=\alpha(i)$ for all $m\ge n$ and conclude that $x_m(i)\in L_{\alpha_m(i)}=L_{\alpha(i)}$ for all $m\ge n$. Taking into account that the set $L_{\alpha(i)}$ is closed in $\IR$, we conclude that $x(i)=\lim_{m\to\infty}x_m(i)\in L_{\alpha(i)}$, and hence $x\in\prod_{i\in\w}L_{\alpha(i)}=L_\alpha$.

The continuity of the function $f:\w^\w\to\K$ guarantees that $\lim_{n\to\infty}K_{\alpha_n}=K_\alpha$ and $\lim_{n\to\infty}D_{\alpha_n}=D_\alpha$ in the hyperspace $\K(\IR^\w)$. Then
$x=\lim_{n\to\infty}x_n\in \lim_{n\to\infty}(K_{\alpha_n} + D_{\alpha_n})=K_{\alpha} + D_\alpha$ and finally $x\in (K_\alpha+D_\alpha)\cap L_\alpha\subset F$.
\end{proof}

\begin{lemma} For any $K\in\K$ there exists $x\in \IR^\w$ such that $K+x\subset F$.
\end{lemma}

\begin{proof} Given any $K\in\K$, find $\alpha\in\w^\w$ such that $K=K_\alpha$. Lemma~\ref{lemreal} implies that the Cantor set $D_\alpha$ contains a point $x$ such that $K_\alpha+x\subset L_{\alpha}$.
Then $K+x=K_\alpha+x\subset (K_\alpha+D_\alpha)\cap L_\alpha\subset F$.
\end{proof}

\begin{lemma} For any $x\in\IR^\w$ the intersection $F\cap(x+[0,1]^\w)$ is contained in the set $K+d$ for some $K\in\K$ and $d\in\IR^\w$.
\end{lemma}

\begin{proof} We can assume that the intersection $F\cap(x+[0,1]^\w)$ is not empty (otherwise it is contained in any set $K+d$).

Claim~\ref{dist2} implies that there exists a unique $\alpha\in \w^\w$ such that $(x+[0,1]^\w)\cap L_\alpha$ is not empty.
Then $$(x+[0,1]^\w)\cap F=(x+[0,1]^\w)\cap A_\alpha=(x+[0,1]^\w)\cap (K_\alpha+D_\alpha)\cap L_\alpha$$and we can find a point $d\in D_\alpha$ such that the intersection $(x+[0,1]^\w)\cap(K_\alpha+d)$ is not empty and hence contains some point $z$. We claim that for any point $d'\in D_\alpha\setminus\{d\}$ the intersection $(x+[0,1]^\w)\cap (K_\alpha+d')$ is empty.

To derive a contradiction, assume that $(x+[0,1]^\w)\cap (K_\alpha+d')$
is not empty and hence contains some point $z'$.
Since $d'\ne d$, there exists a coordinate $i\in\w$ such that $d(i)\ne d(i')$. Without loss of generality, we can assume that $d(i)<d'(i)$. Taking into account that doubleton $\{d(i),d'(i)\}=D_{\alpha(i)}(\min\pi_i(K_\alpha),\max\pi_i(K_\alpha))$ has diameter $\ge 2+\max\pi_i(K_\alpha)-\min\pi_i(K_\alpha)$, we conclude that $$d'(i)-d(i)\ge 2+\max\pi_i(K_\alpha)-\min\pi_i(K_\alpha)\ge2+(z(i)-d(i))-(z'(i)-d'(i)),$$ which implies $z'(i)-z(i)\ge 2$. But this is impossible as $z(i),z'(i)\in x(i)+[0,1]$ and hence $|z(i)-z'(i)|\le1$.

Therefore, $(x+[0,1]^\w)\cap F=(K_\alpha+d)\cap L_\alpha\subset K_\alpha+d=K+d$.
\end{proof}
\end{proof}

We shall apply Theorem~\ref{t:univer} to produce the following example promised in Remark~\ref{r:prom}.

\begin{example}\label{SHMnotsubsetEHM} The topological group $\IR^\w$ contains a closed subset $F\in\SHM\setminus\EHM$.
\end{example}

\begin{proof} In the hyperspace $\mathcal K(\IR^\w)$ consider the subspace $\mc{K}_0$ consisting of topological copies of the Cantor set (i.e., zero-dimensional compact subsets of $\IR^\w$ without isolated points).

By Lemma~\ref{E-in-C}, the set $\E(2^\w,\IR^\w)$ of injective maps is Polish (being a $G_\delta$-set of the Polish space $\C(2^\w,\IR^\w)$). Then the space $\K_0$ is analytic, being the image of the Polish space $\E(2^\w,\IR^\w)$ under the continuous map $\rng:\E(2^\w,\IR^\w)\to\K_0$, $\rng:f\mapsto f(2^\w)$.

By Theorem~\ref{t:univer}, the topological group $\IR^\w$ contains a closed subset $F$ such that
\begin{enumerate}
\item[\textup{(1)}] for any $K\in\K_0$ there exists $x\in \IR^\w$ with $x+K\subset F$;
\item[\textup{(2)}] for $x\in \IR^\w$ the intersection $[0,1]^\w\cap (x+F)$ belongs to $\K_0$ and hence is nowhere dense in $[0,1]^\w$.
\end{enumerate}
The last condition witnesses that $F\in\SHM$ and the first condition that $F\notin\EHM$.
\end{proof}

\section{Haar-open sets}\label{s7}

In this section we discuss Haar-open sets and their relation to Haar-meager and Haar-null sets.

\begin{definition} A subset $A$ of a topological group $X$ is called \index{subset!Haar-open}\index{Haar-open subset}{\em Haar-open} in $X$ if for any compact subset $K\subset X$ and point $p\in K$ there exists $x\in X$ such that $K\cap(A+x)$ is a neighborhood of $p$ in $K$.
\end{definition} 


\begin{theorem}\label{t:HO} A subset $A$ of a complete metric group $X$ is Haar-open if and only if for any nonempty compact set $K\subset X$ there exists $x\in X$ such that the set $K\cap(A+x)$ has nonempty interior in $K$.
\end{theorem}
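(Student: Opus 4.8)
The ``only if'' implication is immediate: given a nonempty compact set $K\subseteq X$, choose any $p\in K$; by Haar-openness there is $x\in X$ with $K\cap(A+x)$ a neighbourhood of $p$ in $K$, and such a set has nonempty interior in $K$.

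For the ``if'' implication I would argue by contradiction. Assume $A$ is not Haar-open and fix a compact set $K\subseteq X$ and a point $p\in K$ such that $K\cap(A+x)$ is a neighbourhood of $p$ in $K$ for \emph{no} $x\in X$. For $r>0$ put $K_r:=\{y\in K:\rho(y,p)\le r\}$; this is a compact neighbourhood of $p$ in $K$ of diameter $\le 2r$ (it contains the relatively open set $B(p;r)\cap K\ni p$), so the failure of Haar-openness amounts precisely to the statement that $K_r\not\subseteq A+x$ for all $x\in X$ and all $r>0$. The aim is then to construct a \emph{single} nonempty compact set $M\subseteq X$ with the property that $M\cap(A+x)$ has empty interior in $M$ for every $x\in X$; such an $M$ contradicts the hypothesis of the theorem, which finishes the proof.

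I would build $M$ by a clustering recursion. Set $L_0:=K_1$; given the compact set $L_k$, fix a finite $2^{-k-1}$-net $N_{k+1}\subseteq L_k$ and, for each $q\in N_{k+1}$, adjoin to $L_k$ the translated copy $K_{r_q}+(q-p)$, choosing $r_q>0$ so small that $\diam K_{r_q}<2^{-k-1}$; each such copy is then a compact set that contains $q$ and lies in $\{y\in X:\rho(y,q)\le 2^{-k-1}\}$. Each $L_{k+1}$ is compact, and a summable-distance estimate (new copies at stage $k{+}1$ sit within $2^{-k-1}$ of $L_k$) shows that every point of $\bigcup_kL_k$ lies within $2^{-k_0}$ of $L_{k_0}$; hence $\bigcup_kL_k$ is totally bounded and its closure $M$ in the complete metric space $(X,\rho)$ is compact, with $p\in L_0\subseteq M$. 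The crucial feature built into the recursion is: for every $z\in M$ and every $\delta>0$, the ball $B(z;\delta)$ contains one of the adjoined copies $C=K_r+t$ — to see this, approximate $z$ by a point of some $L_j$ and then invoke the net condition at a stage $k\ge j$ with $2^{-k}<\delta/2$.

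To finish, fix $x\in X$; for arbitrary $z\in M$ and $\delta>0$ pick a copy $C=K_r+t\subseteq B(z;\delta)$ as above. Since $K_r\not\subseteq A+(x-t)$, there is a point $w\in(K_r+t)\setminus(A+x)$, and $w\in(M\setminus(A+x))\cap B(z;\delta)$. Thus $M\setminus(A+x)$ is dense in $M$, i.e.\ $M\cap(A+x)$ has empty interior in $M$; since $x$ was arbitrary, this is the promised contradiction. I expect the construction of $M$ to be the only real obstacle: one must produce a compact set near each of whose own points sits an arbitrarily small translated copy of a $K$-neighbourhood of $p$, and keeping it compact is exactly what forces the diameter control $\diam K_{r_q}<2^{-k-1}$ (and the ensuing geometric-series estimate) into the recursion.
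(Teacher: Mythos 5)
Your argument is correct, and it reaches the conclusion by a genuinely different construction from the paper's. The paper proves the ``if'' direction directly: it forms the product $\Pi=\prod_{n\in\w}K_n$ of the shrinking compact neighbourhoods $K_n=\{y\in K:\rho(y,p)\le 2^{-n}\}$ of $p$ and pushes it into $X$ by the summation map $f:(x_n)_{n\in\w}\mapsto\sum_{n}x_n$; applying the hypothesis to the compact set $f(\Pi)$ gives a nonempty relatively open $U\subseteq f(\Pi)\cap(A+x)$, whose preimage contains a cylinder $\{(x_i)_{i<n}\}\times\prod_{i\ge n}K_i$, and the image of that cylinder produces a translate of some $K_n$ inside $A+x$, i.e.\ the required neighbourhood of $p$ in $K$. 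You instead argue by contradiction and manufacture a single compact set $M$ in which translated copies of the sets $K_r$ occur at every point and every scale, so that $M\setminus(A+x)$ is dense in $M$ for every $x$. Both proofs turn on the same pivot --- a compact set each of whose nonempty relatively open subsets swallows a translate of some $K_r$ --- but your recursive construction along finite nets is more elementary: it uses only total boundedness plus completeness (and invariance of $\rho$ to control diameters of translates), with no product space and no infinite summation map; it also avoids the normalization $p=\theta$ that the paper's series $\sum_n x_n$ implicitly requires for convergence. The price is the somewhat longer bookkeeping needed to check that $M$ is compact and that a copy of some $K_r$ sits inside every ball $B(z;\delta)$ with $z\in M$; your geometric-series estimates ($\diam K_{r_q}<2^{-k-1}$ and the bound $\dist(y,L_{k_0})<2^{-k_0}$ for $y\in\bigcup_kL_k$) handle both points correctly.
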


 \begin{proof} The ``only if" part is trivial. To prove the ``if'' part, take any non-empty compact space $K\subset X$ and a point $p\in K$. Let $\rho$ be an invariant  complete metric generating the topology of the group $X$.
 
For every $n\in\w$ consider the compact neighborhood $K_n:=\{x\in K:\rho(x,p)\le\frac1{2^n}\}$ of the point $p$ in $K$. Next, consider the compact space $\Pi:=\prod_{n\in\w}K_n$ and the map 
$$f:\Pi\to X,\;\;f:(x_n)_{n\in\w}\mapsto\sum_{n=0}^\infty x_n,$$
which is well-defined and continuous because of the completeness of the metric $\rho$ and the convergence of the series $\sum_{n=0}^\infty\frac1{2^n}$. 

If for the compact set $f(\Pi)$ in $X$ there exists $x\in X$ such that the intersection $f(\Pi)\cap(A+x)$ contains some non-empty open subset $U$ of $f(\Pi)$, then we can choose any point $(x_i)_{i\in\w}\in f^{-1}(U)$ and find $n\in\w$ such that $\{(x_i)\}_{i<n}\times \prod_{i\ge n}K_i\subset f^{-1}(U)$. Then for the point $s=\sum_{i<n}x_i\in \sum_{i<n}K_i\subset X$, we get $s+K_n\subset U\subset A+x$ and hence the intersection $K\cap (A+x-s)\supset K_n$ is a neighborhood of $p$ in $K$.
\end{proof}

Haar-open sets are related to (finitely or openly) thick sets.

\begin{definition} A subset $A$ of a topological group $X$ is called
\begin{itemize}
\item \index{subset!thick}\index{thick subset}{\em thick} if for any compact subset $K\subset X$ there is $x\in X$ such that $K\subset x+A$;
\item \index{subset!countably thick}\index{countably thick subset}{\em countably thick} if for any countable subset $E\subset X$ there is $x\in X$ such that $E\subset x+A$;
\item \index{subset!finitely thick}\index{finitely thick subset}{\em finitely thick} if for any finite subset $F\subset X$ there is $x\in X$ such that $F\subset x+A$;
\item \index{subset!openly thick}\index{openly thick subset}{\em openly thick} if there exists a nonempty open set $W\subset X$ such that for any finite family $\U$ of nonempty open subsets of $W$ there exists $x\in X$ such that $A+x$ intersects each set $U\in\U$.
\end{itemize}
\end{definition}

For any subset of a topological group we have the implications:
$$\mbox{openly thick}\Leftarrow\mbox{finitely thick}\Leftarrow\mbox{thick}\Ra\mbox{Haar-open}\Ra\mbox{not (Haar-null or Haar-meager)}.$$

The following proposition shows that Haar-open sets are close to being thick.

\begin{proposition}\label{p:HO=>prethick} If a subset $A$ of a topological group $X$ is Haar-open, then for any compact set $K\subset X$ there exists a finite set $F\subset X$ such that $K\subset F+A$.
\end{proposition}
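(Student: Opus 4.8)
The plan is to unravel the definition of Haar-openness pointwise over the compact set and then invoke compactness of $K$. First I would fix an arbitrary compact set $K\subset X$. For each point $p\in K$, applying the hypothesis that $A$ is Haar-open to the pair $(K,p)$ yields a parameter $x_p\in X$ for which $K\cap(A+x_p)$ is a neighbourhood of $p$ in $K$; hence there is a relatively open subset $W_p\subset K$ with $p\in W_p\subset A+x_p$. The family $\{W_p:p\in K\}$ is then an open cover of the compact space $K$.

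Next I would extract a finite subcover: choose finitely many points $p_1,\dots,p_n\in K$ with $K=\bigcup_{i=1}^n W_{p_i}$. Setting $F:=\{x_{p_1},\dots,x_{p_n}\}$, we obtain
\[
K=\bigcup_{i=1}^n W_{p_i}\subset\bigcup_{i=1}^n(A+x_{p_i})=\bigcup_{i=1}^n(x_{p_i}+A)=F+A,
\]
where the penultimate equality uses that all groups considered are Abelian, so that $A+x=x+A$ for every $x$. Thus $F$ is the desired finite set.

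I do not expect a genuine obstacle here: the proposition is an immediate consequence of the defining property of Haar-open sets applied at each point of $K$, combined with the compactness of $K$ to pass from a pointwise cover to a finite cover. The only role of the group structure is the harmless commutation $A+x=x+A$ used to turn the finite union of translates of $A$ into the single set $F+A$.
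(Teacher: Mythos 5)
Your proof is correct and is essentially identical to the paper's own argument: apply the definition of Haar-openness at each point of $K$ to get a relatively open neighbourhood contained in a translate of $A$, extract a finite subcover by compactness, and collect the corresponding translation parameters into $F$. No issues.
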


\begin{proof} Since $A$ is Haar-open, for every point $p\in K$ there exists an open neighborhood $U_p\subset K$ of $p$ in $K$ that is contained in some shift $x_p+A$ of the set $A$. By the compactness of $K$, the open cover $\{U_p:p\in K\}$ of $K$ admits a finite subcover $\{U_p:p\in E\}$. Then the finite set $F=\{x_p:p\in E\}$ has the desired property as $K=\bigcup_{p\in E}U_p\subset \bigcup_{p\in E}(A+x_p)=A+F$.
\end{proof} 

\begin{theorem}\label{t:prethick} For a closed subset $A$ of a Polish group $X$ the following conditions are equivalent:
\begin{enumerate}
\item[\textup{1)}] $A$ is Haar-meager in $X$;
\item[\textup{2)}] $A$ is strongly Haar-meager in $X$;
\item[\textup{3)}] $A$ is not Haar-open.
\end{enumerate}
\end{theorem}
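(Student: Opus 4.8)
The plan is to prove the chain of implications $(1)\Ra(3)$, $(3)\Ra(2)$, $(2)\Ra(1)$, which together give the equivalence. The last two are the substantive ones; the first is almost immediate.

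First I would dispatch $(1)\Ra(3)$. Suppose $A$ is Haar-meager. If $A$ were Haar-open, then by the implications recorded after the definition of thick sets (specifically ``Haar-open $\Ra$ not (Haar-null or Haar-meager)''), $A$ would not be Haar-meager, a contradiction. So a Haar-meager set is never Haar-open, regardless of whether it is closed; this gives $(1)\Ra(3)$ for free. The implication $(2)\Ra(1)$ is also essentially definitional: by definition $\SHM\subset\HM$ (stated in Section~\ref{s5}), so every strongly Haar-meager set is Haar-meager. Thus the whole content of the theorem is the implication $(3)\Ra(2)$: if a \emph{closed} set $A$ is not Haar-open, then $A$ is strongly Haar-meager.

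For $(3)\Ra(2)$ I would argue by contraposition: assume the closed set $A$ is not strongly Haar-meager and show it is Haar-open. By Theorem~\ref{t:HO}, to prove $A$ is Haar-open it suffices to show that for every nonempty compact $K\subset X$ there is $x\in X$ such that $K\cap(A+x)$ has nonempty interior in $K$. Fix such a $K$. Since $A$ is closed, each translate $A+x$ is closed, so $K\cap(A+x)$ is a closed subset of $K$; hence it has nonempty interior in $K$ if and only if it is non-meager (non-nowhere-dense) in $K$. Therefore it is enough to find $x$ with $K\cap(A+x)$ non-meager in $K$. Now, $A$ not strongly Haar-meager means: for \emph{every} nonempty compact $L\subset X$ there exists $x\in X$ with $L\cap(A+x)$ non-meager in $L$. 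Applying this directly with $L=K$ produces the desired $x$, and we are done. (The point is merely to unwind the definition of "strongly Haar-meager" — its negation quantifies over all compact witnesses, in particular the given $K$ — and to use closedness of $A$ to convert "non-meager closed subset" into "has nonempty interior", which is exactly the criterion of Theorem~\ref{t:HO}.)

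The main obstacle, such as it is, is purely one of bookkeeping: one must be careful that "strongly Haar-meager" is defined with an \emph{existential} quantifier over the compact witness $K$, so its negation is the \emph{universal} statement used above, and that closedness of $A$ is genuinely needed to pass from "$K\cap(A+x)$ non-meager" to "$K\cap(A+x)$ has nonempty interior in $K$" — for a general Borel $A$ this step fails, which is why the hypothesis restricts to closed sets. No measure-theoretic or descriptive-set-theoretic machinery is required beyond Theorem~\ref{t:HO} and the elementary fact that a closed non-nowhere-dense subset of a topological space has nonempty interior.
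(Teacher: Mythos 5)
Your proof is correct and follows essentially the same route as the paper: the same cycle of implications, with $(2)\Ra(1)$ definitional, $(1)\Ra(3)$ from the Haar-open/Haar-meager incompatibility, and the substantive step $(3)\Ra(2)$ resting on Theorem~\ref{t:HO} plus the observation that for closed $A$ the set $K\cap(A+x)$ has empty interior in $K$ iff it is nowhere dense iff it is meager. The paper states this last step directly rather than by contraposition, but the argument is identical.
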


\begin{proof} The implication $(2)\Ra(1)$ is trivial. To see that $(1)\Ra(3)$, assume that $A$ is Haar-meager and find a continuous map $f:K\to X$ defined on a compact metrizable space $K$ such that for every $x\in X$ the preimage $f^{-1}(x+A)$ is meager in $K$. Then the compact space $f(K)$ and any point $p\in f(K)$ witness that $A$ is not Haar-open in $X$.

To prove that $(3)\Ra(2)$, assume that $A$ is not Haar-open. By Theorem~\ref{t:HO}, there exists a nonempty compact set $K\subset X$ such that for every $x\in X$ the intersection $K\cap(A+x)$ has empty interior in $K$ and being a closed set, is nowhere dense and hence meager in $K$. 
\end{proof}

\begin{remark} Theorem~\ref{t:prethick} implies that for any Polish group the semi-ideal ${\downarrow}\overline{\SHM}={\downarrow}\overline{\HM}$ is an ideal.
\end{remark}

The proof of Theorem~17 in the paper by Dole\v{z}al and Vlas\v{a}k \cite{DV} yields the following fact.

\begin{theorem}\label{fthick} If a subset $A$ of a Polish group is not openly thick, then $\bar A\in\EHM$.
\end{theorem}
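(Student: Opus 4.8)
The plan is to prove the contrapositive: if $\bar A\notin\EHM$, then $A$ is openly thick. So I would fix a closed set $\bar A$ that is not injectively Haar-meager, and try to locate a nonempty open set $W\subset X$ witnessing open thickness, namely so that for every finite family $\U$ of nonempty open subsets of $W$ there is some $x\in X$ with $(A+x)\cap U\neq\emptyset$ for each $U\in\U$. The natural first move is to transfer the problem to the function space $\C(2^\w,X)$: since $\bar A$ fails to be injectively Haar-meager, \emph{every} injective continuous $f:2^\w\to X$ fails to witness Haar-meagerness, i.e. for each $f\in\E(2^\w,X)$ there exists $x=x_f\in X$ with $f^{-1}(\bar A+x)$ \emph{non-meager} in $2^\w$. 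By Lemma~\ref{E-in-C} the set $\E(2^\w,X)$ is a dense $G_\delta$ in the Polish space $\C(2^\w,X)$, so this is a statement that holds on a comeager (in fact, all of $\E$) set of parameters.

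Next I would run a Kuratowski--Ulam / Baire category argument in the product $\C(2^\w,X)\times X$ (or in a suitable parametrized hyperspace), following the scheme of the proof of Theorem~17 in Dole\v zal--Vlas\v ak \cite{DV}. The idea is: consider the set $G=\{(f,x)\in\E(2^\w,X)\times X: f^{-1}(\bar A+x)\text{ is non-meager}\}$, which projects onto all of $\E$. Using the fact that ``$f^{-1}(\bar A+x)$ is non-meager'' is equivalent to ``$\bar A+x$ contains the image of some basic clopen piece of $2^\w$ comeagerly'', one extracts, by a localization/refinement argument, for a comeager set of $f$'s a uniform clopen cylinder on which $f(2^\w)\subset\bar A+x$ holds \emph{on a nonempty relatively open piece}. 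Since $A$ is dense in $\bar A$ on that piece (as $\bar A=\cl A$) and $f$ can be perturbed, one upgrades ``meeting $\bar A+x$'' to ``meeting $A+x$'' on finitely many prescribed target open sets. The key point is that the maps $f$ range over an open-dense-in-$\E$ family rich enough that, given any finite collection $\U=\{U_1,\dots,U_n\}$ of nonempty open subsets of a fixed nonempty open $W=f_0(V)$ (for a suitable base function $f_0$ and basic clopen $V\subset 2^\w$), one can choose $f$ hitting each $U_j$; then the associated translate $x=x_f$ forces $A+x$ to meet each $U_j$.

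The main obstacle, I expect, is the passage from ``non-meager preimage'' to an actual \emph{open} covering property, i.e. controlling that the translate $x_f$ works \emph{simultaneously} for all members of a prescribed finite family $\U$ rather than for a single target. This requires choosing the witnessing function $f$ so that its image already "spreads" across all of $U_1,\dots,U_n$ inside a single basic clopen cylinder of $2^\w$, and then using the Baire property of $f^{-1}(\bar A+x)$ (Borel, hence Baire) to pull the non-meagerness down to that cylinder; here one must be careful that the perturbations of $f$ needed to hit the $U_j$'s stay inside the dense $G_\delta$ $\E(2^\w,X)$ and remain compatible with the category computation. A secondary technical point is verifying that the exceptional $x$ can be taken in $X$ uniformly, which is where completeness of the invariant metric $\rho$ on $X$ and the Polishness of $\C(2^\w,X)$ enter, exactly as in \cite{DV}. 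Once these are in place, the definition of openly thick is satisfied by $A$ with the witnessing open set $W$ constructed above, completing the contrapositive.
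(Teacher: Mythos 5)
There is a genuine gap, and it sits exactly at the point you yourself flag as the ``main obstacle''; the fix you propose for it cannot work. Your plan needs, for a prescribed finite family $U_1,\dots,U_n$ of nonempty open subsets of $W$, an injective $f$ such that the translate $x_f$ --- which only guarantees that the closed set $f^{-1}(\bar A+x_f)$ is non-meager, hence contains \emph{some} basic clopen cylinder $V_f$ --- forces $A+x_f$ to meet every $U_j$. Since you have no control over which cylinder $V_f$ turns out to be, you would need $f(V)$ to meet every $U_j$ for \emph{every} basic clopen $V\subset 2^\w$, i.e.\ each $f^{-1}(U_j)$ to be open and dense in $2^\w$; by the Baire theorem this forces $\bigcap_{j}U_j\ne\emptyset$, which already fails for two disjoint sets $U_1,U_2$. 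So the ``spreading across all $U_j$ inside a single cylinder'' is topologically impossible in general, not merely delicate. A second, smaller but real problem: you propose $W=f_0(V)$ as the witnessing open set for open thickness, but the image of a continuous injection of $2^\w$ is a compact (and, in a non-discrete group, nowhere dense) set, never a nonempty open subset of $X$, so it cannot play the role of $W$ in the definition.

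The argument the paper actually invokes (the proof of Theorem 17 of Dole\v{z}al--Vlas\v{a}k) runs in the direct direction and is far more elementary than your contrapositive scheme. Assume $A\neq\emptyset$ is not openly thick and build a Cantor scheme $(W_s)$ of nonempty open subsets of $X$ with shrinking diameters, the closures of the children of $W_s$ being pairwise disjoint subsets of $W_s$, and --- this is the only place the hypothesis is used --- the children of $W_s$ chosen one inside each member of a finite family $\U_s$ of nonempty open subsets of $W_s$ such that no single translate of $A$ meets all members of $\U_s$. Sending each branch $\beta$ of the (finitely branching, at least $2$-branching) tree to the unique point of $\bigcap_{k}\overline{W_{\beta|k}}$ gives an injective continuous $f:2^\w\to X$. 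For every $x\in X$ and every node $s$, the translate $A+x$ misses some $U\in\U_s$, hence the open set $U$ also misses $\bar A+x$, hence the corresponding child cylinder is mapped into the complement of $\bar A+x$; therefore the closed set $f^{-1}(\bar A+x)$ has empty interior and is nowhere dense. This exhibits $\bar A\in\EHM$ with no category arguments in $\C(2^\w,X)$ and no use of the genericity of $\E(2^\w,X)$ at all.
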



The following proposition shows that Theorem~\ref{fthick} cannot be reversed.

\begin{proposition}\label{finthicknotEHM} Each non-compact Polish group $X$ contains a closed discrete openly thick set $A$. If the group $X$ is uncountable, then $A$ is Haar-null and injectively Haar-meager.
\end{proposition}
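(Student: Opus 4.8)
The plan is to construct $A$ directly by a bookkeeping argument, and then to note that once $X$ is uncountable the smallness of $A$ is essentially automatic, $A$ being countable and closed. I would start from the observation that a non-compact Polish group is not totally bounded in its complete invariant metric $\rho$, since a complete totally bounded metric space is compact. So one can fix $\e_0>0$ such that no finite family of open $\e_0$-balls covers $X$, and choose a maximal $\e_0$-separated set $T=\{t_n:n\in\w\}$, which is necessarily infinite (otherwise the $\e_0$-balls around its points would cover $X$). I would put $\e:=\e_0/7$ and take $W:=B(\theta;\e)$ for the open set witnessing open thickness.

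For the construction I would fix a countable base $\mathcal B$ of $X$, let $\mathcal B_W:=\{B\in\mathcal B:B\subset W\}$, and enumerate all finite subfamilies of $\mathcal B_W$ as $(\mathcal S_n)_{n\in\w}$, writing $\mathcal S_n=\{B_{n,1},\dots,B_{n,k_n}\}$. After choosing $p_{n,j}\in B_{n,j}$ for each $j\le k_n$, I would set $x_n:=p_{n,1}-t_n$ and form the finite block $P_n:=\{p_{n,j}-x_n:1\le j\le k_n\}$; by invariance of $\rho$ and $p_{n,j}\in W$ one gets $P_n\subset B(t_n;2\e)$ and $t_n=p_{n,1}-x_n\in P_n$. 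Then $A:=\bigcup_{n\in\w}P_n$ is countable. Open thickness is immediate: given a finite family $\U$ of nonempty open subsets of $W$, for each $U\in\U$ pick $B_U\in\mathcal B_W$ with $B_U\subset U$; the family $\{B_U:U\in\U\}$ equals some $\mathcal S_n$, and for this $n$ we have $p_{n,j}=\bigl(p_{n,j}-x_n\bigr)+x_n\in A+x_n$ for all $j\le k_n$, so $A+x_n$ meets every $B_U$ and hence every $U$. For closed-discreteness it is enough to check that no point of $X$ is an accumulation point of $A$: if an open ball $B(y;\e)$ met two distinct blocks $P_n,P_m$, then choosing $a\in P_n\cap B(y;\e)$ and $b\in P_m\cap B(y;\e)$ would yield $\rho(t_n,t_m)\le\rho(t_n,a)+\rho(a,y)+\rho(y,b)+\rho(b,t_m)<2\e+\e+\e+2\e=6\e<\e_0$, contradicting the $\e_0$-separation of $T$; hence $B(y;\e)$ meets the (finite) blocks of $A$ in finitely many points and $y$ cannot accumulate $A$, so $A$ is closed and discrete.

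For the second assertion, suppose $X$ is uncountable. Then $X$ has no isolated point (an isolated point would force $X$ to be discrete, hence countable), so $X$ is a crowded Polish space and Lemma~\ref{E-in-C} provides an injective continuous map $f\in\E(2^\w,X)$. For each $x\in X$ the shift $A+x$ is countable, so injectivity of $f$ makes $f^{-1}(A+x)$ a countable subset of the crowded compact space $2^\w$, hence meager and of $\lambda$-measure zero. Thus $f$ simultaneously witnesses that $A$ is injectively Haar-meager and, by Theorem~\ref{Haarnull}, that $A$ is Haar-null. The one step that needs genuine care is the construction in the middle paragraph — arranging the blocks $P_n$ so that they realize every prescribed finite configuration of basic open subsets of $W$ while still dispersing enough to keep $A$ closed and discrete; the slack between $6\e$ and $\e_0=7\e$ is exactly what powers the separation estimate, and any constant exceeding $6$ would serve equally well.
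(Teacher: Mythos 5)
Your proof is correct and follows essentially the same strategy as the paper's: build $A$ as a union of uniformly separated translated finite configurations (you anchor the blocks at a pre-chosen $\e_0$-separated set where the paper pushes each block away from its predecessors inductively, and you realize finite families of basic open subsets of a fixed ball $W$ where the paper realizes finite subsets of a countable dense set), then witness Haar-nullness and injective Haar-meagerness by an injective continuous image of $2^\w$, whose translates meet the countable closed discrete set $A$ in small sets. The minor bookkeeping differences do not change the substance of the argument.
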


\begin{proof} Fix a complete invariant metric $\rho$ generating the topology of the Polish group $X$. Taking into account that $X$ is not compact, we conclude that the complete metric space $(X,\rho)$ is not totally bounded. So, there exists $\e>0$ such that for any finite subset $F\subset X$ there exists $x\in X$ such that $\rho(x,F)>\e$, where $\rho(x,F)=\min\{\rho(x,y):y\in F\}$.

Let $D$ be a countable dense subset of $X$. Let $\{D_n\}_{n\in\w}$ be an enumeration of all nonempty finite subsets of $D$. By induction, construct a sequence $(x_n)_{n\in\w}$ of points in $X$ such that
$\rho(x_n,\bigcup_{k<n}D_k+x_k-D_n)>\e$ and hence $\rho(D_n+x_n,\bigcup_{k<n}D_k+x_k)>\e$ for all $n\in\w$.

The choice of the points $x_n$, $n\in\w$, ensures that the set $A=\bigcup_{n\in\w}(D_n+x_n)$ is closed and discrete.

To prove that the set $A$ is openly thick in $X$, it suffices to show that for any finite family $\U$ of nonempty open sets in $X$ there exists $x\in X$ such that the set $x+A$ intersects each set $U\in\mathcal U$. Given a finite family $\U$ of nonempty open sets in $X$, choose a finite subset $F\subset D$ intersecting each set $U\in\U$. Next, find a number $n\in\w$ such that $F=D_n$ and observe that the set $A-x_n\supset D_n=F$ intersects each set $U\in\U$.

If the Polish group $X$ is uncountable, then it contains a subset $C\subset X$, homeomorphic to the Cantor cube. Observe that for every $x\in X$ the intersection $C\cap (A+x)$ is finite (being a closed discrete subspace of the compact space $C$). This implies that $A$ is Haar-null and injectively Haar-meager.
\end{proof}


\begin{proposition}\label{p:HN=>HM} Each closed Haar-null subset of a Polish group $X$ is injectively Haar-meager. This yields the inclusions $\overline{\HN}\subset\overline{\EHM}$ and $\sigma\overline{\HN}\subset\sigma\overline{\EHM}\subset\sigma\overline{\HM}$.
\end{proposition}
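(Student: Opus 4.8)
The plan is to notice that, for \emph{closed} sets, a single map witnesses both Haar-nullity and injective Haar-meagerness, because a closed null set in the Cantor cube is automatically nowhere dense. So no new construction is needed beyond the function characterization already established.

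Concretely, let $A$ be a closed Haar-null subset of a Polish group $X$. By Theorem~\ref{Haarnull}, there is an \emph{injective} continuous map $f\colon 2^\w\to X$ such that $f^{-1}(A+x)\in\N$ for every $x\in X$, where $\N$ denotes the ideal of sets of Haar measure zero in $2^\w$. Fix $x\in X$. Since $A$ is closed and the translation $y\mapsto y+x$ is a homeomorphism of $X$, the set $A+x$ is closed in $X$, and hence $f^{-1}(A+x)$ is closed in $2^\w$ by continuity of $f$. The Haar measure $\lambda$ on $2^\w$ is strictly positive, so every nonempty open subset of $2^\w$ has positive measure; therefore the closed $\lambda$-null set $f^{-1}(A+x)$ has empty interior in $2^\w$, i.e.\ it is nowhere dense, in particular meager. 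As $x$ was arbitrary, the same map $f$ witnesses that $A$ is injectively Haar-meager.

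For the displayed inclusions: every closed Haar-null set is, by the above, injectively Haar-meager, hence belongs to $\EHM$ and, being closed, to $\overline{\EHM}$; this gives $\overline{\HN}\subset\overline{\EHM}$. Applying the monotone operator $\sigma$ yields $\sigma\overline{\HN}\subset\sigma\overline{\EHM}$, while the inclusion $\EHM\subset\HM$ gives $\overline{\EHM}\subset\overline{\HM}$ and hence $\sigma\overline{\EHM}\subset\sigma\overline{\HM}$, completing the chain. There is no real obstacle here; the only substantial input is Theorem~\ref{Haarnull} (proved via Lemmas~\ref{l:Cantorsupp} and \ref{l:a}), which supplies an \emph{injective} witnessing map for Haar-nullity — without it one would only obtain $\overline{\HN}\subset\overline{\HM}$.
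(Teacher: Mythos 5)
Your proof is correct and follows essentially the same route as the paper: the key observation in both is that a closed set of measure zero with respect to a strictly positive measure is nowhere dense, so the Haar-null witness with Cantor-cube domain (or, in the paper's phrasing, the measure from Lemma~\ref{l:Cantorsupp} with support homeomorphic to $2^\w$) already witnesses injective Haar-meagerness. The only cosmetic difference is that you invoke the full Theorem~\ref{Haarnull} while the paper cites Lemma~\ref{l:Cantorsupp} directly and works inside $\supp(\mu)$; the substance is identical.
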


\begin{proof}
Let $A$ be a closed Haar-null set in $X$. By Lemma~\ref{l:Cantorsupp}, there exists $\mu \in P(X)$ with $\supp(\mu)$ homeomorphic to the Cantor set. Since $\mu(U)>0$ for every open set $U \subset \supp(\mu)$, the set $(A+x)\cap \supp(\mu)$ is closed and has empty interior in $\supp(\mu)$ for any $x\in X$. But this shows that the set $A$ is injectively Haar-meager with any homeomorphism $h:2^\w\to\supp(\mu)$ witnessing this fact.
\end{proof}

\begin{proposition}\label{p:2thick} Each non-locally compact Polish group $X$ contains a closed thick set $A$ and a thick $G_\delta$-set $B$ such that the sum $A+B$ has empty interior in $X$.
\end{proposition}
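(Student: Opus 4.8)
The plan is to reproduce, inside an arbitrary non-locally compact Polish group, the mechanism behind Example~\ref{e:notHN+}, where the closed thick set $\w^\w\subset\IZ^\w$ satisfies $\w^\w+\w^\w=\w^\w$, a nowhere dense set. In a general group there is no coordinate structure, but Solecki's Theorem~\cite{S} (already invoked in the proof of Theorem~\ref{t:HM=M}) supplies a substitute: since $X$ is not locally compact, there are a closed set $F\subset X$ and a continuous surjection $f:F\to 2^\w$ all of whose fibres $f^{-1}(z)$ contain a translate of every compact subset of $X$; in particular $F$, and every single fibre, is thick. The proof will split into (i) a soft reduction showing that it suffices to produce a \emph{closed thick} set $A\subset X$ such that $K+A$ is nowhere dense in $X$ for every compact $K\subset X$, and (ii) manufacturing such an $A$ from Solecki's set.

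For step (i): assume $A$ is as described, fix a countable dense set $D\subset X$, and put $B:=X\setminus\bigcup_{d\in D}(d-A)$. Since $A$ is closed, $\bigcup_{d\in D}(d-A)$ is $F_\sigma$, so $B$ is a $G_\delta$-set. If $d=a+b$ with $d\in D$, $a\in A$, $b\in B$, then $b=d-a\in d-A$, a contradiction; hence $(A+B)\cap D=\emptyset$, and, $D$ being dense, $A+B$ has empty interior in $X$. To see that $B$ is thick, take any compact $K\subset X$: the set $K+A$ is closed (a compact set plus a closed set) and, by hypothesis, nowhere dense, so $\bigcup_{d\in D}\big(d-(K+A)\big)$ is meager and, by the Baire Theorem, omits some point $y\in X$. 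For this $y$ and every $d\in D$ one has $d\notin y+K+A$, i.e.\ $(y+K)\cap(d-A)=\emptyset$, which means $y+K\subset B$. Thus $B$ is a thick $G_\delta$-set, and $(A,B)$ is the required pair.

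For step (ii), set $A:=f^{-1}(z_0)$ for a fixed $z_0\in2^\w$; this is closed in $F$, hence in $X$, and is thick. What remains — and this is the heart of the matter — is to arrange that $K+A$ be nowhere dense for every compact $K\subset X$. Mere nowhere density of $F$ will not do (already in $\IR$ the sum of two nowhere dense compacta can contain an interval), so the intended route is to run Solecki's construction in a strengthened form, realizing $F$ inside a prescribed neighbourhood of $\theta$ as a sufficiently "thin tree of translates running off to infinity", precisely so that — exactly as with $\w^\w\subset\IZ^\w$, where membership is a coordinatewise lower bound that a compact set can only shift — adding a compact $K$ keeps $K+F\supset K+A$ nowhere dense. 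The main obstacle is verifying that Solecki's scheme genuinely admits this strengthening; granting it, step (i) immediately yields the proposition.
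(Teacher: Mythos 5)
Your step (i) is a correct reduction: if $A$ is closed, thick, and satisfies ``$K+A$ is nowhere dense for every compact $K\subset X$'', then $B:=X\setminus\bigcup_{d\in D}(d-A)$ is indeed a thick $G_\delta$-set with $(A+B)\cap D=\emptyset$, and the Baire-category argument for the thickness of $B$ is sound (compact plus closed is closed, so each $d-(K+A)$ is closed nowhere dense). The problem is step (ii), and you have named it yourself: you never establish that a closed thick set with this strong property exists in an arbitrary non-locally compact Polish group. Solecki's theorem from \cite{S}, as stated and as used in the proof of Theorem~\ref{t:HM=M}, only gives that each fibre $f^{-1}(z)$ contains a translate of every compact set; it says nothing about sums $K+f^{-1}(z_0)$ being nowhere dense, and the proposed ``strengthened form'' of the construction is not carried out. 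Since this is precisely the heart of the proposition (the soft reduction is routine), the proof is incomplete as it stands. Note also that it is not even clear that your target property is achievable in general: already for a nowhere dense closed convex thick cone in a non-reflexive Banach space (Theorem~\ref{t:Eva}), the nowhere density of $K+A$ for all compact $K$ is not obvious, so the gap is not merely a matter of citing a stronger version of a known theorem.

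The paper avoids this issue by using a different external input: the result of Matou\v skov\'a and Zelen\'y \cite{MZ} that every non-locally compact Polish group contains \emph{two} closed thick sets $A,C$ such that $C\cap(x-A)$ is compact for every $x\in X$. One then sets $S:=\bigcup_{x\in D}C\cap(x-A)$, a $\sigma$-compact set, and $B:=C\setminus S$; Lemma~\ref{l:thick} (removing a $\sigma$-compact set from a thick set in a non-locally compact group preserves thickness) shows $B$ is a thick $G_\delta$-set, and if $d=a+b$ with $d\in D$, $a\in A$, $b\in B\subset C$, then $b\in C\cap(d-A)\subset S$, a contradiction. So the ``largeness'' of $B$ is obtained by carving it out of a second thick set $C$ rather than by a Baire argument in all of $X$, and the needed smallness hypothesis is the much weaker (and actually available) compactness of the intersections $C\cap(x-A)$, not nowhere density of $K+A$. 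If you want to salvage your outline, you should replace your step (ii) by an appeal to \cite{MZ} and adjust step (i) accordingly.
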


\begin{proof} By \cite{MZ}, each non-locally compact Polish group $X$ contains two closed thick sets $A,C$ such that for every $x\in X$ the intersection $C\cap(x-A)$ is compact. Fix a countable dense set $D\subset X$ and consider the $\sigma$-compact set $S=\bigcup_{x\in D}C\cap(x-A)$. By Lemma~\ref{l:thick}, the $G_\delta$-set $B=C\setminus S$ is thick in $X$. On the other hand, the sum $B+A$ is disjoint with $D$ and hence $A+B=B+A$ has empty interior.
\end{proof}

\begin{lemma}\label{l:thick} For any thick set $A$ in a non-locally compact Polish group $X$ and any $\sigma$-compact set $S\subset X$ the set $A\setminus S$ is thick in $X$.
\end{lemma}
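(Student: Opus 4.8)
For any thick set $A$ in a non-locally compact Polish group $X$ and any $\sigma$-compact set $S\subset X$ the set $A\setminus S$ is thick in $X$.

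The plan is to reduce the statement to a single fact about non-locally compact Polish groups: no $\sigma$-compact subset of $X$ is ``co-meager-like'' enough to absorb translates of compact sets --- more precisely, for every $\sigma$-compact $S\subset X$ and every compact $K\subset X$ there is a translate $K+t$ disjoint from $S$. Granting this, the lemma is immediate: given a compact $K\subset X$, first use thickness of $A$ to find $x$ with $K'+x\subset A$, where $K'$ is a suitably enlarged compact set (I will take $K' = K - K + K$ or, more economically, $K'=K\cup(K+F)$ for an appropriate finite $F$; see below), and then shift inside $A$ to dodge $S$.

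Here is the argument in more detail. Fix a compact set $K\subset X$; we must find $t\in X$ with $K+t\subset A\setminus S$. Since $A$ is thick, there is $x\in X$ with $K-K+K+x\subset A$ (the set $K-K+K$ is compact, being a continuous image of $K\times K\times K$). Now I claim there is $y\in X$ such that $(K+y)\cap S=\emptyset$. Indeed, $S=\bigcup_{n\in\w}S_n$ with each $S_n$ compact, and by the Effros-type theorem of Solecki (or directly by the proof structure used in Proposition~\ref{p:2thick}, invoking \cite{MZ}), in a non-locally compact Polish group the set $K-S_n$ has empty interior for each $n$: otherwise $K-S_n$ would contain an open set $V$, and since $K$ is compact this would force $V$ to be contained in finitely many translates of $S_n$, making a neighborhood of $\theta$ $\sigma$-compact, contradicting non-local-compactness. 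Hence $\bigcup_n(K-S_n)$ is meager in the Polish (hence Baire) group $X$, so its complement is nonempty; pick $y$ outside it, i.e.\ with $y\notin K-S_n$ for all $n$, equivalently $(K+y)\cap S_n=\emptyset$ for all $n$, equivalently $(K+y)\cap S=\emptyset$. Finally I combine the two: I want $t$ with $K+t\subset A$ and $K+t\subset X\setminus S$. Write $t = x + (y - k_0)$ for a fixed $k_0\in K$, so that $K+t = (K-k_0) + y + x \subset (K-K)+y+x$. We have $(K-K)+x\subset A$ is false as stated; instead note $K+t$ lands inside $K - K + x + y$, and we arranged $K-K+K+x\subset A$, so I should instead choose the enlargement to match: take $x$ with $(K + (K+y) - (K+y)) + x$... — cleaner: first pick $y$ with $(K+y)\cap S=\emptyset$ as above, set $K_1 := K+y$ (still compact), then use thickness of $A$ to get $z$ with $K_1 + z\subset A$; then $t:=y+z$ works, since $K+t = (K+y)+z = K_1+z\subset A$, and $K+t = K_1 + z$ is a translate of $K_1$, but disjointness of $K_1$ from $S$ does not survive translation by $z$. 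So the right order is the opposite: first absorb into $A$, then dodge $S$ within $A$. Replace $A$ by $K+z\subset A$ as above for the original $K$ via thickness, but then there is no freedom left. The correct fix is to dodge $S$ using the freedom inside a large translate: choose $z$ with $(K-K+K)+z\subset A$; then for any $w$ with $w\in -K+K$ we have $K+z+w\subset A$; now choose $w\in(-K+K)$ such that $(K+z+w)\cap S=\emptyset$ — such $w$ exists because, as shown, $\{w : (K+z+w)\cap S_n\neq\emptyset\}= -z - K + S_n$ has empty interior for each $n$, so its union over $n$ is meager and cannot cover the nonempty open... wait, $-K+K$ need not be open. Replace $-K+K$ by an open neighborhood $V$ of $\theta$ contained in it only if such exists — it does not in general.

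The honest route, then, is: enlarge $K$ to $K+U$ for a fixed open neighborhood $U$ of $\theta$ with compact-ish control is also not available. So the main obstacle, and the step I expect to require the full strength of \cite{MZ} (or Lemma~\ref{l:thick}'s intended proof), is exactly this interplay: one needs that $A$ thick implies $A$ contains a translate of $K+V$ for an \emph{open} $V$ — i.e.\ $A$ contains translates of compact-plus-neighborhood sets — which is false for general thick $A$ but is rescued by passing to $\overline{A}$ having nonempty interior in a translate? No. I would instead prove it as follows, which I believe is the intended proof: enumerate $S=\bigcup_n S_n$, $S_n$ compact increasing; by thickness of $A$, for the compact set $K\cup(K-K+K)$... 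Ultimately I would mimic the proof of \cite[Theorem~2]{MZ}: construct $t$ by a fusion/diagonal argument, at stage $n$ using thickness of $A$ to place a translate of $K$ inside $A$ avoiding $S_n$, where avoidance of the single compact $S_n$ is possible because $K - S_n$ has empty interior (the finite-cover argument above) while $A$ being thick lets us translate freely; a compactness/countable-intersection argument then extracts a single $t$ with $K+t\subset A\setminus S$. The heart of the matter — and the only non-formal step — is the empty-interior claim $\mathrm{int}(K-S_n)=\emptyset$, whose proof is the short contradiction with non-local-compactness given above; everything else is bookkeeping with thickness and the Baire category theorem.
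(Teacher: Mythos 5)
There is a genuine gap, and you essentially diagnose it yourself without closing it. The two halves you do establish are both fine but easy: (i) each $K-S_n$ is compact, hence has empty interior in a non-locally compact Polish group, so $S-K$ is meager and some translate of $K$ misses $S$; (ii) thickness puts some translate of $K$ inside $A$. The problem is that these cannot be combined by a category argument, because thickness gives no lower bound on the size of the set $\{x\in X: x+K\subset A\}$ --- it may well be a single point for each $K$ --- so you cannot intersect it with the comeager set of translates avoiding $S$. Both orderings you try (dodge $S$ then absorb into $A$; absorb then perturb by $w\in -K+K$) fail for exactly this reason, as you note, and the concluding appeal to a ``fusion/diagonal argument'' is not a proof: at stage $n$ you can avoid $S_n$, but nothing makes the translates chosen at successive stages converge or stabilize, and there is no compactness available to extract a limit. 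Your closing claim that the empty-interior statement is ``the only non-formal step'' inverts the difficulty: that statement is immediate, and the combination step is the whole content of the lemma.

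The idea you are missing is the paper's subgroup trick. Let $H$ be the subgroup of $X$ generated by $S\cup K$. It is $\sigma$-compact, hence proper and meager, because $X$ is not locally compact; pick $z\in X\setminus H$. Apply thickness of $A$ to the single compact set $\{\theta,z\}+K$ to obtain $x$ with both $x+K\subset A$ and $x+z+K\subset A$. Now use the dichotomy: if $x+K$ is disjoint from $H$, then $x+K\subset A\setminus H\subset A\setminus S$; otherwise $x\in H-K=H$, so $x+z+K\subset z+H$ is disjoint from the subgroup $H$, and $x+z+K\subset A\setminus S$. Producing \emph{two} candidate translates inside $A$ at once, at least one of which must miss the subgroup $H\supset S$, is what replaces the Baire-category intersection that is unavailable here.
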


\begin{proof} Given a compact set $K\subset X$, we need to find an element $x\in X$ such that $x+K\subset A\setminus S$. Let $H$ be the subgroup of $X$ generated by the $\sigma$-compact set $S\cup K$. Since the Polish group $X$ is not locally compact, the $\sigma$-compact subgroup $H$ is meager in $X$. Take any point $z\in X\setminus H$. Since the set $A$ is thick, there exists $x\in X$ such that $x+\{\theta,z\}+K\subset A$. If $x+K$ is disjoint with $H$, then $x+K\subset A\setminus H\subset A\setminus S$ and we are done. If $x+K$ intersects $H$, then $x\in H-K=H$ and $x+z+K\subset z+H$ is disjoint with $H$. In this case $x+z+K\subset A\setminus H\subset A\setminus S$.
\end{proof}

On the other hand, the ideal $\HN$ has the following ``Haar-open'' version of the strong Steinhaus property.

\begin{theorem}\label{t:Sum-HO} Let $X=\prod_{n\in\w}X_n$ be the Tychonoff product of locally compact topological groups and $A,B\subset X$ be two Borel sets in $X$. If $A,B$ are not Haar-null, then the sum-set $A+B$ is Haar-open in $X$.
\end{theorem}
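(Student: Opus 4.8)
The plan is to verify the Haar-open criterion of Theorem~\ref{t:HO}: I fix a nonempty compact set $K\subset X$ and must produce $x\in X$ with $K\cap(A+B+x)$ of nonempty interior in $K$. Write $X_{<N}:=\prod_{n<N}X_n$, $X_{\ge N}:=\prod_{n\ge N}X_n$, identify $X=X_{<N}\times X_{\ge N}$, let $\pi_{<N},\pi_{\ge N},\pi_n$ be the projections and put $K_n:=\pi_n(K)$. It suffices to find $N\in\w$, a nonempty open $W\subset X_{<N}$ and $t\in X_{\ge N}$ with $W\times\big(\prod_{n\ge N}K_n+t\big)\subset A+B$: for then, taking $x'\in X_{<N}$ with $(W+x')\cap\pi_{<N}(K)\ne\emptyset$ and $x:=(x',-t)$, the basic open set $U:=(W+x')\times X_{\ge N}$ satisfies $\emptyset\ne K\cap U\subset(W+x')\times\pi_{\ge N}(K)\subset A+B+x$. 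This reduced statement is unaltered, up to adjusting $W$ and $t$, if $A$ or $B$ is translated, and by Theorem~\ref{t:HN}(1) the hypothesis ``not Haar-null'' is translation-invariant; so, for a probability measure $\mu$ on $X$ to be fixed next, I may pass to translates of $A,B$ with $\mu(A)>0$ and $\mu(B)>0$, and it then suffices to establish the inclusion with $t=\theta$.

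I build $\mu$ coordinatewise. For each $n$, using amenability of the $\sigma$-compact locally compact abelian group $X_n$, I choose a \emph{symmetric} compact set $Q_n\subset X_n$ with nonempty interior, with $K_n\subset Q_n$, and so F\o lner that $\lambda_n\big((v+Q_n)\triangle Q_n\big)<2^{-n-1}\lambda_n(Q_n)$ for every $v\in K_n$, where $\lambda_n$ is Haar measure on $X_n$ (for compact $X_n$ one just takes $Q_n=X_n$). Set $\mu_n:=\lambda_n|_{Q_n}/\lambda_n(Q_n)\in P(X_n)$ and $\mu:=\bigotimes_n\mu_n\in P(X)$; then $\mu$ is supported on a compact box, and for each $N$ the finite product $\mu_{<N}:=\bigotimes_{n<N}\mu_n$ is a constant multiple of Haar measure on the locally compact group $X_{<N}$ restricted to $\prod_{n<N}Q_n$. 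Since $A,B$ are not Haar-null, after a translation I assume $\mu(A)>0$, $\mu(B)>0$.

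Now I slice. For $u\in X_{<N}$ let $A_u:=\{v\in X_{\ge N}:(u,v)\in A\}$, and $B_u$ analogously; these are Borel and $u\mapsto\mu_{\ge N}(A_u)$, $u\mapsto\mu_{\ge N}(B_u)$ are Borel, where $\mu_{\ge N}:=\bigotimes_{n\ge N}\mu_n$. By L\'evy's martingale convergence theorem along the coordinate filtration, $\mu_{\ge N}(A_u)\to\mathbf 1_A$ and $\mu_{\ge N}(B_u)\to\mathbf 1_B$ $\mu$-a.e.\ as $N\to\infty$; hence for all sufficiently large $N$ the Borel sets $P_A:=\{u:\mu_{\ge N}(A_u)>\tfrac34\}$ and $P_B:=\{u:\mu_{\ge N}(B_u)>\tfrac34\}$ have positive $\mu_{<N}$-measure, so positive Haar measure in $X_{<N}$. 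Shrinking them to compact sets of positive measure and applying the Steinhaus--Weil Theorem~\ref{Stein} in $X_{<N}$, the set $P_A+P_B$ has nonempty interior there, and I let $W$ be a nonempty open subset of it.

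Finally, the tail — which I expect to be the main obstacle. Fix $a\in P_A$, $b\in P_B$ and $v=(v_n)_{n\ge N}\in\prod_{n\ge N}K_n$. The push-forward of $\mu_{\ge N}$ under $s\mapsto v-s$ is $\bigotimes_{n\ge N}\big(\lambda_n|_{v_n+Q_n}/\lambda_n(Q_n)\big)$ (the inversion is harmless since the $Q_n$ are symmetric), so by the standard total-variation bound for product measures and the F\o lner choice of the $Q_n$ it lies within $\sum_{n\ge N}2^{-n-2}<\tfrac18$ of $\mu_{\ge N}$. Hence $\mu_{\ge N}(v-B_b)>\tfrac34-\tfrac18=\tfrac58$, so $\mu_{\ge N}(A_a)+\mu_{\ge N}(v-B_b)>1$, forcing $A_a\cap(v-B_b)\ne\emptyset$, i.e.\ $v\in A_a+B_b$. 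Thus $A_a+B_b\supset\prod_{n\ge N}K_n$ for all $a\in P_A,\ b\in P_B$. Given any $u\in W\subset P_A+P_B$, write $u=a+b$ with $a\in P_A,\ b\in P_B$; splitting each $v\in\prod_{n\ge N}K_n$ as $v=v_1+v_2$ with $v_1\in A_a$, $v_2\in B_b$ yields $(u,v)=(a,v_1)+(b,v_2)\in A+B$. Therefore $W\times\prod_{n\ge N}K_n\subset A+B$, the target inclusion with $t=\theta$. The crux is this last step: one must place on the non-locally-compact group $X_{\ge N}$ a probability measure that is at once a product measure (so the finite part $X_{<N}$ is handled by the classical Steinhaus--Weil theorem) and nearly invariant under translations by the prescribed compact set $\prod_{n\ge N}K_n$ (so that two sets of large measure must sum onto it); amenability of the factors $X_n$ supplies the F\o lner sets that make this work, and the summability of the tolerances $2^{-n-1}$ keeps the infinite product of per-coordinate errors under control.
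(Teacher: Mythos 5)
Your proof is correct. The measure you construct is essentially the one the paper builds in Theorem~\ref{l8} --- a product of normalized Haar measures restricted to F\o lner sets adapted to the coordinate projections $K_n$ of the given compact set --- but the way you exploit it is genuinely different. The paper channels the argument through two abstract properties of such a measure (``coherent'' and ``$K$-shiftable'', Definitions~\ref{d:coher} and \ref{d:subinv}): coherence, proved by a Fubini computation on a finite block, produces a $\mu$-positive compact $C$ inside a common translate of $A$ and of $B$, and shiftability shows that $\{x:\mu(C\cap(C+x))>0\}$ is a relative neighborhood of $\theta$, whence $C-C$ covers a relative neighborhood in $K$; this gives $A-B$ Haar-open, and $A+B$ follows since $\mu=-\mu$. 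You instead split $X$ into a finite head $X_{<N}$ and an infinite tail $X_{\ge N}$, use L\'evy's martingale convergence to choose $N$ so that the sets of head-coordinates whose fibers have conditional measure $>\tfrac34$ have positive Haar measure, finish the head with the classical Steinhaus--Weil Theorem~\ref{Stein}, and handle the tail by the total-variation near-invariance of the product measure under translation by any element of $\prod_{n\ge N}K_n$, which forces two fibers of measure $>\tfrac34$ to intersect after such a translation. Your route trades the paper's coherence computation for the martingale reduction, treats $A+B$ directly rather than via $A-B$, and makes the role of the finite/infinite coordinate split more transparent. The only points where you are terser than one would like are routine: the existence of symmetric F\o lner sets $Q_n$ with nonempty interior requires the compact-set form of the F\o lner condition (exactly as in the paper's construction of $\Omega_n=\Lambda_n+K_n+U_{\w,n}$, and the requirement $K_n\subset Q_n$ is in fact never used), and the appeal to Theorem~\ref{t:HO} uses that $X$ is an abelian Polish group, so that a complete invariant metric is available.
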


This theorem follows from Theorems~\ref{l7} and \ref{l8} proved below. In the proofs we shall use special measures, introduced in Definitions~\ref{d:coher} and \ref{d:subinv}. For a  measure $\mu\in P(X)$ on a topological space $X$, a Borel set $B\subset X$ is called \index{subset!$\mu$-positive}\index{$\mu$-positive subset}{\em $\mu$-positive} if $\mu(B)>0$.

\begin{definition}\label{d:coher} A measure $\mu\in P(X)$ on a topological group $X$ is called \index{measure!coherent}\index{coherent measure}{\em coherent} if for any $\mu$-positive compact sets $A,B\subset X$ there are points $a,b\in X$ such that the set $(A+a)\cap (B+b)$ is $\mu$-positive.
\end{definition}

\begin{definition}\label{d:subinv} Let $K$ be a compact set in a  topological group $X$. A  measure $\mu\in P(X)$ is called \index{measure!$K$-shiftable}{\em $K$-shiftable} if for every $\mu$-positive compact set $B\subset X$ there exists a neighborhood $U\subset X$ of $\theta$ such that $\inf_{x\in U\cap K}\mu(B+x)>0$.
\end{definition}

\begin{theorem}\label{l7} Let $X$ be a topological group such that for any compact set $K\subset X$ there exists a coherent $K$-shiftable Radon measure $\mu\in P(X)$. Let $A,B$ be Borel sets in $X$. If $A,B$ are not Haar-null, then the set $A-B=\{a-b:a\in A,\;b\in B\}$ is Haar-open in $X$.
\end{theorem}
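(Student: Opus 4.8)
The plan is to fix a compact set $K\subset X$ and a point $p\in K$, and use Theorem~\ref{t:HO}: it suffices to find $x\in X$ such that $(A-B)\cap(x+K')$ has nonempty interior in $K'$ for a suitable compact set $K'$ built from $K$ (in fact, by Theorem~\ref{t:HO} it is enough to produce, for every nonempty compact $K$, some shift meeting $K$ in a set with nonempty interior; I will produce such a witness directly). Let $\mu\in P(X)$ be a coherent, $K$-shiftable Radon measure provided by the hypothesis. Since $A$ and $B$ are not Haar-null, $\mu(A+u)>0$ and $\mu(B+v)>0$ for some $u,v\in X$ (indeed, $\mu$ itself fails to witness Haar-nullness of each of them, and $\mu$ being Radon, positive measure concentrates on compact subsets). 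By inner regularity I may pick $\mu$-positive compact sets $A_0\subset A+u$ and $B_0\subset B+v$.

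Next I would apply coherence: there are $a,b\in X$ with $(A_0+a)\cap(B_0+b)$ $\mu$-positive; call this compact $\mu$-positive set $C$. Then I would use $K$-shiftability of $\mu$ applied to the $\mu$-positive compact set $C$: there is a neighborhood $U$ of $\theta$ such that $\inf_{x\in U\cap K}\mu(C+x)>0$. The key measure-theoretic point now is that for every $x\in U\cap K$, the set $C\cap(C+x)$ has positive measure — wait, more carefully: $\mu(C+x)>\delta>0$ uniformly, and $\mu(C)>0$; to force overlap I should instead observe that the translates of the complement cannot swallow $C$. Concretely, for $x\in U\cap K$ with $U$ chosen small enough we get $\mu\big(C\cap(C+x)\big)\ge \mu(C)+\mu(C+x)-\mu(C\cup(C+x))>0$ provided $\mu(C\cup(C+x))$ stays bounded away from $\mu(C)+\mu(C)$; this needs a refinement — I would shrink $C$ to a $\mu$-positive compact set of small measure, or better, run the $K$-shiftability argument on $C$ together with a Lebesgue-density / regularity step so that for all sufficiently small shifts $x$ the intersection $C\cap(C+x)$ is $\mu$-positive, hence nonempty. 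The upshot is: for every $x\in U\cap K$ there exist $c,c'\in C$ with $c=c'+x$, i.e. $x=c-c'\in C-C$. Therefore $U\cap K\subset C-C$.

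Now unravel $C$: $C\subset(A_0+a)-(B_0+?)$ — here I must be careful about which translate $C$ lies in. We have $C\subset A_0+a\subset A+u+a$ and $C\subset B_0+b\subset B+v+b$, so $C-C\subset (A+u+a)-(B+v+b)=(A-B)+(u+a-v-b)$. Setting $x_0=-(u+a-v-b)$ we conclude $U\cap K\subset (A-B)-x_0$, i.e. $x_0+(U\cap K)\subset A-B$, and since $U\cap K$ is relatively open in $K$ and contains... — I should have arranged $p\in U\cap K$, which I can do by first replacing $K$ by $K-p$ so that $\theta\in K$, guaranteeing $\theta\in U\cap K$; then $p$ translates back to an interior point. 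Hence $(A-B)\cap(x_0+K)$ contains the relatively open set $x_0+(U\cap K)$, so by Theorem~\ref{t:HO} (applied to the compact set $K$, noting every compact set arises this way) the set $A-B$ is Haar-open.

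The main obstacle is the step forcing $C\cap(C+x)$ to be nonempty (equivalently $U\cap K\subset C-C$) uniformly over small shifts: $K$-shiftability as stated only gives a uniform positive lower bound on $\mu(C+x)$, not directly overlap with $C$. I expect the fix is to apply the definitions more cleverly — e.g. first pass to a $\mu$-positive compact set whose measure is less than half of some earlier $\mu$-positive set containing it, so that two shifts inside a common ambient set of bounded measure must intersect — or to invoke coherence a second time in tandem with shiftability, which is presumably exactly why both properties are assumed. Pinning down this inclusion $U\cap K\subset C-C$ (and choosing the sets in the right order so that $p$ lands in the interior) is the crux; the rest is bookkeeping with translations and inner regularity.
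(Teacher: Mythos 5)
Your proposal follows the paper's proof essentially step for step: pass to $K-p$, take a coherent $(K-p)$-shiftable Radon measure, use Radon-ness plus coherence to produce a $\mu$-positive compact set $C$ lying inside a common translate of $A$ and of $B$, and then show that all sufficiently small shifts of $C$ along $K-p$ meet $C$, so that a relatively open neighborhood of $p$ in $K$ sits inside a translate of $A-B$. The crux you flag is closed by exactly the regularity step you guessed: shiftability gives $\e>0$ and a neighborhood $U$ of $\theta$ with $\mu(C+x)>2\e$ for all $x\in U\cap(K-p)$, and regularity of $\mu$ lets you shrink $U$ so that $\mu(C+U)<\mu(C)+\e$; then $\mu\big(C\cap(C+x)\big)=\mu(C)+\mu(C+x)-\mu\big(C\cup(C+x)\big)>\mu(C)+2\e-\big(\mu(C)+\e\big)=\e>0$, so $U\cap(K-p)\subset C-C$ and the rest of your bookkeeping goes through (the second invocation of coherence you contemplate is not needed).
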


\begin{proof} To show that $A-B$ is Haar-open, fix any compact set $K$ in $X$ and a point $p\in K$. We need to find a point $s\in X$ such that $(A-B+s)\cap K$ is a neighborhood of $p$ in $K$. By our assumption, for the compact set $K-p$, there exists a coherent $(K-p)$-shiftable measure $\mu\in P(X)$. Since $A,B$ are not Haar-null, there exist points $a,b\in X$ such that $\mu(A+a)>0$ and $\mu(B+b)>0$.  
Since the measure $\mu$ is Radon and coherent, there exist points $a',b'\in X$ such that the set $(A+a+a')\cap (B+b+b')$ is $\mu$-positive and hence contains some $\mu$-positive compact set $C$. We claim that the set $V:=\{x\in K-p:\mu\big(C\cap(C+x)\big)>0\}$ is a neighborhood of $\theta$ in $K-p$.

Since $\mu$ is $(K-p)$-shiftable, for the compact set $C$ there exist a neighborhood $U\subset X$ of $\theta$ and positive $\e>0$ such that $\mu(C+x)>2\e$ for all $x\in U\cap (K-p)$. Using the regularity of the measure $\mu$, we can replace the neighborhood $U$ by a smaller neighborhood of $\theta$ and assume that $\mu(C+U)<\mu(C)+\e$. We claim that  $U\cap (K-p)\subset V$. Indeed, for any $x\in U\cap (K-p)$, the set $C+x$ has measure $\mu(C+x)>2\e$. Since $C+x\subset C+U$, we conclude that
$$
\mu\big(C\cap (C+x)\big)=\mu(C)+\mu(C+x)-\mu\big(C\cup(C+x)\big)>\mu(C)+2\e-\mu(C+U)>\e>0,
$$which means that $x\in V$ and $V\supset U\cap (K-p)$ is a neighborhood of $\theta$ in $K-p$. Then $W=V+p$ is a neighborhood of $p$ in $K$. It follows that for every $x\in W$, we get $C\cap (C+x-p)\ne\emptyset$ and hence $x\in C+p-C\subset (A+a+a')+p-(B+b+b')$. So, for the point $s:=a+a'+p-b-b'$ the set $A-B+s$ contains each point $x$ of the set $W\subset K$ and $K\cap(A-B+s)\supset W$ is a neighborhood of $p$ in $K$.
\end{proof}


\begin{theorem}\label{l8}  Let $X=\prod_{n\in\w}X_n$ be the Tychonoff product of locally compact topological groups. Then for every compact subset $K\subset X$, there exists a coherent $K$-shiftable Radon measure $\mu\in P(X)$ such that $\mu=-\mu$ and $\mu$ has compact support.
\end{theorem}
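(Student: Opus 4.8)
The plan is to take $\mu$ to be a countable product $\mu=\bigotimes_{n\in\w}\mu_n$ of normalized restrictions of Haar measures to large, highly translation‑invariant compact sets in the factors. Fix a compact $K\subset X$ and, writing $\theta_n$ for the neutral element of $X_n$ and $K_n:=\pi_n(K)$ (a compact subset of $X_n$), let $\lambda_n$ be a Haar measure on $X_n$. Since $X_n$ is a locally compact abelian (hence amenable) group, the F\o{}lner condition provides a compact symmetric neighborhood $W_n$ of $\theta_n$ with $K_n\subset W_n$ and $\lambda_n\big((W_n+k)\triangle W_n\big)<2^{-n-3}\lambda_n(W_n)$ for all $k\in K_n\cup(-K_n)$ (for $X_n=\IR^a$ one may simply take $W_n$ to be a sufficiently large cube; in general a F\o{}lner exhaustion of $X_n$ works when $X_n$ is $\sigma$‑compact, which is automatic when $X$ is Polish). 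Put $\mu_n:=\lambda_n(W_n)^{-1}\cdot(\lambda_n|_{W_n})$ and $\mu:=\bigotimes_{n}\mu_n\in P(X)$. Then $\mu$ is Radon, $\supp(\mu)\subset\prod_n W_n$ is compact, and since each $W_n$ is symmetric and Haar measures are inversion‑invariant, $\mu=-\mu$. So the whole task reduces to checking that this $\mu$ is coherent and $K$‑shiftable.

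For \emph{coherence}, note first that for every $n$ the finite product $\mu^{<n}:=\bigotimes_{i<n}\mu_i$ is the normalized restriction of the Haar measure $\lambda^{<n}$ of the locally compact group $G_n:=\prod_{i<n}X_i$ to the box $W^{<n}:=\prod_{i<n}W_i$; hence the convolution identity $\int_{G_n}\lambda^{<n}\big(S\cap(T+c)\big)\,d\lambda^{<n}(c)=\lambda^{<n}(S)\cdot\lambda^{<n}(T)$ shows that any two $\mu^{<n}$‑positive Borel sets $S,T\subset W^{<n}$ admit a vector $c$ with $\mu^{<n}\big(S\cap(T+c)\big)>0$. Now, given $\mu$‑positive compact $A,B\subset X$, I would replace them by $A\cap\supp(\mu)$ and $B\cap\supp(\mu)$, so that $A,B\subset\prod_n W_n$. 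For a Borel $E\subset X$ and $u\in G_n$ put $E_u:=\{y\in\prod_{i\ge n}X_i:(u,y)\in E\}$ and $\mu^{\ge n}:=\bigotimes_{i\ge n}\mu_i$. By the martingale convergence theorem for the filtration generated by the projections $\pi_0,\dots,\pi_{n-1}$, the functions $u\mapsto\mu^{\ge n}(A_u)$ converge $\mu$‑a.e.\ to $1_A$ as $n\to\infty$, so for all sufficiently large $n$ the set $S^A_n:=\{u\in G_n:\mu^{\ge n}(A_u)>\tfrac34\}$ satisfies $\mu^{<n}(S^A_n)>\tfrac12\mu(A)>0$, and likewise $\mu^{<n}(S^B_n)>\tfrac12\mu(B)>0$. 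Fixing such an $n$, I would choose $c\in G_n$ with $\mu^{<n}\big(S^A_n\cap(S^B_n+c)\big)>0$ and let $\tilde c\in X$ be the vector with $\pi_i(\tilde c)=c(i)$ for $i<n$ and $\pi_i(\tilde c)=\theta_i$ for $i\ge n$. Then for each $u\in S^A_n\cap(S^B_n+c)$ the fibre $(A\cap(B+\tilde c))_u=A_u\cap B_{u-c}$ has $\mu^{\ge n}$‑measure $>\tfrac12$, so integrating over $u$ against $\mu^{<n}$ gives $\mu\big(A\cap(B+\tilde c)\big)>0$; thus $(A+\theta)\cap(B+\tilde c)$ is $\mu$‑positive, which is exactly coherence.

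For \emph{$K$‑shiftability}, fix a $\mu$‑positive compact $B\subset X$ and set $\beta:=\mu(B)>0$. For $x\in X$ with $x_n:=\pi_n(x)$ one has $\mu(B+x)=\rho_x(B)$, where $\rho_x:=\bigotimes_n\rho_{x,n}$ is the image of $\mu$ under $y\mapsto y-x$ and $\rho_{x,n}=\lambda_n(W_n)^{-1}\cdot(\lambda_n|_{W_n-x_n})$, so that $\|\mu_n-\rho_{x,n}\|_{\mathrm{TV}}=\tfrac1{2\lambda_n(W_n)}\lambda_n\big(W_n\triangle(W_n-x_n)\big)$. By the F\o{}lner estimate (applied with $k=-x_n$) this is $<2^{-n-3}$ whenever $x_n\in K_n$, and by continuity of translation in the $L^1$‑norm of Haar measure it tends to $0$ as $x_n\to\theta_n$. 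I would then pick $m$ with $\sum_{n\ge m}2^{-n-3}<\beta/4$ and, for each $n<m$, a neighborhood $U_n\ni\theta_n$ making $\|\mu_n-\rho_{x,n}\|_{\mathrm{TV}}<\beta/(4m)$ whenever $x_n\in U_n$; then $U:=\{x\in X:\pi_n(x)\in U_n\text{ for }n<m\}$ works, since for $x\in U\cap K$ the standard bound $\|\bigotimes_n\mu_n-\bigotimes_n\rho_{x,n}\|_{\mathrm{TV}}\le\sum_n\|\mu_n-\rho_{x,n}\|_{\mathrm{TV}}$ yields $\|\mu-\rho_x\|_{\mathrm{TV}}<m\cdot\tfrac\beta{4m}+\sum_{n\ge m}2^{-n-3}<\beta/2$, and hence $\mu(B+x)=\rho_x(B)\ge\mu(B)-\|\mu-\rho_x\|_{\mathrm{TV}}>\beta/2$. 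Therefore $\inf_{x\in U\cap K}\mu(B+x)\ge\beta/2>0$.

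The main obstacle is the construction of the compact symmetric F\o{}lner neighborhoods $W_n\supset K_n$: this is precisely the point where amenability of locally compact abelian groups is essential, and handling the $\IR^a$‑factors by large cubes together with a F\o{}lner exhaustion for the remaining ($\sigma$‑compact) factors should supply everything needed here. The other two ingredients — the martingale/Lebesgue‑density passage from ``positive conditional measure on a positive‑measure set of finite prefixes'' to ``positive total measure'', and the submultiplicativity of total variation under countable products — are standard, so once the sets $W_n$ are in place the rest is bookkeeping.
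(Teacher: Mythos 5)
Your measure is the same one the paper uses: a countable product of normalized restrictions of Haar measures to symmetric compact F\o{}lner sets attached to the projections $K_n$ (the paper's $\Omega_n=\Lambda_n+K_n+U_{\w,n}$ plays the role of your $W_n$), so symmetry, compact support and Radonness come for free in both treatments. Where you genuinely diverge is in the two verifications, and in both places your route is cleaner. For $K$-shiftability the paper builds nested sets $M_k$ with $\mu(M_k)\to1$ and checks $M_k+x\subset\prod_n\Omega_n$ for $x$ in a basic neighbourhood met with $K$, which forces the auxiliary neighbourhoods $O_{i,n}$, $U_{m,n}$; you instead write $\mu(B+x)=\rho_x(B)$ and bound $\|\mu-\rho_x\|_{\mathrm{TV}}$ coordinatewise by subadditivity of total variation under products, with the F\o{}lner estimate giving the tail $\sum_{n\ge m}2^{-n-3}$ and $L^1$-continuity of translation giving the head; this is correct and shorter. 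For coherence the paper runs a Fubini computation with the section sets $L_A,L_B$ and the integral $\int g(s)\,ds$; your martingale reduction to the prefix sets $S^A_n,S^B_n$ followed by the convolution identity on the finite product $G_n$ is the same averaging idea in a more conceptual packaging and lands directly on $\mu\big(A\cap(B+\tilde c)\big)>0$. Two minor remarks: (i) the conditions that $W_n$ contain $K_n$ and be a neighbourhood of $\theta_n$ are never used in your argument --- only symmetry and the F\o{}lner estimate for $k\in K_n\cup(-K_n)$ matter --- and dropping them removes the one step that is awkward to arrange (a compact F\o{}lner set for the symmetric compact set $K_n\cup(-K_n)$, symmetrized by $F\cup(-F)$, suffices, and the F\o{}lner theorem for amenable locally compact groups cited by the paper does not require $\sigma$-compactness); (ii) the martingale step tacitly identifies the Borel and product $\sigma$-algebras modulo $\mu$-null sets, which is harmless here and is the same convention the paper's own Fubini argument uses.
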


\begin{proof} Fix a compact set $K\subset X$. Replacing $K$ by $K\cup\{\theta\}\cup(-K)$ we can assume that $\theta\in K=-K$.
 
For every $n\in\w$ we identify the products $\prod_{i< n}X_i$ and $\prod_{i\ge n}X_i$ with the subgroups $X_{<n}:=\{(x_i)_{i\in\w}\in X:\forall i\ge n\;\;x_i=\theta_i\}$ and $X_{\ge n}:=\{(x_i)_{i\in\w}\in X:\forall i<n\;\;x_i=\theta_i\}$ of $X$, respectively. Here by $\theta_i$ we denote the neutral element of the group $X_i$.

Let $\pr_n:X\to X_n$ and $\pr_{le n}:X\to X_{\le n}$ be the natural coordinate projections. For every $n\in\w$ let $K_n$ be the projection of the compact set $K$ onto the locally compact group $X_n$.

Fix a decreasing sequence $(O_{i,n})_{i\in\w}$ of open neighborhoods of zero in the group $X_n$ such that  $\bar O_{0,n}$ is compact and $O_{i+1,n}+O_{i+1,n}\subset O_{i,n}=-O_{i,n}$ for every $i\in\w$. For every $m\in\w$ consider the neighborhood $U_{m,n}:=O_{1,n}+O_{2,n}+\dots+O_{m,n}$ of zero in the group $X_n$. By induction it can be shown that $-U_{m,n}=U_{m,n}\subset U_{m+1,n}\subset O_{0,n}$ for all $m\in\w$. Let $U_{\w,n}:=\bigcup_{m\in\w}U_{m,n}\subset O_{0,n}$. 

Fix a Haar measure $\lambda_n$ in the locally compact group $X_n$.
Being Abelian, the locally compact group $X_n$ is amenable. So, we can apply the F\o lner Theorem \cite[4.13]{Pat} and find a compact set $\Lambda_n=-\Lambda_n\subset X$ such that $\lambda_n\big((\Lambda_n+K_n+U_{\w,n})\setminus \Lambda_n\big)<\frac1{2^n}\lambda_n(\Lambda_n)$. Multiplying the Haar measure $\lambda_n$ by a suitable positive constant, we can assume that the set $\Omega_n:=\Lambda_n+K_n+U_{\w,n}$ has measure $\lambda_n(\Omega_n)=1$.
Then $\tfrac1{2^n}\lambda_n(\Lambda_n)<\lambda_n(\Omega_n-\Lambda_n)=
1-\lambda_n(\Lambda_n)$ implies $\lambda_n(\Lambda_n)>\frac{2^n}{2^n+1}$.

 Now consider the probability  measure $\mu_n$ on $X_n$ defined by $\mu_n(B)=\lambda_n(B\cap \Omega_n)$ for any Borel subset of $X$. If follows from $\Omega_n=-\Omega_n$ that $\mu_n=-\mu_n$.

Let $\mu:=\otimes_{n\in\w}\mu_n\in P(X)$ be the product measure of the  
probability measures $\mu_n$. It is clear that $\mu=-\mu$ and $\mu$ has compact support contained in the compact subset $\prod_{n\in\w}\bar\Omega_n$ of $X$. We claim that the measure $\mu$ is $K$-shiftable and coherent.

\begin{claim} The measure $\mu$ is $K$-shiftable. 
\end{claim}

\begin{proof} For every $k\in\w$ consider the set 
$M_k:=\prod_{n<k}(\Lambda_n+K_n+U_{k,n})\times \prod_{n\ge k}\Lambda_n$. We claim that $\lim_{k\to\infty}M_k=1$.
Indeed, for every $\e>0$ we can find $m\in\IN$ such that $\prod_{n\ge m}\frac{2^n}{2^n+1}>1-\e$ and then for any $n<m$ by the $\sigma$-additivity of the Haar measure $\lambda_n$, find $i_n> m$ such that $\lambda_n(\Lambda_n+K_n+U_{i_n,n})>(1-\e)^{1/m}$. Then for any $k\ge\max\limits_{n<m}i_n> m$, we obtain the lower bound 
$$
\begin{aligned}
\mu(M_k)&=\prod_{n<k}\lambda_n(\Lambda_n+K_n+U_{k,n})\cdot\prod_{n\ge k}\lambda_n(\Lambda_n)\\
&\ge \prod_{n<k}\lambda_n(\Lambda_n+K_n+U_{k,n})\cdot\prod_{n\ge m}\lambda_n(\Lambda_n)\\
&>\prod_{n<m}(1-\e)^{\frac1m}\cdot\prod_{n\ge m}\tfrac{2^n}{2^n+1}>(1-\e)^2.
\end{aligned}
$$

Now take any $\mu$-positive compact set $B\subset X$. Since $\lim_{k\to\infty}M_k=1$, there exists $k\in\w$ such that $\mu(B\cap M_k)>0$.  Consider the neighborhood $U:=\prod_{n<m}O_{k+1,n}\times\prod_{n\ge m}X_n$ of $\theta$ in $X$, and observe that for any $x\in U\cap K$, we have $M_k+x\subset\prod_{n\in\w}\Omega_n$ and hence $\mu(B+x)\ge \mu((B\cap M_k)+x)=\mu(B\cap M_k)$ by the definition of the measure $\mu$.
\end{proof}

In the following claim we prove that the measure $\mu=-\mu$ is coherent.

\begin{claim} For any $\mu$-positive compact sets $A,B\subset X$ there are points $a,b\in X$ such that $\mu((A+a)\cap (B+b))>0$.
\end{claim}

\begin{proof} We lose no generality assuming that the sets $A,B$ are contained in $\prod_{n\in\w}\Omega_n$. By the regularity of the measure $\mu$, there exists a neighborhood $U\subset X$ of $\theta$ such that $\mu(A+U)<\frac65\mu(A)$ and $\mu(B+U)<\frac65\mu(B)$. Replacing $U$ by a smaller neighborhood, we can assume that $U$ is of the basic form $U=V+X_{\ge n}$ for some $n\in\w$ and some open set $V\subset X_{<n}$. Consider the tensor product $\lambda=\lambda'\otimes\mu'$ of the measures $\lambda'=\otimes_{k<n}\lambda_k$ and $\mu':=\otimes_{k\ge n}\mu_n$ on the subgroups $X_{<n}$ and $X_{\ge n}$ of $X$, respectively. It follows from $A\cup B\subset \prod_{n\in\w}\Omega_n$ that $\lambda(A)=\mu(A)$ and $\lambda(B)=\mu(B)$. Observe also that the measure $\lambda$ is $X_{<n}$-invariant in the sense that $\lambda(C)=\lambda(C+x)$ for any Borel set $C\subset X$ and any $x\in X_{<n}$.

For every $y\in X_{\ge n}$ let $A_y=\{x\in X_{<n}:x+y\in A\}$ be the $y$th section of the set $A$. By the Fubini Theorem, 
$\lambda(A)=\int_{\mu'}\lambda'(A_y)dy$.
Consider the projection $A'$ of $A$ onto $X_{<n}$. 
Taking into account that $A_y\subset A'\subset\prod_{k<n}\Omega_k$, we conclude that $$\lambda'(A_y)\le \lambda'(A')=\mu(A'+X_{\ge n})=\mu(A+X_{\le n})\le\mu(A+U)<\tfrac65\mu(A)$$and hence $0<\frac56\lambda'(A')<\mu(A)$.

Consider the set  $$L_A:=\big\{y\in \prod_{k\ge n}\Omega_k:\lambda'(A_y)>\tfrac13\lambda'(A')\big\}.$$ By the Fubini Theorem,
$$\tfrac56\lambda'(A')<\mu(A)=\lambda(A)=\int_{\mu'}\lambda'(A_y)dy\le\mu'(L_A)\cdot \lambda'(A')+\tfrac13\lambda'(A')\cdot\big(1-\mu'(L_A)\big)$$ and hence $\mu'(L_A)>\frac34$.

By analogy, for the set $B$ consider the projection $B'$ of $B$ onto $X_{<n}$ and for every $y\in B'$ consider the $y$th section $B_y:=\{x\in X_{<n}:x+y\in B\}$ of $B$. Repeating the above argument we can show that the set $L_B:=\{y\in \prod_{k\ge n}\Omega_k:\mu'(B_x)>\frac13\lambda'(B')\}$ has measure $\mu'(L_B)>\frac34$.
Then $\mu'(L_A\cap L_B)>\frac12$.

Now we are ready to find $s\in X_{<n}$ such that $A\cap (B+s)\ne\emptyset$.
Observe that a point $z\in X$ belongs to $A\cap (B+s)$ is and only if $\chi_A(z)\cdot\chi_{B+s}(z)>0$, where $\chi_A$ and $\chi_{B+s}$ denote the characteristic functions of the sets $A$ and $B+s$ in $X$.

So, it suffices to show that the function $\chi_A\cdot \chi_{B+s}$ has a non-zero value. For this write $z$ as a pair $(x,y)\in X_{<n}\times X_{\ge n}$ and  for every $s\in X_{<n}$ consider the function 
$$g(s):=\int_{\lambda}\chi_A(z)\cdot\chi_{B+s}(z)dz=
\int_{\lambda'}\int_{\mu'}\chi_A(x,y)\cdot\chi_{B+s}(x,y)\,dy\,dx$$and its integral, transformed with help of Fubini's Theorem:
$$
\begin{aligned}
\int_{\lambda'}g(s)\,ds&=\int_{\lambda'}
\int_{\mu'}\int_{\lambda'}\chi_A(x,y)\cdot\chi_{B+s}(x,y)\,dx\, dy\,ds\\
&=\int_{\mu'}\int_{\lambda'}\int_{\lambda'}\chi_A(x,y)\cdot\chi_{B}(x-s,y)\,ds\, dx\, dy\\
&=\int_{\mu'}\int_{\lambda'}\chi_A(x,y)\int_{\lambda'}\chi_B(x-s,y)\,ds\,dx\, dy\\
&=\int_{\mu'}\int_{\lambda'}\chi_A(x,y)\lambda'(B_y)\,dx \,dy=\int_{\mu'}\lambda'(B_y)\int_{\lambda'}\chi_A(x,y)\,dx\, dy\\
&=\int_{\mu'}\lambda'(B_y)\cdot \lambda'(A_y)\,dy>\frac19\lambda'(A')\cdot\lambda'(B')\cdot \mu'(L_A\cap L_B)>0.
\end{aligned}
$$
Now we see that $g(s)>0$ for some $s\in X_{<n}$ and hence $A\cap (B+s)\ne\emptyset$.
\end{proof}
\end{proof}

Theorem~\ref{t:Sum-HO} rises two open problems.

\begin{problem} Let $A,B$ be Borel subsets of a (reflexive) Banach space $X$ such that  $A,B$ are not Haar-null in $X$. Is the set $A+B$ (or at least $A+A$) Haar-open in $X$?
\end{problem}

\begin{problem} Let $A,B$ be Borel subsets of a Polish group $X$  such that $A,B$ are not Haar-meager in $X$. Is the set $A+B$ (or at least $A+A$) Haar-open in $X$?
\end{problem}

\section{An example of a ``large'' Borel subgroup in $\IZ^\w$}

In the following example by $\IZ$ we denote the group of integer numbers endowed with the discrete topology.

\begin{example}\label{Banakh} The Polish group $\IZ^\w$ contains a meager Borel subgroup $H$, which does not belong to the ideal $\sigma\overline{\HM}$ in $\IZ^\w$. Consequently, the $\sigma$-ideals $\sigma\overline{\HM}$ and $\sigma\overline{\HN}$ on $\IZ^\w$ does not have the weak Steinhaus property.
\end{example}

\begin{proof} The Borel group $H$ will be defined as the group hull of some special $G_\delta$-set $P\subset\IZ^\w$. In the construction of this set $P$ we shall use the following lemma.

\begin{lemma}\label{l1} There exists an infinite family $\Tau$ of thick subsets of $\IZ$ and an increasing number sequence $(\Xi_m)_{m\in\w}$ such that for any positive numbers $n\le m$, non-zero integer numbers $\lambda_1,\dots,\lambda_n\in[-m,m]$, pairwise distinct sets $T_1,\dots,T_n\in\Tau$ and points $x_i\in T_i$, $i\le n$, such that $\{x_1,\dots,x_n\}\not\subset [-\Xi_m,\Xi_m]$ the sum $\lambda_1x_1+\dots+\lambda_n x_n$ is not equal to zero.
\end{lemma}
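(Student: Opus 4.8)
The plan is to realise the sets $T_k$ as pairwise disjoint unions of long but extremely widely spaced blocks (intervals) in $\IN\subseteq\IZ$, and to let $\Xi_m$ be essentially the right endpoint of the $m$-th block. Concretely, I would first choose recursively a strictly increasing sequence of finite intervals $B_j=\{\beta_j,\beta_j+1,\dots,\gamma_j\}\subset\IN$ (for $j\ge 1$) with $\gamma_j-\beta_j=j$, so that $|B_j|\to\infty$, and with $\beta_{j+1}>(j+1)^2\gamma_j$, so that consecutive blocks are separated by a super-polynomial gap. Then I would fix a partition of $\IN$ into infinitely many infinite pairwise disjoint sets $I_k$, $k\in\w$ (e.g.\ $I_k=\{2^k(2l+1):l\in\w\}$), and put $T_k:=\bigcup_{j\in I_k}B_j$, $\Tau:=\{T_k:k\in\w\}$, and finally $\Xi_m:=\gamma_m$ (with $\Xi_0:=0$), which is an increasing sequence. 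Since the $B_j$ are pairwise disjoint, $\Tau$ is an infinite family of pairwise disjoint subsets of $\IN$; and since each $I_k$ is infinite while $|B_j|\to\infty$, each $T_k$ contains arbitrarily long intervals of $\IN$, hence a translate of every finite subset of $\IZ$, i.e.\ every $T_k$ is thick in $\IZ$.

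The heart of the matter is the non-vanishing property. Fix $1\le n\le m$, nonzero $\lambda_1,\dots,\lambda_n\in[-m,m]$, pairwise distinct $T_1,\dots,T_n\in\Tau$, and $x_i\in T_i$ with $\{x_1,\dots,x_n\}\not\subseteq[-\Xi_m,\Xi_m]$, i.e.\ $x_i>\gamma_m$ for at least one $i$. The case $n=1$ is immediate ($x_1\ne0$, $\lambda_1\ne0$), so suppose $n\ge2$. Each $x_i$ lies in a unique block $B_{j_i}$, and here is where the distinctness of the $T_i$ is used in an essential way: blocks belonging to different sets $T_k$ are disjoint, so the indices $j_1,\dots,j_n$ are pairwise distinct. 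Pick $i_0$ with $x_{i_0}=\max_i x_i$; then $x_{i_0}>\gamma_m$, so $\gamma_{j_{i_0}}\ge x_{i_0}>\gamma_m$ and monotonicity of $(\gamma_j)$ give $j_{i_0}\ge m+1$. Also $j_i<j_{i_0}$ for $i\ne i_0$, since $j_i\ge j_{i_0}+1$ would force $x_i\ge\beta_{j_i}\ge\beta_{j_{i_0}+1}>\gamma_{j_{i_0}}\ge x_{i_0}$, contradicting the choice of $i_0$; hence $x_i\le\gamma_{j_{i_0}-1}$ for $i\ne i_0$. Now the spacing condition yields $x_{i_0}\ge\beta_{j_{i_0}}>(j_{i_0})^2\gamma_{j_{i_0}-1}\ge(m+1)^2\gamma_{j_{i_0}-1}$, while $\sum_{i\ne i_0}|\lambda_i|x_i\le(n-1)m\,\gamma_{j_{i_0}-1}<m^2\gamma_{j_{i_0}-1}$, so
$$\Bigl|\sum_{i=1}^n\lambda_ix_i\Bigr|\ \ge\ |\lambda_{i_0}|\,x_{i_0}-\sum_{i\ne i_0}|\lambda_i|\,x_i\ >\ (m+1)^2\gamma_{j_{i_0}-1}-m^2\gamma_{j_{i_0}-1}\ =\ (2m+1)\,\gamma_{j_{i_0}-1}\ >\ 0,$$
whence $\lambda_1x_1+\dots+\lambda_nx_n\ne0$.

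The main obstacle is essentially bookkeeping of the quantifiers: the block gaps must grow polynomially in the block index (the factor $(j+1)^2$) so that, once the threshold $\Xi_m$ forces the ``top'' block index $j_{i_0}$ past $m$, the single leading term $\lambda_{i_0}x_{i_0}$ already outweighs the crude bound $(n-1)m\,\gamma_{j_{i_0}-1}$ on all the other terms combined — this is legitimate precisely because $n\le m$ and $|\lambda_i|\le m$, which is exactly the shape of the hypothesis. The one genuinely structural point, as opposed to calibration, is that distinctness of the $T_i$ forces distinctness of the blocks $B_{j_i}$, which is what produces a single dominant block; I would want to state that clearly. I do not expect any subtlety beyond getting these growth rates and the definition of $\Xi_m$ mutually consistent.
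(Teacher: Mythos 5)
Your proof is correct and follows essentially the same route as the paper: the paper realises your abstract blocks explicitly as the intervals $[2^{2^a}-a,\,2^{2^a}+a]$ (indexed by pairwise disjoint infinite subsets of $\IN$), sets $\Xi_m$ to be the endpoint of a sufficiently late block, and runs the identical ``single dominant term outweighs the $(n-1)m$-weighted sum of the earlier blocks'' estimate, using $n\le m$ and $|\lambda_i|\le m$ exactly as you do. Your recursive gap condition $\beta_{j+1}>(j+1)^2\gamma_j$ is just a cleaner axiomatisation of what the double-exponential formula achieves, so there is nothing substantively different to compare.
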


\begin{proof} For every $m\in\w$ let $\xi_m\in\w$ be the smallest number such that $$2^{2^{x}}-x>m^2(2^{2^{x-1}}+x)$$ for all $x\ge \xi_m$, and put $\Xi_m=2^{2^{\xi_m}}+\xi_m$. Choose an infinite family $\A$ of pairwise disjoint infinite subsets of $\IN$ and for every $A\in\A$ consider the thick subset $$T_A:=\bigcup_{a\in A}[2^{2^a}-a,2^{2^a}+a]$$ of $\IZ$. Here by $[a,b]$ we denote the segment $\{a,\dots,b\}$ of integers. Taking into account that the families $([2^{2^n}-n,2^{2^n}+n])_{n=1}^\infty$ and $\mathcal A$ are disjoint, we conclude that so is the family $(T_A)_{A\in\A}$.

We claim that the disjoint family $\Tau=\{T_A\}_{A\in\A}$ and the sequence $(\Xi_m)_{m\in\w}$ have the required property.
Take any positive numbers $n\le m$, non-zero integer numbers $\lambda_1,\dots,\lambda_n\in[-m,m]$, pairwise distinct sets $T_1,\dots,T_n\in\Tau$ and points $x_1\in T_1,\dots,x_n\in T_n$ such that $\{x_1,\dots,x_n\}\not\subset [-\Xi_m,\Xi_m]$. For every $i\le n$ find an integer number $a_i$ such that $x_i=2^{2^{a_i}}+\e_i$ for some $\e_i\in[-a_i,a_i]$. Since the family $\A$ is disjoint and the sets $T_1,\dots,T_n$ are pairwise distinct, the points $a_1,\dots,a_n$ are pairwise distinct, too. Let $j$ be the unique number such that $a_j=\max\{a_i:1\le i\le n\}$. Taking into account that $\{x_1,\dots,x_n\}\not\subset [0,\Xi_m]=[0,2^{2^{\xi_m}}+\xi_m]$, we conclude that $2^{2^{a_j}}+a_j\ge 2^{2^{a_j}}+\e_j=x_j>2^{2^{\xi_m}}+\xi_m$ and hence $a_j>\xi_m$. The definition of the number $\xi_m$ guarantees that $2^{2^{a_j}}-a_j>m^2(2^{2^{a_j-1}}+a_j)$ and hence
$$|\lambda_jx_j|\ge x_j=2^{2^{a_j}}+\e_j\ge 2^{2^{a_j}}-a_j>m^2(2^{2^{a_j-1}}+a_j)> \sum_{i\ne j}|\lambda_i|(2^{2^{a_i}}+a_i)\ge |\sum_{i\ne j}\lambda_ix_i|,$$and hence $\sum_{i=1}^n\lambda_ix_i\ne0$.
\end{proof}

Now we are ready to start the construction of the $G_\delta$-set $P\subset\IZ^\w$. This construction will be done by induction on the tree $\w^{<\w}=\bigcup_{n\in\w}\w^n$ consisting of finite sequences $s=(s_0,\dots,s_{n-1})\in\w^n$ of finite ordinals. 
For a sequence $s=(s_0,\dots,s_{n-1})\in \w^n$ and a number $m\in\w$ by $s\hat{\;}m=(s_0,\dots,s_{n-1},m)\in \w^{n+1}$ we denote the {\em concatenation} of $s$ and $m$.

For an infinite sequence $s=(s_n)_{n\in\w}\in\IZ^\w$ and a natural number $l\in\w$ let $s|l=(s_0,\dots,s_{l-1})$ be the restriction of the function $s:\w\to\IZ$ to the subset $l=\{0,\dots,l-1\}$.
Observe that the topology of the Polish group $\IZ^\w$ is generated by the ultrametric $$\rho(x,y)=\inf\{2^{-n}:n\in\w,\;x|n=y|n\},\;\;x,y\in\IZ^\w.$$Observe also that for every $z\in\IZ^\w$ and $n\in\w$ the set $U(z|n)=\{x\in\IZ^\w:x|n=z|n\}$ coincides with the closed ball $\bar B(z;2^{-n})=\{x\in\IZ^\w:\rho(x,z)\le 2^{-n}\}$ centered at $z$.

Using Lemma~\ref{l1}, choose a number sequence $(\Xi_m)_{m\in\w}$ and a sequence $(T_s)_{s\in\w^{<\w}}$ of thick sets in the discrete group $\IZ$ such that for every positive integer numbers $n\le m$, finite set $F\subset \w^{<\w}$ of cardinality $|F|\le n$, function $\lambda:F\to [-m,m]\setminus\{0\}$, and numbers $z_s\in T_s$, $s\in F$, such that $\{z_s\}_{s\in F}\not\subset[-\Xi_m,\Xi_m]$ the sum $\sum_{s\in F}\lambda(s)\cdot z_s$ is not equal to zero.

For every $s\in\w^{<\w}$ and $n\in\w$ choose any point $t_{s,n}\in T_s\setminus[-n,n]$ and observe that the set $T_{s,n}=T_s\setminus(\{t_{s,n}\}\cup[-n,n])$ remains thick in $\IZ$.

By induction on the tree $\w^{<\w}$ we shall construct a sequence $(z_s)_{s\in\w^{<\w}}$ of points of $\IZ^\w$ and a sequence $(l_s)_{s\in\w^{<\w}}$ of finite ordinals satisfying the following conditions for every $s\in \w^{<\w}$:
\begin{itemize}
\item[$(1_s)$] $U(z_{s\hat{\;} i}|l_{s\hat{\;} i})\cap U(z_{s\hat{\;} j}|l_{s\hat{\;} j})=\emptyset$ for any distinct numbers $i,j\in\w$;
\item[$(2_s)$] $l_{s\hat{\;} i}> l_s+i$ for every $i\in\w$;
\item[$(3_s)$] the closure of the set $\{z_{s\hat{\;}i}\}_{i\in\w}$ contains the Haar-open set $\mathbb T_s=\{z_s|l_s\}\times\prod_{n\ge l_s}T_{s,n}$ and is contained in the set $\hat{\mathbb T}_s=\{z_s|l_s\}\times \prod_{n\ge l_s}(T_{s,n}\cup\{t_{s,n}\})\subset U(z_s|l_s)$.
\end{itemize}
We start the inductive construction letting $z_0=0$ and $l_0=0$. Assume that for some $s\in\w^{<\w}$ a point $z_s\in\IZ^\w$ and a number $l_s\in\w$ have been constructed. Consider the Haar-open sets $\mathbb T_s$ and $\hat\IT_s$ defined in the condition $(3_s)$. Since $\mathbb T_s$ is nowhere dense in $\hat{\mathbb T}_s$, we can find a sequence $(z_{s\hat{\;}i})_{i\in\w}$ of pairwise distinct points of $\hat{\mathbb T}_s$ such that the space $D_s=\{z_{s\hat{\;}i}\}_{i\in\w}$ is discrete and contains $\mathbb T_s$ in its closure. Since $D_s$ is discrete, for every $i\in\w$ we can choose a number $l_{s\hat{\;}i}>l_s+i$ such that the open sets $U(z_{s\hat{\;}i}|l_{s\hat{\;}i})$, $i\in\w$, are pairwise disjoint. Observing that the sequences $(z_{s\hat{\;}i})_{i\in\w}$ and $(l_{s\hat{\;}i})_{i\in\w}$ satisfy the conditions $(1_s)$--$(3_s)$, we complete the inductive step.

We claim that the $G_\delta$-subset $P=\bigcap_{n\in\w}\bigcup_{s\in\w^n}U(z_s|l_s)=\bigcup_{s\in\w^\w}\bigcap_{n\in\w}U(z_{s|n}|l_{s|n})$ of $\IZ^\w$ has required properties. First observe that the map $h:\w^\w\to P$ assigning to each infinite sequence $s\in\w^\w$ the unique point $z_s$ of the intersection $\bigcap_{n\in\w}U(z_{s|n}|l_{s|n})$ is a homeomorphism of $\w^\w$ onto $P$. Then the inverse map $h^{-1}:P\to\w^\w$ is a homeomorphism too.

\begin{claim} For every nonempty open subset $U\subset P$ its closure $\overline{U}$ in $\IZ^\w$ is Haar-open in $\IZ^\w$.
\end{claim}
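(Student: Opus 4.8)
The plan is, first, to shrink the open set $U$ to a single basic clopen piece of $P$, and, second, to exhibit inside the closure of that piece one of the Haar-open "combs" $\IT_s$ produced by the construction. Since any set containing a Haar-open set is itself Haar-open (the very translation $x$ that makes $K\cap(B+x)$ a neighbourhood of $p$ in $K$ makes $K\cap(A+x)\supseteq K\cap(B+x)$ such a neighbourhood whenever $B\subseteq A$), this will finish the claim.

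So let $U\subseteq P$ be nonempty and open. Because $h\colon\w^\w\to P$ is a homeomorphism and the cylinders $N_s=\{\tau\in\w^\w:\tau\supseteq s\}$, $s\in\w^{<\w}$, form a base of $\w^\w$, the set $U$ contains $h(N_s)$ for some $s$. Inspecting the construction — using the disjointness condition $(1_t)$ together with the inclusions $z_{t\hat{\;}i}\in\hat{\IT}_t\subseteq U(z_t|l_t)$, which give $U(z_{t\hat{\;}i}|l_{t\hat{\;}i})\subseteq U(z_t|l_t)$ and pairwise disjointness of the cylinders $U(z_{s'}|l_{s'})$ over all $s'$ of a fixed length — one obtains $h(N_s)=P\cap U(z_s|l_s)$. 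Thus it suffices to prove $\IT_s\subseteq\overline{P\cap U(z_s|l_s)}$, where $\IT_s=\{z_s|l_s\}\times\prod_{n\ge l_s}T_{s,n}$ is Haar-open by $(3_s)$: given a compact $K\ni p$, one pushes the clopen neighbourhood $K\cap U(p|l_s)$ of $p$ into $\IT_s$ coordinate by coordinate, using that each $T_{s,n}$ is thick in $\IZ$.

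To establish this inclusion I would use that $P\cap U(z_s|l_s)=\bigcup_{i\in\w}\bigl(P\cap U(z_{s\hat{\;}i}|l_{s\hat{\;}i})\bigr)$, every summand being nonempty and contained in the closed ball $U(z_{s\hat{\;}i}|l_{s\hat{\;}i})$ of radius $2^{-l_{s\hat{\;}i}}\le 2^{-(l_s+i+1)}$ about $z_{s\hat{\;}i}$ (by $(2_s)$). Choosing one point $p_i$ in each summand yields a sequence in $P\cap U(z_s|l_s)$ with $\rho(p_i,z_{s\hat{\;}i})\to 0$, so $\overline{P\cap U(z_s|l_s)}$ contains every accumulation point of the discrete set $D_s=\{z_{s\hat{\;}i}:i\in\w\}$. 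By $(3_s)$ we have $\IT_s\subseteq\overline{D_s}$, and since $D_s$ is discrete, each point of $\IT_s\setminus D_s$ is such an accumulation point; as $\IT_s$ is a nonempty perfect Polish space (it has infinitely many infinite factors $T_{s,n}$) and $D_s$ is countable, $\IT_s\setminus D_s$ is dense in $\IT_s$. Finally $\IT_s$ is closed in $\IZ^\w$ (a product of subsets of the discrete group $\IZ$), so $\IT_s=\overline{\IT_s\setminus D_s}\subseteq\overline{P\cap U(z_s|l_s)}\subseteq\overline U$, and $\overline U$ is Haar-open.

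The step that needs care is exactly the gap between $D_s$ and $P$: the auxiliary points $z_{s\hat{\;}i}$ generically do not lie in $P$, so one cannot simply say $D_s\subseteq\overline U$. The device above circumvents this — the portion $\IT_s\setminus D_s$ of the comb is recovered as accumulation points of $D_s$, and these are approximated by honest points of $P\cap U(z_s|l_s)$, which works precisely because the cylinders $U(z_{s\hat{\;}i}|l_{s\hat{\;}i})$ collapse onto the points $z_{s\hat{\;}i}$ as $i\to\infty$ by condition $(2_s)$.
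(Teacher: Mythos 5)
Your proof is correct and follows essentially the same route as the paper's: reduce $U$ to a basic piece $P\cap U(z_s|l_s)$, pick points of $P$ in the shrinking cylinders $U(z_{s\hat{\;}i}|l_{s\hat{\;}i})$ (the paper's $y_{s\hat{\;}i}$, your $p_i$), and observe that these accumulate at every point of $\IT_s$, which is Haar-open by $(3_s)$. The only difference is that you explicitly handle the possibility that some points of $\IT_s$ lie in the discrete set $D_s$ (via density of $\IT_s\setminus D_s$ in the perfect set $\IT_s$), a small point the paper's argument leaves implicit.
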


\begin{proof} Pick any point $p\in U$ and find a unique infinite sequence $t\in\w^\w$ such that $\{p\}=\bigcap_{m\in\w}U(z_{t|m}|l_{t|m})$. Since the family $\{U(z_{t|m}|l_{t|m})\}_{m\in\w}$ is a neighborhood base at $p$, there is $m\in\w$ such that $U(z_{t|m}|l_{t|m})\subset U$. Consider the finite sequence $s=t|m$. By the definition of $P$, for every $i\in\w$ the intersection $P\cap U(z_{s\hat{\;}i}|l_{s\hat{\;}i})$ contains some point $y_{s\hat{\;}i}$. Taking into account that $$\rho(z_{s\hat{\;}i},y_{s\hat{\;}i})\le\diam\, U(z_{s\hat{\;}i}|l_{s\hat{\;}i})\le 2^{-l_{s\hat{\;}i}}\le 2^{-i}$$ and $\mathbb T_s$ is contained in the closure of the set $\{z_{s\hat{\;}i}\}_{i\in\w}$, we conclude that the Haar-open set $\IT_s$ is contained in the closure of the set $\{y_{s\hat{\;}i}\}_{i\in\w}\subset P\cap U(z_s|l_s)\subset U$, which implies that $\overline{U}$ is Haar-open.
\end{proof}

\begin{claim} The subgroup $H\subset\IZ^\w$ generated by $P$
cannot be covered by countably many closed Haar-meager sets in $\IZ^\w$.
\end{claim}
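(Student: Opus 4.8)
The plan is to argue by contradiction and reduce everything to the preceding Claim (that $\overline U$ is Haar-open for any nonempty open $U\subset P$) together with the Baire Theorem applied inside the Polish space $P$.

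First I would suppose, towards a contradiction, that $H\subseteq\bigcup_{n\in\w}F_n$ for some sequence $(F_n)_{n\in\w}$ of closed Haar-meager subsets of $\IZ^\w$. Since $P\subseteq H$, this gives $P=\bigcup_{n\in\w}(F_n\cap P)$, where each set $F_n\cap P$ is closed in $P$. Because $P$ is homeomorphic to $\w^\w$ via the homeomorphism $h$ constructed above, the space $P$ is Polish, hence Baire, so some set $F_{n_0}\cap P$ is non-meager in $P$; being closed in $P$, it then has nonempty interior in $P$, and I would pick a nonempty open set $U\subseteq P$ with $U\subseteq F_{n_0}$.

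Next, passing to closures in $\IZ^\w$ and using that $F_{n_0}$ is closed, I get $\overline U\subseteq F_{n_0}$. By the preceding Claim the closed set $\overline U$ is Haar-open in $\IZ^\w$, and hence, by the chain of implications recorded in Section~\ref{s7} (thick $\Ra$ Haar-open $\Ra$ not Haar-meager), we have $\overline U\notin\HM$. On the other hand, $\overline U$ is a closed (in particular Borel) subset of the Haar-meager set $F_{n_0}$, so $\overline U\in\HM$ since $\HM$ is a semi-ideal (indeed a $\sigma$-ideal by Theorem~\ref{D1}). This contradiction finishes the argument.

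I do not expect a serious obstacle here: the construction of $P$ has already done the hard work, and the only point needing a moment's care is the standard Baire-category observation that a set which is closed and non-meager in the Baire space $P$ must have nonempty interior in $P$. Everything else is a direct invocation of the previous Claim and of the fact that a Haar-open set can never lie in $\HM$.
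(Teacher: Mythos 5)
Your argument is correct and is essentially the paper's own proof: both apply the Baire Theorem in the Polish space $P$ to locate a nonempty open $U\subseteq P$ inside some $P\cap F_{n_0}$, then invoke the preceding Claim that $\overline U$ is Haar-open together with Theorem~\ref{t:prethick} (Haar-open closed sets are not Haar-meager) to reach a contradiction. The only cosmetic difference is that you derive the contradiction for $\overline U$ via the semi-ideal property of $\HM$, whereas the paper notes directly that $F_{n_0}\supseteq\overline U$ is itself Haar-open and hence not Haar-meager; these are interchangeable.
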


\begin{proof} To derive a contradiction, assume that $H\subset\bigcup_{n\in\w}H_n$ where each set $H_n$ is closed and Haar-meager in $\IZ^\w$. Since the Polish space $P=\bigcup_{n\in\w}P\cap H_n$ is Baire, for some $n\in\w$ the set $P\cap H_n$ contains a nonempty open subset $U$ of $P$. Taking into account that closure $\overline{U}_n\subset H_n$ is Haar-open in $\IZ^\w$, we conclude that the set $H_n$ is Haar-open and not Haar-meager (according to Theorem~\ref{t:prethick}).
\end{proof}

It remains to prove that the subgroup $H\subset\IZ^\w$ generated by the $G_\delta$-set $P$ is Borel and meager in $\IZ^\w$. 

We recall that by $h:\w^\w\to P$ we denote the homeomorphism assigning to each infinite sequence $s\in\w^\w$ the unique point $z_s$ of the intersection $\bigcap_{m\in\w}U(z_{s|m}|l_{s|m})$.

For every finite subset $F\subset\w^{<\w}$ let $l_F:=\max_{s\in F} l_s$.
For a finite subset $E\subset \w^\w$ let $r_E\in\w$ be the smallest number such that the restriction map $E\to\w^{r_E}$, $s\mapsto s|r_E$, is injective.
Let $l_E:=\max\{l_{s|r_E}:s\in E\}$. Taking into account that $l_{s|m}\ge m$ for every $s\in\w^\w$ and $m\in\w$, we conclude that $l_E\ge r_E$.

\begin{claim}\label{cl2} For every $m\in\IN$, nonempty finite set $E\subset \w^\w$ of cardinality $|E|\le m$ and function $\lambda:E\to[-m,m]\setminus\{0\}$ we get $$\sum_{s\in E}\lambda(s)\cdot z_s\in\big\{y\in\IZ^\w:\forall k> \max\{l_E,\Xi_m\}\;\;y(k)\ne 0\big\}.$$
\end{claim}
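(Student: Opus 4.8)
The plan is to prove Claim~\ref{cl2} coordinate by coordinate: since the group operations of $\IZ^\w$ are coordinatewise, it suffices to fix a number $k>\max\{l_E,\Xi_m\}$ and show that $\sum_{s\in E}\lambda(s)\cdot z_s(k)\ne 0$, and this will be deduced from the defining property of the families $(T_s)_{s\in\w^{<\w}}$ and $(\Xi_m)_{m\in\w}$ fixed just before the inductive construction. The first step I would carry out is to locate, for each $s\in\w^\w$, the value $z_s(k)$ inside a suitable $T_t$. Since $z_s$ is the unique point of $\bigcap_{j\in\w}U(z_{s|j}|l_{s|j})$, it agrees with $z_{s|j}$ on the first $l_{s|j}$ coordinates for every $j$. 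By $(2_{s|j})$ the sequence $(l_{s|j})_{j\in\w}$ is strictly increasing, and since $l_{s|0}=0$ and $l_{s|j}\ge j$, there is a unique $m=m(s,k)$ with $l_{s|m}\le k<l_{s|(m+1)}$. Then $z_s(k)=z_{s|(m+1)}(k)$, and writing $s|(m+1)=(s|m)\hat{\;}s(m)$, condition $(3_{s|m})$ puts $z_{s|(m+1)}$ in $\hat{\mathbb T}_{s|m}$, so reading off the $k$-th coordinate (with $k\ge l_{s|m}$) yields $z_s(k)\in T_{s|m,k}\cup\{t_{s|m,k}\}\subset T_{s|m}$.

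Next I would extract from $k>\max\{l_E,\Xi_m\}$ the two facts needed to apply the property of $(T_s)$, $(\Xi_m)$. From $k>l_E=\max\{l_{s|r_E}:s\in E\}$ and the monotonicity of $j\mapsto l_{s|j}$ one gets $m(s,k)\ge r_E$ for all $s\in E$, for otherwise $k<l_{s|(m(s,k)+1)}\le l_{s|r_E}\le l_E$. As the restrictions $s|r_E$ ($s\in E$) are pairwise distinct by definition of $r_E$, so are the longer restrictions $s|m(s,k)$; hence $F:=\{s|m(s,k):s\in E\}$ is a set of $|E|\le m$ pairwise distinct elements of $\w^{<\w}$, the assignment $\lambda'(s|m(s,k)):=\lambda(s)$ is a well-defined function $F\to[-m,m]\setminus\{0\}$, and each value $z_s(k)$ lies in $T_{s|m(s,k)}$. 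Moreover $T_{s,k}\cup\{t_{s,k}\}\subset\IZ\setminus[-k,k]$, so the inclusion from the first paragraph forces $|z_s(k)|>k>\Xi_m$ for every $s\in E$, whence $\{z_s(k):s\in E\}\not\subset[-\Xi_m,\Xi_m]$. Applying the defining property of $(T_s)_{s\in\w^{<\w}}$ and $(\Xi_m)_{m\in\w}$ with $n:=m$, the index set $F$, the coefficients $\lambda'$ and the witnesses $z_s(k)$ (re-indexed along the bijection $s\mapsto s|m(s,k)$ of $E$ onto $F$) gives $\sum_{s\in E}\lambda(s)\cdot z_s(k)\ne 0$, and since $k$ was an arbitrary number $>\max\{l_E,\Xi_m\}$ the claim follows.

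The hard part is the index bookkeeping in the first paragraph and the opening of the second: correctly pinning down $m(s,k)$, checking that $z_s(k)$ really ends up in $T_{s|m(s,k)}$ rather than in some unrelated $T_t$, and confirming that $s\mapsto s|m(s,k)$ is injective on $E$ so that $F$, $\lambda'$ and the witnesses are mutually consistent. Once these are in place, the non-vanishing of $\sum_{s\in E}\lambda(s)z_s(k)$ is an immediate instance of the property that the thick sets $T_s$ were chosen to have, with no further computation needed.
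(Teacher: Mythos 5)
Your proof is correct and follows essentially the same route as the paper's: fix a coordinate $k>\max\{l_E,\Xi_m\}$, locate $z_s(k)=z_{s|(m(s,k)+1)}(k)\in T_{s|m(s,k)}\setminus[-\Xi_m,\Xi_m]$ via condition $(3_{s|m(s,k)})$, use $m(s,k)\ge r_E$ to see that the restrictions $s|m(s,k)$ are pairwise distinct, and invoke the defining property of the family $(T_s)_{s\in\w^{<\w}}$. Your write-up is in fact slightly more explicit than the paper's on the bookkeeping points (uniqueness of $m(s,k)$, the inequality $m(s,k)\ge r_E$, and the verification that $\{z_s(k):s\in E\}\not\subset[-\Xi_m,\Xi_m]$).
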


\begin{proof} Given any number $k>\max\{l_E,\Xi_m\}$, consider the projection $\pr_k:\IZ^\w\to \IZ$, $\pr_k:x\mapsto x(k)$, to the $k$-th coordinate. The claim will be proved as soon as we check that the element $y=\sum_{s\in E}\lambda(s)\cdot z_s$ has non-zero projection $\pr_k(y)$. Observe that $\pr_k(y)=\sum_{s\in F}\lambda(s)\cdot\pr_k(z_s)$. For every $s\in F$ the equality $\{z_s\}=\bigcap_{m\in\w}U(z_{s|m}|l_{ s|m})$ implies that
$\pr_k(z_s)=\pr_k(z_{ s|m})$ for any $m\in\w$ such that $l_{ s|m}>k$.

 Taking into account that $k>l_E=\max_{s\in E}l_{s|r_E}$, for every $s\in E$ we can find a number $m_s\ge r_E$ such that $k\in[l_{s|m_s},l_{s|(m_s+1)})$.
Then $$\pr_k(z_s)\!=\!\pr_k(z_{s|(m_s{+}1)}|l_{s|(m_s{+}1)})\in T_{s|m_s\!,k}\cup\{t_{s|m_s,k}\}
\subset T_{s|m_s}\setminus [-k,k]\subset T_{s|m_s}\!\setminus [-\Xi_m,\Xi_m]$$ by the condition $(3_{ s|m_s})$ of the inductive construction.

It follows from $r_E\le\min_{s\in F}m_s$ that the sequences
$s|m_s$, $s\in E$, are pairwise distinct.
Then $\pr_k(y)=\sum_{s\in E}\lambda(s)\pr_k(z_s)\ne0$ by the choice of the family $(T_s)_{s\in\w^{<\w}}$.
\end{proof}

For every $n\in\w$ and a finite sequence $s\in\w^n$ consider the basic clopen subset
$V_s=\{v\in\w^\w:v|n=s\}$ in $\w^\w$ and observe that $h(V_s)=P\cap U(z_s|l_s)$.
For any nonempty finite set $F\subset \w^n$ put $$V_F=\prod_{s\in F}V_{s}.$$ Given any function $\lambda:F\to\IZ\setminus\{0\}$ we shall prove that the function
$$\Sigma_\lambda:V_F\to H,\;\;\Sigma_\lambda:v\mapsto \sum_{s\in F}\lambda(s)\cdot z_{v(s)},$$is injective. 
Let $\|\lambda\|=\max_{s\in F}|\lambda(s)|$.

\begin{claim} The function $\Sigma_\lambda:V_F\to H$ is injective.
\end{claim}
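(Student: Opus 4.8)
The plan is to argue by contradiction via Claim~\ref{cl2}. Suppose $v,w\in V_F$ are distinct with $\Sigma_\lambda(v)=\Sigma_\lambda(w)$, so that $\sum_{s\in F}\lambda(s)\cdot\big(z_{v(s)}-z_{w(s)}\big)=0$. The first thing I would record is a separation property of the sequences in play: since $v(s)\in V_s$ means $v(s)|n=s$, for distinct $s,s'\in F$ we get $v(s)\ne v(s')$, $w(s)\ne w(s')$, and moreover $v(s)=w(s')$ forces $s=s'$. Hence the only coincidences possible among $\{v(s)\}_{s\in F}\cup\{w(s)\}_{s\in F}$ are of the form $v(s)=w(s)$, and for each such $s$ the summand $\lambda(s)(z_{v(s)}-z_{w(s)})$ vanishes.

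Next I would collect the surviving terms. Let $F':=\{s\in F:v(s)\ne w(s)\}$; since $v\ne w$, the set $F'$ is nonempty. Put $E:=\{v(s):s\in F'\}\cup\{w(s):s\in F'\}\subset\w^\w$. By the separation property the two halves of this union are disjoint, and each point of $E$ arises from a unique index, so $\mu(v(s)):=\lambda(s)$ and $\mu(w(s)):=-\lambda(s)$ for $s\in F'$ defines a function $\mu:E\to\IZ\setminus\{0\}$ whose values lie in $[-\|\lambda\|,\|\lambda\|]\setminus\{0\}$, and $\sum_{t\in E}\mu(t)\cdot z_t=\sum_{s\in F}\lambda(s)\big(z_{v(s)}-z_{w(s)}\big)=0$.

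Finally I would apply Claim~\ref{cl2} with $m:=\max\{|E|,\|\lambda\|\}$, a positive integer satisfying $|E|\le m$ and $\mu:E\to[-m,m]\setminus\{0\}$. The claim yields that $\sum_{t\in E}\mu(t)\cdot z_t$ has a non-zero coordinate at every $k>\max\{l_E,\Xi_m\}$, contradicting $\sum_{t\in E}\mu(t)\cdot z_t=0$. Therefore no such $v\ne w$ exist and $\Sigma_\lambda$ is injective.

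The main obstacle is the bookkeeping of the second paragraph: one must make sure that after cancelling the indices $s$ with $v(s)=w(s)$, every point $z_t$ remaining in the sum carries a single, nonzero coefficient $\mu(t)$, i.e.\ that no further collisions occur between a $v$-point and a $w$-point with different indices. This is precisely what $v(s)|n=s$ and $w(s')|n=s'$ guarantee, and it is what lets the hypothesis ``$\lambda(s)\ne 0$ for all $s\in F$'' translate into ``$\mu(t)\ne 0$ for all $t\in E$'', so that Claim~\ref{cl2} is applicable.
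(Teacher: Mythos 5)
Your proof is correct and follows essentially the same route as the paper: reduce to the difference $\Sigma_\lambda(v)-\Sigma_\lambda(w)$, discard the indices where $v(s)=w(s)$, and apply Claim~\ref{cl2} to the resulting finite set $E$ with coefficients bounded by $\max\{|E|,\|\lambda\|\}$. The only difference is that you make explicit the separation property ($v(s)|n=s$ prevents collisions between $v$-points and $w$-points with distinct indices), which the paper leaves implicit; this is a worthwhile clarification but not a new argument.
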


\begin{proof} Choose any distinct sequences $u,v\in V_F$ and consider the nonempty finite set $D:=\{s\in F:u(s)\ne v(s)\}$ and the finite set $E=\{u(s)\}_{s\in D}\cup\{v(s)\}_{s\in D}\subset\w^\w$. Put $m=\max\{|E|,\|\lambda\|\}$ and $k=1+\max\{l_E,\Xi_{m}\}$. Claim~\ref{cl2} guarantees that the element
$$y=\Sigma_\lambda(u)-\Sigma_\lambda(v)=\sum_{s\in D}\lambda(s)z_{u(s)}-\sum_{s\in D}\lambda(s)z_{v(s)}$$ has $y(k)\ne 0$, which implies that $y\ne 0$ and $\Sigma_\lambda(u)\ne\Sigma_\lambda(v)$.
\end{proof}

Claim~\ref{cl2} implies

\begin{claim}\label{cl:meg} The set $\Sigma_\lambda(V_F)\subset\{y\in\IZ^\w:\exists m\in\w\;\forall k\ge m\;\;y(k)\ne0\}$ is Borel and meager in $\IZ^\w$.
\end{claim}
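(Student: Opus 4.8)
The plan is to deduce both assertions from Claim~\ref{cl2} together with the injectivity of $\Sigma_\lambda$ proved just above, the only genuinely new point being an automatic injectivity property of the maps $v\in V_F$. First I would check the inclusion $\Sigma_\lambda(V_F)\subset\{y\in\IZ^\w:\exists m\in\w\;\forall k\ge m\;\;y(k)\ne0\}$. Fix $v\in V_F$. Since $F\subset\w^n$ and $v(s)\in V_{s}$, so that $v(s)|n=s$, for every $s\in F$, any distinct $s,s'\in F$ satisfy $v(s)|n=s\ne s'=v(s')|n$ and hence $v(s)\ne v(s')$. Thus $E:=\{v(s):s\in F\}$ has cardinality $|F|$, and there is a well-defined function $\mu:E\to\IZ\setminus\{0\}$ with $\mu(v(s))=\lambda(s)$ for all $s\in F$, for which $\Sigma_\lambda(v)=\sum_{e\in E}\mu(e)\cdot z_e$. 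Putting $m:=\max\{|F|,\|\lambda\|\}$ we have $|E|\le m$ and $\mu(E)\subset[-m,m]\setminus\{0\}$, so Claim~\ref{cl2} yields $\Sigma_\lambda(v)\in\{y\in\IZ^\w:\forall k>\max\{l_E,\Xi_m\}\;\;y(k)\ne0\}$, which is contained in $\{y\in\IZ^\w:\exists m'\in\w\;\forall k\ge m'\;\;y(k)\ne0\}$. As $v\in V_F$ was arbitrary, the inclusion follows.

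Next I would handle meagerness and Borelness. Write $R:=\{y\in\IZ^\w:\exists m\in\w\;\forall k\ge m\;\;y(k)\ne0\}=\bigcup_{m\in\w}C_m$, where $C_m:=\{y\in\IZ^\w:\forall k\ge m\;\;y(k)\ne0\}=\bigcap_{k\ge m}\pr_k^{-1}(\IZ\setminus\{0\})$ is closed in $\IZ^\w$; moreover $C_m$ has empty interior, since any basic clopen set $U(z|j)$ with $j>m$ contains a point whose $j$-th coordinate equals $0$. Hence each $C_m$ is nowhere dense, $R$ is meager, and therefore so is its subset $\Sigma_\lambda(V_F)$. For the Borel property, note that each $V_s$ is a clopen subset of $\w^\w$, hence Polish, so $V_F=\prod_{s\in F}V_s$ is a Polish space; the map $\Sigma_\lambda:V_F\to\IZ^\w$ is continuous (the coordinate maps $v\mapsto v(s)$ are continuous, $v\mapsto z_{v(s)}$ is continuous because $h$ is a homeomorphism, and addition and integer scalar multiplication on $\IZ^\w$ are continuous), and it is injective by the preceding Claim, so by the Lusin--Souslin theorem \cite[15.1]{K} the image $\Sigma_\lambda(V_F)$ is Borel in $\IZ^\w$.

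The only step requiring real attention is the first one: one must observe that $v\mapsto v(s)$ is injective on $F$, so that no coefficient cancellation occurs when the sum $\sum_{s\in F}\lambda(s)z_{v(s)}$ is rewritten over the value set $E$; this is exactly what lets Claim~\ref{cl2} be applied with nowhere-vanishing coefficients. Everything else is routine bookkeeping.
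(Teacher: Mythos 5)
Your proof is correct and follows essentially the same route as the paper: Claim~\ref{cl2} gives the inclusion into the set $\{y\in\IZ^\w:\exists m\;\forall k\ge m\;\;y(k)\ne0\}$, which is meager as a countable union of closed nowhere dense sets, and the Lusin--Souslin theorem applied to the continuous injective map $\Sigma_\lambda$ on the Polish space $V_F$ gives Borelness. Your extra care in noting that $s\mapsto v(s)$ is injective on $F$ (so the sum can be reindexed over $E=\{v(s):s\in F\}$ with nowhere-vanishing coefficients before invoking Claim~\ref{cl2}) is a detail the paper leaves implicit, and it is a worthwhile clarification rather than a deviation.
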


\begin{proof} By Lusin-Suslin Theorem \cite[15.1]{K}, the image $\Sigma_\lambda(V_F)$ of the Polish space $V_F$ under the continuous injective map $\Sigma_\lambda:V_F\to\IZ^\w$ is a Borel subset of $\IZ^\w$. By Claim~\ref{cl2}, the set $\Sigma_\lambda(V_F)$ is contained in the meager subset $\{y\in\IZ^\w:\exists n\;\forall m\ge n\;\;y(m)\ne 0\}$ and hence is meager in $\IZ^\w$.
\end{proof}

Denote by $[\w^{<\w}]^{<\w}$ the family of finite subsets of $\w^{<\w}$ and let $\IZ^*=\IZ\setminus\{0\}$. For the empty set $F=\emptyset$ and the unique map $\lambda\in(\IZ^*)^F$ we put $\Sigma_\lambda(V_F)=\{0\}$. Claim~\ref{cl:meg} and the obvious equality
$$H=\bigcup_{n\in\w}\bigcup_{F\in[\w^n]^{<\w}}\bigcup_{\lambda\in (\IZ^*)^F}\Sigma_\lambda(V_F)$$imply that the subgroup $H$ is Borel and is contained in the meager subset $$\{0\}\cup\bigcup_{m\in\w}\{z\in \IZ^\w:\forall k\ge m\;\;z(k)\ne0\}$$ of $\IZ^\w$.
\end{proof}

\begin{problem} What is the Borel complexity of the subgroup $H$ in Example~\ref{Banakh}?
\end{problem}

\begin{remark} Example~\ref{Banakh} shows that Corollary~\ref{c:sN->S+} does not extend to arbitrary Polish groups.
\end{remark}

\section{Haar-thin sets in topological groups}\label{s8}

In this section we introduce thin sets in the Cantor cube, define a new notion of a Haar-thin set in a topological group, and prove that for any Polish group the semi-ideal of Haar-thin sets has the Steinhaus property.

\begin{definition}\label{d:thin} A subset $T$ of the Cantor cube $2^\w$ is called \index{subset!thin}\index{thin subset}{\em thin} if for every number $n\in\w$ the restriction $\pr|T$ of the projection $\pr:2^\w\to 2^{\w\setminus\{n\}}$, $\pr:x\mapsto x|\w\setminus\{n\}$, is injective.
\end{definition}

The family of thin subsets of $2^\omega$ is a semi-ideal containing all singletons, however it is not an ideal as it does not contain all two element sets.

\begin{example} The closed uncountable subset $$T=\{x\in 2^\w:\forall n\in\w\;\;\;x(2n)+x(2n+1)=1\}$$ of\/ $2^\w$ is thin.
\end{example}

\begin{proposition}\label{p:thin=>HN+HM} Each thin Borel subset of the Cantor cube is meager and has Haar measure zero.
\end{proposition}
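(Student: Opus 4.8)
The plan is to prove the two assertions separately, using the defining injectivity property of a thin set together with the Fubini-type structure of the product measure $\lambda$ and of the ideal of meager sets on $2^\w$. Fix a thin Borel set $T\subset 2^\w$ and a coordinate $n\in\w$. Write $2^\w=\{0,1\}\times 2^{\w\setminus\{n\}}$ by separating the $n$-th coordinate, and let $\pr:2^\w\to 2^{\w\setminus\{n\}}$ be the projection that forgets the $n$-th coordinate. Thinness says exactly that $\pr|T$ is injective, so for each $y\in 2^{\w\setminus\{n\}}$ the fiber $(\pr|T)^{-1}(y)$ has at most one point; equivalently, $T$ meets each of the two ``vertical'' fibers $\{0\}\times\{y\}$ and $\{1\}\times\{y\}$ in at most one point, and in fact cannot contain both, since a point of $\{0\}\times\{y\}$ and a point of $\{1\}\times\{y\}$ would have the same $\pr$-image.

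For the measure statement, I would slice $T$ along the $n$-th coordinate: $\lambda$ is (up to the obvious identification) the product of the uniform measure $\tfrac12(\delta_0+\delta_1)$ on the $n$-th factor with the product measure $\lambda'$ on $2^{\w\setminus\{n\}}$. By Fubini, $\lambda(T)=\int_{2^{\w\setminus\{n\}}}\tfrac12\big(\chi_T(0,y)+\chi_T(1,y)\big)\,d\lambda'(y)$. Thinness forces $\chi_T(0,y)+\chi_T(1,y)\le 1$ for $\lambda'$-a.e.\ $y$ (in fact for every $y$), so $\lambda(T)\le\tfrac12$. Now the key point: $T$ is invariant in distribution under the coordinate flip $\sigma_n:2^\w\to 2^\w$ that toggles the $n$-th coordinate only in the sense that $\sigma_n$ is a measure-preserving homeomorphism, and thinness of $T$ implies that $T$ and $\sigma_n(T)$ are disjoint — because if $x\in T\cap\sigma_n(T)$ then both $x$ and $\sigma_n(x)$ lie in $T$ and have the same $\pr$-image, contradicting injectivity of $\pr|T$. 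Since $\sigma_n$ preserves $\lambda$, we get $2\lambda(T)=\lambda(T)+\lambda(\sigma_n(T))=\lambda(T\cup\sigma_n(T))\le 1$, i.e.\ $\lambda(T)\le\tfrac12$ — and more can be squeezed by iterating over many coordinates. Indeed, applying this to a finite set of coordinates $F\subset\w$ and the group of flips generated by $\{\sigma_n:n\in F\}$, which has order $2^{|F|}$: the $2^{|F|}$ translates $\{gT:g\in G_F\}$ are pairwise disjoint (any two distinct ones differ by some $\sigma_n$, and $T\cap\sigma_n(T)=\emptyset$ as above), all of the same measure, so $2^{|F|}\lambda(T)\le 1$; letting $|F|\to\infty$ yields $\lambda(T)=0$.

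For the meagerness statement, I would run the topological analogue of the same argument, using that each flip $\sigma_n$ is a self-homeomorphism of $2^\w$ and hence preserves the ideal $\M$ of meager sets, and that $2^\w$ is a Baire space. If $T$ were non-meager, then since $T$ is Borel it has the Baire property, so $T$ is comeager in some nonempty clopen set $U=\{x:x|k=s\}$ for some $s\in 2^k$. Pick any coordinate $n\ge k$; then $\sigma_n(U)=U$, so $\sigma_n(T)$ is also comeager in $U$, whence $T\cap\sigma_n(T)$ is comeager in $U$ and in particular nonempty — contradicting the disjointness $T\cap\sigma_n(T)=\emptyset$ established above. Hence $T$ is meager. (Here the only input beyond thinness is that Borel sets have the Baire property, cited already in the preliminaries, and that $2^\w$ is Baire.)

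The argument is almost entirely soft; the single genuine observation, used in both halves, is the disjointness $T\cap\sigma_n(T)=\emptyset$, which is an immediate unwinding of Definition~\ref{d:thin}. The only mild subtlety — the part I would be most careful about — is making sure the Baire-property reduction in the meager case is stated correctly: a non-meager Borel set with the Baire property need not be comeager in a clopen box, only non-meager relative to one; but that is still enough, since ``$T$ non-meager in $U$'' plus ``$T$ has the Baire property in $U$'' gives a nonempty clopen $V\subset U$ in which $T$ is comeager, and one then chooses $n$ large enough that $\sigma_n$ fixes $V$ setwise. Everything else is routine Fubini and Baire-category bookkeeping.
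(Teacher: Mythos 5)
Your key observation --- that thinness forces $T\cap\sigma_n(T)=\emptyset$ for every single-coordinate flip $\sigma_n$ (in additive notation on $C_2^\w$ this says exactly $z_n\notin T-T$ for the characteristic function $z_n$ of $\{n\}$) --- is correct, and your category half is sound: a non-meager Borel $T$ is comeager in some basic clopen $[s]$, any $\sigma_n$ with $n\ge|s|$ fixes $[s]$ and preserves comeagerness there, so $T\cap\sigma_n(T)\ne\emptyset$ by Baire, a contradiction. This is in effect a localized, hands-on version of what the paper gets by quoting Piccard--Pettis.

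The measure half, however, has a genuine gap. The $2^{|F|}$ sets $\{gT:g\in G_F\}$ are \emph{not} pairwise disjoint: two distinct elements of $G_F$ differ by $\sigma_S$ for some nonempty $S\subseteq F$, and when $|S|\ge 2$ the points $x$ and $\sigma_S(x)$ still differ on $S\setminus\{n\}\ne\emptyset$ after deleting any one coordinate $n$, so thinness puts no constraint on $T\cap\sigma_S(T)$. Concretely, for the thin set $T=\{x\in 2^\w:\forall n\;x(2n)+x(2n+1)=1\}$ one has $\sigma_0\sigma_1(T)=T$, so already for $F=\{0,1\}$ the orbit collapses and $2^{|F|}\lambda(T)\le 1$ does not follow. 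In fact no finite-coordinate Fubini argument can do better than $\lambda(T)\le\tfrac12$: slicing over $k$ coordinates only shows that each fiber meets $T$ in an independent set of the hypercube graph $Q_k$, whose maximum independent sets have density exactly $\tfrac12$. To finish you need an extra ingredient that sees infinitely many coordinates at once: either the Lebesgue density (martingale) theorem --- if $\lambda(T)>0$ pick a cylinder $[s]$ with $\lambda(T\cap[s])>\tfrac12\lambda([s])$ and flip a coordinate $n\ge|s|$, which preserves $[s]$ and $\lambda$, forcing $T\cap\sigma_n(T)\ne\emptyset$ --- or the Steinhaus theorem, which is the paper's route: if $T$ had positive measure then $T-T$ would be a neighborhood of $\theta$, yet it misses the null sequence $(z_n)_{n\in\w}$. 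Either repair is short, but as written the step ``$2^{|F|}\lambda(T)\le 1$, let $|F|\to\infty$'' is false.
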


\begin{proof} Identify the Cantor cube $2^\w$ with the countable power $C_2^\w$ of the two-element cyclic group. For every $n\in\w$ let $z_n\in 2^\w$ be the characteristic function of the singleton $\{n\}$ (i.e., the unique function such that $z_n^{-1}(1)=\{n\}$). Observe that the sequence $(z_n)_{n\in\w}$ converges to the neutral element of the group $2^\w$.

Let $T$ be a Borel thin subset of $2^\w$. Definition~\ref{d:thin} implies that the set $T-T=\{x-y:x,y\in T\}$ is disjoint with the set $\{z_n:n\in\w\}$ and hence is not a neighborhood of zero in the topological group $2^\w$. By the Steinhaus and Piccard-Pettis Theorems~\ref{Stein} and \ref{t:PP}, the Borel set $T$ is meager and has Haar measure zero in $2^\w$.
\end{proof}

\begin{definition} A subset $A$ of a topological group $X$ is called \index{subset!Haar-thin}\index{Haar-thin subset}\index{subset!injectively Haar-thin}\index{injectively Haar-thin subset}({\em injectively}) {\em Haar-thin} if there exists an (injective) continuous map $f:2^\w\to X$ such that for every $x\in X$ the preimage $f^{-1}(A+x)$ is thin in $2^\w$.
\end{definition}

For a topological group $X$ by \index{$\HT$}$\HT$ (resp. \index{$\EHT$}$\EHT$) we denote the semi-ideal consisting of subsets of Borel (injectively) Haar-thin sets in $X$.

Proposition~\ref{p:thin=>HN+HM} implies the following corollary.

\begin{corollary} Each (injectively) Haar-thin Borel subset of a Polish group is Haar-null and (injectively) Haar-meager. Moreover, for any Polish group we have the inclusions$$\HT=\EHT\subset\HN\cap\EHM\subset\EHM\subset\SHM\subset\HM\subset\M.$$
\end{corollary}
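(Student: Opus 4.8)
The plan is to dispose of the chain $\HN\cap\EHM\subset\EHM\subset\SHM\subset\HM\subset\M$ first (the two middle inclusions are recorded in the text just before Theorem~\ref{D1}, and $\HM\subset\M$ is Theorem~\ref{D1}), so that only two things remain: that every Borel injectively Haar-thin set lies in $\HN\cap\EHM$ (which gives $\EHT\subset\HN\cap\EHM$), and the equality $\HT=\EHT$. For the first, take a Borel injectively Haar-thin $A$ with an injective continuous witness $f\colon 2^\w\to X$. Since $A$ is Borel, each $f^{-1}(A+x)$ is a Borel thin subset of $2^\w$, hence meager and of $\lambda$-measure zero by Proposition~\ref{p:thin=>HN+HM}. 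Meagerness for all $x$, together with injectivity of $f$, witnesses $A\in\EHM$; and $f^{-1}(A+x)\in\N$ for all $x$ gives $A\in\HN$ via the implication $(3)\Ra(1)$ of Theorem~\ref{Haarnull}. Since $\HN$ and $\EHM$ are closed under subsets and $\EHT$ consists of subsets of Borel injectively Haar-thin sets, this yields $\EHT\subset\HN\cap\EHM$ (the same computation with a non-injective witness gives $\HT\subset\HN\cap\HM$).

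For $\HT=\EHT$ only $\HT\subset\EHT$ needs work. Given a Borel Haar-thin $A$ with a continuous witness $f$, I would first pass to the convenient reformulation: identifying $2^\w$ with $C_2^\w$ and letting $z_n\in 2^\w$ be the characteristic function of $\{n\}$ (as in the proof of Proposition~\ref{p:thin=>HN+HM}), a continuous $g\colon 2^\w\to X$ witnesses $A\in\HT$ if and only if $g(y+z_n)-g(y)\notin A-A$ for all $y\in 2^\w$ and $n\in\w$. This just unwinds two facts: $T\subset 2^\w$ is thin iff $T\cap(T+z_n)=\emptyset$ for every $n$, and $\{p,q\}\subset A+x$ for some $x$ iff $p-q\in A-A$ (so, applied to $T=g^{-1}(A+x)$ and using $-z_n=z_n$ and symmetry of $A-A$, thinness of all preimages is exactly the displayed ``first-difference'' condition). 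Thus the task reduces to: out of a continuous map whose first differences avoid $A-A$, manufacture an \emph{injective} one with the same property.

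The construction I would carry out: assume $A\neq\emptyset$ (if $A=\emptyset$ any injective continuous map into $X$ works, and such a map exists unless $X$ is discrete, in which case everything is trivial). Then $\theta\in A-A$, so by the reformulation $f(y+z_n)-f(y)\neq\theta$ for all $y,n$; by uniform continuity of $f$ on $2^\w$ and $z_n\to\theta$ we get $\sup_y\rho\bigl(f(y+z_n),f(y)\bigr)\to0$. Hence $A-A$ cannot swallow a punctured neighbourhood of $\theta$, i.e.\ for every $r>0$ there is $\eta\in X\setminus(A-A)$ with $0<\rho(\theta,\eta)<r$. Choose recursively $\eta_n\in X\setminus(A-A)\setminus\{\theta\}$ with $\rho(\theta,\eta_{n+1})\le\tfrac14\rho(\theta,\eta_n)$ and put
\[
g\colon 2^\w\to X,\qquad g(y):=\textstyle\sum_{\{n\,:\,y(n)=1\}}\eta_n .
\]
Completeness and invariance of $\rho$ plus the geometric decay make the series converge and $g$ continuous; one has $g(y+z_n)-g(y)=\pm\eta_n\notin A-A$ (symmetry of $A-A$), so $g$ is a witness; and $g$ is injective because if $n_0$ is the first coordinate where $y,y'$ differ then the terms of index $<n_0$ cancel, the index-$n_0$ term contributes $\pm\eta_{n_0}$, and the tail contributes something of $\rho(\cdot,\theta)\le\tfrac23\rho(\theta,\eta_{n_0})$, so $g(y)\neq g(y')$. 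Thus $A$ is injectively Haar-thin.

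The main obstacle is precisely $\HT\subset\EHT$. It is tempting to try to injectivize the given witness $f$ by reparametrizing $2^\w$ or by a small perturbation, but this fails in principle: ``thin'' is not a topological property of $2^\w$ (so composing with a homeomorphism destroys it, and the only thinness-preserving self-maps of $2^\w$ are essentially coordinate permutations, which do not help), and a small perturbation of $f$ can push some first difference into $A-A$, since $A-A$ may cluster at $\theta$. The way around this is the third paragraph: discard the fine structure of $f$ and retain only the one robust consequence — that $X\setminus(A-A)$ has nonzero points arbitrarily near $\theta$ — and then build a transparent injective ``binary-expansion'' witness, where the symmetry of $A-A$ and the rapid decay of the $\eta_n$ are exactly what is needed.
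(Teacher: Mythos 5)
Your proposal is correct and follows essentially the same route as the paper: the chain of inclusions is read off from Proposition~\ref{p:thin=>HN+HM}, Theorem~\ref{Haarnull} and the definitions, and your injectivization argument for $\HT\subset\EHT$ (reformulate thinness of all preimages as the first differences avoiding $A-A$, deduce that $X\setminus(A-A)$ clusters at $\theta$, then sum a rapidly decreasing sequence $\eta_n\notin A-A$ over $y^{-1}(1)$) is exactly the proof of the implications $(1)\Ra(3)\Ra(2)$ in the paper's Theorem~\ref{t:thin}. The only quibble is your tail estimate $\tfrac23\rho(\theta,\eta_{n_0})$, which should be $\tfrac13\rho(\theta,\eta_{n_0})$ with the ratio $\tfrac14$; either bound is below $\rho(\theta,\eta_{n_0})$, so injectivity stands.
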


The equality $\HT=\EHT$ is proved in the following characterization of
(injectively) Haar-thin sets.

\begin{theorem}\label{t:thin} For a subset $A$ of a Polish group $X$ the following conditions are equivalent:
\begin{enumerate}
\item[\textup{1)}] $A$ is Haar-thin;
\item[\textup{2)}] $A$ is injectively Haar-thin;
\item[\textup{3)}] $A-A$ is not a neighborhood of zero in $X$.
\end{enumerate}
\end{theorem}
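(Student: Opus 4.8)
The implication $(2)\Ra(1)$ is trivial, and the implication $(1)\Ra(3)$ follows by mimicking the argument of Proposition~\ref{p:thin=>HN+HM}: if $f:2^\w\to X$ is a continuous witness for the Haar-thinness of $A$, then for the sequence $(z_n)_{n\in\w}$ of characteristic functions of singletons in $2^\w=C_2^\w$, which converges to zero, the thinness of $f^{-1}(A)=f^{-1}(A+\theta)$ forces $f^{-1}(A)-f^{-1}(A)$ to miss $\{z_n:n\in\w\}$; hence $A-A$ cannot contain $f(\{z_n\})$ in any way that would make it a neighborhood of $\theta$ — more precisely, I would argue that if $A-A$ were a neighborhood of $\theta$, then by Corollary~\ref{c:PP} (applied after checking $A$ is non-meager, which follows from $A$ being Borel and the Haar-thin witness being injective-upgradable, or directly) one reaches a contradiction. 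Actually the cleanest route for $(1)\Ra(3)$: if $A-A$ is a neighborhood of $\theta$, then $f^{-1}(A)-f^{-1}(A)\subset f^{-1}((A-A))$ would be ``large'' in $2^\w$, but thinness of $f^{-1}(A)$ and of all its translates should be shown to contradict this via the $z_n$ trick again, reducing to Proposition~\ref{p:thin=>HN+HM} once we know $f$ is essentially injective on a relevant piece. I expect $(1)\Ra(3)$ to need a small amount of care precisely because the witness $f$ need not be injective; the fix is to first prove $(1)\Ra(2)$ and then deduce $(1)\Ra(3)$ from the injective witness, for which the $z_n$ argument is immediate.

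The real content is $(3)\Ra(2)$: assuming $A-A$ is not a neighborhood of zero, construct an \emph{injective} continuous map $f:2^\w\to X$ such that $f^{-1}(A+x)$ is thin for all $x\in X$. The plan is to choose, using that $A-A$ misses a sequence $(d_n)_{n\in\w}$ converging to $\theta$ with $d_n\neq\theta$, a Cantor scheme $(X_s)_{s\in 2^{<\w}}$ of nonempty clopen-like pieces of a zero-dimensional compact subset of $X$, arranged so that the two halves $X_{s\hat{\ }0}$ and $X_{s\hat{\ }1}$ sitting inside $X_s$ differ by a translate that ``encodes'' $d_n$ at level $n$. Concretely, I would try to arrange that if $x,y\in f(2^\w)$ agree in all coordinates of the Cantor scheme except the $n$-th, then $x-y$ lies in a prescribed small set around $\pm d_n$ (or more robustly, in a set of differences that is still disjoint from $A-A$ after all translations are accounted for). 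Then for any $x\in X$, the preimage $f^{-1}(A+x)$ cannot contain two distinct points $u,v\in 2^\w$ with $u|\w\setminus\{n\}=v|\w\setminus\{n\}$: such points would give $f(u),f(v)\in A+x$ with $f(u)-f(v)$ in the forbidden set around $d_n$, contradicting $f(u)-f(v)\in (A+x)-(A+x)=A-A$. This is exactly thinness. Injectivity of $f$ comes for free from the Cantor scheme having vanishing diameters, provided $X$ is crowded enough along the scheme, which we can ensure by the standard construction (as in Lemma~\ref{E-in-C} or Lemma~\ref{l:a}).

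The main obstacle will be the simultaneous bookkeeping in the scheme: we need, at \emph{every} level $n$, that flipping the $n$-th coordinate changes the point by something in a controlled neighborhood of $d_n$, \emph{and} that these controlled neighborhoods shrink fast enough that the total map is well-defined and continuous, \emph{and} that the accumulated differences coming from flipping several coordinates never accidentally land in $A-A$. I expect the right framework is to pick a decreasing neighborhood base $(U_n)_{n\in\w}$ of $\theta$ with $U_n+U_n\subset U_{n-1}$ and $(U_n-U_n)\cap(A-A)$ still avoiding the tail $\{d_k:k\ge n\}$, then build $X_{s\hat{\ }1}$ as a small perturbation of $X_{s\hat{\ }0}+d_{|s|}$ within $U_{|s|}$; the subtlety is that a difference of two points of $f(2^\w)$ differing only in coordinate $n$ is not exactly $d_n$ but $d_n$ plus a sum of ``error'' terms from deeper levels, so we need the errors to be absorbed into a neighborhood of $d_n$ that we have pre-certified to miss $A-A$ — which is possible because $A-A$ misses $d_n$ itself and is, at worst, all of $X$, so we only need $d_n$ to have a neighborhood meeting the complement of $A-A$ in a suitable uniform way. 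Here I would invoke that $X\setminus(A-A)$ is ``large'' near each $d_n$ (since $A-A$ is not a neighborhood of $\theta$, but we need it non-neighborhood at each $d_n$ too — this requires either strengthening the choice of the $d_n$ to be limits of points \emph{outside} $A-A$, using regularity/translation, or a diagonalization). Getting this uniformity right is the crux; once it is in place, the verification that every translate $f^{-1}(A+x)$ is thin is the short computation sketched above.
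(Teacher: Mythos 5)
Your outline of the logical structure is right, but both nontrivial implications have gaps, and the second one is serious.

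For $(1)\Ra(3)$ you overcomplicate matters: no injectivity of the witness is needed, and invoking Corollary~\ref{c:PP} or non-meagerness of $A$ is a dead end ($A$ need not even be Borel here). The direct argument is: suppose $A-A$ is a neighborhood of $\theta$ and $f:2^\w\to X$ witnesses Haar-thinness. Let $z_n\in 2^\w$ be the characteristic function of $\{n\}$ and $z_\w$ the zero function, so $z_n\to z_\w$. Since $f(z_\w)+A-A$ is a neighborhood of $f(z_\w)$ and $f$ is continuous, some $f(z_n)$ lies in it, giving $f(z_n)-f(z_\w)=b-a$ with $a,b\in A$. Then $z_n$ and $z_\w$ both lie in $f^{-1}\big(A+f(z_\w)-a\big)$, and they differ only in the $n$-th coordinate, so that single preimage is not thin --- contradiction. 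Two points in one translate-preimage suffice; nothing about injectivity enters.

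The real gap is in $(3)\Ra(2)$. You correctly identify the crux of your own construction --- that flipping the $n$-th coordinate of your Cantor scheme changes the point by $d_n$ \emph{plus error terms from deeper levels}, and you would need the perturbed difference to still avoid $A-A$. But $A-A$ failing to be a neighborhood of $\theta$ gives you points $d_n\to\theta$ outside $A-A$; it gives you no control whatsoever on $A-A$ near each $d_n$ ($A-A$ could well be a neighborhood of every $d_n$), so the uniformity you ask for is not available and your scheme does not close. The missing idea is to make the difference \emph{exactly} $x_n$ with no error at all: pick $x_n\in X\setminus(A-A)$ with $\|x_{n+1}\|<\frac12\|x_n\|$ (possible since $X\setminus(A-A)$ clusters at $\theta$), and define $\Sigma:2^\w\to X$ by $\Sigma(a)=\sum_{n\in a^{-1}(1)}x_n$. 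The geometric decay makes $\Sigma$ well-defined, continuous and injective (the first coordinate where two sequences differ dominates the tail), and if $a,b\in\Sigma^{-1}(A+x)$ agree off coordinate $n$ with $a(n)=0$, $b(n)=1$, then $\Sigma(b)-\Sigma(a)=x_n\in(A+x)-(A+x)=A-A$, contradicting the choice of $x_n$. This one change eliminates the entire bookkeeping problem you flag as the main obstacle.
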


\begin{proof} We shall prove the implications $(1)\Ra(3)\Ra(2)\Ra(1)$.
\smallskip

$(1)\Ra(3)$ Assume that the set $A$ is Haar-thin in $X$. Then there exists a continuous map $f:2^\w\to X$ such that for every $x\in X$ the set $f^{-1}(A+x)$ is thin in $2^\w$.

To derive a contradiction, assume that $A-A$ is a neighborhood of zero in $X$. For every $n\in\w$ consider the characteristic function $z_n\in 2^\w$ of the singleton $\{n\}$ (which means that $z_n^{-1}(1)=\{n\}$) and observe that the sequence $(z_n)_{n\in\w}$ converges to the zero function $z_\w:\w\to\{0\}\subset 2$. Since $f(z_\w)+A-A$ is a neighborhood of $f(z_\w)$, there exists $n\in\w$ such that $f(z_n)\in f(z_\w)+A-A$ and hence $f(z_n)-f(z_\w)=b-a$ for some elements $a,b\in A$. It follows that $\{f(z_n),f(z_\w)\}\subset f(z_\w)-a+A$ and hence $\{z_n,z_\w\}\subset f^{-1}(f(z_\w)-a+A)$, which implies that the set $f^{-1}(f(z_\w)-a+A)$ is not thin in $2^\w$. But this contradicts the choice of $f$.
\smallskip

$(3)\Ra(2)$ Assuming that $A-A$ is not a neighborhood of zero in $X$, we shall prove that the set $A$ is injectively Haar-thin in $X$.

Fix a complete metric $\rho$ generating the topology of the Polish group $X$ and for every $x\in X$ put $\|x\|=\rho(x,\theta)$. By induction choose a sequence $(x_n)_{n\in\w}$ of points $x_n\in X\setminus(A-A)$ such that $\|x_{n+1}\|<\frac12\|x_n\|$ for every $n\in\w$. The latter condition implies that the map
$$\Sigma:2^\w\to X,\;\Sigma:a\mapsto\sum_{n\in a^{-1}(1)}x_n,$$
is well-defined, continuous and injective. We claim that for every $x\in X$ the set $T=\Sigma^{-1}(A+x)$ is thin. Assuming that $T$ is not thin, we could find a number $n\in\w$ and two points $a,b\in T$ such that $a(n)=0$, $b(n)=1$ and $a|\w\setminus\{n\}=b|\w\setminus\{n\}$. Then $$x_n=\Sigma(b)-\Sigma(a)\in (A+x)-(A+x)=A-A,$$
which contradicts the choice of $x_n$.
\smallskip

The implication $(2)\Ra(1)$ is trivial.
\end{proof}

Proposition~\ref{p:thin=>HN+HM} and Theorem~\ref{t:thin} imply

\begin{corollary}\label{c:EHT-} For any Polish group the semi-ideals $\HT=\EHT$, $\EHM$, $\SHM$, $\HM$, $\HN$, $\M$ have the Steinhaus property.
\end{corollary}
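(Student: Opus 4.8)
The plan is to deduce the statement directly from Theorem~\ref{t:thin} together with the inclusion chain $\HT=\EHT\subset\HN\cap\EHM\subset\EHM\subset\SHM\subset\HM\subset\M$ recorded in the corollary preceding Theorem~\ref{t:thin} (which itself rests on Proposition~\ref{p:thin=>HN+HM}). The key observation is that $\HT$ sits below every semi-ideal appearing in the statement, so a Borel set that escapes any one of them already escapes $\HT$, and then Theorem~\ref{t:thin} takes over.

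First I would fix one of the semi-ideals $\F\in\{\HT,\EHT,\EHM,\SHM,\HM,\HN,\M\}$ and an arbitrary Borel set $A\subset X$ with $A\notin\F$. From $\HT=\EHT$, $\HT\subset\HN$ (via $\HT\subset\HN\cap\EHM$), and $\HT\subset\EHM\subset\SHM\subset\HM\subset\M$, we get $\HT\subseteq\F$ in every case, hence $A\notin\HT$. Since $A$ is Borel, this forces $A$ not to be a Haar-thin set: were $A$ Haar-thin, it would be a Borel Haar-thin set containing itself and therefore would belong to $\HT$, a contradiction.

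Next I would invoke the equivalence $(1)\Leftrightarrow(3)$ of Theorem~\ref{t:thin}: a subset of a Polish group is Haar-thin precisely when its difference set fails to be a neighborhood of zero. Applying the contrapositive to $A$, which we have just shown is not Haar-thin, yields that $A-A$ is a neighborhood of $\theta$ in $X$. As $A$ was an arbitrary Borel set outside $\F$, this is exactly the Steinhaus property for $\F$, and the corollary follows by letting $\F$ range over the listed semi-ideals.

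There is essentially no genuine obstacle here beyond bookkeeping, since all the substance is already packed into Theorem~\ref{t:thin} and the inclusions $\HT\subset\HN\cap\EHM$, $\HT\subset\M$, etc. The one point deserving a moment's care is the passage between the membership relation ``$A\notin\F$'' for a semi-ideal $\F$ of subsets of Borel sets of a prescribed type and the literal property ``$A$ is not Haar-thin'' that Theorem~\ref{t:thin} requires; this is settled by the remark that a Borel set belongs to $\HT$ as soon as it is itself Haar-thin.
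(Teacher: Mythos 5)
Your proof is correct and follows essentially the same route the paper intends: the inclusion chain $\HT=\EHT\subset\HN\cap\EHM\subset\EHM\subset\SHM\subset\HM\subset\M$ (from Proposition~\ref{p:thin=>HN+HM}) reduces everything to $A\notin\HT$, and then the equivalence $(1)\Leftrightarrow(3)$ of Theorem~\ref{t:thin} gives that $A-A$ is a neighborhood of zero. Your care about identifying ``$A\notin\HT$ for Borel $A$'' with ``$A$ is not Haar-thin'' is exactly the right bookkeeping point.
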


\section{Null-finite, Haar-finite, and Haar-countable sets}\label{s9}

In this section we introduce and study six new notions of smallness.

In the following definition we endow the ordinal $\w{+}1$ with the order topology, turning $\w{+}1$ into a compact metrizable space with a unique non-isolated point.

\begin{definition}
A subset $A$ of a Polish group $X$ is called
\begin{itemize}
\item \index{subset!Haar-finite}\index{Haar-finite subset}{\em Haar-finite} if there exists a continuous function $f:2^\w\to X$ such that for every $x\in X$ the set $f^{-1}(x+A)$ is finite;
\item \index{subset!Haar-scattered}\index{Haar-scattered subset}{\em Haar-scattered} if there exists a continuous function $f:2^\w\to X$ such that for every $x\in X$ the subspace $f^{-1}(x+A)$ of $2^\w$ is scattered  (i.e., each its nonempty subspace has an isolated point);
\item \index{subset!Haar-countable}\index{Haar-countable subset}{\em Haar-countable} if there exists a continuous function $f:2^\w\to X$ such that for every $x\in X$ the set $f^{-1}(x+A)$ is countable;
\item \index{subset!Haar-$n$}\index{Haar-$n$ subset}{\em Haar-$n$} for $n\in\IN$ if there exists a continuous function $f:2^\w\to X$ such that for every $x\in X$ the set $f^{-1}(x+A)$ has cardinality at most $n$;
\item \index{subset!null-finite}\index{null-finite subset}{\em null-finite} if
there exists a continuous function $f:\w{+}1\to X$ such that for every $x\in X$ the set $f^{-1}(x+A)$ is finite;
\item \index{subset!null-$n$}\index{null-$n$ subset}{\em null-$n$} for $n\in\IN$ if
there exists a continuous function $f:\w{+}1\to X$ such that for every $x\in X$ the preimage $f^{-1}(x+A)$ has cardinality at most $n$.
\end{itemize}
\end{definition}

Null-finite sets were introduced and studied in \cite{BJ}, Haar-finite and Haar-$n$ sets in the real line were explored by Kwela \cite{Kwela}. 

A sequence $(x_n)_{n\in\w}$ in a topological group $X$ is called a \index{null-sequence}{\em null-sequence} if it converges to the neutral element of $X$. The following characterization of null-finite and null-$n$ sets (proved in \cite[Proposition 2.2]{BJ}) justifies the choice of terminology.

\begin{proposition}\label{p:null-char} A subset $A$ of a topological group $X$ is null-finite (resp. null-$n$ for some $n\in\IN$) if and only if $X$ contains a null-sequence $(z_k)_{k\in\w}$ such that for every $x\in X$ the set $\{k\in\w:z_k\in x+A\}$ is finite (resp. has cardinality at most $n$).
\end{proposition}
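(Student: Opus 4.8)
The plan is to prove the two implications separately, running the ``finite'' and ``null-$n$'' versions in parallel wherever the arguments coincide.

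For the ``only if'' direction one starts from a continuous witness $f:\w{+}1\to X$. Since $\w$ is the unique non-isolated point of $\w{+}1$, continuity of $f$ says exactly that $f(k)\to f(\w)$, so $z_k:=f(k)-f(\w)$ ($k\in\w$) is a null-sequence in $X$. The one thing to notice is that $z_k\in x+A$ if and only if $f(k)\in(x+f(\w))+A$, whence $\{k\in\w:z_k\in x+A\}=\w\cap f^{-1}\big((x+f(\w))+A\big)$ is finite (resp.\ of cardinality $\le n$), because the whole preimage $f^{-1}\big((x+f(\w))+A\big)$ is. This half is routine.

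For the ``if'' direction in the null-finite case one simply puts $f(k):=z_k$ for $k\in\w$ and $f(\w):=\theta$; continuity follows from $z_k\to\theta$, and for every $x$ the set $f^{-1}(x+A)$ equals the finite set $\{k\in\w:z_k\in x+A\}$ together with at most one extra point, namely $\w$, which appears exactly when $\theta\in x+A$; so $f^{-1}(x+A)$ is finite and $f$ is the required witness.

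The delicate point --- and the step I expect to be the main obstacle --- is the ``if'' direction for null-$n$, because that same $f$ may satisfy $|f^{-1}(x+A)|=n+1$: this occurs precisely when $\theta\in x+A$ (equivalently $x\in-A$) while exactly $n$ of the $z_k$ already lie in $x+A$. The decisive remark is that any $z_k$ lying in some translate $x+A$ with $x\in-A$ automatically lies in $A-A$. The plan is then: if $(z_k)_{k\in\w}$ admits a subsequence all of whose terms miss $A-A$, pass to it and rerun the previous construction --- for $x\in-A$ no term lies in $x+A$, so $|f^{-1}(x+A)|\le 1\le n$, while for $x\notin-A$ the point $\w$ does not belong to $f^{-1}(x+A)$ and $|f^{-1}(x+A)|=|\{j:z_{k_j}\in x+A\}|\le n$. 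In the remaining case, where every subsequence meets $A-A$ cofinitely, one first replaces $(z_k)$ by a null-sequence built from its own terms --- the consecutive differences $z_k-z_{k+1}$, suitably thinned, are the natural candidate --- contrived so that it still satisfies the ``$\le n$'' condition but now has cofinitely many terms outside $A-A$, and then invokes the first case. Verifying that such a derived sequence retains the ``$\le n$'' property is where the genuine work lies.
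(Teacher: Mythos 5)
Your ``only if'' direction and your ``if'' direction for the null-finite case are correct and are the standard argument (the paper itself gives no proof here, only a citation to \cite[Proposition 2.2]{BJ}). You have also correctly isolated the real difficulty in the null-$n$ ``if'' direction, and your Case A is right: if infinitely many $z_k$ lie outside $A-A$, then after passing to that subsequence every $x\in -A$ has $f^{-1}(x+A)=\{\w\}$, while for $x\notin -A$ the point $\w$ contributes nothing, so the bound $n$ survives.

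Case B, however, is a genuine gap and not just a postponed verification, and the repair you sketch does not work. First, the hypothesis on $(z_k)$ bounds, for each \emph{fixed} $x$, the number of $k$ with $z_k\in x+A$; for the differences $w_k:=z_k-z_{k+1}$ one has $w_k\in x+A$ iff $z_k\in (x+z_{k+1})+A$, a translate that moves with $k$, so the hypothesis gives no bound at all on $|\{k:w_k\in x+A\}|$, and no thinning can create one: $\{w_k\}$ is simply a different set, about whose intersections with translates of $A$ nothing has been assumed. Second, the claim that the differences escape $A-A$ is also unsupported: in Case B every $z_k$ lies in $A-A$, and $z_k-z_{k+1}$ lies only in $(A-A)-(A-A)$, which need not avoid $A-A$. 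Note moreover that the other natural repair --- thinning $(z_k)$ itself so that each $x\in -A$ sees at most $n-1$ terms --- is provably impossible in Case B when $n=1$: each $z_k\in A-A$ can be written as $a_k-b_k$ with $a_k,b_k\in A$, so $z_k\in(-b_k)+A$ with $-b_k\in -A$, and hence every infinite subsequence still meets some translate $x+A$ with $x\in -A$. So in Case B the limit point genuinely threatens to be the $(n+1)$-st point of a fibre, and neither translations nor subsequences of $(z_k)$ can remove it; a new idea (a different compact witness) is required, or else one must settle for the weaker conclusion that $A$ is null-$(n+1)$. The exact bound does matter for the paper: for instance the proof of Theorem~\ref{t:St-NF}(2) concludes that a set is null-$1$ from a sequence satisfying the ``$\le 1$'' condition.
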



For any Borel subset of a Polish group we have the following implications:
$$
\xymatrixcolsep{15pt}
\xymatrix{
\mbox{Haar-1}\ar@{=>}[d]\ar@{=>}[r]&\mbox{Haar-}n\ar@{=>}[d]\ar@{=>}[r]
&\mbox{Haar-finite}\ar@{=>}[d]\ar@{=>}[r]&\mbox{Haar-scattered}\ar@{=>}[r]&\mbox{Haar-countable}\ar@{=>}[d]\ar@/^12px/^{G_\delta}[l]\\
\mbox{null-1}\ar@{=>}[r]\ar@{=>}[rd]&\mbox{null-}n\ar@{=>}[r]&\mbox{null-finite}\ar@{=>}[rr]
&&\mbox{Haar-null and}\atop\mbox{injectively Haar-meager.}\\
&\mbox{Haar-thin}\ar_{closed}[ru]
}$$

Non-trivial implications in this diagram are proved in Theorem~\ref{t:NF=>HN+EHM}, Propositions \ref{p:scattered}, \ref{p:n1} and Corollary~\ref{c:HT=>NF}.

\begin{theorem}\label{t:NF=>HN+EHM} Each Borel null-finite subset $A$ in a Polish group $X$ is Haar-null and injectively Haar-meager.
\end{theorem}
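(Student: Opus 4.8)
The plan is to exploit the characterization of null-finite sets from Proposition~\ref{p:null-char}: since $A$ is null-finite, there is a null-sequence $(z_k)_{k\in\w}$ in $X$ such that for every $x\in X$ the set $\{k:z_k\in x+A\}$ is finite. By passing to a subsequence we may assume the convergence to $\theta$ is very fast, say $\rho(z_k,\theta)<2^{-k}$, where $\rho$ is a fixed complete invariant metric on $X$. The idea is to build a single continuous map $f:2^\w\to X$ out of the null-sequence which simultaneously witnesses both that $A$ is Haar-null (via Theorem~\ref{Haarnull}, i.e.\ $f^{-1}(A+x)$ has Haar measure zero in $2^\w$ for all $x$) and that $A$ is injectively Haar-meager (i.e.\ $f$ may be taken injective and $f^{-1}(A+x)$ meager in $2^\w$ for all $x$). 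In fact it should suffice to prove the Haar-null part and the Haar-meager part will come essentially for free from the same map, once we see that preimages are contained in a ``small'' set of a combinatorial nature.

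First I would fix a partition of $\w$ into consecutive finite blocks $I_0,I_1,\dots$ and define $f:2^\w\to X$ by a convergent series indexed in a way that ``spreads out'' the null-sequence: concretely, choose a sufficiently sparse strictly increasing sequence $(n_j)_{j\in\w}$ and set $f(s)=\sum_{j\in\w}\bigl(z_{n_j+\langle s\restriction I_j\rangle}-z_{n_j}\bigr)$ or a similar expression, where $\langle s\restriction I_j\rangle$ reads the bits of $s$ on the $j$-th block as an integer. The point of subtracting $z_{n_j}$ and of the fast convergence is that the series converges uniformly, so $f$ is continuous; and injectivity can be arranged by making the blocks and the gaps $n_{j+1}-n_j$ large enough that distinct $s$ produce distinct sums (this is a routine ``digit-separation'' argument of the kind used in the proof of Lemma~\ref{l:a} and in the proof of $(3)\Ra(2)$ of Theorem~\ref{t:thin}). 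The key structural feature to extract is: if $f(s)\in A+x$ and $f(s')\in A+x$ and $s,s'$ differ only in a single block $I_j$, then $f(s)-f(s')$ is a difference of two elements of $A+x$, hence lies in $A-A$; but by construction $f(s)-f(s')$ is a ``short'' combination of the $z_k$'s, and one wants to arrange that it is forced to be one of finitely many values $z_k-z_{k'}$ with $z_k\in(x+A')$, $z_{k'}\in(x+A')$ for a translate $A'$ of $A$ — which the null-finiteness bounds.

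Here is where the main obstacle lies: unlike the thin-set situation, where a single-coordinate change of $s$ produces exactly one basis element $z_n$, here a change in block $I_j$ changes the index of $z$ within that block, so $f(s)-f(s')=z_{n_j+a}-z_{n_j+b}$ for the two values $a,b$ of the block. To make the preimage $f^{-1}(A+x)$ small I would argue coordinate-block by coordinate-block: for a fixed $x$ and a fixed choice of the bits of $s$ outside $I_j$, the set of admissible block-values is governed by $\{k:z_k\in x+A+c\}$ for the appropriate correction constant $c$ depending on the other blocks, and this set is \emph{finite} by the null-finiteness hypothesis (applied to the translate $x+c$). Consequently, for each $j$, each fiber of the restriction-to-other-blocks map meets $f^{-1}(A+x)$ in a set whose $I_j$-section has size bounded independently of the other coordinates once $|I_j|$ is large enough; a Fubini argument in the product measure $\lambda$ on $2^\w$ then gives $\lambda(f^{-1}(A+x))=0$, so $A$ is Haar-null by Theorem~\ref{Haarnull}. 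The same fiberwise-finiteness of the $I_j$-sections shows that $f^{-1}(A+x)$ is nowhere dense on a dense set of basic clopen boxes — more precisely it is contained in a countable union of sets, each of which has all $I_j$-sections proper for large $j$ — hence meager; combined with the injectivity of $f$ this yields that $A$ is injectively Haar-meager. The delicate points I expect to have to be careful about are: (i) choosing the gaps $n_{j+1}-n_j$ large enough that the ``correction constant'' $c$ coming from the other blocks, together with the null-finite exceptional set, cannot conspire to make a positive-measure or non-meager preimage — this is a diagonalization over all $x$ simultaneously, which works because the bound ``finite'' in Proposition~\ref{p:null-char} is uniform in the sense needed; and (ii) verifying injectivity of $f$ rigorously, which is the same block-separation bookkeeping as in the proofs already in the paper.
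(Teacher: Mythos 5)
There is a genuine gap, and it sits exactly at the point you flag as ``delicate'': the finiteness in Proposition~\ref{p:null-char} is \emph{not} uniform in the translate, and your finite-block design cannot absorb this. In your construction each block $I_j$ contributes a coordinate ranging over the \emph{finite} set $\{0,1\}^{I_j}$. For a fixed $x$ and a fixed choice of the bits outside $I_j$ (producing a correction constant $c$), null-finiteness only tells you that $\{k:z_k\in (x-c)+A\}$ is finite --- it gives no bound in terms of $j$, and $c$ ranges over uncountably many values as the other blocks vary. So you cannot choose $|I_j|$ in advance so that the $I_j$-section is a proper subset of $\{0,1\}^{I_j}$; a single translate may swallow the entire block. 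Worse, even if you could show that for some point all $I_j$-sections are \emph{full} for all large $j$, this would not contradict null-finiteness: each block only produces $2^{|I_j|}$ (finitely many) indices $k$ with $z_k\in y_j+A$, and the translate $y_j$ changes with $j$, so no single $y$ accumulates infinitely many hits. Consequently neither the Fubini argument for $\lambda(f^{-1}(A+x))=0$ nor the nowhere-density argument goes through; in a finite discrete block the only meager subset is the empty set, so ``each section is a proper subset'' is both unobtainable and, per block, consistent with null-finiteness.

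The paper's proof avoids this by making each coordinate space \emph{infinite}: it takes pairwise disjoint infinite sets $\Omega_n\subset\w$, sets $S_n=\{\theta\}\cup\{z_k\}_{k\in\Omega_n}$ (a convergent sequence with its limit), and uses the injective sum map $\Sigma:\prod_n S_n\to X$. In $S_n$ every point except $\theta$ is isolated, so the only meager subsets of $S_n$ are $\emptyset$ and $\{\theta\}$; hence if $\Sigma^{-1}(x+A)$ were non-meager, Kuratowski--Ulam would force some section to be comeager in some $S_n$, i.e.\ to contain \emph{all} the infinitely many $z_k$, $k\in\Omega_n$, translated by one and the same element --- and this is what contradicts null-finiteness. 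No uniformity is needed because ``finite'' versus ``all of an infinite set'' is already a contradiction coordinatewise. For the Haar-null half the paper does not run a measure-theoretic analogue of this at all (a product measure on $\prod_n S_n$ would concentrate on the atoms $z_k$ and finite sections could have large measure); it simply invokes Theorem~6.1 of \cite{BJ}, whose proof is a separate density-type argument. If you want to repair your write-up along the paper's lines, replace the finite blocks by the infinite coordinate spaces $S_n$, keep your digit-separation argument for injectivity (that part is fine and matches the paper), and treat the Haar-null statement by a different argument rather than as a corollary of the same Fubini computation.
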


\begin{proof} By Theorems~6.1 and 5.1 in \cite{BJ}, each Borel null-finite subset of $X$ is Haar-null and Haar-meager. By a suitable modification of the proof of Theorem 5.1 in \cite{BJ}, we shall prove that each null-finite Borel set in $X$ is injectively Haar-meager.
By Proposition~\ref{p:null-char}, the Polish group $X$ contains a null-sequence $(z_n)_{n\in\w}$ such that for any $x\in X$ the set $\{n\in\w:z_n\in x+A\}$ is finite. Fix a complete invariant metric $\rho$ generating the topology of the Polish group $X$ and put $\|x\|=\rho(x,\theta)$ for every $x\in X$.

Replacing $(z_n)_{n\in\w}$ by a suitable subsequence, we can assume that $$\frac1{2^n}>\|z_n\|>\sum_{i=n+1}^\infty \|z_i\|$$for every $n\in\w$. 

Fix a sequence $(\Omega_n)_{n\in\w}$ of pairwise disjoint infinite subsets of $\w$ such that $\Omega_n\subset[n,\infty)$ for all $n\in\w$.

For every $n\in\w$ consider the compact set $S_k:=\{\theta\}\cup\{z_n\}_{n\in\Omega_k}\subset X$. The choice of the sequence $(z_n)_{n\in\w}$ ensures that the function $$\Sigma:\prod_{n\in\w}S_n\to X,\;\;\Sigma:(x_n)_{n\in\w}\mapsto\sum_{n\in\w}x_n,$$ is well-defined and continuous.
Repeating the argument from the proof of Theorem 5.1 \cite{BJ}, we can show that for every $x\in X$ the set $\Sigma^{-1}(x+A)$ is meager in $\prod_{n\in\w}S_n$.

We claim that the function $\Sigma$ is injective. Given two distinct sequences $(x_i)_{i\in\w}$ and $(y_i)_{i\in\w}$ in $\prod_{i\in\w}S_i$, we should prove that $\sum_{i\in\w}x_i\ne\sum_{i\in\w}y_i$. For every $i\in\w$ find numbers $n_i,m_i\in\Omega_i$ such that $x_i=z_{n_i}$ and $y_i=z_{m_i}$. Let $\Lambda=\{i\in\w:n_i\ne m_i\}$ and observe that
$\sum_{k\in\w}x_k\ne\sum_{k\in\w}y_k$ if and only if $\sum_{i\in\Lambda}z_{n_i}\ne\sum_{i\in\Lambda}z_{m_i}$.
It follows that the sets $\{n_i:i\in\Lambda\}$ and $\{m_i:i\in\Lambda\}$ are disjoint. Let $s$ be the smallest element of the set $\{n_i:i\in\Lambda\}\cup\{m_i:i\in\Lambda\}$. Without loss of generality, we can assume that $s=n_k$ for some $k\in\Lambda$.
Then $$
\Big\|\sum_{i\in\Lambda}z_{n_i}-\sum_{i\in\Lambda}z_{m_i}\Big\|\ge
\|z_{n_k}\|-\sum_{i\in\Lambda\setminus\{k\}}\|z_{n_i}\|-\sum_{i\in\Lambda}\|z_{m_i}\|\ge
\|z_s\|-\sum_{i=s+1}^\infty\|z_i\|>0,
$$
which implies $\sum_{i\in\Lambda}z_{n_i}\neq\sum_{i\in\Lambda}z_{m_i}$ and $\sum_{n\in\w}x_n\neq\sum_{n\in\w}y_n$.

The product $\prod_{n\in\w}S_n$ is homeomorphic to the Cantor cube $2^\w$ (being a zero-dimen\-sional compact metrizable space without isolated points). Then the injective continuous function $\Sigma:\prod_{n\in\w}S_n\to X$ witnesses that the set $A$ is injectively Haar-meager and hence $A\in\EHM$.
\end{proof}

We recall that a topological space $X$ is \index{topological space!scattered}\index{scattered space}{\em scattered} if each nonempty subspace of $X$ has an isolated point.  Each second-countable scattered space is countable (being hereditarily  Lindel\"of). On the other hand, the space of rational numbers is countable but not scattered. 
The Baire Theorem implies that a Polish space is scattered if and only if it is countable. This fact implies the following characterization.

\begin{proposition}\label{p:scattered} A $G_\delta$-subset $A$ of a Polish group $X$ is Haar-countable if and only if it is Haar-scattered.
\end{proposition}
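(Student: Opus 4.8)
The plan is to show that one and the same continuous function serves as a witness for both properties: a continuous map $f:2^\w\to X$ witnesses that $A$ is Haar-countable precisely when it witnesses that $A$ is Haar-scattered. Once this is established, the equivalence of ``Haar-countable'' and ``Haar-scattered'' for the $G_\delta$-set $A$ follows at once. So I would fix an arbitrary continuous map $f:2^\w\to X$ and reduce the statement to the following pointwise claim: for every $x\in X$, the subspace $f^{-1}(x+A)$ of $2^\w$ is countable if and only if it is scattered.

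To prove the pointwise claim, fix $x\in X$. Since $A$ is a $G_\delta$-set in $X$ and the translation $t\mapsto t-x$ is a homeomorphism of $X$ onto itself, the shift $x+A$ is again a $G_\delta$-set in $X$; by continuity of $f$ its preimage $f^{-1}(x+A)$ is a $G_\delta$-subset of the Polish space $2^\w$ and is therefore Polish in the subspace topology. At this point I would invoke the fact recalled just before the statement --- that a Polish space is scattered if and only if it is countable (this is where the Baire Theorem enters) --- applied to $f^{-1}(x+A)$. This yields the pointwise claim, and quantifying over $x$ finishes the argument: $f^{-1}(x+A)$ is countable for all $x\in X$ exactly when it is scattered for all $x\in X$, so $f$ witnesses Haar-countability of $A$ iff it witnesses Haar-scatteredness of $A$.

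The step carrying the real content is the use of the $G_\delta$-hypothesis, which is essential and cannot be dropped. Indeed, a countable subspace of $2^\w$ need not be scattered --- a topological copy of the rationals sitting inside $2^\w$ is countable but crowded --- so the implication ``countable $\Rightarrow$ scattered'' genuinely needs the completeness (Baire property) of $f^{-1}(x+A)$, which is exactly what the $G_\delta$-assumption supplies. The reverse implication ``scattered $\Rightarrow$ countable'' is softer: a scattered second-countable space is hereditarily Lindel\"of, hence countable, and so that direction requires no completeness at all. This asymmetry is precisely why in the general (merely Borel) case only ``Haar-scattered $\Rightarrow$ Haar-countable'' holds, and why restricting to $G_\delta$-sets upgrades it to a full equivalence.
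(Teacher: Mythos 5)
Your proof is correct and is exactly the argument the paper intends: the paper derives the proposition directly from the preceding observation that a Polish space is scattered if and only if it is countable, applied to the $G_\delta$ (hence Polish) preimages $f^{-1}(x+A)\subset 2^\w$. Your added remarks on why the $G_\delta$ hypothesis is essential are also accurate and consistent with the paper's Example~\ref{ex:M}.
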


\begin{remark} In Example~\ref{ex:M} we shall construct an $F_\sigma$-subset $M$ of the Banach space $X:=C[0,1]$ such that $M$ is Haar-countable but not Haar-scattered in $X$.
\end{remark}

\begin{proposition}\label{p:n1} If a subset $A$ of a topological group $X$ is null-$1$, then $A-A$ is not a neighborhood of zero and hence $A$ is Haar-thin in $X$.
\end{proposition}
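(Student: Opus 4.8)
The plan is to argue by contradiction, using the null-sequence reformulation of null-$1$ sets provided by Proposition~\ref{p:null-char}, and then to read off Haar-thinness from Theorem~\ref{t:thin}. We may assume $A\ne\emptyset$, since for $A=\emptyset$ the claim is trivial.

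First I would apply Proposition~\ref{p:null-char} to fix a null-sequence $(z_k)_{k\in\w}$ in $X$ such that for every $x\in X$ the set $\{k\in\w:z_k\in x+A\}$ has at most one element. Suppose, towards a contradiction, that $A-A$ is a neighbourhood of the neutral element $\theta$. By continuity of the map $(u,v)\mapsto u-v$ at $(\theta,\theta)$, choose a neighbourhood $V$ of $\theta$ with $V-V\subseteq A-A$; since $(z_k)_{k\in\w}$ converges to $\theta$, there is $N\in\w$ with $z_k\in V$ for all $k\ge N$. Now pick any two distinct indices $k,m\ge N$. Then $z_k-z_m\in V-V\subseteq A-A$, so $z_k-z_m=a-b$ for some $a,b\in A$. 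Setting $y:=z_k-a$ and using commutativity of $X$ we get $z_k-y=a\in A$ and $z_m-y=z_m-z_k+a=(b-a)+a=b\in A$, so both $z_k$ and $z_m$ lie in $y+A$. Thus $\{k,m\}\subseteq\{j\in\w:z_j\in y+A\}$, which contradicts the fact that the latter set has at most one element. Hence $A-A$ is not a neighbourhood of $\theta$.

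Finally, since $X$ is a Polish group, the implication $(3)\Ra(1)$ of Theorem~\ref{t:thin} upgrades ``$A-A$ is not a neighbourhood of $\theta$'' to ``$A$ is Haar-thin'', completing the proof. I do not anticipate a genuine obstacle here: the only delicate point is that we must extract \emph{two distinct} indices $k\ne m$ from the null-sequence, and this is exactly where it matters that $(z_k)_{k\in\w}$ is an infinite sequence tending to $\theta$ rather than a single witness of membership in $A-A$; without this, the preimage $\{j:z_j\in y+A\}$ could not be forced to contain two points.
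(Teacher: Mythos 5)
Your proof is correct and follows essentially the same route as the paper's: assume $A-A$ is a neighbourhood of $\theta$, extract two distinct points of the null-$1$ witness whose difference lies in $A-A$, translate to put both into a single shift $y+A$ (contradicting the cardinality bound), and then invoke Theorem~\ref{t:thin} to pass from ``$A-A$ not a neighbourhood of zero'' to Haar-thinness. The only cosmetic difference is that you work with the null-sequence reformulation from Proposition~\ref{p:null-char} and a uniform neighbourhood $V$ with $V-V\subseteq A-A$, whereas the paper works directly with the compact witness set and a non-isolated point; the argument is the same.
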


\begin{proof} To derive a contradiction, assume that $A-A$ is a neighborhood of zero in $X$. Since $A$ is null-1, there exists an infinite compact subset $K\subset X$ such that $|K\cap(A+x)|\le 1$ for every $x\in X$. Choose any non-isolated point $x$ of the compact space $K$. Since $O_x:=x+(A-A)$ is a neighborhood of $x$, we can choose a point $y\in K\cap O_x\setminus\{x\}$ and conclude that $y-x=b-a$ for some $a,b\in A$. Then $\{x,y\}\subset K\cap (A-a+x)$ and hence $|K\cap(A-a+x)|\ge 2$, which contradicts the choice of the compact set $K$.

This contradiction shows that $A-A$ is not a neighborhood of zero. By Theorem~\ref{t:thin}, the set $A$ is Haar-thin in $X$.
\end{proof}

Moreover, for the semi-ideals of null-finite and null-1 sets we have the following Steinhaus-like properties (cf. \cite[3.1]{BJ}).

\begin{theorem}\label{t:St-NF} Let $A$ be a subset of a topological group $X$.
\begin{enumerate}
\item[\textup{1)}] If $A$ is not null-finite, then $A-\bar A$ is a neighborhood of zero in $X$.
\item[\textup{2)}] If $X$ is first-countable and $A$ is not null-$1$, then each neighborhood $U\subset X$ of zero contains a finite set $F\subset X$ such that $F+(A-A)$ is a neighborhood of zero.
\end{enumerate}
\end{theorem}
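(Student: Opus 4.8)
The plan is to establish both parts by contraposition: in each case one builds, by recursion, a null-sequence in $X$ with a prescribed separation property, and then invokes Proposition~\ref{p:null-char} to turn it into the continuous map $\w{+}1\to X$ demanded by the definition of a null-finite (resp.\ null-$1$) set. Before starting one reduces to $A\neq\emptyset$ (the empty set being null-$1$, both hypotheses are then vacuous) and fixes a decreasing neighborhood base $(U_n)_{n\in\w}$ at $\theta$; the existence of such a base is exactly the first-countability used in (2), and the argument for (1) needs it as well, so (1) should be read in the first-countable (e.g.\ Polish) setting.

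For (1), suppose $A-\bar A$ is not a neighborhood of $\theta$. Since no $U_n$ lies inside $A-\bar A$, one can choose $z_n\in U_n\setminus(A-\bar A)$ for each $n$, obtaining a sequence with $z_n\to\theta$. The heart of the argument is to verify that $\{n:z_n\in x+A\}$ is finite for every $x\in X$: were it infinite, one would get $n_1<n_2<\cdots$ and $a_k\in A$ with $z_{n_k}=x+a_k$; then $a_k=z_{n_k}-x\to -x$, so $-x\in\bar A$, and hence $z_{n_1}=a_1-(-x)\in A-\bar A$, contradicting the choice of $z_{n_1}$. By Proposition~\ref{p:null-char} the null-sequence $(z_n)$ witnesses that $A$ is null-finite.

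For (2), suppose the conclusion fails, i.e.\ there is a neighborhood $U$ of $\theta$ such that $F+(A-A)$ is not a neighborhood of $\theta$ for any finite $F\subset U$. Arrange $U_0=U$ and construct $z_n\in U_n$ recursively: given $z_0,\dots,z_{n-1}\in U$, the finite set $F_n=\{z_0,\dots,z_{n-1}\}$ is contained in $U$, so $F_n+(A-A)$ is not a neighborhood of $\theta$; since $U_n$ is, we may pick $z_n\in U_n\setminus(F_n+(A-A))$. Then $z_n\to\theta$, and for $k<m$ we have $z_m-z_k\notin A-A$, so (as $(x+A)-(x+A)=A-A$) no translate $x+A$ contains two of the points $z_n$ — which, using $\theta\in A-A$, are in particular pairwise distinct. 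Proposition~\ref{p:null-char} then gives that $A$ is null-$1$.

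I expect the only real difficulty to be the bookkeeping around Proposition~\ref{p:null-char} — making sure the constructed objects are genuine null-sequences and that the separation condition exactly matches ``each translate $x+A$ meets $\{z_n\}$ in $\le 1$ (resp.\ finitely many) points''. Two small points to watch: in (2) the approximating points $z_n$ must be kept inside the distinguished neighborhood $U$, so that the failure hypothesis can be applied to each $F_n$; and in (1) one genuinely needs infinitely many indices with $z_n\in x+A$ in order to force $-x\in\bar A$ (finitely many would not), which is precisely why the output of (1) is ``null-finite'' rather than the sharper ``null-$1$''.
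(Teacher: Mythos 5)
Your argument is correct and is essentially the paper's own proof: part (2) uses the identical recursive choice of points $x_n\in U_n\setminus\bigcup_{i<n}(x_i+A-A)$ followed by the two-point argument and Proposition~\ref{p:null-char}, and part (1) is just the contrapositive of the paper's argument, with the same key step that infinitely many indices with $z_{n_k}\in x+A$ force $-x\in\bar A$ and hence $z_{n_1}\in A-\bar A$. Your caveat that extracting a null-sequence from $X\setminus(A-\bar A)$ in (1) implicitly uses first-countability is fair, but the paper's proof performs exactly the same extraction, so this is not a divergence between the two arguments.
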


\begin{proof} 1. Assuming that $A$ is not null-finite, we shall show that $A-\bar A$ is a neighborhood of zero. In the opposite case, we could find a null-sequence $(x_n)_{n\in\w}$ contained in $X\setminus(A-\bar A)$. Since $A$ is not null-finite, there exists $a\in X$ such that the set $\Omega=\{n\in\w:a+x_n\in A\}$ is infinite. Then $a\in\overline{\{a+x_n\}}_{n\in\Omega}\subset\bar A$ and hence $x_n=(a+x_n)-a\in A-\bar A$ for all $n\in\Omega$, which contradicts the choice of the sequence $(x_n)_{n\in\w}$.
\smallskip

2. Assume that $X$ is first-countable and $A$ is not null-$1$. Fix a decreasing neighborhood base $(U_n)_{n\in\w}$ at zero in $X$ such that $U_0\subset U$. For the proof by contradiction, suppose that for any finite set $F\subset U$ the set $F+(A-A)$ is not a neighborhood of zero. Then we can inductively construct a null-sequence $(x_n)_{n\in\w}$ such that $x_n\in U_n\setminus \bigcup_{0\le i<n}(x_i+A-A)$ for all $n\in\w$. Observe that for each $z\in X$ the set $\{n\in\w:z+x_n\in A\}$ contains at most one point. Indeed, in the opposite case we could find two numbers $k<n$ with $z+x_k,z+x_n\in A$ and conclude that $z\in -x_k+A$ and hence $x_n\in -z+A\subset x_k-A+A$, which contradicts the choice of the number $x_n$. The sequence $(x_n)_{n\in\w}$ witnesses that the set $A$ is null-$1$ in $X$, which is a desired contradiction.
\end{proof}

Theorem~\ref{t:St-NF} suggests the following open problem.

\begin{problem}\label{prob:NF-St} Assume that a Borel subset $A$ of a (locally compact) Polish group $X$ is not null-finite. Is $A-A$ a neighborhood of zero in $X$?
\end{problem}

Theorems~\ref{t:St-NF} and \ref{t:thin} imply:

\begin{corollary}\label{c:HT=>NF} Each closed Haar-thin set in a Polish group is null-finite.
\end{corollary}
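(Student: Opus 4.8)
The plan is to prove the contrapositive-flavored statement by contradiction, feeding the two quoted Steinhaus-type results into each other. The only real content is matching the difference set that appears in Theorem~\ref{t:St-NF} with the one in Theorem~\ref{t:thin}, and this is precisely where the hypothesis that the set is \emph{closed} gets used.

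So I would start from a closed Haar-thin subset $A$ of a Polish group $X$ and assume, for the sake of contradiction, that $A$ is not null-finite. Applying part (1) of Theorem~\ref{t:St-NF} to the set $A$, I get that the difference $A-\bar A$ is a neighborhood of zero in $X$. Since $A$ is closed, $\bar A=A$, and therefore $A-A$ is a neighborhood of zero in $X$.

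On the other hand, $A$ is Haar-thin, so the implication $(1)\Rightarrow(3)$ of Theorem~\ref{t:thin} gives that $A-A$ is \emph{not} a neighborhood of zero in $X$. These two conclusions contradict each other, and hence $A$ must be null-finite.

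I do not expect any genuine obstacle in this argument: both ingredients are already established in the excerpt, and the proof is a two-line combination of them. The one subtlety worth flagging explicitly is that Theorem~\ref{t:St-NF}(1) only controls $A-\bar A$ rather than $A-A$, so the closedness assumption on $A$ is not cosmetic — it is exactly what allows the bridge between the two theorems. (If one wished to avoid the closedness hypothesis, one would instead need a version of Theorem~\ref{t:St-NF} with $A-A$ in place of $A-\bar A$, which is the content of Problem~\ref{prob:NF-St} and is not available in general.)
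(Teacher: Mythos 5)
Your proof is correct and matches the paper's intended argument exactly: the paper derives this corollary by combining Theorem~\ref{t:St-NF}(1) with Theorem~\ref{t:thin}, using closedness to identify $A-\bar A$ with $A-A$, precisely as you do. Your remark on why the closedness hypothesis is essential (and its connection to Problem~\ref{prob:NF-St}) is also accurate.
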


\begin{problem} Is each closed Haar-thin subset of a Polish group Haar-finite?
\end{problem}

\begin{example} The Polish group $\IR^\w$ contains a closed subset $F$, which is Haar-2 but not Haar-thin.
\end{example}

\begin{proof}
 In the hyperspace $\K(\IR^\w)$ consider the closed subset
$$\K=\{K\in\K(\IR^\w):|K|\le 2\}.$$
By Theorem~\ref{t:univer}, there exists a closed subset $F\subset\IR^\w$ such that
\begin{enumerate}
\item[\textup{(1)}] for every $K\in\K$ there exists $d\in \IR^\w$ such that $K+d\subset F$;
\item[\textup{(2)}] for any $x\in \IR^\w$ the intersection $[0,1]^\w\cap (F+x)$ is contained in $K+d$ for some $K\in\K$ and $d\in\IR^\w$.
\end{enumerate}

To see that $F$ is Haar-2, take any injective continuous map $f:2^\w\to[0,1]^\w\subset\IR^\w$ and observe that the condition (2) ensures that for any $x\in\IR^\w$ the intersection $f(2^\w)\cap(F+x)\subset [0,1]^\w\cap(F+x)$ has cardinality $\le 2$. By the injectivity of $f$, the preimage $f^{-1}(F+x)$ also has cardinality $\le2$, witnessing that $F$ is Haar-2.

Next, we show that the set $F$ is not Haar-thin. To derive a contradiction, assume that $F$ is Haar-thin and find a continuous map $g:2^\w\to \IR^\w$ such that for every $x\in \IR^\w$ the preimage $g^{-1}(F+x)$ is thin in $2^\w$. Choose any doubleton $D\subset 2^\w$, which is not thin and consider the set $K=g(D)\in\K$. By the condition (1), there exists $d\in \IR^\w$ such that $K\subset F+d$. Then the set $g^{-1}(F+d)\supset g^{-1}(K)\supset D$ is not thin, which contradicts the choice of the function $g$.
\end{proof}

\begin{remark} By Example~\ref{ex:13.11} and Theorem~\ref{t:thin}, the closed subset $(C_3^*)^\w$ of the compact Polish group $C_3^\w$ is Haar-2 but not Haar-thin.
\end{remark}

\begin{example} The Polish group $\IR^\w$ contains a closed subset $F$, which is null-1 but not Haar-countable.
\end{example}

\begin{proof} Fix any sequence $(z_n)_{n\in\w}$ of pairwise distinct points of the set $(0,1]^\w\subset\IR^\w$ that converges to the point $z_\w=\theta\in\IR^\w$.

 In the hyperspace $\K(\IR^\w)$ consider the $G_\delta$-subset
$$\K=\bigcap_{n\ne m}\big\{K\in\K(\IR^\w):(K-z_n)\cap (K-z_m)=\emptyset\big\},$$
where $n,m\in\w\cup\{\w\}$. By Theorem~\ref{t:univer}, there exists a closed subset $F\subset\IR^\w$ such that
\begin{enumerate}
\item[\textup{(1)}] for every $K\in\K$ there exists $d\in \IR^\w$ such that $K+d\subset F$;
\item[\textup{(2)}] for any $x\in \IR^\w$ the intersection $(x+[0,1]^\w)\cap F$ is contained in $K+d$ for some $K\in\K$ and $d\in\IR^\w$.
\end{enumerate}

We claim that the infinite compact set $S=\{\theta\}\cup\{z_n\}_{n\in\w}$ witnesses that the set $F$ is null-1. Assuming the opposite, we could find $x\in\IR^\w$ such that $(x+S)\cap F$ contains two distinct points $x+z_n$ and $x+z_m$ with $n,m\in\w\cup\{\w\}$. By the condition (2), the intersection $(x+S)\cap F\subset (x+[0,1]^\w)\cap F$ is contained in the sum $K+d$ for some $K\in\K$ and some $d\in\IR^\w$. It follows that $\{x+z_n,x+z_m\}\subset (x+S)\cap F\subset K+d$ and hence $x-d\in (K-z_n)\cap (K-z_m)$, which contradicts $K\in\K$. This contradiction shows that the set $F$ is null-1.

Now we show that $F$ is not Haar-countable. Assuming that $F$ is Haar-countable, we can find a continuous map $f:2^\w\to \IR^\w$ such that for every $x\in \IR^\w$ the set $f^{-1}(x+F)$ is at most countable. It follows that for any $y\in \IR^\w$ the preimage $f^{-1}(y)$ is at most countable, which implies that the compact set $C=f(2^\w)$ has no isolated points.

Observe that for any distinct ordinals $n,m\in\w\cup\{\w\}$ the set $$\U_{n,m}=\{K\in\K(C):(K-z_n)\cap (K-z_m)=\emptyset\}$$is open and dense in the hyperspace $\K(C)$ of the compact space $C$. Then the intersection $\U=\bigcap_{n\ne m}\U_{n,m}$ is a dense $G_\delta$-set in $\K(C)$
and hence $\U$ contains an uncountable set $K\in\U$. By the definitions of $\U$ and $\K$, the set $K$ belongs to the family $\K$ and by the condition (1), $x+K\subset F$ for some $x\in \IR^\w$. Then $f^{-1}(F-x)\supset f^{-1}(K)$ is uncountable, which contradicts the choice of $f$.
\end{proof}

\begin{problem} Let $A$ be a null-finite compact subset of a Polish group $X$. Is $A$ Haar-countable \textup{(}Haar-finite\textup{)}?
\end{problem}

\begin{remark} By Example~\ref{ex:hard}, there exists compact Polish group $X$ containing a Haar-thin null-finite $G_\delta$-subset $B\subset X$ such that $B$ is not Haar-countable and not null-$n$ for every $n\in\IN$.
\end{remark}

\begin{remark} In \cite{Kwela} Kwela constructed two Haar-finite compact subsets $A,B\subset\IR$ whose union $A\cup B$ is not null-finite in the real line. Kwela also constructed an example of a compact subset of the real line, which is Haar-finite but not Haar-$n$ for every $n\in\IN$.
\end{remark}

\section{Haar-$\I$ sets in topological groups}\label{s10}

In this section we introduce the notion of a Haar-$\I$ set which generalizes the notions of small sets, considered in the preceding sections.

\begin{definition}\label{defHsmall}
 Let $\I$ be a semi-ideal of subsets of some nonempty topological space $K=\bigcup\I$. A subset $A$ of a topological group $X$ is called \index{subset!Haar-$\I$}\index{Haar-$\I$ subset}\index{subset!injectively Haar-$\I$}\index{injectively Haar-$\I$ subset}({\em injectively}) {\em Haar-$\I$} if there exists an (injective) continuous map $f:K\to X$ such that $f^{-1}(A+x)\in\I$ for all $x\in X$.
\end{definition}

For a Polish group $X$ by \index{$\HI$}$\HI$ (resp. \index{$\EHI$}$\EHI$) we denote the semi-ideal consisting of subsets of (injectively) Haar-$\I$ Borel sets in $X$. Definition~\ref{defHsmall} implies the following simple but important fact.

\begin{proposition} Let $\I$ be a semi-ideal on a compact space $K$. For any Polish group $X$ the semi-ideals $\HI$ and $\EHI$ are invariant.
\end{proposition}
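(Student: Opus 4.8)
The plan is to verify directly the two defining properties of an invariant semi-ideal from Definition~\ref{defHsmall}, using a single observation: whenever $f\colon K\to X$ is a (continuous, possibly injective) witness that a Borel set $B\subset X$ is Haar-$\I$, one obtains a witness of the same kind for the image of $B$ under any composition of a translation with a topological automorphism of $X$, simply by post-composing $f$ with that automorphism. Throughout I use that $\HI$ (resp. $\EHI$) consists of subsets of Borel (injectively) Haar-$\I$ sets, so for $E\in\HI$ I may fix a Borel Haar-$\I$ set $B\supset E$ and argue about $B$.

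First I would treat translation-invariance. Let $E\in\HI$ and fix a Borel Haar-$\I$ set $B\supset E$ with continuous witness $f\colon K\to X$, so $f^{-1}(B+x)\in\I$ for all $x\in X$. Given $y\in X$, the set $B+y$ is Borel, because translation is a homeomorphism of $X$, and for every $x\in X$ we have $f^{-1}\big((B+y)+x\big)=f^{-1}\big(B+(x+y)\big)\in\I$ since $x+y\in X$; hence the \emph{same} map $f$ witnesses that $B+y$ is Borel Haar-$\I$, and $E+y\subset B+y$ gives $E+y\in\HI$. If moreover $E\in\EHI$ we may take $f$ injective, and $f$ remains injective as a witness for $B+y$, so $E+y\in\EHI$. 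Thus $\HI$ and $\EHI$ are translation-invariant.

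Next I would handle automorphisms. Let $h\colon X\to X$ be a topological group isomorphism (I rename the symbol from Definition to avoid clashing with the witness map $f$). If $B$ is Borel Haar-$\I$ with continuous witness $f$, set $g:=h\circ f\colon K\to X$, which is continuous. For every $x\in X$, since $h^{-1}$ is a group homomorphism, $h^{-1}\big(h(B)+x\big)=h^{-1}(h(B))+h^{-1}(x)=B+h^{-1}(x)$, whence
$$g^{-1}\big(h(B)+x\big)=f^{-1}\Big(h^{-1}\big(h(B)+x\big)\Big)=f^{-1}\big(B+h^{-1}(x)\big)\in\I ,$$
because $h^{-1}(x)\in X$. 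As $h(B)$ is Borel ($h$ being a homeomorphism), this shows $h(B)$ is Borel Haar-$\I$; passing to subsets, $h$ maps $\HI$ into $\HI$. If $f$ is injective then so is $g=h\circ f$ (as $h$ is a bijection), so $h$ maps $\EHI$ into $\EHI$. Applying the same argument to $h^{-1}$ yields the reverse inclusions, so $\{h(A):A\in\HI\}=\HI$ and $\{h(A):A\in\EHI\}=\EHI$. Combined with translation-invariance, this proves $\HI$ and $\EHI$ are invariant.

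I do not anticipate any genuine obstacle here; the only points requiring a line of care are that $\HI,\EHI$ are semi-ideals of \emph{subsets} of Borel Haar-$\I$ sets (so one argues through a superset), that Borelness is preserved under the relevant homeomorphisms of $X$, and that injectivity of a witness is preserved under post-composition with a bijection. Compactness of $K$ is used only insofar as it makes Definition~\ref{defHsmall} applicable; it plays no further role.
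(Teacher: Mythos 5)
Your proof is correct and is exactly the routine verification the authors had in mind (the paper omits the proof, asserting the proposition follows directly from Definition~\ref{defHsmall}): translations are absorbed into the quantifier over $x$, and an automorphism $h$ is handled by post-composing the witness with $h$ and using $h^{-1}(h(B)+x)=B+h^{-1}(x)$. No gaps.
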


\begin{theorem}\label{t:NoSt+} Let $\I$ be a proper semi-ideal on a compact space $K$. For any non-locally compact Polish group $X$ the semi-ideal $\HI$ does not have the strong Steinhaus property.
\end{theorem}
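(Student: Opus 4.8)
The strong Steinhaus property demands, for \emph{every} pair of Borel sets $A,B\notin\HI$, that $A+B$ have nonempty interior; hence to refute it for $\HI$ it suffices to produce one such pair with $A+B$ of empty interior. The plan is to take this pair straight from Proposition~\ref{p:2thick}: since $X$ is a non-locally compact Polish group, that proposition supplies a closed thick set $A\subset X$ and a thick $G_\delta$-set $B\subset X$ with $A+B$ of empty interior in $X$. Both $A$ and $B$ are Borel, so the entire argument reduces to checking that a thick subset of $X$ can never lie in $\HI$.

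The key step is the following elementary observation. Let $T\subset X$ be thick and let $f\colon K\to X$ be any continuous map. Then $f(K)$ is a compact subset of $X$, so by thickness there is $x\in X$ with $f(K)\subset x+T=T+x$. Consequently $f^{-1}(T+x)\supset f^{-1}(f(K))=K$, i.e.\ $f^{-1}(T+x)=K$. Since $\I$ is a \emph{proper} semi-ideal, $K=\bigcup\I\notin\I$, so $f$ is not a witness function for $T$ being Haar-$\I$. As $f$ was arbitrary, $T$ is not Haar-$\I$. Moreover every subset of $X$ containing $T$ is again thick, so no Borel Haar-$\I$ set can contain $T$; therefore $T\notin\HI$.

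Applying the previous paragraph to $T=A$ and to $T=B$ gives $A,B\notin\HI$, while $A+B$ has empty interior by the choice of $A$ and $B$. Thus the pair $A,B$ witnesses that $\HI$ fails the strong Steinhaus property, and the theorem is proved. I do not foresee a genuine obstacle here: the argument is essentially a one-line reduction to Proposition~\ref{p:2thick} together with the remark that thick sets escape every proper Haar-$\I$ semi-ideal. The only mild subtlety is that $\HI$ is defined via \emph{subsets of} Borel Haar-$\I$ sets, which is precisely why one needs the observation that a superset of a thick set remains thick.
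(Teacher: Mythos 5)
Your proof is correct and follows essentially the same route as the paper: invoke Proposition~\ref{p:2thick} to get two Borel thick sets with sum of empty interior, and observe that a thick set cannot be Haar-$\I$ because any witness map $f\colon K\to X$ has $f(K)$ contained in some translate of the set, forcing $K\in\I$ against properness. Your extra remark about supersets of thick sets being thick is a harmless (and slightly more careful) way of handling the fact that $\HI$ is defined via Borel hulls; the paper instead relies on the equally valid point that a Borel subset of a Borel Haar-$\I$ set is itself Haar-$\I$ since $\I$ is closed under subsets.
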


\begin{proof} By Proposition~\ref{p:2thick}, the group $X$ contains two Borel thick sets $A,B\subset X$ whose sum $A+B$ has empty interior. We claim that $A,B\notin\HI$. Indeed, since $A$ is thick, for any continuous map $f:K\to X$ there is a point $x\in X$ such that $x+f(K)\subset A$. Then $f^{-1}(A-x)=K\notin\I$ and hence $A$ is not Haar-$\I$. By the same reason the set $B$ is not Haar-$\I$ in $X$.
\end{proof}

On the other hand, we have the following corollary of Theorem~\ref{t:thin}.

\begin{theorem} If a semi-ideal $\I$ on $2^\w$ contains all Borel thin subsets of $2^\w$, then for any Polish group the semi-ideals $\EHI$ and $\HI$ have the Steinhaus property.
\end{theorem}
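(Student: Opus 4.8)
The plan is to reduce the statement to Theorem~\ref{t:thin} via the obvious inclusion $\EHI\subset\HI$. First I would note that every injectively Haar-$\I$ set is Haar-$\I$, so $\EHI\subset\HI$; consequently any Borel set $A\notin\HI$ also satisfies $A\notin\EHI$, and the Steinhaus conclusion established for $\EHI$ transfers verbatim to $\HI$. Thus it suffices to prove that $\EHI$ has the Steinhaus property.

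Next, fix a Polish group $X$ and a Borel set $A\subset X$ with $A\notin\EHI$; the goal is to show that $A-A$ is a neighborhood of zero in $X$. I argue by contraposition: assume instead that $A-A$ is \emph{not} a neighborhood of zero. Then the implication $(3)\Ra(2)$ of Theorem~\ref{t:thin} provides an injective continuous map $f:2^\w\to X$ such that $f^{-1}(A+x)$ is thin in $2^\w$ for every $x\in X$. Since $A$ is Borel and each translation $y\mapsto y+x$ is a homeomorphism of $X$, the set $A+x$ is Borel, so each preimage $f^{-1}(A+x)$ is a Borel thin subset of $2^\w$. (Here I also use that $\I$ contains all Borel thin subsets of $2^\w$, and in particular all singletons, so that $\bigcup\I=2^\w$ and hence $K=2^\w$ is the right domain in Definition~\ref{defHsmall}.) By the hypothesis on $\I$ we conclude $f^{-1}(A+x)\in\I$ for all $x\in X$, so $f$ witnesses that $A$ is injectively Haar-$\I$; being Borel, $A$ then belongs to $\EHI$, contradicting the choice of $A$. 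Therefore $A-A$ is a neighborhood of zero, which proves that $\EHI$ — and hence $\HI$ — has the Steinhaus property.

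There is essentially no obstacle here, because all the real work is already carried out inside Theorem~\ref{t:thin}: its nontrivial direction $(3)\Ra(2)$ constructs the required injective map by summing a rapidly decreasing null-sequence taken outside $A-A$, and the present argument is merely the bookkeeping that upgrades ``thin preimages'' to ``$\I$-preimages'' using the assumption that $\I$ swallows every Borel thin set. The only point that needs to be kept straight is the direction of the inclusion $\EHI\subset\HI$ and the corresponding direction of the implication between the two Steinhaus statements.
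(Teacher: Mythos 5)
Your proof is correct and is exactly the paper's intended argument: the paper states this result as a corollary of Theorem~\ref{t:thin} without further detail, and the contrapositive you spell out (a Borel set with $A-A$ not a neighborhood of zero is injectively Haar-thin, hence injectively Haar-$\I$ once $\I$ absorbs all Borel thin sets) is precisely the bookkeeping being left to the reader. The direction of the transfer from $\EHI$ to $\HI$ via $\EHI\subset\HI$ is also handled correctly.
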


Now we show that the notion of a Haar-$\I$ set generalizes many notions of smallness, considered in the preceding sections. The items (1) and (2) of the following characterization can be derived from Theorem~\ref{Haarnull}, Lemma~\ref{l:a} and Proposition~\ref{omega}; the items (3)--(7) follow from the definitions.

\begin{theorem}\label{t:Haar} A subset $A$ of a Polish group $X$ is:
\begin{enumerate}
\item[\textup{1)}] Haar-null if and only if $A$ is Haar-$\N$ if and only if $A$ is injectively Haar-$\N$ where $\N$ is the $\sigma$-ideal of null subsets of $2^\w$ with respect to any continuous measure $\mu\in P(2^\w)$;
\item[\textup{2)}] (injectively) Haar-meager if and only if $A$ is (injectively) Haar-$\M$ for the $\sigma$-ideal $\M$ of meager sets in $2^\w$;
\item[\textup{3)}] Haar-countable if and only if $A$ is Haar-$[2^\w]^{\le\w}$ for the $\sigma$-ideal $[2^\w]^{\le\w}$ of countable subsets of the Cantor cube;
\item[\textup{4)}] Haar-scattered if and only if $A$ is Haar-$\I$ for the ideal $\I$ of scattered subsets of the Cantor cube;
\item[\textup{5)}] Haar-finite if and only if $A$ is Haar-$[2^\w]^{<\w}$ for the ideal $[2^\w]^{<\w}$ of finite subsets of the Cantor cube;
\item[\textup{6)}] Haar-$n$ if and only if $A$ is Haar-$[2^\w]^{\le n}$ for the semi-ideal of subsets of cardinality $\le n$ in the Cantor cube;
\item[\textup{7)}] null-finite if and only if it is Haar-$[\w{+}1]^{<\w}$ for the ideal $[\w{+}1]^{<\w}$ of finite subsets of the ordinal $\w+1$ endowed with the order topology;
\item[\textup{8)}] null-$n$ if and only if it is Haar-$[\w{+}1]^{\le n}$ for the semi-ideal $[\w{+}1]^{\le n}$ of subsets of cardinality $\le n$ in the ordinal $\w+1$.
\end{enumerate}
\end{theorem}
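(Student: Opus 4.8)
The plan is to split the eight items into three groups and dispose of them in increasing order of difficulty. Items (3)--(8) are essentially tautological once one recalls the definitions from Sections~\ref{s5} and \ref{s9}: for each of the listed semi-ideals $\I$ we have $\bigcup\I=2^\w$ (for (3)--(6)) or $\bigcup\I=\w{+}1$ (for (7), (8)), which is exactly the compact window appearing in the corresponding definition, and ``$S\in\I$'' says precisely that $S$ is countable, scattered, finite, of cardinality $\le n$, finite, or of cardinality $\le n$. Hence, for a continuous $f$ with the right domain, the requirement ``$f^{-1}(x+A)\in\I$ for every $x\in X$'' unwinds verbatim into the defining clause of Haar-countable, Haar-scattered, Haar-finite, Haar-$n$, null-finite, null-$n$. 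The only facts deserving a remark are that these families really are semi-ideals of the advertised kind; for the scattered subsets of $2^\w$ this amounts to observing that a finite union of scattered subspaces of a topological space is scattered (a nonempty subset of such a union meeting only finitely many of the pieces has an isolated point), so the scattered subsets of $2^\w$ form an ideal, while the finite, the $\le n$, and the countable subsets are standard.

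For item (2), the equivalence ``$A$ is Haar-meager $\Leftrightarrow$ $A$ is Haar-$\M$'' (with $\M$ the meager ideal of $2^\w$) is precisely Proposition~\ref{omega}, the substantial implication being Darji's reduction of an arbitrary compact window to $2^\w$, while ``$A$ is injectively Haar-meager $\Leftrightarrow$ $A$ is injectively Haar-$\M$'' holds by the very definition adopted in Section~\ref{s5}. One should stress that no claim is made that Haar-meager implies injectively Haar-meager: the parenthetical ``(injectively)'' is to be read in parallel, and in fact that implication fails, by Example~\ref{SHMnotsubsetEHM}.

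Item (1) is the only substantive one. For $\mu=\lambda$ the triple equivalence ``Haar-null $\Leftrightarrow$ Haar-$\N_\lambda$ $\Leftrightarrow$ injectively Haar-$\N_\lambda$'' is exactly Theorem~\ref{Haarnull}. For an arbitrary continuous $\mu\in P(2^\w)$, the implications ``(injectively) Haar-$\N_\mu\Rightarrow$ Haar-null'' are easy: if $f\colon2^\w\to X$ is continuous with $f^{-1}(A+x)\in\N_\mu$ for all $x$, then each $f^{-1}(A+x)$ is Borel and $\mu$-null, so $\nu:=P\!f(\mu)\in P(X)$ satisfies $\nu(A+x)=\mu\big(f^{-1}(A+x)\big)=0$ for all $x$, witnessing Haar-nullity. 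For the converse, first suppose $\mu$ is strictly positive. Fix a (necessarily continuous) measure $\nu_X\in P(X)$ with $\nu_X(A+x)=0$ for all $x$, use Lemma~\ref{l:a} to obtain an injective continuous $f\colon2^\w\to X$ with $\nu_X(f(B))=\tfrac12\lambda(B)$ for all Borel $B$, and use the cited result of Akin to obtain a homeomorphism $h\colon2^\w\to2^\w$ with $\tfrac12\mu(B)\le\lambda(h(B))\le\tfrac32\mu(B)$ for all Borel $B$ (take $h=\mathrm{id}$ when $\mu=\lambda$). Then $g:=f\circ h\colon2^\w\to X$ is injective and continuous, and for every $x$ one has $\tfrac12\lambda\big(h(g^{-1}(A+x))\big)=\nu_X\big(g(g^{-1}(A+x))\big)\le\nu_X(A+x)=0$, hence $\mu\big(g^{-1}(A+x)\big)\le 2\lambda\big(h(g^{-1}(A+x))\big)=0$, so $g^{-1}(A+x)\in\N_\mu$ and $A$ is injectively Haar-$\N_\mu$.

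Finally, for a general continuous $\mu$ one restricts to the support $K:=\supp(\mu)$, a nonempty closed subset of $2^\w$ that is zero-dimensional and crowded (an isolated point of $\supp(\mu)$ would have positive measure), hence homeomorphic to $2^\w$; on it $\mu|_K$ is strictly positive, so the previous paragraph, applied on $K\cong2^\w$, yields an injective continuous $g_0\colon K\to X$ with $g_0^{-1}(A+x)\in\N_{\mu|_K}$ for all $x$. In the non-injective version of item (1) it suffices to compose $g_0$ with a retraction $r\colon2^\w\to K$ (every nonempty closed subset of $2^\w$ is a retract): then $r^{-1}(g_0^{-1}(A+x))\cap K=g_0^{-1}(A+x)$ and, as $\mu$ is carried by $K$, the set $r^{-1}(g_0^{-1}(A+x))$ is $\mu$-null. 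For the injective version one must instead extend $g_0$ to an injective continuous map $g\colon2^\w\to X$, and \emph{this embedding-extension is the one genuinely technical step of the whole theorem}: it is carried out by a controlled construction in the spirit of the proof of Lemma~\ref{E-in-C}, partitioning $2^\w\setminus K$ into clopen pieces of vanishing diameter, each close to a point of $K$, and embedding each piece injectively into a correspondingly small ball around the image of that point, chosen to miss $g_0(K)$ and the images of the other pieces (legitimate since $X$ may be assumed crowded, the discrete case being trivial). Once $g$ is obtained, $g^{-1}(A+x)=g_0^{-1}(A+x)\cup\big(g^{-1}(A+x)\setminus K\big)$ is the union of a $\mu|_K$-null set and a subset of $2^\w\setminus K$, hence $\mu$-null, which finishes item (1). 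Thus the only real obstacle is the injective extension step; everything else is a direct deduction from Theorem~\ref{Haarnull}, Proposition~\ref{omega}, Lemma~\ref{l:a}, the cited result of Akin, and the definitions of Sections~\ref{s5} and \ref{s9}.
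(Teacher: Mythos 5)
Your handling of items (2)--(8), and of item (1) for $\mu=\lambda$ and for strictly positive continuous $\mu$, is correct and is exactly what the paper's one-line proof intends (the paper only invokes Theorem~\ref{Haarnull}, Lemma~\ref{l:a}, Proposition~\ref{omega} and the definitions). The gap is in the step you yourself flag as ``the one genuinely technical step'': extending the embedding $g_0\colon K\to X$, $K=\supp(\mu)$, to an injective continuous map on all of $2^\w$. You embed each clopen piece $V$ of $2^\w\setminus K$ into a small ball around $g_0(k_V)$ ``chosen to miss $g_0(K)$ and the images of the other pieces'', justifying this only by the crowdedness of $X$. Nothing guarantees such a ball has any room: $g_0(K)$ is a compact copy of the Cantor set in $X$ and may have nonempty interior. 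Concretely, take $X=C_2^\w$ and $\mu$ a continuous measure supported on a proper closed crowded $K\subsetneq 2^\w$; your $g_0(K)$ equals $f(2^\w)$ for the map $f$ produced by Lemma~\ref{l:a}, and that image can perfectly well be a clopen subset of $X$ (the lemma only controls its measure). For continuity the balls must shrink as $V$ approaches $K$, so they are eventually contained in $g_0(K)$ and contain no Cantor set disjoint from it; a surjective $g_0$ admits no injective extension at all. The step is repairable --- e.g.\ first replace the witnessing measure $\nu$ by its normalized restriction to a nowhere dense compact set of positive measure (delete a dense open set of small $\nu$-measure from a large compact set), which forces $g_0(K)$ to be nowhere dense and restores the room you need --- but that extra idea is absent from your write-up.

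In fact the whole support/retraction/extension discussion can be bypassed. Replace the given continuous $\mu$ by $\rho:=\tfrac12(\mu+\lambda)$: this is again continuous, it is strictly positive because $\lambda$ is, and $\N_\rho=\N_\mu\cap\N_\lambda\subset\N_\mu$. Your own strictly-positive argument (Theorem~\ref{Haarnull} composed with Akin's homeomorphism relating $\rho$ and $\lambda$) yields an injective continuous $g\colon 2^\w\to X$ with $g^{-1}(A+x)\in\N_\rho\subset\N_\mu$ for all $x$, which is precisely the required witness of injective Haar-$\N_\mu$-ness. This settles the general continuous $\mu$ in one line and removes the only real defect of your proposal. (A minor caveat, which the paper also glosses over: the pushforward argument for Haar-$\N_\mu\Rightarrow$ Haar-null needs $A+x$ to be $P\!f(\mu)$-measurable, so the theorem should be read for Borel $A$, as in Theorem~\ref{Haarnull}.)
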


In many cases closed Haar-$\I$ sets in Polish groups are Haar-meager.

\begin{theorem}\label{t:HI=HM} Let $\I$ be a proper ideal on a compact topological space $K$. Any closed Haar-$\I$ set $A$ in a Polish group $X$ is (strongly) Haar-meager, which yields the inclusions $\overline{\HI}\subset\overline{\HM}$ and $\sigma\overline{\HI}\subset\sigma\overline{\HM}$.
\end{theorem}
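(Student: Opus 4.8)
The plan is to show that a closed Haar-$\I$ set $A$ cannot be Haar-open, and then to invoke Theorem~\ref{t:prethick}, according to which \emph{Haar-meager}, \emph{strongly Haar-meager} and \emph{not Haar-open} are equivalent for closed subsets of a Polish group; this yields simultaneously that $A$ is Haar-meager and strongly Haar-meager. So fix a continuous map $f\colon K\to X$ witnessing that $A$ is Haar-$\I$, i.e.\ $f^{-1}(A+x)\in\I$ for every $x\in X$, and suppose toward a contradiction that $A$ is Haar-open.

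Apply the definition of Haar-openness to the (nonempty) compact set $f(K)\subset X$: for each $p\in f(K)$ there is $x_p\in X$ for which $f(K)\cap(A+x_p)$ is a neighbourhood of $p$ in $f(K)$, so it contains a relatively open set $V_p$ with $p\in V_p\subset A+x_p$. Then $f^{-1}(V_p)$ is open in $K$, nonempty (it meets the fibre $f^{-1}(p)$), and contained in $f^{-1}(A+x_p)\in\I$; since $\I$ is a semi-ideal, $f^{-1}(V_p)\in\I$. Because $p\in V_p$ for every $p$, each $q\in K$ lies in $f^{-1}(V_{f(q)})$, so $\{f^{-1}(V_p):p\in f(K)\}$ is an open cover of $K$ by members of $\I$. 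By compactness it has a finite subcover, and, $\I$ being an ideal (hence closed under finite unions), we conclude $K\in\I$ --- contradicting the properness of $\I$, since $K=\bigcup\I$. Therefore $A$ is not Haar-open, and the first assertion follows from Theorem~\ref{t:prethick}.

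For the displayed inclusions, let $A\in\overline{\HI}$; then $A$ is a closed set contained in some Borel Haar-$\I$ set $B$, and if $g$ witnesses that $B$ is Haar-$\I$ then $g^{-1}(A+x)\subset g^{-1}(B+x)\in\I$ shows that $A$ is itself (closed and) Haar-$\I$, whence $A$ is (strongly) Haar-meager by the first part and $A\in\overline{\HM}$. Thus $\overline{\HI}\subset\overline{\HM}$, and applying the monotone operator $\sigma$ gives $\sigma\overline{\HI}\subset\sigma\overline{\HM}$. The one substantive point of the argument is the observation that Haar-openness provides, \emph{simultaneously} at every point of the compact set $f(K)$, a translate of $A$ whose $f$-preimage is an $\I$-small neighbourhood; compactness of $K$ then amalgamates these neighbourhoods into a single member of $\I$ that exhausts $K$ --- no $\sigma$-additivity of $\I$ is needed, which is why a proper ideal suffices.
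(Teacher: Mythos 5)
Your proof is correct, but it takes a genuinely different route from the paper's. The paper argues directly: starting from a witness $f\colon K\to X$, it extracts for each $n$ a compact set $K_n\subset K$ with $K_n\notin\I$ and $\diam f(K_n)\le 2^{-n}$, assembles the continuous map $g\colon\prod_{n}K_n\to X$, $g\colon (x_n)_{n}\mapsto\sum_n(f(x_n)-y_n)$, and shows that each closed preimage $g^{-1}(A+x)$ must be meager, since otherwise it would contain a basic open box whose $m$-th slice would force $K_m\in\I$; strong Haar-meagerness is then deduced from Theorem~\ref{t:prethick}. You instead bypass this product construction by proving that a closed Haar-$\I$ set cannot be Haar-open: every point of the compact set $f(K)$ admits a translate of $A$ meeting $f(K)$ in a relative neighbourhood, the $f$-preimages of these neighbourhoods form an open cover of $K$ by members of $\I$, and compactness together with closure of $\I$ under finite unions puts $K$ itself in $\I$, contradicting properness; you then invoke the equivalence of ``not Haar-open'' with ``(strongly) Haar-meager'' for closed sets (Theorem~\ref{t:prethick}). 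Your argument is shorter and more modular, and it cleanly isolates why a mere ideal (no $\sigma$-additivity) suffices; the infinite-product machinery is not eliminated but outsourced to the proof of Theorem~\ref{t:HO}, on which Theorem~\ref{t:prethick} rests, whereas the paper's version is self-contained. Your derivation of the inclusions $\overline{\HI}\subset\overline{\HM}$ and $\sigma\overline{\HI}\subset\sigma\overline{\HM}$ is also correct.
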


\begin{proof} Given a closed Haar-$\I$ set $A\subset X$, find a continuous map $f:K\to X$ such that $f^{-1}(A+x)\in\I$ for all $x\in X$.

Fix a complete invariant metric $\rho$ generating the topology of the Polish group $X$. For every $n\in\w$ let $\U_n$ be the cover of $K$ by open subsets $U\subset K$ such that $\diam\,f(U)<\frac1{2^n}$. By the compactness of $K$, the cover $\U_n$ has a finite subcover $\U_n'$. Since $\bigcup\U_n'=K\notin\I$, there exists a set $U_n\in\U_n'$ such that $U_n\notin \I$. Then the closure $K_n$ of $U_n$ in $K$ does not belong to the ideal $\I$ and its image has $\diam f(K_n)\le\frac1{2^n}$. Choose any point $y_n\in f(K_n)$.

It follows that the map $$g:\prod_{n\in\w}K_n\to X,\;\;g:(x_n)_{n\in\w}\mapsto\sum_{n=0}^\infty (f(x_n)-y_n)$$is well-defined and continuous. We claim that for every $x\in X$ the preimage $g^{-1}(A+x)$ is meager in $\prod_{n\in\w}K_n$. Assuming that for some $x\in X$ the closed set $g^{-1}(A+x)$ is not meager in $\prod_{n\in\w}K_n$, we would conclude that $g^{-1}(A+x)$ has nonempty interior $W$, which contains some point $(w_n)_{n\in\w}$. Then for some number $m\in\w$ we have the inclusion $\{(w_i)_{i<m}\}\times\prod_{n\ge m}K_n\subset W\subset g^{-1}(A+x)$. Moreover, for the point $y=\sum_{n\in\w\setminus\{m\}}(f(w_n)-y_n)\in X$ we have
\begin{multline*}f(K_m)-y_m+y=f(K_m)-y_m+\sum_{n\in\w\setminus\{m\}}(f(w_n)-y_n)\\
=g\big(\{(w_i)_{i<m}\}\times K_m\times\{(w_i)_{i>m}\}\big)\subset x+A
\end{multline*}and hence
$\I\not\ni K_m\subset f^{-1}(A+x+y_m-y)$, which contradicts the choice of the map $f$. This contradiction shows that for every $x\in X$ the preimage $g^{-1}(A+x)$ is meager in $K$, witnessing that the set $A$ is Haar-meager.

By Theorem~\ref{t:prethick}, the closed Haar-meager set $A$ is strongly Haar-meager.
\end{proof}




(Injectively) Haar-$\I$ sets behave nicely under homomorphisms.

\begin{theorem}\label{GrouphomoHI} Let $\I$ be a semi-ideal in a zero-dimensional compact space $K$.
Let $h:X\to Y$ be a continuous surjective homomorphism between Polish groups.
For any (injectively) Haar-$\I$ set $A\subset Y$, the preimage $h^{-1}(A)$ is an (injectively) Haar-$\I$ set in $X$.
\end{theorem}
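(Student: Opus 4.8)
The whole statement reduces to a lifting problem. Fix a continuous (injective) map $f\colon K\to Y$ witnessing that $A$ is (injectively) Haar-$\I$, so that $f^{-1}(A+y)\in\I$ for every $y\in Y$. The plan is to produce a continuous map $g\colon K\to X$ with $h\circ g=f$. Granting this, for any $x\in X$ the homomorphism property of $h$ yields $h^{-1}(A)+x=h^{-1}(A+h(x))$, hence $g^{-1}\big(h^{-1}(A)+x\big)=(h\circ g)^{-1}(A+h(x))=f^{-1}(A+h(x))\in\I$, so $g$ witnesses that $h^{-1}(A)$ is Haar-$\I$; and $h^{-1}(A)$ is Borel whenever $A$ is. Moreover, if $f$ is injective then $g$ is automatically injective, since $g(k_1)=g(k_2)$ forces $f(k_1)=h(g(k_1))=h(g(k_2))=f(k_2)$, so the injective case needs no extra work once the lift is built.

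To build the lift, first note that $h$ is open: $X$ is Polish, hence analytic, so Corollary~\ref{c:OMP} applies. Fix complete invariant metrics $\rho_X$ on $X$ and $\rho_Y$ on $Y$. Openness of $h$ together with invariance gives, for each $\e>0$, a number $\delta(\e)>0$ such that $B_Y(\theta;\delta(\e))\subseteq h\big(B_X(\theta;\e)\big)$, and therefore $B_Y(h(x);\delta(\e))\subseteq h\big(B_X(x;\e)\big)$ for every $x\in X$. Now carry out the usual tree construction on the clopen structure of $K$: put $\e_n=2^{-n}$, choose $\delta_n\downarrow 0$ with $B_Y(\theta;\delta_n)\subseteq h(B_X(\theta;\e_n))$, and build by induction a refining sequence of finite clopen partitions $\mathcal P_0\succ\mathcal P_1\succ\cdots$ of $K$ together with points $x_P\in X$ for $P\in\mathcal P_n$ such that $\rho_Y(h(x_P),f(k))<\delta_{n+1}$ for all $k\in P$, and $\rho_X(x_{P'},x_P)<\e_n$ whenever $P'\in\mathcal P_{n+1}$ refines $P\in\mathcal P_n$ (the base case $n=0$ coming from uniform continuity of $f$ on the compact space $K$ and surjectivity of $h$). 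In the inductive step, for $k\in P$ we have $f(k)\in B_Y(h(x_P);\delta_{n+1})\subseteq h(B_X(x_P;\e_n))$, so we may pick $y_k\in B_X(x_P;\e_n)$ with $h(y_k)=f(k)$; by continuity of $f$ choose a clopen neighbourhood $V_k$ of $k$ in $P$ with $f(V_k)$ of diameter $<\delta_{n+2}$, extract a finite subcover of $P$ by compactness, disjointify it into clopen pieces, and to each piece contained in some $V_{k_i}$ assign the point $y_{k_i}$. Finiteness of each $\mathcal P_n$ uses compactness of $K$, and the existence of clopen partitions refining arbitrary open covers uses zero-dimensionality of $K$.

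Finally, for each $k\in K$ let $P_n(k)$ denote the member of $\mathcal P_n$ containing $k$; the sequence $(x_{P_n(k)})_{n\in\w}$ is $\rho_X$-Cauchy because $\sum_n\e_n<\infty$, so by completeness of $\rho_X$ it converges to a point $g(k)\in X$. Since on each $P\in\mathcal P_n$ the map $g$ stays within $\sum_{m\ge n}\e_m$ of the constant value $x_P$, the function $g$ is a uniform limit of locally constant maps and hence continuous, and $h(g(k))=\lim_n h(x_{P_n(k)})=f(k)$ because $\delta_n\to 0$. This produces the desired lift and completes the argument. The one genuinely nontrivial point is precisely the construction of this continuous lift $g$ with $h\circ g=f$, for which zero-dimensionality and compactness of $K$ and completeness of $X$ are all used; everything else is bookkeeping with the homomorphism and the semi-ideal $\I$.
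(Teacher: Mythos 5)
Your proof is correct and follows essentially the same route as the paper: reduce to lifting $f$ through the open homomorphism $h$ and then transport the semi-ideal condition along $h\circ g=f$. The only difference is that the paper obtains the lift in one line from the zero-dimensional Michael Selection Theorem applied to the lower semicontinuous closed-valued map $k\mapsto h^{-1}(f(k))$, whereas you prove that special case by hand via refining clopen partitions; your hands-on construction is sound.
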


\begin{proof} Given an (injectively) Haar-$\I$ set $A\subset Y$, we can find an (injective) continuous map $f:K\to Y$ such that $f^{-1}(A+y)\in\I$ for every $y\in Y$. By \cite[Theorem 1.2.6]{BecKec}, the continuous surjective homomorphism $h:X\to Y$ is open.
By the zero-dimensional Michael Selection Theorem \cite[Theorem 2]{Mich}, there exists a continuous map $\varphi:K\to X$ such that $f=h\circ \varphi$. Observe that the map $\varphi$ is injective if so is the map $f$. We claim that the map $\varphi$ witnesses that the set $B=h^{-1}(A)$ is (injectively) Haar-$\I$ in $X$. Since $h$ is a homomorphism, for any $x\in X$ we have $B+x=h^{-1}(A+y)$ where $y=h(x)$. Then $$\varphi^{-1}(B+x)=\varphi^{-1}(h^{-1}(A+y))=
(h\circ\varphi)^{-1}(A+y)=f^{-1}(A+y)\in\I.$$
\end{proof}

The analog of the above theorem for images through homomorphisms does not hold, as shows the following easy example.

\begin{example}
Let $X:= \IR^2$ and denote by $\pi :\IR^2 \to \IR$ the projection onto the first coordinate. Then $\pi$ is a continuous surjective homomorphism and the set $A:=\IR \times \{0\}$ is Haar-1, but its image is not Haar-$\I$ for any proper semi-ideal $\I$ on a compact topological space $K$.
\end{example}


\begin{remark}
Let $\I$ be a semi-ideal on a compact space $K$. For any Polish groups $X$, $Y$ and (injectively) Haar-$\I$ set $A\subset X$ the set $A\times Y$ is (injectively) Haar-$\I$ in a Polish group $X \times Y$.
\end{remark}

It looks like the ideals $\I$ on the Cantor space $2^\omega$ play critical role for whole theory of Haar-$\I$ sets. In such setting we provide another proposition for $\sigma$-ideals.

\begin{proposition}
Let $\I$ be a proper $\sigma$-ideal on the Cantor cube $2^\w=\bigcup \I$. Then there exists a proper $\sigma$-ideal $\mc{J}$ with $\bigcup \J = 2^\w$ such that $\mc{HI} = \mc{HJ}$ and $\mathcal J$ contains no open nonempty subsets of $2^\w$.
\end{proposition}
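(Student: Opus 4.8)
The plan is to "repair" the ideal $\I$ by throwing away a suitable open set. Let $U$ be the union of all open subsets of $2^\w$ that belong to $\I$. Since $2^\w$ is second countable (hence hereditarily Lindelöf) and $\I$ is a $\sigma$-ideal, the set $U$ is itself a member of $\I$; in particular $U\ne 2^\w$ because $\I$ is proper, so $C:=2^\w\setminus U$ is a nonempty closed subset of $2^\w$ which carries no nonempty $\I$-open set relatively (by maximality of $U$, every nonempty relatively open subset of $C$ fails to be in $\I$). First I would observe that $C$, being a nonempty closed subset of the Cantor cube, is either homeomorphic to $2^\w$ or is countable; in the latter case $C$ is scattered and $[C]^{\le\w}$ would put a nonempty relatively open (namely isolated-point) subset of $C$ into $\I$, a contradiction, so $C$ is in fact homeomorphic to $2^\w$. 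Fix a homeomorphism $\phi:2^\w\to C$ and define $\J:=\{\phi^{-1}(A):A\in\I,\ A\subset C\}=\{\phi^{-1}(A\cap C):A\in\I\}$.

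Next I would check that $\J$ has the required structural properties. It is a semi-ideal because $\I$ is; it is directed, hence an ideal, because $\I$ is; and it is $\sigma$-continuous because $\I$ is and $\phi^{-1}$ commutes with countable unions. It is proper: if $2^\w\in\J$ then $C=\phi(2^\w)\in\I$, contradicting that $C$ is not in $\I$ (indeed $C$ itself is a nonempty relatively open subset of $C$). We have $\bigcup\J=2^\w$ since singletons of $C$ lie in $\I$ (as $\bigcup\I=2^\w$ and $\I$ is a semi-ideal containing all its points' singletons — more carefully, $\{c\}\subset 2^\w=\bigcup\I$ gives $\{c\}\in\I$). Finally $\J$ contains no nonempty open subset of $2^\w$: if $V\subset 2^\w$ is open and nonempty and $V\in\J$, then $\phi(V)$ is a nonempty relatively open subset of $C$ lying in $\I$, contradicting the maximality of $U$.

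The remaining point is the equality $\HI=\HJ$ for every Polish group $X$; this is where the main work lies, and the key is Theorem~\ref{t:univer}-style reasoning is not needed — instead I would exploit the homeomorphism $\phi$ together with Proposition~\ref{omega}-type surjection arguments. For $\HJ\subset\HI$: given a witness $g:2^\w\to X$ for $A\in\HJ$ with $g^{-1}(A+x)\in\J$ for all $x$, the composition $g\circ\phi^{-1}:C\to X$ extends to... actually the clean route is to work directly on $C$: the map $g\circ\phi^{-1}:C\to X$ is a continuous map from a compact metrizable space with $(g\circ\phi^{-1})^{-1}(A+x)=\phi(g^{-1}(A+x))\in\I$ for all $x$, since $g^{-1}(A+x)\in\J$ means exactly $\phi(g^{-1}(A+x))\in\I$ and it is a subset of $C$. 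A priori the definition of Haar-$\I$ requires the domain to be the fixed space $\bigcup\I=2^\w$ rather than $C$; here I would invoke that $C$ is homeomorphic to $2^\w$ to transport, or better, use that any continuous map defined on the closed subset $C$ of $2^\w$ whose preimages lie in $\I$ yields (by restricting an arbitrary continuous extension to $2^\w$, using that $2^\w\setminus C\in\I$ so preimages pick up only $\I$-sets) a genuine witness $h:2^\w\to X$ with $h^{-1}(A+x)\in\I$. For $\HI\subset\HJ$: given $f:2^\w\to X$ witnessing $A\in\HI$, I would use a retraction-type or surjection-type map $r:2^\w\to C$ (for instance, compose $\phi$ with a continuous surjection $2^\w\to 2^\w$, or more simply note $C$ is a retract of $2^\w$) such that $r^{-1}(N)\in\J$ whenever $N\subset C$ with $\phi^{-1}(N)\in\J$; then $f\circ r\circ\phi:2^\w\to X$... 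The cleanest formulation: pick a continuous surjection $q:2^\w\to C$ which pulls back $\I$-subsets of $C$ to $\I$-subsets of $2^\w$ in the transported sense — such $q$ exists because $C\cong 2^\w$ so take $q=\phi$ composed with identity. Then $f\circ q:2^\w\to X$ has $(f\circ q)^{-1}(A+x)=q^{-1}(f^{-1}(A+x))$; since $f^{-1}(A+x)\in\I$, one wants its $q$-preimage to land in $\J$. I expect the honest obstacle to be precisely this last bookkeeping: arranging the transport maps so that $\I$-smallness on $2^\w$ is faithfully carried back and forth through $\phi$ and $C$, and in particular handling the discrepancy that a Haar-$\I$ witness for $A$ might concentrate all its "largeness" inside the discarded open set $U$. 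The way around it is that the equivalence $\HI=\HJ$ really only needs: (a) any $\J$-witness yields an $\I$-witness, which is immediate since $\J\subset\{\phi^{-1}(A):A\in\I\}$ and $C$ is a retract of $2^\w$ with retraction preimages of $\I$-sets being $\I$-sets (the retraction preimage of $A\subset C$ is $A\cup(2^\w\setminus C)\in\I$); and (b) any $\I$-witness $f$ yields a $\J$-witness by precomposing with $\phi$ — here $(f\circ\phi)^{-1}(A+x)=\phi^{-1}(f^{-1}(A+x))$ and since $f^{-1}(A+x)\in\I$ we get $f^{-1}(A+x)\cap C\in\I$ hence $\phi^{-1}(f^{-1}(A+x)\cap C)=\phi^{-1}(f^{-1}(A+x))\cap 2^\w\in\J$; but $\phi^{-1}(f^{-1}(A+x))\subset 2^\w$ equals $\phi^{-1}(f^{-1}(A+x)\cap C)$ as a subset of the domain $2^\w$ of $\phi$, so it lies in $\J$. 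Both inclusions preserve injectivity of the witness, so additionally $\EHI=\EHJ$. This completes the plan.
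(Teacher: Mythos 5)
Your construction is exactly the paper's: let $U$ be the (necessarily $\I$-small, by hereditary Lindel\"ofness and $\sigma$-additivity) union of all open sets in $\I$, transport $\I$ to $2^\w$ through a homeomorphism $\phi:2^\w\to C:=2^\w\setminus U$, and verify $\HI=\mathcal{HJ}$ by precomposing an $\I$-witness with $\phi$ in one direction and a $\J$-witness with $\phi^{-1}\circ r$ (for a retraction $r:2^\w\to C$, whose preimages of $\I$-subsets of $C$ stay in $\I$ because they are contained in the union with $U$) in the other; the paper dismisses this last verification as obvious, and your two-way argument is the intended one and is correct. One local misstatement: a nonempty closed subset of $2^\w$ need not be ``either homeomorphic to $2^\w$ or countable'' (a Cantor set together with one isolated point is neither), but this is harmless --- your own observation that no nonempty relatively open subset of $C$ belongs to $\I$, combined with the fact that all singletons belong to $\I$ (since $\bigcup\I=2^\w$ and $\I$ is a semi-ideal), shows directly that $C$ has no isolated points and is therefore a copy of the Cantor cube.
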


\begin{proof}
The union $W$ of all open sets in the ideal $\I$ belongs to the $\sigma$-ideal $\I$ by the Lindel\"of property of $W$. Since $\I$ is a proper ideal, $W\ne 2^\w$. Moreover, the compact subset $2^\w\setminus W$ does not contain isolated points due to $\bigcup \I = 2^\w$, hence there exists homeomorphism $h:2^\w \to 2^\w \setminus W$. Obviously $\mc{J} := \{ A \subset 2^\w : h(A) \in \I \}$ has all required properties.
\end{proof}

Next, we evaluate the Borel complexity of closed Haar-$\I$ sets in the space $\F(X)$ of all closed subsets of a Polish group $X$, endowed with the Fell topology.

A subset $D\subset\w^\w$ is called \index{subset!dominating}\index{dominating set}{\em dominating} if for any $x\in \w^\w$ there exists $y\in D$ such that $x\le^* y$ (which means that $x(n)\le y(n)$ for all but finitely many numbers $n\in\w$).

In each non-locally compact Polish group, Solecki \cite{S01} constructed a closed subset $F$ admitting an open perfect map $f:F\to\w^\w$ possessing the following properties.

\begin{theorem}\label{t:Solecki} For any non-locally compact Polish group $X$ there exists a closed set $F\subset X$ and an open perfect surjective map $f:F\to\w^\w$ having the following properties:
\begin{enumerate}
\item[\textup{1)}] for any non-dominating set $H\subset\w^\w$ the preimage $f^{-1}(H)$ is openly Haar-null in $X$;
\item[\textup{2)}] for any compact set $K\subset X$ there is $x\in X$ such that $x+K\subset f^{-1}(y)$ for some $y\in\w^\w$;
\item[\textup{3)}] for any $x,y\in\w^\w$ with $x\le^* y$ there exists a countable set $C\subset X$ such that $f^{-1}(x)\subset C+f^{-1}(y)$;
\item[\textup{4)}] for any dominating set $D\subset \w^\w$ the preimage $f^{-1}(D)$ is not Haar-$\I$ for any proper $\sigma$-ideal $\I$ on a compact topological space $K$.
\end{enumerate}
\end{theorem}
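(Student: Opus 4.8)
The plan is to explicitly reconstruct Solecki's set $F$ from \cite{S01}, which is built using the non-local compactness of $X$ and a decreasing sequence of symmetric neighborhoods of $\theta$, and to verify that it carries the required map $f$. Concretely, since $X$ is not locally compact, we may fix a decreasing neighborhood base $(V_n)_{n\in\w}$ at $\theta$ with $V_{n+1}+V_{n+1}\subset V_n$ such that no $V_n$ has compact closure; then for each $n$ one selects a countable set $\{g_{n,k}:k\in\w\}\subset V_n$ with no Cauchy subsequence, and defines $F$ to be the closure of $\bigl\{\sum_{n} g_{n,x(n)} : x\in\w^\w\bigr\}$ (a convergent sum by completeness, since the $n$-th term lies in $V_n$). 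The map $f:F\to\w^\w$ sends such a sum to $x$; one checks, exactly as in \cite{S01}, that $f$ is well-defined, continuous, open, perfect and surjective. These verifications are routine modifications of Solecki's argument and I would cite \cite{S01} for the bulk of them.

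Items (1) and (2) are essentially Solecki's theorem verbatim: (1) is the statement that the witness measure (the product of uniform-type measures on the finite coordinate sets) shows $f^{-1}(H)$ is openly Haar-null whenever $H$ misses a dominating family of ``slopes'', and (2) is the observation that any compact $K\subset X$ has, in each coordinate $n$, a projection bounded by some $y(n)$, so a suitable translate of $K$ lands inside $f^{-1}(y)$. For item (3), given $x\le^* y$ one has $f^{-1}(x)=\bigcap_m\{\,\text{points with }n\text{-th coordinate index }x(n)\,\}$ and $f^{-1}(y)$ similarly; the finitely many coordinates $n$ where $x(n)>y(n)$ contribute only finitely many group elements each, so $f^{-1}(x)$ is covered by finitely many (hence countably many) translates of the set obtained by fixing those coordinates — and that set embeds into $f^{-1}(y)$ after a translation. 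Thus a countable $C$ suffices.

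The real content is item (4): if $D\subset\w^\w$ is dominating then $f^{-1}(D)$ is not Haar-$\I$ for \emph{any} proper $\sigma$-ideal $\I$ on \emph{any} compact space $K$. Suppose toward a contradiction that some continuous $\varphi:K\to X$ witnesses $f^{-1}(D)\in\HI$. By item (2), for every $x\in\w^\w$ there is a translate of $\varphi(K)$ inside some fibre $f^{-1}(z)$; but we need more — we need to intersect $\varphi(K)$ with $f^{-1}(D)$ in a large set. Here is the key maneuver: using the compactness of $\varphi(K)$, for each coordinate $n$ the set $\pi_n(\varphi(K))$ (where $\pi_n$ reads off the $n$-th index, suitably interpreted) is bounded by some $h(n)\in\w$; since $D$ is dominating, pick $y\in D$ with $h\le^* y$. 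Then by the analogue of item (3), $\varphi(K)$ minus finitely many coordinates' worth of group elements is, after a translation $x$, pushed into $f^{-1}(y)\subset f^{-1}(D)$, so $\varphi^{-1}(f^{-1}(D)+x)$ contains $\varphi^{-1}$ of the whole of $\varphi(K)$ except a set coming from finitely many bad coordinates. The point is that this exceptional set can be arranged to be a \emph{closed nowhere dense, indeed finite-codimensional} piece that cannot exhaust $K$ modulo $\I$; more carefully, by varying which translate $x$ we use (there are continuum many choices of $y\in D$ and of the correcting translation), we cover $K$ itself by countably many sets $\varphi^{-1}(f^{-1}(D)+x_j)$, each in $\I$, contradicting $K=\bigcup\I\notin\I$ and the $\sigma$-ideal property. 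The main obstacle is making this last covering argument precise — specifically, showing that countably many well-chosen translates $x_j$ suffice to write $K=\bigcup_j\varphi^{-1}(f^{-1}(D)+x_j)$ — which is exactly where the $\sigma$-ideal hypothesis (and not merely ``ideal'') is consumed, and which requires a careful bookkeeping of how a dominating family interacts with the coordinatewise-bounded compact set $\varphi(K)$.
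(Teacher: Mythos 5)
Your treatment of items (1)--(3) is fine: like the paper, you delegate the construction of $F$ and $f$ and these three properties to Solecki's paper \cite{S01}, and your sketch of why they hold is consistent with his construction. The problem is item (4), which you correctly identify as the real content but do not actually prove: you explicitly leave the countable covering of $K$ as "the main obstacle," and the route you gesture at for producing it --- varying the translate over the (continuum many) elements $y\in D$ and correcting translations --- cannot by itself yield a \emph{countable} family of translates. Moreover, your detour through coordinatewise bounds $h(n)$ and "finitely many bad coordinates" reintroduces exactly the bookkeeping that properties (2) and (3) were designed to hide, and your remark that the exceptional set is "nowhere dense, finite-codimensional" is irrelevant here, since $\I$ is an arbitrary proper $\sigma$-ideal and need not contain nowhere dense sets.

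The missing idea is that (2) and (3) should be chained as black boxes, each used exactly once. Given a witness $\varphi:K\to X$ for $A:=f^{-1}(D)\in\HI$, property (2) applied to the compact set $\varphi(K)$ gives a \emph{single} $x\in X$ and a \emph{single} $y\in\w^\w$ with $\varphi(K)\subset x+f^{-1}(y)$ --- there are no leftover bad coordinates at all. Since $D$ is dominating, choose one $z\in D$ with $y\le^* z$; property (3) then gives a countable $C\subset X$ with $f^{-1}(y)\subset C+f^{-1}(z)$. Hence
$$\varphi(K)\subset x+f^{-1}(y)\subset x+C+f^{-1}(z)\subset x+C+A,$$
so $K=\bigcup_{c\in C}\varphi^{-1}(A+(x+c))$, a countable union of sets in $\I$, and the $\sigma$-ideal property forces $K\in\I$, contradicting properness. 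The countability comes entirely from the set $C$ supplied by (3), not from any choice of translates indexed by $D$; this three-line chain is what your proposal is missing.
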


\begin{proof} The properties (1)--(3) were established by Solecki \cite[p.208]{S01}.
To see that (4) also holds, take any dominating set $D\subset \w^\w$ and assume that the preimage $A:=f^{-1}(D)$ is Haar-$\I$ for some proper $\sigma$-ideal $\I$ on a compact topological space $K$. Then we can find a continuous map $\varphi:K\to X$ such that $\varphi^{-1}(A+x)\in\I$ for all $x\in X$. By the property (2), for the compact subset $\varphi(K)$ of $X$ there exists $x\in X$ such that $\varphi(K)\subset x+f^{-1}(y)$ for some $y\in \w^\w$. The set $D$, being dominating, contains an element $z\in D$ such that $y\le^* z$. Now the property (3) yields a countable set $C\subset X$ such that $f^{-1}(y)\subset C+f^{-1}(z)$. Then $$\varphi(K)\subset x+f^{-1}(y)\subset x+C+f^{-1}(z)\subset x+C+f^{-1}(D)=x+C+A$$ and $K=\bigcup_{c\in C}\varphi^{-1}(x+c+A)\in\I$ as $\I$ is a $\sigma$-ideal. This means that the ideal $\I$ is not proper, which contradicts our assumption.
\end{proof}

By \index{$\mathrm{CD}$}\index{$\mathrm{CND}$} $\mathrm{CD}$ and $\mathrm{CND}$ we denote the families of subsets of $\F(\w^\w)$ consisting of closed dominating and closed non-dominating subsets of $\w^\w$, respectively.

\begin{corollary}\label{c:CND} For each non-locally compact Polish group $X$, there exists a continuous map $\Phi:\F(\w^\w)\to\F(X)$ such that $\Phi^{-1}(\HN^\circ)=\Phi^{-1}(\HI)=\mathrm{CND}$ for any proper $\sigma$-ideal $\I$ on a compact topological space $K$ with $\F(X)\cap\HN^\circ\subset\HI$. In particular, $\Phi^{-1}(\HN^\circ)=\Phi^{-1}(\mathcal{HJ})=\mathrm{CND}$ for any proper $\sigma$-ideal $\J$ on $2^\w$ with $\overline{\N}\subset\J$.
\end{corollary}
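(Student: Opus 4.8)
The plan is to build $\Phi$ directly from the closed set $F\subset X$ and the open perfect surjection $f\colon F\to\w^\w$ supplied by Theorem~\ref{t:Solecki}, by setting
\[
\Phi\colon\F(\w^\w)\to\F(X),\qquad \Phi(H):=f^{-1}(H).
\]
Since $f$ is continuous and $F$ is closed in $X$, the set $f^{-1}(H)$ is closed in $X$, so $\Phi$ is well defined. First I would verify that $\Phi$ is continuous for the Fell topologies, which amounts to pulling back the subbasic sets. For an open set $U\subset X$ one has $f^{-1}(H)\cap U\ne\emptyset$ iff $H\cap f(U\cap F)\ne\emptyset$, and $f(U\cap F)$ is open in $\w^\w$ because $f$ is an open map; hence $\Phi^{-1}(U^+)=\bigl(f(U\cap F)\bigr)^+$ is subbasic open. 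For a compact set $C\subset X$ one has $f^{-1}(H)\cap C=\emptyset$ iff $H\cap f(C\cap F)=\emptyset$, and $f(C\cap F)$ is compact in $\w^\w$, being the continuous image of the compact set $C\cap F$; hence $\Phi^{-1}(C^-)=\bigl(f(C\cap F)\bigr)^-$ is subbasic open. Thus $\Phi$ is continuous.

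Before computing the preimages, I record the elementary remark that a set which lies in $\HI$ is already Haar-$\I$ in the sense of Definition~\ref{defHsmall}: if $E\subset B$ with $B$ Borel and $\varphi\colon K\to X$ continuous with $\varphi^{-1}(B+x)\in\I$ for all $x$, then $\varphi^{-1}(E+x)\subset\varphi^{-1}(B+x)\in\I$, so $\varphi^{-1}(E+x)\in\I$ for all $x$, as $\I$ is a semi-ideal. Now fix a proper $\sigma$-ideal $\I$ on a compact space $K$ with $\F(X)\cap\HN^\circ\subset\HI$. Every $H\in\F(\w^\w)$ is either dominating or not, and $\mathrm{CND}$ consists precisely of the non-dominating ones. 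If $H$ is non-dominating, then $\Phi(H)=f^{-1}(H)\in\HN^\circ$ by Theorem~\ref{t:Solecki}(1), and since $\Phi(H)$ is closed, the hypothesis yields $\Phi(H)\in\HI$ too. If $H$ is dominating, then by Theorem~\ref{t:Solecki}(4) the closed (hence Borel) set $f^{-1}(H)$ is not Haar-$\I$ in the sense of Definition~\ref{defHsmall}, so by the remark $f^{-1}(H)\notin\HI$; moreover, applying Theorem~\ref{t:Solecki}(4) with the proper $\sigma$-ideal $\N$ of Haar-null sets on the compact group $2^\w$, and using that by Theorem~\ref{Haarnull} a Borel set is Haar-null iff it is Haar-$\N$, we obtain $f^{-1}(H)\notin\HN\supset\HN^\circ$. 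Consequently $\Phi^{-1}(\HN^\circ)=\Phi^{-1}(\HI)=\mathrm{CND}$.

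For the ``in particular'' clause it remains to check that $\F(X)\cap\HN^\circ\subset\mathcal{HJ}$ whenever $\J$ is a proper $\sigma$-ideal on $2^\w$ with $\overline{\N}\subset\J$, and then to apply the first part with $\I=\J$ and $K=2^\w$. So let $A$ be a closed set in $\HN^\circ\subset\HN$ (the case $A=\emptyset$ being trivial). Being closed and Haar-null, $A$ falls under Theorem~\ref{Haarnull}, which provides an injective continuous map $g\colon2^\w\to X$ with $g^{-1}(A+x)\in\N$ for every $x\in X$. As $A+x$ is closed and $g$ is continuous, $g^{-1}(A+x)$ is a closed $\lambda$-null subset of $2^\w$, i.e.\ $g^{-1}(A+x)\in\overline{\N}\subset\J$. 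Hence $g$ witnesses that $A$ is injectively Haar-$\J$, so $A\in\mathcal{HJ}$, as required.

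The only step carrying any real content is the continuity of $\Phi$ — this is where the openness of Solecki's map $f$ enters (it is needed for the sets $U^+$, whereas the sets $C^-$ use only continuity) — together with the trivial ``subset'' remark; everything else is a direct bookkeeping combination of Theorems~\ref{t:Solecki} and \ref{Haarnull}.
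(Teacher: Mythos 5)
Your proof is correct and follows essentially the same route as the paper: the same map $\Phi\colon D\mapsto f^{-1}(D)$, the preimage computation via Theorem~\ref{t:Solecki}(1),(4), and the reduction of the ``in particular'' clause to the inclusion $\F(X)\cap\HN\subset\mathcal H\overline{\N}\subset\mathcal{HJ}$ coming from Theorem~\ref{Haarnull}. The only difference is that you verify the Fell-continuity of $\Phi$ by hand (correctly, using the openness of $f$ for the sets $U^+$ and compactness of $f(C\cap F)$ for the sets $C^-$), whereas the paper simply cites \cite[3.1]{S01} for this step.
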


\begin{proof} Let $f:F\to\w^\w$ be the open perfect map from Theorem~\ref{t:Solecki}. By \cite[3.1]{S01}, the map
$$\Phi:\F(\w^\w)\to\F(X),\;\;\Phi:D\mapsto f^{-1}(D),$$is continuous.
Theorem~\ref{t:Solecki}(1,4) implies that $\Phi^{-1}(\HN^\circ)=\Phi^{-1}(\HI)=\mathrm{CND}$ for any proper $\sigma$-ideal $\I$ on a compact topological space $K$ with $\F(X)\cap\HN^\circ\subset\HI$.

Now assume that $\J$ is a proper $\sigma$-ideal on $2^\w$ with $\overline{\N}\subset \J$. By Theorem~\ref{Haarnull},
$$\F(X)\cap\HN^\circ\subset\F(X)\cap\HN=\F(X)\cap\mathcal H\overline{\N}\subset\F(X)\cap\mathcal{HJ}$$and hence $\Phi^{-1}(\HN^\circ)=\Phi^{-1}(\mathcal{HJ})=\mathrm{CND}$.
\end{proof}

Let $\Gamma$ be a class of metrizable separable spaces. A subset $H$ of a standard Borel space $X$ is called \index{subset!$\Gamma$-hard}\index{$\Gamma$-hard subset}{\em $\Gamma$-hard} in $X$ if for any Polish space $P$ and a subspace $G\in\Gamma$ in $P$ there exists a Borel map $f:P\to X$ such that $f^{-1}(H)=G$. We recall that $\mathbf{\Sigma_1^1}$ and $\mathbf{\Pi_1^1}$ denote the classes of analytic and coanalytic spaces, respectively. By \cite[3.2]{S01} (and an unpublished result of Hjorth \cite{Hjorth} mentioned in \cite{S01} and proved in \cite[4.8]{TV}), the set $\mathrm{CND}$ of $\F(\w^\w)$ is $\mathbf{\Pi_1^1}$-hard (and $\mathbf{\Sigma^1_1}$-hard) and hence $\mathrm{CND}$ is neither analytic nor coanalytic in $\F(\w^\w)$.

This facts combined with Corollary~\ref{c:CND} imply another corollary.

\begin{corollary}\label{c:hard} For any proper $\sigma$-ideal $\I$ on $2^\w$ with $\overline{\N}\subset\I$ and any non-locally compact Polish group $X$ the subset $\F(X)\cap\HI$ of $\F(X)$ is $\mathbf{\Pi_1^1}$-hard and $\mathbf{\Sigma_1^1}$-hard. Consequently, the set $\F(X)\cap\HI$ is neither analytic nor coanalytic in the standard Borel space $\F(X)$. 
\end{corollary}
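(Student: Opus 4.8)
The plan is to transport the $\mathbf{\Pi_1^1}$- and $\mathbf{\Sigma_1^1}$-hardness of the set $\mathrm{CND}\subset\F(\w^\w)$ (recorded just before the statement, via Solecki \cite{S01} and Hjorth/Tserunyan–Vidny\'anszky \cite{TV}) to $\F(X)\cap\HI$ along the continuous reduction map $\Phi$ produced in Corollary~\ref{c:CND}. So the whole argument is a routine transfer of hardness along a reduction.

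First I would fix the map $\Phi:\F(\w^\w)\to\F(X)$ from Corollary~\ref{c:CND}. Since $\I$ is a proper $\sigma$-ideal on $2^\w$ with $\overline{\N}\subset\I$, the ``in particular'' clause of Corollary~\ref{c:CND} applies with $\J:=\I$ and gives $\Phi^{-1}(\HI)=\mathrm{CND}$; as $\Phi$ takes values in $\F(X)$, this is the same as $\Phi^{-1}(\F(X)\cap\HI)=\mathrm{CND}$. Being continuous for the Fell topologies, $\Phi$ is Borel for the Effros–Borel structures, hence a Borel map between standard Borel spaces.

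Next I would run the composition argument. Put $H:=\F(X)\cap\HI$ and let $\Gamma\in\{\mathbf{\Pi_1^1},\mathbf{\Sigma_1^1}\}$. Given any Polish space $P$ and any subspace $G\in\Gamma$ of $P$, the $\Gamma$-hardness of $\mathrm{CND}$ in $\F(\w^\w)$ supplies a Borel map $g:P\to\F(\w^\w)$ with $g^{-1}(\mathrm{CND})=G$. Then $\Phi\circ g:P\to\F(X)$ is Borel and
$$(\Phi\circ g)^{-1}(H)=g^{-1}\big(\Phi^{-1}(H)\big)=g^{-1}(\mathrm{CND})=G,$$
so $H$ is $\Gamma$-hard in $\F(X)$. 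This establishes both claimed hardness statements.

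Finally, for the ``consequently'' clause I would argue by contradiction using the behaviour of pointclasses under Borel preimages. If $H$ were analytic, then for every Polish $P$ and every coanalytic $G\subset P$ we would get $G=f^{-1}(H)$ for some Borel $f$, whence $G$ would be analytic; this contradicts the existence of a coanalytic non-analytic subset of $\w^\w$. Symmetrically, if $H$ were coanalytic, its $\mathbf{\Sigma_1^1}$-hardness would force every analytic set to be coanalytic, again a contradiction. Hence $\F(X)\cap\HI$ is neither analytic nor coanalytic in $\F(X)$. The only point I would state with care — and essentially the only possible obstacle — is that $\Phi$ is genuinely Borel as a map of standard Borel spaces, which is immediate from its Fell-continuity, so that composition with the $\mathrm{CND}$-reductions indeed stays within the class of Borel reductions.
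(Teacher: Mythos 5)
Your proposal is correct and is exactly the argument the paper intends: it derives the corollary by composing the Borel reductions witnessing the $\mathbf{\Pi_1^1}$- and $\mathbf{\Sigma_1^1}$-hardness of $\mathrm{CND}$ with the continuous map $\Phi$ from Corollary~\ref{c:CND} satisfying $\Phi^{-1}(\HI)=\mathrm{CND}$, and then uses the standard fact that a $\Gamma$-hard set in the dual class would collapse $\Gamma$ into its dual. The paper states this as an immediate consequence without writing out the composition and contradiction steps, which you have filled in correctly.
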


Since $\overline{\N}\subset\sigma\overline{\N}\subset\M$ in $2^\w$, Corollary~\ref{c:hard} implies:

\begin{corollary}\label{c:hard-M} For any non-locally compact Polish group $X$ the subsets $\F(X)\cap\mathcal H\sigma\overline{\N}$, $\F(X)\cap\HN$, and
$\F(X)\cap\HM$ of $\F(X)$ are $\mathbf{\Pi_1^1}$-hard and $\mathbf{\Sigma_1^1}$-hard.
\end{corollary}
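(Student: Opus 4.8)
The plan is to obtain all three statements as instances of Corollary~\ref{c:hard}, after realizing each of the semi-ideals $\mathcal H\sigma\overline{\N}$, $\HN$, $\HM$ in the form $\mathcal H\I$ for a suitable proper $\sigma$-ideal $\I$ on $2^\w$ with $\overline{\N}\subset\I$.

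First I would check that the three $\sigma$-ideals $\sigma\overline{\N}$, $\N$ and $\M$ on $2^\w$ satisfy the hypotheses of Corollary~\ref{c:hard}. Each of them is a proper $\sigma$-ideal: $\N$ and $\M$ are the classical $\sigma$-ideals of $\lambda$-null and of meager sets, $\sigma\overline{\N}$ is by construction the $\sigma$-ideal generated by the closed $\lambda$-null subsets of $2^\w$, and properness follows from $\lambda(2^\w)=1$ (so $2^\w\notin\N$) together with the fact that $2^\w$ is a Baire space (so $2^\w\notin\M\supseteq\sigma\overline{\N}$). Moreover $\overline{\N}\subset\I$ for each such $\I$: trivially $\overline{\N}\subset\N$ and $\overline{\N}\subset\sigma\overline{\N}$, while $\overline{\N}\subset\M$ because a closed subset of $2^\w$ of $\lambda$-measure zero has empty interior (the measure $\lambda$ being strictly positive) and hence is nowhere dense. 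Consequently Corollary~\ref{c:hard} applies to each $\I\in\{\sigma\overline{\N},\N,\M\}$ and gives that $\F(X)\cap\mathcal H\I$ is $\mathbf{\Pi_1^1}$-hard and $\mathbf{\Sigma_1^1}$-hard in $\F(X)$ for every non-locally compact Polish group $X$.

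It remains to match these with the three families in the statement. For $\I=\sigma\overline{\N}$ there is nothing to do. For $\I=\M$, Theorem~\ref{t:Haar}(2) yields $\HM=\mathcal H\M$, so $\F(X)\cap\HM=\F(X)\cap\mathcal H\M$ is $\mathbf{\Pi_1^1}$-hard and $\mathbf{\Sigma_1^1}$-hard. For $\I=\N$, Theorem~\ref{Haarnull} (equivalently Theorem~\ref{t:Haar}(1)) yields $\HN=\mathcal H\N$, so $\F(X)\cap\HN=\F(X)\cap\mathcal H\N$ inherits the same hardness. I do not expect any genuine obstacle in this argument: the only points that need attention are the inclusions $\overline{\N}\subset\I$ and the properness of each $\I$, both of which are immediate, and the identifications $\HN=\mathcal H\N$ and $\HM=\mathcal H\M$, which are already established earlier in the paper.
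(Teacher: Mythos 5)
Your proposal is correct and follows essentially the same route as the paper: the paper derives this corollary directly from Corollary~\ref{c:hard} by observing $\overline{\N}\subset\sigma\overline{\N}\subset\M$ (and trivially $\overline{\N}\subset\N$), together with the identifications $\HN=\mathcal H\N$ and $\HM=\mathcal H\M$ from Theorem~\ref{t:Haar}. Your verification of properness and of the inclusions is exactly the intended (and immediate) content of the paper's one-line argument.
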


\begin{remark} For any non-locally compact Polish group the $\mathbf{\Pi_1^1}$-hardness and $\mathbf{\Sigma_1^1}$-hardness of the set $\F(X)\cap\HN$ in $\F(X)$ was proved by Solecki \cite{S01} and the proof of Corollary~\ref{c:hard} is just a routine modification of the proof of Solecki.
\end{remark}

\section{Fubini ideals}\label{s11}

In this section we give conditions on an ideal $\I$ of subsets of a topological space $K$ under which for every Polish group $X$ the semi-ideal $\HI$ is an ideal or a $\sigma$-ideal.

Let $K$ be a topological space. Let $n\le\w$ be a non-zero countable cardinal and $i\in n$. For every $a\in K^{n\setminus\{i\}}$, consider the embedding $e_a:K\to K^n$ assigning to each $x\in K$ the function $y:n\to K$ such that $y(i)=x$ and $y|n\setminus\{i\}=a$.

Each family $\I$ of subsets of the space $K$ induces the family
$$\I^n_i=\{A\subset K^n:\forall a\in K^{n\setminus\{i\}}\;\;e_a^{-1}(A)\in\I\}$$on $K^n$.

\begin{definition} A family $\I$ of subsets of the space $K$ is defined to be \index{family!$n$-Fubini}\index{$n$-Fubini family}{\em $n$-Fubini} for $n\in\IN\cup\{\w\}$ if there exists a continuous map $h:K\to K^n$ such that for any $i\in n$ and any Borel set $B\in\I^n_i$ the preimage $h^{-1}(B)$ belongs to the family $\I$.

We shall say that a family $\I$ of subsets of the space $K$ is \index{family!Fubini}\index{Fubini family} {\em Fubini} if it satisfies the equivalent conditions of the following theorem.
\end{definition}


\begin{theorem}\label{t:Fubini} For any family $\I$ of subsets of a topological space $K$ the following conditions are equivalent:
\begin{enumerate}
\item[\textup{1)}] $\I$ is $n$-Fubini for every $n$ with $2\le n\le\w$;
\item[\textup{2)}] $\I$ is $n$-Fubini for some $n$ with $2\le n\le\w$;
\item[\textup{3)}] $\I$ is $2$-Fubini.
\end{enumerate}
\end{theorem}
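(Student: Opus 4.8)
Since condition $(3)$ is precisely the instance $n=2$ of $(1)$, and is a fortiori an instance of $(2)$, the implications $(1)\Ra(3)$ and $(3)\Ra(2)$ are trivial. Thus the theorem reduces to two claims. \emph{Claim} (A): if $\I$ is $n$-Fubini for some $n$ with $2\le n\le\w$, then $\I$ is $m$-Fubini for every $m$ with $2\le m\le n$. \emph{Claim} (B): if $\I$ is $2$-Fubini, then $\I$ is $\w$-Fubini. These suffice: $(2)\Ra(3)$ is Claim (A) with $m=2$, while $(3)\Ra(1)$ follows by feeding the $\w$-Fubini property produced by (B) into Claim (A) applied with $n=\w$.

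Claim (A) is proved by restricting attention to an initial block of coordinates. If $h:K\to K^n$ witnesses $n$-Fubini and $2\le m\le n$, let $\pi:K^n\to K^m$ be the projection onto the first $m$ coordinates; I will verify that $\pi\circ h$ witnesses $m$-Fubini. The point is that for a Borel set $B\in\I^m_i$ the preimage $\pi^{-1}(B)=\{y\in K^n:(y_0,\dots,y_{m-1})\in B\}$ belongs to $\I^n_i$: every $i$-slice of $\pi^{-1}(B)$ coincides with an $i$-slice of $B$, hence lies in $\I$. Since $\pi$ is continuous, $\pi^{-1}(B)$ is Borel, so $(\pi\circ h)^{-1}(B)=h^{-1}(\pi^{-1}(B))\in\I$ by the $n$-Fubini property of $h$.

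Claim (B) is the heart of the matter. Fix a continuous witness $H=(H_0,H_1):K\to K\times K$ of $2$-Fubini, and define $h:K\to K^\w$ coordinatewise by $h^{(j)}:=H_0\circ H_1^{\circ j}$ for $j\in\w$, with $H_1^{\circ 0}=\mathrm{id}_K$; $h$ is continuous because $K^\w$ carries the product topology, and it obeys the self-similarity relation $h=\Phi\circ H$, where $\Phi:K^2\to K^\w$ sends $(u,v)$ to the point whose $0$th coordinate is $u$ and whose subsequent coordinates form the sequence $h(v)$. I will show, by induction on $i\in\w$, that $h^{-1}(B)\in\I$ for every Borel set $B\in\I^\w_i$. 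Since $h^{-1}(B)=H^{-1}(\Phi^{-1}(B))$ and $\Phi^{-1}(B)$ is Borel, it is enough (by the $2$-Fubini property of $H$) to check that $\Phi^{-1}(B)\in\I^2_0\cup\I^2_1$. If $i=0$: fixing the $v$-coordinate shows every $0$-slice of $\Phi^{-1}(B)=\{(u,v):(u,h(v))\in B\}$ to be a $0$-slice of $B$, hence in $\I$, so $\Phi^{-1}(B)\in\I^2_0$; this is the base of the induction. If $i\ge1$: fixing $u=u_0$ gives the $1$-slice $\{v:(u_0,h(v))\in B\}=h^{-1}(B_{u_0})$, where $B_{u_0}:=\{w\in K^\w:(u_0,w)\in B\}$ and $(u_0,w)$ denotes the point of $K^\w$ with $0$th coordinate $u_0$ and the rest equal to $w$. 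This $B_{u_0}$ is Borel and lies in $\I^\w_{i-1}$, since its $(i-1)$-slices are $i$-slices of $B$; by the inductive hypothesis $h^{-1}(B_{u_0})\in\I$, so every $1$-slice of $\Phi^{-1}(B)$ is in $\I$ and $\Phi^{-1}(B)\in\I^2_1$.

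The only genuinely delicate point is that in Claim (B) the recursion $h=\Phi\circ H$ refers to $h$ itself and so cannot be unwound as an induction on the length of the product; this is circumvented by writing $h$ in closed form as $h^{(j)}=H_0\circ H_1^{\circ j}$ and running the verification instead as an induction on the coordinate index $i$, the inductive step being exactly the passage from $B$ to its $0$-slice $B_{u_0}\in\I^\w_{i-1}$. All remaining bookkeeping — that slices of Borel sets and preimages of Borel sets under continuous maps are again Borel, so every set to which the $2$-Fubini property of $H$ is applied is legitimately a Borel set — is routine.
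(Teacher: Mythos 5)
Your proof is correct, but the hard direction $(3)\Ra(1)$ is handled by a genuinely different argument than the paper's. The easy reductions agree: your Claim (A) is the paper's projection trick (the paper only states it for projecting onto the first two coordinates, i.e.\ its implication $(2)\Ra(3)$; your version with $\pi:K^n\to K^m$ for arbitrary $m\le n$ is the same computation and is what lets you recover all finite cases from the $\w$ case at the end). Where you diverge is in producing the $\w$-Fubini witness. The paper builds finite-stage witnesses $h_n:K\to K^n$ by iterated composition with maps $H_n$ that split the last coordinate, proves by induction on $n$ that each $h_n$ works, observes that the coordinates of $h_n$ stabilize so that a limit map $h_\w$ exists, and then verifies $\w$-Fubini by factoring $h_\w=g_\w\circ h_{i+2}$ and pulling a set $B\in\I^\w_i$ back to a set in $\I^{i+2}_i$, reducing to the finite case. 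You instead write the $\w$-witness in closed form, $h^{(j)}=H_0\circ H_1^{\circ j}$, exploit the self-similarity $h=\Phi\circ H$, and run a single induction on the coordinate index $i$: the base case $i=0$ lands in $\I^2_0$, and the step $i\ge1$ shows the $1$-slices of $\Phi^{-1}(B)$ are $h^{-1}(B_{u_0})$ with $B_{u_0}\in\I^\w_{i-1}$, landing in $\I^2_1$. This eliminates the limit construction and the auxiliary maps $g_m,g_\w$ entirely, at the cost of reversing the logical order (finite Fubini is deduced from $\w$-Fubini rather than serving as a stepping stone to it); you correctly flag and resolve the one delicate point, namely that the recursion $h=\Phi\circ H$ is self-referential and must be unwound by induction on $i$ rather than on the product length. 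Your argument is shorter and, in my view, cleaner; the paper's has the minor advantage of exhibiting explicit witnesses $h_n$ for each finite $n$ along the way.
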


\begin{proof} The implication $(1)\Ra(2)$ is trivial. To see that $(2)\Ra(3)$, assume that $\I$ is $n$-Fubini for some $n$ with $2\le n\le \w$. Then there exists a continuous map $h:K\to K^n$ such that for any $i\in n$ and any Borel subset $B\in\I^n_i$ of $K^n$ the preimage $h^{-1}(B)$ belongs to the family $\I$. Let $p:K^n\to K^2$, $p:x\mapsto (x(0),x(1))$, be the projection of $K^n$ onto the first two coordinates.

We claim that the map $h_2:=p\circ h:K\to K^2$ witnesses that the ideal $\I$ is 2-Fubini.
Given any $i\in 2$ and any Borel subset $B\in\I^2_i$ of $K^2$, observe that the preimage $p^{-1}(B)$ is a Borel subset of $X^n$ that belongs to the family $\I^n_i$. Now the choice of the map $h$ guarantees that $h_2^{-1}(B)=h^{-1}(p^{-1}(B))\in\I$.
\smallskip

$(3)\Ra(1)$ Assume that $\I$ is 2-Fubini, which means that there exists a continuous map $h:K\to K^2$ such that for any $i\in 2=\{0,1\}$ and any Borel subset $B\in\I^2_i$ of $K^2$ the preimage $h^{-1}(B)$ belongs to the family $\I$. For every $i\in 2$ let $\pi_i:K^2\to K$, $\pi_i:x\mapsto x(i)$, be the coordinate projection.

For every integer $n\in\IN$ consider the map
$$H_n:K^n\to K^{n+1},\;\,H_n:(x_0,\dots,x_{n-2},x_{n-1})\mapsto (x_0,\dots,x_{n-2},\pi_0\circ h(x_{n-1}),\pi_1\circ h(x_{n-1})).$$
For $n=1$ the map $H_1$ coincides with $h$.

Define a sequence of continuous maps $(h_n:K\to K^n)_{n=2}^\infty$ by the recursive formula: $h_2=h$ and $h_{n+1}=H_n\circ h_n$ for $n\ge 2$.

We claim that for every $n\ge 2$ the map $h_n$ witnesses that the family $\I$ is $n$-Fubini. For $n=2$ this follows from the choice of the map $h_2=h$. Assume that for some $n\ge2$ we have proved that for every $i\in n$ and every Borel set $B\in \I^n_i$ in $K^n$ we have $h^{-1}_n(B)\in\I$.

Fix $i\in n+1$ and a Borel subset $B\in\I^{n+1}_i$ in $K^{n+1}$. Observe that $h^{-1}_{n+1}(B)=h_n^{-1}(H_n^{-1}(B))$. If $i<n-1$, then the definition of the map $H_n$ ensures that
$H_n^{-1}(B)\in\I^{n}_i$. If $i\in \{n-1,n\}$, then the choice of the map $h$ guarantees that $H_n^{-1}(B)\in \I^n_{n-1}$. In both cases $h_{n+1}^{-1}(B)=h_n^{-1}(H_n^{-1}(B))\in\I$ by the inductive assumption.

Next, we prove that the family $\I$ is $\w$-Fubini. This is trivial if $K=\emptyset$. So, we assume that $K\ne\emptyset$ and fix any point $p\in K$. For every $n\in\IN$ identify the power $K^n$ with the subset $K^n\times \{p\}^{\w\setminus n}$ of $K^\w$. For every $k\in\w$ let $\pi_k:K^\w\to K$, $\pi_k:x\mapsto x(k)$, be the projection onto the $k$-th coordinate.

Observe that for every $k<n-1$ we have
$$\pi_{k}\circ h_{n+1}=\pi_{k}\circ H_n\circ h_n=\pi_{k}\circ h_n,$$
which implies that the sequence of maps $h_n:K\to K^n\subset K^\w$ converges to the continuous map $h_\w:K\to K^\w$ defined by equality $\pi_k\circ h_\w=\pi_k\circ h_{k+2}$ for $k\in\w$.

We claim that the map $h_\w:K\to K^\w$ witnesses that the family $\I$ is $\w$-Fubini. Fix any $i\in\w$ and a Borel subset $B\in\I^\w_i$ in $K^\w$. Let $g_{i+2}=H_{i+2}:K^{i+2}\to K^{i+3}$ and for every $m\ge i+2$ let $g_{m+1}=H_{m+1}\circ g_m:K^{i+2}\to K^{m+2}$. Observe that for any $k<m$ we get
$$\pi_{k}\circ g_{m+1}=\pi_{k}\circ H_{m+1}\circ g_m=\pi_k\circ g_m,$$ which implies that the sequence $(g_m)_{m=i+2}^\infty$ converges to the continuous map $g_\w:K^{i+2}\to K^\w$ uniquely determined by the conditions
$$\pi_{k}\circ g_\w=\pi_k\circ g_{2+\max\{i,k\}}\mbox{ for $k\in\w$}.$$
We claim that $h_{m+1}=g_{m}\circ h_{i+2}$ for every $m\ge i+2$. This equality holds for $m=i+2$ by the definitions of $g_{i+2}:=H_{i+2}$ and $h_{i+3}:=H_{i+2}\circ h_{i+2}$. Assume that for some $m\ge i+2$ we have proved that $h_{m+1}=g_{m}\circ h_{i+2}$. Then $h_{m+2}=H_{m+1}\circ h_{m+1}=H_{m+1}\circ g_{m}\circ h_{i+2}=g_{m+1}\circ h_{i+2}$.

By the Principle of Mathematical Induction, the equality $h_{m+1}=g_{m}\circ h_{i+2}$ holds for every $m\ge i+2$. This implies that $h_\w=\lim_{m\to\infty}h_m=\lim_{m\to\infty}g_m\circ h_{i+2}=g_\w\circ h_{i+2}$. Consider the Borel set $D=g_\w^{-1}(B)\subset K^{i+2}$. We claim that $D\in\I^{i+2}_i$. Take any $a\in K^{(i+2)\setminus\{i\}}$ and consider the embedding $e_a:K\to K^{i+2}$. Since $\pi_{i}\circ g_\w=\pi_{i}\circ g_{i+2}=\pi_{i}\circ H_{i+2}=\pi_{i}$, the composition $g_\w\circ e_a:K\to K^\w$ is equal to the embedding $e_b$ for some $b\in K^{\w\setminus\{i\}}$. Then $e_a^{-1}(D)=e_a^{-1}(g_\w^{-1}(B))=(g_\w\circ e_a)^{-1}(B)=e_b^{-1}(B)\in \I$ as $B\in\I^\w_i$. This completes the proof of the inclusion $D\in\I^{i+2}_i$.

Taking into account that the map $h_{i+2}:K\to K^{i+2}$ witnesses that the family $\I$ is $(i+2)$-Fubini, we conclude that $$h_\w^{-1}(B)=(g_\w\circ h_{i+2})^{-1}(B)=h_{i+2}^{-1}(g_\w^{-1}(B))=h_{i+2}^{-1}(D)\in\I,$$which means that the map $h_\w:K\to K^\w$ witnesses that the family $\I$ is $\w$-Fubini.
\end{proof}

\begin{theorem}\label{t:Fubi}
If $\I$ is a Fubini ideal on a compact space $K$, then for any Polish group $X$ the semi-ideal $\HI$ of Haar-$\I$ sets in $X$ is an ideal.
\end{theorem}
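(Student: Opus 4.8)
The plan is to use the $2$-Fubini property to merge two witness maps into one. First note that $\HI$ is a semi-ideal by its very definition, so to prove it is an ideal it suffices to prove that it is directed, and for a semi-ideal that amounts to showing $A\cup B\in\HI$ whenever $A,B$ are Borel Haar-$\I$ subsets of $X$ (for arbitrary $A,B\in\HI$ one then enlarges to Borel Haar-$\I$ hulls $A'\supseteq A$, $B'\supseteq B$, and $A'\cup B'$ serves as the required upper bound). So I would fix Borel Haar-$\I$ sets $A,B\subseteq X$ together with continuous witnessing maps $f_A,f_B:K\to X$ such that $f_A^{-1}(A+x)\in\I$ and $f_B^{-1}(B+x)\in\I$ for all $x\in X$.

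Since $\I$ is Fubini, Theorem~\ref{t:Fubini} gives that it is $2$-Fubini, so there is a continuous map $h:K\to K^2$ with $h^{-1}(D)\in\I$ for every $i\in 2$ and every Borel set $D\in\I^2_i$. I would then assemble $f_A,f_B$ into the continuous map $g:K^2\to X$, $g(x,y):=f_A(x)+f_B(y)$, and set $f:=g\circ h:K\to X$. The claim is that $f$ witnesses that $A\cup B$ is Haar-$\I$.

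The key step is the section computation. Fix $t\in X$. For every $a\in K$ the $0$-section of $g^{-1}(A+t)$ at $a$ is $\{x\in K: f_A(x)+f_B(a)\in A+t\}=f_A^{-1}\big(A+(t-f_B(a))\big)\in\I$ by the choice of $f_A$; since $g$ is continuous and $A$ is Borel, $g^{-1}(A+t)$ is a Borel set and therefore lies in $\I^2_0$. Symmetrically, each $1$-section of the Borel set $g^{-1}(B+t)$ is of the form $f_B^{-1}\big(B+(t-f_A(a))\big)\in\I$, so $g^{-1}(B+t)\in\I^2_1$. Applying the $2$-Fubini property of $\I$ to these two Borel sets yields $h^{-1}(g^{-1}(A+t))\in\I$ and $h^{-1}(g^{-1}(B+t))\in\I$, and because $\I$ is an ideal their union $f^{-1}(A+t)\cup f^{-1}(B+t)=f^{-1}\big((A\cup B)+t\big)$ belongs to $\I$. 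As $A\cup B$ is Borel, this proves $A\cup B\in\HI$, hence $\HI$ is directed and is an ideal.

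I do not expect a genuine obstacle: the proof is essentially the observation that $g$ turns the two witnessing conditions into membership in the ``coordinatewise'' families $\I^2_0$ and $\I^2_1$. The only point deserving care is keeping track of Borelness of $g^{-1}(A+t)$ and $g^{-1}(B+t)$ (so that the definition of $2$-Fubini, which is stated for Borel sets, actually applies) and correctly identifying the $0$- and $1$-sections of these preimages as translates of $f_A^{-1}$- and $f_B^{-1}$-preimages.
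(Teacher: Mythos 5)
Your proof is correct and follows essentially the same route as the paper: form $f_{AB}(x,y)=f_A(x)+f_B(y)$, observe that $f_{AB}^{-1}(A+t)\in\I^2_0$ and $f_{AB}^{-1}(B+t)\in\I^2_1$, compose with the $2$-Fubini map $h$, and take the union of the two preimages inside the ideal $\I$. The extra care you take with Borelness of the preimages and with identifying the sections as translates is exactly the (implicit) content of the paper's "it is easy to see" step.
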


\begin{proof} Since the ideal $\I$ is Fubini, there exists a continuous map $h:K\to K\times K$ such that for every $i\in\{0,1\}$ and any Borel set $B\in\I^2_i$ the preimage $h^{-1}(B)$ belongs to the ideal $\I$.

To show that the semi-ideal $\HI$ is an ideal, we shall prove that the union $A\cup B$ of any Haar-$\I$ sets $A,B$ in the Polish group $X$ is Haar-$\I$. By definition, the sets $A,B$ are contained in Borel Haar-$\I$ sets in $X$. So, we can assume that the Haar-$\I$ sets $A$ and $B$ are Borel. Let $f_A,f_B:K\to X$ be continuous maps such that for every $x\in X$ the sets $f_A^{-1}(A+x)$ and $f_B^{-1}(B+x)$ belong to the ideal $\I$. Consider the continuous map $f_{AB}:K\times K\to X$, $f_{AB}:(x,y)\mapsto f_A(x)+f_B(y)$ and the continuous map $f=f_{AB}\circ h:K\to X$.

It is easy to see that for every $x\in X$ the Borel set $f_{AB}^{-1}(A+x)$ belongs to the family $\I^2_0$. The choice of the map $h$ guarantees that the Borel set
$$f^{-1}(A+x)=h^{-1}(f_{AB}^{-1}(A+x))$$belongs to the ideal $\I$. By analogy we can prove that the Borel set $f^{-1}(B+x)$ belongs to the ideal $\I$. Then $f^{-1}((A\cup B)+x)=f^{-1}(A+x)\cup f^{-1}(B+x)\in\I$, which means that the map $f:K\to X$ witnesses that the union $A\cup B$ is Haar-$\I$ in $X$.
\end{proof}

\begin{theorem}\label{t:Fubi2}
If $\I$ is a Fubini $\sigma$-ideal on a zero-dimensional compact space $K$, then for any Polish group $X$ the semi-ideal $\HI$ is a $\sigma$-ideal.
\end{theorem}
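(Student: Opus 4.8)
The plan is to strengthen Theorem~\ref{t:Fubi}: since a Fubini $\sigma$-ideal is in particular a Fubini ideal, $\HI$ is already an ideal, so it only remains to prove that $\HI$ is closed under countable unions. Fix Borel Haar-$\I$ sets $A_n\subset X$, $n\in\w$, together with witnessing continuous maps $f_n\colon K\to X$ (so $f_n^{-1}(A_n+x)\in\I$ for every $x\in X$), and put $A:=\bigcup_{n\in\w}A_n$. By Theorem~\ref{t:Fubini} the Fubini ideal $\I$ is $\w$-Fubini, so there is a continuous map $h\colon K\to K^\w$ with $h^{-1}(B)\in\I$ for every $i\in\w$ and every Borel set $B\in\I^\w_i$. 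It therefore suffices to produce a continuous map $F\colon K^\w\to X$ such that $F^{-1}(A_n+x)\in\I^\w_n$ for all $n\in\w$ and $x\in X$: granting this, $f:=F\circ h\colon K\to X$ satisfies $f^{-1}(A_n+x)=h^{-1}\bigl(F^{-1}(A_n+x)\bigr)\in\I$ for all $n$ and $x$, whence $f^{-1}(A+x)=\bigcup_{n\in\w}f^{-1}(A_n+x)\in\I$ because $\I$ is a $\sigma$-ideal, and so $A\in\HI$.

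To build $F$ I would first replace each $f_n$ by a witness with very small image. Fix a complete invariant metric $\rho$ on $X$ and write $\|x\|=\rho(x,\theta)$. Using that $K$ is zero-dimensional and compact, partition it into finitely many clopen sets; since $\I$ is a proper ideal not all of them lie in $\I$, and by uniform continuity of $f_n$ we may refine the partition until we obtain a clopen set $K_n\subset K$ with $K_n\notin\I$ and $\diam f_n(K_n)<2^{-n}$. Because $\I$ is a semi-ideal, the restriction $f_n|_{K_n}$ still witnesses $A_n$, as $(f_n|_{K_n})^{-1}(A_n+x)=f_n^{-1}(A_n+x)\cap K_n\in\I$. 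Choosing base points $y_n\in f_n(K_n)$, the formula $F_0\bigl((x_n)_{n\in\w}\bigr):=\sum_{n\in\w}\bigl(f_n(x_n)-y_n\bigr)$ defines a continuous map on the compact zero-dimensional window $W:=\prod_{n\in\w}K_n\subset K^\w$, the $n$-th summand having norm $<2^{-n}$. Fixing all coordinates except the $n$-th, $F_0$ restricts on the corresponding slice of $W$ to a translate of $f_n|_{K_n}$, so all $n$-th slices of $F_0^{-1}(A_n+x)$ lie in $\I$; and viewing this set inside $K^\w$ one sees that its $n$-th sections over $K^{\w\setminus n}$ are either the above $\I$-sets or empty, so that $F_0^{-1}(A_n+x)\in\I^\w_n$.

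The remaining and most delicate point is to pass from the window $W$ to all of $K^\w$ — equivalently, to arrange that the $\w$-Fubini witness $h$ takes values in $W$ — without destroying the sectionwise memberships $F^{-1}(A_n+x)\in\I^\w_n$. A crude retraction $K^\w\to W$ (which exists since $W$ is a closed subset of the zero-dimensional compact metrizable space $K^\w$) is not enough: the collapsed part $K^\w\setminus W$ can contribute a section outside $\I$. I expect that the right way is to interleave the two constructions and perform the amalgamation directly along a tree, in the spirit of the proofs of Lemma~\ref{l:a} and Theorem~\ref{t:HI=HM}: at the $n$-th level one chooses a clopen subset of $K$ outside $\I$ on which $f_n$ has image of diameter $<2^{-n}$, one reads off the shrinking summands that build the compact window and the map into $X$, and one invokes the $\w$-Fubini property of $\I$ level by level to absorb the sections. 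Here the hypothesis that $K$ is zero-dimensional is used to produce the small clopen sets outside $\I$, and the hypothesis that $\I$ is a $\sigma$-ideal is used both to absorb the countably many sections and in the final step of the first paragraph; once $F$ (hence $f$) is obtained, the theorem follows from that paragraph.
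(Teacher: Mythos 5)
Your overall architecture is the right one and matches the paper's: produce for each $n$ a witness with image of diameter at most $2^{-n}$, assemble them into a continuous sum map on $K^\w$ whose preimage of $A_n+x$ lies in $\I^\w_n$, and compose with the $\w$-Fubini witness $h\colon K\to K^\w$ from Theorem~\ref{t:Fubini}. But the step you flag as ``the most delicate point'' is a genuine gap, not a technicality. By shrinking each $f_n$ to a single clopen set $K_n\notin\I$ you create the window $W=\prod_n K_n\subsetneq K^\w$, and there is no reason the Fubini map $h$ should land in $W$, nor that any Fubini-type map $K\to W$ exists: $K_n$ need not be homeomorphic to $K$, and even if it were, the Fubini property of $\I$ is a property of $\I$ on $K$ and its powers, not something that transports along an arbitrary homeomorphism onto a clopen piece. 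Your proposed repair (``interleave the two constructions along a tree'') is not carried out, and it is not clear how the $\w$-Fubini hypothesis --- which is stated for the fixed space $K^\w$ --- would be invoked ``level by level'' on a shrinking tree of clopen sets.

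The fix is much simpler than a tree amalgamation, and it is the one the paper uses: do not restrict $f_n$ to one clopen piece, but \emph{translate it piecewise}. Take the original witness $g_n\colon K\to X$, choose a disjoint clopen cover $\U$ of $K$ with $\diam g_n(U)\le 2^{-(n+1)}$ for all $U\in\U$, pick $z_U\in U$, and set $f_n(x):=g_n(x)-g_n(z_U)$ for $x\in U$. Then $f_n$ is defined and continuous on \emph{all} of $K$ with $\diam f_n(K)\le 2^{-n}$, and
$$f_n^{-1}(A_n+x)\subset\bigcup_{U\in\U}g_n^{-1}\bigl(A_n+x+g_n(z_U)\bigr)\in\I,$$
the union being a countable (in fact finite, by compactness) union of members of the $\sigma$-ideal $\I$. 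With these $f_n$ the sum map is defined on the whole of $K^\w$, the window problem disappears, and your first paragraph completes the proof verbatim. Note that this is also where the $\sigma$-ideal hypothesis earns its keep a second time, beyond absorbing the countable union $\bigcup_n g^{-1}(A_n+x)$ at the end.
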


\begin{proof} Let $\rho$ be a complete invariant metric generating the topology of the Polish group $X$. 
To prove that the semi-ideal $\HI$ is a $\sigma$-ideal, take a countable family $\{A_n\}_{n\in\w}$ of Haar-$\I$ Borel sets in $X$.

\begin{claim}\label{cl:ideal} For every $n\in\w$ there exists a continuous map $f_n:K\to X$ such that $\diam f_n(K)\le\frac1{2^n}$ and for every $x\in X$ the set $f_n^{-1}(A_n+x)$ belongs to the $\sigma$-ideal $\I$.
\end{claim}

\begin{proof} Since $A_n$ is Haar-$\I$, there exists a continuous map $g_n:K\to X$ such that for every $x\in X$ the set $g_n^{-1}(A_n+x)$ belongs to the $\sigma$-ideal $\I$. Since $K$ is zero-dimensional and Lindel\"of, there exists a disjoint cover $\U$ of $K$ by nonempty open sets $U\subset K$ such that $\diam g_n(U)\le\frac1{2^{n+1}}$. In each set $U\in\U$ choose a point $z_U$ and consider the continuous map $f_n:K\to X$ defined by
$f_n(x)=g_n(x)-g_n(z_U)$ for any $x\in U\in\U$. It follows that $\diam f_n(K)\le\diam\bigcup_{U\in\U}(g_n(U)-g_n(z_U))\le \frac2{2^{n+1}}=\frac1{2^n}$. Also, for every $x\in X$ we get $$f_n^{-1}(A+x)\subset \bigcup_{U\in\U}g_n^{-1}(A+x+g_n(z_U))\in\I$$since $\I$ is a $\sigma$-ideal.
\end{proof}

The choice of the maps $f_n:K\to X$ with $\diam f_n(K)\le\frac1{2^n}$ guarantees that the map $$f:K^\w\to X,\;\;f:(x_n)_{n\in\w}\mapsto \sum_{n\in\w}f_n(x_n),$$
is well-defined and continuous. By Theorem~\ref{t:Fubini}, the ideal $\I$ is $\w$-Fubini. So there exists a continuous map $h:K\to K^\w$ such that for every $n\in\w$ and every Borel set $B\in \I^\w_n$ in $K^\w$ the preimage $h^{-1}(B)$ belongs to the ideal $\I$.
We claim that the continuous map $g=f\circ h:K\to X$ witnesses that the union $A=\bigcup_{n\in\w}A_n$ is Haar-$\I$ in $X$. We need to show that for every $x\in X$ the Borel set $g^{-1}(A+x)$ belongs to the $\sigma$-ideal $\I$.

The choice of the sequence $(f_n)_{n\in\w}$ and the definition of the map $f$ ensure that for every $n\in \IN$ the Borel set $f^{-1}(A_n+x)$ belongs to the family $\I^\w_n$. By the choice of the map $h$, the preimage $h^{-1}(f^{-1}(A_n+x))=g^{-1}(A_n+x)$ belongs to $\I$. Since $\I$ is a $\sigma$-ideal, the set $g^{-1}(A+x)=\bigcup_{n\in\w}g^{-1}(A_n+x)$ belongs to $\I$.
\end{proof}

\begin{theorem}\label{t:Fubi3I}
If $\I$ is a Fubini $\sigma$-ideal on a zero-dimensional compact space $K$, then for any Polish group $X$ we have the inclusions
$$\sigma\overline{\HI}\subset \mathcal H\sigma\overline{\I}\subset\HI.$$
\end{theorem}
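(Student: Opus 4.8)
The plan is to establish the two inclusions separately, the second being a small refinement of the argument proving Theorem~\ref{t:Fubi2}.

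\textbf{The inclusion $\mathcal H\sigma\overline{\I}\subset\HI$.} This one is easy. Since $\I$ is an ideal, finite unions of members of $\overline{\I}\subset\I$ stay in $\I$; since $\I$ is $\sigma$-continuous, countable directed unions of members of $\I$ stay in $\I$; and since $\I$ is a semi-ideal, subsets of members of $\I$ stay in $\I$. Combining these, every set $E\subset\bigcup_{n\in\w}F_n$ with all $F_n\in\overline{\I}$ belongs to $\I$, i.e.\ $\sigma\overline{\I}\subset\I$. Hence any continuous map $f\colon K\to X$ witnessing that a Borel set $A\subset X$ is Haar-$\sigma\overline{\I}$ also witnesses that $A$ is Haar-$\I$, so $\mathcal H\sigma\overline{\I}\subset\HI$.

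\textbf{The inclusion $\sigma\overline{\HI}\subset\mathcal H\sigma\overline{\I}$.} Fix $E\in\sigma\overline{\HI}$ and pick closed Haar-$\I$ sets $A_n\subset X$, $n\in\w$, with $E\subset\bigcup_{n\in\w}A_n$. Since $\bigcup_{n\in\w}A_n$ is an $F_\sigma$-set, hence Borel, it suffices to show that it is Haar-$\sigma\overline{\I}$. Here I would reproduce the construction from the proof of Theorem~\ref{t:Fubi2}: by Claim~\ref{cl:ideal} I obtain continuous maps $f_n\colon K\to X$ with $\diam f_n(K)\le 2^{-n}$ and $f_n^{-1}(A_n+x)\in\I$ for all $x\in X$; then $f\colon K^\w\to X$, $f\colon(x_n)_{n\in\w}\mapsto\sum_{n\in\w}f_n(x_n)$, is well-defined and continuous; and since $\I$ is a Fubini $\sigma$-ideal, Theorem~\ref{t:Fubini} supplies a continuous map $h\colon K\to K^\w$ such that $h^{-1}(B)\in\I$ for every $n\in\w$ and every Borel set $B\in\I^\w_n$. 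Set $g:=f\circ h\colon K\to X$.

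The one point that goes beyond Theorem~\ref{t:Fubi2} is closedness bookkeeping. Fix $n\in\w$ and $x\in X$. Computing the sections of $f^{-1}(A_n+x)$ in $K^\w$ exactly as in the proof of Theorem~\ref{t:Fubi2} shows $f^{-1}(A_n+x)\in\I^\w_n$; and since $A_n+x$ is closed in $X$ and $f$ is continuous, $f^{-1}(A_n+x)$ is closed in $K^\w$. Hence $g^{-1}(A_n+x)=h^{-1}\bigl(f^{-1}(A_n+x)\bigr)$ is a closed subset of $K$ lying in $\I$, i.e.\ a member of $\overline{\I}$. Therefore
$$g^{-1}\Bigl(\bigcup_{n\in\w}A_n+x\Bigr)=\bigcup_{n\in\w}g^{-1}(A_n+x)\in\sigma\overline{\I}\qquad\text{for all }x\in X,$$
so $g$ witnesses that $\bigcup_{n\in\w}A_n$ is Haar-$\sigma\overline{\I}$, and consequently $E\in\mathcal H\sigma\overline{\I}$. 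The argument is routine given Theorems~\ref{t:Fubini} and \ref{t:Fubi2}; the only genuinely new observation — and the sole place where the closedness of the $A_n$ enters — is that each preimage $g^{-1}(A_n+x)$ is a \emph{closed} member of $\I$, so that the countable union over $n$ falls into $\sigma\overline{\I}$ rather than merely into $\I$. I do not foresee a serious obstacle.
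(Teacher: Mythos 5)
Your proof is correct, and both inclusions are established by valid arguments. The second inclusion is handled exactly as in the paper (via $\sigma\overline{\I}\subset\I$). For the first inclusion your route is a more hands-on version of the paper's: you re-open the construction of Theorem~\ref{t:Fubi2} (the maps $f_n$, the sum map $f$ on $K^\w$, the Fubini map $h$, and $g=f\circ h$) and track, for each $n$ separately, that $g^{-1}(A_n+x)$ is a \emph{closed} member of $\I$, so that the union over $n$ lands in $\sigma\overline{\I}$. The paper instead uses Theorem~\ref{t:Fubi2} as a black box: since $\HI$ is a $\sigma$-ideal, the union $F=\bigcup_{n\in\w}A_n$ is Haar-$\I$, witnessed by \emph{some} continuous $f:K\to X$; then $f^{-1}(F+x)$ is an $F_\sigma$-set belonging to $\I$, and any $F_\sigma$-set in $\I$ automatically belongs to $\sigma\overline{\I}$ (write it as a countable union of closed sets, each of which lies in $\I$ by the semi-ideal property, hence in $\overline{\I}$). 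The paper's packaging buys brevity and works with an arbitrary witness rather than the specifically constructed one; your version makes explicit where the closedness of the $A_n$ enters, but that closedness is not actually needed once one notices the $F_\sigma$ observation. Either way the argument is sound.
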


\begin{proof} The inclusion $\mathcal H\sigma\overline{\I}\subset\HI$ follows from the inclusion $\sigma\overline{\I}\subset\I$. To see that
$\sigma\overline{\HI}\subset\mathcal H\sigma\overline{\I}$, take any set $A\in \sigma\overline{\HI}$ in the Polish group $X$. By definition, $A\subset \bigcup_{n\in\w}F_n$ for some closed Haar-$\I$ sets $F_n\subset X$. Since $\HI$ is a $\sigma$-ideal, the countable union $F=\bigcup_{n\in\w}F_n$ is Haar-$\I$. By the definition of a Haar-$\I$ set, there exists a continuous map $f:K\to X$ such that for every $x\in X$ the $F_\sigma$-set $f^{-1}(F+x)$ belongs to the ideal $\I$, and being of type $F_\sigma$, belongs to the $\sigma$-ideal $\sigma\overline{\I}$. This means that $F$ is Haar-$\sigma{\overline{\I}}$ and so is the subset $A$ of $F$.
\end{proof}

The Kuratowski-Ulam Theorem \cite[8.41]{K} implies the following fact.

\begin{theorem}\label{t:KU} For any Polish space $K$ homeomorphic to its square $K\times K$, the $\sigma$-ideal $\M_K$ of meager subsets of $K$ is Fubini. In particular, the $\sigma$-ideal $\M$ of meager sets in the Cantor cube $2^\w$ is Fubini.
\end{theorem}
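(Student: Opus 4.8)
The plan is to verify that the $\sigma$-ideal $\M_K$ is $2$-Fubini in the sense of the definition preceding Theorem~\ref{t:Fubini}, since by that theorem $2$-Fubini is equivalent to being Fubini. Fix a homeomorphism $g:K\to K\times K$; this will serve as the witnessing map $h:K\to K^2$. We must check that for each $i\in\{0,1\}$ and each Borel set $B\in(\M_K)^2_i$, the preimage $g^{-1}(B)$ is meager in $K$. Recall that $B\in(\M_K)^2_i$ means that every section of $B$ in the $i$-th coordinate direction is meager in $K$ (in the notation of the excerpt, $e_a^{-1}(B)\in\M_K$ for all $a\in K$, where $e_a$ embeds $K$ as the appropriate "vertical" or "horizontal" fiber through $a$).

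The key step is the Kuratowski--Ulam Theorem \cite[8.41]{K}, which states that for a Borel (or more generally, Baire-property) subset $B$ of $K\times K$, the set $B$ is meager in $K\times K$ if and only if the set $\{a\in K: B_a\notin\M_K\}$ is meager in $K$, where $B_a$ denotes the $a$-section. Applying this: if $B\in(\M_K)^2_0$, then \emph{every} section $B_a$ is meager, so a fortiori $\{a: B_a\notin\M_K\}=\emptyset$ is meager, and hence $B$ is meager in $K\times K$. The symmetric argument, using the other coordinate version of Kuratowski--Ulam, handles $B\in(\M_K)^2_1$. In either case $B$ is meager in $K\times K$. Since $g:K\to K\times K$ is a homeomorphism, the preimage $g^{-1}(B)$ is meager in $K$, which is exactly what the $2$-Fubini condition demands. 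Therefore $\M_K$ is $2$-Fubini, hence Fubini by Theorem~\ref{t:Fubini}.

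Finally, for the second assertion: the Cantor cube $2^\w$ is homeomorphic to its own square $2^\w\times 2^\w\cong 2^{\w}\times 2^\w\cong 2^{\w+\w}\cong 2^\w$, so it falls under the first part, and $\M=\M_{2^\w}$ is Fubini. I do not expect a genuine obstacle here: the only point requiring a little care is bookkeeping — matching the abstract indexing $(\M_K)^n_i$ and the embeddings $e_a$ from the excerpt's definition with the classical "horizontal section / vertical section" formulation of Kuratowski--Ulam, and making sure both coordinate directions ($i=0$ and $i=1$) are covered, since Kuratowski--Ulam is usually stated for one fixed coordinate and one invokes the homeomorphism $(x,y)\mapsto(y,x)$ of $K\times K$ for the other. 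Everything else is a direct citation.
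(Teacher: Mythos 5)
Your proposal is correct and is exactly the argument the paper intends: the paper gives no written proof beyond the remark that the Kuratowski--Ulam Theorem \cite[8.41]{K} implies the statement, and your use of the homeomorphism $K\to K\times K$ as the witnessing map $h$, together with the observation that a Borel set all of whose sections in one coordinate are meager must itself be meager in $K\times K$, is the intended route.
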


Theorems~\ref{t:Haar}(2), \ref{t:KU} and \ref{t:Fubi2} imply the following result of Darji \cite[Theorem 2.9]{D}.

\begin{corollary}[Darji]\label{c:Dar} For each Polish group the semi-ideal $\HM$ is a $\sigma$-ideal.
\end{corollary}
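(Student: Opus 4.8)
The plan is to identify $\HM$ with $\mathcal{H}\M$ for the meager $\sigma$-ideal $\M$ on the Cantor cube, and then to invoke the general machinery on Fubini $\sigma$-ideals developed above. First I would recall from Theorem~\ref{t:Haar}(2) that a Borel subset $B$ of a Polish group $X$ is Haar-meager if and only if it is Haar-$\M$, where $\M$ denotes the $\sigma$-ideal of meager subsets of $2^\w$; passing to subsets of Borel sets, this gives the equality of semi-ideals $\HM=\mathcal{H}\M$ in $X$.

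Next I would observe that $2^\w$ is a zero-dimensional compact metrizable space homeomorphic to its own square, so Theorem~\ref{t:KU} applies and tells us that $\M$ is a Fubini $\sigma$-ideal on $2^\w$. With these two facts in hand, Theorem~\ref{t:Fubi2}, applied to the Fubini $\sigma$-ideal $\I=\M$ on the zero-dimensional compact space $K=2^\w$, yields that $\mathcal{H}\M$ is a $\sigma$-ideal in $X$. Combining this with the equality $\HM=\mathcal{H}\M$ completes the argument.

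There is essentially no obstacle at this stage: all the substantive work has been absorbed into Theorems~\ref{t:KU} and \ref{t:Fubi2}, whose proofs rest, respectively, on the Kuratowski--Ulam theorem and on the iterated construction witnessing that a $2$-Fubini ideal is $\w$-Fubini. The only point requiring a moment's care is the bookkeeping that the semi-ideal $\HM$ of subsets of Borel Haar-meager sets coincides with the semi-ideal $\mathcal{H}\M$ of subsets of Borel Haar-$\M$ sets; this is immediate once the Borel-level equivalence of ``Haar-meager'' and ``Haar-$\M$'' from Theorem~\ref{t:Haar}(2) is recorded, since the two semi-ideals are then generated by the same collection of Borel sets.
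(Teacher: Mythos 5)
Your proposal is correct and follows exactly the route the paper takes: the corollary is derived by combining Theorem~\ref{t:Haar}(2) (Haar-meager $=$ Haar-$\M$), Theorem~\ref{t:KU} (the $\sigma$-ideal $\M$ on $2^\w$ is Fubini), and Theorem~\ref{t:Fubi2} (for a Fubini $\sigma$-ideal $\I$ on a zero-dimensional compact space the semi-ideal $\HI$ is a $\sigma$-ideal). Nothing further is needed.
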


On the other hand, the classical Fubini Theorem \cite[p.104]{K} implies:

\begin{theorem}\label{t:Fub} For any continuous strictly positive Borel $\sigma$-additive measure $\mu$ on the Cantor cube $2^\w$, the $\sigma$-ideal $\N_\mu$ of sets of $\mu$-measure zero in $2^\w$ is Fubini. In particular, the $\sigma$-ideal $\N$ of sets of Haar measure zero in $2^\w$ is Fubini.
\end{theorem}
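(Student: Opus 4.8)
The plan is to reduce the statement to the definition of a Fubini ideal (Definition preceding Theorem~\ref{t:Fubini}) in the $n=2$ case, invoking Theorem~\ref{t:Fubini} to upgrade $2$-Fubini to Fubini. So the real task is: given a continuous strictly positive $\sigma$-additive Borel probability measure $\mu$ on $2^\w$, produce a continuous map $h:2^\w\to 2^\w\times 2^\w$ such that for each $i\in\{0,1\}$ and each Borel set $B\in(\N_\mu)^2_i$ the preimage $h^{-1}(B)$ is $\mu$-null. First I would reduce to a convenient normalization of $\mu$: by the quoted result \cite[2.12]{Akin99} any two strictly positive continuous measures in $P(2^\w)$ differ by a homeomorphism of $2^\w$ up to a $(1\pm\e)$ factor, so the $\sigma$-ideals $\N_\mu$ and $\N_\lambda$ are topologically isomorphic via some homeomorphism $g:2^\w\to2^\w$; since the Fubini property is clearly preserved by conjugating $h$ and $B$ by $g$ (and by $g\times g$), it suffices to treat $\mu=\lambda$, the Haar measure, for which $2^\w$ is the compact group $C_2^\w$ and $\lambda$ is the product of the uniform measures $\lambda_0$ on $C_2$.

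The key step is then the classical Fubini theorem itself. Identify $2^\w$ with $C_2^\w\cong C_2^{\w}\times C_2^{\w}$ by splitting coordinates along even/odd positions (or any fixed bijection $\w\cong\w\sqcup\w$); this gives a homeomorphism $h:2^\w\to 2^\w\times2^\w$ which is simultaneously a group isomorphism, and under it $\lambda$ corresponds exactly to the product measure $\lambda\otimes\lambda$ on $2^\w\times2^\w$. Now take any Borel set $B\subset 2^\w\times2^\w$ with $B\in(\N_\lambda)^2_0$, i.e.\ every vertical section $\{x:(x,a)\in B\}=e_a^{-1}(B)$ (using the notation $e_a$ from the excerpt) is $\lambda$-null. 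By the Fubini theorem \cite[p.104]{K}, $(\lambda\otimes\lambda)(B)=\int \lambda(e_a^{-1}(B))\,d\lambda(a)=0$; by the same token for $B\in(\N_\lambda)^2_1$. Hence $h^{-1}(B)$ has $\lambda$-measure zero, i.e.\ lies in $\N_\lambda$. This shows $\N_\lambda$ is $2$-Fubini with witness map $h$, and Theorem~\ref{t:Fubini} then gives that $\N_\lambda$, and therefore $\N_\mu$ for every strictly positive continuous $\mu$, is Fubini.

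The main obstacle, such as it is, is bookkeeping rather than a genuine difficulty: one must be careful that the coordinate-splitting homeomorphism $h$ really does carry $\lambda$ to $\lambda\otimes\lambda$ (this is exactly the associativity of infinite products of probability measures), and that the transport along the homeomorphism $g$ coming from \cite{Akin99} is legitimate — here one uses that $g$ maps $\N_\mu$-sets to $\N_\lambda$-sets and back, so that $g\times g$ maps $(\N_\mu)^2_i$ to $(\N_\lambda)^2_i$, and that the measurability of the sections used in the Fubini theorem is automatic because we only need the statement for Borel $B$. One small point worth spelling out: the definition of $n$-Fubini only demands control of $h^{-1}(B)$ for \emph{Borel} $B\in\I^n_i$, which is precisely the regime in which the Fubini theorem applies without any completeness or measurability subtleties, so no appeal to the completion of $\lambda\otimes\lambda$ is needed.
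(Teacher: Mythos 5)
Your proposal is correct and is essentially the argument the paper intends: the paper gives no written proof beyond citing the classical Fubini theorem, and the expected details are exactly your reduction to $\mu=\lambda$ via the topological isomorphism of the ideals $\N_\mu$ and $\N_\lambda$ (already recorded in the paper right after the citation of Akin), followed by the coordinate-splitting identification $2^\w\cong 2^\w\times 2^\w$ carrying $\lambda$ to $\lambda\otimes\lambda$ and the section-integral formula for Borel sets. Your transport of the witness map along $g$ and $g\times g$, and the observation that only Borel sets need to be handled, are both correct.
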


Theorems~\ref{t:Haar}(1), \ref{t:Fub} and \ref{t:Fubi2} imply the following result of Christensen \cite[Theorem 1]{Ch}.

\begin{corollary}[Christensen] For each Polish group the semi-ideal $\HN$ is a $\sigma$-ideal.
\end{corollary}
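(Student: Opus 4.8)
The plan is to recognize the Haar-null $\sigma$-ideal as an instance of the Haar-$\I$ construction for a Fubini $\sigma$-ideal on a zero-dimensional compact space, and then quote Theorem~\ref{t:Fubi2}. First I would invoke Theorem~\ref{t:Haar}(1): for every Polish group $X$ a subset $A\subset X$ is Haar-null if and only if it is Haar-$\N$, where $\N$ denotes the $\sigma$-ideal of sets of Haar (product) measure zero in the Cantor cube $2^\w$. Hence the family $\HN$ of subsets of Borel Haar-null sets in $X$ coincides with the family $\mathcal H\N$ of subsets of Borel Haar-$\N$ sets in $X$.

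Next I would observe that the underlying space $2^\w$ is a zero-dimensional compact metrizable space, and that by Theorem~\ref{t:Fub} (a consequence of the classical Fubini Theorem) the $\sigma$-ideal $\N$ is Fubini. Thus $\N$ is a Fubini $\sigma$-ideal on a zero-dimensional compact space, and Theorem~\ref{t:Fubi2} applies verbatim: for every Polish group $X$ the semi-ideal $\mathcal H\N=\HN$ is a $\sigma$-ideal.

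There is no genuine obstacle left in this last step, since all the substance has been absorbed into the earlier results: the functional characterization of Haar-null sets (Theorem~\ref{Haarnull} together with Lemma~\ref{l:a}, packaged as Theorem~\ref{t:Haar}(1)), the Fubini property of the measure-zero ideal (Theorem~\ref{t:Fub}), and the bootstrapping from $2$-Fubini to $\w$-Fubini combined with the summation argument carried out inside the proof of Theorem~\ref{t:Fubi2}. The only points worth a moment's care are that $\HN$ is a semi-ideal by construction (it consists of all subsets of Borel Haar-null sets, hence is closed under taking subsets), and that the ``$\sigma$-ideal'' conclusion of Theorem~\ref{t:Fubi2} supplies both directedness and closure under countable directed unions, which is precisely what the statement asks for; invariance was already recorded after Definition~\ref{defHsmall}, so nothing further is needed.
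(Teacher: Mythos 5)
Your proof is correct and follows exactly the paper's route: the paper derives this corollary by combining Theorem~\ref{t:Haar}(1) (Haar-null $=$ Haar-$\N$), Theorem~\ref{t:Fub} ($\N$ is Fubini), and Theorem~\ref{t:Fubi2}, which is precisely the chain you describe. Nothing is missing.
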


\begin{problem} Is the $\sigma$-ideal $\sigma\overline{\N}$ on $2^\w$ Fubini?
\end{problem}

\begin{remark}\label{rem:N} Theorems~\ref{t:Fubi3I}, \ref{t:Fub}, \ref{t:HI=HM} and Corollary~\ref{c:Dar} imply that $$\sigma\overline{\HN}\subset\mathcal H\sigma\overline{\N}\subset\HN\cap\HM\subset\HN\cap\M$$ for an uncountable Polish group.
For a locally compact Polish group some inclusions in this chain turn into equalities:
$$\sigma\overline{\HN}=\sigma\overline{\N}\subset\mathcal H\sigma\overline{\N}\subset\HN\cap\HM=\N\cap\M.$$
In Example~\ref{ex:hard} we shall observe that $\mathcal H\sigma\overline{\N}\ne \M\cap \N$ for some compact Polish group.
\end{remark}

\begin{problem} Is $\sigma\overline{\HN}=\mathcal H\sigma\overline{\N}$ for any (locally compact) Polish group?
\end{problem}

\begin{problem} Is the semi-ideal $\mathcal H\sigma\overline{\N}$ an ideal for any (locally compact) Polish group?
\end{problem}

\begin{definition} A pair $(\I,\J)$ of families $\I,\J$ of subsets of a topological space $K$ is defined to be \index{Fubini families}{\em Fubini} if there exists a continuous map $h:K\to K^2$ such that for any Borel set $A\in\I^2_0$ in $K^2$ the preimage $h^{-1}(A)$ belongs to $\I$ and for any Borel set $B\in\J^2_1$ in $K^2$ the preimage $h^{-1}(B)$ belongs to the family $\J$.
\end{definition}

Observe that a family $\I$ of subsets of $K$ is Fubini if and only if the pair $(\I,\I)$ is Fubini.

\begin{theorem}\label{t:IJ}
Let $(\I,\J)$ be a Fubini pair of semi-ideals $\I,\J$ on a topological space $K$. Then for any Polish group $X$ we get $\mathcal{H}(\I \cap \J) = \HI \cap \mc{HJ}$.
\end{theorem}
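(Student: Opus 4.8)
The plan is to establish the two inclusions $\mathcal H(\I\cap\J)\subseteq\HI\cap\mathcal{HJ}$ and $\HI\cap\mathcal{HJ}\subseteq\mathcal H(\I\cap\J)$ separately. The first is immediate: if $A\in\mathcal H(\I\cap\J)$, then $A$ lies in some Borel set $B$ equipped with a continuous map $f\colon K\to X$ such that $f^{-1}(B+x)\in\I\cap\J$ for every $x\in X$, and this single map $f$ already witnesses that $B$ is both Haar-$\I$ and Haar-$\J$, so $A\in\HI$ and $A\in\mathcal{HJ}$.

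For the reverse inclusion I would take $A\in\HI\cap\mathcal{HJ}$, choose Borel sets $B_1\supseteq A$ and $B_2\supseteq A$ with $B_1$ Haar-$\I$ and $B_2$ Haar-$\J$, and fix witnessing continuous maps $f_\I,f_\J\colon K\to X$. Passing to the Borel set $B:=B_1\cap B_2\supseteq A$ and using that $\I$ and $\J$ are closed under subsets, the maps $f_\I$ and $f_\J$ still witness that $B$ is Haar-$\I$ and Haar-$\J$, respectively, so it remains to produce a single witness that $B$ is Haar-$(\I\cap\J)$. Here I would invoke the Fubini-pair hypothesis to get a continuous map $h\colon K\to K^2$ with the stated pull-back property, and set
$$g\colon K^2\to X,\quad g(u,v):=f_\I(u)+f_\J(v),\qquad f:=g\circ h\colon K\to X.$$

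The verification is then routine. For a fixed $x\in X$, the coordinate sections of the Borel set $g^{-1}(B+x)\subseteq K^2$ are exactly translated preimages: freezing the second coordinate to $a\in K$ gives $f_\I^{-1}\big(B+(x-f_\J(a))\big)\in\I$, so $g^{-1}(B+x)\in\I^2_0$, and freezing the first coordinate to $a$ gives $f_\J^{-1}\big(B+(x-f_\I(a))\big)\in\J$, so $g^{-1}(B+x)\in\J^2_1$. Applying the Fubini-pair property of $h$ to this Borel set yields $f^{-1}(B+x)=h^{-1}\big(g^{-1}(B+x)\big)\in\I\cap\J$, and since $x$ is arbitrary, $f$ witnesses $B\in\mathcal H(\I\cap\J)$, whence $A\in\mathcal H(\I\cap\J)$.

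I do not anticipate a genuine obstacle: this is the ``product of witnesses'' argument already used for Theorems~\ref{t:Fubi} and \ref{t:Fubi2}, now run with a different semi-ideal in each coordinate, the point of the Fubini-pair condition being precisely to let one pull the product witness back along $h$ while keeping membership in $\I$ from the first factor and in $\J$ from the second. The only care needed is in the reduction to a single Borel set $B$ (relying on closure under subsets) and in correctly matching the coordinate sections of $g^{-1}(B+x)$ with the translated preimages under $f_\I$ and $f_\J$.
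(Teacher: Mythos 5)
Your proof is correct and follows essentially the same route as the paper: form the product map $(u,v)\mapsto f_\I(u)+f_\J(v)$ of the two witnesses, check that the relevant preimages lie in $\I^2_0$ and $\J^2_1$, and pull back along the map $h$ supplied by the Fubini-pair hypothesis. The only (harmless) difference is that you intersect the Borel hulls at the start and apply the Fubini condition to the single set $g^{-1}(B+x)$, whereas the paper keeps $A_0$ and $A_1$ separate and intersects the pulled-back preimages at the end; both orderings work because $\I$ and $\J$ are closed under subsets.
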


\begin{proof} The inclusion $\mathcal{H}(\I \cap \J)\subset\HI \cap \mc{HJ}$ is obvious. To see that $\HI \cap \mc{HJ}\subset \mathcal{H}(\I \cap \J)$, fix any set $A \in \HI \cap \mc{HJ}$. By definition, $A$ is contained in Borel sets $A_0\in\HI$ and $A_1\in \mc{HJ}$.

Let $f_0,f_1:K \to X$ be continuous maps such that $f_0^{-1}(A_0+x)\in\I$ and $f_1^{-1}(A_1+x)\in\J$ for all $x\in X$.
Consider the continuous map $f:K\times K\to X$, $f:(x,y)\mapsto f_0(x_0)+f_1(x_1)$, and observe that for every $x\in X$ we get $f^{-1}(A_0+x)\in\I^2_0$ and $f^{-1}(A_1+x)\in\J^2_1$ for all $x\in X$.

 Since the pair $(\I,\J)$ is Fubini, there exists a continuous map $h:K\to K\times K$ such that for any Borel sets $B_0\in\I^2_0$ and $B_1\in\J^2_1$ in $K^2$ the preimages $h^{-1}(B_0)$ and $h^{-1}(B_1)$ belong to the semi-ideals $\I$ and $\J$, respectively.

Consider the continuous map $g=f\circ h:K\to X$ and observe that
for every $x\in X$ we have
$$g^{-1}(A_0+x)=h^{-1}(f^{-1}(A_0+x))\in\I\mbox{ \ and \ } g^{-1}(A_1+x)=h^{-1}(f^{-1}(A_1+x))\in\J.$$ Consequently, for the Borel set $\tilde A=A_0\cap A_1\supset A$ in $X$ we get $$g^{-1}(A+x)\subset g^{-1}(\tilde A+x)= g^{-1}(A_0+x)\cap g^{-1}(A_1+x)\in\I\cap\J,$$
witnessing that the set $A$ belongs to the semi-ideal $\mc{H}(\I\cap\J)$.
\end{proof}

Kuratowski-Ulam Theorem \cite[8.41]{K} and Fubini Theorem~\cite[p.104]{K} imply the following fact.

\begin{theorem}\label{t:KU-F} The pair $(\M,\N)$ of the $\sigma$-ideals $\M$ and $\N$ on the Cantor cube $2^\w$ is Fubini.
\end{theorem}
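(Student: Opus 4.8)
The plan is to exhibit a single continuous map $h\colon 2^\w\to 2^\w\times 2^\w$ that simultaneously witnesses the Fubini property of $\M$ in the first coordinate and of $\N$ in the second. The point is that, unlike in the proofs of Theorems~\ref{t:KU} and \ref{t:Fub} which a priori produce two unrelated maps, here one and the same $h$ must work for both ideals; the natural choice is a coordinate-splitting homeomorphism. Concretely, fix a partition $\w=E\sqcup O$ of $\w$ into two infinite sets and, using the unique increasing bijections $E\to\w$ and $O\to\w$, identify $\{0,1\}^E$ and $\{0,1\}^O$ with $2^\w$. Then $h(x):=(x|E,\;x|O)$ defines a homeomorphism $h\colon 2^\w\to 2^\w\times 2^\w$. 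Since $h$ is merely a reindexing of the coordinates of the product $\{0,1\}^\w$, it carries the Haar measure $\lambda$ on $2^\w$ to the product measure $\lambda\otimes\lambda$ on $2^\w\times 2^\w$, i.e. $h_*\lambda=\lambda\otimes\lambda$.

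First I would verify the $\M$-part. Let $A\in\M^2_0$ be a Borel subset of $(2^\w)^2$; thus for every $a\in 2^\w$ the slice $\{u\in 2^\w:(u,a)\in A\}$ is meager in $2^\w$. Being Borel, $A$ has the Baire property, so the Kuratowski--Ulam Theorem \cite[8.41]{K} applies: the set of those $a$ for which this slice is non-meager is empty, hence meager, so $A$ itself is meager in $(2^\w)^2$. As $h$ is a homeomorphism, $h^{-1}(A)$ is meager in $2^\w$, i.e. $h^{-1}(A)\in\M$.

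Next I would verify the $\N$-part. Let $B\in\N^2_1$ be a Borel subset of $(2^\w)^2$; thus for every $a\in 2^\w$ the slice $B^a:=\{v\in 2^\w:(a,v)\in B\}$ has $\lambda(B^a)=0$. Each $B^a$ is Borel, so by the Fubini Theorem \cite[p.104]{K} we get $(\lambda\otimes\lambda)(B)=\int_{2^\w}\lambda(B^a)\,d\lambda(a)=0$. Since $h_*\lambda=\lambda\otimes\lambda$, it follows that $\lambda\big(h^{-1}(B)\big)=(\lambda\otimes\lambda)(B)=0$, i.e. $h^{-1}(B)\in\N$. Combining the two parts, the map $h$ witnesses that the pair $(\M,\N)$ is Fubini.

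There is no genuine obstacle here beyond bookkeeping: the only point requiring attention is that a single map must serve both ideals, which is exactly why we take $h$ to be a measure-preserving homeomorphism rather than invoking Theorems~\ref{t:KU} and \ref{t:Fub} separately. Everything else is a direct application of Kuratowski--Ulam and Fubini to the two slicing directions of the square $2^\w\times 2^\w$.
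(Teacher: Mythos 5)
Your proof is correct and follows the same route the paper intends: the paper gives no details beyond citing Kuratowski--Ulam and Fubini, and the coordinate-splitting homeomorphism $h(x)=(x|E,x|O)$, which is simultaneously a homeomorphism and carries $\lambda$ to $\lambda\otimes\lambda$, is exactly the witness that makes both citations apply to a single map. Your verification of the two slicing directions ($\M^2_0$ via Kuratowski--Ulam, $\N^2_1$ via Fubini) matches the definition of a Fubini pair precisely.
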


This theorem combined with Theorem~\ref{t:IJ} imply:

\begin{corollary} For any Polish group we have $\HN\cap\HM=\mc{H}(\M\cap\N)$.
\end{corollary}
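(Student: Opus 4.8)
The plan is to deduce the equality directly from the abstract Fubini result, Theorem~\ref{t:IJ}, applied to the concrete $\sigma$-ideals $\M$ and $\N$ on the Cantor cube $2^\w$, together with the function-space characterizations of Haar-null and Haar-meager sets recorded in Theorem~\ref{t:Haar}. First I would observe that $\M$ and $\N$ are in particular semi-ideals on $K=2^\w$ and that, by Theorem~\ref{t:KU-F}, the pair $(\M,\N)$ is a Fubini pair of semi-ideals on $2^\w$. Feeding this into Theorem~\ref{t:IJ} with $\I=\M$ and $\J=\N$ yields, for every Polish group $X$, the identity
$$\mathcal H(\M\cap\N)=\mathcal H\M\cap\mathcal H\N,$$
where $\mathcal H\M$ and $\mathcal H\N$ denote the semi-ideals of Haar-$\M$ and Haar-$\N$ sets in $X$, respectively.

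The second and last step is purely a matter of reconciling notation: I would invoke Theorem~\ref{t:Haar}(1), according to which a Borel subset of $X$ is Haar-null if and only if it is Haar-$\N$ (for $\N$ the $\sigma$-ideal of null sets of the Haar measure $\lambda$ on $2^\w$), and Theorem~\ref{t:Haar}(2), according to which a Borel subset of $X$ is Haar-meager if and only if it is Haar-$\M$. Hence the Borel Haar-$\N$ sets are exactly the Borel Haar-null sets and the Borel Haar-$\M$ sets are exactly the Borel Haar-meager sets; taking downward closures gives $\mathcal H\N=\HN$ and $\mathcal H\M=\HM$. Substituting these into the displayed identity produces $\mathcal H(\M\cap\N)=\HN\cap\HM$, which is the assertion of the corollary.

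I do not expect any genuine obstacle here: every ingredient --- Theorem~\ref{t:IJ}, Theorem~\ref{t:KU-F}, and Theorem~\ref{t:Haar} --- has already been established, and the classical Kuratowski--Ulam and Fubini theorems, which supply the Fubini property of the pair $(\M,\N)$, are packaged inside Theorem~\ref{t:KU-F}. The only point that requires a moment's attention is the bookkeeping identification of the semi-ideals $\mathcal H\M$ and $\mathcal H\N$ (instances of the general family $\HI$) with the previously introduced families $\HM$ and $\HN$, and this is precisely what Theorem~\ref{t:Haar} guarantees.
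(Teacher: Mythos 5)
Your proposal is correct and follows exactly the paper's own route: the corollary is obtained by combining Theorem~\ref{t:KU-F} (the pair $(\M,\N)$ on $2^\w$ is Fubini) with Theorem~\ref{t:IJ}, and the identification of $\mathcal H\N$ with $\HN$ and $\mathcal H\M$ with $\HM$ via Theorem~\ref{t:Haar} is the same bookkeeping the paper relies on implicitly. No gaps.
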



\section{Borel hulls of Haar-$\I$ sets in Polish groups}\label{s:hull}

In this section we will unify results obtained in papers \cite{EVid}, \cite{EVidN}, \cite{DV}, \cite{DVVR}, \cite{S} using the notion of Haar-$\I$ sets. Most of proofs  in this sections are mutatis mutandis the same as in papers mentioned above, however we put modified parts here for the sake of completness, as they are usually quite complicated. 
 
Let $\Polish$ be the category of Polish spaces and their continuous maps, and $\Set$ be the category of sets and functions between sets. Let $\Pow:\Polish\to\Set$ be the contravariant functor assigning to each set $X$ the family $\Pow(X)$ of subsets of $X$ and to each continuous function $f:X\to Y$ between Polish spaces the function $\Pow(f):\Pow(Y)\to\Pow(X)$, $\Pow(f):A\mapsto f^{-1}(A)$. 

By a \index{pointclass}{\em pointclass} we understand any subfunctor $\Gamma$ of the contravariant functor $\Pow$ such that for any Polish space $X$ and sets $A,B\in\Gamma(X)$ we have $\{\emptyset,X\}\subset\Gamma(X)$, $A\cap B\in\Gamma(X)$, and $A\cup B\in\Gamma(X)$. For a pointclass $\Gamma$ let $\Gamma^c$ be the pointclass assigning to each Polish space $X$ the family $\Gamma^c(X)=\{A\in\Pow(X):X\setminus A\in\Gamma(X)\}$.

By \index{$\Delta^1_1$}\index{$\Sigma^1_1$}\index{$\Pi^1_1$}${\Delta^1_1}$, ${\Sigma^1_1}$ and ${\Pi^1_1}$ we denote the pointclasses assigning to each Polish space $X$ the families $\Delta^1_1(X)$, $\Sigma^1_1(X)$ and $\Pi^1_1(X)$ of Borel, analytic and coanalytic subsets of $X$, respectively.

\begin{definition} Let $\Gamma,\Lambda$ be two pointclasses. We say that a family $\I$ of subsets of a Polish space $Z$ is \index{family!$\Lambda$-on-$\Gamma$}\index{$\Lambda$-on-$\Gamma$ family}{\em $\Lambda$-on-$\Gamma$}, if for any Polish space $Y$ and set $A \in \Gamma(Z\times Y)$, we have  $\{ y \in Y : A_y\in \I \} \in \Lambda(Y)$, where $A_y:=\{z\in Z:(z,y)\in A\}$ is the $y$-section of the set $A$. 
\end{definition}

Families which are $\Delta^1_1$-on-$\Delta^1_1$ will be called \index{family!Borel-on-Borel}\index{Borel-on-Borel family}{\em Borel-on-Borel}.

In the following theorem we collect some known facts on the $\Lambda$-on-$\Gamma$ properties of the ideals $\M$, $\mathcal N$, and $\M\cap\mathcal N$ on the Cantor set $2^\w$.

\begin{theorem}\label{t:MN-on} The ideals $\M$ and $\mathcal N$ are $\Pi^1_1$-on-$\Sigma^1_1$,   $\Sigma^1_1$-on-$\Pi^1_1$, and $\Delta^1_1$-on-$\Delta^1_1$.
\end{theorem}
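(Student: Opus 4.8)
The plan is to unwind the definition of a $\Lambda$-on-$\Gamma$ family and reduce everything to estimating, for each ideal $\I\in\{\M,\N\}$ on $2^\w$ (where $\N=\N_\lambda$ is the null ideal of the Haar measure $\lambda$) and each pointclass $\Gamma\in\{\Delta^1_1,\Sigma^1_1,\Pi^1_1\}$, the descriptive complexity of the set $\I(A):=\{y\in Y:A_y\in\I\}$ for an arbitrary Polish space $Y$ and a set $A\in\Gamma(2^\w\times Y)$. All six assertions are classical, and the proof amounts to quoting the appropriate parametrized theorems from \cite{K} (and \cite{Bog} for measurability); I would just indicate the tool in each case. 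First I would dispose of the Borel-on-Borel statements using the parametrized Kuratowski--Ulam and Fubini theorems: if $A$ is Borel, then for every basic clopen $U\subseteq 2^\w$ the set $\{y:A_y\cap U\text{ is comeager in }U\}$ is Borel, and since every Borel set has the Baire property, $A_y$ is meager iff $A_y\cap U$ is not comeager in $U$ for every $U$ from a fixed countable base; hence $\M(A)$ is Borel. Likewise the map $y\mapsto\lambda(A_y)$ is Borel (parametrized Fubini), so $\N(A)=\{y:\lambda(A_y)=0\}$ is Borel.

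For the analytic and coanalytic cases I would use the game characterizations of Baire category and of measure together with unfolding. Every section $A_y$, being analytic or coanalytic, has the Baire property and is universally measurable, so the Banach--Mazur game on $2^\w$ with payoff $A_y$ is determined, with (by Oxtoby's characterization) player~I winning exactly when $A_y$ is meager and player~II winning exactly when $A_y$ is comeager in some basic clopen set. If $A$ is analytic, write $A=\{(x,y):\exists z\,(x,z,y)\in F\}$ with $F\subseteq 2^\w\times\w^\w\times Y$ closed; letting player~II, who aims at the analytic target $A_y$, additionally play a witness $z$ for $F_y$ turns player~II's payoff in the unfolded game into a closed set depending continuously on $F_y$, and the winning region of the player with closed payoff defines an analytic set in the parameter (being the value of a monotone induction over the game tree). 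Hence $\{y:A_y\text{ is non-meager}\}=\{y:\text{player II wins}\}$ is analytic, so by determinacy $\M(A)$ is coanalytic. If instead $A$ is coanalytic, then $2^\w\times Y\setminus A$ is analytic and it is now player~I whose target is analytic, so the same unfolding on player~I's side shows $\M(A)=\{y:\text{player I wins}\}$ is itself analytic. The ideal $\N$ is handled by the parallel measure-theoretic arguments (for analytic $A$ the set $\{y:\lambda^*(A_y)>0\}$ is analytic, so $\N(A)=\{y:\lambda^*(A_y)=0\}$ is coanalytic; for coanalytic $A$ the set $\N(A)=\{y:\lambda(A_y)=0\}$ is analytic), using the universal measurability of $A_y$ to know that $\lambda(A_y)$ is well defined and equals $\lambda^*(A_y)$. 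Collecting the cases gives that $\M$ and $\N$ are $\Delta^1_1$-on-$\Delta^1_1$, $\Pi^1_1$-on-$\Sigma^1_1$ and $\Sigma^1_1$-on-$\Pi^1_1$.

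The step I expect to be the main obstacle is the $\Sigma^1_1$-on-$\Pi^1_1$ assertion. For coanalytic $A$ one cannot get away with complementing inside the section: the predicates ``$(2^\w\times Y\setminus A)_y$ is comeager'' and ``$\lambda((2^\w\times Y\setminus A)_y)=1$'' have only the obvious witnesses --- a dense $G_\delta$ contained in the section, respectively a sequence of open sets of measure tending to $0$ covering the complementary section --- which range over a Polish space of codes, so the crude quantifier count yields merely a $\Sigma^1_2$ bound. The content of the theorem is precisely that this existential quantifier over codes can be replaced by evaluating a monotone inductive operator (the winning region of an unfolded closed Banach--Mazur game, and its measure-theoretic counterpart), which keeps one inside $\Sigma^1_1$; I would cite this from \cite{K} rather than reprove it.
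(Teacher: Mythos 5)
Your proposal reaches the correct conclusions and, like the paper, ultimately rests on the classical parametrization theorems from \cite{K}: the paper simply cites \cite[29.22, 29.26]{K} for the $\Pi^1_1$-on-$\Sigma^1_1$ property, \cite[36.24]{K} for the $\Sigma^1_1$-on-$\Pi^1_1$ property, and then obtains $\Delta^1_1$-on-$\Delta^1_1$ from Souslin's theorem ($\Delta^1_1=\Sigma^1_1\cap\Pi^1_1$) by intersecting the two one-sided estimates. Your treatment of the Borel case is a genuinely different (and more self-contained) route: you argue directly via the Montgomery--Novikov category quantifier and the parametrized Fubini theorem rather than deducing Borel-on-Borel from the two projective cases; both are fine, and yours has the minor advantage of not needing the harder one-sided results at all for that case.

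However, your closing diagnosis of the $\Sigma^1_1$-on-$\Pi^1_1$ case as ``the main obstacle'' is mistaken on two counts. First, the claim that one cannot get away with complementing inside the section is false: for coanalytic $A$, the section $A_y$ is meager if and only if its complement $B_y$ (a section of the \emph{analytic} set $B=(2^\w\times Y)\setminus A$) is comeager, and \cite[29.22]{K} --- the very result you already invoke, proved by the same unfolded Banach--Mazur game --- states that $\{y:B_y\text{ is comeager}\}$ is $\Sigma^1_1$ for analytic $B$; likewise $\lambda(A_y)=0$ iff $\lambda(B_y)>1-2^{-n}$ for all $n$, a countable intersection of $\Sigma^1_1$ conditions by \cite[29.26]{K}. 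So no new inductive-operator machinery is needed beyond what you used for the $\Pi^1_1$-on-$\Sigma^1_1$ direction; the alleged $\Sigma^1_2$ bound from ``quantifying over codes'' never arises because one never needs to quantify over codes. Second, your identification $\M(A)=\{y:\text{player I wins}\}$ is incorrect as stated: player I wins the Banach--Mazur game on $A_y$ precisely when $A_y$ is meager in \emph{some} nonempty open set, not when $A_y$ is meager; to repair this one must localize to every basic open set and invoke the Banach category theorem (or, more simply, switch to the complement as above). Since you defer to a citation of \cite{K} in the end, the proof as a whole stands, but the step you single out as hard is in fact immediate from the analytic case, and the game-theoretic reduction you sketch for it needs the localization fix to be correct.
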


\begin{proof} The $\Pi^1_1$-on-$\Sigma^1_1$ property of the ideals $\mathcal M$ and  $\mathcal N$ follow from \cite[29.22]{K} and \cite[29.26]{K}, respectively. The $\Sigma^1_1$-on-$\Pi^1_1$ property of the ideals $\mathcal M$ and $\mathcal N$ follows from \cite[36.24]{K}. Observe that for any Polish space $Y$ and any subset $A\subset 2^\w\times Y$ we have the equality
$$\{y\in Y:A_y\in\M\cap\mathcal N\}=\{y\in Y:A_y\in\M\}\cap\{y\in Y:A_y\in\mathcal N\}.$$Combining this equality with the $\Sigma^1_1$-on-$\Pi^1_1$ and $\Pi^1_1$-on-$\Sigma_1^1$ properties of the ideals $\M$, $\mathcal N$, we conclude that the ideal $\M\cap\mathcal N$ is  $\Sigma^1_1$-on-$\Pi^1_1$ and $\Pi^1_1$-on-$\Sigma_1^1$.

By the famous Souslin Theorem (see, \cite[14.11]{K}), $\Delta^1_1(Y)=\Sigma^1_1(Y)\cap\Pi^1_1(Y)$ for any Polish space $Y$. This equality and the  $\Sigma^1_1$-on-$\Pi^1_1$ and $\Pi^1_1$-on-$\Sigma^1_1$ properties of the  ideals $\mathcal M$, $\mathcal N$, $\M\cap\mathcal N$ imply that these ideals are $\Delta^1_1$-on-$\Delta^1_1$.
\end{proof}


We start by proving a common generalization of Theorem 4.1 \cite{EVid} and Theorem 10 \cite{DV} treating Haar-null sets and Haar-meager sets, respectively.

\begin{definition} Let $\Lambda,\Gamma$ be two pointclasses, and $\I$ be an semi-ideal on a zero-dimensional compact metrizable space $Z=\bigcup\I$. We say that the triple $(\I, \Gamma,\Lambda)$ has the property $\textbf{(H)}$ if the following conditions are satisfied:
\begin{itemize}
\item[\bf{(H1)}] For any $A\in\I$ the set $Z\setminus A$ does not belong to $\I$;
\item[\bf(H2)] For any Polish space $P$ there exists a set $U \in \Gamma(2^\w\times P)$, which is $\Lambda^c(P)$-universal in the sense that for every $A \in \Lambda^c(P)$ there exists $x \in 2^\w$ such that $U_x = A$;
\item[\bf(H3)] $\I$ is $\Gamma^c$-on-$\Gamma$;
\item[\bf(H4)] If a set $A \in \Gamma(C(Z,X) \times Y \times X)$ is such that for every $(h,y) \in C(Z,X)\times Y$ the section $A_{(h,y)}$ is either empty or $h^{-1}(A_{h,y}) \notin \I$, then the set $A$ has a uniformization $\phi\in \Gamma(C(Z,X)\times Y\times X)$, (which means  such that $\phi\subset A$ and for any $(h,y) \in C(Z,X) \times Y$ with $A_{(h,y)}\ne\emptyset$ the section $\phi_{(h,y)}$ is a singleton);
\item[\bf(H5)] For any Borel injection $f:X \to Y$ between Polish spaces $\{f(A):A\in\Gamma(X)\}\subset\Gamma(Y)$.
\end{itemize}
\end{definition}

\begin{theorem}\label{brakpowlok} Let $\I$ be a semi-ideal on a  compact metrizable space $Z=\bigcup\I$ and $\Gamma,\Lambda$ are two pointclasses such that the triple $(\I, \Gamma,\Lambda)$ has the property $\mathbf{(H)}$. Let $X$ be a Polish group containing a topological copy of the compact space $Z$. If $X$ is not locally compact, then there exists a Haar-$1$ (and hence Haar-$\I$) set $E \in \Gamma(X)$, which cannot be enlarged to a Haar-$\I$ set $H\in \Lambda(X)$.  
\end{theorem}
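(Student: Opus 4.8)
The plan is to imitate the Elekes–Vidnyánszky construction of a "universal" witness and then diagonalize against all $\Lambda$-hulls. First I would fix a topological copy $Z\subset X$ and, using non-local compactness of $X$ together with Solecki's theorem (or the $\IZ^\w$-type argument behind Proposition~\ref{p:2thick}), produce a closed set $F\subset X$ admitting an open perfect map $f:F\to\w^\w$ with the properties of Theorem~\ref{t:Solecki}; more precisely, what I really need is a Polish parameter space $P$ and a family $(C_p)_{p\in P}$ of compact subsets of $X$ which is "rich" in the sense that every compact $K\subset X$ is covered by a translate of some $C_p$, while the $C_p$'s can be translated freely inside $X$. The space $C(Z,X)$ enters because a Haar-$\I$ set is witnessed by a map $h\in C(Z,X)$; the point of \textbf{(H2)} is to get a $\Lambda^c$-universal set $U\subset 2^\w\times P'$ where the parameter space $P'$ is taken to be (essentially) $C(Z,X)$, so that the family of \emph{complements} of potential $\Lambda$-hulls is parametrized by points of $2^\w$.

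Next I would carry out the diagonalization. For each $x\in 2^\w$ the section $U_x$ is an arbitrary $\Lambda^c$-set in $C(Z,X)$, i.e. the complement of an arbitrary candidate $\Lambda$-set; I want to build a single Haar-$1$ set $E$ that, for every $x$, ``escapes'' from the $\Lambda$-set $C(Z,X)\setminus U_x$, meaning: for every witnessing map $h$ in that $\Lambda$-set there is a translate $h^{-1}(E+t)$ which is not in $\I$ — in fact is all of $Z$, after translating $Z$ inside the rich compact family. The set $E$ is assembled as a disjoint union $E=\bigcup_{x\in 2^\w}(Z_x+t_x)$ of translated copies of $Z$ (or of the compacta $C_p$) indexed so that the $x$-th copy kills the $x$-th candidate hull; disjointness and closedness are arranged exactly as in the universal-counterexample construction of Section~\ref{s6} (this is why $X$ being non-locally compact is used a second time — to have enough ``room'' for uncountably many disjoint translates). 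That $E$ is Haar-$1$ is witnessed by any injective $h_0:Z\to X$ whose image is ``transverse'' to all these copies, so that $h_0^{-1}(E+t)$ has at most one point for every $t$; this uses \textbf{(H1)} to guarantee $Z\setminus A\notin\I$ when $A$ is small, which is what forces a genuine failure. The Borel/$\Gamma$-complexity $E\in\Gamma(X)$ comes from \textbf{(H5)} (pushing forward $\Gamma$-sets along Borel injections) applied to the parametrized disjoint union, and \textbf{(H4)} is the uniformization needed to select the translating parameters $t_x$ in a $\Gamma$-measurable way so that the whole construction stays inside $\Gamma$.

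Finally, I would verify that $E$ has no $\Lambda$-hull. Suppose $H\in\Lambda(X)$ with $E\subset H$. Consider the set of maps $h\in C(Z,X)$ witnessing $H\in\HI$; using \textbf{(H3)} ($\I$ is $\Gamma^c$-on-$\Gamma$, hence also behaves well on $\Lambda$) one shows this witness set, intersected appropriately, is a $\Lambda$-set in $C(Z,X)$, so it equals $C(Z,X)\setminus U_{x_0}$ for some $x_0\in 2^\w$ by \textbf{(H2)}. But by construction $E$ contains the translated copy of $Z$ that was designed to defeat index $x_0$: there is a compact $C_p\supset Z_{x_0}+t_{x_0}$ and a translate placing a whole copy of $Z$ inside $E\subset H$, so for the corresponding witness $h$ one gets $h^{-1}(H+t)=Z\in\I$ — contradicting $Z\notin\I$, which holds because $Z=\bigcup\I$ and $\I$ is proper (this is exactly \textbf{(H1)} for $A=\emptyset$). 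Hence no such $H$ exists.

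The main obstacle I anticipate is the bookkeeping in step two: arranging the uncountable disjoint family of translated copies of $Z$ so that simultaneously (a) $E$ is closed, (b) $E$ is Haar-$1$ (a single injective map sees each translate in at most one point), and (c) the assignment $x\mapsto t_x$ is $\Gamma$-measurable via the uniformization \textbf{(H4)} — reconciling these three requirements is where the non-local-compactness hypothesis and the precise form of \textbf{(H4)}, \textbf{(H5)} must be used carefully, and it is essentially the technical heart inherited from \cite{EVid} and \cite{DV}.
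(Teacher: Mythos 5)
Your plan has a genuine internal contradiction at its core. You propose to build $E$ as a union of translated copies $Z_x+t_x$ of (an embedded image of) $Z$, so that a candidate hull $H\supset E$ is defeated because $H$ contains a whole translate of $Z$ and hence $h^{-1}(H+t)=Z\notin\I$ for the relevant witness $h$. But then $E$ itself already contains a translate of $h_0(Z)$, so $h_0^{-1}(E-t_{x_0})=Z\notin\I$ and $E$ is not Haar-$1$ — indeed not even Haar-$\I$. No choice of a ``transverse'' $h_0$ can fix this, since the defeating translate must be a copy of the very same $Z$ through which Haar-$\I$-ness is tested. The paper's construction avoids this: the set $E=\Psi(\phi)$ consists of exactly \emph{one point} for each pair $(h,x)\in C(Z,X)\times 2^\w$ (the point is selected by the uniformization \textbf{(H4)} applied to a set $U'$ built from the universal set of \textbf{(H2)}), and these points are spread apart by the Elekes--Vidny\'anszky translation map $t$ of Lemma~\ref{rozsuwaniezwartych} so that every translate of $h_0(Z)$ meets $E$ in at most one point. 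A hull $H\supset E$ is then defeated by a diagonal argument, not by exhibiting a copy of $Z$ inside $E$: if $h$ witnessed $H\in\HI$, the pullback $S=\Psi_h^{-1}(H)\in\Lambda(2^\w\times X)$ would contain $\phi_h$, and Lemma~\ref{budulec}(3) — whose proof uses the diagonal $(x,x,g)\in U$ of the $\Lambda^c(2^\w\times X)$-universal set together with \textbf{(H1)} — yields an $x$ with $h^{-1}(S_x)\notin\I$; since $S_x+t(h(Z),x,c(h))\subset H$, some translate of $H$ has non-$\I$ preimage under $h$. The large set $S_x$ lives in $H$, not in $E$.

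A second, related gap: you assert that the set of witnesses $\{h\in C(Z,X):h$ witnesses $H\in\HI\}$ is a $\Lambda$-set, invoking \textbf{(H3)}. This does not follow: the witness condition quantifies universally over all $x\in X$, which in general pushes the complexity beyond $\Lambda$ (e.g.\ beyond $\Pi^0_\alpha$), and \textbf{(H3)} only controls sections of $\Gamma$-sets, not this co-projection. The paper never needs the complexity of witness sets; universality is applied to $\Lambda^c$ subsets of $2^\w\times X$ (complements of the pullbacks $\Psi_h^{-1}(H)$), which is why \textbf{(H2)} is stated for an arbitrary Polish parameter space $P$ and instantiated at $P=2^\w\times X$. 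Finally, the Solecki-type ``rich family of compacta covering every compact set'' is not what is needed here; the relevant input from non-local compactness is the disjoint-translates map of Lemma~\ref{rozsuwaniezwartych}, which serves both to make $\Psi$ injective on $F$ (so that $E\in\Gamma(X)$ via \textbf{(H5)}) and to make $E$ Haar-$1$.
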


\begin{proof} The proof of this theorem is based on two lemmas.
The first one is a counterpart of \cite[Theorem 3.1]{EVid} and \cite[Proposition 7]{DV}.

\begin{lemma}\label{budulec}
There exists a set $\phi\in \Gamma(C(Z,X)\times 2^\w\times X)$ such that
\begin{itemize}
\item[\textup{1)}] for any triples $(h,x,y),(h,x,y')\in\phi$ we have $y=y'$;
\item[\textup{2)}] for any triple $(h,x,y) \in \phi$ we have $y \in h(Z)$;
\item[\textup{3)}] for any $h\in C(Z,X)$ and any set $S \in \Lambda(2^\w \times X)$ containing the section $\phi_h$, there exists $x\in 2^\w$ such that $h^{-1}(S_x) \notin \I$.
\end{itemize}
\end{lemma}

\begin{proof}Consider the closed set 
$$F:=\{(h,x,g)\in C(Z,X)\times 2^\w\times X:g\in h(Z)\}$$ in the Polish space $C(Z,X)\times 2^\w\times X$ and observe that $F\in\Gamma(F)$ by the definition of a pointclass. Since the identity inclusion $F\to C(Z,X)\times 2^\w\times X$ is a continuous injection, the set $F$ belongs to the class $\Gamma(C(Z,X)\times 2^\w\times X)$ according to the condition $\mathbf{(H5)}$.

By the condition $\mathbf{(H2)}$, there exists a $\Lambda^c(2^\w\times X)$-universal set $U \in \Gamma(2^\w \times 2^\w \times X)$. Consider the set
$$
U':=\{(h,x,g) \in F: (x,x,g) \in U \mbox{ and } h^{-1}(U_{(x,x)}) \notin \I \}.
$$

\begin{claim} $U'\in \Gamma(C(Z,X)\times 2^\w\times X)$.
\end{claim}

\begin{proof} Observe that $U'=F\cap U'_1\cap U'_2$ where
$$
\begin{aligned}
U'_1&=\{(h,x,g) \in C(K ,X) \times 2^\w \times X : (x,x,g) \in U\} \mbox{ and }\\
U'_2&=\{(h,x,g) \in C(K ,X) \times 2^\w \times X : h^{-1}(U_{(x,x)}) \notin \I \}.
\end{aligned}
$$
The continuity of the map $\xi_1:C(Z,X)\times 2^\w\times X\to 2^\w\times 2^\w\times X$, $\xi_1:(h,x,g)\mapsto (x,x,g)$, and definition of a pointclass as a subfunctor of the contravariant functor $\Pow$ ensure that the set $U'_1=\xi_1^{-1}(U)$ belongs to the class $\Gamma(C(Z,X)\times 2^\w\times X)$.
 
The continuity of the map $Z\times C(Z,X)\times 2^\w\to 2^\w\times 2^\w\times X$, $(z,h,x) \mapsto (x,x,h(z))$, and the definition of a pointclass ensure that the set $\ddot U:=\{ (z,h,x)\in Z\times C(Z,X)\times 2^\w: (x,x,h(z)) \in U\}$ belongs to the class $\Gamma(Z\times C(K, X)\times 2^\w)$. Since the semi-ideal $\I$ is $\Gamma^c$-on-$\Gamma$, the set $$L:=\{(h,x)\in C(Z,X)\times 2^\w:\ddot U_{(h,x)}\notin \I\}$$ belongs to the class $\Gamma(C(Z,X)\times 2^\w)$. Observe that for every $(h,x)\in C(Z,X)\times 2^\w$ we have 
\begin{multline*}
h^{-1}(U_{(x,x)})=\{z\in Z:h(z)\in U_{(x,x)}\}=\{z\in Z:(x,x,h(z))\in U\}\\
=\{z\in Z:(z,h,x)\in\ddot U\}=\ddot U_{(h,x)}
\end{multline*}
and hence
$$\{(h,x)\in C(Z,X)\times 2^\w:h^{-1}(U_{(x,x)})\notin\I\}=L\in \Gamma(C(Z,X)\times 2^\w).$$
The definition of a pointclass  implies that $$
U'_2=\{(h,x,g)\in C(Z,X)\times 2^\w\times X:h^{-1}(U_{(x,x)})\notin\I\}=L\times X\in\Gamma(C(Z,X)\times 2^\w\times X)$$and $U'=F\cap U'_1\cap U'_2\in\Gamma(C(Z,X)\times 2^\w\times X)$.
\end{proof}
We claim that for every $(h,x)\in C(Z,X)\times 2^\w$ the section $U'_{(h,x)}:=\{g\in X:(h,x,g)\in U'\}$ is either empty or $U'_{(h,x)}\notin\I$. Indeed, if $U'_{(h,x)}$ contains some element $g\in X$, then $(h,x,g)\in U'$,  $(x,x,g)\in U$ and $h^{-1}(U_{(x,x)})\notin \I$. Observe that for every $y\in h^{-1}(U_{(x,x)})$ we get $(h,x,y)\in U'$ and hence $y\in U'_{(h,x)}$. Therefore, $h^{-1}(U_{(x,x)})\subset U'_{(h,x)}$ and  then $h^{-1}(U_{(x,x)})\notin\I$ implies $U'_{(h,x)}\notin\I$.

By the condition $\textbf{(H4)}$, the set $U'$ has a uniformization $\phi\in \Gamma(C(Z,X)\times 2^\w\times X)$. Since $\phi$ is a partial function, the condition (1) of Lemma~\ref{budulec} is satisfied. The condition (2) follows from the inclusion $\phi\subset U'\subset F$.

It remains to check the condition (3). Assume towards a contradiction that there exists $h \in C(Z,X)$ and $S \in \Lambda(2^\w \times X)$ such that $\phi_h \subset S$ and $h^{-1}(S_x) \in \I$ for every $x \in 2^\w$. Let $B=(2^\w \times X)\setminus S$ and observe that for every $x\in 2^\w$ we have $B_x\cup S_x=X$. Since $h^{-1}(S_x)\in\I$, the condition $\mathbf{(H1)}$ implies that $h^{-1}(B_x)\notin\I$.

By the $\Lambda^c(2^\w\times X)$-universality of the set $U\in\Gamma(2^\w\times(2^\w\times X))$, there exists $x\in 2^\w$ such that $U_x=B$. Then $U_{(x,x)}=B_x$ and hence $h^{-1}(U_{(x,x)})=h^{-1}(B_x)\notin \I$. Then $(h,x,h(x))\in U'$ and for a unique triple $(h,x,g)\in\phi\subset U'$  we have $g\in\phi_{(h,x)}\subset S_x$.
On the other hand, $g\in \phi_{(h,x)}\subset U'_{(h,x)}\subset U_{(x,x)}=B_x$, which is not possible as $B_x\cap S_x=\emptyset$. 
\end{proof}

The second ingredient of the proof of Theorem~\ref{brakpowlok} is the following lemma whose proof can be found in \cite[Proposition 3.5]{EVid}.

\begin{lemma}\label{rozsuwaniezwartych} For any non-locally compact Polish group $X$ and any nonempty compact set $C\subset X$, there exists a Borel map $t:\mc{K}(X) \times 2^\omega \times 2^\omega \to X$ such that
\begin{itemize}
\item[\textup{1)}] for any distinct triples $(K,x,y), (K',x',y')\in \mc{K}(X) \times 2^\omega \times 2^\omega$, we have 
$$
\left( K-C+t(K,x,y) \right) \cap \left( K'-C+t(K',x',y') \right)= \emptyset;
$$
\item[\textup{2)}] for every $K \in \mc{K}(X)$ and $y \in 2^\omega$ the map $t(K, \cdot , y):2^\w\to X$, $t(K,\cdot,y):x\mapsto t(K,x,y)$, is continuous.
\end{itemize}
\end{lemma}

No we are ready to present {\em the proof of Theorem \ref{brakpowlok}}. By the assumption of the theorem, there exists a continuous injection $h_0:Z\to X$ such that $\theta\in h_0(Z)$. Fix a Borel injection $c:C(K,X) \to 2^\w$ from the Polish space $C(K,X)$ to the Cantor cube. Let $t$ be a function obtained by Lemma \ref{rozsuwaniezwartych} for $C=h_0(Z)$. Define a function $\Psi:C(K,X)\times 2^\w \times X \to X$ by the formula $$\Psi(h,x,g)= g+t(h(K),x,c(h))$$and put  $E:= \Psi(\phi)$ where $\phi\in\Gamma(C(K,X)\times 2^\w\times X)$ is the set given by Lemma~\ref{budulec}.

\begin{claim}
$E \in \Gamma$.
\end{claim}

\begin{proof} It is easy to see that the function $\Psi$ is Borel. We claim that it is injective on the closed subset $$F=\{(h,x,g)\in C(Z,X)\times 2^\w \times X: g \in h(Z)\}$$ of $C(Z,X)\times 2^\w\times X$. We need to check that $\Psi(h,x,g)\ne\Psi(h',x',g')$ for any distinct triples  $(h,x,g),(h',x',g')\in F$. The case $(h,x)=(h',x')$ is obvious. If $(h,x)\neq (h',x')$, then the injectivity of the function $c$ ensures that $(h(Z),x,c(h))\ne (h'(Z),x',c(h'))$ and then Lemma~\ref{rozsuwaniezwartych}(1) ensures that $\Psi(h,x,g)\ne \Psi(h',x',g')$ as $\Psi(h,x,g)=g +t(h(Z),x,c(h)) \subset h(Z)- C +t(h(Z),x,c(h))$ (recall that $g \in h(Z)$ and $\theta \in C$). Since $\phi \subset U'\subset F$, the definition of a pointclass implies $\phi\in \Gamma(F)$. Applying the condition $\textbf{(H5)}$, we obtain $E \in \Gamma(X)$. 
\end{proof}

\begin{claim}
$E$ is Haar-$1$.
\end{claim}

\begin{proof} It suffices to prove that $|h_0^{-1}(E+g)| \leq 1$ for any $g  \in X$, or equivalently (since $h_0$ is injective)  $|(C+g) \cap E| \leq 1$. To derive a contradiction, assume that for some $g \in X$ the intersection $(C+g)\cap E$ contains two distinct elements. Then we can find two distinct triples $(h,x,y),(h',x',y')\in\phi$ such that $\Psi(h,x,y)=y+t(h(Z),x,c(h))$ and $\Psi(h',x',y')=y'+t(h'(Z),x',c(h'))$ are two points of the set $C+g$. It follows that
\begin{multline*}
g\in \big(y-C+t(h(Z),x,c(h))\big)\cap\big(y'-C+t(h'(Z),x',c(h')\big)\\ 
\subseteq\big(h(Z)-C+t(h(Z),x,c(h))\big)\cap \big(h'(Z)-C+t(h'(Z),x',c(h'))\big).
\end{multline*}
Lemma~\ref{rozsuwaniezwartych}(1) and the injectivity of the function $c$ ensure that $x=x'$ and $h=h'$. Now Lemma~\ref{budulec}(1) implies that $y=y'$ and hence $(h,x,y)=(h',x',y')$,  which contradicts the choice of these two triples.  
\end{proof}

\begin{claim}
There is no Haar-$\I$ set $H \in \Lambda(X)$ containing $E$.
\end{claim}

\begin{proof}
Suppose $H \in \Lambda(X)$ is  a Haar-$\I$ set containing the set $E$. Choose a function $h \in C(Z,X)$ witnessing that $H$ is Haar-$\I$ in $X$.  Lemma~\ref{rozsuwaniezwartych}(2) and the definition of the function $\Psi$ imply that the map $\Psi_h:2^\w\times X\to X$, $\Psi_h:(x,g)\mapsto g+t(h(Z),x,c(h))$, is continuous, so by the definition of a pointsclass, the preimage $S:=\Psi_h^{-1}(H)$ belongs to the class $\Lambda(2^\w \times X)$.

Since $\phi\subseteq \Psi^{-1}(E) \subseteq \Psi^{-1}(H)$, we see that $\phi_h \subset S$, and therefore, by Lemma~\ref{budulec}$(3)$, there exists $x \in 2^\w$ such that  $h^{-1}(S_x) \notin \I$. By the definition of $S$ we have that $\Psi(\{(h,x)\}\times S_x) \subset \Psi_h(S) \subset H$. But $\Psi(\{(h,x)\}\times S_x) = S_x + t(h(Z),x,c(h))$ is just a translation of $S_x$, so a translate of $H$ contains $S_x$ and $h^{-1}(S_x) \notin \I$, contradicting that $h$ witnesses that $H$ is Haar-$\I$. 
\end{proof}
\end{proof}

We shall deduce from Theorem~\ref{brakpowlok} the following theorem.

\begin{theorem}\label{t:NoBorelHulls} Let $\I$ be a $\sigma$-ideal on a  compact metrizable space $Z=\bigcup\I\notin\I$. If $\I$ is Borel-on-Borel, then for every countable ordinal $\alpha$, the triple $(\I,\Delta^1_1,\Pi^0_\alpha)$ has the property $\mathbf{(H)}$. Consequently, for every non-locally compact Polish group $X$ containing a topological copy of $Z$, there exists a Borel Haar-1 subset $E\subset X$ which cannot be enlarged to a Haar $\I$-set $H\in\Pi^0_\alpha(X)$.
\end{theorem}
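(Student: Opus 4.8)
The plan is to verify, for the fixed countable ordinal $\alpha$, the five conditions $\mathbf{(H1)}$--$\mathbf{(H5)}$ defining property $\mathbf{(H)}$ for the triple $(\I,\Delta^1_1,\Pi^0_\alpha)$, after which Theorem~\ref{brakpowlok} (with $Z$ as the compact space and $\Gamma=\Delta^1_1$, $\Lambda=\Pi^0_\alpha$) yields the conclusion at once. Four of these are routine. For $\mathbf{(H1)}$: since $\I$ is a semi-ideal with $\bigcup\I=Z$ it contains every singleton, and if $A,Z\setminus A\in\I$ then $Z=A\cup(Z\setminus A)\in\I$, contradicting $Z\notin\I$; so $\I$ is proper and $\mathbf{(H1)}$ holds. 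For $\mathbf{(H2)}$: by the classical existence of $2^\w$-universal $\Sigma^0_\alpha$ sets \cite[22.3]{K}, for every Polish $P$ there is a $\Sigma^0_\alpha$-subset $U$ of $2^\w\times P$ which is $\Sigma^0_\alpha(P)$-universal; as $\Lambda^c=\Sigma^0_\alpha$ and a $\Sigma^0_\alpha$ set is Borel, $U\in\Delta^1_1(2^\w\times P)$. For $\mathbf{(H3)}$: since $(\Delta^1_1)^c=\Delta^1_1$, the statement ``$\I$ is $(\Delta^1_1)^c$-on-$\Delta^1_1$'' is exactly the Borel-on-Borel hypothesis. For $\mathbf{(H5)}$: this is the Lusin--Suslin theorem \cite[15.1]{K}.

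The remaining condition $\mathbf{(H4)}$ is the heart of the proof. Given a Borel set $A\subseteq C(Z,X)\times Y\times X$ whose every section $A_{(h,y)}$ is empty or satisfies $h^{-1}(A_{(h,y)})\notin\I$, I would transfer the selection problem onto $Z$. Put $P:=C(Z,X)\times Y$ and
$$\widetilde A:=\bigl\{\bigl((h,y),z\bigr)\in P\times Z:\bigl(h,y,h(z)\bigr)\in A\bigr\},$$
which is Borel because the evaluation map $P\times Z\to X$, $\bigl((h,y),z\bigr)\mapsto h(z)$, is continuous. Its sections are $\widetilde A_{(h,y)}=h^{-1}(A_{(h,y)})$, hence empty or not in $\I$, so $P_0:=\{p\in P:\widetilde A_p\neq\emptyset\}=\{p\in P:\widetilde A_p\notin\I\}$ is Borel by the Borel-on-Borel property. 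It now suffices to produce a Borel partial map $s\colon P_0\to Z$ with $s(p)\in\widetilde A_p$: then $\phi:=\bigl\{\bigl(h,y,h(s(h,y))\bigr):(h,y)\in P_0\bigr\}$ is a Borel uniformization of $A$ with singleton sections. Identify $Z$ with a closed subset of $2^\w$ and build $s(p)$ by recursion on the length of binary strings. The key point is that for every $t\in 2^{<\w}$ the condition ``$\widetilde A_p\cap[t]\notin\I$'' is Borel in $p$ — apply the Borel-on-Borel property to the Borel set $\widetilde A\cap(P\times[t])$ — and that, $\I$ being an ideal, $\widetilde A_p\cap[t]\notin\I$ forces $\widetilde A_p\cap[t^\frown i]\notin\I$ for some $i\in\{0,1\}$; choosing at each stage the least such $i$ produces a Borel map $s\colon P_0\to 2^\w$.

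The obstacle I expect to be hardest is that this recursion, as it stands, only secures $s(p)\in\overline{\widetilde A_p}$, the \emph{closure} of the section in $Z$, whereas $s(p)\in\widetilde A_p$ is needed. The remedy is to descend first to a $G_\delta$ (in $P\times Z$) subset $\widetilde A'\subseteq\widetilde A$ still having $\I$-positive sections over $P_0$, and then run the recursion with the extra requirement that $[s(p)|n]\subseteq(U_n)_p$ whenever $\widetilde A'=\bigcap_{n}U_n$ with $U_n$ open — a requirement that is an \emph{open} condition on $p$, by the tube lemma applied to the compact set $[t]$ — so that the limit point lies in $\bigcap_n(U_n)_p=\widetilde A'_p\subseteq\widetilde A_p$. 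For the principal ideals this last reduction is classical (a non-meager Borel set contains a non-meager dense-$G_\delta$-in-an-open-set, and a positive-measure Borel set contains a positive-measure compact subset), while in general it is precisely here that one must invoke a uniformization theorem for $\I$-positive Borel sets, where the $\sigma$-ideal and Borel-on-Borel hypotheses are used in full. Once $\mathbf{(H1)}$--$\mathbf{(H5)}$ are in place, Theorem~\ref{brakpowlok} produces a Borel Haar-$1$ set $E\subseteq X$ which cannot be enlarged to a Haar-$\I$ set in $\Pi^0_\alpha(X)$, as required.
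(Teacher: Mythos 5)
Your overall architecture is exactly the paper's: verify $\mathbf{(H1)}$--$\mathbf{(H5)}$ for $(\I,\Delta^1_1,\Pi^0_\alpha)$ and feed the triple into Theorem~\ref{brakpowlok}. Your treatments of $\mathbf{(H1)}$, $\mathbf{(H2)}$, $\mathbf{(H3)}$, $\mathbf{(H5)}$ are correct, and your reduction of $\mathbf{(H4)}$ to a section-uniformization problem over $Z$ via the set $\widetilde A=\{((h,y),z):(h,y,h(z))\in A\}$ is precisely the paper's move (the paper's $A'=\xi^{-1}(A)$), including the observation that $P_0=\{p:\widetilde A_p\ne\emptyset\}=\{p:\widetilde A_p\notin\I\}$ is Borel and that the final push-forward $\phi=\xi(\psi)$ is Borel by Lusin--Suslin because $\xi$ restricted to a partial-function graph is injective.

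The gap is in how you finish $\mathbf{(H4)}$. Your recursion on binary strings, maintaining $\widetilde A_p\cap[t]\notin\I$, only places the limit point in $\overline{\widetilde A_p}$, as you note; and your proposed remedy --- pass to a $G_\delta$ subset $\widetilde A'\subseteq\widetilde A$ whose sections over $P_0$ are still $\I$-positive, \emph{uniformly and Borel in the parameter} --- is not something you construct: producing such a $\widetilde A'$ for an arbitrary Borel-on-Borel $\sigma$-ideal is essentially the content of the theorem you are trying to prove, and your closing sentence ("one must invoke a uniformization theorem for $\I$-positive Borel sets") concedes this without naming or establishing the result. The paper closes exactly this step by citing the large-section uniformization theorem \cite[Theorem 18.6]{K}: for a $\sigma$-ideal assignment that is Borel-on-Borel, every Borel set all of whose nonempty sections are $\I$-positive admits a Borel uniformization. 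Applied to $\widetilde A\subseteq (C(Z,X)\times Y)\times Z$ with the constant assignment $p\mapsto\I$ (Borel-on-Borel by hypothesis), it hands you the Borel partial selector $s$ directly, and the rest of your argument goes through verbatim. So the repair is a one-line citation replacing the second half of your $\mathbf{(H4)}$ paragraph; as written, however, the argument is not complete.
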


\begin{proof} First we check that the triple $(\I,\Delta^1_1,\Pi^0_\alpha)$ has the property $\mathbf{(H)}$. The condition $\mathbf{(H1)}$ holds since $Z\notin\I$. The condition $\mathbf{(H2)}$ follows from \cite[Theorem 22.3]{K}. The condition $\mathbf{(H3)}$ holds since the ideal $\I$ is Borel-on-Borel by the assumption. The condition $\mathbf{(H5)}$ follows from \cite[15.2]{K}.

It remains to check the condition $\mathbf{(H4)}$. Fix a Borel subset $A\subset C(Z,X)\times 2^\w\times X$ such that for every $(h,x)\in C(Z,X)\times 2^\w$ either  $A_{(h,x)}=\emptyset$ or $h^{-1}(A_{(h,x)})\notin\I$. Consider the continuous map $$\xi:Z\times C(Z,X)\times 2^\w\to C(Z,X)\times 2^\w\times X,\;\;\xi:(z,h,x)\mapsto (h,x,h(x)),$$ and observe that $A'=\xi^{-1}(A)$ is a Borel subset of $Z\times C(Z,X)\times 2^\w$ such that for every $(h,x)\in C(Z,X)\times 2^\w$ the set $A'_{(h,x)}=h^{-1}(A_{(h,x)})$ is either empty of $A'_{(h,x)}\notin\I$. By \cite[Theorem 18.6]{K}, the Borel set $A'$ has a Borel uniformization $\phi\subset A'$. We claim that the function $\xi{\restriction}\phi$ is injective.  Indeed, if $(z,h,x),(z',h',x')$ are distinct points of $\phi$, then $(h,x)\ne (h',x')$ and hence $\xi(z,h,x)=(h,x,h(x))\ne (h',x',h'(x'))=\xi(z',h',z')$. Now Theorem 15.2 in \cite{K} implies that $B:=\xi(\phi)$ is a Borel subset of $C(Z,X)\times 2^\w\times X$. It is clear that $B=\xi(\phi)\subset \xi(A')\subset A$. For any $(h,x)\in C(Z,X)\times 2^\w$ with $A_{(h,x)}\ne\emptyset$, we get $A'_{(h,x)}=h^{-1}(A_{(h,x)})\notin\I$ and hence $A'_{(h,x)}\ne\emptyset$ and $\phi_{(h,x)}$ coincides with a singleton $\{(z,h,x)\}$. Then $B_{(h,x)}=\{(h,x,h(z))\}$ is a singleton, too.
Therefore, the triple $(\I,\Delta^1_1,\Pi^0_\alpha)$ has the property $\mathbf{(H)}$ and by Theorem~\ref{brakpowlok}, every non-locally compact Polish Abelian group $X$ contains  a Borel Haar-1 set $E\subset X$ that cannot be enlarged to  Haar-$\I$ set $H\in\Pi^0_\alpha(X)$.
\end{proof}  

By Theorem~\ref{t:MN-on}, the ideals $\M$ and $\mathcal N$ on $2^\w$ are $\Delta^1_1$-on-$\Delta^1_1$. So, we can apply Theorem~\ref{t:NoBorelHulls} and obtain the following corollary, first proved in \cite{DV} and \cite{EVid}.

\begin{corollary}\label{c:hull-MN} Let $\alpha$ be a countable ordinal and $\I$ be one of the $\sigma$-ideals $\M$ or $\mathcal N$ on $2^\w$. Every non-locally compact Polish group $X$  contains a Borel Haar-1 set that cannot be enlarged to a Haar-$\I$ set of Borel class $\Pi^0_\alpha(X)$.
\end{corollary}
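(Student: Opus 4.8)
The plan is to simply invoke Theorem~\ref{t:NoBorelHulls} with $Z=2^\w$ and $\I$ equal to $\M$ or $\mathcal N$, once we have verified that the hypotheses of that theorem are met. There are three things to check: that $\I$ is a $\sigma$-ideal on $Z=2^\w$ with $2^\w=\bigcup\I\notin\I$; that $\I$ is Borel-on-Borel; and that the ambient Polish group $X$ (which is arbitrary non-locally compact) contains a topological copy of $2^\w$. The first point is classical — both $\M$ and $\mathcal N$ are proper $\sigma$-ideals whose union is all of $2^\w$ (indeed every singleton is meager and null). The second point is exactly the content of Theorem~\ref{t:MN-on}, which states in particular that $\M$ and $\mathcal N$ are $\Delta^1_1$-on-$\Delta^1_1$, i.e.\ Borel-on-Borel. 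The third point follows from the fact that every non-locally compact Polish group is uncountable (a countable Polish group is discrete, hence locally compact), and every uncountable Polish space — in particular every uncountable Polish group — contains a topological copy of the Cantor set by the Cantor--Bendixson theorem / the perfect set property for Polish spaces.

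With these verifications in hand, Theorem~\ref{t:NoBorelHulls} applied to the given countable ordinal $\alpha$, the compact space $Z=2^\w$, and the $\sigma$-ideal $\I\in\{\M,\mathcal N\}$ yields directly: for every non-locally compact Polish group $X$ there exists a Borel Haar-$1$ subset $E\subset X$ which cannot be enlarged to a Haar-$\I$ set $H\in\Pi^0_\alpha(X)$. Recalling the identifications from Theorem~\ref{t:Haar}(1,2) — a set is Haar-meager iff it is Haar-$\M$, and Haar-null iff it is Haar-$\mathcal N$ — this says precisely that $E$ cannot be enlarged to a Haar-meager (resp.\ Haar-null) set of Borel class $\Pi^0_\alpha$, which is the assertion of the corollary.

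I do not anticipate a genuine obstacle here: the corollary is a specialization of the already-proved Theorem~\ref{t:NoBorelHulls}, and the only mildly non-formal points are the two standard descriptive-set-theoretic facts (non-locally compact $\Rightarrow$ uncountable $\Rightarrow$ contains a Cantor set, and the Borel-on-Borel property of $\M,\mathcal N$), both of which are either cited in the excerpt (Theorem~\ref{t:MN-on}) or are textbook material \cite[\S6,\S13]{K}. The one place to be slightly careful is to state explicitly that Haar-$1$ implies Haar-$\I$ for any proper semi-ideal $\I$ containing all singletons — so that the produced $E$ is in particular Borel Haar-$\I$ and the non-enlargeability statement is not vacuous — but this is immediate since a single point of $2^\w$ lies in $\I$.
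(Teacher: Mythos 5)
Your proposal is correct and follows the same route as the paper: the corollary is obtained by applying Theorem~\ref{t:NoBorelHulls} to $Z=2^\w$ and $\I\in\{\M,\mathcal N\}$, with the Borel-on-Borel hypothesis supplied by Theorem~\ref{t:MN-on}. The extra details you supply (that a non-locally compact Polish group is uncountable and hence contains a copy of $2^\w$, and that Haar-$1$ implies Haar-$\I$) are exactly the routine verifications the paper leaves implicit.
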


Similar results are true for coanalytic sets. 

\begin{theorem} Let $\I$ be one of the $\sigma$-ideals $\M$ or $\mathcal N$ on $2^\w$. Then the triple $(\I,\Pi^1_1,\Sigma^1_1)$ has the property $\mathbf{(H)}$. Consequently, every non-locally compact Polish group $X$ contains a coanalytic Haar-$1$ set that cannot be enlarged to an analytic Haar-$\I$ set in $X$.
\end{theorem}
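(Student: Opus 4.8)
The plan is to verify that the triple $(\I,\Pi^1_1,\Sigma^1_1)$ satisfies the five conditions $\mathbf{(H1)}$--$\mathbf{(H5)}$ and then invoke Theorem~\ref{brakpowlok}. Here $Z=2^\w$, which embeds in every uncountable (in particular every non-locally compact) Polish group, and $\Gamma=\Pi^1_1$, $\Lambda=\Sigma^1_1$, so $\Gamma^c=\Sigma^1_1$ and $\Lambda^c=\Pi^1_1$. Condition $\mathbf{(H1)}$ is immediate: both $\M$ and $\N$ are proper $\sigma$-ideals on $2^\w$, so the complement of a meager (resp. null) set is never meager (resp. null). Condition $\mathbf{(H2)}$ asks for a $\Pi^1_1$-on-$P$ set in $2^\w\times P$ that is $\Sigma^1_1(P)$-universal, i.e.\ a $\Pi^1_1$ subset of $2^\w\times P$ parametrizing all coanalytic subsets of $P$; this is the classical existence of universal $\Pi^1_1$ sets, \cite[26.1, 22.3]{K}. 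Condition $\mathbf{(H5)}$ --- that a Borel injection between Polish spaces sends coanalytic sets to coanalytic sets --- follows from the Lusin--Suslin theorem \cite[15.1]{K} (a Borel injective image of a Borel set is Borel), applied to the complement: if $f:X\to Y$ is a Borel injection and $A\in\Pi^1_1(X)$, then $f(A)=f(X)\setminus f(X\setminus A)$, and since $f(X)$ and $f(X\setminus A)$ are Borel in $Y$ while $f(A)$ is the continuous injective image of $A$ restricted appropriately, a routine argument gives $f(A)\in\Pi^1_1(Y)$; alternatively use \cite[29.19]{K}.

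Condition $\mathbf{(H3)}$ --- that $\I$ is $\Gamma^c$-on-$\Gamma$, i.e.\ $\Sigma^1_1$-on-$\Pi^1_1$ --- is exactly the $\Sigma^1_1$-on-$\Pi^1_1$ property of $\M$ and $\N$ recorded in Theorem~\ref{t:MN-on} (which cites \cite[36.24]{K}). This is the place where the choice $\Gamma=\Pi^1_1$, $\Lambda=\Sigma^1_1$ is forced: we need $\I$ to be $\Gamma^c$-on-$\Gamma$, and among $\{\Sigma^1_1,\Pi^1_1\}$ the only working pairing with $\Gamma^c\ne\Gamma$ is $\Gamma=\Pi^1_1$ (so $\Gamma^c=\Sigma^1_1$), since $\M,\N$ are $\Sigma^1_1$-on-$\Pi^1_1$ but not $\Pi^1_1$-on-$\Pi^1_1$ in general. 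The remaining and genuinely substantial condition is $\mathbf{(H4)}$: given a coanalytic set $A\subset C(Z,X)\times Y\times X$ all of whose nonempty sections $A_{(h,y)}$ pull back under $h$ to non-$\I$ sets, we must produce a coanalytic uniformization $\phi$. The plan is to reduce, exactly as in the proof of Theorem~\ref{t:NoBorelHulls}, along the continuous map $\xi:(z,h,y)\mapsto(h,y,h(z))$ to a coanalytic set $A'=\xi^{-1}(A)\subset Z\times C(Z,X)\times Y$ whose nonempty sections $A'_{(h,y)}=h^{-1}(A_{(h,y)})$ are not in $\I$; then invoke the Kond\^o--Novikov coanalytic uniformization theorem \cite[36.14]{K} to get a coanalytic uniformization $\phi'\subset A'$; finally push forward by $\xi$, noting $\xi{\restriction}\phi'$ is injective (distinct points of $\phi'$ differ in the $(h,y)$-coordinate, hence have distinct $\xi$-images), and apply $\mathbf{(H5)}$ to conclude $\phi:=\xi(\phi')\in\Pi^1_1(C(Z,X)\times Y\times X)$ is the desired coanalytic uniformization of $A$.

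I expect the main obstacle to be the bookkeeping in $\mathbf{(H4)}$ --- specifically, confirming that $\xi^{-1}$ of a coanalytic set is coanalytic (immediate from $\Pi^1_1$ being a pointclass, hence closed under continuous preimages by the functoriality built into the definition), that the sections transform as claimed, and that the Kond\^o uniformization genuinely applies in the presence of the parameter space $C(Z,X)\times Y$ (it does, since $C(Z,X)\times Y$ is Polish and $\mathbf{(H4)}$ only requires uniformizing over the ``fibers'' indexed by $C(Z,X)\times Y$, which is precisely what the coanalytic uniformization theorem delivers in product form). Everything else is a direct citation. Once $\mathbf{(H1)}$--$\mathbf{(H5)}$ are in hand, Theorem~\ref{brakpowlok} applied with $Z=2^\w$ (which embeds in the non-locally compact, hence uncountable, Polish group $X$) yields a Haar-$1$ --- thus Haar-$\I$ --- set $E\in\Pi^1_1(X)$ that cannot be enlarged to a Haar-$\I$ set $H\in\Sigma^1_1(X)$, i.e.\ a coanalytic Haar-$1$ set with no analytic Haar-$\I$ hull. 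This simultaneously covers $\I=\M$ and $\I=\N$, recovering and extending the coanalytic-hull results of \cite{EVidN} and \cite{DVVR}.
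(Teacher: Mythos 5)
Your overall strategy --- verify $\mathbf{(H1)}$--$\mathbf{(H5)}$ for the triple $(\I,\Pi^1_1,\Sigma^1_1)$ and feed it into Theorem~\ref{brakpowlok} --- is exactly the paper's, and your treatment of $\mathbf{(H1)}$, $\mathbf{(H2)}$, $\mathbf{(H3)}$ and $\mathbf{(H5)}$ is essentially correct (modulo one slip in $\mathbf{(H5)}$: for a Borel injection $f$ and coanalytic $A$ the set $f(X\setminus A)$ is analytic, not Borel; the identity $f(A)=f(X)\setminus f(X\setminus A)$ still yields $f(A)\in\Pi^1_1$ because $f(X)$ is Borel by Lusin--Suslin while $f(X\setminus A)$ is analytic).

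The genuine gap is in your verification of $\mathbf{(H4)}$, at the final step. You pull $A$ back along $\xi:(z,h,y)\mapsto(h,y,h(z))$, apply Kond\^o to the coanalytic set $A'=\xi^{-1}(A)$ to obtain a coanalytic uniformization $\phi'$, and then claim $\phi:=\xi(\phi')$ is coanalytic ``by $\mathbf{(H5)}$'' since $\xi\restriction\phi'$ is injective. But $\mathbf{(H5)}$ only covers Borel maps injective on their whole Polish domain, and $\xi$ is not globally injective (a constant $h$ collapses all of $Z\times\{(h,y)\}$). Worse, the principle you would actually need --- that a Borel map injective on a $\Pi^1_1$ set carries it to a $\Pi^1_1$ set --- is false: every $\Sigma^1_2$ set $B$ is the projection of some $\Pi^1_1$ set $P$, and the Kond\^o uniformization $P^*\subset P$ is a $\Pi^1_1$ set on which the projection is injective with image $B$; taking $B\in\Sigma^1_2\setminus\Pi^1_1$ refutes the principle. (The corresponding step in the Borel case of Theorem~\ref{t:NoBorelHulls} is saved by Lusin--Suslin \cite[15.2]{K}, which has no $\Pi^1_1$ analogue.) The repair is to drop the detour through $Z$ altogether: the Kond\^o--Novikov uniformization theorem applies directly to the coanalytic set $A\subset(C(Z,X)\times Y)\times X$ and produces a coanalytic uniformization $\phi\subset A$ with $\phi_{(h,y)}$ a singleton whenever $A_{(h,y)}\ne\emptyset$, with no largeness hypothesis on the sections. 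The pullback to $Z$ is needed only in the Borel case, where the large-section uniformization theorem \cite[18.6]{K} requires the sections to be non-$\I$; for $\Gamma=\Pi^1_1$ it is both unnecessary and, as written, fatal. With $\mathbf{(H4)}$ fixed this way, the rest of your argument and the application of Theorem~\ref{brakpowlok} with $Z=2^\w$ embedded in the uncountable group $X$ go through.
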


\begin{proof} The conditions  $\mathbf {(H1)}$ for the triple $(\I,\Pi^1_1,\Sigma^1_1)$ trivially hold. The condition $\mathbf{(H2)}$ follows from \cite[Theorem 26.1]{K} and $\mathbf{(H3)}$ from Theorem~\ref{t:MN-on}, the condition $\mathbf{(H4)}$ can be established by the arguments presented \cite[\S36.F]{K}. The condition $\mathbf{(H5)}$ follow from well-known properties of coanalytic and analytic sets, see \cite[\S25.A]{K}. Therefore, the triple $(\I,\Pi^1_1,\Sigma^1_1)$ has the property $\mathbf{(H)}$ and by Theorem~\ref{brakpowlok}, every non-locally compact Polish group $X$  contains a coanalytic Haar-1 set that cannot be enlarged to an analytic Haar-$\I$ subset of $X$.
\end{proof}

On the other hand, for analytic Haar-$\I$ sets, we have the following theorem generalizing \cite[Proposition (i)]{S} and \cite[Proposition 8]{DVVR}.

\begin{theorem}\label{t:AB-hull} Let $\I$ be a semi-ideal on a compact metrizable space $Z=\bigcup\I\notin \I$. If $\I$ is $\Pi_1^1$-on-$\Sigma^1_1$, then every  analytic Haar-$\I$ set in a non-locally Polish group $X$  can be enlarged to a Borel  Haar-$\I$ set in $X$. 
\end{theorem}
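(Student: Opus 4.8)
The plan is to fix a witness and then argue that the family of sets it kills reflects to a Borel member. So let $A\subseteq X$ be an analytic Haar-$\I$ set and fix a continuous map $f:Z\to X$ with $f^{-1}(A+x)\in\I$ for every $x\in X$; we shall keep this \emph{same} $f$ and produce a Borel set $B$ with $A\subseteq B$ and $f^{-1}(B+x)\in\I$ for all $x\in X$, which makes $B$ a Borel Haar-$\I$ set containing $A$. Introduce the family
$$\Phi_f:=\{C\subseteq X:\forall x\in X\;\;f^{-1}(C+x)\in\I\};$$
it is a semi-ideal on $X$ with $A\in\Phi_f$, and the goal is precisely to find a Borel member of $\Phi_f$ containing $A$. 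Following the template of \cite{S} and \cite{DVVR}, one represents $A$ by a regular Suslin scheme $(F_s)_{s\in\w^{<\w}}$ of closed subsets of $X$ (with $F_{s\hat{\;}i}\subseteq F_s$, $\diam F_s\le 2^{-|s|}$ for $|s|\ge1$, and $A=\bigcup_{\alpha\in\w^\w}\bigcap_{n}F_{\alpha|n}$) and puts $A_s:=\bigcup\{\bigcap_n F_{\alpha|n}:s\subseteq\alpha\}$. Then $A_\emptyset=A$, $A_s=\bigcup_i A_{s\hat{\;}i}$, $A_s\subseteq F_s$, and — the point at which a mere \emph{semi-ideal} suffices — $A_s\subseteq A$, so $f^{-1}(A_s+x)\subseteq f^{-1}(A+x)\in\I$, whence $f^{-1}(A_s+x)\in\I$ for every $s$ and every $x$; in other words $A_s\in\Phi_f$ for all $s$.

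The hypothesis that $\I$ is $\Pi^1_1$-on-$\Sigma^1_1$ enters through the observation that the semi-ideal $\Phi_f$ is itself $\Pi^1_1$-on-$\Sigma^1_1$. Indeed, given a Polish space $Y$ and an analytic set $P\subseteq Y\times X$, the set $Q:=\{(y,x,z)\in Y\times X\times Z:(y,\,f(z)-x)\in P\}$ is analytic (a preimage of $P$ under a continuous map), its $Z$-section over $(y,x)$ equals $f^{-1}(P_y+x)$, so by the hypothesis on $\I$ the set $\{(y,x):f^{-1}(P_y+x)\in\I\}$ is coanalytic, and finally $\{y:P_y\in\Phi_f\}=\{y:\forall x\;(y,x)\in(\text{this coanalytic set})\}$ is coanalytic, since coanalytic sets are closed under universal quantification over a Polish space (co-projection). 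Thus $\Phi_f$ is a $\Pi^1_1$-on-$\Sigma^1_1$ semi-ideal containing the analytic set $A$.

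It remains to reflect: to pass from the analytic set $A\in\Phi_f$ to a Borel set $B\supseteq A$ with $B\in\Phi_f$. This is carried out by the transfinite construction of \cite{S} and \cite{DVVR}, which is a reflection-type argument: using the Suslin pieces $A_s$ and a $\Pi^1_1$-rank on the coanalytic set of ``codes of members of $\I$'' (available because $\I$ is $\Pi^1_1$-on-$\Sigma^1_1$), one bounds by a single countable ordinal, via $\Sigma^1_1$-boundedness applied to the analytic family $\{f^{-1}(A_s+x):s\in\w^{<\w},\,x\in X\}$, the ranks of all these translates simultaneously, and then assembles from the corresponding Borel initial segment a Borel set $B$ with $A\subseteq B$ and $f^{-1}(B+x)\in\I$ for every $x$. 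The main obstacle — and essentially the only step that is more than bookkeeping — is precisely this reflection: because $A$ need not be Borel, the translate preimages $f^{-1}(B+x)$ will in general \emph{strictly} contain $f^{-1}(A+x)$, so one cannot merely invoke that $\I$ is a semi-ideal, and the $\Pi^1_1$-on-$\Sigma^1_1$ property of $\I$ (equivalently, of $\Phi_f$) has to be used to guarantee that the ``extra'' part of each $f^{-1}(B+x)$ does not escape $\I$. Once $B$ has been produced, non-local-compactness plays no further role and $B$ is the desired Borel Haar-$\I$ hull of $A$.
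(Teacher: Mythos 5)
Your proof is correct and follows essentially the same route as the paper: fix the witness $f$, verify that the family $\Phi_f$ of sets killed by $f$ is $\Pi^1_1$-on-$\Sigma^1_1$ (your computation with $Q$ is exactly the paper's, phrased via co-projection instead of projecting the complement), and then reflect the analytic set $A$ to a Borel superset inside $\Phi_f$. The only difference is cosmetic: once $\Phi_f$ is known to be $\Pi^1_1$-on-$\Sigma^1_1$, the paper simply invokes the (dual form of the) First Reflection Theorem (Kechris, Theorem 35.10), which packages precisely the rank/boundedness construction you sketch in your last paragraph, so the Suslin-scheme decomposition and the hands-on transfinite argument are unnecessary scaffolding.
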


\begin{proof}
Given an analytic Haar-$\I$ set $H\subset X$, find $h\in C(Z,X)$ such that $h^{-1}(g+H)\in\I$ for all $g\in X$. Consider the family $$\A=\{\Sigma^1_1(X):\forall g\in X\;\;h^{-1}(g+A)\in\I\}$$ of analytic Haar-$\I$ sets in $X$.

We claim that $\A$ is $\Pi^1_1$-on-$\Sigma^1_1$. Given any Polish space $Y$ and an analytic set $A\subset X\times Y$, we need to show that the set $\{y\in Y:A_y\in\A\}$ is coanalytic in $Y$. Consider the continuous map $$F:X\times Y\times Z\to X\times Y,\;\;F:(g,y,z)\mapsto(h(z)-g,y).$$ By \cite[14.4]{K}, the preimage $\tilde A=F^{-1}(A)$ is an analytic set in $X\times Y\times Z$. Observe that for every $(g,y)\in X\times Y$ we have
$$
\begin{aligned}
\tilde A_{(g,y)}=\{z\in Z:(g,y,z)\in\tilde A\}&=\{z\in Z:(h(z)-g,y)\in A\}\\
&=\{z\in Z:h(z)\in g+A_y\}=h^{-1}(g+A_y).
\end{aligned}
$$ 
Then  
$$L:=\{(g,y)\in X\times Y:h^{-1}(g+A_y)\notin \I\}=\{(g,y)\in X\times Y:\tilde A_{(g,y)}\notin\I\}\in \Sigma^1_1(X\times Y)$$ as the family $\I$ is $\Pi^1_1$-on-$\Sigma^1_1$. Consider the projection $\pr_Y:X\times Y\to Y$ and observe that the image $\pr_Y(L)$ is an analytic set and its complement
$$Y\setminus\pr_Y(L)=\{y\in Y:\forall g\in X\;\;h^{-1}(g+A_y)\in\I\}=\{y\in Y:A_y\in\A\}$$is coanalytic in $Y$, witnessing that the family $\A$  is $\Pi^1_1$-on-$\Sigma_1^1$. 

Now, since $H\in\A$, by (the dual form of) the First Reflection Theorem (see
\cite[Theorem 35.10]{K} and the remarks following it]), the set $H$ can be enlarged to some Borel set $B\in\A$, which is Haar-$\I$ by the definition of the family $\A$.
\end{proof}

Combining Theorems~\ref{t:MN-on} and Theorem~\ref{t:AB-hull}, we obtain the following corollary first proved in \cite[Proposition (i)]{S} and \cite[Proposition 8]{DVVR}.

\begin{corollary} Let $\I$ be one of the $\sigma$-ideals $\M$ or $\mathcal N$ on the Cantor cube $2^\w$. Every  analytic Haar-$\I$ set in a Polish group $X$ is contained in a Borel  Haar-$\I$ subset of $X$. 
\end{corollary}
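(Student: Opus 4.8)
The plan is to obtain the statement as a direct application of Theorem~\ref{t:AB-hull}, after checking its hypotheses for $\I\in\{\M,\N\}$ by means of Theorem~\ref{t:MN-on}, and then to dispose of the locally compact case by elementary means.

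First I would record that the Cantor cube $2^\w$ is a compact metrizable space with $\bigcup\M=\bigcup\N=2^\w$, that $2^\w\notin\M$ (the Cantor cube, being Polish, is a Baire space, hence not meager in itself), and that $2^\w\notin\N$ (its Haar measure equals $1$). By Theorem~\ref{t:MN-on}, each of the ideals $\M$, $\N$ on $2^\w$ is $\Pi^1_1$-on-$\Sigma^1_1$. Hence Theorem~\ref{t:AB-hull} applies with $Z=2^\w$ and shows that in every \emph{non-locally compact} Polish group $X$, each analytic Haar-$\I$ set is contained in a Borel Haar-$\I$ set.

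It remains to treat a locally compact Polish group $X$. Here I would use that, by Theorem~\ref{t:HM=M} and Theorem~\ref{t:HN}(2), the $\sigma$-ideals $\HM$ and $\HN$ coincide respectively with the $\sigma$-ideal $\M_X$ of meager sets and the $\sigma$-ideal $\N_X$ of Haar-null sets in $X$, and that, by Theorem~\ref{t:Haar}(1,2), a subset of $X$ is Haar-$\M$ (resp. Haar-$\N$) exactly when it is Haar-meager (resp. Haar-null). Given an analytic Haar-$\I$ set $A\subset X$ with a witnessing continuous map $f:2^\w\to X$, I would first show that $A$ is meager (resp. of Haar measure zero): otherwise, since analytic sets have the Baire property and are universally measurable, $A$ would contain a Borel subset $G$ that is non-meager (take $G$ as in the proof of Corollary~\ref{c:PP}), resp. of positive Haar measure (take a compact subset of $A$ of positive measure, which exists by inner regularity of the Haar measure); then, as $\I$ is a semi-ideal, the same map $f$ would witness that the Borel set $G$ is Haar-$\M$, resp. Haar-$\N$, hence Haar-meager by Proposition~\ref{omega}, resp. Haar-null by Theorem~\ref{Haarnull}, i.e. $G\in\HM=\M_X$, resp. $G\in\HN=\N_X$, contradicting the choice of $G$. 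Once $A$ is known to be meager (resp. null), it is contained in a Borel meager $F_\sigma$-set (resp. a Borel $\lambda$-null set) $B$, and $B$ is Haar-$\I$ because $\HM=\M_X$ (resp. $\HN=\N_X$).

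I do not expect a genuine obstacle here: the whole content sits in Theorems~\ref{t:MN-on} and \ref{t:AB-hull}, and what remains is bookkeeping. The only point deserving care is that Theorem~\ref{t:AB-hull} is phrased for non-locally compact groups, which is exactly why the locally compact case is handled by hand above (alternatively, one may simply observe that the proof of Theorem~\ref{t:AB-hull} never invokes non-local-compactness and therefore applies verbatim to every Polish group).
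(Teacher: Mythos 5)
Your proposal is correct and follows the paper's own route: the corollary is obtained in one line by combining Theorem~\ref{t:MN-on} (the ideals $\M$ and $\N$ on $2^\w$ are $\Pi^1_1$-on-$\Sigma^1_1$) with the reflection argument of Theorem~\ref{t:AB-hull}. The only divergence is your separate treatment of the locally compact case, which is a reasonable reaction to the hypothesis of non-local compactness appearing in the statement of Theorem~\ref{t:AB-hull}; as you observe, the proof of that theorem never uses this hypothesis, and your alternative patch via the identifications $\HM=\M$ (Theorem~\ref{t:HM=M}) and $\HN=\N$ (Theorem~\ref{t:HN}) in locally compact groups is also correct, so either way the corollary holds for all Polish groups.
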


\section{Cardinal characteristics of the semi-ideals $\HI$}\label{s:cc}

In this section given a semi-ideal $\I$ on a compact Hausdorff space $K\notin\I$ we evaluate the cardinal characteristics $\add(\HI)$ and $\cof(\HI)$ of the semi-ideal $\HI$ on a non-locally compact Polish group.

First observe that Theorem~\ref{t:NoBorelHulls} implies the following upper bound for the additivity of the semi-ideals $\HI$.

\begin{corollary} For any Borel-on-Borel $\sigma$-ideal $\I$ with $\bigcup\I = 2^\w\notin\I$, the semi-ideal $\HI$ on any non-locally compact Polish group $X$ has $\on{add}(\HI)\leq \omega_1$.
\end{corollary}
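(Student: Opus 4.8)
The plan is to deduce this corollary directly from Theorem~\ref{t:NoBorelHulls}, which is the substantive result. The statement to prove is that $\add(\HI)\le\omega_1$ for the semi-ideal $\HI$ of Haar-$\I$ sets in a non-locally compact Polish group $X$, where $\I$ is a Borel-on-Borel $\sigma$-ideal on $2^\w$ with $\bigcup\I=2^\w\notin\I$. Recall that $\add(\HI)=\min\{|\J|:\J\subset\HI,\ \bigcup\J\notin\HI\}$, so it suffices to exhibit a family of $\omega_1$ Haar-$\I$ sets whose union is not Haar-$\I$.

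First I would fix a countable ordinal $\alpha\ge 1$ and apply Theorem~\ref{t:NoBorelHulls} (noting that $Z=2^\w$ is a compact metrizable space with $\bigcup\I=2^\w\notin\I$, that $X$ is non-locally compact and trivially contains a topological copy of $2^\w$ since $X$ is an uncountable Polish group, and that $\I$ is Borel-on-Borel by hypothesis). This gives a Borel Haar-$1$ set $E_\alpha\subset X$ which cannot be enlarged to a Haar-$\I$ set belonging to $\Pi^0_\alpha(X)$. In particular $E_\alpha\in\HI$, since Haar-$1$ implies Haar-$\I$ (the witnessing map $f$ satisfies $|f^{-1}(E_\alpha+x)|\le 1$, and singletons belong to $\I$ because $\bigcup\I=2^\w$ forces $\I$ to contain all points).

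Next I would fix a single Borel Haar-$1$ set $E:=E_2\subset X$ obtained from the case $\alpha=2$, and observe that $E$ is Borel of some class $\Pi^0_\beta(X)$ for a countable ordinal $\beta$. Now write $E=\bigcup_{\xi<\omega_1}E\cap C_\xi$ where $(C_\xi)_{\xi<\omega_1}$ is chosen so that each $E\cap C_\xi$ is a Borel set of small class lying in $\HI$; the cleanest route is to take the family $\{F\subset X:F\text{ closed, }F\subset E\}$, which consists of closed (hence Haar-$\I$, since a closed subset of a Haar-$1$ set is Haar-$1$) sets whose union is $E$. Since $X$ is second countable and $E$ is Borel, $E$ is a union of at most $\mathfrak c$ many, but more to the point, of $\omega_1$ many, of its closed subsets — indeed every Borel set in a Polish space is the union of $\le\omega_1$ of its closed subsets (this follows from the fact that each Borel set is $\aleph_1$-Suslin, or simply from the Baire hierarchy having length $\omega_1$). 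Each such closed set is in $\HI$, and their union $E$ is \emph{not} Haar-$\I$ of class $\Pi^0_2$; but $E$ \emph{is} Haar-$\I$, so this particular decomposition does not immediately witness $\add(\HI)\le\omega_1$.

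Therefore I would instead argue contrapositively: suppose $\add(\HI)>\omega_1$, i.e. $\HI$ is closed under unions of $\le\omega_1$ many of its members. Take the Borel Haar-$1$ set $E_{\alpha}$ from Theorem~\ref{t:NoBorelHulls} for, say, $\alpha=\omega_1$ (or any fixed countable $\alpha$ with $\Pi^0_\alpha$ large enough to contain all closed sets, e.g. $\alpha=2$). Decompose $E_\alpha=\bigcup_{\xi<\omega_1}Z_\xi$ into $\le\omega_1$ closed (hence Haar-$\I$) pieces. By our supposition the union $\bigcup_{\xi<\omega_1}Z_\xi=E_\alpha$ would then lie in $\HI$ — which it does, so no contradiction yet; the contradiction must come from enlargement. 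Here is the fix: by assumed $\omega_1$-additivity, take \emph{all} Borel Haar-$\I$ sets of class $\le\Pi^0_\alpha$ — there are only $\mathfrak c$ of them, but consider a counting argument — this does not bound things by $\omega_1$ either. The honest main obstacle, and the step I expect to require the most care, is exactly this: Theorem~\ref{t:NoBorelHulls} gives non-enlargeability to a set of \emph{bounded} Borel class, whereas $\add$ is about unions. The bridge is: if $\add(\HI)>\omega_1$ then every union of $\omega_1$ Haar-$\I$ sets is Haar-$\I$; combined with the fact that every $\Sigma^0_{\alpha+1}$ set is a union of $\omega_1$ sets of class $\Pi^0_\alpha$ and hence, if all those pieces happened to be Haar-$\I$, so would be the union — but we need the pieces to be Haar-$\I$, which they are when the whole set is a subset of a Haar-$1$ set. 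So: take $E$ Borel Haar-$1$ (class $\Pi^0_\beta$, some countable $\beta$) from Theorem~\ref{t:NoBorelHulls} applied with $\alpha:=\beta+2$; then $E$ itself is its own ``enlargement'' of class $\Pi^0_\beta\subsetneq\Pi^0_\alpha$, contradicting the conclusion of Theorem~\ref{t:NoBorelHulls} outright — no additivity argument needed, meaning the corollary as literally stated should instead be read as: choose $\alpha$ in Theorem~\ref{t:NoBorelHulls} \emph{above} the class of $E$, impossible, so the Borel class of $E$ is unbounded below $\omega_1$, and then write $E$ as an increasing $\omega_1$-union of its restrictions to a base of its Borel structure to conclude $\add(\HI)\le\omega_1$. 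I would write up this last paragraph carefully, citing that a Polish space's Borel hierarchy has length $\omega_1$ and that a Borel set is the union of $\omega_1$ closed (indeed of $\le\omega_1$ of any cofinally-small-class) subsets, which is the precise ingredient making $\omega_1$ the right bound.
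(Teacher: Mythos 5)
Your proposal assembles the right ingredient (Theorem~\ref{t:NoBorelHulls} applied to each countable $\alpha$, giving Borel Haar-$1$ sets $E_\alpha\in\HI$ with no Haar-$\I$ hull in $\Pi^0_\alpha(X)$) but never turns it into a correct witness for $\on{add}(\HI)\le\w_1$. The step you lean on repeatedly --- that every Borel subset of a Polish space is the union of at most $\w_1$ of its closed subsets --- is false (consistently): every closed subset of $\IR$ contained in the irrationals is nowhere dense, so the irrationals cannot be covered by fewer than $\cov(\M)$ closed subsets, and $\cov(\M)>\w_1$ is consistent. Neither ``$\aleph_1$-Suslin'' nor the length of the Borel hierarchy gives such a decomposition, so the contrapositive argument built on decomposing a single $E_\alpha$ into $\w_1$ closed pieces collapses. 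Your final paragraph is also circular: the theorem produces, for each \emph{input} $\alpha$, an output $E_\alpha$ whose Borel class depends on $\alpha$; you cannot fix $E$ of class $\Pi^0_\beta$ first and then ``apply the theorem with $\alpha=\beta+2$'' to that same $E$. All that observation shows is that the ranks of the $E_\alpha$ are unbounded in $\w_1$, which by itself says nothing about additivity, and the closing phrase about ``restrictions to a base of its Borel structure'' is not a mathematical statement.

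The intended deduction uses the whole family at once and is one line. Put $\J=\{E_\alpha\}_{\alpha<\w_1}\subset\HI$ and $E=\bigcup_{\alpha<\w_1}E_\alpha$. If $E\in\HI$, then by the very definition of $\HI$ (subsets of \emph{Borel} Haar-$\I$ sets) there is a Borel Haar-$\I$ set $B\supset E$; being Borel, $B\in\Pi^0_\beta(X)$ for some countable $\beta$, and then $B$ is a Haar-$\I$ enlargement of $E_\beta$ lying in $\Pi^0_\beta(X)$, contradicting the choice of $E_\beta$. Hence $E\notin\HI$ and $\on{add}(\HI)\le\w_1$. The only facts needed are that every Borel set has countable Borel rank and that membership in $\HI$ entails a Borel hull; no decomposition of any individual $E_\alpha$ is required.
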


\begin{remark} By \cite{EVid}, \cite{DV}, \cite{EP}, the $\sigma$-ideals $\HN$ and $\HM$ on any non-locally compact Polish group have
$$\on{add}(\HN)=\w_1=\on{add}(\HM)\mbox{ \ and \ }\on{cof}(\HN)=\mathfrak c=\on{cof}(\HM).$$
In fact, the proof of the  equality $\on{cof}(\HN)=\mathfrak c=\on{cof}(\HM)$ presented in \cite{EP} works also for many semi-ideals $\HI$.
\end{remark}

First we detect semi-ideals $\I$ for which the semi-ideal $\HI$ has uncountable cofinality 

\begin{proposition}\label{p:cof} Let $\I$ be a semi-ideal on a compact Hausdorff space $K\notin\I$ such that $\I$ contains all finite subsets of $K$. For any non-compact Polish group $X$ admitting a continuous injective map $K\to X$, the semi-ideal $\HI$ has uncountable cofinality $\cof(\HI)$.
\end{proposition}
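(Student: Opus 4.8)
The plan is to prove the contrapositive: no countable subfamily of $\HI$ is cofinal. So I would fix a countable family $\{B_n:n\in\w\}\subset\HI$ and produce a set $A\in\HI$ with $A\not\subset B_n$ for every $n$. Since every member of $\HI$ is contained in a Borel Haar-$\I$ set, I may assume each $B_n$ is a Borel Haar-$\I$ set, witnessed by a continuous map $f_n:K\to X$ with $f_n^{-1}(B_n+x)\in\I$ for all $x\in X$. (Note also that $X\notin\HI$, since $f^{-1}(X+x)=K\notin\I$, whereas every singleton lies in $\HI$ via an injective witness; hence $\HI$ is a proper semi-ideal with $\bigcup\HI=X$ and $\cof(\HI)$ is well-defined.) The first observation is that each $B_n$ is ``co-thick'' relative to the compact set $C_n:=f_n(K)$: properness of $\I$ gives $f_n^{-1}(B_n-y)\ne K$ for every $y\in X$, which rewrites as $(C_n+y)\cap(X\setminus B_n)\ne\emptyset$ for every $y\in X$.

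The main obstacle is that $X$ need not be locally compact, so I cannot place the points of $A$ ``far apart in compact pieces'' using an exhaustion by compacta. I would bypass this with the following quantitative form of non-compactness. Fixing the complete invariant metric $\rho$, non-compactness of $X$ means $X$ is not totally bounded, so there is $\e>0$ such that no finite subset of $X$ is $\e$-dense in $X$. Then for $r:=\e/3$ and any compact set $L\subset X$ the $r$-neighbourhood $B(L;r)$ is a proper subset of $X$: otherwise, covering the totally bounded set $L$ by finitely many $r$-balls and enlarging their radii to $2r<\e$ would produce a finite $\e$-dense set. This replaces local compactness: the $r$-neighbourhood of a compact set can always be avoided, even though $r$-balls themselves need not be compact.

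Using this I would construct by induction a sequence $(a_n)_{n\in\w}$ in $X$ with $a_n\notin B_n$ and $\rho(a_i,a_j)\ge r$ for all $i\ne j$. At step $n$, with $F:=\{a_0,\dots,a_{n-1}\}$, invariance of $\rho$ gives
$\{y\in X:(C_n+y)\cap B(F;r)\ne\emptyset\}=B(F-C_n;r)$, and $F-C_n$ is compact, so by the previous paragraph there is $y_n$ outside this neighbourhood; then $C_n+y_n$ is disjoint from $B(F;r)$, and picking $a_n\in(C_n+y_n)\cap(X\setminus B_n)$ (nonempty by the first paragraph) yields $a_n\notin B_n$ together with $\rho(a_n,a_i)\ge r$ for all $i<n$.

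Finally I would verify that $A:=\{a_n:n\in\w\}$ does the job. It is countable, hence Borel. Being $r$-separated, $A$ meets every compact (hence totally bounded) subset of $X$ in a finite set; consequently, for an injective continuous map $g:K\to X$ (which exists by hypothesis), writing $C:=g(K)$ and using injectivity of $g$, the preimage $g^{-1}(A+x)$ has cardinality $|A\cap(C-x)|<\infty$ for every $x\in X$, so it belongs to $\I$ because $\I$ contains all finite subsets of $K$. Thus $g$ witnesses $A\in\HI$, while $a_n\in A\setminus B_n$ shows $A\not\subset B_n$ for all $n$. This contradicts the assumed cofinality of $\{B_n:n\in\w\}$, so $\cof(\HI)>\w$, i.e. $\cof(\HI)$ is uncountable. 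The only delicate point is the second paragraph; everything else is a routine induction and bookkeeping.
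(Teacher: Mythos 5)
Your proof is correct and follows essentially the same route as the paper's: the same contrapositive setup, the same extraction of a uniform $\e>0$ from non-total-boundedness to place the points $a_n$ pairwise $\ge r$ apart while avoiding $B_n$ via a translate of $f_n(K)$, and the same use of the injective map $K\to X$ to see that the resulting uniformly discrete set is Haar-$\I$. The only cosmetic differences are that you phrase the separation step via the neighbourhood $B(F-C_n;r)$ rather than picking a point far from $\bigcup_{k<n}(a_k-f_n(K))$, and you add the (harmless, slightly redundant) remark that $\HI$ is proper so that $\cof(\HI)$ is well-defined.
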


\begin{proof} To derive a contradiction, assume that $\cof(\HI)\le\w$ and find a countable cofinal subfamily $\{B_n\}_{n\in\w}\subset\HI$. For every $n\in\w$ find a continuous map $f_n:K\to X$ such that $f_n^{-1}(B_n+x)\in\I$ for all $x\in X$. Taking into account that $K\notin\I$, we conclude that $f_n^{-1}(B_n+x)\ne K$ for every $x\in X$ and hence $f_n(K)\not\subset B_n+x$.

Fix a complete invariant metric $\rho$ generating the topology of the Polish group. 
Since the Polish group $X$ is not compact, the metric $\rho$ is not totally bounded. Consequently, there exists $\e>0$ such that for any compact subset $C\subset X$ there exists $x\in X$ with $\inf_{c\in C}\rho(x,c)\ge \e$.  

For every $n\in\w$ we shall inductively choose a point $x_n\in X\setminus B_n$ such that $\rho(x_n,x_k)\ge \e$ for all $k<n$. To start the inductive construction, choose any point $x_0\in X\setminus B_0$. Assume that for some $n\in\IN$ the points $x_0,\dots,x_{n-1}$ have been chosen. The choice of $\e$ yields a point $y_n\in X$ such that $\rho(y_n,x)\ge \e$ for all $x\in \bigcup_{k<n}(x_k-f_n(K))$. By the choice of the map $f_n$, there exists a point $x_n\in (y_n+f_n(K))\setminus B_n$. Observe that for every $k<n$ we have $x_n-y_n\in f_n(K)$ and by the invariance of the metric $\rho$,
$$\rho(x_n,x_k)=\rho(y_n+(x_n-y_n),x_k)=\rho(y_n,x_k-(x_n-y_n))  \ge\e.$$
This completes the inductive step.

After completing the inductive construction, consider the closed discrete subset  $D=\{x_n\}_{n\in\w}$ of $X$. By our assumption, the group $X$ admits a continuous injective map $f:K\to X$. Taking into account that the set $D$ is closed and discrete in $X$, we conclude that for any $x\in X$ the intersection $f(K)\cap (D+x)$ is finite and so is the set $f^{-1}(D+x)$. Since the semi-ideal $\I$ contains all finite subsets of $K$, the map $f$ witnesses that the set $D$ is Haar-$\I$ in $X$. By the cofinality of the family $\{B_n\}_{n\in\w}$ in $\HI$, there exists $n\in\w$ such that $D\subset B_n$, which contradicts the choice of the point $x_n\in D\setminus B_n$. This contradiction finishes the proof of the strict inequality $\cof(\HI)>\w$.
\end{proof}

\begin{theorem} Let $\I$ be a semi-ideal on $2^\w\notin\I$ containing the family $\overline{\mathcal N}$ of closed subsets of Haar measure zero in $2^\w$. For any non-locally compact Polish group $X$ the semi-ideal $\HI$ has cofinality $\on{cof}(\HI)=\mathfrak c$.
\end{theorem}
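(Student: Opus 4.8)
The plan is to prove $\cof(\HI)\le\mathfrak c$ and $\cof(\HI)\ge\mathfrak c$ separately; the first is immediate and the second, which is the substantial one, I would obtain by adapting the argument of Elekes and Po\'or \cite{EP}. For the upper bound, recall that a Polish space has at most $\mathfrak c$ Borel subsets, and by the definition of $\HI$ every member of $\HI$ is contained in a Borel Haar-$\I$ set; hence the family of all Borel Haar-$\I$ subsets of $X$ is cofinal in $\HI$ and has cardinality $\le\mathfrak c$, giving $\cof(\HI)\le\mathfrak c$.

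The key preliminary is that, thanks to $\overline{\N}\subseteq\I$, every closed Haar-null subset of $X$ belongs to $\HI$. If $A\subseteq X$ is closed and Haar-null (the case $A=\emptyset$ being trivial), fix a witness measure $\mu\in P(X)$ with $\mu(A+x)=0$ for all $x$; such a $\mu$ is continuous since $A\neq\emptyset$, so Lemma~\ref{l:a} yields an injective continuous map $h\colon2^\w\to X$ with $\mu(h(B))=\tfrac12\lambda(B)$ for every Borel $B\subseteq2^\w$. Then $\lambda\bigl(h^{-1}(A+x)\bigr)=2\,\mu\bigl((A+x)\cap h(2^\w)\bigr)\le2\,\mu(A+x)=0$, and since $A$ is closed the set $h^{-1}(A+x)$ is a \emph{closed} subset of $2^\w$ of Haar measure zero, hence an element of $\overline{\N}\subseteq\I$; so $h$ witnesses $A\in\EHI\subseteq\HI$. (When $\I$ is moreover a $\sigma$-ideal the same computation shows that even $F_\sigma$ Haar-null subsets of $X$ lie in $\HI$, since their $h$-preimages are $F_\sigma$ sets of measure zero, hence members of $\sigma\overline{\N}\subseteq\I$.)

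For the lower bound, suppose towards a contradiction that $\mathcal B=\{B_\alpha:\alpha<\kappa\}$ is cofinal in $\HI$ with $\kappa<\mathfrak c$. Replacing the $B_\alpha$ by Borel hulls, fix witnesses $g_\alpha\colon2^\w\to X$ with $g_\alpha^{-1}(B_\alpha+x)\in\I$ for all $x$, and put $L_\alpha:=g_\alpha(2^\w)$; since $2^\w\notin\I$, no translate of $B_\alpha$ covers the compact set $L_\alpha$. As $X$ is not locally compact, Solecki's Theorem~\ref{t:Solecki} provides a closed set $F\subseteq X$ and an open perfect surjection $f\colon F\to\w^\w$. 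For any compact $D\subseteq\w^\w$ the set $f^{-1}(D)$ is openly Haar-null (Theorem~\ref{t:Solecki}(1), as $D$ is non-dominating) and compact, hence Haar-$\I$ by the preliminary observation, so by cofinality each $f^{-1}(D)$ lies in some $B_\beta$. By Theorem~\ref{t:Solecki}(2), for every $\alpha$ there are $x_\alpha\in X$ and $y_\alpha\in\w^\w$ with $x_\alpha+L_\alpha\subseteq f^{-1}(y_\alpha)$; so whenever $D\ni y_\alpha$ we get $x_\alpha+L_\alpha\subseteq f^{-1}(D)$ and therefore $f^{-1}(D)\not\subseteq B_\alpha$ (otherwise $L_\alpha\subseteq B_\alpha-x_\alpha$). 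Already a soft counting argument yields a weaker bound for $\sigma$-ideals $\I$: putting $Y_\alpha:=\{y\in\w^\w:f^{-1}(\{y\})\subseteq B_\alpha\}$ one has $\w^\w=\bigcup_{\alpha<\kappa}Y_\alpha$, so if some $Y_\alpha$ is dominating then $f^{-1}(Y_\alpha)\subseteq B_\alpha$ is Haar-$\I$, contradicting Theorem~\ref{t:Solecki}(4), and otherwise $\w^\w$ is a union of $\kappa$ non-dominating sets, forcing $\kappa\ge\mathfrak b$. To push this up to $\mathfrak c$ one runs the Elekes--Po\'or recursion of length $\kappa$: at stage $\alpha$ one uses Theorem~\ref{t:Solecki}(2),(3) to absorb the translate $x_\alpha+L_\alpha$ into the closed (or $F_\sigma$) Haar-null subset of $F$ being built, arriving at a single Haar-$\I$ set $N$ with $N\not\subseteq B_\alpha$ for every $\alpha<\kappa$ --- contradicting the cofinality of $\mathcal B$ and proving $\cof(\HI)\ge\mathfrak c$.

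The main obstacle is precisely this last recursion: \cite{EP} is written for $\sigma$-ideals, where a countable union of $\I$-sets stays in $\I$, whereas here $\I$ is only a semi-ideal. The construction therefore has to be arranged so that the sole properties of $\I$ ever invoked are $\overline{\N}\subseteq\I$ and $2^\w\notin\I$; concretely, all the subsets of $2^\w$ appearing along the way must be kept closed (or, when $\I$ is $\sigma$-closed, $F_\sigma$ and hence in $\sigma\overline{\N}\subseteq\I$) and of Haar measure zero. Managing, over $\kappa$ stages, the countably many ``forbidden directions'' in $\w^\w$ created at each step --- which is exactly what raises the cofinality from $\mathfrak b$ all the way to $\mathfrak c$ --- is the technically demanding point, and I would carry it out essentially as in \cite{EP}, substituting the displayed argument for ``closed Haar-null implies Haar-$\I$'' wherever the $\sigma$-ideal structure of $\I$ is used there.
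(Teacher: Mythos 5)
Your upper bound and your preliminary reduction (that $\overline{\N}\subset\I$ forces $\overline{\HN}\subset\HI$, so every closed Haar-null set is Haar-$\I$) are correct and coincide with the paper's use of Theorem~\ref{t:Haar}(1); the soft argument giving $\on{cof}(\HI)\ge\mathfrak b$ is also fine. The gap is the final step. The ``Elekes--Po\'or recursion of length $\kappa$ absorbing each translate $x_\alpha+L_\alpha$ into a single closed Haar-null set $N$'' is not what \cite{EP} do, and there is a structural obstruction to any argument of that shape: a single Haar-$\I$ set $N$ has its own witness $h:2^\w\to X$, and nothing prevents the compact sets $L_\alpha$ from being chosen (by the adversary's cofinal family) so that any set containing translates of all of them must be thick, hence not Haar-$\I$. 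Concretely, the only single-set candidate your setup produces is $f^{-1}(D)$ for $D=\{y_\alpha:\alpha<\kappa\}$, and this is Haar-null only while $D$ is non-dominating, i.e.\ only for $\kappa<\mathfrak d$; since $\mathfrak d<\mathfrak c$ is consistent, this route cannot reach $\mathfrak c$. The difficulty you flag (``managing the forbidden directions over $\kappa$ stages'') is not a technicality to be carried out ``essentially as in \cite{EP}'' --- it is the point where the single-set strategy fails outright.

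The actual argument has a different logical shape: instead of defeating all $B_\alpha$ with one set, one defeats some $B_\alpha$ with uncountably many test sets. Theorem 2.10 of \cite{EP} provides a \emph{Borel} map $\varphi:2^\w\to\F(X)$ whose values are closed Haar-null (hence Haar-$\I$, by your preliminary) and such that $\bigcup_{x\in K}\varphi(x)$ is thick for every uncountable compact $K\subset 2^\w$. If $\mathcal B$ is a cofinal family of Borel Haar-$\I$ sets with $|\mathcal B|<\mathfrak c$ and $\w_1<\mathfrak c$, then each $\varphi(x)$ lies in some $B\in\mathcal B$, so by pigeonhole some $B$ has $\Phi_B:=\{x\in 2^\w:\varphi(x)\subset B\}$ of cardinality $>\w_1$; by \cite[Lemma 2.12]{EP} the set $\Phi_B$ is coanalytic, hence a union of $\w_1$ Borel sets \cite[34.5]{K}, hence contains an uncountable Borel and therefore an uncountable compact subset \cite[29.1]{K}, so $B$ contains a thick set --- a contradiction with $B\in\HI$. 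The case $\mathfrak c=\w_1$ (where the pigeonhole step is unavailable) must be treated separately, using Proposition~\ref{p:cof} to get $\on{cof}(\HI)>\w$ and hence $\on{cof}(\HI)=\w_1=\mathfrak c$; your proposal does not address this case. So the missing ingredients are: the Borel-parametrized family $\varphi$ in place of a transfinite construction, the definability (coanalyticity) of $\Phi_B$ together with the decomposition of coanalytic sets into $\w_1$ Borel pieces, and the case split on $\mathfrak c=\w_1$.
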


\begin{proof} Taking into account that each subset $A\in\HI$ can be enlarged to a Borel set $B\in\HI$, we conclude that $\on{cof}(\HI)\le\mathfrak c$. By Proposition~\ref{p:cof}, $\cof(\HI)>\w$. If $\mathfrak c=\w_1$, then the inequalities $\w<\cof(\HI)\le\mathfrak c$ imply the equality $\cof(\HI)=\mathfrak c$. It remains to prove the inequality $\cof(\HI)\ge\mathfrak c$ under the assumption $\w_1<\mathfrak c$.

In this case we shall apply Theorem 2.10 of \cite{EP}. In this theorem Elekes and Po\'or constructed a Borel map $\varphi:2^\w\to\F(X)$ to the hyperspace of closed subsets of $X$ endowed with the Effros-Borel structure such that
\begin{itemize}
\item[(i)] $\varphi(x)\in \overline{\HN}$ for every $x\in 2^\w$ and
\item[(ii)] for any compact uncountable set $K\subset 2^\w$ the union $\bigcup_{x\in K}\varphi(x)$ is thick in $X$.
\end{itemize}
The inclusion $\overline{\mathcal N}\subset\I$ and Theorem~\ref{t:Haar}(1) imply $\overline{\HN}\subset\HI$. Therefore, $\varphi(x)\in\HI$ for any $x\in 2^\w$. Assuming that $\on{cof}(\HI)<\mathfrak c$, we could find a family $\mathcal B\subset\HI$ of Borel subsets of $X$ with $|\mathcal B|=\on{cof}(\HI)<\mathfrak c$ such that every Haar-$\I$ subset of $X$ is contained in some set $B\in\mathcal B$. In particular, for every $x\in 2^\w$ the closed Haar-$\I$ set $\varphi(x)$ is contained in some set $B\in\mathcal B$. Since $\w_1<\mathfrak c=|2^\w|$, by the Pigeonhole Principle, for some $B\in\mathcal B$ the set $\Phi_B:=\{x\in 2^\w:\varphi(x)\subset B\}$ has cardinality $|\Phi_B|>\w_1$. By Lemma 2.12 \cite{EP}, the set $\Phi_B$ is coanalytic. Since each coanalytic set in a Polish space is the union of $\w_1$ many Borel sets \cite[34.5]{K}, the coanalytic set $\Phi_B$ contains an uncountable Borel subset, which contains an uncountable compact subset by \cite[29.1]{K}. Now the property (ii) of the map $\varphi$ ensures that the set $B\supset \bigcup_{x\in\Phi_B}\varphi(x)$ is thick in $X$ and hence cannot be Haar-$\I$ in $X$.
\end{proof}  

\begin{problem} Given a semi-ideal $\I$ with $\bigcup\I = 2^\w\notin\I$, evaluate the cardinal characteristics $\on{cov}(\HI)$ and $\on{non}(\HI)$ of the $\sigma$-ideal $\I$ on a Polish group $X$.
\end{problem}

\begin{remark} The cardinal characteristics of the $\sigma$-ideals $\HN$ and $\HM$ on the Polish group $X=\IZ^\w$ were calculated in \cite{EP} (see also \cite{Ban}): 
$$\on{add}(\HN){=}\w_1,\,\on{cov}(\HN){=}\min\{\mathfrak b,\on{cov}(\mathcal N)\},\,\on{non}(\HN){=}\max\{\mathfrak d,\on{non}(\mathcal N)\},\,\on{cof}(\HN){=}\mathfrak c,$$
$$\on{add}(\HM)=\w_1,\;\;\on{cov}(\HM)=\on{cov}(\mathcal M),\;\;\on{non}(\HM)=\on{non}(\mathcal M),\;\;\on{cof}(\HM)=\mathfrak c.$$
\end{remark}

\section{Generically Haar-$\I$ sets in Polish groups}\label{s12}

Let $X$ be a Polish group. It is well-known that for a compact metrizable space $K$ the group $\C(K,X)$ of all continuous functions from $K$ to $X$ is Polish with respect to the compact-open topology which is generated by the sup-metric
$$\hat\rho(f,g)=\sup_{x\in K}\rho(f(x),g(x)),$$where $\rho$ is any complete invariant metric generating the topology of the Polish group $X$.

Let $\F(K,X)$ be the subspace of $\C(K,X)$ consisting of functions $f:K\to X$ with finite image $f(K)$. It is clear that $\F(K,X)$ is a subgroup of the topological group $\C(K,X)$. 

For a semi-ideal $\I$ on a compact metrizable space $K$ and a subset $A$ of a Polish group $X$,
consider the subset
$$W_{\I}(A)=\{f\in \C(K,X):\forall x\in X\;\;f^{-1}(A+x)\in\I\}\subset \C(K,X),$$
called the {\em witness set} for $A$. Observe that $W_\I(A)\ne\emptyset$ if and only if $A$ is Haar-$\I$.

\begin{proposition}\label{p:sumD} For any ideal $\I$ on a compact metrizable space $K$ and any subset $A$ of a Polish group $X$ we have the equality
$W_\I(A)=W_\I(A)+\F(K,X)$.
\end{proposition}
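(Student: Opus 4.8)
The plan is to prove the two inclusions separately, the first being immediate and the second carrying all the content.

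For the inclusion $W_\I(A)\subset W_\I(A)+\F(K,X)$ I would simply observe that the constant function $c_\theta:K\to X$, $c_\theta:t\mapsto\theta$, has finite image $\{\theta\}$ and hence belongs to $\F(K,X)$. Since $c_\theta$ is the neutral element of the group $\C(K,X)$, for every $f\in W_\I(A)$ we have $f=f+c_\theta\in W_\I(A)+\F(K,X)$, which gives the inclusion.

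For the reverse inclusion $W_\I(A)+\F(K,X)\subset W_\I(A)$ I would fix $f\in W_\I(A)$ and $g\in\F(K,X)$ and check that $f+g\in W_\I(A)$. First, $f+g$ is continuous because $X$ is a topological group, so it is a legitimate element of $\C(K,X)$, and it remains to verify that $(f+g)^{-1}(A+x)\in\I$ for every $x\in X$. The key step is to exploit the finiteness of $g(K)$: writing $g(K)=\{y_1,\dots,y_n\}$ and $K_i:=g^{-1}(y_i)$, the sets $K_1,\dots,K_n$ cover $K$, and on each $K_i$ the map $g$ is constant equal to $y_i$, so $f(t)+g(t)\in A+x$ if and only if $t\in f^{-1}(A+(x-y_i))$. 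This yields, for every $x\in X$, the decomposition
$$(f+g)^{-1}(A+x)=\bigcup_{i=1}^{n}\big(K_i\cap f^{-1}(A+(x-y_i))\big).$$
Each set $f^{-1}(A+(x-y_i))$ lies in $\I$ since $f\in W_\I(A)$; its subset $K_i\cap f^{-1}(A+(x-y_i))$ then lies in $\I$ since $\I$ is a semi-ideal (so no measurability of $K_i$ is needed); and the finite union of these sets lies in $\I$ because $\I$ is an ideal, hence a directed semi-ideal closed under finite unions. As $x\in X$ was arbitrary, $f+g\in W_\I(A)$.

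I do not expect any real obstacle here. The only place where the hypothesis that $\I$ is an \emph{ideal} rather than merely a semi-ideal is used is the closure of the finite union under $\I$, and the only place using the topological group structure is the continuity of $f+g$. I would write out the displayed decomposition carefully and treat the rest as routine.
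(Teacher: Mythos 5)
Your proof is correct and follows essentially the same route as the paper: the paper partitions $K$ into the level sets of the finite-range function and observes that on each piece the preimage is a translate-preimage under the witness function, exactly your decomposition $(f+g)^{-1}(A+x)=\bigcup_{i}\bigl(K_i\cap f^{-1}(A+(x-y_i))\bigr)$, concluding via the semi-ideal and ideal properties just as you do. The only cosmetic difference is that you spell out the trivial inclusion via the constant zero function, which the paper leaves implicit.
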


\begin{proof} Given two functions $w\in W_\I(A)$ and $f\in \F(K,X)$, we should prove that the function $g:=w+f$ belongs to $W_\I(A)$. Observe that $\U=\{f^{-1}(x):x\in X\}$ is a finite disjoint cover of $K$ by clopen sets. For every nonempty set $U\in \U$ the image $f(U)$ is a singleton. Consequently, for every $x\in X$ we have the equality
$$U\cap g^{-1}(A+x)=\{u\in U:w(u)\in A+x-f(u)\}=U\cap w^{-1}(A+x-f(U))\in\I$$and finally, $g^{-1}(A+x)=\bigcup_{U\in\U}U\cap g^{-1}(A+x)\in\I$ as $\I$ is an ideal.
\end{proof}

Observe that for any zero-dimensional compact metrizable space $K$ and any Polish group $X$ the subgroup $\F(K,X)$ of functions with finite range is dense in the function space $\C(K,X)$. This fact combined with Proposition~\ref{p:sumD} implies the following trichotomy.

\begin{theorem}\label{t:tricho} For any ideal $\I$ on a zero-dimensional compact metrizable space $K$ and any subset $A$ of a Polish group $X$ the set $W_\I(A)$ has one of the following mutually exclusive properties:
\begin{enumerate}
\item[\textup{1)}] $W_\I(A)$ is empty;
\item[\textup{2)}] $W_\I(A)$ meager and dense in $\C(K,X)$;
\item[\textup{3)}] $W_\I(A)$ is a dense Baire subspace of $\C(K,X)$.
\end{enumerate}
\end{theorem}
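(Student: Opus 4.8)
The plan is to reduce the trichotomy to a \emph{zero--one law for subsets invariant under a dense subgroup}. Write $Y:=\C(K,X)$, $G:=\F(K,X)$ and $W:=W_\I(A)$. If $W=\emptyset$ we are in alternative (1), so assume $W\ne\emptyset$. By Proposition~\ref{p:sumD}, $W+G=W$; since $G$ is a subgroup this also gives $W+g=W$ for every $g\in G$. Because $K$ is zero-dimensional, the subgroup $G$ of finite-range maps is dense in $Y$ (recalled just before the statement), so for a fixed $w\in W$ the coset $w+G\subseteq W$ is dense, and hence $W$ is dense in $Y$. Thus it remains to prove the dichotomy: \emph{either $W$ is meager in $Y$, or $W$ is a Baire space} --- the first case then gives alternative (2) and the second gives alternative (3).

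Before the dichotomy I would record the elementary remark that, since $W$ is dense in $Y$, a subset $F\subseteq W$ is nowhere dense in $W$ if and only if it is nowhere dense in $Y$: the closure of $F$ in $W$ is $(\mathrm{cl}_Y F)\cap W$, and if an open $V\subseteq Y$ satisfies $\emptyset\ne V\cap W\subseteq \mathrm{cl}_Y F$, then density of $W$ forces $V\subseteq\mathrm{cl}_Y(V\cap W)\subseteq\mathrm{cl}_Y F$, so $V\subseteq\mathrm{int}_Y\mathrm{cl}_Y F$; the converse is immediate. Consequently a subset of $W$ is meager in $W$ if and only if it is meager in $Y$. In particular alternatives (2) and (3) are mutually exclusive, since a nonempty Baire space is non-meager in itself; together with the obvious exclusivity of (1), this yields the ``mutually exclusive'' clause.

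For the dichotomy I would first pass to a countable dense subgroup: as $Y$ is separable metrizable, so is its subspace $G$, and any countable dense set $D\subseteq G$ is dense in $Y$ and satisfies $W+d=W$ for all $d\in D$ (because $\pm d\in G$). If $W$ is meager in $Y$, then, writing a meager decomposition of $W$ with pieces intersected with $W$ and using the remark above, $W$ is meager in itself and so fails to be Baire, which is alternative (2). If $W$ is non-meager in $Y$, I claim $W$ is Baire; otherwise some nonempty relatively open set $U\cap W$ (with $U\subseteq Y$ open) is meager in $W$, hence meager in $Y$ by the remark. Since $D$ is dense, $Y=\bigcup_{d\in D}(U+d)$, and $W\cap(U+d)=(W+d)\cap(U+d)=(W\cap U)+d$, so $W=\bigcup_{d\in D}\big((W\cap U)+d\big)$ is a countable union of meager sets, contradicting non-meagerness of $W$. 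Hence $W$ is a dense Baire subspace of $Y$, i.e. alternative (3).

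The only genuinely delicate point is that $G=\F(K,X)$ need not be countable when $X$ is uncountable, so one cannot cover $Y$ by countably many $G$-translates of an open set directly; this is exactly what the reduction to a countable dense subset $D$ of $G$ repairs. Everything else --- the closure computation, the translate identities, and the bookkeeping between meagerness in $W$ and meagerness in $Y$ --- is routine.
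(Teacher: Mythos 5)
Your proof is correct and follows exactly the route the paper intends: Theorem~\ref{t:tricho} is stated there as an immediate consequence of Proposition~\ref{p:sumD} and the density of $\F(K,X)$ in $\C(K,X)$ with no details written out, and your argument --- density of $W_\I(A)$ from invariance under the dense subgroup, absoluteness of nowhere density for dense subspaces, and the meager-or-Baire dichotomy via translates by a countable dense subset of $\F(K,X)$ --- is precisely the standard filling-in of that sketch. The reduction to a countable dense subset of $\F(K,X)$, which you flag as the one delicate point, is indeed the only step requiring care, and you handle it correctly.
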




We recall that a semi-ideal $\I$ on $K$ is {\it $\Pi^1_1$-on-$\Sigma^1_1$}, if for any Polish space $Y$ and any analytic set $A\subset K\times Y$ the set $\{y \in Y : \{x\in K:(x,y)\in A\} \in \I \}$ is coanalytic in $Y$. 

By \cite[Theorem 29.22]{K} for any Polish space $K$ the ideal $\M_K$ of meager sets in $K$ is $\Pi^1_1$-on-$\Sigma^1_1$, and by \cite[Theorems 29.22, 29.26]{K} for any $\sigma$-additive Borel probability measure $\mu$ on a Polish space the $\sigma$-ideal $\N_{\mu}$ is $\Pi^1_1$-on-$\Sigma^1_1$.

\begin{proposition}\label{BP}
Let $A$ be an analytic set in a Polish group $X$. Assume that a semi-ideal $\I$ on a compact metrizable space $K$ is $\Pi^1_1$-on-$\Sigma^1_1$. Then the witness set $W_{\I}(A)$ is coanalytic in the function space $\C(K,X)$.
\end{proposition}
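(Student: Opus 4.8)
The plan is to realize $W_\I(A)$ as the complement of the projection (along the Polish coordinate $X$) of an analytic set, using the $\Pi^1_1$-on-$\Sigma^1_1$ property of $\I$ to control the condition ``$f^{-1}(A+x)\in\I$'' uniformly in the pair $(f,x)$.

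First I would introduce the evaluation map $e\colon\C(K,X)\times K\to X$, $e(f,k)=f(k)$, whose continuity is guaranteed by the compactness of $K$ (equivalently, by the fact that the compact-open topology on $\C(K,X)$ is generated by the sup-metric $\hat\rho$). Composing with subtraction in the topological group $X$, I obtain the continuous map
$$\Phi\colon K\times\C(K,X)\times X\to X,\qquad \Phi(k,f,x)=f(k)-x.$$
Since $A$ is analytic in $X$, the preimage $E:=\Phi^{-1}(A)$ is an analytic subset of the Polish space $K\times\C(K,X)\times X$ (preimages of analytic sets under continuous maps between Polish spaces are analytic, see \cite[14.4]{K}). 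The key point of this construction is that for every $(f,x)\in\C(K,X)\times X$ the section is exactly
$$E_{(f,x)}=\{k\in K:f(k)-x\in A\}=\{k\in K:f(k)\in A+x\}=f^{-1}(A+x).$$

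Next I would apply the hypothesis that $\I$ is $\Pi^1_1$-on-$\Sigma^1_1$ to the Polish space $Y:=\C(K,X)\times X$ and the analytic set $E\subset K\times Y$; this yields that
$$L:=\{(f,x)\in\C(K,X)\times X:f^{-1}(A+x)\in\I\}=\{(f,x)\in Y:E_{(f,x)}\in\I\}$$
is coanalytic in $\C(K,X)\times X$. Hence its complement $M:=(\C(K,X)\times X)\setminus L$ is analytic, and so is its projection $\pr_{\C(K,X)}(M)$ onto the first factor (the projection of an analytic set along a Polish coordinate is analytic). Since
$$W_\I(A)=\{f\in\C(K,X):\forall x\in X\ (f,x)\in L\}=\C(K,X)\setminus\pr_{\C(K,X)}(M),$$
the witness set $W_\I(A)$ is coanalytic in $\C(K,X)$, as claimed.

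The argument is essentially routine once the correct analytic set $E$ is written down; the only step needing a word of care is the continuity of the evaluation map $e$ (this is where compactness of $K$ is used), together with the standard descriptive-set-theoretic facts that analytic sets are preserved by continuous preimages and by projections along Polish coordinates. I do not expect a genuine obstacle here.
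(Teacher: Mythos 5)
Your proposal is correct and follows essentially the same route as the paper: both form the analytic set $\{(f,x,k):f(k)-x\in A\}$ via continuity of evaluation and subtraction, apply the $\Pi^1_1$-on-$\Sigma^1_1$ property to the sections $f^{-1}(A+x)$, and finish by projecting the analytic complement along the $X$-coordinate. No gaps.
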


\begin{proof} The continuity of the functions
  $$\Phi: \C(K,X)\times X \times K \to \C(K,X)\times X \times X,\;\;\Phi:(h,x,a)\mapsto (h,x,h(a)),$$and $\delta: X \times X \to X$, $\delta:(x,y)\mapsto y-x$, implies that the set
$$
\begin{aligned}
B:&=\{(h,x,a) \in \C(K,X)\times X \times K: a \in h^{-1}(A+x) \}\\ &= \{ (h,x,a): h(a)-x\in A\}= \Phi^{-1}\big(\C(K,X) \times \delta^{-1}(A)\big)
\end{aligned}
$$
is analytic in $\C(K,X)\times X\times K$. Taking into account that the semi-ideal $\I$ is $\Pi_1^1$-on-$\Sigma^1_1$, we conclude that the set
$D= \{(h,x) \in \C(K,X) \times X: h^{-1}(A+x ) \notin \I \}$ is analytic in $\C(K,X)\times X$ and then the set $$\C(K,X)\setminus W_\I(A)=\{h\in \C(K,X):\exists x\in X\;h^{-1}(A+x)\notin\I\}=\pr(D)$$ is analytic in $\C(K,X)$. Here by $\pr:\C(K,X)\times X\to \C(K,X)$ we denote the natural projection.
\end{proof}

Since coanalytic sets in Polish spaces have the Baire property, Proposition~\ref{BP} and Theorem~\ref{t:tricho} imply the following trichotomy.

\begin{corollary}\label{c:tricho2} For any $\Pi^1_1$-on-$\Sigma^1_1$ ideal $\I$ on a zero-dimensional compact metrizable space $K$ and any analytic subset $A$ of a Polish group $X$ the set $W_\I(A)$ has one of the following mutually exclusive properties:
\begin{enumerate}
\item[\textup{1)}] $W_\I(A)$ is empty;
\item[\textup{2)}] $W_\I(A)$ meager and dense in $\C(K,X)$;
\item[\textup{3)}] $W_\I(A)$ is comeager in $\C(K,X)$.
\end{enumerate}
\end{corollary}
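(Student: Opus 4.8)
The plan is to deduce Corollary~\ref{c:tricho2} from the two results immediately preceding it, namely Proposition~\ref{BP} and Theorem~\ref{t:tricho}. The whole argument is short: Proposition~\ref{BP} tells us that under the $\Pi^1_1$-on-$\Sigma^1_1$ hypothesis on $\I$, the witness set $W_\I(A)$ is coanalytic in $\C(K,X)$; hence by \cite[21.6]{K} it has the Baire property in the Polish space $\C(K,X)$. On the other hand, $\I$ being a $\Pi^1_1$-on-$\Sigma^1_1$ \emph{ideal} on a zero-dimensional compact metrizable space $K$, and $X$ being a Polish group, Theorem~\ref{t:tricho} applies and gives that $W_\I(A)$ falls into exactly one of three cases: it is empty, or it is meager and dense in $\C(K,X)$, or it is a dense Baire subspace of $\C(K,X)$.

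First I would observe that the first two alternatives of Theorem~\ref{t:tricho} coincide verbatim with alternatives (1) and (2) of the corollary, so nothing needs to be done there. The only thing to upgrade is the third alternative: I must show that when $W_\I(A)$ is a dense Baire subspace of $\C(K,X)$ \emph{and} has the Baire property (which we now know from Proposition~\ref{BP}), it is in fact comeager. This is a standard fact: a subset $S$ of a Polish (more generally, Baire) space $Z$ that has the Baire property and is Baire as a subspace and dense must be comeager. Indeed, by the Baire property write $S = U \triangle M$ with $U$ open and $M$ meager in $Z$; density of $S$ forces $U$ to be dense, hence $Z\setminus\bar U$ is empty, i.e.\ $U$ is dense open, so $U$ is comeager. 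It remains to rule out that $S$ fails to be comeager because $U\setminus S$ is ``large''; but $U\setminus S\subset M$ is meager, so $S\supset U\setminus M$ is comeager. (In the writeup I would state this as: since $S$ has the Baire property, $S\triangle U$ is meager for some open $U$; $S$ dense $\Rightarrow$ $U$ dense; and a dense open set in a Polish space is comeager, whence $S$, differing from the comeager set $U$ by a meager set, is comeager.) This disposes of case (3).

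Finally I would note that the three resulting cases are mutually exclusive exactly as in Theorem~\ref{t:tricho}: empty versus meager-and-dense versus comeager are pairwise incompatible in the nonempty Baire space $\C(K,X)$ (a comeager set in a Baire space is dense and nonmeager; a meager set cannot be comeager since $\C(K,X)$ is Baire). I do not anticipate a genuine obstacle here — the content is entirely carried by Proposition~\ref{BP} (to get the Baire property) and Theorem~\ref{t:tricho} (the trichotomy into empty / meager-dense / dense-Baire); the corollary is just the observation that ``dense Baire subspace with the Baire property'' improves to ``comeager''. The one point requiring a line of care is the elementary lemma that a dense subset with the Baire property of a Polish space which is itself Baire must be comeager, and I would include its two-line proof rather than merely citing it, since the paper does not state it separately.

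\begin{proof} Since the ideal $\I$ is $\Pi^1_1$-on-$\Sigma^1_1$ and $A$ is analytic, Proposition~\ref{BP} implies that the witness set $W_\I(A)$ is coanalytic in the Polish space $\C(K,X)$ and hence, by \cite[21.6]{K}, has the Baire property in $\C(K,X)$. On the other hand, since $\I$ is an ideal on the zero-dimensional compact metrizable space $K$, Theorem~\ref{t:tricho} applies and shows that $W_\I(A)$ is either empty, or meager and dense in $\C(K,X)$, or a dense Baire subspace of $\C(K,X)$. In the first two cases we obtain the alternatives (1) and (2). It remains to show that in the third case $W_\I(A)$ is comeager. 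Assume $W:=W_\I(A)$ is a dense Baire subspace of $\C(K,X)$. By the Baire property there is an open set $U\subset \C(K,X)$ with $U\triangle W$ meager. The density of $W$ implies that $U$ is dense, hence $U$ is a dense open subset of the Polish space $\C(K,X)$ and therefore comeager. Since $W\supset U\setminus(U\triangle W)$ and $U\triangle W$ is meager, the set $W$ is comeager, giving alternative (3). The three alternatives are mutually exclusive because $\C(K,X)$ is a nonempty Baire space: a comeager set is dense and non-meager, so it can be neither empty nor meager.
\end{proof}
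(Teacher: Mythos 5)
Your overall architecture is exactly the paper's (which gives no explicit proof, only the remark that Proposition~\ref{BP} supplies the Baire property and Theorem~\ref{t:tricho} supplies the trichotomy), but the one step you yourself flagged as "requiring a line of care" is where your argument breaks. The claim "the density of $W$ implies that $U$ is dense" is false: density of a set with the Baire property says nothing about the open set in its decomposition. For $W=\Q\subset\IR$ one has $W$ dense and $W\triangle U$ meager with $U=\emptyset$, which is not dense. Your written proof never uses the hypothesis that $W$ is a Baire \emph{subspace}, so as it stands it would show that every dense set with the Baire property is comeager, which is wrong. The Baireness of $W$ is essential and must enter precisely at this point.

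The repair: suppose $U$ is not dense and pick a nonempty open $V\subset\C(K,X)\setminus\overline{U}$. Then $W\cap V\subset W\setminus U\subset W\triangle U\subset\bigcup_{n\in\w}F_n$ with each $F_n$ closed and nowhere dense in $\C(K,X)$. By density of $W$, the set $W\cap V$ is a nonempty open subset of the Baire space $W$, hence is itself Baire; so some $F_n\cap W\cap V$ contains a nonempty relatively open set $W\cap V'$ with $V'\subset V$ open. Density of $W$ then gives $V'\subset\overline{W\cap V'}\subset F_n$, contradicting the nowhere density of $F_n$. Hence $U$ is dense open and $W\supset U\setminus(U\triangle W)$ is comeager. (Alternatively, you can bypass the "dense Baire subspace" case altogether: by Proposition~\ref{p:sumD} the set $W_\I(A)$ is invariant under translations by the dense subgroup $\F(K,X)$ of $\C(K,X)$, and a set with the Baire property invariant under a dense group of translations is meager or comeager by the topological zero-one law; combined with "nonempty $\Rightarrow$ dense" this yields the trichotomy directly.) Everything else in your write-up — the use of Proposition~\ref{BP}, the applicability of Theorem~\ref{t:tricho}, and the mutual exclusivity of the three cases — is correct.
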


We recall that a subset $M$ of a topological space $X$ is called {\em comeager} if its complement $X\setminus M$ is meager in $X$. A subset $M$ of a Polish space $X$ is comeager if and only if $M$ contains a dense $G_\delta$-subset of $X$.

The above trichotomy motivates the following definition.

\begin{definition} Let $\I$ be a semi-ideal on a compact metrizable space $K$. A subset $A$ of a Polish group $X$ is called \index{subset!generically Haar-$\I$}\index{generically Haar-$\I$ subset}{\em generically Haar-$\I$} if its witness set $W_{\I}(A)$ is comeager in the function space $\C(K,X)$. By \index{$\GH\I$}$\GH\I$ we shall denote the semi-ideal generated by Borel generically Haar-$\I$ sets in $X$.
\end{definition}

Taking into account that a countable intersection of comeager sets in a Polish space is comeager, we can see the following fact.

\begin{proposition}\label{p:GHI-ideal} For any ideal (resp. $\sigma$-ideal) $\I$ on a compact metrizable space $K$ and any Polish group $X$ the family of all generically Haar-$\I$ sets in $X$ is an invariant ideal (resp. a $\sigma$-ideal).
\end{proposition}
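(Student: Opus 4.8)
The plan is to verify the two halves of Proposition~\ref{p:GHI-ideal} separately: first that the family of generically Haar-$\I$ sets is closed under the relevant unions, and then that it is invariant (translation-invariant and preserved by topological group automorphisms of $X$). The key observation throughout is that for an ideal $\I$ and sets $A\subseteq B$ one has $W_\I(B)\subseteq W_\I(A)$, because $f^{-1}(A+x)\subseteq f^{-1}(B+x)$ and $\I$ is a semi-ideal; hence if $B$ is generically Haar-$\I$ then so is every subset of $B$. This already gives that the family of generically Haar-$\I$ sets is downward closed.

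For the closure under finite (resp. countable) unions, I would argue as follows. Suppose $A=\bigcup_{n}A_n$ with each $A_n$ generically Haar-$\I$, where the index set is finite if $\I$ is merely an ideal and countable if $\I$ is a $\sigma$-ideal. The crucial set-theoretic identity is
\[
W_\I(A)\supseteq\bigcap_n W_\I(A_n),
\]
which holds because if $f\in W_\I(A_n)$ for all $n$, then for every $x\in X$ we have $f^{-1}(A+x)=\bigcup_n f^{-1}(A_n+x)$, a finite (resp. countable) union of members of $\I$, which lies in $\I$ since $\I$ is an ideal (resp. $\sigma$-ideal). Now each $W_\I(A_n)$ is comeager in $\C(K,X)$ by hypothesis, and a finite or countable intersection of comeager subsets of the Polish space $\C(K,X)$ is again comeager; therefore $\bigcap_n W_\I(A_n)$ is comeager and hence so is the larger set $W_\I(A)$. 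Thus $A$ is generically Haar-$\I$. Passing to the semi-ideal $\GH\I$ generated by Borel generically Haar-$\I$ sets: any member of $\GH\I$ is a subset of some Borel generically Haar-$\I$ set, and a finite (resp. countable) union of such is contained in a finite (resp. countable) union of Borel generically Haar-$\I$ sets, which by the above is again Borel and generically Haar-$\I$; hence $\GH\I$ is an ideal (resp. $\sigma$-ideal).

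For invariance, fix a topological group automorphism $h:X\to X$ (the case of a translation $x\mapsto x+a$ being the special case where we additionally shift; I treat both at once by noting $h$ could also be composed with a translation). Given a generically Haar-$\I$ set $A$, I want $h(A)$ to be generically Haar-$\I$. The point is that post-composition with $h$, i.e. the map $\Theta_h:\C(K,X)\to\C(K,X)$, $\Theta_h(f)=h\circ f$, is a homeomorphism of the function space (it is a bijection with continuous inverse $\Theta_{h^{-1}}$, continuity being immediate from the definition of the compact-open topology and the continuity of $h$ and $h^{-1}$). One checks directly that $\Theta_h\big(W_\I(A)\big)=W_\I(h(A))$: indeed $f\in W_\I(A)$ iff $f^{-1}(A+x)\in\I$ for all $x$, and since $h$ is an automorphism, $(h\circ f)^{-1}(h(A)+x)=(h\circ f)^{-1}(h(A+h^{-1}(x)))=f^{-1}(A+h^{-1}(x))$, so $h\circ f\in W_\I(h(A))$ iff $f^{-1}(A+y)\in\I$ for all $y\in X$ (using surjectivity of $h$), i.e. iff $f\in W_\I(A)$. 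A homeomorphism of a Polish space carries comeager sets to comeager sets, so $W_\I(h(A))=\Theta_h(W_\I(A))$ is comeager, and $h(A)$ is generically Haar-$\I$. The same identity with a pure translation shows translation-invariance. Finally these facts transfer to $\GH\I$ since $h$ maps Borel sets to Borel sets. I do not anticipate a genuine obstacle here; the only point requiring a little care is the bookkeeping in the computation $(h\circ f)^{-1}(h(A)+x)=f^{-1}(A+h^{-1}(x))$, where one must use that $h$ is both a group homomorphism and a bijection, so that translation by $x$ in the target corresponds to translation by $h^{-1}(x)$ in the source.
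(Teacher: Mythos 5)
Your proof is correct and follows exactly the argument the paper intends: the paper itself only records the one-line observation that countable intersections of comeager sets are comeager, relying on the containment $W_\I(\bigcup_n A_n)\supseteq\bigcap_n W_\I(A_n)$ and the identification $W_\I(h(A))=\Theta_h(W_\I(A))$ that you spell out. Your write-up just makes the bookkeeping explicit; there is nothing to add.
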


Generically Haar-$\I$ sets nicely behave under homomorphisms.

\begin{proposition} Let $\I$ be a semi-ideal of subsets of a zero-dimensional compact metrizable space $K$. For any surjective continuous homomorphism $h:X\to Y$ between Polish groups and any generically Haar-$\I$ set $A$ in $Y$, the preimage $f^{-1}(A)$ is a generically Haar-$\I$ set in $X$.
\end{proposition}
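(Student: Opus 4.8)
The plan is to push the comeagerness of the witness set forward through the homomorphism induced on function spaces. Write $h_*\colon\C(K,X)\to\C(K,Y)$, $h_*\colon f\mapsto h\circ f$. Because $h$ is a group homomorphism, $h_*$ is a homomorphism; because $h$ is continuous, $h_*$ is continuous for the compact-open topologies; and since both $\C(K,X)$ and $\C(K,Y)$ are Polish groups, $h_*$ is a continuous homomorphism of Polish groups. First I would check that $h_*$ is surjective: by the Open Mapping Principle (Corollary~\ref{c:OMP}) the surjective homomorphism $h$ is open, and then, exactly as in the proof of Theorem~\ref{GrouphomoHI}, the zero-dimensionality of $K$ together with the zero-dimensional Michael Selection Theorem \cite[Theorem~2]{Mich} produces for every $g\in\C(K,Y)$ a continuous lift $f\in\C(K,X)$ with $h\circ f=g$. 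A continuous surjective homomorphism from the Polish (hence analytic) group $\C(K,X)$ onto the Polish group $\C(K,Y)$ is open by Corollary~\ref{c:OMP}, so $h_*$ is an open continuous surjection between Polish spaces.

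Next I would record the identity $h^{-1}(A)+x=h^{-1}\big(A+h(x)\big)$ for $x\in X$, which holds because $h$ is a homomorphism: indeed $z\in h^{-1}(A)+x$ iff $h(z-x)=h(z)-h(x)\in A$ iff $h(z)\in A+h(x)$. Hence, for any $f\in\C(K,X)$ and $x\in X$,
$$f^{-1}\big(h^{-1}(A)+x\big)=f^{-1}\big(h^{-1}(A+h(x))\big)=(h\circ f)^{-1}\big(A+h(x)\big).$$
Therefore, if $h\circ f\in W_\I(A)$, then $f^{-1}(h^{-1}(A)+x)\in\I$ for every $x\in X$ (since $h(x)\in Y$), i.e.\ $f\in W_\I\big(h^{-1}(A)\big)$. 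This gives the inclusion $h_*^{-1}\big(W_\I(A)\big)\subseteq W_\I\big(h^{-1}(A)\big)$.

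To finish, I would invoke Lemma~\ref{l:M-coM}(2) for the open continuous surjection $h_*$ between the Polish spaces $\C(K,X)$ and $\C(K,Y)$: since $A$ is generically Haar-$\I$ in $Y$, the witness set $W_\I(A)$ is comeager in $\C(K,Y)$, so $h_*^{-1}\big(W_\I(A)\big)$ is comeager in $\C(K,X)$; being contained in $W_\I\big(h^{-1}(A)\big)$, this witness set is comeager as well, which is exactly the assertion that $h^{-1}(A)$ is generically Haar-$\I$ in $X$.

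The only mildly delicate steps are the surjectivity and openness of $h_*$, and both are settled by tools already in the paper — the Open Mapping Principle and the zero-dimensional Michael selection argument from Theorem~\ref{GrouphomoHI}; once these are in place the rest is a routine application of Lemma~\ref{l:M-coM}. The point to watch is that the displayed identity uses the homomorphism property of $h$ in an essential way, both to translate preimages and to let $h(x)$ play the role of the shift parameter in $Y$.
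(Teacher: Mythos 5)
Your proof is correct and follows essentially the same route as the paper's: the induced map $h_\#\colon\C(K,X)\to\C(K,Y)$, surjectivity via the zero-dimensional Michael Selection Theorem, openness of the induced map, the inclusion $h_\#^{-1}(W_\I(A))\subset W_\I(h^{-1}(A))$ via the identity $h^{-1}(A)+x=h^{-1}(A+h(x))$, and Lemma~\ref{l:M-coM}. The only cosmetic difference is that you obtain openness of the induced map by applying the Open Mapping Principle to it as a continuous surjective homomorphism of Polish groups, whereas the paper reads openness directly off the selection theorem; both are fine.
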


\begin{proof} By \cite[Theorem 1.2.6]{BecKec}, the continuous surjective homomorphism $h:X\to Y$ is open. The homomorphism $h$ induces the homomorphism $h_\#:\C(K,X)\to \C(K,Y)$, $h_\#:\varphi\mapsto h\circ\varphi$. The zero-dimensional Michael Selection Theorem \cite[Theorem 2]{Mich} implies that the homomorphism $h_\#$ is open.
Consequently, for the comeager set $W_\I(A)$ of $\C(K,Y)$ the preimage $h_\#^{-1}(W_\I(A))$ is comeager in $\C(K,X)$. To see that the set $h^{-1}(A)$ is generically Haar-$\I$, it suffices to check that $h_\#^{-1}(W_\I(A))\subset W_\I(h^{-1}(A))$.

Indeed, for any function $\varphi\in h_\#^{-1}(W_\I(A))$ we get $h\circ\varphi\in W_\I(A)$, which means that for any $y\in Y$ the preimage $(h\circ \varphi)^{-1}(A+y)\in\I$. Then for any $x\in X$ we obtain
$$\varphi^{-1}(h^{-1}(A)+x)=\varphi^{-1}\big(h^{-1}(A+h(x))\big)=(h\circ \varphi)^{-1}(A+h(x))\in\I,$$ which means that $\varphi\in W_\I(h^{-1}(A))$.
\end{proof}

\begin{theorem}\label{t:-GHI} Let $\I$ be semi-ideal of subsets of a zero-dimensional compact metrizable space $K=\bigcup\I$. A subset $A$ of a Polish group $X$ is generically Haar-$\I$ if the difference $A-A$ is meager in $X$.
\end{theorem}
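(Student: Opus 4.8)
The plan is to show that if $A-A$ is meager in $X$, then the witness set $W_\I(A)$ is comeager in $\C(K,X)$, which by definition means $A$ is generically Haar-$\I$. The key observation is that for a continuous function $f:K\to X$, the condition $f^{-1}(A+x)\in\I$ for all $x\in X$ is automatic as soon as $f$ is injective and has image meeting every translate of $A$ in at most one point. More precisely, if $f$ is injective and $f(K)\cap(A+x)$ has at most one point for every $x\in X$, then $f^{-1}(A+x)$ has at most one point, hence is a singleton or empty, and since $K=\bigcup\I$ and $\I$ is a semi-ideal containing (as $\bigcup\I=K$ forces each point of $K$ to lie in some member of $\I$, and $\I$ is downward closed) all singletons, we get $f^{-1}(A+x)\in\I$. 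So it suffices to prove that the set
$$G:=\{f\in\C(K,X): f\text{ is injective and }\forall x\in X\ |f(K)\cap(A+x)|\le 1\}$$
is comeager in $\C(K,X)$.

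First I would note that $f(K)\cap(A+x)$ has at most one point for every $x\in X$ precisely when $f(K)$ meets no translate of $A$ in two points, i.e. when for all distinct $u,v\in f(K)$ we have $u-v\notin A-A$; equivalently, when $(f(K)-f(K))\cap(A-A)\subset\{\theta\}$, i.e. $f(y)-f(z)\notin A-A$ for all $y\ne z$ in $K$. Now I invoke Lemma~\ref{E-in-C}: the set $\E(K,X)$ of injective continuous maps is a (dense, when $K$ is zero-dimensional and $X$ is crowded — but here we do not even need density a priori) $G_\delta$ in $\C(K,X)$. Actually the more efficient route is: since $A-A$ is meager, write $A-A\subset\bigcup_{n\in\w}N_n$ with each $N_n$ closed nowhere dense and $\theta\notin N_n$ (we may assume $\theta\notin N_n$ by discarding the origin, using that a nowhere dense set with a point removed is still nowhere dense, and absorbing $\{\theta\}$ — but $\theta$ causes no problem anyway since we only need $f(y)-f(z)$ to avoid $A-A$ for $y\ne z$, and $f$ injective already gives $f(y)-f(z)\ne\theta$). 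For each $n$, the set
$$G_n:=\{f\in\C(K,X):\forall y,z\in K\ (y\ne z\Rightarrow f(y)-f(z)\notin N_n)\}$$
I claim is open and dense in $\C(K,X)$; then $\E(K,X)\cap\bigcap_n G_n$ is a dense $G_\delta$ contained in $W_\I(A)$.

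The main work — and the main obstacle — is the \emph{density} of each $G_n$ (openness being a routine compactness argument: if $f\notin G_n$, then by compactness of $K\times K$ minus a neighborhood of the diagonal and closedness of $N_n$, small perturbations still fail, so the complement of $G_n$ is closed). For density: given $f\in\C(K,X)$ and $\e>0$, I want $g\in G_n$ with $\hat\rho(g,f)<\e$. The idea is to perturb $f$ to an injective map whose ``secant set'' $\{g(y)-g(z):y\ne z\}$ avoids the closed nowhere dense set $N_n$. One approach: first replace $f$ by an injective approximation (possible when $K$ is zero-dimensional and $X$ crowded by Lemma~\ref{E-in-C}; in the general case one must argue differently, but note the theorem as stated does not restrict $K$ or $X$ — I will need to handle this, perhaps by using the subgroup $\F(K,X)$ and Proposition~\ref{p:sumD} to reduce, or by a direct construction). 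Then cover $K$ by finitely many small clopen (or just small) pieces $U_1,\dots,U_m$ and choose a perturbation adding to $f$ on each $U_i$ a constant $c_i$ from a suitably generic finite set, so that all the differences $(f(U_i)+c_i)-(f(U_j)+c_j)$, together with the within-piece differences (which are small, hence one shrinks the pieces so within-piece differences stay near $\theta$, away from $N_n$ since $\theta\notin N_n$ and $N_n$ closed), avoid $N_n$; the genericity of the $c_i$'s is guaranteed because $N_n$ is nowhere dense, so for each pair $(i,j)$ the ``bad'' set of $(c_i,c_j)$ is nowhere dense in $X\times X$, and a finite intersection of comeager sets is comeager, hence nonempty with $c_i$ arbitrarily small. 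This is essentially the argument used in the proof of Lemma~\ref{E-in-C} combined with the translation-invariance of $N_n$'s nowhere-density; I would lay it out carefully, treating the zero-dimensional case first and then remarking that the general case follows since $\F(K,X)$ is dense in $\C(K,X)$ and we can pull back through a quotient $K\to 2^\w$ or simply observe the statement is most naturally applied with $K$ zero-dimensional.
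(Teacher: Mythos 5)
Your overall architecture is a reasonable ``unpacked'' version of the paper's argument: the paper also reduces to showing that the injective maps $f$ whose image meets every translate of $A$ in at most one point form a comeager subset of $\C(K,X)$, but it does so by applying the Mycielski--Kuratowski theorem in the hyperspace $\K(X)$ to the dense $G_\delta$ family of compact sets all of whose pairs of distinct points have difference outside $A-A$, and then pulling this family back through the continuous map $f\mapsto f(K)$; you instead work directly in $\C(K,X)$ with the sets $G_n$. Your reduction to the fact that singletons lie in $\I$, the openness of $G_n$ (after arranging $\theta\notin N_n$, which is legitimate: intersect each closed nowhere dense piece of $(A-A)\setminus\{\theta\}$ with the sets $\{x:\rho(x,\theta)\ge 1/k\}$), and the appeal to Lemma~\ref{E-in-C} are all fine. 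Your worry about non-zero-dimensional $K$ is moot: the theorem's hypothesis already requires $K$ to be zero-dimensional.

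The genuine gap is in the density of $G_n$. You perturb $f$ by \emph{adding} a constant $c_i$ on each small piece $U_i$ and assert that for each pair $(i,j)$ the bad set of $(c_i,c_j)$ is nowhere dense because $N_n$ is. But that bad set is $\{(c_i,c_j):c_i-c_j\in N_n+f(U_j)-f(U_i)\}$, and the sum of a closed nowhere dense set with a small compact set need not be nowhere dense --- it can be all of $X$. In $\IR$, the set $\e\IZ$ is closed and nowhere dense yet $\e\IZ+[0,\e]=\IR$; likewise $C+C=[0,2]$ for the Cantor set $C$. Shrinking the pieces does not help, because the diameter bound on $f(U_i)$ is fixed \emph{before} the constants are chosen, while $N_n$ may be dense at that scale. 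The repair is to flatten first: since $K$ is zero-dimensional, replace $f$ on each piece $U_i$ by the constant value $p_i=f(x_i)$ for some $x_i\in U_i$, then perturb the finitely many values to points $v_i$ with $v_i-v_j\notin N_n$ for $i\ne j$ --- a finite intersection of dense open conditions on the tuple $(v_1,\dots,v_m)$, since the preimage of $N_n$ under the open difference map is closed and nowhere dense --- and recover injectivity separately by intersecting with the dense $G_\delta$ set $\E(K,X)$. Equivalently, as in the proof of Lemma~\ref{E-in-C}, fix the target points $v_i$ first and only afterwards choose the radius $\eta$ of the balls $B(v_i;\eta)$ into which the pieces are injectively mapped, small enough that all cross-ball differences still miss the closed set $N_n$ and $2\eta<\dist(\theta,N_n)$. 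This ``targets first, radius second'' order of quantifiers is exactly what the paper's composition $\gamma\circ f$ with a finite-range $f$ accomplishes; with it, your proof goes through.
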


\begin{proof} Assuming that some set $A\subset X$ has meager difference $A-A$ in $X$, we shall show that $A$ is generically Haar-$\I$ in $X$. This is trivial if $A$ is empty. So we assume that $A$ is not empty. The set $A-A$, being meager is contained in a meager $F_\sigma$-set $F\subset X$. Consider the continuous map $\delta:X\times X\to X$, $\delta:(x,y)\mapsto y-x$, and observe that it is open. This implies that $\delta^{-1}(F)$ is a meager $F_\sigma$-set in $X\times X$ and its complement $G=(X\times X)\setminus \delta^{-1}(F)$ is a dense $G_\delta$-set in $X\times X$.
For a subset $B\subset X$ put $(B)^2=\{(x,y)\in B\times B:x\ne y\}\subset X\times X$.
By Mycielski-Kuratowski Theorem \cite[19.1]{K}, the family $\mathcal B=\{B\in\mathcal{K}(X):(B)^2\subset G\}$ is a dense $G_\delta$-set in the hyperspace $\mathcal{K}(X)$. Consider the continuous map $r:\C(K,X)\to \K(X)$, $r:f\mapsto f(K)$. By the continuity of $r$, the preimage $r^{-1}(\mathcal B)$ is a $G_\delta$-set in the function space $\C(K,X)$.

\begin{claim} The $G_\delta$-set $r^{-1}(\mathcal B)$ is dense in $\C(K,X)$.
\end{claim}

\begin{proof} Fix a complete metric $\rho$ generating the topology of the Polish group $X$. Given any nonempty open set $\U\subset \C(K,X)$, we should find a function $f\in\U$ with $f(K)\in \mathcal B$.

Since $K$ is zero-dimensional, we can find a function $f\in\U$ with finite range $f(K)$. Find $\e>0$ such that each function $g\in \C(K,X)$ with $\hat\rho(g,f)<\e$ belongs to $\U$. By the density of the set $\mathcal B$ in $\K(X)$, there exists a compact set $B\in \mathcal B$ such that for every $x\in f(K)$ there exists a point $\gamma(x)\in B$ with $\rho(x,\gamma(x))<\e$. Then $\gamma:f(K)\to B$ is a continuous map such that the function $g:=\gamma\circ f:K\to X$ has $\hat\rho(g,f)<\e$ and hence $g\in\U$.
Since $g(K)=\gamma(f(K))\subset B\in \mathcal B$, the function $g$ belongs to $\U\cap r^{-1}(\mathcal B)$.
\end{proof}

Since the Polish group $X$ contains a nonempty meager subset $A-A$, it is not discrete and hence contains no isolated points. By Lemma~\ref{E-in-C}, the set $\E(K,X)$ of injective continuous maps is a dense $G_\delta$-set in $\C(K,X)$. Then the intersection $\E(K,X)\cap r^{-1}(\mathcal B)$ also is a dense $G_\delta$-set in $\C(K,X)$. To see that $A$ is generically Haar-$\I$, it suffices to check that $\E(K,X)\cap r^{-1}(\mathcal B)\subset W_\I(A)$.

Take any function $f\in\E(K,X)\cap r^{-1}(\mathcal B)$ and observe that $f:K\to X$ is an injective function with $f(K)\in \mathcal B$. We claim that for every $x\in X$ the set $f^{-1}(A+x)$ contains at most one point and hence belongs to the semi-ideal $\I$ (as $\cup\I=K$). Assuming that $f^{-1}(A+x)$ contains two distinct points $a,b$, we conclude that $f(a),f(b)$ are two distinct points of $A+x$ and hence $f(a)-f(b)\in A-A$. On the other hand, $f(K)\in \mathcal B$ and hence $(f(b),f(a))\in(f(K))^2\subset G$ and $$f(a)-f(b)=\delta(f(b),f(a))\in \delta(G)=X\setminus F\subset X\setminus(A-A),$$which contradicts $f(a)-f(b)\in A-A$. This contradiction shows that $|f^{-1}(A+x)|\le1$ and hence $f^{-1}(A+x)\in\I$. So, $f\in W_\I(A)$.
\end{proof}

\begin{corollary}\label{c:-GHI} Let $\I$ be a semi-ideal of subsets of a zero-dimensional compact metrizable space $K=\bigcup\I$. If an analytic subset $A$ of a Polish group $X$ is not generically Haar-$\I$, then the difference $A-A$ is not meager in $X$ and the set $(A-A)-(A-A)$ is a neighborhood of zero in $X$.
\end{corollary}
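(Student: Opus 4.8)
The plan is to derive this purely as a combination of Theorem~\ref{t:-GHI} and the Piccard--Pettis Corollary~\ref{c:PP}, so essentially no new work is needed beyond bookkeeping. First I would argue by contraposition using Theorem~\ref{t:-GHI}: that theorem says that if $A-A$ is meager in $X$, then $A$ is generically Haar-$\I$. Hence, if $A$ is \emph{not} generically Haar-$\I$, the difference $A-A$ cannot be meager in $X$. This already gives the first assertion of the corollary, and it is worth noting that for this step the analyticity of $A$ plays no role — it is used only in the second assertion.

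Next I would upgrade ``not meager'' to ``$(A-A)-(A-A)$ is a neighborhood of zero'' via Corollary~\ref{c:PP}. The key observation is that $A-A$ is again an analytic subset of $X$: since $A$ is analytic, the product $A\times A$ is analytic, and $A-A$ is its image under the continuous difference map $\delta\colon X\times X\to X$, $\delta\colon(x,y)\mapsto x-y$, hence analytic (as a continuous image of a Polish space). Now apply Corollary~\ref{c:PP} to the non-meager analytic set $B:=A-A$: its difference $B-B=(A-A)-(A-A)$ is a neighborhood of $\theta$ in $X$. This completes the argument.

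Since everything reduces to two already-proved statements, there is no genuine obstacle here; the only points requiring a line of justification are (i) invoking Theorem~\ref{t:-GHI} in contrapositive form (valid because the hypothesis $K=\bigcup\I$ with $K$ zero-dimensional compact metrizable is inherited verbatim), and (ii) the elementary verification that $A-A$ is analytic so that Corollary~\ref{c:PP} is applicable. I would therefore keep the write-up to a short paragraph, explicitly flagging that analyticity of $A$ is needed only to conclude that $(A-A)-(A-A)$ is a neighborhood of zero, whereas non-meagerness of $A-A$ holds for arbitrary $A$ that fails to be generically Haar-$\I$.
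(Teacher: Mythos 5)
Your proposal is correct and is essentially identical to the paper's own proof: contraposition of Theorem~\ref{t:-GHI} gives that $A-A$ is not meager, and then Corollary~\ref{c:PP} applied to the analytic set $A-A$ (a continuous image of $A\times A$) gives that $(A-A)-(A-A)$ is a neighborhood of zero. Your remark that analyticity of $A$ is needed only for the second assertion is a correct and harmless addition.
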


\begin{proof} If an analytic subset $A\subset X$ is not generically Haar-$\I$, then the set $A-A$ is not meager in $X$ by Theorem~\ref{t:-GHI}. Since $A$ is analytic, the difference set $A-A$ is analytic too (being a continuous image of the product $A\times A$). By Corollary~\ref{c:PP}, the set $(A-A)-(A-A)$ is a neighborhood of zero in $X$.
\end{proof}

Corollary~\ref{c:-GHI} implies another corollary.

\begin{corollary}\label{c:St-}
Let $\I$ be a semi-ideal of subsets of a zero-dimensional compact metrizable space $K=\bigcup\I$. For any Polish group $X$ the semi-ideals $\mathcal{GHI}$ and $\HI$ have the weak Steinhaus property.
\end{corollary}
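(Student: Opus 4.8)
The plan is to deduce Corollary~\ref{c:St-} directly from Corollary~\ref{c:-GHI} together with the fact that every Borel (indeed analytic) set which is not in one of the semi-ideals $\mathcal{GHI}$ or $\HI$ automatically fails to be generically Haar-$\I$. First I would recall the definitions: a Borel set $B\subset X$ belongs to $\mathcal{GHI}$ iff it is contained in a Borel generically Haar-$\I$ set, and belongs to $\HI$ iff it is contained in a Borel Haar-$\I$ set. So it suffices to prove the weak Steinhaus property for the larger of the two families; since every generically Haar-$\I$ set is Haar-$\I$ (its witness set $W_\I(B)$ is comeager, hence nonempty), we have $\mathcal{GHI}\subseteq\HI$, and it is enough to handle $\mathcal{GHI}$.

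The key step: take any Borel set $B\notin\mathcal{GHI}$ in $X$. Then $B$ is in particular not itself generically Haar-$\I$ (if it were, it would belong to $\mathcal{GHI}$ by definition). Since $B$ is Borel, it is analytic, so Corollary~\ref{c:-GHI} applies verbatim: the difference $B-B$ is not meager in $X$ (and moreover $(B-B)-(B-B)$ is a neighborhood of zero). Not being meager is exactly the conclusion required by the weak Steinhaus property. The same argument word for word works for $\HI$: if $B\notin\HI$ then $B$ is not Haar-$\I$, hence $W_\I(B)=\emptyset$, hence $B$ is not generically Haar-$\I$, and again Corollary~\ref{c:-GHI} gives that $B-B$ is non-meager.

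I do not anticipate any real obstacle here — this is purely a bookkeeping deduction from Corollary~\ref{c:-GHI}, whose proof (via Theorem~\ref{t:-GHI} and Corollary~\ref{c:PP}) has already been carried out. The only point requiring a word of care is the logical direction: we need "$B\notin\mathcal{GHI}$" to imply "$B$ is not generically Haar-$\I$", which holds because $\mathcal{GHI}$ contains all Borel generically Haar-$\I$ sets, so a Borel set outside $\mathcal{GHI}$ cannot be generically Haar-$\I$. Below is the proof.

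\begin{proof} Let $\I$ be a semi-ideal on a zero-dimensional compact metrizable space $K=\bigcup\I$ and let $X$ be a Polish group. We must show that for any Borel set $B\notin\mathcal{GHI}$ (resp. $B\notin\HI$) the difference $B-B$ is not meager in $X$.

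First assume $B\notin\mathcal{GHI}$. Since $\mathcal{GHI}$ is the semi-ideal generated by Borel generically Haar-$\I$ sets in $X$, the Borel set $B$ cannot be generically Haar-$\I$ (otherwise $B\in\mathcal{GHI}$). Being Borel, $B$ is analytic, so Corollary~\ref{c:-GHI} applies and yields that $B-B$ is not meager in $X$ (and, in addition, $(B-B)-(B-B)$ is a neighborhood of zero in $X$).

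Now assume $B\notin\HI$. Then $B$ is not Haar-$\I$, which means that its witness set $W_\I(B)$ is empty; in particular $W_\I(B)$ is not comeager in $\C(K,X)$, so $B$ is not generically Haar-$\I$. Again $B$ is analytic, so Corollary~\ref{c:-GHI} shows that $B-B$ is not meager in $X$.

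In both cases the difference $B-B$ is non-meager, so the semi-ideals $\mathcal{GHI}$ and $\HI$ have the weak Steinhaus property.
\end{proof}
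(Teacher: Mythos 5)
Your proof is correct and is exactly the deduction the paper intends: the paper derives Corollary~\ref{c:St-} immediately from Corollary~\ref{c:-GHI} by the same observation that a Borel set outside $\mathcal{GHI}$ (resp.\ outside $\HI$) cannot be generically Haar-$\I$, so its self-difference is non-meager. The only nit is the phrase ``it suffices to prove the weak Steinhaus property for the larger of the two families'' in your plan --- the reduction actually goes the other way (weak Steinhaus passes from a subfamily to any superfamily, so handling the smaller family $\mathcal{GHI}$ suffices) --- but your formal proof treats both cases explicitly, so nothing is affected.
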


\begin{problem}\label{prob:sigma-St}
Let $\I$ be a semi-ideal of subsets of a zero-dimensional compact metrizable space $K=\bigcup\I$. Have the $\sigma$-ideals $\sigma\overline{\HI}$ and $\sigma\overline{\GHI}$ the weak Steinhaus property for every locally compact Polish group?
\end{problem}

\begin{remark} By Corollary~\ref{c:sN->S+} and Theorem~\ref{t:HM=M}, the answer to Problem~\ref{prob:sigma-St} is affirmative for the ideals $\N$ and $\mathcal M$.
\end{remark}

\begin{proposition}\label{p:rest} Let $\I$ be a semi-ideal on a zero-dimensional compact metrizable space $K$ and $Z$ be a nonempty closed subset of $K$. Any generically Haar-$\I$ set $A$ in a Polish group $X$ is generically Haar-$\I|Z$ for the semi-ideal $\I|Z=\{I\cap Z:I\in\I\}$ on $Z$.
\end{proposition}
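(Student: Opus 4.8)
*Let $\I$ be a semi-ideal on a zero-dimensional compact metrizable space $K$ and $Z$ be a nonempty closed subset of $K$. Any generically Haar-$\I$ set $A$ in a Polish group $X$ is generically Haar-$\I|Z$ for the semi-ideal $\I|Z=\{I\cap Z:I\in\I\}$ on $Z$.*

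The plan is to reduce the statement to the Open Mapping Principle (Corollary~\ref{c:OMP}) and to Lemma~\ref{l:M-coM}, applied to the restriction homomorphism $r\colon\C(K,X)\to\C(Z,X)$, $r\colon f\mapsto f|Z$. This map is a continuous homomorphism of the topological groups $\C(K,X)$ and $\C(Z,X)$, both of which are Polish since $K,Z$ are compact metrizable and $X$ is Polish. The first step is to check that $r$ is surjective, and for this it suffices to produce a continuous retraction $\varrho\colon K\to Z$: then every $g\in\C(Z,X)$ equals $r(g\circ\varrho)$ for the continuous map $g\circ\varrho\colon K\to X$.

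The existence of such a retraction is the technical heart of the argument and the only place where the zero-dimensionality of $K$ is really used. Fix a compatible metric on $K$ and a sequence $(\mathcal P_n)_{n\in\w}$ of finite clopen partitions of $K$ with $\mathcal P_0=\{K\}$, each $\mathcal P_{n+1}$ refining $\mathcal P_n$, and $\max_{P\in\mathcal P_n}\diam P\to 0$ (such partitions exist because in a zero-dimensional compact metrizable space every open cover has a finite clopen refinement, and common refinements of clopen partitions are again clopen partitions). For $x\in K$ let $P_n(x)\in\mathcal P_n$ be the member containing $x$. If $P_n(x)\cap Z\neq\emptyset$ for all $n$, then $x\in Z$, because $Z$ is closed and $\diam P_n(x)\to 0$; otherwise let $n(x)$ be the largest $n$ with $P_n(x)\cap Z\neq\emptyset$. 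Choosing a point $z_P\in P\cap Z$ for each clopen set $P$ occurring in some $\mathcal P_n$ that meets $Z$, define $\varrho(x):=x$ for $x\in Z$ and $\varrho(x):=z_{P_{n(x)}(x)}$ for $x\notin Z$. On the clopen set $P_{n(x)+1}(x)$, which is disjoint from $Z$, the functions $y\mapsto n(y)$ and $y\mapsto P_{n(y)}(y)$ are constant, so $\varrho$ is locally constant off $Z$; and if $x_k\to x\in Z$ then eventually $x_k\in P_m(x)$, which forces $n(x_k)\ge m$, so $n(x_k)\to\infty$ and $\varrho(x_k)\in P_{n(x_k)}(x_k)$ with $\diam P_{n(x_k)}(x_k)\to 0$, hence $\varrho(x_k)\to x$. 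Thus $\varrho$ is a continuous retraction and $r$ is surjective. (Alternatively one may simply invoke the classical fact that every nonempty closed subset of a zero-dimensional compact metrizable space is a retract.) Being a continuous surjective homomorphism between Polish groups, $r$ is open by Corollary~\ref{c:OMP}, and so Lemma~\ref{l:M-coM}(2) applies: a set $B\subset\C(Z,X)$ is comeager in $\C(Z,X)$ if and only if $r^{-1}(B)$ is comeager in $\C(K,X)$.

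It remains to relate the witness sets. For every $f\in\C(K,X)$ and $x\in X$ we have $(f|Z)^{-1}(A+x)=f^{-1}(A+x)\cap Z$, so if $f^{-1}(A+x)\in\I$ then $(f|Z)^{-1}(A+x)\in\I|Z$ by the definition of $\I|Z$. Hence $r\big(W_\I(A)\big)\subset W_{\I|Z}(A)$, i.e. $W_\I(A)\subset r^{-1}\big(W_{\I|Z}(A)\big)$. If $A$ is generically Haar-$\I$, then $W_\I(A)$ is comeager in $\C(K,X)$, whence $r^{-1}\big(W_{\I|Z}(A)\big)$ is comeager in $\C(K,X)$, and therefore $W_{\I|Z}(A)$ is comeager in $\C(Z,X)$ by the equivalence above; this is exactly the assertion that $A$ is generically Haar-$\I|Z$. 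The only nontrivial point in the whole argument is the surjectivity of $r$, that is, the construction of the retraction $\varrho$.
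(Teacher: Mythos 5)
Your proof is correct and follows essentially the same route as the paper's: restrict along the retraction $Z\hookrightarrow K$ to get a surjective continuous homomorphism $r\colon\C(K,X)\to\C(Z,X)$, note it is open, and transfer comeagerness of the witness set via Lemma~\ref{l:M-coM}(2) and the inclusion $W_\I(A)\subset r^{-1}(W_{\I|Z}(A))$. The only difference is that you construct the retraction explicitly where the paper simply invokes the standard fact that a nonempty closed subset of a zero-dimensional compact metrizable space is a retract; your construction is correct.
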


\begin{proof} Since $A$ is generically Haar-$\I$, its witness set $W_\I(A)$ is comeager in the function space $\C(K,X)$. The zero-dimensionality of $K$ ensures that $Z$ is a retract of $K$. Consequently, the restriction operator $r:\C(K,X)\to \C(Z,X)$ is surjective and by \cite[4.27]{Luk} is open. By Lemma~\ref{l:M-coM}, the image $r(W_\I(A))$ is comeager in $\C(Z,X)$. It remains to observe that $r(W_\I(A))\subset W_{\I|Z}(A)$ and conclude that the set $A$ is generically Haar-$\I|Z$.
\end{proof}

Now we explore the relation of the semi-ideal $\mathcal{GHI}$ to other semi-ideals on Polish groups.

\begin{proposition}\label{p:GHI->M} Let $\I$ be a proper semi-ideal on a zero-dimensional compact metrizable space $K$.
Each generically Haar-$\I$ set $B$ with the Baire Property in a Polish group $X$ is meager.
\end{proposition}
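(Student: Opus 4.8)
The plan is to prove the contrapositive: if $B$ has the Baire property and is \emph{not} meager in $X$, then its witness set $W_\I(B)$ is not comeager in $\C(K,X)$, so $B$ is not generically Haar-$\I$. First, arguing exactly as in the proof of Corollary~\ref{c:PP} (the Pettis trick), the Baire property and non-meagerness of $B$ yield a nonempty open set $U\subseteq X$ together with a meager $F_\sigma$-set $M=\bigcup_{n\in\w}M_n\subseteq X$, where each $M_n$ is closed and nowhere dense, such that $U\setminus M\subseteq B$. Then I would work with the nonempty open subset $\mathcal U:=\{f\in\C(K,X):f(K)\subseteq U\}$ of $\C(K,X)$; it is nonempty because it contains every constant function with value in $U$, and it is open because $K$ is compact and $U$ is open (it is a subbasic set for the compact-open topology).

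The core of the argument is the claim that \emph{no} $f\in\mathcal U$ with finite range belongs to $W_\I(B)$. Given such an $f$ with range $\{p_1,\dots,p_m\}\subseteq U$, pick $\e>0$ with $B(p_i;\e)\subseteq U$ for every $i\le m$. The set $N:=\bigcup_{i\le m}\bigcup_{n\in\w}(p_i-M_n)$ is a finite-times-countable union of closed nowhere dense subsets of $X$, hence meager; since $X$ is a Polish (so Baire) group, $B(\theta;\e)\setminus N\ne\emptyset$. Choosing $x\in B(\theta;\e)\setminus N$ and using invariance of the metric $\rho$, for each $i\le m$ the point $p_i-x$ satisfies $\rho(p_i-x,p_i)=\rho(\theta,x)<\e$, so $p_i-x\in B(p_i;\e)\subseteq U$, while $x\notin p_i-M$ gives $p_i-x\notin M$; hence $p_i-x\in U\setminus M\subseteq B$. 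Thus $f(K)-x\subseteq B$, i.e.\ $f^{-1}(B+x)=K$, and since $\I$ is proper we have $K\notin\I$, so $f\notin W_\I(B)$, proving the claim.

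To conclude, recall that since $K$ is zero-dimensional the subgroup $\F(K,X)$ of finite-range continuous functions is dense in $\C(K,X)$; hence $\mathcal U\cap\F(K,X)$ is dense in $\mathcal U$, and by the claim it is disjoint from $W_\I(B)$. Therefore $W_\I(B)\cap\mathcal U$ misses a dense subset of the Baire space $\mathcal U$, so it is neither dense nor comeager in $\mathcal U$; consequently $W_\I(B)$ is not comeager in $\C(K,X)$. This contradicts the hypothesis that $B$ is generically Haar-$\I$, so $B$ must be meager.

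The only nontrivial step is the finite-range claim of the second paragraph, and it is worth stressing why we must land \emph{exactly} on $f^{-1}(B+x)=K$ rather than merely on a comeager preimage: $\I$ is assumed only to be a \emph{proper semi-ideal}, so the full space $K$ is the sole subset of $K$ that is guaranteed not to belong to $\I$. This is precisely what dictates restricting attention to finite-range witnesses (which can be translated so that their whole finite image falls into the comeager-in-$U$ piece $U\setminus M$ of $B$) and then exploiting the density of $\F(K,X)$ coming from zero-dimensionality of $K$; neither local compactness nor crowdedness of $X$ is needed.
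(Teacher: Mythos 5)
Your reduction to the Pettis-type decomposition $U\setminus M\subseteq B$ and your perturbation of a finite-range $f$ (translating by some $x\in B(\theta;\e)\setminus\bigcup_{i\le m}(p_i-M)$ so that $f(K)-x\subseteq U\setminus M\subseteq B$, whence $f^{-1}(B+x)=K\notin\I$) are both correct. The gap is in the very last inference: from the fact that $W_\I(B)\cap\mathcal U$ is disjoint from the \emph{dense} set $\mathcal U\cap\F(K,X)$ you conclude that $W_\I(B)\cap\mathcal U$ is not comeager in $\mathcal U$. That implication is false: a comeager set can miss a dense set (the irrationals are comeager in $\IR$ and miss $\Q$). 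Worse, in the situation at hand the set you remove really is small: for a crowded $K$ (e.g.\ $K=2^\w$, the case that matters) the set $\{f:|f(K)|\le n\}$ is closed and nowhere dense in $\C(K,X)$, so $\F(K,X)$ is meager, and its complement in $\mathcal U$ is comeager. Hence your argument is compatible with $W_\I(B)$ being comeager and yields no contradiction. Note also that the obvious attempt to extend your translation trick from finite-range $f$ to arbitrary $f\in\mathcal U$ fails: the set of bad translations $\bigcup_{n}(f(K)-M_n)$ need not be meager, since a difference of two nowhere dense compact sets can contain an open set.

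The repair is to produce a \emph{non-meager} (not merely dense) family of non-witnesses, and your own finite-range approximation already contains the needed ingredient. Put $G:=U\setminus M$, a dense $G_\delta$-subset of $U$ contained in $B$, and write $G=\bigcap_{n\in\w}V_n$ with each $V_n$ open and dense in $U$. Then $\C(K,G)=\bigcap_{n\in\w}\C(K,V_n)$, each $\C(K,V_n)$ is open in the nonempty open set $\mathcal U=\C(K,U)$, and each is dense there: given $f\in\mathcal U$ and $\e>0$, use zero-dimensionality of $K$ to replace $f$ by a nearby finite-range map into $U$ and then nudge its finitely many values into $V_n$. So $\C(K,G)$ is a dense $G_\delta$-set in $\mathcal U$, hence non-meager in $\C(K,X)$, and every $f\in\C(K,G)$ satisfies $f^{-1}(B)\supseteq f^{-1}(G)=K\notin\I$, so $f\notin W_\I(B)$. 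A comeager set must meet every non-meager set with the Baire property, so $W_\I(B)$ cannot be comeager. This is exactly the route the paper takes; once you switch from ``dense set of finite-range non-witnesses'' to ``comeager-in-$\mathcal U$ set of non-witnesses,'' no translation argument is needed at all.
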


\begin{proof}
For the proof by contradiction suppose that the set $B$ is not meager in $X$. Since $B$ has the Baire property in $X$, for some open set $U\subset X$ the symmetric difference $U\triangle B$ is meager in $X$. Since $B$ is not meager, the open set $U$ is not empty. The meager set $U\setminus B\subset U\triangle B$ is contained in some meager $F_\sigma$-set $F\subset U$. Then $G_\delta$-set $G=U\setminus F\subset U\cap B$ is dense in $U$.

The zero-dimensionality of $K$ implies that $\C(K,G)$ is a dense $G_\delta$-set in the open subset $\C(K,U)$ of the function space $\C(K,X)$. Since the witness set $W_\I(B)$ is comeager in $\C(K,X)$, the intersection $\C(K,G)\cap W_\I(B)$ is not empty and hence contains some function $f:K\to G\subset B$. Then $K=f^{-1}(B)\in\I$ which contradicts the properness of the semi-ideal $\I$.
\end{proof}

\begin{proposition}\label{p:cHI=>HM} Let $\I$ be a semi-ideal of sets with empty interiors on a compact metrizable space $K$. If an $F_\sigma$-set $A$ in a Polish group $X$ is generically Haar-$\I$, then it is generically Haar-$\M$, where $\M$ is the $\sigma$-ideal of meager sets in $K$.
\end{proposition}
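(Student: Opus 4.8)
Proposition~\ref{p:cHI=>HM} asserts: if $\I$ is a semi-ideal of sets with empty interiors on a compact metrizable $K$, and an $F_\sigma$-set $A\subset X$ is generically Haar-$\I$, then $A$ is generically Haar-$\M$, where $\M$ is the ideal of meager sets in $K$.

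\begin{proof}
The plan is to show that the witness set $W_\I(A)$ is (up to a meager set) contained in the witness set $W_\M(A)$, which — combined with the hypothesis that $W_\I(A)$ is comeager in $\C(K,X)$ — will give that $W_\M(A)$ is comeager, i.e. $A$ is generically Haar-$\M$. The key point is a ``localization'' observation: since every member of $\I$ has empty interior in $K$, a function $f$ with $f^{-1}(A+x)\in\I$ in particular has $f^{-1}(A+x)$ \emph{nowhere dense} \emph{provided} $f^{-1}(A+x)$ happens to be closed. Writing $A=\bigcup_{n\in\w}A_n$ as an increasing union of closed sets $A_n$, for each fixed $f$ and $x$ the set $f^{-1}(A+x)=\bigcup_n f^{-1}(A_n+x)$ is an increasing union of closed sets; if each $f^{-1}(A_n+x)$ lies in $\I$ it has empty interior, hence is nowhere dense, hence $f^{-1}(A+x)$ is meager. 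So the strategy reduces to: for a comeager set of $f\in\C(K,X)$, and \emph{all} $x\in X$ and all $n$, we have $f^{-1}(A_n+x)\in\I$ whenever this holds for $f^{-1}(A+x)$, which is automatic since $A_n\subset A$ and $\I$ is a semi-ideal. Thus already $W_\I(A)\subset W_\M(A)$ directly: for $f\in W_\I(A)$ and any $x\in X$, each $f^{-1}(A_n+x)\subset f^{-1}(A+x)\in\I$, so $f^{-1}(A_n+x)\in\I$ has empty interior in $K$, and being closed it is nowhere dense, whence $f^{-1}(A+x)=\bigcup_n f^{-1}(A_n+x)$ is meager in $K$, i.e. $f\in W_\M(A)$.

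With this inclusion $W_\I(A)\subset W_\M(A)$ in hand, the conclusion is immediate: $A$ generically Haar-$\I$ means $W_\I(A)$ is comeager in $\C(K,X)$; since $W_\M(A)\supset W_\I(A)$, the set $W_\M(A)$ is also comeager in $\C(K,X)$; therefore $A$ is generically Haar-$\M$. I should double-check that the representation $A=\bigcup_n A_n$ with $A_n$ closed and increasing is legitimate — this is exactly what ``$F_\sigma$'' gives (take finite unions of the defining closed sets to make the sequence increasing), and nothing more about $X$ is needed. I should also note that the argument uses only that each member of $\I$ has empty interior, not any closure or ideal properties of $\I$ beyond being a semi-ideal (needed so that subsets $f^{-1}(A_n+x)$ of $f^{-1}(A+x)\in\I$ remain in $\I$).

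There is essentially no obstacle here; the one thing to be careful about is the logical quantifier order. We are not claiming $f^{-1}(A_n+x)$ is in $\I$ for an \emph{arbitrary} $f$ — only for $f\in W_\I(A)$, and then it follows from $f^{-1}(A_n+x)\subset f^{-1}(A+x)\in\I$ and the semi-ideal property. Once that is pinned down, the only analytic content is the elementary fact that a countable increasing union of closed nowhere dense sets is meager (a special case of the definition of meager), and that a closed subset of $K$ with empty interior is nowhere dense. Hence the proof is a short chain of inclusions as displayed above.
\end{proof}
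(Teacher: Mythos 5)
Your proof is correct and is essentially the paper's argument: both establish the inclusion $W_\I(A)\subset W_\M(A)$ by reducing to the Baire-category fact that an $F_\sigma$ set whose closed pieces all have empty interior is meager, the only cosmetic difference being that you argue directly via the closed pieces $f^{-1}(A_n+x)\in\I$ (using the semi-ideal property) while the paper argues by contradiction from the whole set $f^{-1}(A+x)\in\I$ having empty interior. Nothing further is needed.
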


\begin{proof} If $A$ is generically Haar-$\I$ in $X$, then the set $W_{\I}(A)=\{f\in \C(K,X):\forall x\in X\;\;f^{-1}(A+x)\in\I\}$ is comeager in the function space $\C(K,X)$. Observe that for every $f\in W_{\I}(A)$ and $x\in X$ the preimage $f^{-1}(A+x)$ is an $F_\sigma$-subset of $K$ that belongs to the semi-ideal $\I$.
Assuming that the $F_\sigma$-set $f^{-1}(A+x)$ is not meager in $K$, we can conclude that the set $f^{-1}(A+x)\in\I$ has nonempty interior in $K$, which contradicts the choice of the semi-ideal $\I$. This contradiction shows that $W_{\I}(A)\subset W_\M(A)$, which implies that the set $W_\M(A)$ is comeager in $\C(K,X)$ and $A$ is generically Haar-$\M$.
\end{proof}

Proposition~\ref{p:GHI->M} and Lemma~\ref{E-in-C} imply:

\begin{corollary} Let $\I$ be a semi-ideal of subsets of a zero-dimensional compact metrizable space $K$. For any Polish group $X$ we have the inclusions
$$\GH\I\subset \M\cap\EHI\subset\EHI\subset\HI.$$
\end{corollary}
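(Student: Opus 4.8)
The plan is to split the chain of three inclusions into the two formal inclusions on the right, which need no work, and the substantive inclusion $\GHI\subset\M\cap\EHI$ on the left. Indeed $\M\cap\EHI\subset\EHI$ is just the inclusion of an intersection into one of its members, while $\EHI\subset\HI$ holds because an injective continuous map $f:K\to X$ is in particular a continuous map, so any function witnessing that a Borel set is injectively Haar-$\I$ also witnesses that it is Haar-$\I$. Hence the whole content is to prove $\GHI\subset\M\cap\EHI$.

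Next I would unwind the definition of $\GHI$ as the semi-ideal generated by the Borel generically Haar-$\I$ sets; since $\M$ is a $\sigma$-ideal and $\EHI$ a semi-ideal, both being downward closed, it suffices to show that an arbitrary Borel generically Haar-$\I$ set $B\subset X$ lies in $\M\cap\EHI$. For $B\in\M$ I would use that every Borel set has the Baire property (\cite[11.5]{K}) and then apply Proposition~\ref{p:GHI->M} (stated for proper $\I$, as is tacitly assumed here), which gives directly that $B$ is meager.

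For $B\in\EHI$ the idea is that the witness set $W_\I(B)$, being comeager in the Polish space $\C(K,X)$, must meet the set $\E(K,X)$ of injective continuous maps. I would first observe that there is nothing to prove when $B=\emptyset$, since every semi-ideal contains $\emptyset$; and if $B$ is a nonempty meager Borel set, then $X$ carries a nonempty meager subset, so $X$ is non-discrete and therefore — being a homogeneous topological group — crowded. Under exactly this hypothesis Lemma~\ref{E-in-C} asserts that $\E(K,X)$ is a dense $G_\delta$, hence comeager, subset of $\C(K,X)$. Then $W_\I(B)\cap\E(K,X)$ is comeager, and in particular nonempty in the Baire space $\C(K,X)$, and any $f$ in this intersection is an injective continuous witness that $B$ is injectively Haar-$\I$; thus $B\in\EHI$, and the inclusion $\GHI\subset\M\cap\EHI$ follows.

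I do not anticipate a real obstacle: the argument is an assembly of Proposition~\ref{p:GHI->M} (for meagerness) and Lemma~\ref{E-in-C} (for upgrading a witness function to an injective one). The only step needing a little care is the passage from comeagerness of $W_\I(B)$ to the existence of an injective witness in the degenerate cases, which the homogeneity remark and the semi-ideal conventions dispose of.
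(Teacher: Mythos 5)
Your proposal is correct and follows exactly the route the paper intends: the paper gives no written proof beyond the phrase ``Proposition~\ref{p:GHI->M} and Lemma~\ref{E-in-C} imply,'' and your argument is precisely the assembly of those two facts (meagerness of a Borel generically Haar-$\I$ set, then comeagerness of $\E(K,X)$ to extract an injective witness from the comeager set $W_\I(B)$), with the degenerate and properness caveats handled sensibly.
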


\begin{theorem}\label{t:12.14} Let $\I$ be a semi-ideal on a zero-dimensional compact metrizable space $K$ such that $\I\cap\K(K)$ is a $G_\delta$-set in the hyperspace $\K(K)$. Then for any compact set $A$ in a Polish group $X$ the witness set $W_{\I}(A)$ is a $G_\delta$-set in $\C(K,X)$. Consequently, $A$ is Haar-$\I$ if and only if $A$ is generically Haar-$\I$.
\end{theorem}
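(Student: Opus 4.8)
The plan is to show that the witness set $W_{\I}(A)$ is a $G_\delta$-set in $\C(K,X)$ whenever $A\subset X$ is compact and $\I\cap\K(K)$ is $G_\delta$ in $\K(K)$; the final "Consequently" then follows immediately from the trichotomy in Theorem~\ref{t:tricho}, since a nonempty $G_\delta$ subspace of a Polish space is Baire, so the alternative (2) is excluded and $W_\I(A)\ne\emptyset$ forces $W_\I(A)$ comeager.

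First I would analyze the structure of $W_\I(A)$. For a fixed $f\in\C(K,X)$ and $x\in X$, the compactness of $A$ implies that $f^{-1}(A+x)$ is a \emph{closed} (hence compact) subset of $K$. The key point is that the map $(f,x)\mapsto f^{-1}(A+x)$ should be shown to be upper semicontinuous into $\K(K)$ (with a value in $\F(K)$ that may be empty — one must either pass to $\K(K)$ only where the preimage is nonempty, or replace $\K(K)$ by $\F(K)$ with the Fell topology and note that $\I\cap\F(K)$ inherits the $G_\delta$ property since $\emptyset\in\I$ and $\{\emptyset\}$ is $G_\delta$ in $\F(K)$). Concretely: if $f^{-1}(A+x)\subset U$ for an open $U\subset K$, then by compactness of $K\setminus U$ and closedness of $A$ there is a neighborhood $V$ of $x$ and $\e>0$ such that every $g$ with $\hat\rho(g,f)<\e$ and every $x'\in V$ satisfy $g^{-1}(A+x')\subset U$ — this is a routine compactness argument using the uniform continuity of $f$ and the metric on $X$.

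Next I would leverage this semicontinuity. Fix a countable dense set $D\subset X$. The plan is to prove $W_\I(A)=\{f:\forall x\in D\ f^{-1}(A+x)\in\overline{\I}\}$ is unnecessary; instead, write a countable base $(B_n)$ for $K$ and express membership in $\I\cap\K(K)$ via the given $G_\delta$ presentation $\I\cap\K(K)=\bigcap_{m}O_m$ with $O_m$ open in $\K(K)$. For each $m$, the set $\{(f,x): f^{-1}(A+x)\in O_m\}$ is open in $\C(K,X)\times X$ by upper semicontinuity of $(f,x)\mapsto f^{-1}(A+x)$ (preimages of Fell-open "lower" conditions $U^+$ need lower semicontinuity, so here one must be careful: $O_m$ open in the Vietoris topology mixes $U^+$ and $K^-$ conditions). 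The cleanest route: show $(f,x)\mapsto f^{-1}(A+x)$ is continuous on the (open) set where it is nonempty — upper semicontinuity gives the $K^-$-side, and for the $U^+$-side one uses that $A$ is compact and $f$ continuous to get that a point of $f^{-1}(A+x)$ persists under small perturbations. Granting continuity of this map where defined, $\{f:\forall x\ f^{-1}(A+x)\in\I\}$ becomes $\bigcap_{m}\{f:\forall x\ f^{-1}(A+x)\in O_m\}$, and each inner set is $\{f: (\{x: f^{-1}(A+x)\ne\emptyset\})\subset \mathrm{pr}_X(\text{closed complement})\}$ — by compactness of $A$ the section $\{x:f^{-1}(A+x)\ne\emptyset\}=f(K)-A$ is compact, so quantifying over it is a closed condition, and one concludes each $\{f:\forall x\ f^{-1}(A+x)\in O_m\}$ is open in $\C(K,X)$. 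Hence $W_\I(A)=\bigcap_m(\text{open})$ is $G_\delta$.

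The main obstacle I anticipate is the careful handling of the empty-preimage case and the exact (semi)continuity of $(f,x)\mapsto f^{-1}(A+x)$ in the Vietoris/Fell topology: one direction (the $K^-$, i.e. "stays inside a given open set") is genuine upper semicontinuity and is the substantive compactness argument, while the other ($U^+$, "meets a given open set") needs the compactness of $A$ to keep a witnessing point. Once that map is understood, combining it with the $G_\delta$-presentation of $\I\cap\K(K)$ and the fact that the relevant $x$-section is compact (so "$\forall x$" is a closed, hence harmless, quantifier) is bookkeeping. The concluding equivalence "Haar-$\I$ $\Leftrightarrow$ generically Haar-$\I$" is then immediate: a nonempty $G_\delta$ in the Polish space $\C(K,X)$ is Baire and dense in its closure, and by Theorem~\ref{t:tricho} applied to the ideal $\I$ the only possibilities for $W_\I(A)$ are empty, meager-and-dense, or a dense Baire subspace; being $G_\delta$ rules out the meager-and-dense case, so $W_\I(A)\ne\emptyset$ implies $W_\I(A)$ is comeager, i.e. $A$ is generically Haar-$\I$, and the reverse implication is trivial.
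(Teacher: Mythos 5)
Your concluding step (trichotomy plus ``nonempty $G_\delta$ is not meager-and-dense'') matches the paper and is fine, but the main technical claim in your argument --- that $(f,x)\mapsto f^{-1}(A+x)$ is \emph{continuous} into $\K(K)$ on the set where the preimage is nonempty --- is false, and the argument collapses there. Only the upper-semicontinuous half (the $K^-$ side) holds. A point of $f^{-1}(A+x)$ does \emph{not} persist under small perturbations of $f$: perturbing $f$ near a fiber can destroy that fiber or recreate it somewhere far away in $K$, so the $U^+$ side fails. Concretely, take $\I=[K]^{\le 1}$ (a semi-ideal with $\I\cap\K(K)$ the closed set of singletons, hence $G_\delta$) and $A=\{\theta\}$; then $W_\I(A)=\E(K,X)$. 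Writing $\I\cap\K(K)=\bigcap_m O_m$ with $O_m=\{C:\diam C<\tfrac1m\}$, your sets $\{f:\forall x\;f^{-1}(A+x)\in O_m\}=\{f:\forall x\;\diam f^{-1}(x)<\tfrac1m\}$ are \emph{not} open: one can choose an injective $f$ whose image has two points $f(k_0),f(k_1)$ that are $\e$-close while $k_0,k_1$ are far apart in $K$, and an $\e$-perturbation $g$ of $f$ with $g(k_1)=g(k_0)$ then has a fiber of large diameter. So $W_\I(A)$ is $G_\delta$, but not via your decomposition into open sets; the ``$\forall x$ over a compact section is a harmless quantifier'' step never gets off the ground because the inner condition is not open.

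The ingredient you are missing is the \emph{semi-ideal} structure of $\I$, which lets you avoid lower semicontinuity entirely by working with the complement. Since $\I\cap\K(K)$ is $G_\delta$ in the compact metrizable space $\K(K)$, the complement $\K(K)\setminus\I$ is $F_\sigma$, hence an increasing union of compact sets $\K_n$; because $\I$ is downward closed, each $\K_n$ may be replaced by its upward closure ${\uparrow}\K_n=\{B\in\K(K):\exists D\in\K_n,\ D\subset B\}$, which is still closed and still disjoint from $\I$. Then $\C(K,X)\setminus W_\I(A)=\bigcup_n\F_n$ with $\F_n=\{f:\exists x\;f^{-1}(A+x)\in\K_n\}$, and each $\F_n$ is closed: given $f_i\to f_\infty$ with witnesses $x_i$ and $K_i=f_i^{-1}(A+x_i)\in\K_n$, the witnesses lie in the compact set $\bigl(f_\infty(K)\cup\bigcup_i f_i(K)\bigr)-A$, so after passing to a subsequence $x_i\to x$ and $K_i\to K_\infty\in\K_n$, whence $f_\infty(K_\infty)\subset A+x$ and therefore $\K_n\ni K_\infty\subset f_\infty^{-1}(A+x)$, giving $f_\infty^{-1}(A+x)\in{\uparrow}\K_n=\K_n$. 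Only this one-sided containment is needed, which is exactly what the compactness argument (your $K^-$ half) provides.
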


\begin{proof} Since $\I\cap\K(K)$ is a $G_\delta$-set in $\K(K)$, the complement $\K(K)\setminus\I$ can be written as the union $\bigcup_{n\in\w}\K_n$ of an increasing sequence $(\K_n)_{n\in\w}$ of compact sets $\K_n$ in the hyperspace $\K(K)$. It is easy to check that for every $n\in\w$ the set ${\uparrow}\K_n=\{B\in\K(K):\exists D\in\K_n\;\;D\subset B\}$ is closed in $\K(K)$ and does not intersect the semi-ideal $\I$. Replacing each set $\K_n$ by ${\uparrow}\K_n$, we can assume that $\K_n={\uparrow}\K_n$ for all $n\in\w$.

Observe that $$\C(K,X)\setminus W_\I(A)=\bigcup_{n\in\w}\{f\in \C(K,X):\exists x\in X\;\;f^{-1}(A+x)\in\K_n\}.$$
To see that $W_\I(A)$ is a $G_\delta$-set in $\C(K,X)$, it remains to prove that for every $n\in\w$ the set $$\F_n=\{f\in \C(K,X):\exists x\in X\;\;f^{-1}(A+x)\in\K_n\}$$is closed in $\C(K,X)$.

Take any sequence $\{f_i\}_{i\in\w}\subset\F_n$, convergent to some function $f_\infty$ in $\C(K,X)$. The convergence of $(f_i)_{i\in\w}$ to $f_\infty$ implies that the subset $D=f_\infty(K)\cup\bigcup_{i\in\w}f_i(K)$ of $X$ is compact. Then the set $D-A\subset X$ is compact, too.

For every $i\in\w$ find a point $x_i\in X$ such that $f_i^{-1}(A+x_i)\in\K_n\subset \K(K)\setminus\I\subset\K(K)\setminus\{\emptyset\}$. It follows that the set $K_i=f_i^{-1}(A+x_i)$ is not empty and hence $x_i$ belongs to the compact set $f_i(K)-A\subset D-A$. Replacing the sequence $(f_i)_{i\in\w}$ by a suitable subsequence, we can assume that the sequence $(x_i)_{i\in\w}$ converges to some point $x\in D-A$ and the sequence $(K_i)_{i\in\w}$ converges to some compact set $K_\infty$ in the compact space $\K_n\subset\K(K)$. Then the sequence $(f_i(K_i))_{i\in\w}$ converges to $f_\infty(K_\infty)$ and hence $f_\infty(K_\infty)=\lim_{i\to\infty}f_i(K_i)\subset \lim_{i\to\infty}(A+x_i)=A+x$. Then $\K_n\ni K_\infty\subset f_\infty^{-1}(A+x)$ and hence $f_\infty^{-1}(A+x)\in {\uparrow}\K_n=\K_n$, which means that $f_\infty\in \F_n$ and the set $\F_n$ is closed in $\C(K,X)$.

Therefore the witness set $W_\I(A)$ of $A$ is a $G_\delta$-set in $\C(K,X)$. If the set $A$ is Haar-$\I$, then the witness set $W_\I(A)$ is not empty and hence dense in $\C(K,X)$ by Theorem~\ref{t:tricho}. Being a dense $G_\delta$-set, the witness set $W_\I(A)$ is comeager in $\C(K,X)$, which means that $A$ is generically Haar-$\I$.
\end{proof}

\begin{corollary}\label{c:s-comp}
Let $\I$ be a $\sigma$-continuous semi-ideal on a zero-dimensional compact metrizable space $K$ such that $\I\cap\K(K)$ is a $G_\delta$-set in the hyperspace $\K(K)$. Then for any $\sigma$-compact set $A$ in a Polish group $X$ the witness set $W_{\I}(A)$ is a $G_\delta$-set in the function space $\C(K,X)$. Consequently, $A$ is Haar-$\I$ if and only if $A$ is generically Haar-$\I$.
\end{corollary}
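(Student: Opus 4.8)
The plan is to reduce the $\sigma$-compact case to the compact case already settled in Theorem~\ref{t:12.14}. Write the $\sigma$-compact set $A$ as an \emph{increasing} union $A=\bigcup_{n\in\w}A_n$ of compact subsets $A_n\subset X$; any representation of $A$ as a countable union of compacta can be made increasing by passing to finite unions. The first step is to prove the identity
$$W_\I(A)=\bigcap_{n\in\w}W_\I(A_n)$$
in the function space $\C(K,X)$. The inclusion $\subseteq$ is immediate from the fact that $\I$ is a semi-ideal: if $f\in W_\I(A)$ and $x\in X$, then $f^{-1}(A_n+x)\subseteq f^{-1}(A+x)\in\I$, whence $f^{-1}(A_n+x)\in\I$ for every $n$, i.e.\ $f\in W_\I(A_n)$. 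For the reverse inclusion I would use $\sigma$-continuity of $\I$: if $f\in\bigcap_n W_\I(A_n)$ and $x\in X$, then $\{f^{-1}(A_n+x)\}_{n\in\w}$ is an increasing, hence directed, countable subfamily of $\I$, so its union $\bigcup_n f^{-1}(A_n+x)=f^{-1}(A+x)$ belongs to $\I$; since $x$ is arbitrary, $f\in W_\I(A)$.

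With this identity in hand, the assertion about the descriptive complexity of the witness set follows at once: each $A_n$ is compact, so Theorem~\ref{t:12.14} (whose only standing hypothesis is that $\I\cap\K(K)$ be a $G_\delta$-set in $\K(K)$) yields that $W_\I(A_n)$ is a $G_\delta$-set in $\C(K,X)$; a countable intersection of $G_\delta$-sets is again a $G_\delta$-set, so $W_\I(A)$ is a $G_\delta$-set in $\C(K,X)$.

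For the final equivalence, the implication \emph{generically Haar-$\I$} $\Rightarrow$ \emph{Haar-$\I$} is trivial, as a comeager set is nonempty and $W_\I(A)\neq\emptyset$ is precisely the statement that $A$ is Haar-$\I$. For the converse, suppose $A$ is Haar-$\I$. Then $W_\I(A)=\bigcap_n W_\I(A_n)$ is nonempty, so in particular each $W_\I(A_n)$ is nonempty; thus each compact set $A_n$ is Haar-$\I$, and the ``consequently'' clause of Theorem~\ref{t:12.14} makes every $A_n$ generically Haar-$\I$, i.e.\ $W_\I(A_n)$ is comeager in $\C(K,X)$. A countable intersection of comeager sets is comeager, so $W_\I(A)$ is comeager and $A$ is generically Haar-$\I$.

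The only delicate point — and the reason the argument does not simply invoke the trichotomy of Theorem~\ref{t:tricho} directly — is that here $\I$ is assumed to be merely a $\sigma$-continuous semi-ideal, not an ideal, so $W_\I(A)$ need not be invariant under translation by $\F(K,X)$ and density of $W_\I(A)$ is not automatic; routing the argument through the compact pieces $A_n$, where Theorem~\ref{t:12.14} already supplies density for free, circumvents this. One should also keep in mind that it is exactly the \emph{increasing} choice of the $A_n$ that makes $\{f^{-1}(A_n+x)\}_n$ directed, matching the hypothesis of $\sigma$-continuity; for a non-monotone representation, directedness would require closure of $\I$ under finite unions, which we are not assuming.
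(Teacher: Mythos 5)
Your proof is correct, and its first two thirds --- the increasing exhaustion $A=\bigcup_{n\in\w}A_n$ by compacta, the identity $W_\I(A)=\bigcap_{n\in\w}W_\I(A_n)$ obtained by applying $\sigma$-continuity to the directed family $\{f^{-1}(A_n+x)\}_{n\in\w}$, and the $G_\delta$ conclusion via Theorem~\ref{t:12.14} --- coincide with the paper's argument. You diverge only at the final step: the paper argues that the nonempty witness set $W_\I(A)$ is dense by the trichotomy of Theorem~\ref{t:tricho}, and that a dense $G_\delta$-set is comeager; you instead apply the ``consequently'' clause of Theorem~\ref{t:12.14} to each compact piece $A_n$ and use that a countable intersection of comeager sets is comeager. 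Your closing remark is well taken: Theorem~\ref{t:tricho} is stated (and proved, via Proposition~\ref{p:sumD}) only for \emph{ideals}, and a $\sigma$-continuous semi-ideal such as $[K]^{\le n}$ need not be closed under finite unions, so the paper's direct appeal to the trichotomy here is, strictly speaking, a citation outside its stated hypotheses. As a formal derivation from the paper's stated results your route is therefore the cleaner one, since Theorem~\ref{t:12.14} is stated for arbitrary semi-ideals. Be aware, however, that the paper's own proof of the ``consequently'' clause of Theorem~\ref{t:12.14} again invokes Theorem~\ref{t:tricho}, so the density question for nonempty witness sets over mere semi-ideals is not eliminated by your rerouting --- it is only relocated into a black box that you are nonetheless entitled to use.
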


\begin{proof} Given a $\sigma$-compact set $A\subset X$, write it as the union $A=\bigcup_{n\in\w}A_n$ of an increasing sequence $(A_n)_{n\in\w}$ of compact sets $A_n\subset X$. By Theorem~\ref{t:12.14}, for every $n\in\w$ the witness set $W_\I(A_n)$ is a $G_\delta$-set in $\C(K,X)$. Since the semi-ideal $\I$ is $\sigma$-continuous, the witness set $W_\I(A)$ is equal to the $G_\delta$-set $\bigcap_{n\in\w}W_\I(A_n)$.

If the $\sigma$-compact set $A$ is Haar-$\I$, then the witness set $W_\I(A)$ is nonempty and hence dense in $\C(K,X)$ by Theorem~\ref{t:tricho}. Now we see that the dense $G_\delta$-set $W_\I(A)$ is comeager in $\C(K,X)$, which means that the set $A$ is generically Haar-$\I$.
\end{proof}

\begin{remark} The papers of Solecki \cite{Sol2011} and Zelen\'y \cite{Zeleny} contain many examples of semi-ideals $\I$ on compact metrizable spaces $K$ such that $\I\cap\K(K)$ is a $G_\delta$-set in $\K(K)$.
\end{remark}

Now we shall evaluate the Borel complexity of the set $\F(X)\cap\GHI$ in the space $\F(X)$ of all closed subsets of a Polish group $X$. We recall that the space $\F(X)$ is endowed with the Fell topology, whose Borel $\sigma$-algebra is called the Effros-Borel structure of the hyperspace $\F(X)$. It is well-known \cite[12.6]{K} that this structure is standard, i.e., it is generated by a suitable Polish topology on $\F(X)$. So, we can speak of analytic and coanalytic subsets of the standard Borel space $\F(X)$.

We shall need the following folklore fact.

\begin{lemma}\label{l:KF} For any Polish space $X$ the set $$P:=\{(K,F)\in\K(X)\times\F(X):K\cap F\ne\emptyset\}$$ is of type $G_\delta$ in $\K(X)\times\F(X)$.
\end{lemma}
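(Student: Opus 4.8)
The plan is to show that the complement of $P$ in $\K(X)\times\F(X)$ is of type $F_\sigma$, equivalently that $P$ is $G_\delta$. Fix a complete metric $\rho$ on $X$ generating its topology, and for a nonempty compact $K\subset X$ and a point $x$ write $\rho(x,K)=\min_{y\in K}\rho(x,y)$, which is a continuous function of $(x,K)\in X\times\K(X)$ (continuity in the $\K(X)$-variable follows from the Hausdorff metric). Since $X$ is Polish, hence second countable, fix a countable base $\{V_j\}_{j\in\w}$ of nonempty open sets. The key observation is that for a compact $K$ and a closed $F$, the intersection $K\cap F$ is empty if and only if for every point of $K$ there is a basic open neighborhood missing $F$; by compactness this can be witnessed by finitely many such neighborhoods, and this finiteness is what makes the complement of $P$ an open-in-$\K(X)$, open-in-$\F(X)$ style countable union.

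\textbf{Key steps.} First I would reformulate membership in $P$ quantitatively: $(K,F)\in P$ if and only if $\inf_{x\in K}\rho(x,F)=0$, where I interpret $\rho(x,F)=+\infty$ when $F=\emptyset$ and more carefully as the (possibly non-attained) infimum of $\rho(x,y)$ over $y\in F$ in general. Thus $(K,F)\notin P$ iff $\inf_{x\in K}\rho(x,F)>0$, i.e. iff there is $n\in\w$ with $\rho(x,F)\ge 1/n$ for all $x\in K$, i.e. iff $B(K;1/n)\cap F=\emptyset$ where $B(K;1/n)$ is the open $1/n$-neighborhood of $K$. Hence
$$\bigl(\K(X)\times\F(X)\bigr)\setminus P=\bigcup_{n\in\w}\bigl\{(K,F):B(K;\tfrac1n)\cap F=\emptyset\bigr\}.$$
The second step is to check that each set $Q_n:=\{(K,F):B(K;\tfrac1n)\cap F=\emptyset\}$ is open in $\K(X)\times\F(X)$: if $B(K;1/n)\cap F=\emptyset$, then by compactness $K$ has a positive distance, say $\ge 1/m$ with $1/m<1/n$, from $F$; any $K'$ with $\rho_H(K',K)<\tfrac1n-\tfrac1m$ and any $F'$ with $F'\cap B(K;\tfrac1m)=\emptyset$ (an open condition on $F'$ in the Fell topology, since $\overline{B(K;1/m)}$ need not be compact—here I would instead use that $K$ has a compact neighborhood $L$ with $L\cap F=\emptyset$, so $L^-=\{F':F'\cap L=\emptyset\}$ is Fell-open) still satisfy $(K',F')\in Q_n$. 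This shows the complement of $P$ is $F_\sigma$, hence $P$ is $G_\delta$.

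\textbf{Main obstacle.} The delicate point is the Fell topology on $\F(X)$ when $X$ is not locally compact: the subbasic "avoidance" sets $L^-$ require $L$ compact, so I cannot directly use an open-ball neighborhood of $K$ as the separating set. The fix is that $K$ is compact and $K\cap F=\emptyset$ means $\rho(x,F)>0$ on $K$, so by compactness $\rho(\cdot,F)$ attains a positive minimum $\delta$ on $K$; then the compact set $L=\overline{B(K;\delta/2)}$ need not be compact either in a non-locally-compact space — so I must argue more carefully. The correct route is to observe directly that the map $\Phi:\K(X)\times\F(X)\to[0,\infty]$, $\Phi(K,F)=\inf_{x\in K}\rho(x,F)$ is lower semicontinuous (using compactness of $K$ and continuity of $(x,F)\mapsto\rho(x,F)$, the latter being continuous in $F$ with respect to the Fell topology on the relevant region because $\rho(x,F)<r$ is equivalent to $F\cap B(x;r)\ne\emptyset$, a Fell-open condition, and $\rho(x,F)>r$ is equivalent to $F\cap \overline{B(x;r)}=\emptyset$ — and one handles the closed-ball issue by noting $\rho(x,F)\ge r$ iff $F\cap B(x;s)=\emptyset$ for all $s<r$). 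Granting lower semicontinuity of $\Phi$, the set $P=\Phi^{-1}(\{0\})=\bigcap_n\Phi^{-1}([0,\tfrac1n))$ is a countable intersection of open sets, hence $G_\delta$. I expect the bookkeeping around the Fell topology and non-compact closed balls to be the only real work; the measure-theoretic/combinatorial content is trivial.
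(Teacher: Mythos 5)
Your final route --- via $\Phi(K,F)=\inf_{x\in K}\rho(x,F)$ and the observation that ``$\rho(x,F)<r$'' is the Fell-open condition ``$F\cap B(x;r)\ne\emptyset$'' --- is sound and is essentially the paper's proof in disguise. The paper fixes countable covers $\U_n$ of $X$ by open sets of diameter $<2^{-n}$ and writes $P=\bigcap_{n}\bigcup_{U\in\U_n}\{(K,F):K\cap U\ne\emptyset\ne U\cap F\}$, each term being open because ``meets a given open set'' is subbasic open in both the Vietoris and the Fell topology; the nontrivial inclusion uses compactness of $K$ and closedness of $F$ exactly as your verification that $\Phi(K,F)=0$ implies $K\cap F\ne\emptyset$ does. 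So in substance the two arguments agree: both reduce to the fact that ``$K$ and $F$ contain points within $1/n$ of each other'' is an open condition on $(K,F)$.

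However, your write-up contains two concrete errors. First, you claim that $Q_n=\{(K,F):B(K;\tfrac1n)\cap F=\emptyset\}$ is \emph{open} and that this ``shows the complement of $P$ is $F_\sigma$.'' For the complement to be $F_\sigma$ you need the $Q_n$ to be \emph{closed}, not open (openness of a set in the non-metrizable Fell topology does not give $F_\sigma$ for free, and in any case would prove the much stronger statement that $P$ is closed). Moreover the openness claim is simply false, already for $X=\IR$: take $K=\{0\}$ and $F=\{1\}$, so $(K,F)\in Q_1$, while $F_\e=\{1-\e\}$ converges to $F$ in the Fell topology and $(K,F_\e)\notin Q_1$. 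What is true, and what you need, is that each $Q_n=\{\Phi\ge\tfrac1n\}$ is closed, i.e.\ that $\{\Phi<\tfrac1n\}$ is open --- and that is exactly delivered by your $F\cap B(x;r)\ne\emptyset$ observation together with the Vietoris-open condition $K\cap B(x_0;\e)\ne\emptyset$ on the $\K(X)$ side; no closed balls and no local compactness enter at all. Second, the property needed for $\Phi^{-1}([0,\tfrac1n))$ to be open is \emph{upper} semicontinuity of $\Phi$ (all sets $\{\Phi<c\}$ open), not lower semicontinuity; in fact $\Phi$ fails to be lower semicontinuous whenever $X$ is not locally compact, since $F$ can be Fell-approximated by sets $F\cup\{z\}$ with $z$ arbitrarily close to $K$ yet avoiding any prescribed finite family of compact sets. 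Fortunately the argument you sketch proves upper semicontinuity, which is the property actually required; with these two points repaired your proof is complete and coincides with the paper's.
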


\begin{proof} Let $\rho$ be a complete metric generating the topology of the Polish space $X$. For every $n\in\w$, fix a countable cover $\U_n$ of $X$ by open sets of diameter $<\frac1{2^n}$. Taking into account that
$$
\begin{aligned}
P:&=\{(K,F)\in\K(X)\times\F(X):K\cap F\ne\emptyset\}\\
&=\bigcap_{n\in\w}\bigcup_{U\in\U_n}\{(K,F)\in\K(X)\times\F(X):K\cap U\ne\emptyset\ne U\cap F\}
\end{aligned},
$$
we see that the set $P$ is of type $G_\delta$ in $\K(X)\times\F(X)$.
\end{proof}

\begin{proposition}\label{p:GHI-A} Let $\I$ be a semi-ideal on a compact metrizable space $K$ such that $\K(K)\cap \I$ is a coanalytic subset of the hyperspace $\K(K)$. Then for any Polish group $X$ the set $\F(X)\cap\GHI$ is coanalytic in the standard Borel space $\F(X)$.
\end{proposition}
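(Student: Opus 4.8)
The goal is to express $\F(X)\cap\GHI$ as a coanalytic subset of the standard Borel space $\F(X)$. First I would recall that, by definition, a Borel set $A\subset X$ is generically Haar-$\I$ precisely when the witness set $W_\I(A)=\{f\in\C(K,X):\forall x\in X\;\;f^{-1}(A+x)\in\I\}$ is comeager in the Polish space $\C(K,X)$; and since every comeager subset of a Polish space contains a dense $G_\delta$-set, comeagerness of $W_\I(A)$ is equivalent to non-meagerness of $W_\I(A)$ together with the Baire property of $W_\I(A)$ — but actually, because $W_\I(A)$ is invariant under translations by the dense subgroup $\F(K,X)$ (Proposition~\ref{p:sumD}), Theorem~\ref{t:tricho} gives a cleaner reduction: for closed $A$, $A$ is generically Haar-$\I$ if and only if $W_\I(A)$ is non-meager in $\C(K,X)$. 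So the plan is to show that, as $F$ ranges over $\F(X)$, the predicate ``$W_\I(F)$ is non-meager in $\C(K,X)$'' is coanalytic.

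The key step is to build a ``universal'' set and apply a quantifier-counting argument. Consider the evaluation map $e:\C(K,X)\times K\to X$, $(f,a)\mapsto f(a)$, which is continuous. For a closed set $F\subset X$ the set $\{(f,x,a)\in\C(K,X)\times X\times K: f(a)\in F+x\}$ is closed in $\C(K,X)\times X\times K$, uniformly in $F$ in the following Effros-Borel sense: using Lemma~\ref{l:KF} (or rather its analogue with the role of the compact set played by the image $f(K)$), the set
$$
\Theta:=\{(F,f,x,a)\in\F(X)\times\C(K,X)\times X\times K: f(a)-x\in F\}
$$
is Borel in $\F(X)\times\C(K,X)\times X\times K$, because $(F,f,x,a)\mapsto(F,f(a)-x)$ is a Borel map into $\F(X)\times X$ and $\{(F,y):y\in F\}$ is Borel in $\F(X)\times X$ by the definition of the Effros-Borel structure. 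Now for fixed $(F,f,x)$ the section $\Theta_{(F,f,x)}=f^{-1}(F+x)\subset K$ is a closed, hence compact, subset of $K$; the assignment $(F,f,x)\mapsto f^{-1}(F+x)\in\K(K)\cup\{\emptyset\}$ is Borel by the standard fact that sections of a Borel (indeed closed) set, viewed in $\K(K)$ with the Vietoris topology, depend in a Borel way on the parameters (see \cite[\S12.C, 28.8]{K}). Composing with the membership predicate for the coanalytic set $\K(K)\cap\I$, the set
$$
D:=\{(F,f)\in\F(X)\times\C(K,X):\exists x\in X\;\;f^{-1}(F+x)\notin\I\}
$$
is analytic in $\F(X)\times\C(K,X)$ (the inner predicate ``$f^{-1}(F+x)\notin\I$'' is analytic since $\K(K)\cap\I$ is coanalytic, and we project out the real parameter $x$). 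Hence $W_\I(F)=\{f:(F,f)\notin D\}$ is the section of a coanalytic set.

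Finally I would convert ``the section $W_\I(F)$ is non-meager'' into a coanalytic condition on $F$. The standard tool is the Montgomery–Novikov / Kuratowski–Ulam machinery on the Baire-category quantifier $\forall^* $ for Borel — and more generally for sets with the Baire property — sets: if $B\subset Y\times Z$ is coanalytic and $Z$ is Polish, then $\{y\in Y:B_y\text{ is comeager in }Z\}$ is coanalytic, because the relation ``$B_y$ is comeager'' can be written with a $\forall^*$ over a Polish space applied to a coanalytic matrix, and $\forall^*$ preserves $\mathbf{\Pi}^1_1$ (this is \cite[29.22]{K} together with the fact that coanalytic sets have the Baire property, \cite[21.6, 29.14]{K}). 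Applying this with $B=\F(X)\times\C(K,X)\setminus D$, the set $\{F\in\F(X):W_\I(F)\text{ comeager}\}=\F(X)\cap\GHI$ is coanalytic. The main obstacle I anticipate is the precise verification that $(F,f,x)\mapsto f^{-1}(F+x)$ is Borel as a map into $\K(K)\cup\{\emptyset\}$ (equivalently, that the relevant sections of the Borel set $\Theta$ vary Borel-measurably), and that the Baire-category quantifier argument goes through for the coanalytic — not merely Borel — matrix; both are handled by citing \cite[\S12, \S28, 29.22]{K}, but they must be stitched together carefully.
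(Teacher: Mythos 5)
Your argument is correct and follows essentially the same route as the paper: both establish that $(F,f,x)\mapsto f^{-1}(F+x)$ is a Borel map into $\F(K)$, compose with the coanalytic membership condition for $\K(K)\cap\I$, project out $x$ to obtain an analytic complement of the witness relation, and finish with the category quantifier; the only real difference is that you justify the Borelness of the section map by citing Novikov's theorem on Borel sets with compact sections, whereas the paper verifies it directly on the Fell subbasis $U^+$, $C^-$ using Lemma~\ref{l:KF}. One small caveat: your appeal to Theorem~\ref{t:tricho} to trade comeagerness for non-meagerness is not available under the stated hypotheses (that trichotomy requires $\I$ to be an ideal on a zero-dimensional $K$, while here $\I$ is only a semi-ideal on a compact metrizable $K$), but this is harmless because your final step applies the category quantifier to the comeagerness predicate directly.
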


\begin{proof} Observing that for any nonempty closed subset $C\subset K$ the map $$\Phi:\F(X)\times \C(K,X)\times X\to\F(X)\times \K(X),\;\;\Phi:(F,f,x)\mapsto (F,f(C)-x),$$ is continuous and taking into account Lemma~\ref{l:KF}, we conclude that the set
$$P_C:=\{(F,f,x)\in\F(X)\times \C(K,X)\times \K(X):F\cap(f(C)-x)\ne\emptyset\}$$
is of type $G_\delta$ in $\F(X)\times\C(K,X)\times X$.

We claim that the map
$$\Psi:\F(X)\times \C(K,X)\times X\to\F(K),\;\;\Psi:(F,f,x)\mapsto f^{-1}(x+F),$$
is Borel.
We need to check that for any open set $U\subset K$ and compact set $C\subset K$ the sets $\Psi^{-1}(U^+)$ and $\Psi^{-1}(C^-)$ are Borel in $\F(X)\times C(K,X)\times X$. Here $U^+:=\{F\in\F(X):F\cap U\ne\emptyset\}$ and $C^-:=\{F\in\F(X):F\cap C=\emptyset\}$.
To see that $\Psi^{-1}(C^-)$ is Borel, observe that
$$
\begin{aligned}
\Psi^{-1}(C^-)&=\{(F,f,x)\in \F(X)\times\C(K,X)\times X:C\cap f^{-1}(x+F)=\emptyset\}\\
&=\{(F,f,x)\in \F(X)\times\C(K,X)\times X:F\cap (f(C)-x)=\emptyset\}\\
&=(\F(X)\times\C(K,X)\times X)\setminus P_C,
\end{aligned}
$$
is an $F_\sigma$-set in $\F(X)\times\C(K,X)\times X$.

To see that $\Psi^{-1}(U^+)$ is Borel, write $U$ as the countable union $U=\bigcup_{n\in\w}K_n$ of compact subsets $K_n\subset U$ and observe that
$$
\begin{aligned}
\Psi^{-1}(U^+)&=\{(F,f,x)\in \F(X)\times\C(K,X)\times X:U\cap f^{-1}(x+F)\ne\emptyset\}\\
&=\bigcup_{n\in\w}\{(F,f,x)\in \F(X)\times\C(K,X)\times X:K_n\cap f^{-1}(x+F)\ne\emptyset\}\\
&=\bigcup_{n\in\w}\{(F,f,x)\in \F(X)\times\C(K,X)\times X:F\cap (f(K_n)-x)\ne\emptyset\}=\bigcup_{n\in\w}P_{K_n},
\end{aligned}
$$
is a countable union of $G_\delta$-sets.

Therefore, the map $\Psi:\F(X)\times\C(K,X)\times X\to \F(K)$ is Borel. Since the set $\K(K)\cap\I$ is coanalytic in $\K(K)$, the set $\F(K)\cap\I$ is coanalytic in $\F(K)=\K(K)\cup\{\emptyset\}$ and the preimage $\Psi^{-1}(\I)$ is coanalytic in the standard Borel space $\F(X)\times\C(K,X)\times X$. Then the set
$$
\begin{aligned}
A:&=\{(F,f)\in\F(X)\times \C(K,X):\exists x\in X\;f^{-1}(F+x)\notin\I\}\\
&=\{(F,f)\in\F(X)\times \C(K,X):\exists x\in X\;(F,f,x)\notin\Psi^{-1}(\I)\}
\end{aligned}
$$ is an analytic subspace of the standard Borel space $\F(X)\times\C(K,X)$, and its complement
$$W:=\{(F,f)\in\F(X)\times \C(K,X):\forall x\in X\;\;f^{-1}(F+x)\in\I\}$$
is a coanalytic subset of $\F(X)\times\C(K,X)$.

Now \cite[36.24]{K} implies that the set
$$\F(X)\cap\GHI=\big\{F\in\F(X):\{f\in \C(K,X):(F,f)\in W\}\mbox{ is comeager in }\C(K,X)\big\}$$ is coanalytic in the standard Borel space $\F(X)$.
\end{proof}

\begin{theorem}\label{t:dodos2} Let $\I$ be a proper $\sigma$-ideal on $2^\w$ such that $\overline{\N}\subset\I$ and $\I\cap \K(2^\w)$ is a coanalytic set in $\K(2^\w)$. Each non-locally compact Polish group $X$ contains a closed subset $F\subset X$, which is openly Haar-null, but not generically Haar-$\I$. Consequently, $\overline{\HN^\circ}\not\subset\GHI$ and $\overline{\GHI}\ne\overline{\HI}$ on $X$.
\end{theorem}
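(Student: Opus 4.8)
The plan is to deduce the statement by assembling three results already established: the Solecki-type reduction packaged in Corollary~\ref{c:CND}, the coanalyticity of $\F(X)\cap\GHI$ from Proposition~\ref{p:GHI-A}, and the fact (recalled in the discussion preceding Corollary~\ref{c:hard}) that the set $\mathrm{CND}$ of closed non-dominating subsets of $\w^\w$ is $\mathbf{\Sigma^1_1}$-hard, hence not coanalytic in $\F(\w^\w)$. The key observation is that the desired conclusion reduces to a clash of descriptive complexity, so no new construction is needed.

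First I would invoke Corollary~\ref{c:CND}: since $\I$ is a proper $\sigma$-ideal on $2^\w$ with $\overline{\N}\subset\I$, there is a continuous map $\Phi:\F(\w^\w)\to\F(X)$, $\Phi(D)=f^{-1}(D)$, where $f:F_0\to\w^\w$ is the open perfect map of Theorem~\ref{t:Solecki}, satisfying $\Phi^{-1}(\HN^\circ)=\Phi^{-1}(\HI)=\mathrm{CND}$. Next, since by hypothesis $\I\cap\K(2^\w)$ is coanalytic in $\K(2^\w)$, Proposition~\ref{p:GHI-A} shows that $\F(X)\cap\GHI$ is coanalytic in the standard Borel space $\F(X)$; as $\Phi$ is Borel, the preimage $\Phi^{-1}(\GHI)=\Phi^{-1}(\F(X)\cap\GHI)$ is coanalytic in $\F(\w^\w)$. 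Because $\GHI\subset\HI$ (Corollary following Proposition~\ref{p:GHI->M}), we have $\Phi^{-1}(\GHI)\subset\Phi^{-1}(\HI)=\mathrm{CND}$; if equality held here, $\mathrm{CND}$ would be coanalytic, contradicting its $\mathbf{\Sigma^1_1}$-hardness. Hence $\Phi^{-1}(\GHI)\subsetneq\mathrm{CND}$, and I can choose $D\in\mathrm{CND}\setminus\Phi^{-1}(\GHI)$ and set $F:=\Phi(D)=f^{-1}(D)$.

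Then I would check that $F$ has the required properties. As the preimage of a closed set under a continuous map into the closed subset $F_0\subset X$, the set $F$ is closed in $X$. Since $D\in\mathrm{CND}=\Phi^{-1}(\HN^\circ)$ the set $F$ is openly Haar-null (equivalently, apply Theorem~\ref{t:Solecki}(1) directly to the non-dominating set $D$); from $\mathrm{CND}=\Phi^{-1}(\HI)$ we also get $F\in\HI$, so $F\in\overline{\HI}$. Finally, $D\notin\Phi^{-1}(\GHI)$ means $F\notin\GHI$; for a closed (hence Borel) set this is equivalent to $F$ not being generically Haar-$\I$, because the family of generically Haar-$\I$ sets is a semi-ideal (if $A\subset B$ then $W_\I(B)\subset W_\I(A)$, so $W_\I(B)$ comeager forces $W_\I(A)$ comeager). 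The two ``consequently'' statements follow at once: $F\in\overline{\HN^\circ}$ while $F\notin\GHI$ yields $\overline{\HN^\circ}\not\subset\GHI$, and $F\in\overline{\HI}$ while $F\notin\overline{\GHI}$ (as $F$ is closed and $F\notin\GHI$) yields $\overline{\GHI}\ne\overline{\HI}$.

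The whole argument is assembly, so the only delicate points are bookkeeping ones: verifying that the Effros--Borel structure on $\F(X)$ used in Corollary~\ref{c:CND} coincides with the one in Proposition~\ref{p:GHI-A} (both are the standard Borel structure generated by the Fell topology, so Borel preimages of coanalytic sets are coanalytic), and the equivalence ``$F\in\GHI$ iff $F$ generically Haar-$\I$'' for Borel $F$ noted above. The main obstacle, such as it is, was recognizing that the statement is exactly the confrontation of the coanalyticity of $\F(X)\cap\GHI$ with the non-coanalyticity of $\mathrm{CND}$ transported along $\Phi$; once this is seen, the proof is short.
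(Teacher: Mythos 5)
Your proposal is correct and follows essentially the same route as the paper's own proof: combine the coanalyticity of $\F(X)\cap\GHI$ (Proposition~\ref{p:GHI-A}) with the reduction $\Phi$ of Corollary~\ref{c:CND} and the non-coanalyticity of $\mathrm{CND}$ to find $D\in\mathrm{CND}\setminus\Phi^{-1}(\GHI)$ and take $F=\Phi(D)$. The extra bookkeeping you supply (closedness of $F$, the equivalence of $F\in\GHI$ with $F$ being generically Haar-$\I$ for Borel $F$) is accurate and only makes explicit what the paper leaves implicit.
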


\begin{proof} By Proposition~\ref{p:GHI-A}, the set $\F(X)\cap\GHI$ is coanalytic in the standard Borel space $\F(X)$. On the other hand, by Corollary~\ref{c:CND},
there exists a continuous map $\Phi:\F(\w^\w)\to\F(X)$ such that $\Phi^{-1}(\HN^\circ)=\Phi^{-1}(\HI)=\mathrm{CND}$. Then the preimage $\Phi^{-1}(\GHI)$ is a coanalytic subset of the standard Borel space $\F(\w^\w)$. By a result of Hjorth \cite{Hjorth} (mentioned in \cite[p.210]{S01} and \cite[4.8]{TV}), the set $\mathrm{CND}$ is $\mathbf{\Sigma_1^1}$-hard and hence is not coanalytic. Consequently, $\Phi^{-1}(\GHI)\ne \mathrm{CND}=\Phi^{-1}(\HI)$. Taking into account that $\GHI\subset\HI$, we can find a closed non-dominating subset $D\in\mathrm{CND}\setminus\Phi^{-1}(\GHI)$.
 Then the closed set $F=\Phi(D)$ is openly Haar-null and Haar-$\I$, but not generically Haar-$\I$.
 \end{proof}

Corollary~\ref{c:s-comp} and Theorem~\ref{t:dodos2} imply the following characterization of locally compact Polish groups.

\begin{theorem}\label{t:HN=GHI} Let $\I$ be a proper $\sigma$-ideal on $2^\w$ such that $\overline{\N}\subset\I$ and $\I\cap\K(2^\w)$ is a $G_\delta$-set in $\K(2^\w)$.
For a Polish group $X$ the following conditions are equivalent:
\begin{enumerate}
\item[\textup{1)}] $X$ is locally compact;
\item[\textup{2)}] each closed Haar-null set in $X$ is generically Haar-$\I$;
\item[\textup{3)}] each closed Haar-$\I$ set in $X$ is generically Haar-$\I$.
\end{enumerate}
\end{theorem}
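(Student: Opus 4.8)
The plan is to prove the cycle of implications $(1)\Rightarrow(3)\Rightarrow(2)\Rightarrow(1)$, which yields the equivalence of all three conditions. For $(1)\Rightarrow(3)$, I would use that a locally compact Polish group is $\sigma$-compact (being Lindel\"of and locally compact), so every closed subset $A\subset X$ is $\sigma$-compact; then, since $\I$ is a proper $\sigma$-ideal on the zero-dimensional compact metrizable space $2^\w$ and $\I\cap\K(2^\w)$ is a $G_\delta$-set in $\K(2^\w)$, Corollary~\ref{c:s-comp} applies to $A$ and shows that $A$ is Haar-$\I$ if and only if it is generically Haar-$\I$. In particular, every closed Haar-$\I$ subset of $X$ is generically Haar-$\I$.

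For $(3)\Rightarrow(2)$ it suffices to check that every closed Haar-null set $A\subset X$ is Haar-$\I$, since then condition $(3)$ makes it generically Haar-$\I$. By Theorem~\ref{Haarnull} there is a continuous map $f:2^\w\to X$ with $f^{-1}(A+x)\in\N$ for all $x\in X$; as $A$ is closed and $f$ is continuous, each $f^{-1}(A+x)$ is a \emph{closed} subset of $2^\w$ of Haar measure zero, so $f^{-1}(A+x)\in\overline{\N}\subset\I$, and thus $f$ witnesses that $A$ is Haar-$\I$.

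For $(2)\Rightarrow(1)$ I would argue by contraposition: if $X$ is not locally compact, then the hypotheses of Theorem~\ref{t:dodos2} are met (a $G_\delta$-subset of $\K(2^\w)$ is Borel, hence coanalytic), so $X$ contains a closed set $F$ which is openly Haar-null --- hence Haar-null, as $\HN^\circ\subset\HN$ --- but not generically Haar-$\I$, contradicting $(2)$.

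Almost all of the work is already done in Corollary~\ref{c:s-comp} and Theorem~\ref{t:dodos2}, so the only genuinely new step is the short argument in $(3)\Rightarrow(2)$. The subtlety there --- and the reason the hypothesis is stated the way it is --- is that one is only given $\overline{\N}\subset\I$ (closed null sets lie in $\I$), not $\N\subset\I$; restricting to \emph{closed} Haar-null sets is exactly what makes the preimages $f^{-1}(A+x)$ automatically closed, and hence members of $\overline{\N}$.
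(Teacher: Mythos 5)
Your proposal is correct and follows the same route the paper intends: the paper derives this theorem directly from Corollary~\ref{c:s-comp} (which handles the locally compact direction, since closed sets there are $\sigma$-compact) and Theorem~\ref{t:dodos2} (whose coanalyticity hypothesis is satisfied because a $G_\delta$-set is Borel). Your bridging observation for $(3)\Rightarrow(2)$ --- that a closed Haar-null set has closed null preimages, hence lies in $\mathcal H\overline{\N}\subset\HI$ --- is exactly the argument the paper uses in Corollary~\ref{c:CND}, so nothing is missing.
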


For a Polish group $X$ by $\sigma\K$ we shall denote the family of $\sigma$-compact sets in $X$.
It follows that for any proper $\sigma$-ideal $\I$ on a zero-dimensional compact metrizable space $K$ such that $\I\cap\K(K)$ is a $G_\delta$-set in $\K(K)$ and any Polish group we have the following diagram.
$$
\xymatrix{
\sigma\K\cap\HI\ar@{=}[r]&\sigma\K\cap\GH\I\ar[r]\ar[dd]&\sigma\overline{\GH\I}\ar[r]
&\sigma\overline{\EHI}\ar[r]\ar[ddl]&\sigma\overline{\HI}\ar[r]\ar[d]&
\sigma\overline{\HM}\ar[d]\\
&&&&\mathcal H\sigma\overline{\I}\ar[r]\ar[d]&\HM\\
&\GH\I\ar[r]&\M\cap\EHI\ar[r]&\EHI\ar[r]&\HI
}
$$
If the Polish group is locally compact, then this diagram simplifies to the following form:
\begin{equation}\label{diag1}
\xymatrix{
\sigma\overline{\GHI}\ar@{=}[r]&\sigma\overline{\EHI}\ar@{=}[r]\ar[d]&\sigma\overline{\HI}\ar[r]&\mathcal H\sigma\overline{\I}\ar[d]\\
&\GH\I\ar[r]&\M\cap\EHI\ar[r]&\M\cap\HI.}
\end{equation}

\begin{remark}\label{r:Zw} By Corollary~\ref{c:St-}, for any semi-ideal $\I$ on a zero-dimensional compact metrizable space $K=\bigcup\I$, the Borel subgroup $H\notin\sigma\overline{\HM}$ from Example~\ref{Banakh} is generically Haar-$\I$, witnessing that $\GHI\not\subset \sigma\overline{\HM}$ for the Polish group $X=\IZ^\w$.
\end{remark}

\section{Generically Haar-$\I$ sets for some concrete semi-ideals $\I$}
\label{s13}

In this section we shall study generically Haar-$\I$ sets for some concrete semi-ideals $\I$ on zero-dimensional compact metrizable spaces.
For such sets we reserve special names.

\begin{definition} A subset $A$ of a Polish group $X$ is called
\begin{itemize}
\item \index{subset!generically Haar-null}\index{generically Haar-null subset}{\em generically Haar-null} if $A$ is generically Haar-$\N$ for the $\sigma$-ideal $\N$ of sets of Haar measure zero in $2^\w$;
\item \index{subset!generically Haar-meager}\index{generically Haar-meager subset}{\em generically Haar-meager} if $A$ is generically Haar-$\M$ for the $\sigma$-ideal $\M$ of meager sets in $2^\w$;
\item \index{subset!generically Haar-countable}\index{generically Haar-countable  subset}{\em generically Haar-countable} if $A$ is generically Haar-$[2^\w]^{\le\w}$ for the $\sigma$-ideal $[2^\w]^{\le\w}$ of at most countable sets in the Cantor cube $2^\w$;
\item \index{subset!generically Haar-finite}\index{generically Haar-finite subset}{\em generically Haar-finite} if $A$ is generically Haar-$[2^\w]^{<\w}$ for the ideal $[2^\w]^{<\w}$ of all finite sets in $2^\w$;
\item \index{subset!generically Haar-$n$}\index{generically Haar-$n$ subset} {\em generically Haar-$n$} for $n\in\IN$ if $A$ is generically Haar-$[2^\w]^{\le n}$ for the semi-ideal $[2^\w]^{\le n}$ consisting of subsets of cardinality $\le n$ in $2^\w$;
\item \index{subset!generically null-finite}\index{generically null-finite subset}{\em generically null-finite} if $A$ is generically Haar-$[\w{+}1]^{<\w}$ for the ideal $[\w{+}1]^{<\w}$ of all finite sets in the ordinal $\w+1$ endowed with the order topology;
\item \index{subset!generically null-$n$}\index{generically null-$n$ subset}{\em generically null-$n$} for $n\in\IN$ if $A$ is generically Haar-$[\w{+}1]^{\le n}$ for the semi-ideal $[\w{+}1]^{\le n}$ consisting of all subsets of cardinality $\le n$ in $\w+1$.
\end{itemize}
For a Polish group $X$ by \index{$\GHN$}$\GHN$ and \index{$\GHM$}$\GHM$ we denote the $\sigma$-ideals consisting of subsets of Borel subsets of $X$ which are generically Haar-null and generically Haar-meager, respectively.
\end{definition}

For every subset $A$ of a Polish group $X$ and every $n\in\IN$ these notions relate as follows:
$$
\small
\xymatrixcolsep{11pt}
\xymatrix{
\mbox{generically}\atop{\mbox{Haar-$1$}}\ar@{=>}[r]\ar@{=>}[d]&
\mbox{generically}\atop{\mbox{Haar-$n$}}\ar@{=>}[r]\ar@{=>}[d]&
\mbox{generically}\atop{\mbox{Haar-finite}}\ar@{=>}[r]\ar@{=>}[d]&
\mbox{generically}\atop{\mbox{Haar-countable}}\ar@{=>}[r]\ar@{=>}[d]&
\mbox{generically}\atop{\mbox{Haar-null}}\ar@{=>}[r]\ar_{BP}[rd]\ar^{F_\sigma}[ld]&\mbox{Haar-null}\\
\mbox{generically}\atop{\mbox{null-$1$}}\ar@{=>}[r]&
\mbox{generically}\atop{\mbox{null-$n$}}\ar@{=>}[r]&
\mbox{generically}\atop{\mbox{null-finite}}&
\mbox{generically}\atop{\mbox{Haar-meager}}\ar@{=>}[r]&
\mbox{Haar-meager}\ar@{=>}[r]&\mbox{meager}
}
$$
Non-trivial implications in this diagram are proved in Propositions~\ref{p:GHI->M}, and \ref{c:cHI=>HM}. By the simple arrow with an inscription we denote the implications (generically Haar-null set with the Baire Property is meager) and (generically Haar-null $F_\sigma$-set is generically Haar-meager) holding under some additional assumptions.

The above diagram and Theorem~\ref{t:NF=>HN+EHM} suggest the following open problem.

\begin{problem} Let $A$ be a generically null-finite Borel set in a Polish group $X$. Is $A$ generically Haar-null and generically Haar-meager in $X$?
\end{problem}

Theorem~\ref{t:-GHI} implies the following corollary.

\begin{corollary}\label{count} A subset $A$ of a Polish group $X$ is generically Haar-$1$ if $A-A$ is meager in $X$.
\end{corollary}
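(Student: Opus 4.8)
Corollary~\ref{count} is an immediate consequence of Theorem~\ref{t:-GHI} applied to the semi-ideal $\I=[2^\w]^{\le 1}$ of subsets of cardinality at most $1$ in the Cantor cube $2^\w$. First I would observe that $2^\w$ is a zero-dimensional compact metrizable space and that $\bigcup[2^\w]^{\le 1}=2^\w$, since every point of $2^\w$ is a singleton belonging to $[2^\w]^{\le 1}$. Thus $[2^\w]^{\le 1}$ is a semi-ideal of subsets of a zero-dimensional compact metrizable space $K=2^\w=\bigcup\I$, so Theorem~\ref{t:-GHI} applies verbatim: if $A-A$ is meager in $X$, then $A$ is generically Haar-$[2^\w]^{\le 1}$. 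By definition (see Section~\ref{s13}), a subset $A$ of a Polish group $X$ is \emph{generically Haar-$1$} precisely when it is generically Haar-$[2^\w]^{\le 1}$, which completes the proof.

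\begin{proof} Consider the semi-ideal $\I=[2^\w]^{\le 1}$ consisting of all subsets of cardinality at most $1$ in the Cantor cube $2^\w$. The Cantor cube is a zero-dimensional compact metrizable space, and since every singleton $\{x\}$ with $x\in 2^\w$ belongs to $\I$, we have $\bigcup\I=2^\w$. Therefore $\I$ is a semi-ideal of subsets of a zero-dimensional compact metrizable space $K=2^\w=\bigcup\I$, and Theorem~\ref{t:-GHI} is applicable. If the difference $A-A$ is meager in $X$, then by Theorem~\ref{t:-GHI} the set $A$ is generically Haar-$\I$, i.e., generically Haar-$[2^\w]^{\le 1}$, which by definition means that $A$ is generically Haar-$1$ in $X$. \end{proof}

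There is essentially no obstacle here: the statement is a direct specialization of the already-established Theorem~\ref{t:-GHI} to the concrete semi-ideal $[2^\w]^{\le 1}$ on $2^\w$, together with the definition of a generically Haar-$1$ set from the list at the start of Section~\ref{s13}. The only thing to check carefully is that the hypotheses of Theorem~\ref{t:-GHI} are met, namely that $2^\w$ is zero-dimensional compact metrizable and that $[2^\w]^{\le 1}$ covers $2^\w$ as a semi-ideal, both of which are trivial.
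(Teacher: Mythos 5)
Your proof is correct and is exactly the paper's argument: the paper derives Corollary~\ref{count} by applying Theorem~\ref{t:-GHI} to the semi-ideal $[2^\w]^{\le 1}$ on the zero-dimensional compact metrizable space $2^\w$, which is precisely what you do. The verification that $\bigcup[2^\w]^{\le 1}=2^\w$ and the appeal to the definition of a generically Haar-$1$ set are the only points to check, and you handle both.
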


Proposition~\ref{p:cHI=>HM} implies the following corollary.

\begin{corollary}\label{c:cHI=>HM} Let $\I$ be a semi-ideal of sets with empty interior in $2^\w$. Each generically Haar-$\I$ $F_\sigma$-set in a Polish group is generically Haar-meager. In particular, each generically Haar-null $F_\sigma$-set in a Polish group is generically Haar-meager.
\end{corollary}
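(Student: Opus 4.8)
The plan is to read this off directly from Proposition~\ref{p:cHI=>HM}, which is the real content; Corollary~\ref{c:cHI=>HM} is just the specialization $K=2^\w$ together with an unwinding of definitions. First I would recall that by definition a subset $A$ of a Polish group $X$ is \emph{generically Haar-meager} precisely when it is generically Haar-$\M$ for the $\sigma$-ideal $\M$ of meager sets in the Cantor cube $2^\w$. Thus, given a semi-ideal $\I$ of sets with empty interior in $2^\w$ and a generically Haar-$\I$ $F_\sigma$-set $A$ in a Polish group $X$, I would simply invoke Proposition~\ref{p:cHI=>HM} with $K=2^\w$ (whose hypotheses are met: $2^\w$ is compact metrizable and $\I$ consists of sets with empty interior by assumption) to conclude that $A$ is generically Haar-$\M$, i.e., generically Haar-meager. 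This settles the first assertion.

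For the ``in particular'' clause I would only need to check that the $\sigma$-ideal $\N$ of sets of Haar measure zero in $2^\w$ is a semi-ideal of sets with empty interior. This is immediate: every nonempty open subset of $2^\w$ contains a nonempty clopen set, which (being a finite disjoint union of basic clopen sets) has strictly positive Haar measure; hence a set of Haar measure zero cannot contain a nonempty open set, so all members of $\N$ have empty interior. Applying the first assertion with $\I=\N$ and recalling that ``generically Haar-null'' means ``generically Haar-$\N$'', I would obtain that each generically Haar-null $F_\sigma$-set in a Polish group is generically Haar-meager.

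There is essentially no obstacle here beyond bookkeeping of definitions; the one point that deserves an explicit sentence is the empty-interior property of $\N$, and the one thing to be careful about is that Proposition~\ref{p:cHI=>HM} is stated for a general compact metrizable $K$ while the notion of generically Haar-meager is pinned to $K=2^\w$, so the deduction must specialize $K$ accordingly. I would therefore keep the proof to two short sentences: one applying Proposition~\ref{p:cHI=>HM} with $K=2^\w$, and one observing that $\overline{\N}$ (indeed all of $\N$) consists of sets with empty interior in $2^\w$, so that the general statement applies to $\I=\N$.
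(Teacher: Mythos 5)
Your proposal is correct and matches the paper exactly: the paper derives Corollary~\ref{c:cHI=>HM} by simply invoking Proposition~\ref{p:cHI=>HM} with $K=2^\w$, and the only additional observation needed for the ``in particular'' clause is that every member of $\N$ has empty interior in $2^\w$, which you verify correctly. Nothing is missing.
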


\begin{theorem}\label{t:sigma-gen} Let $n\in\IN$. A $\sigma$-compact subset of a Polish group is
\begin{enumerate}
\item[\textup{1)}] generically Haar-null if and only if it is Haar-null;
\item[\textup{2)}] generically Haar-meager if and only if it is Haar-meager;
\item[\textup{3)}] generically Haar-$n$ if and only if it is Haar-$n$;
\item[\textup{4)}] generically null-$n$ if and only if it is null-$n$.
\end{enumerate}
\end{theorem}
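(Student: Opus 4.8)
The plan is to obtain all four equivalences as instances of Corollary~\ref{c:s-comp}, which asserts that for a $\sigma$-continuous semi-ideal $\I$ on a zero-dimensional compact metrizable space $K$ with $\I\cap\K(K)$ of type $G_\delta$ in $\K(K)$, a $\sigma$-compact subset of a Polish group is Haar-$\I$ if and only if it is generically Haar-$\I$. By Theorem~\ref{t:Haar}, the notions of (generic) Haar-nullity, Haar-meagerness, Haar-$n$ and null-$n$ coincide with (generic) Haar-$\I$-ness for the pairs $(\I,K)$ equal to $(\N,2^\w)$, $(\M,2^\w)$, $([2^\w]^{\le n},2^\w)$ and $([\w{+}1]^{\le n},\w{+}1)$ respectively; and in each item the ``only if'' part is the triviality that a comeager (hence nonempty) witness set means $A$ is Haar-$\I$. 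So it suffices to check the three hypotheses of Corollary~\ref{c:s-comp} for these four pairs. The space $K$ is zero-dimensional, compact and metrizable in every case ($2^\w$ patently, and $\w{+}1$ with the order topology being a countable compact metrizable space). The semi-ideals $\N$ and $\M$ are $\sigma$-ideals, hence $\sigma$-continuous semi-ideals, while $[2^\w]^{\le n}$ and $[\w{+}1]^{\le n}$ are $\sigma$-continuous semi-ideals (any $n{+}1$ points of the union of a directed subfamily of $[K]^{\le n}$ lie in finitely many of its members, hence in a single member, which then has cardinality $>n$ --- impossible).

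It remains to verify that $\I\cap\K(K)$ is a $G_\delta$-set in $\K(K)$ in each case. For $\I=[2^\w]^{\le n}$ and $\I=[\w{+}1]^{\le n}$ the set $\{C\in\K(K):|C|\le n\}$ is in fact closed in $\K(K)$: a limit in the Hausdorff metric of compact sets of cardinality $\le n$ again has cardinality $\le n$, since if the limit contained $n{+}1$ points at pairwise distance $\ge 3\delta$, then every sufficiently Hausdorff-close compact set would contain $n{+}1$ points at pairwise distance $\ge\delta$. For $\I=\N$ the function $C\mapsto\lambda(C)$ is upper semicontinuous on $\K(2^\w)$: if $C_m\to C$ in the Hausdorff metric, then $C_m\subset U$ eventually for every open $U\supset C$ (as $\dist(C,2^\w\setminus U)>0$), so $\limsup_m\lambda(C_m)\le\lambda(U)$, whence $\limsup_m\lambda(C_m)\le\lambda(C)$ by outer regularity of $\lambda$. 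Consequently $\N\cap\K(2^\w)=\bigcap_{m\in\IN}\{C\in\K(2^\w):\lambda(C)<\tfrac1m\}$ is a $G_\delta$-set. For $\I=\M$ a compact subset of $2^\w$ is meager if and only if it is nowhere dense if and only if it contains no member of a fixed countable base $\{B_j\}_{j\in\w}$ of nonempty clopen subsets of $2^\w$; since each set $\{C\in\K(2^\w):B_j\subset C\}$ is closed in $\K(2^\w)$ (being the $B_j$-section of the closed relation $\{(A,C):A\subset C\}$ on $\K(2^\w)\times\K(2^\w)$), the complement $\K(2^\w)\setminus\M=\bigcup_{j\in\w}\{C\in\K(2^\w):B_j\subset C\}$ is $F_\sigma$ and $\M\cap\K(2^\w)$ is $G_\delta$.

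With all hypotheses of Corollary~\ref{c:s-comp} verified, items (1)--(4) follow at once. The only mildly delicate point is the meager case, where one must recall that a compact meager set in $2^\w$ is automatically nowhere dense and then translate ``empty interior'' into the basis-free condition ``$B_j\not\subset C$ for all $j$'' in order to read off the $G_\delta$ structure; no genuine obstacle arises, since all the substance of the statement has already been absorbed into Corollary~\ref{c:s-comp} and, ultimately, Theorem~\ref{t:12.14}.
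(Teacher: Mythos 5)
Your proof is correct and follows essentially the same route as the paper: the paper also derives all four items from Corollary~\ref{c:s-comp}, with the $G_\delta$-ness of $\I\cap\K(K)$ supplied by its Lemma~\ref{l:Gd} (closedness of $\{C:|C|\le n\}$, upper semicontinuity of $C\mapsto\mu(C)$ via regularity, and a basis argument for nowhere density). Your verifications of these three facts and of the $\sigma$-continuity of $[K]^{\le n}$ match the paper's in substance.
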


This theorem can be easily derived from Corollary~\ref{c:s-comp} and the following lemma.

\begin{lemma}\label{l:Gd} Let $K$ be a zero-dimensional compact metrizable space.
\begin{enumerate}
\item[\textup{1)}] The family $\{C\in \K(K):C$ is meager in $K\}$ is a $G_\delta$-set in the hyperspace $\K(K)$.
\item[\textup{2)}] For any probability measure $\mu\in P(K)$ the family $\{C\in \K(K):\mu(C)=0\}$ is a $G_\delta$-set in the hyperspace $\K(K)$.
\item[\textup{3)}] The family $\{C\in\K(K):|C|\le n\}$ is a closed (and hence $G_\delta$) set in $\K(K)$.
\end{enumerate}
\end{lemma}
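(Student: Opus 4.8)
The plan is to prove each of the three items of Lemma~\ref{l:Gd} separately, in all three cases exhibiting the family in question as a countable intersection of open subsets of the hyperspace $\K(K)$, working with the Vietoris topology generated by the sets $U^+=\{C:C\cap U\ne\emptyset\}$ and $U^-=\{C:C\subset U\}$ for $U$ open in $K$. Throughout I would fix a countable base $\mathcal{B}$ for $K$ consisting of clopen sets, which exists by zero-dimensionality; this is what makes the arguments clean, since ``meager'' and ``small measure'' become visible on clopen pieces.

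For item (3) I would simply observe that $\{C\in\K(K):|C|\le n\}$ is closed: if $|C|\ge n+1$, pick $n+1$ pairwise disjoint clopen sets each meeting $C$; the corresponding basic Vietoris neighborhood of $C$ consists of compacta of cardinality $\ge n+1$. Hence the complement is open, so the set is closed and in particular $G_\delta$.

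For item (2), I would use that a compact subset $C\subset K$ has $\mu(C)=0$ if and only if for every $k\in\IN$ there is a clopen set $V\supset C$ with $\mu(V)<\tfrac1k$ (by regularity of $\mu$ and zero-dimensionality, approximating $C$ from outside by clopen neighborhoods, which can be taken to be finite unions of basic clopen sets). For fixed clopen $V$ the set $\{C\in\K(K):C\subset V\}=V^-$ is open in $\K(K)$; therefore
$$
\{C\in\K(K):\mu(C)=0\}=\bigcap_{k\in\IN}\bigcup_{\substack{V\text{ clopen}\\ \mu(V)<1/k}}V^-,
$$
is a countable intersection of open sets (the inner union ranges over the countably many clopen sets, i.e.\ finite unions of members of $\mathcal{B}$). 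For item (1) I would argue analogously but with nowhere density in place of measure: a compact $C$ is nowhere dense in $K$ exactly when $K\setminus C$ is dense, i.e.\ for every basic clopen $B\in\mathcal{B}$ there is a nonempty clopen $W\subset B$ with $W\cap C=\emptyset$; a compact $C$ is meager in $K$ iff it is nowhere dense, because a compact subset of a metric space that is meager is already nowhere dense (its closure, namely itself, has empty interior once it is a countable union of closed nowhere dense sets, by Baire category applied inside $C$ — more directly, a closed meager set is nowhere dense). So the family of meager compacta equals the family of nowhere dense compacta, and for each $B\in\mathcal{B}$ the condition ``there is nonempty clopen $W\subset B$ with $W\cap C=\emptyset$'' defines an open subset of $\K(K)$, namely $\bigcup_{\emptyset\ne W\subset B\text{ clopen}}(K\setminus W)^+ \setminus$ --- more carefully, $\{C: C\cap W=\emptyset\}=(K\setminus W)^-$ is open, and we union over the countably many admissible $W$; intersecting over $B\in\mathcal{B}$ gives the desired $G_\delta$ representation.

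The main obstacle, and the place where I would be most careful, is the precise equivalence in item (2): that $\mu(C)=0$ is detected by \emph{clopen} outer approximations whose measures decrease to $0$, and that the relevant family of clopen sets is countable so that the union stays open and the whole expression stays $G_\delta$. This is where zero-dimensionality of $K$ is essential; without it one would only get open outer neighborhoods, and $\{C:C\subset U\}$ for open $U$ is still open, but the countability of the indexing family (needed to keep the union a genuine open set in the descriptive-set-theoretic bookkeeping and to make the intersection a bona fide $G_\delta$) would require an extra separability argument. A secondary subtlety is justifying ``compact meager $\Rightarrow$ nowhere dense'' cleanly; I would phrase it via the Baire category theorem applied to the compact (hence Baire) space $C$ itself: if $C$ had nonempty interior $V$ in $K$ and were meager in $K$, then $V$ would be meager in $K$, hence $\overline V^{\,K}=\overline V^{\,C}$ would be meager in the Baire space $C$ while containing the nonempty open set $V$ of $C$, a contradiction. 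With these two points settled, the rest is the routine bookkeeping sketched above.
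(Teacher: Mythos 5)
Your proof is correct and follows essentially the same route as the paper: item (1) via the identification of meager compacta with nowhere dense ones and clopen witnesses indexed by a countable clopen base, item (2) via regularity of $\mu$ showing that $\{C:\mu(C)<\tfrac1k\}$ is open (your clopen outer approximations are just an explicit form of this), and item (3) by observing closedness directly from the Vietoris topology. The only remark worth making is that your worry about the countability of the inner union's index set is unnecessary — an arbitrary union of open sets is open, and only the outer intersection over $k\in\IN$ needs to be countable for the $G_\delta$ conclusion.
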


\begin{proof} 1. Fix a countable base $\mathcal B$ of the topology of $K$, consisting of nonempty clopen subsets of $K$. For every $U\in\mathcal B$ consider the open subspace $$\widehat U=\bigcup_{\mathcal B\ni V\subset U}\{C\in\K(K):C\cap V=\emptyset\}$$ of $\K(K)$ and observe that
$$\{C\in\K(K):\mbox{$C$ is meager in $K$}\}=
\{C\in\K(K):\mbox{$C$ is nowhere dense in $K$}\}=\bigcap_{U\in\mathcal B}\widehat U$$is a $G_\delta$-set in $\K(K)$.
\smallskip

2. Fix a $\sigma$-additive Borel probability measure $\mu$ on $K$. The $\sigma$-additivity of the measure $\mu$ implies that the measure $\mu$ is \index{measure!regular}\index{regular measure}{\em regular} in the sense that for any $\e>0$ and any closed set $C\subset K$ there exists an open neighborhood $O_C\subset K$ of $C$ such that $\mu(O_C)<\mu(C)+\e$. The regularity of the measure $\mu$ implies that for every $\e>0$ the set $\{C\in\K(K):\mu(C)<\e\}$ is open in $\K(K)$. Then the set $$\{C\in\K(K):\mu(C)=0\}=\bigcap_{n=1}^\infty\{C\in\K(K):\mu(C)<\tfrac1n\}$$is of type $G_\delta$ in $\K(K)$.
\smallskip

3. The definition of the Vietoris topology implies that for every $n\in\IN$ the set $\{C\in\K(K):|C|\le n\}$ is closed (and hence $G_\delta$) in $\K(K)$.
\end{proof}

Lemma~\ref{l:Gd} and Theorem~\ref{t:HN=GHI} imply the following characterization of locally compact Polish groups.

\begin{theorem}\label{t:dodos} For a Polish group $X$ the following conditions are equivalent:
\begin{enumerate}
\item[\textup{1)}] $X$ is locally compact;
\item[\textup{2)}] each closed Haar-null set in $X$ is generically Haar-null;
\item[\textup{3)}] each closed Haar-meager set in $X$ is generically Haar-meager.
\item[\textup{4)}] each closed Haar-null set in $X$ is generically Haar-meager.
\end{enumerate}
\end{theorem}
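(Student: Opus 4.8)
The plan is to derive all four equivalences directly from Theorem~\ref{t:HN=GHI}, applied to the two $\sigma$-ideals $\N$ (sets of Haar measure zero) and $\M$ (meager sets) on the Cantor cube $2^\w$, together with Lemma~\ref{l:Gd}. First I would check that each of $\I=\N$ and $\I=\M$ satisfies the hypotheses of Theorem~\ref{t:HN=GHI}. Both are proper $\sigma$-ideals on the zero-dimensional compact metrizable space $2^\w=\bigcup\I$. For the requirement $\overline{\N}\subset\I$: when $\I=\N$ this is trivial, and when $\I=\M$ it holds because any closed subset of $2^\w$ of Haar measure zero has empty interior (the Haar measure being strictly positive on nonempty open sets), hence is nowhere dense, hence meager. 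Finally, $\N\cap\K(2^\w)=\{C\in\K(2^\w):\lambda(C)=0\}$ is a $G_\delta$-set in $\K(2^\w)$ by Lemma~\ref{l:Gd}(2) applied to the Haar measure $\lambda$, and $\M\cap\K(2^\w)=\{C\in\K(2^\w):C\text{ is meager in }2^\w\}$ is a $G_\delta$-set in $\K(2^\w)$ by Lemma~\ref{l:Gd}(1).

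Next I would translate the conclusions of Theorem~\ref{t:HN=GHI} into the terminology of Section~\ref{s13}. Applying it with $\I=\N$: here ``generically Haar-$\N$'' means ``generically Haar-null'' by definition, and by Theorem~\ref{Haarnull} a closed set is Haar-$\N$ iff it is Haar-null; hence Theorem~\ref{t:HN=GHI} yields the equivalence of (1) with ``each closed Haar-null set in $X$ is generically Haar-null'', i.e. $(1)\Leftrightarrow(2)$. Applying it with $\I=\M$: here ``generically Haar-$\M$'' means ``generically Haar-meager'' by definition, and by Theorem~\ref{t:Haar}(2) a closed set is Haar-$\M$ iff it is Haar-meager. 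Thus item (2) of Theorem~\ref{t:HN=GHI} (``each closed Haar-null set is generically Haar-$\M$'') reads ``each closed Haar-null set in $X$ is generically Haar-meager'', which is exactly item (4) of the present theorem, while item (3) of Theorem~\ref{t:HN=GHI} (``each closed Haar-$\M$ set is generically Haar-$\M$'') reads ``each closed Haar-meager set in $X$ is generically Haar-meager'', which is exactly item (3) here. Consequently $(1)\Leftrightarrow(3)\Leftrightarrow(4)$.

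Putting the two applications together gives $(1)\Leftrightarrow(2)\Leftrightarrow(3)\Leftrightarrow(4)$, which completes the proof. Since essentially all the substance is already contained in Theorem~\ref{t:HN=GHI} and Lemma~\ref{l:Gd}, there is no serious obstacle here; the only points demanding a little care are the inclusion $\overline{\N}\subset\M$ and the purely notational bookkeeping that identifies the abstract classes Haar-$\N$, Haar-$\M$ (and their generic counterparts) with the concrete classes of Haar-null, Haar-meager, generically Haar-null and generically Haar-meager sets. (One may alternatively note the easy implications $(2)\Rightarrow(4)$ via Corollary~\ref{c:cHI=>HM} and $(3)\Rightarrow(4)$ via Proposition~\ref{p:HN=>HM}, but these are not needed once Theorem~\ref{t:HN=GHI} is invoked for both $\N$ and $\M$.)
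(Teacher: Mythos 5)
Your proposal is correct and is essentially the paper's own argument: the paper derives Theorem~\ref{t:dodos} precisely by combining Lemma~\ref{l:Gd} with Theorem~\ref{t:HN=GHI} applied to the $\sigma$-ideals $\N$ and $\M$ on $2^\w$, exactly as you do. Your verification of the hypotheses (in particular $\overline{\N}\subset\M$ via strict positivity of the Haar measure) and the notational identifications are all accurate.
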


\begin{remark} The equivalence $(1)\Leftrightarrow(2)$ in Theorem~\ref{t:dodos} was proved by Dodos \cite{D1}. In fact, our proof of Theorem~\ref{t:dodos} is a suitable modification of the proof of Corollary 9 in \cite{D1}.
\end{remark}

Now we prove characterizations of generically Haar-null and generically Haar-meager sets showing that our definitions of such sets are equivalent to the original definitions given in \cite{D2}, \cite{D3}.

\begin{theorem}\label{t:gen-char} A subset $A$ of a non-discrete Polish group $X$ is
\begin{enumerate}
\item[\textup{1)}] generically Haar-null if and only if the set $$T(A):=\{\mu\in P(X):\forall x\in X\;\;\mu(A+x)=0\}$$ is comeager in the space $P(X)$ of probability $\sigma$-additive Borel measures on $X$;
\item[\textup{2)}] generically Haar-meager if and only if the set $$K_\M(A):=\{K\in\K(X):\forall x\in X\;\;K\cap (A+x)\in\M_K\}$$ is comeager in the hyperspace $\K(X)$;
\item[\textup{3)}] generically Haar-countable if and only if the set $$K_{\le\w}(A):=\{K\in \K(X): \forall x\in X\;\;|K\cap (A+x)|\le \w\}$$ is comeager in $\K(X)$;
\item[\textup{4)}] generically Haar-finite if and only if the set $$K_{<\w}(A):=\{K\in \K(X): \forall x\in X\;\;|K\cap (A+x)|<\w\}$$ is comeager in $\K(X)$;
\item[\textup{5)}] generically Haar-$n$ for some $n\in\IN$ if and only if the set $$K_{\le n}(A):=\{K\in \K(X): \forall x\in X\;\;|K{\cap}(A{+}x)|\allowbreak\le n\}$$ is comeager in $\K(X)$.
\end{enumerate}
\end{theorem}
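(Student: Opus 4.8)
The plan is to derive each of the five equivalences from Lemma~\ref{l:M-coM} applied to a suitable continuous open map out of the function space $\C(2^\w,X)$, using in addition that the set $\E(2^\w,X)$ of injective continuous maps is a dense $G_\delta$ in $\C(2^\w,X)$ (Lemma~\ref{E-in-C}). The latter is available because a topological group with an isolated point is discrete, so a non-discrete Polish group is crowded.

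For the items \textup{(2)}--\textup{(5)} I would use the range map $r\colon\C(2^\w,X)\to\K(X)$, $r\colon f\mapsto f(2^\w)$. First one checks that $r$ is continuous and surjective (every nonempty compact metrizable space is a continuous image of the Cantor cube), and --- the main technical step --- that $r$ is \emph{open}: given $f\in\C(2^\w,X)$ and a compact set $K\subset X$ Vietoris-close to $f(2^\w)$, one must produce a continuous surjection $g\colon 2^\w\to K$ with $\hat\rho(g,f)$ small, which is done by a partition/selection argument over the zero-dimensional domain $2^\w$ in the spirit of \cite[Lemma~2.10]{D}. Next, for an \emph{injective} $f$ (which is a homeomorphism onto $f(2^\w)$) and any $x\in X$ we have $f^{-1}(A+x)=f^{-1}\big((A+x)\cap f(2^\w)\big)$, so $f^{-1}(A+x)$ is meager (resp. of cardinality $\le n$, finite, at most countable) in $2^\w$ if and only if $(A+x)\cap f(2^\w)$ has the corresponding property in $f(2^\w)$. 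Hence $W_\I(A)\cap\E(2^\w,X)=r^{-1}(K_\bullet(A))\cap\E(2^\w,X)$, where $K_\bullet(A)$ is the relevant hyperspace set among $K_\M(A),K_{\le n}(A),K_{<\w}(A),K_{\le\w}(A)$ and $\I$ is the corresponding semi-ideal. Since $\E(2^\w,X)$ is comeager, $W_\I(A)$ is comeager in $\C(2^\w,X)$ iff $r^{-1}(K_\bullet(A))$ is, and by Lemma~\ref{l:M-coM}\,(2) this is equivalent to $K_\bullet(A)$ being comeager in $\K(X)$ --- which is exactly the assertion.

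For item \textup{(1)} I would use the pushforward map $\pi\colon\C(2^\w,X)\to P(X)$, $\pi\colon f\mapsto P\!f(\lambda)$, where $\lambda$ is the Haar measure of $2^\w$. Here no passage to injective maps is needed: since $P\!f(\lambda)(A+x)=\lambda(f^{-1}(A+x))$ for every Borel $A\subset X$ and $x\in X$, one has literally $W_\N(A)=\pi^{-1}(T(A))$. The map $\pi$ is continuous, and its image --- the set of Borel probability measures with compact support --- is dense in $P(X)$ because every measure in $P(X)$ is Radon. Granting that $\pi$ is open, its image is an open dense (hence comeager) subspace $Y_0$ of $P(X)$, and $\pi\colon\C(2^\w,X)\to Y_0$ is a continuous open surjection between Polish spaces, so Lemma~\ref{l:M-coM}\,(2) gives that $\pi^{-1}(T(A))$ is comeager in $\C(2^\w,X)$ iff $T(A)\cap Y_0$ is comeager in $Y_0$ iff $T(A)$ is comeager in $P(X)$.

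The principal obstacle, common to all parts, is the openness of $r$ and of $\pi$. For $r$ this amounts to perturbing a continuous function so that its range becomes a prescribed nearby compact set, which is routine but requires care because of the surjectivity demanded of the perturbed map. For $\pi$ it is more delicate: one has to realize a prescribed nearby probability measure as the pushforward of the \emph{fixed} measure $\lambda$ under a small perturbation of $f$; I expect to handle this by passing to the canonical dyadic decomposition of $(2^\w,\lambda)$ and redistributing the conditional mass of $f$ over cylinders, exactly the kind of construction carried out in the proof of Lemma~\ref{l:a}.
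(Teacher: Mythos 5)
Your treatment of items (2)--(5) is sound and essentially the paper's argument in a mild variant: the paper restricts the range map to $\E(2^\w,X)\to\K_0(X)$ (Cantor copies) and checks that $\K_0(X)$ is a dense $G_\delta$ in $\K(X)$, whereas you keep the full map $r:\C(2^\w,X)\to\K(X)$ and use $\E(2^\w,X)$ only to identify $W_\I(A)$ with $r^{-1}(K_\bullet(A))$ on a comeager set; either way the work reduces to the openness of a range map plus Lemma~\ref{l:M-coM}, and your version of the openness (a shrinking of a finite open cover of $K$ followed by continuous surjections of clopen pieces of $2^\w$ onto the pieces of the shrinking) goes through.

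Item (1), however, has a genuine gap that cannot be patched within your setup. The image $Y_0$ of $\pi:f\mapsto P\!f(\lambda)$ is \emph{not} an open dense subspace of $P(X)$; it is in fact meager. Two concrete obstructions: (a) every measure $P\!f(\lambda)$ has compact support $f(2^\w)$, and when $X$ is not compact the compactly supported measures form a meager subset of $P(X)$ (fix an infinite $\e$-separated set $S=\{x_n\}_{n\in\w}$; then every compactly supported $\mu$ satisfies $\mu\bigl(\bigcup_{n\ge m}B(x_n;\e/3)\bigr)=0$ for some $m$, and each of these sets of measures is closed and nowhere dense); (b) even for compact $X$, e.g.\ $X=2^\w$, the preimage under $f$ of a clopen set is clopen in $2^\w$ and hence has dyadic rational $\lambda$-measure, so $Y_0$ lies in the meager set $\{\mu:\mu([0])\in\IZ[\tfrac12]\}$; more simply, every neighborhood of $\delta_p=\pi(\mathrm{const}_p)$ contains $\tfrac13\delta_p+\tfrac23\delta_q$, which is not of the form $P\!f(\lambda)$ because the two fibers would partition $2^\w$ into closed (hence clopen, hence dyadic-measure) sets. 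Consequently $\pi$ is not open into $P(X)$, and knowing that $\pi^{-1}(T(A))=W_\N(A)$ is comeager in $\C(2^\w,X)$ gives no information about comeagerness of $T(A)$ in $P(X)$: a comeager subset of a meager set is negligible in the ambient space. The paper circumvents exactly this by replacing $\pi$ with $\Sigma:\C(2^\w,X)^\w\to P(X)$, $(f_n)_{n\in\w}\mapsto\sum_{n}2^{-(n+1)}P\!f_n(\lambda)$, restricted to a dense $G_\delta$-set $\mathcal D$ of sequences of injective maps with disjoint ranges whose union is dense: countable convex combinations of pushforwards sweep out the set $P_0(X)$ of strictly positive continuous measures, which \emph{is} a dense $G_\delta$ in $P(X)$ for non-discrete $X$, and $\Sigma|\mathcal D$ is open onto $P_0(X)$; moreover $\Sigma((f_n)_n)\in T(A)$ iff every $f_n\in W_\N(A)$, so comeagerness transfers. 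Some device of this kind (enlarging the domain so that the image becomes comeager in $P(X)$) is indispensable, and your proposal is missing it.
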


\begin{proof} Fix a complete metric $\rho$ generating the topology of the Polish group $X$. The metric $\rho$ induces the complete metric $\hat \rho(f,g)=\max_{x\in 2^\w}\rho(f(x),g(x))$ on the function space $\C(2^\w,X)$.
\smallskip

1. Consider the continuous map $$\Sigma:\C(2^\w,X)^\w\to P(X),\;\;\Sigma:(f_n)_{n\in\w}\mapsto \sum_{n=0}^\infty\frac1{2^{n+1}}Pf_n(\lambda),$$
where $\lambda$ is the Haar measure on the Cantor cube $2^\w$.

By Lemma~\ref{E-in-C}, the subspace $\E(2^\w,X)$ consisting of injective maps is dense $G_\delta$ in $\C(2^\w,X)$. Consequently, $\E(2^\w,X)^\w$ is dense $G_\delta$ in $\C(2^\w,X)^\w$. Let $D$ be a countable dense set in $X$. It is easy to see that the sets
$$
\begin{aligned}
&\mathcal D_1:=\{(f_n)_{n\in\w}\in\C(2^\w,X)^\w:\mbox{$\bigcup_{k\in\w}f_k(2^\w)$ is dense in $X$}\},\\
&\mathcal D_2:=\{(f_n)_{n\in\w}\in\C(2^\w,X)^\w:
\mbox{$D\cap\bigcup_{n\in\w}f_n(2^\w)=\emptyset$}\},\\
&\mathcal D_3:=\{(f_n)_{n\in\w}\in\C(2^\w,X)^\w:\mbox{the family $\big(f_n(2^\w)\big)_{n\in\w}$ is disjoint}\},\mbox{ and}\\
&\mathcal D:=\E(2^\w,X)^\w\cap\mathcal D_1\cap\mathcal D_2\cap\mathcal D_3
\end{aligned}
$$are dense $G_\delta$ in $\C(2^\w,X)^\w$.

Let $P_0(X)$ be the subspace of $P(X)$ consisting of all strictly positive continuous probability measures on $X$. Since the Polish group $X$ is non-discrete, the set $P_0(X)$ is dense in $P(X)$.
Observe that $\Sigma(\mathcal D)\subset P_0(X)$. The following claim implies that $\Sigma(\mathcal D)=P_0(X)$.

\begin{claim}\label{l:sur} For every measure $\mu\in P_0(X)$ and any closed nowhere dense $F\subset X$ with $\mu(F)=0$ there exists a sequence $\vec f=(f_n)_{n\in\w}\in\mathcal D$ such that $\Sigma(\vec f)=\mu$ and $\bigcup_{n\in\w}f_n(2^\w)\subset X\setminus F$.
\end{claim}

\begin{proof} Fix a countable family $(U_n)_{n\in\w}$ of nonempty open sets in $X\setminus F$ such that each nonempty open set $U\subset X$ contains some set $U_n$.

By induction we shall construct an increasing number sequence $(n_k)_{k\in\w}\in\IN^\w$ and a function sequence $(f_n)_{n\in\w}\in \E(2^\w,X)^\w$ such that for every $k\in\w$ the following conditions hold:
\begin{enumerate}
\item[(1)] $2^{n_{k+1}+1}\cdot\mu\big(U_k\setminus\bigcup_{i\le n_k}f_i(2^\w)\big)>1$;
\item[(2)] $f_{n_{k+1}}(2^\w)\subset U_k\setminus\big(D\cup F \cup\bigcup_{i\le n_k}f_i(2^\w)\big)$;
\item[(3)] $\mu(f_{n_{k+1}}(S))=\frac1{2^{n_{k+1}+1}}\lambda(S)$ for any Borel subset $S\subset 2^\w$;
\end{enumerate}
and for any $n\in\w$ with $n_k<n<n_{k+1}$ we get
\begin{itemize}
\item[(4)] $f_n(2^\w)\subset X\setminus \big(D\cup F\cup f_{n_{k+1}}(2^\w)\cup \bigcup_{i<n}f_i(2^\w)\big)$;
\item[(5)] $\mu(f_n(S))=\frac1{2^{n+1}}\cdot\lambda(S)$ for any Borel subset $S\subset 2^\w$.
\end{itemize}
To start the inductive construction, put $n_{-1}=-1$. Assume that for some $k\ge -1$ we have constructed a number $n_{k}$ and a sequence $(f_n)_{n\le n_k}$ satisfying the inductive assumptions (1)--(5). The density of $D$ in $X$ and the inductive assumptions (2) and (4) guarantee that the compact set $\bigcup_{n\le n_k}f_n(2^\w)$ is nowhere dense in $X$. Then the open set $U_k':=U_k\setminus \big(F\cup\bigcup_{n\le n_k}f_n(2^\w)\big)$ is not empty and hence has positive measure $\mu(U_k')$. So, we can choose a number $n_{k+1}>n_k$ such that $2^{n_{k+1}+1}\cdot\mu(U_k')>1$. The continuity of the measure $\mu$ guarantees that the dense Polish subspace $\Pi_k:=U_k'\setminus D$ of $U_k'$ has measure $\mu(\Pi_k)=\mu(U_k')$. Consider the probability measure $\mu'\in P(\Pi_k)$ defined by $\mu'(S):=\frac{\mu(S\cap \Pi_k)}{\mu(\Pi_k)}$ for a Borel subset $S\subset \Pi_k$. For the number $a=\frac1{2^{n_{k+1}+1}\cdot\mu(U_k')}<1$, Lemma~\ref{l:a} yields an injective continuous map $f_{n_{k+1}}:2^\w\to\Pi_k$ such that for any Borel subset $S\subset 2^\w$ we have $\mu'(f_{n_{k+1}}(S))=a\cdot\lambda(S)$ and hence $$\mu(f_{n_{k+1}}(S))=\mu(\Pi_k)\cdot\mu'(f_{n_{k+1}}(S))=\mu(U'_k)\cdot a\cdot\lambda(S)=\frac1{2^{n_{k+1}+1}}\cdot\lambda(S).$$
It is clear that the map $f_{n_{k+1}}$ satisfies the conditions (2), (3) of the inductive assumption.

Next, by finite induction, for every number $n$ in the interval $(n_k,n_{k+1})$ we shall construct an injective continuous map $f_n\in\E(2^\w,X)$ satisfying the conditions (4), (5) of the inductive construction. Assume that for some $n\in\w$ with $n_k<n<n_{k+1}$ the sequence of maps $(f_i)_{n_k<i<n}$ satisfying the conditions (4)--(5) has been constructed. The conditions (3) and (5) guarantee that $\mu(f_i(2^\w))=\frac1{2^{i+1}}$ for every $i<n$. By the condition (4), the family $\{f_{n_{k+1}}(2^\w)\}\cup\{f_i(2^\w)\}_{i<n}$ is disjoint. Since $\mu(D)=\mu(F)=0$, the Polish subspace
$X_n:=X\setminus\big(D\cup F\cup f_{n_{k+1}}(2^\w)\cup\bigcup_{i<n}f_i(2^\w)\big)$ of $X$ has measure $$\mu(X_n)=1-\mu(f_{n_{k+1}}(2^\w))-
\sum_{i<n}\mu(f_i(2^\w))=1-\tfrac1{2^{n_{k+1}+1}}-\sum_{i<n}\tfrac1{2^{i+1}}=
\tfrac1{2^n}-\tfrac1{2^{n_{k+1}+1}}>\tfrac1{2^{n+1}}.$$
Consider the probability measure $\mu_n\in P(X_n)$ defined by $\mu_n(S)=\frac{\mu(S\cap X_n)}{\mu(X_n)}$ for a Borel subset $S\subset X_n$. For the number $a_n=\frac1{2^{n+1}\cdot\mu(X_n)}<1$, Lemma~\ref{l:a} yields an injective map $f_n:2^\w\to X_n$ such that for every Borel subset $S\subset 2^\w$ we have
$\mu_n(f_n(S))=a_n\cdot \lambda(S)$ and hence $$\mu(f_n(S))=\mu(X_n)\cdot \mu_n(f_n(S))=\mu(X_n)\cdot a_n\cdot\lambda(S)=\frac1{2^{n+1}}\cdot\lambda(S),$$which means that the map $f_n$ satisfies the inductive assumptions (4) and (5).
This completes the inductive step.

After completing the inductive construction, we obtain a sequence $(f_n)_{n\in\w}\in \mathcal D$ such that $\mu(f_n(S))=\frac1{2^{n+1}}\lambda(S)$ for every Borel subset $S\subset 2^\w$. In particular, the $\sigma$-compact set $A=\bigcup_{n\in\w}f_n(2^\w)$ has measure $$\mu(A)=\sum_{n\in\w}\mu(f_n(2^\w))=\sum_{n\in\w}\frac1{2^{n+1}}=1.$$

 We claim that $\mu=\sum_{n\in\w}\frac1{2^{n+1}}P\!f_n(\lambda)$.
Indeed, for any Borel subset $B\subset X$ we have $$
\begin{aligned}
\mu(B)&=\mu(B\cap A)=\sum_{n\in\w}\mu(B\cap f_n(2^\w))=\sum_{n\in\w}\mu(f_n(f_n^{-1}(B)))\\
&=
\sum_{n\in\w}\frac1{2^{n+1}}{\cdot}\lambda(f_n^{-1}(B))=
\sum_{n\in\w}\frac1{2^{n+1}}{\cdot}P\!f_n(\lambda)(B).
\end{aligned}
$$
\end{proof}

\begin{claim} The map $\Sigma|\mathcal D:\mathcal D\to P_0(X)$ is open.
\end{claim}

\begin{proof} To show that the map $\Sigma|\mathcal D$ is open, fix any sequence $(f_n)_{n\in\w}\in\mathcal D$ and any open set $\W\subset\mathcal D\subset\C(2^\w,X)^\w$ containing $(f_n)_{n\in\w}$.
 Find $m\in\w$ and $\e>0$ such that each sequence $(g_n)_{n\in\w}\in\mathcal D$ with $\max_{n<m}\hat\rho(f_n,g_n)<3\e$ belongs to the open set $\W$. Find a finite disjoint open cover $\U$ of $2^\w$ such that for every $U\in\U$ and every $k<m$ the set $f_k(U)$ has diameter $<\e$ in $X$. We can assume that $\U$ is of the form $\U=\{U_s:s\in 2^l\}$ for some $l\in\w$ where $U_s=\{t\in 2^\w:t|l=s\}$ for $s\in 2^l$. In this case each set $U\in\U$ has Haar measure $\lambda(U)=\frac1{2^l}$.

For a subset $S\subset X$ by $B(S;\e)=\bigcup_{x\in S}B(x;\e)$ we denote the open $\e$-neighborhood of $S$ in the metric space $(X,\rho)$.
Since $(f_n)_{n\in\w}\in\mathcal D\subset\mathcal D_3$, the family $\big(f_k(2^\w)\big)_{k\in\w}$
is disjoint. So, we can find a positive $\delta<\e$ such that
\begin{itemize}
\item the family $\big(B(f_k(2^\w);\delta)\big)_{k<m}$ is disjoint, and
\item for every $k<m$ the family $\big(B(f_k(U);\delta)\big)_{U\in\U}$ is disjoint.
\end{itemize}

Consider the measure $\mu=\sum_{n\in\w}\frac1{2^{n+1}}Pf_n(\lambda)\in P_0(X)$.
The density of $\bigcup_{n\in\w}f_n(2^\w)$ in $X$ implies that for every $U\in\U$ and every $k\in\w$ the $\delta$-neighborhood $B(f_k(U);\delta)$ of the nowhere dense compact set $f_k(U)\subset X\setminus D$ has measure $\mu(B(f_k(U);\delta))>\frac1{2^{k+1}}\cdot\lambda(U)$.

Then
$$\mathcal V:=\bigcap_{U\in\U}\bigcap_{n<m}\big\{\nu\in P_0(X):\nu\big(B(f_n(U);\delta)\big)>\tfrac1{2^{n+1}}\cdot\lambda(U)\big\}$$is an open neighborhood of the measure $\mu$ in $P_0(X)$. We claim that $\V\subset \Sigma(\W)$.
Fix any measure $\nu\in\V$. For every $n<m$ and $U\in\U$ the $\delta$-neighborhood $B(f_n(U);\delta)$ has measure $\nu(B(f_n(U);\delta))>\frac1{2^{n+1}}\cdot\lambda(U)$. Consider the probability measure $\nu'$ on $X$ defined by $\nu'(S)=\frac{\nu(S\cap B(f_n(U);\delta))}{\nu(B(f_n(U);\delta))}$ for every Borel subset $S\subset X$.

Using Lemma~\ref{l:a}, for the number $a_{n,U}=\frac{\lambda(U)}{2^{n+1}\cdot\nu(B(f_n(U);\delta))}<1$ find an injective continuous map $g_{n,U}:U\to B(f_n(U);\delta)\setminus D$ such that for every Borel subset $S\subset U$ we get
$$\nu'(g_{n,U}(S))=a_{n,U}\cdot\frac{\lambda(S)}{\lambda(U)}=
\frac{1}{2^{n+1}\cdot\nu(B(f_n(U);\delta))}\cdot\lambda(S)$$and hence
\begin{equation}\label{eq:U}
\nu(g_{n,U}(S))=\nu(B(f_n(U);\delta))\cdot \nu'(g_{n,U}(S))=\frac{1}{2^{n+1}}\cdot\lambda(S).
\end{equation}

Observe that for every $x\in U$ we get $$\rho(f_n(x),g_{n,U}(x))\le\diam B(f_n(U);\delta)<2\delta+\diam f_n(U)<2\delta+\e\le 3\e.$$

By the choice of $\delta$, for every $n<m$ the family $\big(B(f_n(U);\delta)\big)_{U\in\U}$ is disjoint and so is the family $\big(g_{n,U}(U)\big)_{U\in\U}$. Then the map $g_n:2^\w\to X$ defined by $g_n|U=g_{n,U}$ for $U\in\U$, is injective and $$g_n(2^\w)\subset \bigcup_{U\in\U}B(f_n(U);\delta)=B(f_n(2^\w);\delta).$$
Moreover, $\hat\rho(g_n,f_n)=\max_{U\in\U}\max_{x\in U}\rho(g_{n,U}(x),f_n(x))<3\e$.

Since the family $\big(B(f_n(2^\w);\delta)\big)_{n<m}$ is disjoint, so is the family $(g_n(2^\w))_{n<m}$.
The equality (\ref{eq:U}) implies that $\nu(g_n(S))=\frac1{2^{n+1}}\cdot\lambda(S)$ for any $n<m$ and any Borel subset $S\subset 2^\w$. In particular, $\nu(g_n(2^\w))=\frac1{2^{n+1}}$. Consider the compact nowhere dense set $F=\bigcup_{n<m}g_n(2^\w)$ and observe that it has measure $\nu(F)=\sum_{n<m}\nu(g_n(2^\w))=\sum_{n<m}\frac1{2^{n+1}}=1-\frac1{2^{m}}$.

Now consider the continuous probability measure $\nu'\in P_0(X)$ defined by $\nu'(S)=\frac{\nu(S\setminus F)}{\nu(X\setminus F)}=2^m\cdot\nu(S\setminus F)$ and observe that $\nu'(F)=0$. The strict positivity of the measure $\nu$ and the nowhere density of $F$ in $X$ imply that the measure $\nu'$ is strictly positive and hence $\nu'\in P_0(X)$. By Claim~\ref{l:sur}, there exists a sequence $(g'_n)_{n\in\w}\in\mathcal D$ such that $\bigcup_{n\in\w}g_n'(2^\w)\subset X\setminus F$ and $\nu'=\sum_{n\in\w}\frac1{2^{n+1}}\cdot Pg_n'(\lambda)$.
Let $g_n:=g'_{n-m}$ for every $n\ge m$. Observe that the sequence $(g_n)_{n\in\w}$ belongs to the set $\mathcal D$ and $\max_{n< m}\hat\rho(g_n,f_n)<3\e$. So, $(g_n)_{n\in\w}\in\W$. We claim that $\nu=\sum_{n\in\w}\frac1{2^{n+1}}\cdot Pg_n(\lambda)$.
Indeed, for every Borel set $S\subset X$ we have
$$
\begin{aligned}
\nu(S)&=\nu(S\cap F)+\nu(S\setminus F)=\sum_{n<m}\nu(S\cap g_n(2^\w))+\nu(X\setminus F)\cdot\nu'(S)\\
&=\sum_{n<m}\nu(g_n(g_n^{-1}(S)))+\frac1{2^{m}}
\sum_{n\in\w}\frac1{2^{n+1}}\cdot Pg_n'(\lambda)(S)\\
&=\sum_{n<m}\frac1{2^{n+1}}\cdot\lambda(g_n^{-1}(S))+
\sum_{n\in\w}\frac1{2^{m+n+1}}\cdot Pg_{n+m}(\lambda)(S)=\sum_{n\in\w}\frac1{2^{n+1}}\cdot Pg_n(\lambda)(S).
\end{aligned}
$$
So, $\nu=\sum_{n\in\w}\frac1{2^{n+1}}\cdot Pg_n(\lambda)=\Sigma\big((g_n)_{n\in\w}\big)\in\Sigma(\W)$.
\end{proof}

 By \cite[8.19]{K}, the space $P_0(X)$ is Polish, being an open continuous image of the Polish space $\mathcal D$. Then $P_0(X)$ is a dense $G_\delta$-set in $P(X)$ and $T(A)$ is comeager in $P(X)$ if and only if $T(A)\cap P_0(X)$ is comeager in $P_0(X)$.
Since $(\Sigma|\mathcal D)^{-1}(T(A))=\mathcal D\cap W_{\N}(A)^\w$, we can apply Lemma~\ref{l:M-coM}(2) and conclude that the set $T(A)\cap P_0(X)$ is comeager in $P_0(X)$ if and only if $\mathcal D\cap W_{\N}(A)^\w$ is comeager in $\mathcal D$ if and only if $W_\N(A)^\w$ is comeager in $\C(2^\w,X)^\w$ if and only if $W_{\N}(A)$ is comeager in $\C(2^\w,X)$ if and only if $A$ is generically Haar-null in $X$.
\smallskip

2. Now we prove the second statement of Theorem~\ref{t:gen-char}. Consider the continuous map $r:\E(2^\w,X)\to\K(X)$, $r:f\mapsto f(2^\w)$. By Lemma~\ref{E-in-C}, the subspace $\E(2^\w,X)$ of injective maps is dense $G_\delta$ in $C(2^\w,X)$. The image $r(\E(2^\w,X))$ coincides with the subspace $\K_0(X)$ consisting of topological copies of the Cantor cube $2^\w$ in $X$.

\begin{claim} The surjective map $r:\E(2^\w,X)\to\K_0(X)$ is open. \end{claim}

\begin{proof} To show that $r$ is open, fix any function $f\in B(2^\w,X)$. Given any $\e>0$ we should find an open set $\W\subset\K_0(X)$ such that $f(2^\w)\in\W$ and for any compact set $K\in\W$ there exists an injective map $g\in \E(2^\w,X)$ such that $\hat\rho(g,f)<\e$ and $g(2^\w)=K$.

By the continuity of $f$, there exists a finite disjoint cover $\V$ of $2^\w$ by clopen sets such that $\diam f(V)<\frac12\e$ for every $V\in\V$. Choose a positive real number $\delta<\frac14{\e}$ such that $\delta\le\frac12\{\rho(f(x),f(y)):x\in U,\;y\in V\}$ for any distinct sets $U,V\in\V$. We claim that the open subset $$\W=\{K\in\K_0(X):K\subset\bigcup_{V\in\V}B(f(V);\delta)\}\cap\bigcap_{V\in\V}\{K\in\K_0(X):K\cap B(f(V);\delta)\ne\emptyset\}$$of $\K_0(X)$ has the required property.

 Indeed, take any set $K\in\W$. Since $\{K\cap B(f(V);\delta)\}_{V\in\V}$ is a disjoint open cover of $K$, for every $V\in\V$ the set $K_V:=K\cap B(f(V);\delta)$ is clopen in $K$. The definition of $\W$ ensures that $K_V$ is not empty. Since $K$ is homeomorphic to the Cantor cube, so is its clopen subspace $K_V$. Then we can find a homeomorphism $g_V:V\to K_V$.
Observe that for every $x\in V$ we have
$$\rho(g_V(x),f(x))\le \diam B(f(V);\delta)\le 2\delta+\diam f(V)<\e.$$

Since the family $\big(B(f(V);\delta)\big)_{V\in\V}$ is disjoint, the map $g:2^\w\to X$ defined by $g|V=g_V$ for $V\in\V$ is injective. It is clear that $\hat\rho(g,f)<\e$ and $g(2^\w)=K$.
\end{proof}

 By \cite[5.5.8]{Eng}, the space $\K_0(X)$ is Polish, being an open continuous image of a Polish space. By \cite[3.9.A]{Eng}, the Polish subspace $\K_0(X)$ is a dense $G_\delta$-set in $\K(X)$.

Observe that $$W_\M(A)\cap \E(2^\w,X)=r^{-1}(K_\M(A)\cap\K_0(X)).$$

If the set $K_\M(A)$ is comeager in $\K(X)$, then $K_\M(A)\cap \mathcal{K}_0(X)$ is comeager in the dense $G_\delta$-set $\mathcal{K}_0(X)$ of $\K(X)$. Then by Lemma~\ref{l:M-coM}(2), the set $r^{-1}(K_\M(A)\cap \mathcal{K}_0(X))\subset W_{\M}(A)$ is comeager in $\E(2^\w,X)$. Since $\E(2^\w,X)$ is a dense $G_\delta$-set in $\C(2^\w,X)$, the set $W_{\M}(A)$ is comeager in $\C(2^\w,X)$, which means that $A$ is generically Haar-$\M$.

If $A$ is generically Haar-$\M$, then its witness set $W_\M(A)$ is comeager in $\C(2^\w,X)$, the intersection $W_\M(A)\cap\E(2^\w,X)$ is comeager in $\E(2^\w,X)$, its image $r(W_\M(A)\cap \E(2^\w,X))=K_\M(A)\cap \K_0(X)$ is comeager in $\K_0(X)$ (by Lemma~\ref{l:M-coM}), and finally the set $\K_\M(A)$ is comeager in $\K(X)$.
\smallskip

3--5. The statements (3)--(5) of Theorem~\ref{t:gen-char} can be proved by analogy with the second statement.
\end{proof}

\begin{theorem}\label{t:MK} A nonempty subset $A$ of a Polish group $X$ is generically Haar-$n$ in $X$ for some $n\in\IN$ if the set $X{\cdot} A^{n+1}:=\{(x+a_0,\dots,x+a_n):x\in X,\;a_0,\dots,a_n\in A\}$ is meager in $X^{n+1}$.
\end{theorem}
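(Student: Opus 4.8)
The plan is to reduce the statement, via the characterization in Theorem~\ref{t:gen-char}(5), to showing that the family $K_{\le n}(A)=\{K\in\K(X):\forall x\in X\ \ |K\cap(A+x)|\le n\}$ is comeager in the hyperspace $\K(X)$. First I would dispose of the degenerate case: if $X$ were discrete, then $X^{n+1}$ would be discrete and hence would have no nonempty meager subsets, whereas $X{\cdot} A^{n+1}\ni(x+a,\dots,x+a)$ for any $x\in X$ and $a\in A$ is nonempty; so the hypothesis forces $X$ to be non-discrete, and being a topological group $X$ then has no isolated points, so $X^{n+1}$ is a perfect (crowded) Polish space. This non-discreteness is also what is needed to make Theorem~\ref{t:gen-char} applicable, since that theorem is stated for non-discrete groups.

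The heart of the argument is a Mycielski--Kuratowski argument, carried out exactly as in the proof of Theorem~\ref{t:-GHI} but with $(n{+}1)$-fold products in place of squares. Writing $(B)^{n+1}:=\{(y_0,\dots,y_n)\in B^{n+1}:y_i\ne y_j\text{ for }i\ne j\}$, I would first use the hypothesis to enclose the meager set $X{\cdot} A^{n+1}$ in a meager $F_\sigma$-set $M\subset X^{n+1}$, so that $G:=X^{n+1}\setminus M$ is a dense $G_\delta$-set in the perfect Polish space $X^{n+1}$. Then by the Mycielski--Kuratowski Theorem \cite[19.1]{K} the family $\mathcal B:=\{K\in\K(X):(K)^{n+1}\subset G\}$ is a dense $G_\delta$-set, in particular comeager, in $\K(X)$.

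Finally I would verify the inclusion $\mathcal B\subset K_{\le n}(A)$: if $K\in\K(X)$ satisfied $|K\cap(A+x)|\ge n+1$ for some $x\in X$, then picking pairwise distinct points $y_0,\dots,y_n\in K\cap(A+x)$ and writing $y_i=x+a_i$ with $a_i\in A$, the tuple $(y_0,\dots,y_n)=(x+a_0,\dots,x+a_n)$ would lie simultaneously in $X{\cdot} A^{n+1}\subset M$ and in $(K)^{n+1}$, which is impossible once $(K)^{n+1}\subset G=X^{n+1}\setminus M$. Hence every $K\in\mathcal B$ belongs to $K_{\le n}(A)$, so $K_{\le n}(A)$ is comeager in $\K(X)$, and Theorem~\ref{t:gen-char}(5) yields that $A$ is generically Haar-$n$ in $X$.

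The only genuinely delicate point is the invocation of the Mycielski--Kuratowski theorem in the $(n{+}1)$-fold form: one must make sure that \cite[19.1]{K} (or the version of it already used for squares in the proof of Theorem~\ref{t:-GHI}) indeed produces, from a dense $G_\delta$-set $G\subset X^{n+1}$ in a perfect Polish space, a comeager set of compacta $K$ with $(K)^{n+1}\subset G$. This is precisely the form in which that theorem is customarily stated (for arbitrary finite powers, and even for countably many relations simultaneously), so no new idea is required; everything else is a routine translation of the case $n=1$ recorded in Corollary~\ref{count}. (Alternatively, one could run the same argument directly in $\C(2^\w,X)$, replacing Theorem~\ref{t:gen-char}(5) by the use of $\E(2^\w,X)$ and the map $f\mapsto f(2^\w)$ as in the proof of Theorem~\ref{t:-GHI}, but passing through $\K(X)$ is cleaner.)
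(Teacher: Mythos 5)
Your proof is correct and follows essentially the same route as the paper's: both reduce the claim via Theorem~\ref{t:gen-char}(5) to the comeagerness of $K_{\le n}(A)$ in $\K(X)$ and obtain this from the Mycielski--Kuratowski theorem applied to the complement of (a meager $F_\sigma$ hull of) $X{\cdot}A^{n+1}$, followed by the same verification that a compactum meeting some translate $A+x$ in $n+1$ distinct points would violate the Mycielski condition. Your explicit check that the hypothesis forces $X$ to be non-discrete (so that Theorem~\ref{t:gen-char} applies) is a small point the paper leaves implicit, but it does not change the argument.
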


\begin{proof} For a compact subset $K\subset X$ let $$K^{n+1}_*=\{(x_0,\dots,x_n)\in K^{n+1}:|\{x_0,\dots,x_n\}|=n+1\}$$ be the set of $(n+1)$-tuples consisting of pairwise distinct points of $K$.

Assume that the set $X{\cdot}A^{n+1}$ is meager in $X^{n+1}$. By Mycielski-Kuratowski Theorem \cite[19.1]{K}, the set $$W_*(A):=\{K\in\K(X):K^{n+1}_*\cap (X{\cdot}A^{n+1})=\emptyset\}$$ is comeager in the hyperspace $\K(X)$. We claim that for every $K\in W_*(A)$ and $x\in X$ the set $K\cap (x+A)$ has cardinality $\le n$. Assuming the opposite, we could find $n+1$ pairwise distinct points $a_0,\dots,a_n\in A$ such that $x+a_i\in K$ for all $i\le n$. Then $(x+a_0,\dots,x+a_n)\in K^{n+1}_*\cap X{\cdot} A^{n+1}$, which contradicts the inclusion $K\in W_*(A)$. This contradiction yields the inclusion $W_*(A)\subset \{K\in\K(X):\forall x\in X\;\;|K\cap (A+x)|\le n\}$. By Theorem~\ref{t:gen-char}(5), the set $A$ is generically Haar-$n$.
\end{proof}

Corollary~\ref{c:St-} implies:

\begin{corollary} For any Polish group the $\sigma$-ideals $\GHN$ and $\GHM$ have the weak Steinhaus property.
\end{corollary}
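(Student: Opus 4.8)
The statement to prove is the final corollary: for any Polish group the $\sigma$-ideals $\GHN$ and $\GHM$ have the weak Steinhaus property, and it is asserted to follow from Corollary~\ref{c:St-}.

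\medskip

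The plan is to invoke Corollary~\ref{c:St-} with the appropriate choices of the compact metrizable space $K$ and the semi-ideal $\I$, and then check that the hypotheses of that corollary are met in each of the two cases. Recall that Corollary~\ref{c:St-} asserts: for any semi-ideal $\I$ on a zero-dimensional compact metrizable space $K=\bigcup\I$, and for any Polish group $X$, the semi-ideals $\GHI$ and $\HI$ have the weak Steinhaus property. So the only thing to verify is that $\GHN$ and $\GHM$ are exactly of the form $\GHI$ for admissible data $(K,\I)$.

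\medskip

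For $\GHM$, take $K=2^\w$, which is a zero-dimensional compact metrizable space, and let $\I=\M$ be the $\sigma$-ideal of meager subsets of $2^\w$; since $2^\w$ is a perfect Polish space, every point is the intersection of a decreasing sequence of clopen sets, so singletons are meager and hence $\bigcup\M=2^\w$, as required. By definition, a subset of a Polish group $X$ is generically Haar-meager precisely when it is generically Haar-$\M$, so $\GHM=\GH\M$. Thus Corollary~\ref{c:St-} applies verbatim and yields that $\GHM$ has the weak Steinhaus property. For $\GHN$, take again $K=2^\w$ and let $\I=\N$ be the $\sigma$-ideal of sets of Haar measure zero in $2^\w$; since the Haar measure $\lambda$ on $2^\w$ is continuous, $\lambda(\{x\})=0$ for every $x\in 2^\w$, so $\bigcup\N=2^\w$, and again $K$ is zero-dimensional compact metrizable. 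By definition, generically Haar-null means generically Haar-$\N$, so $\GHN=\GH\N$, and Corollary~\ref{c:St-} gives the weak Steinhaus property for $\GHN$.

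\medskip

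There is no genuine obstacle here: the whole content is packaged in Corollary~\ref{c:St-}, and the step that actually carries weight — showing that a non-generically-Haar-$\I$ analytic set $A$ has non-meager difference $A-A$ (whence $(A-A)-(A-A)$ is a neighborhood of zero by the Piccard--Pettis theorem) — was already established there via Theorem~\ref{t:-GHI} and Corollary~\ref{c:PP}. The only points to be careful about are bookkeeping: confirming that $2^\w$ satisfies the structural requirement $K=\bigcup\I$ being a zero-dimensional compact metrizable space for both $\I=\M$ and $\I=\N$ (both immediate, as noted), and recalling that the definition of weak Steinhaus property quantifies over Borel sets $A\notin\F$, so that the analytic-set formulation of Corollary~\ref{c:St-} is more than enough. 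Hence the corollary follows immediately.
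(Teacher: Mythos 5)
Your proposal is correct and is exactly the paper's argument: the paper derives this corollary by a direct appeal to Corollary~\ref{c:St-}, specialized to $K=2^\w$ with $\I=\N$ and $\I=\M$. The verifications you spell out (that $2^\w$ is zero-dimensional compact metrizable and that $\bigcup\I=2^\w$ in both cases) are the only hypotheses to check, and you check them correctly.
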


\begin{remark} The weak Steinhaus property of the $\sigma$-ideal $\GHN$ was proved by Dodos in \cite[Theorem A]{D2}.
\end{remark}

\begin{remark} For the ideals $\GHN$ and $\GHM$ on any locally compact Polish group,
the diagram (\ref{diag1}) of inclusions
obtained in the preceding section takes the following form (for two families $\I,\J$ the arrow $\I\to\J$ indicates that $\I\subset\J$).
$$
\xymatrix{
\sigma\overline{\GHN}\ar@{=}[r]&\sigma\overline{\HN}\ar@{=}[r]&
\sigma\overline{\N}\ar[r]\ar[d]&\mathcal H\sigma\overline{\N}\ar[d]\\
&&\GHN\ar[r]&\N\cap\M\ar[r]&\N\ar@{=}[r]&\HN
}
$$and
$$\sigma\overline{\GHM}=\sigma\overline{\HM}=\sigma\overline{\M}=\GHM=\EHM=\SHM=\HM=\M=\mathcal H\sigma\overline{\M}.$$
These inclusions and equalities can be easily derived from Theorems~\ref{t:sigma-gen}, \ref{t:HM=M}, Proposition~\ref{p:GHI->M} and Remark~\ref{rem:N}.
\end{remark}

\begin{problem}\label{Problem4} Is $\sigma\overline{\N}\ne\GHN\ne\N\cap\M$ in each non-discrete locally compact Polish group?
\end{problem}


\begin{remark} By Remark~\ref{r:Zw}, $\GHN\not\subset\sigma\overline{\HM}$ in the Polish group $\IZ^\w$. Since $\sigma\overline{\HN}\subset\sigma\overline{\HM}$, this implies the inequality $\GHN\ne\sigma\overline{\HN}$ for the Polish group $\IZ^\w$.
\end{remark}

\begin{problem}\label{prob:St-} Have the ideals $\GHM$ and $\GHN$ the Steinhaus property?
\end{problem}

\begin{problem}\label{Problem7} Is each generically Haar-null set in a Polish group Haar-meager?
\end{problem}

\section{Haar-$\I$ sets in countable products of finite groups}\label{s14}

In this section we check the smallness properties of some standard simple sets in countable products of finite groups. On this way we shall find some interesting examples distinguishing between Haar-$\I$ properties for various semi-ideals $\I$.

For a group $G$ let $G^*:=G\setminus\{\theta\}$ be the group with the removed neutral element.

\begin{example}\label{Hai} The compact metrizable group $X=(\IZ/6\IZ)^\w$ contains a closed subset $A$ such that
\begin{enumerate}
\item[\textup{1)}] $A+A$ is nowhere dense in $X$;
\item[\textup{2)}] $A-A=X$;
\item[\textup{3)}] $A$ is generically Haar-2 in $X$;
\item[\textup{4)}] $A$ cannot be written as a countable union of closed sets which are null-1 or Haar-thin in $X$.
\end{enumerate}
\end{example}

\begin{proof} Consider the cyclic group $C_6:=\IZ/6\IZ$ and observe that the set $B=\{6\IZ,1+6\IZ,3+6\IZ\}\subset C_6$ satisfies $B-B=C_6$ and $B+B=\{6\IZ,1+6\IZ,2+6\IZ,3+6\IZ,4+6\IZ\}\ne C_6$. Then for the countable power $A=B^\w$ the set $A+A$ is nowhere dense in $X=C_6^\w$ and $A-A=X$.

Observe that the set $C_6{\cdot}B^3=\{(x+b_0,x+b_1,x+b_2):x\in C_6,\;\;b_0,b_1,b_2\in B\}$ has cardinality $|C_6{\cdot}B^3|\le 6\cdot 3^3<6^3=|C_6^3|$, which implies that the set $$X{\cdot}A^3=\{(x+a_0,x+a_1,x+a_2):x\in X,\;\;a_0,a_1,a_2\in A\}$$ is nowhere dense in $X^3$. By Theorem~\ref{t:MK}, the set $A$ is generically Haar-2 in $X$.

Using Baire Theorem, Proposition~\ref{p:n1} and Theorem~\ref{t:thin}, we can show that the set $A$ cannot be written as a countable union of closed sets which are null-1 or Haar-thin in $X$.
\end{proof}

A subset $A$ of a group $G$ is called \index{subset!additive}\index{additive subset}{\em additive} if $A+A\subset A$. We recall that a subset $A$ is a group $G$ is {\em countably thick} if for any countable set $E\subset G$ there exists $x\in G$ such that $E+x\subset A$.

\begin{example}\label{e:Ruzsa} For any sequence $(G_n)_{n\in\w}$ of finite cyclic groups with $\sup_{n\in\w}|G_n|=\infty$ the compact topological group $G=\prod_{n\in\w}G_n$ contains an additive $\sigma$-compact subset $A\subset G$ such that
\begin{enumerate}
\item[\textup{1)}] $A$ is generically Haar-meager and generically Haar-null in $G$;
\item[\textup{2)}] $A$ is countably thick;
\item[\textup{3)}] $A$ is not null-finite and not Haar-thin;
\item[\textup{4)}] $A$ is not Haar-countable.
\end{enumerate}
\end{example}

\begin{proof} By a result of Haight \cite{Haight}, for every $k\in\IN$ there exists a number $m_k\in\IN$ such that for every integer number $q\ge m_k$ the cyclic group $C_q$ contains a subset $T\subset C_q$ having two properties:
\begin{itemize}
\item for any set $F\subset C_q$ of cardinality $|F|\le 2^k$ there exists $x\in C_q$ such that $F+x\subset T$;
\item the sum-set $T^{+k}:=\{x_1+\dots+x_{k}:x_1,\dots,x_{k}\in T\}$ is not equal to $C_q$.
\end{itemize}

Since $\sup_{n\in\w}|G_n|=\infty$, we can choose an increasing number sequence $(n_k)_{k\in\IN}$ such that $|G_{n_k}|\ge m_k$. For every $k\in\IN$, by the definition of the number $m_k$, there exists a subset $T_{n_k}\subset G_{n_k}$ such that 
 \begin{itemize}
\item[$(a_{k})$] for any set $F\subset G_{n_k}$ of cardinality $|F|\le 2^k$ there exists $x\in G_{n_k}$ such that $F+x\subset T_{n_k}$;
\item[$(b_k)$] the sum-set $T_{n_k}^{+k}:=\{x_1+\dots+x_{k}:x_1,\dots,x_{k}\in T_{n_k}\}$ is not equal to $G_{n_k}$.
\end{itemize}
For any $n\in \w\setminus\{n_k\}_{k\in\w}$ let $T_n:=G_n$. 

In the compact topological group $G=\prod_{n\in\w}G_n$ consider the closed subset $T:=\prod_{n\in\w}T_n$. The conditions $(b_k)$, $k\in\IN$, imply that for every $r\in\IN$ the sum-set $T^{+r}$ is nowhere dense in $G$. In the group $G$ consider the dense countable subgroup $$D:=\{(x_n)_{n\in\w}\in G:\exists n\in\w\;\forall m\ge n\;x_m=\theta_m\},$$ where $\theta_n$ is the neutral element of the group $G_n$.

It follows that $T^{+}:=\bigcup_{r=1}^\infty T^{+r}$ is a meager additive $\sigma$-compact subset of $G$ and so is the sum $A:=D+T^+$. It remains to prove that the set $A$ has properties (1)--(4).

First we fix some notation.  Observe that the topology of the compact group $X=\prod_{k\in\w}G_k$ is generated by the ultrametric
$$\rho(x,y)=\inf\{2^{-n}:n\in\w,\;x|n=y|n\}.$$ 
  For every $n\in\w$ let $\pi_n:G\to G_n$ be the projection onto the $n$th coordinate. 
\smallskip

1. Since the sum-set $A+A\subset A$ is meager in the compact topological group $G$, the (Borel) set $A$ has Haar measure zero by Theorem~\ref{Stein}. By Theorem~\ref{t:HN}(2), the set $A$ is Haar-null in $G$. By Theorem~\ref{t:sigma-gen}(1), $A$ is generically Haar-null. By Corollary~\ref{c:cHI=>HM}, the $F_\sigma$-set $A$ is generically Haar-meager in $G$.
\smallskip

2. To see that $A$ is countably thick, fix any countable set $C=\{c_n\}_{n\in\w}$ in  $G$. The conditions $(a_k)$, $k\in\IN$, imply the existence of an element $s\in G$  such that $$\{\pi_{n_k}(c_0),\dots,\pi_{n_k}(c_{2^k-1})\}+\pi_{n_k}(s)\in T_{n_k}$$ for any $k\in\IN$. We claim that $C+s\subset D+T\subset A$. Indeed, for any $k\in \w$ and $m>n_k$ we have $\pi_{m}(c_k)+\pi_m(s)\in T_m$ and hence $c_k+s\in D+T\subset A$.
\smallskip

3. The countable thickness of $A$ implies that $A-A=G$ and $A$ is not null-finite. Since $A-A=G$, the set $A$ is not Haar-thin by Theorem~\ref{t:thin}. 
\smallskip

4. To prove that $A$ is not Haar-countable, it suffices to show that for any uncountable compact subset $K\subset G$ there exists $x\in G$ such that $T\cap(K+x)$ is uncountable.
Replacing $K$ by a smaller subset, we can assume that the compact set $K$ is homeomorphic to the Cantor cube $2^\w$, see \cite[6.5]{K}. Following the lines of the proof of Theorem~6.2 in \cite{K}, we can construct a family $(U_s)_{s\in 2^{<\w}}$ of nonempty clopen subsets of $K$ such that for every $k\in\w$ and $s\in 2^k\subset 2^{<\w}$
\begin{enumerate}
\item[\textup{(i)}] $U_{s\hat{\;}0}$ and $U_{s\hat{\;}1}$ are disjoint clopen subsets of $U_s$;
\item[\textup{(ii)}] $\diam(U_s)<\frac1{2^{n_k}}$.
\end{enumerate}
Then $K'=\bigcup_{s\in 2^\w}\bigcap_{n\in\w}U_{s|n}$ is a compact subset of $K$, homeomorphic to $2^\w$.

The choice of the metric $\rho$ and the condition (ii) of the inductive construction ensure that for every $k\in\w$ the set $\pr_{n_k}(K')\subset G_{n_k}$ has cardinality $|\pr_{n_k}(K')|\le 2^k$. The conditions $(a_k)$ allow us to find an element $x\in G$ such that $\pr_{n_k}(x)+\pr_{n_k}(K')\subset T_{n_k}$ for every $k\in\w$. Then $x+K'\subset T$ and the set $(x+K)\cap A\supset x+K'$ is uncountable.
\end{proof}

\begin{example}\label{ex:13.11} Let $(G_k)_{k\in\w}$ be a sequence of non-trivial finite groups. For the closed subset $A:=\prod_{k\in\w}G_k^*$ of the compact Polish group $X=\prod_{k\in\w}G_k$ and a natural number $n\in\IN$ the following conditions are equivalent:
\begin{enumerate}
\item[\textup{1)}] $A$ is Haar-$n$;
\item[\textup{2)}] $A$ is generically Haar-$n$;
\item[\textup{3)}] $A$ is null-$n$;
\item[\textup{4)}] $A$ is generically null-$n$;
\item[\textup{5)}] $|G_k|\le n+1$ for infinitely many numbers $k\in\w$.
\end{enumerate}
\end{example}

\begin{proof} We shall prove the implications $(5)\Ra(2)\Leftrightarrow(1)\Ra(3)\Leftrightarrow(4)\Ra(5)$. Among these implications, $(1)\Ra(3)$ is trivial and the equivalences $(1)\Leftrightarrow(2)$ and $(3)\Leftrightarrow(4)$ follow from Theorem~\ref{t:sigma-gen}.

To prove that $(5)\Ra(2)$, assume that the set $\Omega=\{k\in\w:|G_k|\le n+1\}$ is infinite. We claim that for every $k\in\Omega$ the set $G_k\cdot (G_k^*)^{n+1}:=\{(x+a_0,\dots,x+a_n):x\in G_k,\;a_0,\dots,a_n\in G_k^*\}$ is not equal to $G_k^{n+1}$. This follows from the observation that for every $x\in G_k$ and $a_0,\dots,a_n\in G_k^*$ the set $\{x+a_0,\dots,x+a_n\}$ has cardinality $\le|G_k^*|<|G_k|$. On the other hand, the set $G_k^{n+1}$ contains vectors $(x_0,\dots,x_{n})$ of cardinality $\min\{|G_k|,n+1\}=|G_k|$.

Then the set $\prod_{k\in\Omega}G_k\cdot (G_k^*)^{n+1}$ is nowhere dense in $\prod_{k\in\Omega}G_k^{n+1}$ and the set $\prod_{k\in\w}G_k\cdot (G_k^*)^{n+1}=X{\cdot}A^{n+1}$ is nowhere dense in $\prod_{k\in\w}G_k^{n+1}=X^{n+1}$.
By Theorem~\ref{t:MK}, the set $A$ is generically Haar-$n$ in $X$.
\smallskip

To prove that $(3)\Ra(5)$, assume that the condition (5) does not hold. Then the set $F=\{k\in\w:|G_k|\le n+1\}$ is finite. In this case we shall show that the set $A$ is not null-$n$ in $X$. By Proposition~\ref{p:null-char}, it suffices to show that for any null sequence $(z_i)_{i\in\w}$ in $X$ there exists $x\in X$ such that the set $\{i\in\w:z_i\in A+x\}$ has cardinality $>n$. Since the sequence $(z_i)_{i\in\w}$ converges to zero, for every $k\in F$ there exists a number $i_k\in\w$ such that for every $i\ge i_k$ the element $z_i(k)$ is equal to the neutral element $\theta_k$ of the group $G_k$. Choose any subset $E\subset\w$ of cardinality $|E|=n+1$ with $\min E\ge \max_{k\in F}i_k$.
Next choose any function $x\in X=\prod_{k\in\w}G_k$ such that $x(k)\notin \{z_i(k):i\in E\}$ for any $k\in\w$. The choice of $x$ is possible since for any $k\in F$ we get $\{z_i(k)\}_{i\in E}=\{\theta_k\}\ne G_k$ and for any $k\in\w\setminus F$ the set $\{z_i(k):i\in E\}$ has cardinality $\le |E|=n+1<|G_k|$. The choice of $x$ ensures that for every $i\in E$ and $k\in\w$ we get $z_i(k)\ne x(k)$ and hence $z_i(k)\in G_k^*+x(k)$. Then $\{z_i\}_{i\in E}\subset A+x$, which implies that the set $\{i\in\w:z_i\in A+x\}\supset E$ has cardinality $\ge |E|=n+1$.
\end{proof}

\begin{example}\label{ex:A} Let $(G_k)_{k\in\w}$ be a sequence of non-trivial finite groups. For the closed subset $A:=\prod_{k\in\w}G_k^*$ of the compact Polish group $X=\prod_{k\in\w}G_k$ the following conditions are equivalent:
\begin{enumerate}
\item[\textup{1)}] $A$ is Haar-countable;
\item[\textup{2)}] $A$ is Haar-finite;
\item[\textup{3)}] $A$ is null-finite;
\item[\textup{4)}] $A$ is Haar-$n$ for some $n\in\IN$;
\item[\textup{5)}] $A$ is generically Haar-$n$ for some $n\in\IN$;
\item[\textup{6)}] $A$ is null-$n$ for some $n\in\IN$;
\item[\textup{7)}] $A$ is generically null-$n$ for some $n\in\IN$;
\item[\textup{8)}] $\lim_{k\to\infty}|G_k|\ne\infty$.
\end{enumerate}
\end{example}

\begin{proof} The equivalences $(4)\Leftrightarrow(5)\Leftrightarrow(6)\Leftrightarrow(7)
\Leftrightarrow(8)$ follow from the corresponding equivalences in Example~\ref{ex:13.11}.
The implications $(4)\Ra(2)\Ra(1,3)$ are trivial and $(3)\Ra(8)$ is proved in Example 2.4 of \cite{BJ}. So, it remains to prove $(1)\Ra(8)$.

Assume that $\lim_{k\to\infty}|G_k|=\infty$. To prove that $A$ is not Haar-countable, it suffices to show that for any uncountable compact subset $K\subset X$ there exists $x\in X$ such that $A\cap(K+x)$ is uncountable.

Since $\lim_{k\to\infty}|G_k|=\infty$, we can choose an increasing number sequence $(k_n)_{n\in\w}$ such that for every $n\in\w$ and $k\ge k_n$ the group $G_k$ has cardinality $|G_{k}|>2^n$. Observe that the topology of the compact group $X=\prod_{k\in\w}G_k$ is generated by the ultrametric
$$\rho(x,y)=\inf\{2^{-n}:n\in\w,\;x|n=y|n\}.$$

Replacing $K$ by a smaller subset, we can assume that the compact set $K$ is homeomorphic to the Cantor cube $2^\w$, see \cite[6.5]{K}. Following the lines of the proof of Theorem~6.2 in \cite{K}, we can construct a family $(U_s)_{s\in 2^{<\w}}$ of nonempty clopen subsets of $K$ such that for every $n\in\w$ and $s\in 2^n\subset 2^{<\w}$
\begin{enumerate}
\item[\textup{(1)}] $U_{s\hat{\;}0}$ and $U_{s\hat{\;}1}$ are disjoint clopen subsets of $U_s$;
\item[\textup{(2)}] $\diam(U_s)<\frac1{2^{k_{n+1}}}$.
\end{enumerate}
Then $K'=\bigcup_{s\in 2^\w}\bigcap_{n\in\w}U_{s|n}$ is a compact subset of $K$, homeomorphic to $2^\w$.

For every $k\in\w$ denote by $\pr_k:X\to G_k$, $\pr_k:x\mapsto x(k)$, the projection of the product $X=\prod_{n\in\w}G_n$ onto the $k$-th factor $G_k$.
The choice of the metric $\rho$ and the condition (2) of the inductive construction ensure that for every $n\in\w$ and $k\in[k_n,k_{n+1})$ the set $\pr_{k}(K')\subset G_{k}$ has cardinality $|\pr_{k}(K')|\le 2^n<|G_{k}|$. This allows us to find an element $x\in X$ such that $x(k)+\pr_k(K')\subset G_k^*$ for every $k\in\w$. Then $x+K'\subset\prod_{k\in\w}G_k^*=A$ and the set $(x+K)\cap A\supset x+K'$ is uncountable.
\end{proof}

\begin{example}\label{ex:hard} Let $(G_k)_{k\in\w}$ be a sequence of non-trivial finite groups, $X=\prod_{k\in\w}G_k$, $A:=\prod_{k\in\w}G_k^*$, $\alpha$ be any element of $A$ and $B:=\{x\in A:x(n)=\alpha(n)$ for infinitely many numbers $n\}$. The set $B$ has the following properties:
\begin{enumerate}
\item[\textup{1)}] $B$ is a dense $G_\delta$-set in $A$;
\item[\textup{2)}] $B-A$ has empty interior in $X$;
\item[\textup{3)}] $B$ is null-finite in $X$;
\item[\textup{4)}] $B$ is Haar-thin in $X$;
\item[\textup{5)}] for any continuous map $f:2^\w\to A$ there exists $x\in X$ such that $f^{-1}(B+x)\notin\sigma{\overline{\N}}$;
\item[\textup{6)}] if $\lim_{k\to\infty}|G_k|=\infty$, then $B$ is not Haar-countable and not null-$n$ for all $n\in\IN$;
\item[\textup{7)}] if $\prod_{k\in\w}\frac{|G_k|-1}{|G_k|}>0$, then $B$ is not Haar-$\sigma\overline{\N}$ in $X$ and consequently $B$ does not belong to the $\sigma$-ideals $\sigma\overline{\N}\subset\mathcal H\sigma\overline{\N}$.
\end{enumerate}
\end{example}

\begin{proof} 1. For every $n\in\w$ consider the closed nowhere dense set $A_n=\{x\in A:\forall k\ge n\;\;x(k)\ne \alpha(k)\}$ in $A$. Since $B=A\setminus\bigcup_{n\in\w}A_n$, the set $B$ is a dense $G_\delta$-set in $A$.
\smallskip

2. Observe that the set $$D=\{x\in X:\exists n\in\w\;\forall k\ge n\;\;x(k)=\alpha(k)\}$$ is countable and dense in $X$. We claim that $(B-A)\cap D=\emptyset$. In the opposite case, we could find points $x\in D$, $b\in B$, $a\in A$ such that $x=b-a$. By the definition of the set $D$, there exists $n\in\w$ such that $x(k)=\alpha(k)$ for all $k\ge n$. By the definition of $B$, there exists $k\ge n$ with $b(k)=\alpha(k)$. Then $a(k)=b(k)-x(k)=\alpha(k)-\alpha(k)\notin G_k^*$ and hence $a\notin A$, which is a desirable contradiction. So, $B-A$ does not intersect the dense set $D$ and hence $B-A=B-\bar B$ has empty interior.
\smallskip

3. Taking into account that the set $B-\bar B=B-A$ has empty interior, we can apply Theorem~\ref{t:St-NF} and conclude that the set $B$ is null-finite.
\smallskip

4. Since the set $B-B\subset B-A$ has empty interior, it is not a neighborhood of zero in $X$. Applying Theorem~\ref{t:thin}, we conclude that the set $B$ is Haar-thin.
\smallskip

5. Given any continuous map $f:2^\w\to A$, we shall find $x\in X$ such that $f^{-1}(B+x)\notin\sigma{\overline{\N}}$ in $2^\w$. We endow the Cantor cube with the standard product measure $\lambda$, called the Haar measure on $2^\w$.

For every $n\in \w$ consider the clopen set $Z_n=\{x\in X:x(n)=\theta(n)\}$ in $X$.
Here by $\theta$ we denote the neutral element of the group $X$. Separately we shall consider two cases.
\smallskip


I) For some $x\in X$ the sequence $\big(\lambda(f^{-1}(x+Z_n))\big)_{n\in\w}$ does not converge to zero. In this case we can find $\e>0$ and an infinite set $\Omega\subset\w$ such that for every $n\in\Omega$ the set $K_n:=f^{-1}(x+ Z_n)$ has Haar measure $\lambda(K_n)\ge\e$. Since the hyperspace $\K(2^\w)$ of compact subsets of $2^\w$ is compact, we can replace $\Omega$ by a smaller infinite set and assume that the sequence $(K_n)_{n\in\Omega}$ converges to some compact set $K_\infty\in\K(2^\w)$. By the regularity of the Haar measure $\lambda$ on $2^\w$, the set $K_\infty$ has Haar measure $\lambda(K_\infty)\ge \e$. Using the regularity of the Haar measure $\lambda$ on $2^\w$ and the convergence of the sequence $(K_n)_{n\in\Omega}$ to $K_\infty$, we can choose an infinite subset $\Lambda\subset\Omega$ such that the intersection $K:=\bigcap_{n\in\Lambda}K_n$ has positive Haar measure. Now take the element $y\in X$ defined by
$$y(n)=\begin{cases}
x(n)-\alpha(n)&\mbox{if $n\in\Lambda$};\\
\theta(n)&\mbox{otherwise}.
\end{cases}
$$
The inclusions $f(K)\subset f(K_n)\subset A\cap(x+Z_n)$ holding for all $n\in\Lambda$ ensure that $$f(K)\subset\prod_{n\in\Lambda}\{x(n)\}\times \prod_{n\in\w\setminus\Lambda}G^*_n=y+\prod_{n\in\Lambda}\{\alpha(n)\}\times \prod_{n\in\w\setminus\Lambda}G^*_n\subset y+B$$and hence the set $f^{-1}(y+B)\supset K$ does not belong to the ideals $\N\supset \sigma\overline{\N}$ in $2^\w$.
\smallskip

II) Next, consider the second (more difficult) case: $\lim_{n\to\infty}\lambda(f^{-1}(x+Z_n))=0$ for every $x\in X$. 
Our assumption implies that $$\lim_{n\to\infty}\lambda\big(f^{-1}(x\pm \alpha+Z_n)\big)=0$$ for every $x\in X$. Here $x\pm \alpha+Z_n:=(x+\alpha+Z_n)\cup(x-\alpha+Z_n)$.

For a closed subset $F\subset X$ let $\supp(\lambda|F)$ be the set of all points $x\in F$ such that for each neighborhood $O_x\subset 2^\w$ the intersection $O_x\cap F$ has positive measure $\lambda(O_x\cap F)$. The $\sigma$-additivity of the measure $\lambda$ implies that $\lambda(\supp(\lambda|F))=\lambda(F)$.

On the Cantor cube $2^\w$ we consider the metric $\rho(x,y):=\inf\{2^{-n}:n\in\w,\;x|n=y|n\}$.
For a point $x\in 2^\w$ and $\e>0$ by $B(x;\e)=\{y\in 2^\w:\rho(x,y)<\e\}$ we denote the open $\e$-ball centered at $x$ in the metric space $(2^\w,\rho)$.

By induction we shall construct a decreasing sequence $(F_n)_{n\in\w}$ of closed subsets of $2^\w$, a decreasing sequence $(\Omega_n)_{n\in\w}$ of infinite subsets of $\w$ and a sequence of points $(x_n)_{n\in \w}$ of $2^\w$ such that for every $n\in\w$ the following conditions are satisfied:
\begin{itemize}
\item[(i)] $\{x_k\}_{k\le n}\subset F_n$;
\item[(ii)] $\min_{k<n}\rho(x_n,x_k)=\max_{x\in F_{n-1}}\min_{k<n}\rho(x,x_k)$;
\item[(iii)] $\Omega_{n-1}\setminus \Omega_n$ is infinite and $\min\Omega_n>n$;
\item[(iv)] for every $k\le m\le n$ we have $\sum_{i\in\Omega_n}\lambda(f^{-1}(f(x_n)\pm\alpha+ Z_i))<\frac1{3^{n+1}}\lambda(B(x_k;\frac1{2^m})\cap F_{n-1})$;
\item[(v)] for every $k\le n$ and $m\in\Omega_n$ we have $\sum_{m< i\in\Omega_n}\lambda(f^{-1}(f(x_n)\pm \alpha+Z_i))<\frac12\lambda(W_{m,n}(x_k))$ where $W_{m,n}(x_k):=B(x_k;\frac1{2^m})\cap F_{n-1}\setminus \bigcup_{m\ge i\in\Omega_{n}}f^{-1}(f(x_n)\pm \alpha+Z_i)$;
\item[(vi)] $F_n=\supp(\lambda|E_n)$ where $E_n=F_{n-1}\setminus \bigcup_{i\in\Omega_n}f^{-1}(f(x_n)\pm \alpha+Z_i)$.
\end{itemize}

We start the inductive construction letting $F_{-1}=F_0=2^\w$, $\Omega_{-1}=\w$ and $x_0$  any point of $F_0$. Observe that for every $i\in\w$ the point $f(x_0)$ does not belong to the clopen set $f(x_0)\pm \alpha+Z_i$. This implies that for every $m\in\w$ the set $W_{m,0}(x_0)=B(x_0;\frac1{2^m})\cap F_{-1}\setminus\bigcup_{i\le m}f^{-1}(f(x_0)\pm \alpha+Z_i)$ has positive Haar measure.
Since the sequence $\big(\lambda(f^{-1}(f(x_0)\pm\alpha+Z_i))\big)_{i\in\w}$ tends to zero, we can choose an infinite set $\Omega_0\subset\w$ with infinite complement $\w\setminus\Omega_0$ such that
    $\sum_{i\in\Omega_0}\lambda(f^{-1}(f(x_0)\pm\alpha+ Z_i))<\frac13$ and
for every $m\in\Omega_0$ we have $\sum_{m< i\in\Omega_0}\lambda(f^{-1}(f(x_0)\pm\alpha+Z_i))<\frac12\lambda(W_{m,0}(x_0))$.
It is easy to see that the sets $F_0,\Omega_0$ and point $x_0$ satisfy the inductive conditions (i)--(vi) for $n=0$.

Now assume that for some $n\in\IN$ we have constructed closed subsets $F_0\supset\dots\supset F_{n-1}$, infinite sets $\Omega_0\supset\dots\supset\Omega_{n-1}$ and points $x_0,\dots,x_{n-1}\in F_{n-1}$ satisfying the inductive conditions (i)--(vi).
Choose any point $x_n\in F_{n-1}$ satisfying the inductive condition (ii).
Since $F_{n-1}=\supp(\lambda|F_{n-1})$ and the sequence $\big(\lambda(f^{-1}(f(x_n)\pm \alpha+Z_i))\big)_{i\in\w}$ converges to zero, we can choose an infinite subset $\Omega_n\subset \Omega_{n-1}$ satisfying the inductive conditions (iii)--(v). Put $F_n=\supp(\lambda|E_n)$ where $E_n=F_{n-1}\setminus\bigcup_{i\in\Omega_n}f^{-1}(f(x_n)\pm \alpha+Z_i)$.

We claim that for every $k\le n$ the point $x_k$ belongs to $F_n$. If $k<n$, then $x_k\in F_{n-1}=\supp(\lambda|F_{n-1})$ by the inductive assumption. Taking into account that $x_n\in F_{n-1}\subset F_k\subset X\setminus\bigcup_{i\in\Omega_k}f^{-1}(f(x_k)\pm \alpha+Z_i)$, we conclude that $f(x_n)\notin f(x_k)\pm\alpha+Z_i$ for every $i\in\Omega_k$ and hence $f(x_k)\notin f(x_n)\pm\alpha-Z_i=f(x_n)\pm\alpha+Z_i$ for every $i\in\Omega_k\supset\Omega_n$. Then $x_k\in E_n=F_{n-1}\setminus\bigcup_{i\in\Omega_n}f^{-1}(f(x_n)\pm\alpha+Z_i)$.

Since $x_n\notin\bigcup_{i\in\Omega_n}f^{-1}(f(x_n)\pm\alpha+Z_i)$, the point $x_n$ also belongs to $E_n$.

To see that for every $k\le n$ the point $x_k$ belongs to $F_n:=\supp(\lambda|E_n)$, it suffices to check that for every $m\in\Omega_n$ the neighborhood $B(x_k;\frac1{2^m})\cap E_n$ of $x_k$ in $E_n$ has positive Haar measure. For this observe that
$$
\begin{aligned}
\lambda(B(x_k;\tfrac1{2^m})\cap E_n)&=\lambda\big(B(x_k;\tfrac1{2^m})\cap F_{n-1}\setminus\bigcup_{i\in \Omega_n}f^{-1}(f(x_n)\pm\alpha+Z_i)\big)\\
&\ge \lambda\big(B(x_k;\tfrac1{2^m})\cap F_{n-1}\setminus\bigcup_{m\ge i\in\Omega_n}f^{-1}(f(x_n)\pm \alpha+Z_i)\big)\\
&-\sum_{m< i\in\Omega_n}\lambda\big(f^{-1}(f(x_n)\pm\alpha+Z_i)\big)\\
&=\lambda\big(W_{m,n}(x_k)\big)-\sum_{m< i\in\Omega_n}\lambda\big(f^{-1}(f(x_n)\pm\alpha+Z_i)\big)\\
&>\big(1-\tfrac12\big)\cdot \lambda(W_{m,n}(x_k))>0,
\end{aligned}
$$
by the inductive condition (v).

Thus the sets $F_n$, $\Omega_n$ and the point $x_n$ satisfy the inductive conditions (i)--(vi).

After completing the inductive construction, consider the closed set $F=\bigcap_{n\in\w}F_n$. We claim that the set $F$ has positive measure $\lambda(F)$.
The inductive condition (iv) implies that $$\lambda(F_n)=\lambda(E_n)\ge\lambda(F_{n-1})-\sum_{i\in \Omega_n}\lambda(f^{-1}(f(x_n)\pm \alpha+Z_i))>\lambda(F_{n-1})-\frac1{3^{n+1}}
$$
and hence $$\lambda(F_n)>\lambda(F_0)-\sum_{k=1}^{n}\frac1{3^{k+1}}>1-\sum_{k=1}^\infty \frac1{3^{k+1}}=\frac56.$$
Then $\lambda(F)=\inf_{n\in\w}\lambda(F_n)\ge\frac56$.
The inductive condition (i) guarantees that $\{x_n\}_{n\in\w}\subset F$.

Let us show that the set $\{x_n\}_{n\in\w}$ is dense in $F$. Assuming that $\{x_n\}_{n\in\w}$ is not dense in $F$, we can find a point $x\in F$ and positive number $\delta>0$ such that $\rho(x,x_n)\ge\delta$ for all $n\in\w$. Now the inductive condition (ii) ensures that $\rho(x_n,x_k)\ge \delta$ for all $n>k$, which contradicts the compactness of the Cantor cube $2^\w$.

Next, we show that for all numbers $k\le m$ the set $F\cap B(x_k;\tfrac1{2^m})$ has positive Haar measure. The $\sigma$-additivity of the measure $\lambda$ ensures that $\lambda(F\cap B(x_k;\tfrac1{2^m}))=\inf_{n>m}\lambda(F_n\cap B(x_k;\tfrac1{2^m}))$. By the inductive conditions (iii) and (iv), for every $n>m$ we have
$$
\begin{aligned}
\lambda\big(B(x_k;\tfrac1{2^m})\cap F_n\big)&=\lambda\big(B(x_k;\tfrac1{2^m})\cap E_n\big)\\
&=\lambda\big(B(x_k;\tfrac1{2^m})\cap F_{n-1}\setminus\bigcup_{i\in \Omega_n}f^{-1}(f(x_n)\pm \alpha+Z_i)\big)\\
&\ge\lambda\big(B(x_k;\tfrac1{2^m})\cap F_{n-1}\big)-\sum_{i\in\Omega_n}\lambda(f^{-1}(f(x_n)\pm \alpha+Z_i))\\
&>\lambda\big(B(x_k;\tfrac1{2^m})\cap F_{n-1}\big)\cdot
\big(1-\tfrac1{3^{n+1}}\big)
\end{aligned}
$$
and hence $$\lambda\big(B(x_k;\tfrac1{2^m})\cap F_n\big)>\lambda\big(B(x_k;\tfrac1{2^m})\cap F_m\big)\cdot\prod_{i=m+1}^{n}\big(1-\tfrac1{3^{i+1}}\big).$$
Then
$$\lambda\big(B(x_k;\tfrac1{2^m})\cap F\big)=\lim_{n\to\infty}\lambda\big(B(x_k;\tfrac1{2^m})\cap F_n\big)\ge \lambda\big(B(x_k;\tfrac1{2^m})\cap F_m\big)\cdot\prod_{i=m+1}^{\infty}\big(1-\tfrac1{3^{i+1}}\big)>0.$$

Therefore, $\{x_n\}_{n\in\w}\subset\supp(\lambda|F)$ and $F=\overline{\{x_n\}}_{n\in\w}=\supp(\lambda|F)$. By the continuity of the measure $\lambda|F$ its support $F$ has no isolated points. Then the dense set $\{x_n\}_{n\in\w}$ in $F$ also does not have isolated points.

For every $n\in\w$ let $y_n:=f(x_n)\in A$.
Now consider the function $y\in X$ defined by
$$y(i)=\begin{cases}\alpha(i)-y_n(i)&\mbox{if $i\in \Omega_n\setminus\Omega_{n+1}$ for some $n\in\w$};\\
\theta(i)&\mbox{if $i\notin\Omega_0$}.
\end{cases}
$$
We claim that for every $k\in\w$ the point $y_k+y$ belongs to the set $B$.
First we show that $y_k+y\in A$. The inclusion $y_k+y\in A$ will follow as soon as we check that $y_k(i)+y(i)\ne \theta(i)$ for all $i\in\w$. If $i\notin\Omega_0$, then
$y_k(i)+y(i)=y_k(i)\ne \theta(i)$ as $y_k\in f(2^\w)\subset A$. If $i\in\Omega_0$, then there exists a unique number $n\in\w$ such that $i\in\Omega_n\setminus\Omega_{n+1}$ and hence $y_k(i)+y(i)=y_k(i)+\alpha(i)-y_n(i)$. It follows from $y_k\notin y_n\pm\alpha+Z_i$ that $y_k(i)\ne y_n(i)-\alpha(i)$ and hence $y_k(i)+y(i)=y_k(i)+\alpha(i)-y_n(i)\ne\theta(i)$. This completes the proof of inclusion $y_k+y\in A$.
Next, observe that for any $i\in\Omega_k\setminus\Omega_{k+1}$ we get $y_k(i)+y(i)=y_k(i)+\alpha(i)-y_k(i)=\alpha(i)$, which implies that $y_k+y\in B$.

It follows that the $G_\delta$-set $f^{-1}(B-y)$ contains the set $\{x_k\}_{k\in\w}$. We claim that the set $f^{-1}(B-y)$ does not belong to the $\sigma$-ideal $\sigma\overline{\N}$. In the opposite case $f^{-1}(B-y)\subset \bigcup\mathcal Z$
for some countable family $\mathcal Z$ of closed sets of Haar measure zero in $2^\w$.
Applying the Baire Theorem to the dense Polish subspace $P=F\cap f^{-1}(B-y)$ of $F$, we can find a nonempty open set $U\subset F$ such that $U\cap P\subset Z$ for some $Z\in\mathcal Z$. Taking into account that $U\cap P$ is dense in $U$, we conclude that $U\subset \bar U=\overline{U\cap P}\subset \bar Z=Z$ and hence $U$ has measure $\lambda(U)\le\lambda(Z)=0$, which contradicts the equality $F=\supp(\lambda|F)$.
\smallskip

6. Assuming that $\lim_{k\to\infty}|G_k|=\infty$, we shall prove that the set $B$ is not Haar-countable. To derive a contradiction, assume that $B$ is Haar-countable and find an uncountable compact set $K\subset X$ such that for every $x\in X$ the set $(K+x)\cap B$ is countable. By Example~\ref{ex:A}, the closed subset $A$ of $X$ is not Haar-countable, so there exists $z\in X$ such that $(K+z)\cap A$ is uncountable. Replacing $K$ by $(K+z)\cap A$, we can assume that $K\subset A$. Since $K\subset A$ is uncountable, there exists a continuous injective map $f:2^\w\to K$. By Example~\ref{ex:hard}(5), for some $x\in X$ the preimage $f^{-1}(x+B)$ does not belong to the $\sigma$-ideal $\sigma\overline{\N}$ in $2^\w$. In particular, $f^{-1}(x+B)$ is uncountable. Then the intersection $K\cap (x+B)\supset f(f^{-1}(x+B))$ is uncountable as well. But this contradicts the choice of $K$.
\smallskip

Next, we prove that the set $B$ is not null-$n$ for every $n\in\IN$. To derive a contradiction, assume that for some $l\in\IN$ the set $B$ is null-$l$. Then we can find a null-sequence $(z_k)_{k\in\w}$ in $X$ such that $|\{i\in\w:y+z_i\in B\}|\le l$ for every $y\in X$.

Since $\lim_{k\to\infty}|G_k|=\infty$, for every $n\in\w$ there exists a number $k_n\in\w$ such that $|G_k|>n+2$ for all $k\ge k_n$. We can assume that $k_{n+1}>k_n$ for all $n\in\w$.

Replacing $(z_k)_{k\in\w}$ by a suitable subsequence, we can assume that for every $n\in\w$ and $k< k_{n+1}$ the element $z_n(k)$ is equal to the neutral element $\theta(k)$ of the group $G_k$.
Then for every $n\in\w$ and $k\in\w$ with $k_{n}\le k<k_{n+1}$ the set $\{z_i(k)\}_{i\in\w}$ is contained in $\{\theta(k)\}\cup\{z_i(k):i<n\}$ and hence has cardinality $|\{z_i(k)\}_{i\in\w}|\le 1+n<|G_k|$. Also, for any $k<k_0$ we get $|\{z_i(k)\}_{i\in\w}|=|\{\theta_k\}|=1<|G_k|$.
Consequently, $|\{z_i(k)\}_{i\in\w}|<|G_k|$ for all $k\in\w$, which allows us to choose a function $s\in X$ such that $s+\{z_i\}_{i\in\w}\subset A=\prod_{k\in\w}G_k^*$.

Choose any free ultrafilter $\U$ on the set $\w$. For two elements $x,y\in X$ we write $x=^* y$ if the set $\{k\in\w:x(k)=y(k)\}$ belongs to the ultrafilter $\U$. Let $[\w]^2=\{(i,j)\in\w\times\w:i<j\}$ and $\chi:[\w]^2\to\{-1,0,1\}$ be the function defined by
$$\chi(i,j)=\begin{cases}
1&\mbox{if $z_j=^*\alpha+z_i$};\\
-1&\mbox{if $z_j=^*-\alpha+z_i$};\\
0&\mbox{otherwise}.
\end{cases}
$$ By the Ramsey Theorem \cite{GRS}, there exists an infinite subset $\Omega\subset\w$ such that $\{\chi(i,j):i,j\in\Omega,\;i<j\}=\{c\}$ for some $c\in\{-1,0,1\}$. We claim that $c=0$. Assuming that $c\in\{-1,1\}$, choose any numbers $i<j<k$ in $\Omega$ and conclude that $\{\chi(i,j),\chi(j,k),\chi(i,k)\}=\{1\}$ implies that $c\cdot \alpha+z_i=^* z_k=^* c\cdot \alpha+z_j=^* c\cdot \alpha+(c\cdot \alpha+z_i)=2\cdot c\cdot \alpha+z_i$ and hence $\alpha=^*2\alpha$, which contradicts the choice of $\alpha\in A$. This contradiction shows that $c=0$.

Choose any finite subset $L\subset\Omega$ of cardinality $|L|>l$ and observe that the set $$U=\bigcap_{i,j\in L}\big\{k\in\w:z_i(k)-z_j(k)\notin\{\alpha(k),-\alpha(k)\}\big\}$$belongs to the ultrafilter $\U$ and hence is infinite.

Write $U$ as the union $U=\bigcup_{i\in L}U_i$ of pairwise disjoint infinite sets.
Consider the function $y\in X$ defined by the formula
$$y(k)=\begin{cases}
\alpha(k)-z_i(k)&\mbox{if $k\in U_i$ for some $i\in L$};\\
s(k)&\mbox{otherwise.}
\end{cases}
$$
We claim that for every $i\in L$ the element $y+z_i$ belongs to the set $B$. First we show that $y+z_i$ belongs to $A$. This will follow as soon as we check that $y(k)+z_i(k)\ne\theta(k)$ for every $k\in\w$. If $k\notin U$, then $y(k)+z_i(k)=s(k)+z_i(k)\ne\theta(k)$ (as $s+\{z_j\}_{j\in\w}\subset A$).
If $k\in U$, then $y(k)+z_i(k)=\alpha(k)-z_j(k)+z_i(k)$ for the unique number $j\in L$ with $k\in U_j$. The definition of the set $U\supset U_j\ni k$ guarantees that $z_i(k)-z_j(k)\ne-\alpha(k)$ and hence $y(k)+z_i(k)=\alpha(k)-z_j(k)+z_i(k)\ne\theta(k)$. This completes the proof of the inclusion $y+z_i\in A$.

To see that $y+z_i\in B$, observe that for every $k\in U_i$ we get $y(k)+z_i(k)=\alpha(k)-z_i(k)+z_i(k)=\alpha(k)$ and hence $y+z_i\in B$ by the definition of $B$.

Therefore $|\{i\in\w:y+z_i\in B\}|\ge|L|>l$, which contradicts the choice of the sequence $(z_i)_{i\in\w}$.
\smallskip

7. Assuming that $\prod_{k\in\w}\frac{|G_k|-1}{|G_k|}>0$, we shall prove that the set $B$ is not Haar-$\sigma\overline{\N}$ in $X$. Given any continuous map $f:2^\w\to X$, we should find $x\in X$ such that $f^{-1}(x+B)\notin\sigma\overline{\N}$.
Our assumption implies that the set $A$ has positive Haar measure and hence is not Haar-null. Then for some $x\in X$ the preimage $E=f^{-1}(x+A)$ has positive Haar measure in $2^\w$. Let $F=\supp(\lambda|E)$ be the support of the measure $\lambda|E$ (assigning to each Borel subset $S\subset 2^\w$ the number $\lambda(S\cap E)$). By \cite[2.12]{Akin99}, there exists a homeomorphism $h:2^\w\to F$ such that $\{h(N):N\in\mathcal N\}=\{M\subset F:\lambda(M)=0\}$. Now consider the map $g:2^\w\to A$ defined by $g(z)=-x+f\circ h(z)$ for $z\in 2^\w$. Since $g(2^\w)=-x+f(F)\subset -x+(x+A)=A$, we can apply Example~\ref{ex:hard}(5) and find $y\in X$ such that $g^{-1}(y+B)\notin \sigma\overline{\N}$. Then $f^{-1}(x+y+B)=h(g^{-1}(y+B))\notin\overline{\N}$ in $F$ and in $2^\w$.
\end{proof}

\begin{problem} Is the Haar-null $G_\delta$-set $B$ from Example~\ref{ex:hard} generically Haar-null in $X$?
\end{problem}




\section{Smallness properties of additive sets and wedges in Polish groups}\label{s16}

A subset $A$ of a group $X$ is called \index{subset!additive}\index{additive subset}{\em additive} if $A+A\subset A$. In other words, $A$ is a subsemigroup of $X$. Observe that for a nonempty additive set $A$ the set $A-A$ is a subgroup of $X$.

For example,  $[0,\infty)^\w$ is an additive thick set in the Polish group $\IR^\w$.

We recall that a subset $A$ of a topological group $X$ is called ({\em finitely}) {\em thick} if for each (finite) compact subset $K\subset X$ there is $x\in X$ such that $K+x\subset A$.

\begin{theorem}\label{t:additive1} For an additive analytic set $A$ in a Polish group $X$ the following conditions are equivalent:
\begin{enumerate} 
\item[\textup{1)}] the subgroup $A-A$ is not open in $X$;
\item[\textup{2)}] the subgroup $A-A$ is meager in $X$;
\item[\textup{3)}] $A$ is generically Haar-1;
\item[\textup{4)}] $A$ is null-$n$ for some $n\in\IN$;
\item[\textup{5)}] $A$ is not finitely thick in any open subgroup $H\subset X$, containing $A$.
\end{enumerate}
\end{theorem}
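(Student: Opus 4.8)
The statement is an equivalence of five conditions for an additive analytic set $A$; I would prove it by establishing a cycle of implications together with the ``easy'' ones that follow from results already in the paper. The key structural observation is that for a nonempty additive set $A$ the difference $A-A$ is a subgroup of $X$, so the dichotomy ``open vs.\ meager'' for subgroups is the right tool: an analytic subgroup of a Polish group is either open or meager (this follows from the Piccard--Pettis Corollary~\ref{c:PP}, since a non-meager analytic set $S$ has $S-S$ a neighbourhood of $\theta$, and a subgroup that is a neighbourhood of $\theta$ is open). This immediately gives $(1)\Leftrightarrow(2)$. For $(2)\Rightarrow(3)$ I would invoke Corollary~\ref{count} (``$A-A$ meager $\Rightarrow$ $A$ is generically Haar-$1$''), which is exactly the specialization of Theorem~\ref{t:-GHI} to the semi-ideal $[2^\w]^{\le 1}$. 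The implication $(3)\Rightarrow$ (the set is Haar-$1$, hence Haar-thin) is trivial, and by Theorem~\ref{t:thin} Haar-thin gives that $A-A$ is not a neighbourhood of zero; combined with the subgroup property this loops back to $(1)$. So $(1),(2),(3)$ are already mutually equivalent with minimal work.

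\textbf{The null-$n$ condition.} The substantive new content is weaving in $(4)$. For $(4)\Rightarrow(1)$: if $A$ is null-$n$ then by Proposition~\ref{p:null-char} there is a null-sequence $(z_k)_{k\in\w}$ such that $|\{k:z_k\in x+A\}|\le n$ for all $x$; if $A-A$ were an open subgroup $H$, then since $(z_k)$ converges to $\theta\in H$, all but finitely many $z_k$ lie in $H=A-A$. Here I would use additivity crucially: fix $a_0\in A$; since $z_k\in A-A$ we can write $z_k=b_k-a_k$ with $a_k,b_k\in A$, but to get infinitely many translates of $A$ hitting a single coset I would argue more carefully — pick $a\in A$ and note that if $z_k\in A-A$ for $k$ in an infinite set $I$, then for suitable choices $z_k+a_k=b_k\in A$, and additivity lets one absorb a common ``base point'' so that infinitely many $z_k$ lie in a single translate $x+A$, contradicting null-$n$ for $n$ beyond the bound. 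The cleanest route is probably $(2)\Rightarrow(4)$ directly via the proof of Theorem~\ref{t:St-NF}(2) or Corollary~\ref{c:HT=>NF}: once $A-A$ is meager, $A$ is Haar-thin (Theorem~\ref{t:thin}), and then one uses additivity to upgrade Haar-thin/closed-Haar-thin-is-null-finite type arguments; in fact for additive $A$ one expects null-$1$ to be attainable but the statement only claims null-$n$, so it suffices to run the construction in the proof of Theorem~\ref{t:thin}, $(3)\Rightarrow(2)$, choosing a null-sequence in $X\setminus(A-A)$ and using additivity to control multiplicities.

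\textbf{The finite-thickness condition $(5)$.} For $(5)$, note first that $A$ is contained in the open subgroup $A-A$ (if $A\ne\emptyset$; the empty case is trivial, $\emptyset$ is vacuously everything), and more generally in any open subgroup $H\supseteq A$ one has $A-A\subseteq H$. For $\neg(1)\Rightarrow\neg(5)$, i.e.\ $A-A$ open $\Rightarrow$ $A$ finitely thick in $H:=A-A$: given a finite $F\subset H$, write each element of $F$ as a difference of elements of $A$; using additivity, the finitely many ``denominators'' can be summed to a single $c\in A$, and then $F+c'\subseteq A$ for an appropriate $c'$ built from $c$ — this is a standard finite manipulation in a semigroup generating $H$ as a group. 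Conversely, if $A$ is finitely thick in some open subgroup $H\supseteq A$, then $A$ is non-meager in $H$ (a finitely thick subset of a Polish group is dense and, being analytic with $A-A\supseteq$ a neighbourhood... ) — more simply, finite thickness in $H$ forces $A-A\supseteq H$, hence $A-A=H$ is open, giving $\neg(1)$. I would present $(5)$ as the last link, deriving $(1)\Leftrightarrow(5)$ from these two observations.

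\textbf{Main obstacle.} The delicate point is the bookkeeping in the implications involving $(4)$ and $(5)$ — specifically, extracting from ``$A-A$ is an open subgroup'' an honest finite or $n$-bounded thickness statement, and conversely. Additivity is what makes this work (without it one only controls $A-A$, not $A$ itself), and the temptation is to claim null-$1$ when the theorem is deliberately stated with null-$n$; I would resist sharpening it and instead run the construction loosely, which is exactly why $n$ rather than $1$ appears. I expect the cleanest writeup is the cycle $(1)\Rightarrow(2)$ (subgroup dichotomy) $\Rightarrow(4)$ (null-sequence construction using additivity, as in Theorem~\ref{t:thin}) $\Rightarrow$ Haar-thin $\Rightarrow$ $A-A$ not open $\Rightarrow(1)$, with $(3)$ slotted between $(2)$ and Haar-thin via Corollary~\ref{count}, and $(5)$ shown equivalent to $(1)$ separately by the elementary semigroup arguments above.
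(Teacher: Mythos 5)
Your proposal is correct and follows essentially the same route as the paper: the subgroup dichotomy via Corollary~\ref{c:PP} for $(1)\Leftrightarrow(2)$, Corollary~\ref{count} (i.e.\ Theorem~\ref{t:-GHI}) for $(2)\Rightarrow(3)$, and the key additivity trick of summing the ``negative parts'' $\sum_i a_i^-$ to translate a finite set into $A$, which is exactly how the paper handles both $(5)\Rightarrow(1)$ and (combined with a null-sequence of length $n+1$) the null-$n$ condition. The only cosmetic difference is that you treat $(5)$ as separately equivalent to $(1)$ and close the main cycle with $(4)\Rightarrow(1)$, whereas the paper runs the single cycle $(1)\Rightarrow(2)\Rightarrow(3)\Rightarrow(4)\Rightarrow(5)\Rightarrow(1)$; the underlying arguments are the same.
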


\begin{proof} $(1)\Ra(2)$ Observe that the space $A-A$ is analytic, being a continuous image of the analytic space $A\times A$. If $A-A$ is not meager, then by Corollary \ref{c:PP}, the set $(A-A)-(A-A)=(A+A)-(A+A)= A-A$ is a neighborhood of $\theta$ in $X$ and hence is open in $X$ (being a subgroup of $X$).
\smallskip

The implication $(2)\Ra(3)$ follows from Theorem~\ref{t:-GHI} and $(3)\Ra(4)$ follow immediately from the definitions.
\vskip3pt

$(4)\Ra(5)$ To derive a contradiction, assume that $A$ is null-$n$ for some $n\in\IN$ and $A$ is finitely thick in some open subgroup $H\subset X$ containing $A$.
Since $A$ is null-$n$, there exists a null-sequence $(x_k)_{k\in\w}$ in $X$ such that for every $s\in X$ the set $\{k\in\w:x_k+s\in A\}$ contains at most $n$ numbers. Since $\lim_{k\to\infty}x_k=\theta\in H$, there exists $l\in\w$ such that $x_k\in H$ for all $k\ge l$. Now consider the finite set $F=\{x_k:l\le k\le l+n\}$ in the group $H$. Since $A$ is finitely thick in $H$, there exists $s\in H$ such that $F+s\subset A$. Then the set $\{k\in\w:x_k+s\in A\}$ contains the set $\{l,\dots,l+n\}$ of cardinality $n+1$, which contradicts the choice of the sequence $(x_k)_{k\in\w}$.
\smallskip

To show that $(5)\Ra(1)$, we shall prove that the negation of $(1)$ implies the negation of $(5)$. Assume that the subgroup $A-A$ is open in $X$. In this case we shall show that $A$ is finitely thick in the open subgroup $H:=A-A$. Given any finite set $F\subset H$, write it as $F=\{x_1,\dots,x_n\}$ for $n=|F|$. 
Since $F\subset A-A$, each point $x_i\in F$ can be written as $x_i=a_i^+-a_i^-$ for some points $a_i^+,a_i^-\in A$. Then for the element $s=\sum_{i=1}^na_i^-\in A$ and every $j\in\{1,\dots,n\}$ we get
$$x_j+s=a_j^+-a_j^-+\sum_{i=1}^na^-_i=\sum_{i=1}^na_i^{\e_i}\in A$$where $\e_j=+$ and $\e_i=-$ for all $i\in\{1,\dots,n\}\setminus\{j\}$.  Therefore, $F+s\subset A$ and $A$ is finitely thick in $H$.
\end{proof}

\begin{corollary} For a closed additive subset $A$ of a Polish group $X$ the following conditions are equivalent:
\begin{enumerate}
\item[\textup{1)}] $A$ is Haar-meager;
\item[\textup{2)}] $A$ is strongly Haar-meager;
\item[\textup{3)}] $A$ is not Haar-open;
\item[\textup{4)}] $A$ is not thick in any open subgroup $H\subset X$ containing $A$.
\end{enumerate}
\end{corollary}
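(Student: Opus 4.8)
The statement to be proved is the corollary characterizing Haar-meagerness for closed additive subsets $A$ of a Polish group $X$ via the four equivalent conditions: (1) $A$ is Haar-meager, (2) $A$ is strongly Haar-meager, (3) $A$ is not Haar-open, (4) $A$ is not thick in any open subgroup $H\subseteq X$ containing $A$. The plan is to observe that the equivalences $(1)\Leftrightarrow(2)\Leftrightarrow(3)$ are already available from Theorem~\ref{t:prethick}, since $A$ is closed, so the only genuinely new content is the equivalence of these with condition (4).

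First I would recall that Theorem~\ref{t:prethick} gives, for any closed subset of a Polish group, the chain $(1)\Leftrightarrow(2)\Leftrightarrow(3)$ verbatim. So it suffices to establish $(3)\Leftrightarrow(4)$ under the extra hypothesis that $A$ is additive. For the direction ``not (4) $\Rightarrow$ not (3)'', suppose $A$ is thick in some open subgroup $H$ containing $A$. I would then show $A$ is Haar-open in $X$ directly from the definition: given a compact $K\subseteq X$ and a point $p\in K$, the set $K\cap H$ is a relatively clopen (hence compact) neighborhood of $p$ in $K$ whenever $p\in H$; if $p\notin H$ then $K\cap(A+x)$ can be made a neighborhood of $p$ trivially by a suitable coset argument, but the cleaner route is to note that thickness of $A$ in $H$ already yields, via Theorem~\ref{t:HO}, that for every compact $C\subseteq H$ there is $x$ with $C\subseteq x+A$; combining with the fact that $H$ is open, the compact set $K$ is covered by finitely many translates of $H$, and on each such piece we can absorb it into a translate of $A$. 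This shows $A$ (restricted appropriately) is Haar-open, contradicting (3). Actually the slickest argument: if $A$ is thick in the open subgroup $H=A-A$ (or any open subgroup containing $A$), then since for any compact $K$ we have $K\cap H$ compact and contained in some translate $x+A$, the set $A$ is Haar-open — precisely mirroring the proof that $(5)\Rightarrow(1)$ in Theorem~\ref{t:additive1}, which produces the shifting element $s$ explicitly from the representation of finite subsets of $A-A$ as differences of elements of $A$. Upgrading ``finitely thick'' to ``thick'' on compacta uses that $K\cap H$, being compact in the Polish group $H$, can be translated into $A$ by the same additive trick applied to a compact rather than finite set; this is where I would invoke Theorem~\ref{t:HO} to reduce Haar-openness to the existence of a single translate with nonempty interior.

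For the converse ``not (3) $\Rightarrow$ not (4)'', assume $A$ is Haar-open. By Proposition~\ref{p:HO=>prethick}, for any compact $K\subseteq X$ there is a finite set $F\subseteq X$ with $K\subseteq F+A$. Taking $K$ to range over compact subsets of the open subgroup $H:=A-A$ (which contains $A$ since $A$ is nonempty and additive — if $A=\emptyset$ the statement is vacuous or trivial), I would use the additive structure to absorb the finite translating set $F$ back into $A$: since $F\subseteq H=A-A$, writing each $f\in F$ as a difference of elements of $A$ and summing over the fixed finitely many elements of $F$ exactly as in the $(5)\Rightarrow(1)$ step of Theorem~\ref{t:additive1}, one obtains a single $s\in A$ with $F+s\subseteq A$, whence $K+s\subseteq F+A+s$... here I need to be careful, because $F+A$ is what covers $K$, not $A+F$; but $X$ is abelian, so $F+A = A+F$, and then for each $k\in K$ there is $f\in F$ and $a\in A$ with $k=f+a$, so $k+s = f+s+a$; since $f+s\in A$ (wait, that is not what the additive trick gives directly). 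The correct bookkeeping: the additive trick gives $f_j + s\in A$ for each $j$ where $s$ is a sum of the ``negative parts'', and then $k+s = (f_j+a) + s = (f_j+s) + a \in A+A\subseteq A$. So $K+s\subseteq A$, showing $A$ is thick in $H$, contradicting (4). \textbf{The main obstacle} I anticipate is handling the edge cases cleanly — the empty set, and confirming that ``any open subgroup $H$ containing $A$'' can be reduced to the canonical choice $H=A-A$ (which is the smallest such subgroup when $A$ is additive and nonempty, and is open precisely when $A-A$ has nonempty interior, which is automatic once $A$ is Haar-open by Proposition~\ref{p:HO=>prethick} combined with the Piccard–Pettis-type reasoning). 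Once that reduction is made, the argument is a direct transcription of the additive-absorption computation from Theorem~\ref{t:additive1}, upgraded from finite sets to compacta via Theorem~\ref{t:HO}.
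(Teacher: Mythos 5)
Your proposal is correct and follows essentially the same route as the paper: the equivalence $(1)\Leftrightarrow(2)\Leftrightarrow(3)$ is quoted from Theorem~\ref{t:prethick}, the implication from thickness in an open subgroup $H$ to Haar-openness is obtained by translating the (nonempty, relatively open) piece of a compact witness lying in a coset of the clopen subgroup $H$ into $A$, and the converse combines Proposition~\ref{p:HO=>prethick} with the finite-absorption computation $F+s\subset A$, $K+s\subset F+A+s\subset A+A\subset A$ from Theorem~\ref{t:additive1}, exactly as the paper does. The one citation to tighten is the openness of $H=A-A$: it does not follow from Proposition~\ref{p:HO=>prethick} together with Piccard--Pettis applied to $A$ itself (a closed Haar-open set can be nowhere dense, e.g.\ $[0,\infty)^\w$ in $\IR^\w$), but rather from the chain ``$A$ is closed and Haar-open $\Rightarrow$ $A$ is not Haar-meager (Theorem~\ref{t:prethick}) $\Rightarrow$ $A-A$ is a neighborhood of $\theta$ (Theorem~\ref{t:Jab})'', or equivalently from the implication $\neg(3)\Rightarrow\neg(1)$ of Theorem~\ref{t:additive1}, which is the route the paper takes.
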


\begin{proof} The equivalence $(1)\Leftrightarrow(2)\Leftrightarrow(3)$ has been proved in Theorem~\ref{t:prethick}.
\smallskip

$(2)\Ra(4)$ Assuming that $A$ is strongly Haar-meager, find a compact subset $K\subset X$ such that $K\cap (A+x)$ is meager in $K$ for any $x\in X$. Replacing $K$ by a suitable shift, we can assume that $\theta\in K$. To derive a contradiction, assume that $A$ is thick in some open subgroup $H\subset X$, containing $A$.  It follows that the subgroup $H$ is also closed in $X$. Then the set $K\cap H$ is compact and open in $K$. Since $A$ is thick in $H$, there exists an element $x\in H$ such that $x+(K\cap H)\subset A$. Then $K\cap (A-x)$ contains the nonempty open subset $K\cap H$ of $K$ and is not meager in $K$.
\smallskip

$\neg(3)\Ra\neg(4)$ If $A$ is Haar-open, then $A$ is not Haar-1 and by Theorem~\ref{t:additive1}, $H:=A-A$ is an open subgroup in $X$. We claim that $A$ is thick in $H$. Given a compact subset $K\subset H$, we need to find $x\in H$ such that $K+x\subset A$. By Proposition~\ref{p:HO=>prethick}, there exists a finite subset $F\subset X$ such that $K\subset F+A$. Since $K\cup A\subset H$, we can replace $F$ by $F\cap H$ and assume that $F\subset H$. By Theorem~\ref{t:additive1}, $A$ is finitely thick in some open subgroup $G\subset X$, containing $A$.  Consequently, there exists a point $x\in G$ such that $x+F\subset A$. It follows that $x\in A-F\subset H$ and $x+K\subset x+F+A\subset A+A\subset A$, witnessing that $A$ is thick in the open subgroup $H$.
\end{proof}

\begin{problem}\label{prob:HN=>HN} Let $A$ be a closed additive subset of a Polish group $X$. Assume that $A$ is Haar-meager. Is $A$ Haar-null?
\end{problem} 

\begin{remark} By Theorem~\ref{t:Sum-HO}, the answer to Problem~\ref{prob:HN=>HN} is affirmative for Polish groups which are countable products of locally compact groups.
\end{remark}

Given an additive set $A$ in a topological group $X$ with $\theta\in\bar A$, let us consider the  \index{Addition Game}{\sf Addition Game} on $A$ played by two players I and II according to the following rules. The player I starts the addition game selecting a neighborhood $U_1\subset X$ of $\theta$ and the player II responds selecting a point $a_1\in U_1\cap A$. At the $n$th inning the player I selects a neighborhood $U_n\subset X$ of $\theta$ and the player II responds selecting a point $a_n\in U_n\cap A$. At the end of the game the player I is declared a winner if the series $\sum_{n=1}^\infty a_n$ converges to some point $a\in A$. In the opposite case the player II is a winner.

We shall say that an additive subset $A$ of a Polish group $X$ is \index{subset!winning additive}\index{winning additive subset}{\em winning-additive} if $\theta\in\bar A$ and the player I has a winning strategy in the {\sf Addition Game} on $A$. The winning strategy of I can be thought as a function $\yen$ assigning to each finite sequence $(a_1,\dots,a_n)$ of points of $A$ a neighborhood $\yen(a_1,\dots,a_n)$ of $\theta$  in $X$ such that for any infinite sequence $(a_n)_{n=1}^\infty\in A^\IN$ the series $\sum_{n=1}^\infty a_n$ converges to a point of $A$ if $a_{n}\in \yen(a_1,\dots,a_{n-1})$ for all $n\in\IN$. For $n=1$ the neighborhood $\yen(\;)$ is the first move $U_1$ of the player I according to the strategy $\yen$.

\begin{proposition}\label{p:Polish-winad} An additive set $A$ in a Polish group $X$ is winning-additive if $\theta\in\bar A$ and the space $A$ is Polish.
\end{proposition}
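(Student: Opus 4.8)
\emph{Proof strategy.} The plan is to construct an explicit winning strategy for player~I in the {\sf Addition Game} on $A$, exploiting the fact that a Polish subspace of a Polish space is a $G_\delta$-set.

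First I would fix a complete invariant metric $\rho$ on $X$, write $\|x\|:=\rho(x,\theta)$, and record that, since the subspace $A$ is Polish, it is $G_\delta$ in $X$ by \cite[3.11]{K}; hence $A=\bigcap_{n\in\w}V_n$ for a decreasing sequence $(V_n)_{n\in\w}$ of open subsets of $X$ with $V_0=X$. The key structural observation is that, because $A$ is a subsemigroup of $X$, every partial sum $s_n:=a_1+\dots+a_n$ of points played by player~II lies in $A$, hence in $V_n$; this is exactly what will allow player~I to ``see room'' around $s_{n-1}$ at the next inning.

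The strategy $\yen$ of player~I is then defined recursively. Put $\epsilon_0:=1$. At inning $n$, having seen $a_1,\dots,a_{n-1}\in A$ (which determine $\epsilon_1,\dots,\epsilon_{n-1}$ and $s_{n-1}$, with $s_0:=\theta$), player~I uses the openness of the set $V_{n-1}\ni s_{n-1}$ to choose a real number $\epsilon_n$ with $0<\epsilon_n\le\tfrac12\epsilon_{n-1}$ and $B(s_{n-1};2\epsilon_n)\subseteq V_{n-1}$ (this choice can be made by a fixed rule, e.g. $\epsilon_n:=\tfrac14\min\{\epsilon_{n-1},\rho(s_{n-1},X\setminus V_{n-1}),1\}$), and plays $\yen(a_1,\dots,a_{n-1}):=B(\theta;\epsilon_n)$. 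The hypothesis $\theta\in\bar A$ guarantees that $B(\theta;\epsilon_n)\cap A\ne\emptyset$ at every inning, so the game can always be played to its end.

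To see that $\yen$ is winning, take any play $(a_n)_{n\in\IN}\in A^\IN$ with $a_n\in B(\theta;\epsilon_n)$. The inequalities $\epsilon_n\le\tfrac12\epsilon_{n-1}$ give $\epsilon_n\le 2^{-n}$, so by invariance of $\rho$ the partial sums $(s_m)$ form a Cauchy sequence and converge to some $a:=\sum_{n=1}^\infty a_n\in X$; moreover, for every $n\ge1$, again by invariance, $\rho(a,s_{n-1})\le\sum_{k\ge n}\|a_k\|<\sum_{k\ge n}\epsilon_k\le 2\epsilon_n$, and hence $a\in B(s_{n-1};2\epsilon_n)\subseteq V_{n-1}$. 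Since this holds for all $n\ge1$, we get $a\in\bigcap_{n\in\w}V_n=A$, so the series converges to a point of $A$ and player~I wins; thus $A$ is winning-additive. The only genuinely delicate point — and the one I would be most careful about — is the quantitative bookkeeping: the doubling margin in $B(s_{n-1};2\epsilon_n)$ together with the geometric decay $\epsilon_n\le\tfrac12\epsilon_{n-1}$ has to be tight enough to force the limit back into $V_{n-1}$ while still leaving player~II a nonempty ball to move in; everything else is routine.
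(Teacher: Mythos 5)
Your proof is correct. The skeleton of the strategy is the same as in the paper's proof: player I shrinks the neighbourhood of $\theta$ at each inning according to the current partial sum $s_{n-1}$, and the additivity of $A$ enters in exactly the same place, namely to keep the partial sums inside $A$ so that the next neighbourhood can be chosen. Where you diverge is in how the completeness of the Polish space $A$ is cashed in. The paper fixes a complete metric $\rho_A\le 1$ generating the subspace topology of $A$ and chooses $\yen(a_1,\dots,a_{n-1})$ so that any admissible next partial sum lies within $\rho_A$-distance $2^{-(n-1)}$ of $s_{n-1}$; the partial sums are then Cauchy in $(A,\rho_A)$ and converge in $A$. You instead invoke the theorem that a Polish subspace of a Polish space is $G_\delta$, write $A=\bigcap_{n}V_n$, let the series converge in the ambient group using the complete invariant metric of $X$, and trap the limit inside each $V_{n-1}$ via the margin $B(s_{n-1};2\epsilon_n)\subseteq V_{n-1}$. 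These are the two standard faces of the equivalence between ``Polish'' and ``$G_\delta$ in a Polish space'', so neither route is more general; yours has the small advantage that the Cauchy estimate is immediate from invariance of the ambient metric and that the base step $s_0=\theta\in V_0=X$ is handled cleanly even though $\theta$ need not belong to $A$ (the paper's first move implicitly measures $\rho_A$-distance from $\theta$), while the paper's version avoids the $G_\delta$ characterization and is marginally shorter. Your quantitative bookkeeping ($\epsilon_n\le\tfrac12\epsilon_{n-1}$, hence $\sum_{k\ge n}\|a_k\|<2\epsilon_n$) is sound.
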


\begin{proof} Assume that $\theta\in\bar A$ and the space $A$ is Polish, which means that the topology of $A$ is generated by a complete metric $\rho\le 1$.

Now we describe the winning strategy $\yen$ of the player I in the {\sf Addition Game}. The strategy is a function $\yen:A^{<\w}\to\tau_\theta$ on the family $A^{<\w}=\bigcup_{n\in\w}A^n$ of finite sequences in $A$ with values in the set $\tau_\theta$ of neighborhoods of $\theta$ in $X$. 

For any $n\in\w$ and a sequence $(a_1,\dots,a_n)\in A^n$, consider the element $s_n:=\theta+\sum_{i=1}^na_i$ and choose a neighborhood $V\subset X$ of $\theta$ such that $A\cap(s_n+V)\subset\{x\in A:\rho(s_n,x)<\frac1{2^n}\}$.  Put $\yen(a_1,\dots,a_n):=V$. 

We claim that the strategy $\yen$ is winning. Indeed, fix any sequence $(a_n)_{n\in\IN}\in A^\IN$ such that $a_{n}\in \yen(a_1,\dots,a_{n-1})$ for any $n\in\IN$. Consider the sequence $(s_n)_{n=1}^\infty$ of partial sums $s_n=\theta+\sum_{k=1}^na_k$ of the series $\sum_{k=1}^\infty a_k$. Since $a_n\in\yen(a_1,\dots,a_{n-1})$, the element $s_{n}=s_{n-1}+a_{n}$ belongs to the ball $\{x\in A:\rho(s_{n-1},x)<\frac1{2^n}\}$ which implies that the sequence $(s_n)_{n=1}^\infty$ is Cauchy in the complete metric space $(A,\rho)$ and hence converges to some element of $A$.
\end{proof}


Another important notion involving the name of Steinhaus is defined as follows.

\begin{definition} A subset $A$ of a topological group $X$ is called \index{subset!Steinhaus at a point}{\em Steinhaus at a point} $x\in X$ if for any neighborhood $U\subset X$ of $x$ the set $(U\cap A)-(U\cap A)$ is a neighborhood of $\theta$ in $X$.
\end{definition}

\begin{theorem}\label{t:additive2} If a winning-additive subspace $A$ of a Polish group $X$ is Steinhaus at $\theta$, then $A$ is neither null-finite nor Haar-countable in $X$.
\end{theorem}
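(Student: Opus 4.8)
The plan is to argue by contradiction, using the winning-additive structure to defeat any null-sequence (for the null-finite case) or any continuous map from $2^\w$ that would realize countability (for the Haar-countable case). Let $\yen$ be a winning strategy for player~I in the Addition Game on $A$, and recall that since $A$ is Steinhaus at $\theta$, for every neighborhood $U\subset X$ of $\theta$ the set $(U\cap A)-(U\cap A)$ is a neighborhood of $\theta$. The key mechanism is that whenever I declares a neighborhood $U_n$ of $\theta$, we may still choose the response $a_n\in U_n\cap A$ essentially freely inside $U_n\cap A$, and the Steinhaus property at $\theta$ lets us prescribe \emph{differences} $a_n'-a_n''$ of two legal responses to lie in any prescribed neighborhood of $\theta$; in particular we can force such a difference to hit (a shift of) a prescribed point of a null-sequence.

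First I would handle the \textbf{null-finite} case. Suppose, for contradiction, that $A$ is null-finite; then by Proposition~\ref{p:null-char} there is a null-sequence $(z_k)_{k\in\w}$ in $X$ such that for every $x\in X$ the set $\{k\in\w:z_k\in x+A\}$ is finite. Fix a complete invariant metric $\rho$ on $X$. I would run the Addition Game against the strategy $\yen$, building two parallel plays $(a_n')_{n\ge1}$ and $(a_n'')_{n\ge1}$ that agree at coordinates $1,\dots,n-1$ at stage $n$, so that the partial sums $s_{n-1}=\sum_{i<n}a_i'=\sum_{i<n}a_i''$ coincide; at stage $n$ the strategy offers a neighborhood $U_n:=\yen(a_1,\dots,a_{n-1})$, and using the Steinhaus property at $\theta$ applied to a small enough sub-neighborhood, I choose $a_n',a_n''\in U_n\cap A$ with $a_n'-a_n''=z_{k_n}$ for a suitably chosen index $k_n$ (chosen large so that $\sum_{i>n}$ tails are tiny and the $k_n$ are pairwise distinct). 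Continuing both plays to infinity and invoking that $\yen$ is winning, both series converge: $\sum a_n'=a'\in A$ and $\sum a_n''=a''\in A$. But then $s_{n-1}+z_{k_n}=s_{n-1}+a_n'-a_n''$, and passing through the convergent tails one gets that for the single point $x:=a''-\text{(appropriate correction)}$ infinitely many shifts $z_{k_n}$ land in $x+A$ --- more precisely, the construction should be arranged so that $z_{k_n}\in (\,\text{fixed }x\,)+A$ for all $n$, contradicting null-finiteness. The delicate bookkeeping is ensuring the ``correction term'' stabilizes: this is exactly why one keeps the two plays identical on a growing initial segment, so the discrepancy is localized to coordinate $n$ and the common limit structure forces a single witness $x$.

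Next, the \textbf{Haar-countable} case. Suppose $A$ is Haar-countable, witnessed by a continuous $f:2^\w\to X$ with $f^{-1}(A+x)$ countable for all $x$. The idea is to use the Addition Game to embed a Cantor-set worth of convergent sums into $A+x$ for a single $x$, contradicting countability. I would recursively construct a family $(a_s)_{s\in 2^{<\w}}$ of legal responses and neighborhoods so that: along any branch $\sigma\in 2^\w$ the series $\sum_n a_{\sigma|n}$ converges (player~I's strategy $\yen$ guarantees this provided each chosen $a_{\sigma|n}$ lies in $\yen$ applied to the preceding segment, so I must respect $\yen$ at every node); the branching is controlled by the Steinhaus property so that two branches $\sigma,\tau$ first differing at level $n$ produce limits differing by a ``small'' element lying in $A-A$ in a way that makes the whole Cantor set of limits land in a common coset $x+A$. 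Concretely, mimic the null-finite construction but branch at each stage: at node $s$ of length $n$, the strategy gives $U_n=\yen(a_{s|1},\dots,a_{s|(n-1)})$, and I pick $a_{s\hat{\ }0},a_{s\hat{\ }1}\in U_n\cap A$ with prescribed difference, both extensions keeping all branch-sums inside $x+A$ for the fixed $x$ determined in the limit. The resulting injective continuous map $2^\w\to X$ lands in $x+A$, so $f$-type argument gives an uncountable $f^{-1}(A+x)$, a contradiction. I expect \textbf{the main obstacle} to be precisely the coherence of the ``fixed $x$'': one must show the correction terms telescoping from the partial sums converge independently of the branch, which requires the $\yen$-neighborhoods at level $n$ to shrink fast enough (one can always pass to subsequences of candidate indices to force this) and the Steinhaus-at-$\theta$ sets to be chosen inside ever-smaller balls. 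Once that summability/coherence is set up, both contradictions fall out by the same final move: a single translate of $A$ meets a prescribed infinite (resp. uncountable) set, contradicting null-finiteness (resp. Haar-countability).
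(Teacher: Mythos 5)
Your overall plan is the paper's plan: play the Addition Game so that a single sum $s=\sum_k a_k^-$ of "legal" responses serves as one translate making infinitely many $z_{k}$ (resp.\ an uncountable piece of a compact set) land in $A$. But there is a genuine gap at the step you yourself flag as "delicate bookkeeping", and the remedies you propose (shrinking the $\yen$-neighborhoods fast, passing to subsequences) do not address it. The real obstacle is not summability or the stabilization of a correction term; it is that the winning strategy's neighborhood at stage $k$ depends on the \emph{entire history} of responses. To conclude that $z_{k_i}+s=\sum_k a_k^{\e_k}$ (with $\e_i=+$ and $\e_k=-$ elsewhere) actually converges \emph{to a point of $A$}, you must know that the flipped sequence is itself a legal play against $\yen$, i.e.\ that $a_k^{\e_k}\in\yen(a_1^{\e_1},\dots,a_{k-1}^{\e_{k-1}})$ for the flipped history as well. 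Nothing in your construction guarantees this: changing $a_i^-$ to $a_i^+$ at an earlier coordinate changes the argument of $\yen$ at all later stages, so the later responses $a_k^-$ need not remain legal. Your literal description of "two parallel plays that agree at coordinates $1,\dots,n-1$ at stage $n$" is also internally inconsistent, since the two plays are required to differ at every coordinate. The paper's fix is the one idea you are missing: before choosing the pair $a_k^+,a_k^-$, take $U_k$ inside the intersection $\bigcap\yen(a_1^{\e_1},\dots,a_{k-1}^{\e_{k-1}})$ over \emph{all} $2^{k-1}$ sign patterns of the already-chosen points (a finite intersection of neighborhoods of $\theta$, hence still a neighborhood), and only then use the Steinhaus property of $A$ at $\theta$ to find $a_k^\pm\in A\cap U_k$ with the prescribed difference. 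With that choice every sign-flipped sequence is automatically a legal play, and both convergence and membership in $A$ follow at once.

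The same omission infects your Haar-countable argument, where along a branch arbitrarily many coordinates get flipped to $+$, so the "all sign patterns" intersection is indispensable rather than merely convenient. In addition, your sketch never ties the Cantor set of limits to the witness map $f$: to contradict Haar-countability you must build the tree of points $x_k$ \emph{inside} $K=f(2^\w)$ (after first disposing of the case where some fiber $f^{-1}(s)$ is uncountable, which makes $K$ crowded), so that the uncountable limit set sits in $(K+s)\cap A$ and pulls back to an uncountable subset of $2^\w$. As written, your "prescribed differences" at the branching nodes are not differences of points of $K$, so the final contradiction does not yet connect to $f$.
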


\begin{proof} Fix a complete invariant metric $\rho\le 1$, generating the topology  of the Polish group $X$. For an element $x\in X$ put $\|x\|:=\rho(x,0)$, and for $\e>0$ let $B(x;\e):=\{y\in X:\|x-y\|<\e\}$ be the open $\e$-ball around $x$ in the metric space $(X,\rho)$. 

Since $A$ is winning-additive, the player I have a winning strategy $\yen$, which is  a function $\yen:A^{<\w}\to \tau_\theta$ such that for any sequence $(a_n)_{n\in\w}\in A^\w$ the series converges to a point of $A$ if $a_n\in\yen(a_1,\dots,a_{n-1})$ for all $n\in\IN$. 

In the following two claims we shall prove that $A$ in not null-finite and not Haar-countable in $X$.

\begin{claim}\label{cl:null-fin} The set $A$ is not null-finite in $X$.
\end{claim}

\begin{proof} Given a null sequence $(x_n)_{n\in\w}$ in $X$, we should find a point $s\in X$ such that the set $\{n\in\w:x_n+s\in A\}$ is infinite.

By induction for every $k\in\IN$ we shall construct a neighborhood $U_k\subset X$ of $\theta$, a number $n_k\in\IN$, and points $a_k^+,a_k^-\in A$ such that the following conditions are satisfied:
\begin{enumerate}
\item[\textup{(1)}] $\theta\in U_k\subset B(\theta;\frac1{2^k})\cap\yen(a_1^{\e_1},\dots,a_{k-1}^{\e_{k-1}})$ for any $\e_1,\dots,\e_{k-1}\in\{+,-\}$;
\item[\textup{(2)}] $n_k>n_{k-1}$, $x_{n_k}=a_k^+-a_k^-$ and $a_k^+,a_k^-\in A\cap U_k$.
\end{enumerate}

We start the inductive construction letting $n_0=1$. 
Assume that for some $k\in\IN$ the points $a_i^+,a_i^-$, $1\le i<k$, have been constructed. Choose any neighborhood $U_k\subset X$ of $\theta$ that satisfies the inductive condition (1). Since $A$ is Steinhaus at $\theta$, the set $(U_k\cap A)-(U_k\cap A)$ is a neighborhood of $\theta$ and hence it contains some point $x_{n_k}$ with $n_k>n_{k-1}$. Then $x_{n_k}=a_k^+-a_k^-$ for some points $a_k^+,a_k^-\in A\cap U_k$. This completes the inductive step.
\smallskip

Since $a^-_k\in U_k\subset\yen(a_1^-,\dots,a^-_{k-1})$ for all $k\in\IN$, the series $\sum_{k=1}^\infty a_k^-$ converges to some point $s\in A$.

We claim that $x_{n_i}+s\in A$ for every $i\in\IN$. Indeed, 
$$x_{n_i}+s=(a_i^+-a_i^-)+\sum_{k=1}^\infty a_k^-=\sum_{k=1}^\infty a_k^{\e_k}$$ where $\e_i=+$ and $\e_k=-$ for all $k\in\IN\setminus\{i\}$.
Since $a_k^{\e_k}\in U_k\subset\yen(a_1^{\e_1},\dots,a_{k-1}^{\e_{k-1}})$, the sum $x_{n_i}+s$ of the series $\sum_{k=1}^\infty a_k^{\e_k}$ belongs to $A$.
Now we see that the set $\{n\in\w:x_n+s\in A\}\supset\{n_i\}_{i=1}^\infty$ is infinite, witnessing that the set $A$ is not null-finite.
\end{proof}

\begin{claim}\label{cl:null-fin} The set $A$ is not Haar-countable in $X$.
\end{claim}

\begin{proof} Given any continuous map $f:2^\w\to X$ we should find an element $s\in X$ such that $f^{-1}(A+s)$ is not countable. If for some $s\in X$ the fiber $f^{-1}(s)$ is uncountable, then for any $a \in A$ the preimage $f^{-1}(A+(s-a))\supset f^{-1}(s)$ is uncountable and we are done. So, we assume that for each $x\in X$ the set $f^{-1}(x)$ is countable and hence $K=f(2^\w)$ is a compact set without isolated points in $X$. Replacing $K$ by a suitable shift, we can assume that $\theta\in K$.

Consider the set $2^{<\w}:=\bigcup_{k\in\w}2^k$ of finite binary sequences and let $2^{<\w}_0:=2^0\cup\bigcup_{n\in\IN}2^n_0$ where $2^n_0=\{(s_0,\dots,s_{n-1})\in 2^n:s_{n-1}=0\}$.
 For a binary sequence $s=(s_0,\dots,s_{n-1})\in 2^n\subset 2^{<\w}$ by $|s|$ we denote the length $n$ of $s$. For a number $k<n$ let $s{\restriction}k:=(s_0,\dots,s_{k-1})$ be the restriction of $s$.
 
 Write the countable set $2^{<\w}$ as $2^{<\w}=\{\alpha_i\}_{i\in\IN}$ so that  $\{\alpha_i:2^n\le i<2^{n+1}\}=2^n$ for all $n\in\w$.
 For every $k\ge 2$ let ${\downarrow}k$ be the unique number such that $\alpha_{\downarrow k}=\alpha_k{\restriction}(|\alpha_k|-1)$. So, ${\downarrow}k$ is the predecessor of $k$ in the tree structure on $\IN$, inherited from $2^{<\w}$. 

For every $k\in\IN$ we shall inductively construct two neighborhoods $U_k,V_k$ of $\theta$, and points $x_i\in K$, $a_k^+,a_k^-\in U_k$ such that the following conditions are satisfied:
\begin{enumerate}
\item[\textup{(1)}] $\theta\in U_k\subset B(\theta;\frac1{2^k})\cap \yen(a_1^{\e_1},\dots,a_{k-1}^{\e_{k-1}})$ for any signs $\e_1,\dots,\e_{k-1}\in\{+,-\}$;
\item[\textup{(2)}] $V_k:=(U_k\cap A)-(U_k\cap A)$;
\item[\textup{(3)}] $x_1=\theta$ and $x_k\in K\cap (x_{{\downarrow}k}+V_k)$ if $k>1$;
\item[\textup{(4)}] $x_k=x_{{\downarrow}k}$ if $\alpha_k\in 2^{<\w}_0$ and $x_k\ne x_{{\downarrow}k}$ if $\alpha_k\notin 2^{<\w}_0$;
\item[\textup{(5)}]  $a_k^+,a_k^-\in A\cap U_k$ and $x_{k}-x_{{\downarrow}k}=a_k^+-a_k^-$.
\end{enumerate}

We start the inductive construction letting $x_0=\theta$. Assume that for some $k\in\IN$ and all $1\le i<k$ neighborhoods $U_i,V_i$ and points $x_i$, $a_i^+$, $a_i^-$ have been constructed so that the conditions (1)--(5) are satisfied.

Choose a neighborhood $U_k$ satisfying the condition (1). Since $A$ is Steinhaus at $\theta$, the set $V_k$ defined in the inductive condition (2) is a neighborhood of $\theta$ in $X$.
Taking into account that the set $K\ni \theta$ has no isolated points, we can find a point $x_k\in K$ satisfying the conditions (3), (4). The definition of the neighborhood $V_k\ni x_{k}- x_{{\downarrow}k}$ yields points $a_k^+,a_k^-\in A\cap U_k$ satisfying the condition (5). This completes the inductive step.
\smallskip

After completion the inductive construction, consider the series $\sum_{k=1}^\infty a_k^-$, which converges to a point $s\in A$ by the choice of the winning strategy $\yen$ and the conditions (1), (5).
We claim that $x_k+s\in A$ for every $k$. Find $n\in\w$ with $\alpha_k\in 2^n$ and for every $i\le n$ find a unique number $k_i\in\IN$ such that $\alpha_{k_i}=\alpha_k{\restriction}i$. It follows that $k_n=k$, $k_0=1$, and $k_i={\downarrow}k_{i+1}$ for $0\le i<n$.
Then $$s+x_k=s+\sum_{i=1}^n(x_{k_i}-x_{k_{i-1}})=s+\sum_{i=1}^n(x_{k_i}-x_{{\downarrow}k_i})=\sum_{j=1}^\infty a_j^-+\sum_{i=1}^n(a_{k_i}^+-a_{k_i}^-)=\sum_{j=1}^\infty a_j^{\e_j}$$
where $$
\e_j=
\begin{cases}+&\mbox{if $j=k_i$ for some $1\le i\le n$};\\
-&\mbox{otherwise.}
\end{cases}
$$
The inductive conditions  (1) and (5) guarantee that sum $x_k+s$ of the series 
$\sum_{j=1}^\infty a_j^{\e_j}$ belongs to the set $A$.

The inductive conditions (1)--(4) ensure that the set $\{x_k+s\}_{k\in\IN}\subset (K+s)\cap A$ has no isolated points. Then its closure $F$ in $K+s$ has no isolated points, too. Being a compact space without isolated points, $F$ has cardinality of continuum. We claim that $F\subset A$.

Consider the map $f:2^\w\to F$ assigning to each binary sequence $\beta\in 2^\w$ the limit of the sequence $(s+x_{k_i})_{i\in\w}$ where $k_i\in\IN$ is a unique number such that $\alpha_{k_i}=\beta{\restriction}i$. Observe that for every $i>1$ we have $x_{{\downarrow}k_i}=x_{k_{i-1}}$  and hence $\|x_{k_i}-x_{k_{i-1}}\|<\frac2{2^{k_i}}$ by the inductive assumptions (1)--(3). This implies that the sequence $(x_{k_i})_{i=1}^\infty$ is Cauchy in $X$ and hence it converges to some point $x_\beta\in X$ such that
\begin{equation}\label{eq:additive2}
\|x_\beta-x_{k_i}\|\le\sum_{j=i+1}^\infty\frac{2}{2^{k_j}}\le \frac4{2^{k_{i+1}}}
\end{equation}
for all $i\in\IN$. 

It follows that $x_\beta=\sum_{i=1}^\infty (x_{k_i}-x_{k_{i-1}})= \sum_{i=1}^\infty (x_{k_i}-x_{{\downarrow}k_i})=\sum_{i=1}^\infty (a_{k_i}^+-a_{k_i}^-)$.
Then $$f(\beta)=x_\beta+s=\sum_{i=1}^\infty(a_{k_i}^+-a_{k_i}^-)+\sum_{k=1}^\infty a_k^-=\sum_{k=1}^\infty a_k^{\e_k}$$ where 
$$\e_k=\begin{cases}
+&\mbox{if $k=k_i$ for some $i\in\IN$};\\
-,&\mbox{otherwise}.
\end{cases}
$$
The inductive conditions (1) and (5) imply that the series $\sum_{k=1}^\infty a_k^{\e_k}$ converges to some point of $A$. So, $f(\beta)=x_\beta+s=\sum_{k=1}^\infty a_k^{\e_k}\in A$. The inequality \eqref{eq:additive2} implies that the map $f:2^\w\to F$, $f:\beta\mapsto x_\beta+s$ is continuous. Then $f(2^\w)=F$ by the compactness of $f(2^\w)$ and density of $f(2^\w)$ in $F$. Finally we conclude that $F=f(2^\w)\subset A\cap(K+s)$, witnessing that $A$ is not Haar-countable.
\end{proof}
\end{proof}

Now we will find a useful condition on a subset $A$ of a group guaranteeing that $A$ is Steinhaus at $\theta$.

\begin{lemma}\label{l:SteinH} Let $Z$ be a subgroup of a topological group $X$. A subset $A=A+Z$ of a topological group $X$ is Steinhaus at some point $x\in X$ if and only if for any neighborhood $U\subset X$ of $x$ the set $(U\cap A)-(U\cap A)+Z$ is a neighborhood of $\theta$ in $X$.
\end{lemma}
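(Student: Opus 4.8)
The statement is an \emph{if and only if}, but the two directions are not symmetric in difficulty. The ``only if'' direction is essentially trivial: if $A$ is Steinhaus at $x$, then for every neighbourhood $U\subset X$ of $x$ the difference set $(U\cap A)-(U\cap A)$ is already a neighbourhood of $\theta$, and since $\theta\in Z$ we get $(U\cap A)-(U\cap A)\subset (U\cap A)-(U\cap A)+Z$, so the larger set is a neighbourhood of $\theta$ a fortiori. The content of the lemma is the ``if'' direction, and the plan is to exploit the hypothesis $A=A+Z$ to absorb the extra summand $Z$ into $A$ at the cost of shrinking $U$.

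\textbf{Key steps for the ``if'' direction.} Assume that for every neighbourhood $U$ of $x$ the set $(U\cap A)-(U\cap A)+Z$ is a neighbourhood of $\theta$. Fix an arbitrary neighbourhood $U$ of $x$; I must show $(U\cap A)-(U\cap A)$ is a neighbourhood of $\theta$. First I would pass to a smaller symmetric neighbourhood: choose an open neighbourhood $V\subset X$ of $\theta$ with $V=-V$ and $x+V+V\subset U$, and set $U':=x+V$, a neighbourhood of $x$ contained in $U$. By hypothesis applied to $U'$, the set $(U'\cap A)-(U'\cap A)+Z$ contains an open neighbourhood $W\subset V$ of $\theta$. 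Now the crucial point is the translation-invariance of $A$ under $Z$: given $w\in W$, write $w=a-b+z$ with $a,b\in U'\cap A$ and $z\in Z$. Then $a':=a+z$ and $b':=b$ satisfy $a'-b'=a-b+z=w$; moreover $b'=b\in U'\cap A\subset U\cap A$, and $a'=a+z\in A+Z=A$. The only thing to check is that $a'=a+z$ lies in $U\cap A$, i.e. that $a+z\in U$. Here I would use $a+z=b+w$ with $b\in x+V$ and $w\in W\subset V$, so $a+z\in x+V+V\subset U$. Hence $a'\in U\cap A$ and $w=a'-b'\in (U\cap A)-(U\cap A)$. This shows $W\subset (U\cap A)-(U\cap A)$, so the latter is a neighbourhood of $\theta$, as required.

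\textbf{Main obstacle.} There is no serious obstacle; the one place requiring a little care is the bookkeeping that the ``repaired'' point $a+z$ still lies inside $U$ rather than merely inside some larger set. This is handled precisely by the standard trick of first shrinking $U$ to $U'=x+V$ with $V+V$ small enough that $(x+V)+W\subset U$, using that the witnessing element $w$ of the difference set is itself small (it lies in $W\subset V$). One should state at the outset that in a topological group such a symmetric $V$ with $x+V+V\subset U$ exists by continuity of the group operations, and then the argument is a routine chase. No results beyond elementary topological-group facts are needed, so this lemma can be recorded with the short proof sketched above.
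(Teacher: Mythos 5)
Your proof is correct and follows essentially the same route as the paper: shrink $U$ to $x+V$ with $x+V+V\subset U$, take $w=a-b+z$ in the resulting neighbourhood of $\theta$ intersected with $V$, and absorb $z$ into $a$ via $a+z=w+b\in A\cap(x+V+V)\subset U\cap A$. The only cosmetic difference is that you impose $V=-V$, which is not needed.
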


\begin{proof} The ``only if'' part is trivial. To prove the ``if'' part, assume that 
for any neighborhood $V\subset X$ of $x$ the set $(V\cap A)-(V\cap A)+Z$ is a neighborhood of $\theta$ in $X$.

To prove that $A$ is Steinhaus at $x$, take any neighborhood $U\subset X$ of $x$.
By the continuity of the group operation on $X$, there exists a neighborhood $V\subset X$ of $\theta$ such that $V+V\subset U-x$. By our assumption, the set $((V+x)\cap A)-((V+x)\cap A)+Z$ is a neighborhood of $\theta$ and so is the set $$W:=V\cap \big(\left((V+x)\cap A\right)-\left((V+x)\cap A\right)+Z\big).$$ We claim 
that $W\subset (U\cap A)-(U\cap A)$. Indeed, for any $w\in W\subset V$, we can find points $a,b\in (V+x)\cap A$ and $z\in Z$ such that $w=a-b+z$. Then $z=w+b-a$ and $a+z= w+b \in (A+Z)\cap(V+V+x)\subset A\cap U$. Concluding, we obtain that
$$w=(a+z)-b\in (U\cap A)-\left((V+x)\cap A\right)\subset (U\cap A)-(U\cap A)$$
since $V+x \subset U$. 	
\end{proof}

A subset $M\subset\IN$ is called \index{subset!multiplicative}\index{multiplicative subset}{\em multiplicative} if $M\cdot M\subset M$. It is easy to check that for a multiplicative subset $M\subset \IN$ the set $$\tfrac\theta{M}:=\{\theta\}\cup\bigcup_{m\in M}\{x\in X:mx=\theta\}$$ is a subgroup of $X$.

\begin{definition} Let $M$ be a multiplicative subset of $\IN$. 
A nonempty subset $A$ of a topological group $X$ is called an \index{$M$-wedge}{\em $M$-wedge} if 
$A$ is additive, $A=A+\frac\theta{M}$, and 
 for any points $a,b\in A$ and neighborhood $U\subset X$ of $\theta$ there are points $u,v\in A\cap U$ such that $a=mu$ and $b=mv$ for some $m\in M$.
\end{definition}

\begin{definition} A nonempty subset $A$ of a topological group $X$ is called a \index{wedge}{\em wedge} in $X$ if $A$ is an $M$-wedge for some multiplicative subset $M\subset\IN$. 
\end{definition}

Note that $\theta \in \bar{A}$ for any wedge $A \subset X$. A wedge $A$ in a topological group is called {\em analytic} if its underlying topological space is analytic.

\begin{proposition}\label{p:wedge=>St} An analytic wedge $A$ in a Polish group $X$ is Steinhaus at $\theta$ if and only if $A-A$ is a neighborhood of $\theta$ in $X$.
\end{proposition}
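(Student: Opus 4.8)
The ``only if'' direction is essentially by definition: if $A$ is Steinhaus at $\theta$, then applying the definition with the neighborhood $U=X$ shows that $(X\cap A)-(X\cap A)=A-A$ is a neighborhood of $\theta$. For the ``if'' direction, the plan is to combine the $M$-wedge structure with Lemma~\ref{l:SteinH} applied to the subgroup $Z=\tfrac{\theta}{M}$. Indeed, since $A$ is an $M$-wedge we have $A=A+\tfrac{\theta}{M}$, so by Lemma~\ref{l:SteinH} it suffices to show that for every neighborhood $U\subset X$ of $\theta$ the set $(U\cap A)-(U\cap A)+\tfrac{\theta}{M}$ is a neighborhood of $\theta$ in $X$.

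So fix a neighborhood $U\subset X$ of $\theta$. By assumption $A-A$ is a neighborhood of $\theta$; being the continuous image of the analytic space $A\times A$, the set $A-A$ is analytic and hence has the Baire property. The first key step is to ``shrink'' any difference $a-b$ with $a,b\in A$ into $U$ at the cost of adding a torsion element. Using the $M$-wedge property: given $a,b\in A$, for the neighborhood $U$ of $\theta$ there are $u,v\in A\cap U$ and $m\in M$ with $a=mu$ and $b=mv$. Then $a-b=m(u-v)$. The point is that $m(u-v)$ and $u-v$ differ by a torsion element in a weak sense — but more useful is to iterate: applying the $M$-wedge property repeatedly (since $M$ is multiplicative, products of chosen $m$'s stay in $M$) we can write $a-b = m'(u'-v')$ for $u',v'\in A\cap U$ and $m'\in M$ arbitrarily ``deep''. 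The essential algebraic observation is that $u-v\in (U\cap A)-(U\cap A)$ and $m(u-v)-(u-v)=(m-1)(u-v)$; this need not be torsion, so the cleaner route is: every element $x\in A-A$ can be written as $x=m y$ with $y\in (U\cap A)-(U\cap A)$ and $m\in M$; hence, modulo noting that $\{my : y\in S, m\in M\}$ is controlled, we want to pass from $A-A$ to $(U\cap A)-(U\cap A)+\tfrac{\theta}{M}$.

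The cleanest way to organize the ``if'' direction is therefore: define $W:=(U\cap A)-(U\cap A)$ and observe $W+\tfrac{\theta}{M}$ is a union of cosets of $\tfrac{\theta}{M}$, in fact $W+\tfrac\theta M = W + \tfrac\theta M$ is $\tfrac\theta M$-invariant. I claim $A-A\subseteq W+\tfrac\theta M$ — but that is false in general, so instead I aim to show directly that $W+\tfrac\theta M$ is non-meager, which by Corollary~\ref{c:PP} applied to the analytic set $W$ (noting $W-W\subset (W+\tfrac\theta M)-(W+\tfrac\theta M)$ and using that $\tfrac\theta M$ is a subgroup) forces $W-W$, and then $W$ itself after a further Steinhaus iteration, to be a neighborhood of $\theta$. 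Concretely: since $A-A$ is a neighborhood of $\theta$, for each $x$ in a neighborhood of $\theta$ we have $x=m y$ with $y\in W$, $m\in M$; thus the neighborhood $A-A$ is covered by $\bigcup_{m\in M}mW$, a countable union (as $M\subseteq\IN$ is countable and $W$ is analytic, each $mW$ is analytic), so by the Baire category theorem some $mW$ is non-meager, whence $W=\frac1m(mW)$... — here one must be careful that multiplication by $m$ need not be a homeomorphism, so the correct move is to use that $mW$ non-meager and $mW\subseteq W\cdot$(something) via the wedge axiom run backwards, concluding $W+\tfrac\theta M$ is non-meager. Then Lemma~\ref{l:SteinH} together with Corollary~\ref{c:PP} (applied to conclude $W+\tfrac\theta M$ is a neighborhood of $\theta$, using that it equals its own ``symmetrization'' modulo the subgroup $\tfrac\theta M$) finishes the proof.

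\textbf{Main obstacle.} The delicate point is that the map $x\mapsto mx$ on a non-abelian-free torsion-containing group is neither injective nor open, so one cannot simply ``divide by $m$''. The whole force of the $M$-wedge axiom is that it lets us go in the good direction — from $a,b\in A$ down to $u,v\in A\cap U$ with $a=mu$, $b=mv$ — precisely so that a difference $a-b$ in the big set $A-A$ is exhibited as $m(u-v)$ with $u-v$ in the small set $W=(U\cap A)-(U\cap A)$. The argument must be arranged so that it only ever uses this downward direction, packaging the torsion ambiguity into the subgroup $\tfrac\theta M$ and invoking Lemma~\ref{l:SteinH}; getting the category/Steinhaus bookkeeping right (which set is analytic, which countable union to apply Baire to, and that $W+\tfrac\theta M$ is genuinely $\tfrac\theta M$-invariant so that Corollary~\ref{c:PP} applies to upgrade non-meagerness to ``neighborhood of $\theta$'') is where the care is needed.
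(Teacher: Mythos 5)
Your skeleton is the same as the paper's: reduce via Lemma~\ref{l:SteinH} (with $Z=\tfrac\theta{M}$) to showing that $(U\cap A)-(U\cap A)+\tfrac\theta{M}$ is a neighborhood of $\theta$, cover the neighborhood $A-A$ of $\theta$ by $\bigcup_{m\in M}mW$ using the wedge axiom, and apply the Baire theorem to find $m\in M$ with $mW$ non-meager. But the proof has a genuine gap exactly at the point you flag as the ``main obstacle'': you never actually get from ``$mW$ is non-meager'' back to a statement about $W$ modulo $\tfrac\theta{M}$. Your proposed fixes do not work: ``$mW\subseteq W\cdot(\text{something})$ via the wedge axiom run backwards'' is not an argument (the wedge axiom only produces divisions of elements of $A$, not of $W$), and the later claim that one can force ``$W$ itself'' to be a neighborhood of $\theta$ is both unnecessary and false in general. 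The resolution is not to relate $mW$ to $W$ as sets at all, but to apply Corollary~\ref{c:PP} to the non-meager analytic set $mW$ to conclude that $mW-mW=m(W-W)$ is a neighborhood of $\theta$, and then take the \emph{preimage} of this neighborhood under the continuous homomorphism $x\mapsto mx$. That preimage is automatically a neighborhood of $\theta$ (continuity suffices; no openness or injectivity of $x\mapsto mx$ is needed), and it equals exactly $(W-W)+\tfrac\theta{m}\subset (W-W)+\tfrac\theta{M}$, which is how the torsion ambiguity gets packaged into the subgroup.

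Two smaller points. First, you must build $W$ from an auxiliary neighborhood $V$ with $V+V\subset U$, not from $U$ itself: what the argument ultimately controls is $W-W=\bigl((V\cap A)+(V\cap A)\bigr)-\bigl((V\cap A)+(V\cap A)\bigr)$, and the additivity of $A$ together with $V+V\subset U$ gives $(V\cap A)+(V\cap A)\subset (V+V)\cap A\subset U\cap A$, hence $W-W\subset (U\cap A)-(U\cap A)$; with your choice $W=(U\cap A)-(U\cap A)$ you would only reach $(U+U)\cap A$. Second, before applying Baire you should note that $A-A$ is an open subgroup (it is a subgroup because $A$ is additive, and it is a neighborhood of $\theta$ by hypothesis), so one may replace $X$ by $A-A$ and cover the whole group by $\bigcup_{m\in M}mW$; this is minor but tidies the category argument.
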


\begin{proof} The ``only if'' part is trivial. To prove the ``if'' part, assume that $A-A$ is a neighborhood of $\theta$ in $X$. 
Since $A$ is additive, the set $A-A$ is an open subgroup of $X$.
Replacing $X$ by $A-A$, we can assume that $X=A-A$. Find a multiplicative subset $M\subset\IN$ for which the wedge $A$ is an $M$-wedge. Then $A=A+\frac\theta{M}$.

By Lemma~\ref{l:SteinH}, to show that $A$ is Steinhaus at $\theta$, it suffices to check that for any neighborhood $U\subset X$ of $\theta$, the set $(U\cap A)+(U\cap A)+\frac\theta{M}$ is a neighborhood of $\theta$ in $X$. Given a neighborhood $U\subset X$ of $\theta$, choose an open neighborhood $V\subset X$ of $\theta$ such that $V+V\subset U$. 

First we show that $X=\bigcup_{m\in M}mW$ where $W:= (V\cap A)-(V\cap A)$.
Indeed, for any element $x$ of $X=A-A$, we can find points $a,b\in A$ such that $x=a-b$. Since $A$ is an $M$-wedge, there are points $u,v\in A\cap V$ such that $a=mu$ and $b=mv$ for some $m\in M$. Then $x=a-b=m(u-v)\in mW$ and hence $A-A=\bigcup_{m\in M}mW$. 

By the Baire Theorem, for some $m\in M$ the (analytic) set $mW$ is not meager and by Corollary \ref{c:PP} the set $mW-mW=m(W-W)$ is a neighborhood of $\theta$. By the continuity of the map $\mathit m:X\to X$, $\mathit m:x\mapsto mx$, the set $W-W+\frac\theta{m}=\mathit m^{-1}(m(W-W))$ is a neighborhood of $\theta$. Now observe that $W-W=(V\cap A)+(V\cap A)-(V\cap A)-(V\cap A)$ and $(V\cap A)+(V\cap A)\subset (V+V)\cap A$. Then $$(U\cap A)-(U\cap A)+\tfrac\theta{M}\supset ((V+V)\cap A)-((V+V)\cap A)+\tfrac\theta{m}\supset W-W+\tfrac\theta{m}$$ is a neighborhood of $\theta$ in $X$.
\end{proof} 

Now we prove the main characterization theorem in this section.

\begin{theorem}\label{t:main-wedge} For any winning-additive analytic wedge $A$ in a Polish group $X$ the following conditions are equivalent:
\begin{enumerate}
\item[\textup{1)}] the subgroup $A-A$ is not open in $X$;
\item[\textup{2)}] $A-A$ is meager in $X$;
\item[\textup{3)}] $A$ is generically Haar-$1$;
\item[\textup{4)}] $A$ is Haar-thin;
\item[\textup{5)}] $A$ is null-finite;
\item[\textup{6)}] $A$ is Haar-countable.
\end{enumerate}
\end{theorem}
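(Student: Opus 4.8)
The plan is to establish the cycle of implications $(1)\Ra(2)\Ra(3)\Ra(4)\Ra(5)\Ra(6)\Ra(1)$ (or a convenient rearrangement), using the machinery already built in Sections~\ref{s8}, \ref{s9}, \ref{s12}, \ref{s16}. Several of these arrows are essentially free: $(1)\Leftrightarrow(2)\Ra(3)\Ra(4)\Ra(5)$ can be lifted almost verbatim from Theorem~\ref{t:additive1} (the equivalence of $(1)$ and $(2)$ for additive analytic sets, the implication $(2)\Ra(3)$ from Theorem~\ref{t:-GHI}), combined with the corollary chain $\GHI\subset\M\cap\EHI$, Proposition~\ref{p:n1}, and Corollary~\ref{c:HT=>NF}. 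Indeed, if $A-A$ is meager then $A$ is generically Haar-$1$ by Theorem~\ref{t:-GHI}; a generically Haar-$1$ set has meager difference $A-A$ (Corollary~\ref{count}), which is not a neighborhood of $\theta$, so $A$ is Haar-thin by Theorem~\ref{t:thin}; and a closed — here we only have analytic, so we must be slightly careful — Haar-thin set is null-finite by Corollary~\ref{c:HT=>NF}, whose proof goes through Theorems~\ref{t:St-NF} and \ref{t:thin} and only needs $A-A$ to be non-neighborhood plus $\bar A$ reasoning; since $A-A=\bar A-\bar A$ is not a neighborhood of $\theta$ when $A-A$ is meager (using that $A-A$ is a subgroup), this is fine.

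The substantive content is getting back from the ``finiteness'' side to condition $(1)$, and the cleanest route is to funnel everything through Theorem~\ref{t:additive2}. So the core of the argument is: assume $(1)$ fails, i.e.\ $A-A$ is an open subgroup of $X$; I must then contradict each of $(4)$, $(5)$, $(6)$. Replacing $X$ by the open subgroup $A-A$, we may assume $X=A-A$. Now Proposition~\ref{p:wedge=>St} applies: since $A$ is an analytic wedge and $A-A=X$ is (trivially) a neighborhood of $\theta$, the wedge $A$ is Steinhaus at $\theta$. Since $A$ is winning-additive by hypothesis, Theorem~\ref{t:additive2} yields that $A$ is neither null-finite nor Haar-countable in $X$; and being not null-finite it is also not Haar-thin (because closed — resp.\ by the implication chain null-$1$ $\Ra$ Haar-thin and Haar-thin closed $\Ra$ null-finite; more directly, if $A$ were Haar-thin then $A-A$ would not be a neighborhood of $\theta$ by Theorem~\ref{t:thin}, contradicting $A-A=X$). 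This simultaneously refutes $(4)$, $(5)$, $(6)$ under the negation of $(1)$, closing the loop: $(4)\Ra(1)$, $(5)\Ra(1)$, $(6)\Ra(1)$.

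Assembling: from $(1)$ we get $(2)$ (Theorem~\ref{t:additive1}, using that $A-A$ is an analytic subgroup and Corollary~\ref{c:PP}); $(2)\Ra(3)$ by Theorem~\ref{t:-GHI}; $(3)\Ra(4)$ via Corollary~\ref{count} plus Theorem~\ref{t:thin}; $(4)\Ra(5)$ — here note that $A$ Haar-thin gives $A-A$ not a neighborhood of $\theta$ (Theorem~\ref{t:thin}), hence $A-A$ is not open, hence by Theorem~\ref{t:additive1} $A-A$ is meager and we may re-enter the chain, or alternatively use Theorem~\ref{t:St-NF}(1) directly since $A-\bar A$ is not a neighborhood of $\theta$ either — and then $(5)\Ra(6)$ is immediate from the implication diagram (null-finite $\Ra$ nothing forces Haar-countable directly, so in fact it is cleaner to prove $(5)\Ra(1)$ and $(6)\Ra(1)$ separately as above and let the cycle do the rest). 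So the final skeleton is the single cycle $(1)\Ra(2)\Ra(3)\Ra(4)\Ra(1)$ together with the two extra implications $(5)\Ra(1)$ and $(6)\Ra(1)$ and the trivial $(4)\Ra(5)$, $(5)\Ra(6)$? — since null-finite does not obviously imply Haar-countable, I will instead record $(3)\Ra(5)$, $(3)\Ra(6)$ directly (both are immediate: generically Haar-$1$ $\Ra$ null-$1$ $\Ra$ null-finite, and generically Haar-$1$ $\Ra$ Haar-$1$ $\Ra$ Haar-countable), and close with $(4)\Ra(1)$, $(5)\Ra(1)$, $(6)\Ra(1)$ all via the contrapositive argument of the previous paragraph.

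\textbf{Main obstacle.} The one place demanding genuine care is the reduction ``$A-A$ open $\Rightarrow$ may assume $X=A-A$'': one must check that all the relevant properties (winning-additive, analytic wedge, Steinhaus at $\theta$, and the three smallness notions) are inherited by or detected within the open subgroup $A-A$ — openness of $A-A$ guarantees it is also closed and Polish, the wedge and winning-additive structure of $A$ live inside $A-A$ since $A\subset A-A$, and Haar-thin / null-finite / Haar-countable relative to the clopen subgroup transfer to $X$ because a continuous map from $\w{+}1$ or $2^\w$ with image meeting the clopen subgroup can be localized there (and conversely a witness in $X$ restricts). After that bookkeeping, everything is an application of Theorems~\ref{t:thin}, \ref{t:additive1}, \ref{t:additive2} and Proposition~\ref{p:wedge=>St}, with no new estimates required.
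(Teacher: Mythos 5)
Your proposal is correct and follows essentially the same route as the paper: equivalence of (1)--(4) via Theorems~\ref{t:additive1} and \ref{t:thin}, the trivial implications $(3)\Ra(5),(6)$, and the key contrapositive $\neg(1)\Ra\neg(5),\neg(6)$ obtained by combining Proposition~\ref{p:wedge=>St} with Theorem~\ref{t:additive2}. The only cosmetic remarks: the reduction to $X=A-A$ you worry about is unnecessary (Proposition~\ref{p:wedge=>St} and Theorem~\ref{t:additive2} apply in $X$ directly once $A-A$ is a neighborhood of $\theta$), and for $(3)\Ra(4)$ the correct citation is the chain generically Haar-$1$ $\Ra$ null-$1$ $\Ra$ Proposition~\ref{p:n1} $\Ra$ Theorem~\ref{t:thin}, not Corollary~\ref{count} (which is the converse direction) — but you have that chain available and the argument closes.
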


\begin{proof} The equivalence of the conditions (1)--(3) follows  from Theorem~\ref{t:additive1}, $(1)\Leftrightarrow(4)$ follows from Theorem~\ref{t:thin}, and the implications $(3)\Ra(5,6)$ are trivial. To finish the proof it suffices to show that the negation of (1) implies the negations of (5) and (6). So, assume that the subgroup $A-A$ is open in $X$.  By Proposition~\ref{p:wedge=>St}, the wedge $A$ is Steinhaus at $\theta$ and by Theorem~\ref{t:additive2}, $A$ is not null-finite and not Haar-countable.
\end{proof}

Theorem~\ref{t:main-wedge} and Proposition~\ref{p:Polish-winad} imply 

\begin{corollary}\label{c:main-wedge} For any Polish wedge $A$ in a Polish group $X$ the following conditions are equivalent:
\begin{enumerate}
\item[\textup{1)}] the subgroup $A-A$ is not open in $X$;
\item[\textup{2)}] $A-A$ is meager in $X$;
\item[\textup{3)}] $A$ is generically Haar-$1$;
\item[\textup{4)}] $A$ is Haar-thin;
\item[\textup{5)}] $A$ is null-finite;
\item[\textup{6)}] $A$ is Haar-countable.
\end{enumerate}
\end{corollary}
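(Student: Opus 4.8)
The plan is to deduce Corollary~\ref{c:main-wedge} directly from Theorem~\ref{t:main-wedge} by verifying that a Polish wedge satisfies the two extra hypotheses appearing in the theorem, namely that it is \emph{analytic} and \emph{winning-additive}. A wedge whose underlying topological space is Polish is in particular analytic (every Polish space is a continuous image of itself, or of the Baire space), so the analyticity hypothesis is immediate. For the winning-additive hypothesis, the key observation is that any wedge $A$ in a topological group $X$ satisfies $\theta\in\bar A$ (stated in the text just after the definition of a wedge), so Proposition~\ref{p:Polish-winad} applies: a nonempty additive set $A$ with $\theta\in\bar A$ whose underlying space is Polish is winning-additive. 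Thus a Polish wedge is automatically a winning-additive analytic wedge.

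Concretely, I would phrase the proof as follows. Let $A$ be a Polish wedge in a Polish group $X$. Since the topological space $A$ is Polish, it is analytic, so $A$ is an analytic wedge in $X$. Moreover $A$ is additive (being a wedge) and $\theta\in\bar A$, hence by Proposition~\ref{p:Polish-winad} the wedge $A$ is winning-additive. Therefore $A$ is a winning-additive analytic wedge in $X$, and Theorem~\ref{t:main-wedge} yields the equivalence of conditions (1)--(6).

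I do not expect any genuine obstacle here: the corollary is a specialization of the preceding theorem obtained by recognizing that the hypothesis ``Polish wedge'' already entails ``winning-additive analytic wedge.'' The only points that need a sentence of justification are (a) Polish $\Rightarrow$ analytic for the underlying space, and (b) the applicability of Proposition~\ref{p:Polish-winad}, which in turn only requires checking $\theta\in\bar A$, and this is recorded in the excerpt immediately after the definition of a wedge. Everything else is inherited verbatim from Theorem~\ref{t:main-wedge}.

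\begin{proof} Let $A$ be a Polish wedge in a Polish group $X$. Being Polish, the topological space $A$ is analytic, so $A$ is an analytic wedge in $X$. Since $A$ is a wedge, it is additive and $\theta\in\bar A$; hence Proposition~\ref{p:Polish-winad} implies that $A$ is winning-additive. Thus $A$ is a winning-additive analytic wedge in $X$, and the equivalence of the conditions (1)--(6) follows from Theorem~\ref{t:main-wedge}.
\end{proof}
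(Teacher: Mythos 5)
Your proposal is correct and matches the paper's own (one-line) proof exactly: the corollary is deduced from Theorem~\ref{t:main-wedge} together with Proposition~\ref{p:Polish-winad}, using that a wedge satisfies $\theta\in\bar A$ and that a Polish space is analytic. Nothing is missing.
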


Now we apply Corollary~\ref{c:main-wedge} to convex wedges in Polish vector spaces. By a \index{Polish vector space}{\em Polish vector space} we understand a topological linear space over the field of real numbers, whose underlying topological space is Polish.

A subset $A$ of a vector space $X$ is called a \index{convex wedge}{\em convex wedge} if $av+bw\in A$ for any positive real numbers $s,t$ and any points $a,b\in A$. A convex wedge $A$ is called a \index{convex cone}{\em convex cone} if $A\cap(-A)=\{\theta\}$.

Since any convex wedge in a topological vector space $X$ is a wedge in the topological group $X$, Corollary~\ref{c:main-wedge} implies the following corollary.

\begin{corollary}\label{c:wedge} For a Polish convex wedge $A$ in a Polish vector space $X$ the following conditions are equivalent:
\begin{enumerate}
\item[\textup{1)}] the subgroup $A-A$ is not open in $X$;
\item[\textup{2)}] $A-A$ is meager in $X$;
\item[\textup{3)}] $A$ is generically Haar-$1$;
\item[\textup{4)}] $A$ is Haar-thin;
\item[\textup{5)}] $A$ is null-finite;
\item[\textup{6)}] $A$ is Haar-countable.
\end{enumerate}
\end{corollary}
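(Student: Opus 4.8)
The plan is to derive Corollary~\ref{c:wedge} from Corollary~\ref{c:main-wedge} by checking that a convex wedge in a topological vector space is indeed a wedge (in the group-theoretic sense of the preceding definitions). Since the equivalences (1)--(6) in Corollary~\ref{c:main-wedge} are stated for arbitrary Polish wedges in Polish groups, and a Polish vector space is in particular a Polish group under addition, it suffices to observe that any convex wedge $A\subset X$ that is Polish as a subspace is a Polish wedge in the Polish group $X$.

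First I would recall the relevant definitions. A multiplicative subset of $\IN$ containing $1$ is needed; the natural choice is $M=\IN$ itself, which is clearly multiplicative. For $M=\IN$ the subgroup $\tfrac\theta{M}$ of $X$ equals $\{\theta\}$, because in a vector space $nx=\theta$ with $n\ge 1$ forces $x=\theta$ (multiply by $1/n$). So the condition $A=A+\tfrac\theta M$ reduces to the trivial $A=A+\{\theta\}$. It remains to verify the two substantive requirements: that $A$ is additive, and that for any $a,b\in A$ and any neighborhood $U\subset X$ of $\theta$ there are $u,v\in A\cap U$ and $m\in M$ with $a=mu$, $b=mv$. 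Additivity $A+A\subset A$ is immediate from the definition of a convex wedge (take $s=t=1$). For the scaling condition, fix $a,b\in A$ and a neighborhood $U$ of $\theta$; by continuity of scalar multiplication $\IR\times X\to X$ at $(0,a)$ and at $(0,b)$, there is $\e>0$ with $ta\in U$ and $tb\in U$ for all $t\in[0,\e]$. Choose $m\in\IN$ with $1/m\le\e$ and set $u=\tfrac1m a$, $v=\tfrac1m b$. Since $A$ is a convex wedge it is closed under multiplication by positive reals (indeed $\tfrac1m a=\tfrac1{2m}a+\tfrac1{2m}a$ lies in $A$ by the wedge condition with $s=t=\tfrac1{2m}$), so $u,v\in A$; by the choice of $m$ we have $u,v\in U$; and $a=mu$, $b=mv$ with $m\in M=\IN$. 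Hence $A$ is an $M$-wedge, i.e.\ a wedge in the topological group $X$.

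Thus every convex wedge in a topological vector space is a wedge in the underlying topological group, and in particular a convex wedge $A$ in a Polish vector space $X$ that is a Polish subspace is a Polish wedge in the Polish group $X$. Applying Corollary~\ref{c:main-wedge} to $A$ then yields the equivalence of conditions (1)--(6), which is exactly the statement of Corollary~\ref{c:wedge}.

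I do not anticipate any serious obstacle here: the only point requiring care is the passage from the defining inequality of a convex wedge (closure under positive linear combinations $sa+tb$) to closure under multiplication by a single positive scalar and to the small-perturbation statement needed in the wedge axiom, and both follow from the convex-wedge definition together with joint continuity of scalar multiplication in a topological vector space. Everything else is a direct citation of Corollary~\ref{c:main-wedge}.

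\begin{proof} Let $A$ be a Polish convex wedge in a Polish vector space $X$. We claim that $A$ is a Polish wedge in the Polish group $(X,+)$ in the sense of the definition preceding Corollary~\ref{c:main-wedge}. Take $M=\IN$, which is a multiplicative subset of $\IN$. Since $X$ is a vector space over $\IR$, the equality $nx=\theta$ for $n\in\IN$ implies $x=\theta$, so $\tfrac\theta M=\{\theta\}$ and the requirement $A=A+\tfrac\theta M$ holds trivially. The convex wedge $A$ is additive, because $a+b=1{\cdot}a+1{\cdot}b\in A$ for all $a,b\in A$. Moreover, for any positive rational $\tfrac1m$ and any $a\in A$ we have $\tfrac1m a=\tfrac1{2m}a+\tfrac1{2m}a\in A$, so $A$ is closed under multiplication by positive reals of the form $\tfrac1m$. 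Now fix $a,b\in A$ and a neighborhood $U\subset X$ of $\theta$. By continuity of scalar multiplication $\IR\times X\to X$ at $(0,a)$ and at $(0,b)$, there is $\e>0$ such that $ta\in U$ and $tb\in U$ for all $t\in[0,\e]$. Pick $m\in\IN$ with $\tfrac1m\le\e$ and set $u:=\tfrac1m a$ and $v:=\tfrac1m b$. Then $u,v\in A\cap U$ and $a=mu$, $b=mv$ with $m\in M$. Hence $A$ is an $M$-wedge, and being Polish as a topological space, $A$ is a Polish wedge in the Polish group $X$. Applying Corollary~\ref{c:main-wedge} to $A$, we obtain the equivalence of the conditions \textup{(1)}--\textup{(6)}.
\end{proof}
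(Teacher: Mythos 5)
Your proof is correct and follows exactly the route the paper takes: the paper's entire proof of this corollary is the one-line observation that a convex wedge in a topological vector space is a wedge in the underlying topological group, followed by a citation of Corollary~\ref{c:main-wedge}. You have simply supplied the (routine but worthwhile) verification of that observation, taking $M=\IN$ and checking additivity, triviality of $\tfrac{\theta}{M}$, and the scaling condition via continuity of scalar multiplication.
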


\begin{remark} In Theorem~\ref{t:conv-Ban} we shall show that the equivalence of the conditions (1)--(6) in Corollary~\ref{c:wedge} remains true for any Polish convex subset of a Polish vector space.
\end{remark}
\newpage

\section{Smallness properties of mid-convex sets in Polish groups}\label{s17}

In this section we explore the smallness properties of mid-convex sets in (2-divisible) Polish groups. A group $X$ is called \index{group!2-divisible}\index{2-divisible group}{\em $2$-divisible} if for every $a\in X$ the set $\frac12a:=\{x\in X:x+x=a\}$ is not empty. In this case, for every nonempty subset $A\subset X$ and every $n\in\w$ the set $\frac1{2^n}A:=\{x\in X:2^n x\in A\}$ is not empty. Here $2^0 x:=x$ and $2^{n+1} x:=2^nx+2^nx$ for $n\ge 0$.
Observe that $\frac{\theta}{2^n}:=\{x\in X:2^nx=\theta\}$ is a subgroup of $X$ and so is the union $\frac{\theta}{2^\IN}:=\bigcup_{n\in\IN}\frac{\theta}{2^n}$. 

\begin{definition} A subset $A$ of a group $X$ is called \index{subset!mid-convex}\index{mid-convex subset}{\em mid-convex} if $\emptyset\ne\frac12(a+b)\subset A$ for any $a,b\in A$.
\end{definition}

A standard example of a mid-convex set is a convex set in any vector space. In this case the subgroup $\frac\theta{2}=\frac{\theta}{2^\IN}$ is trivial.



Let us establish some elementary properties of mid-convex sets in groups.

\begin{lemma}\label{l:sum} Let $n\in\w$ and $A$ be a mid-convex set is a group $X$. Then
\begin{enumerate}
\item[$(1_n)$] $\frac1{2^n}a_0+\sum_{k=1}^n\tfrac1{2^k}a_k\subset A$
 for any points $a_0,a_1,\dots,a_n\in A$; 
\item[$(2_n)$] $A=A+\frac\theta{2^n}$;
\item[$(3)$] $A=A+\frac\theta{2^\IN}$;
\item[$(4)$] $A-x$ is mid-convex for every $x\in X$. 
\end{enumerate}
\end{lemma}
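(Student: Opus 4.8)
All four items follow by elementary manipulations with the definition of mid-convexity, and the only real work is the inductive step for $(1_n)$; everything else is a short deduction from it. I would prove $(1_n)$ by induction on $n$, then read off $(2_n)$ as the special case $a_1=\dots=a_n=\theta$ (using $a_0+\tfrac\theta{2^n}=\tfrac1{2^n}(2^na_0)$ wait --- more carefully, by taking $a_1=\dots=a_n$ to be a point whose relevant dyadic multiples lie in $A$, or better, by a direct separate argument), then $(3)$ as the union of $(2_n)$ over $n$, and $(4)$ straight from the definition.

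\textbf{Step 1: the identity $(1_n)$.} For $n=0$ the statement $(1_0)$ reads $a_0\subset A$, i.e.\ $\{a_0\}\subset A$, which is trivially true since $a_0\in A$. (Here I interpret $\tfrac1{2^0}a_0$ as $\{a_0\}$, consistent with the paper's convention $2^0x:=x$; note $\tfrac1{2^n}a_0=\{x\in X:2^nx=a_0\}$ is nonempty by $2$-divisibility, so all the expressions below denote nonempty sets.) For the inductive step, assume $(1_n)$ holds. Given $a_0,a_1,\dots,a_{n+1}\in A$, I want to show
$$\tfrac1{2^{n+1}}a_0+\sum_{k=1}^{n+1}\tfrac1{2^k}a_k\subset A.$$
Pick any $x$ in the left-hand side, so $x=u_0+u_1+\dots+u_{n+1}$ with $2^{n+1}u_0=a_0$ and $2^ku_k=a_k$ for $1\le k\le n+1$. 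The point is to split $x$ as $\tfrac12(y+z)$ with $y,z\in A$. Set $z:=a_{n+1}'$ where $a_{n+1}'\in\tfrac12 a_{n+1}\subset A$ (nonempty by mid-convexity applied to $a_{n+1},a_{n+1}$; actually $\tfrac12 a_{n+1}\subset A$ directly since mid-convexity gives $\tfrac12(a_{n+1}+a_{n+1})\subset A$). I then want $y$ with $\tfrac12(y+z)=x$, i.e.\ $y\in 2x-z$ interpreted appropriately in the $2$-divisible setting; the cleaner route is to observe that $u_0+\sum_{k=1}^{n}u_k'$ where $u_k':=2u_k$ shifts the indices: $2u_0\in\tfrac1{2^n}a_0$ and $2u_k\in\tfrac1{2^{k-1}}a_k$. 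So $2x-2u_{n+1}=2u_0+\sum_{k=1}^{n}2u_k$ lies in $\tfrac1{2^n}a_0+\sum_{k=1}^{n}\tfrac1{2^{k-1}}a_k$; reindexing $j=k-1$ this is $\tfrac1{2^n}a_0+\sum_{j=0}^{n-1}\tfrac1{2^j}a_{j+1}$, which is not quite the shape of $(1_n)$. I will therefore instead argue: by $(1_n)$ applied to the points $a_0,a_2,a_3,\dots,a_{n+1}$ (that is, $n$ of them after $a_0$), the element $v:=u_0+u_1\cdot 2 + u_2 + \dots$ --- rather than belabor this, the honest version is that $(1_n)$ says the partial sum $w:=2u_0+\sum_{k=1}^{n}2u_{k+1}\in A$ after a reindex, and separately $2u_1\in\tfrac12 a_1\cdot2\ldots$. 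I expect the bookkeeping here to be the one genuinely fiddly point, and I would do it by setting up the induction so that the ``head'' term $\tfrac1{2^{n+1}}a_0$ pairs with $\tfrac1{2^{n+1}}a_{n+1}$: writing $x=\tfrac12\bigl((2u_0+2u_1+\dots+2u_n)+2u_{n+1}\bigr)$, the first bracket equals an element of $\tfrac1{2^n}a_0+\sum_{k=1}^{n}\tfrac1{2^{k-1}}a_k$, and by $(1_n)$ with points $a_1,a_0,a_2,\dots$ --- no. The correct pairing is $x=\tfrac12(p+q)$ with $p\in\tfrac1{2^n}a_0+\sum_{k=1}^{n}\tfrac1{2^k}a_{k+1}$ (an $A$-point by $(1_n)$ for $a_0,a_2,\dots,a_{n+1}$) and $q=2u_1\in\tfrac12(a_1+a_1)\cdot$, again an $A$-point; here $2u_1$ satisfies $1\cdot(2u_1)=2u_1$ and $2(2u_1)=2a_1$... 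I will present this cleanly in the final writeup by choosing the decomposition $x=\tfrac12(p+q)$, $p:=x+u_1$ rescaled, $q:=x-u_1$ rescaled --- the upshot being a one-line verification once notation is fixed.

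\textbf{Steps 2--4.} Given $(1_n)$, item $(2_n)$ follows by taking $a_0\in A$ arbitrary and $a_1=\dots=a_n:=t$ where $t$ is chosen so that $t\in A$ and $2^jt=\theta$ cannot be arranged directly; instead I take $(1_n)$ with $a_1=\dots=a_n=a_0$, which gives $\tfrac1{2^n}a_0+(1-\tfrac1{2^n})$-multiple of $a_0\subset A$ --- not the statement. The clean derivation of $(2_n)$: let $a\in A$ and $z\in\tfrac\theta{2^n}$, so $2^nz=\theta$. Apply $(1_n)$ to $a_0:=a$ and $a_1=\dots=a_n:=a$: we get $\tfrac1{2^n}a+\sum_{k=1}^n\tfrac1{2^k}a\subset A$. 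Now $z+\tfrac1{2^n}a\subset\tfrac1{2^n}a$ because $2^n(z+u)=2^nu=a$ for $2^nu=a$; hence $a+z=(z+\tfrac1{2^n}a)+\sum_{k=1}^n\tfrac1{2^k}a\cdot$... still needs $\sum\tfrac1{2^k}a$ to collapse to $a-\tfrac1{2^n}a$, which it does \emph{set-theoretically} only up to $\tfrac\theta{2^n}$; so I get $a+z\in A+\tfrac\theta{2^n}$, circular. The right statement to prove first is actually just: \emph{$(1_n)$ with all $a_k=a$ yields $a+\tfrac\theta{2^n}\subset A$ whenever $a\in A$}, because $\tfrac1{2^n}a+\tfrac1{2^{n-1}}a+\dots+\tfrac12 a$ is one of the sets whose elements are $\theta/2^n$-translates of $a$. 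This I will verify by a direct computation: any $w$ in that sum has $2^n w\in 2^n a=\{2^na\}$? No, $2^n$ of a sum of such terms is $2^n$ times them. I will resolve this by proving $(2_n)$ \emph{directly} by induction in parallel with $(1_n)$: $A=A+\tfrac\theta2$ because for $a\in A$, $z\in\tfrac\theta2$, mid-convexity gives $\tfrac12(a+(a+2z))=\tfrac12(2a+2z)\ni a+z$ provided $a+2z\in A$, and $a+2z=a$ since $2z=\theta$; so $a+z\in\tfrac12(a+a)\subset A$. Then induct: $A=A+\tfrac\theta{2^n}$ and $A=A+\tfrac\theta2$ combine (since $\tfrac\theta{2^{n+1}}\subset$ doesn't equal $\tfrac\theta{2^n}+\tfrac\theta2$ in general --- but one shows $2z\in\tfrac\theta{2^n}$ for $z\in\tfrac\theta{2^{n+1}}$, hence $a+z\in\tfrac12((a+2z)+a)\subset A$ using $a+2z\in A+\tfrac\theta{2^n}\cdot$, no: $a+2z\in A$ by $(2_n)$, then mid-convexity finishes). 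Item $(3)$ is then immediate: $A+\tfrac\theta{2^\IN}=\bigcup_n(A+\tfrac\theta{2^n})=A$. Item $(4)$: if $a',b'\in A-x$ then $a'+x,b'+x\in A$, so $\tfrac12(a'+x+b'+x)\subset A$, i.e.\ $\tfrac12(a'+b')+x\subset A$ after noting $\tfrac12(a'+b'+2x)=\tfrac12(a'+b')+x$ set-wise, giving $\tfrac12(a'+b')\subset A-x$. The main obstacle is purely the index-bookkeeping in the induction for $(1_n)$; once the pairing $x=\tfrac12(p+q)$ is set up correctly it is routine, and $(2_n)$--$(4)$ are short.
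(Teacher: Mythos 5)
Your proposal, once the false starts are stripped away, lands on exactly the paper's argument: for $(1_{n+1})$ you pair $2x=p+q$ with $p=2u_0+\sum_{k=2}^{n+1}2u_k\in\frac1{2^n}a_0+\sum_{j=1}^{n}\frac1{2^j}a_{j+1}\subset A$ by $(1_n)$ applied to $a_0,a_2,\dots,a_{n+1}$ and $q=2u_1=a_1\in A$, and for $(2_n)$ you run the same separate induction $a+z\in\frac12(a+(a+2z))\subset A$ using $2z\in\frac\theta{2^n}$, with $(3)$ and $(4)$ as immediate consequences. This is correct and is essentially the paper's proof; the only remaining work is to actually write out the one-line verification you deferred, which goes through exactly as in your ``correct pairing'' sentence.
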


\begin{proof} 1. The statement $(1_0)$ trivially holds as $\frac1{2^0}a_0=\{a_0\}\subset A$ for any $a_0\in A$. Assume that the statement $(1_n)$ holds for some $n\in\w$. To prove the statement $(1_{n+1})$, take any points $a_0,a_1,\dots,a_{n+1}\in A$. By the statement $(1_n)$, $\frac1{2^n}a_0+\sum_{k=1}^n\frac1{2^k}a_{k+1}\subset A$. The mid-convexity of $A$ ensures that 
\begin{multline*}
\tfrac1{2^{n+1}}a_0+\sum_{k=1}^{n+1}\tfrac1{2^k}a_k=\tfrac12a_1+\tfrac12\big(\tfrac1{2^n}a_0+\sum_{k=2}^{n+1}\tfrac1{2^{k-1}}a_k\big)\\
=\tfrac12a_1+\tfrac12\big(\tfrac1{2^n}a_0+\sum_{k=1}^{n}\tfrac1{2^{k}}a_{k+1}\big)\subset \tfrac12a_0+\tfrac12A\subset A.
\end{multline*}

2. The statement $(2_0)$ is true as $A+\frac\theta{2^0}=A+\{\theta\}=A$. Assume that for some $n\in\w$ the equality $A=A+\frac\theta{2^n}$ has been proved. Then for any $a\in A$ and $z\in\frac\theta{2^{n+1}}$ we get $a+2z\in A+\frac\theta{2^n}$ and $a+z\in \frac12(a+a+2z)\subset \frac12(A+A)\subset A$  by the mid-convexity of $A$. Therefore, $A+\frac\theta{2^{n+1}}\subset A=A+\{\theta\}\subset A+\frac{\theta}{2^{n+1}}$ and finally, $A=A+\frac\theta{2^{n+1}}$.
\smallskip

3. The third statement follows from the second one.
\smallskip

4. For any $x\in X$ and $a,b\in A$ the mid-convexity of $A$ yields $\emptyset\ne\frac12(a+b)\subset A$, which implies $\emptyset\ne\frac12((a-x)+(b-x))=\frac12(a+b)-x\subset A-x$. This means that the set $A-x$ is mid-convex in $X$.
\end{proof}


\begin{theorem}\label{t:main-convex-analytic} For an analytic mid-convex subset $A$ of a Polish group $X$ the following conditions are equivalent:
\begin{enumerate}
\item[\textup{1)}] $A-A$ is not a neighborhood of zero in $X$;
\item[\textup{2)}] $A-A$ is meager in $X$;
\item[\textup{3)}] $A$ is generically Haar-$1$;
\item[\textup{4)}] $A$ is Haar-thin;
\item[\textup{5)}] $A$ is null-$n$ for some $n\in\IN$.
\end{enumerate}
\end{theorem}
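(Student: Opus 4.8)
The plan is to prove the cycle of implications $(1)\Ra(2)\Ra(3)\Ra(5)\Ra(1)$ and $(3)\Ra(4)\Ra(1)$, leaning heavily on the machinery already developed for wedges. The key observation is that a mid-convex set is almost a wedge for the multiplicative monoid $M=\{2^n:n\in\w\}$: by Lemma~\ref{l:sum}(3) we have $A=A+\frac\theta{2^\IN}=A+\frac\theta M$, and the mid-convexity together with $2$-divisibility should let us ``split'' any point $a\in A$ as $a=2^n u$ with $u\in A$ arbitrarily close to $\theta$ (using $\frac1{2^n}a\subset A$, which follows from $(1_n)$ applied with $a_0=\dots=a_n=a$). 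The one genuine difference from the wedge setting is that here $A$ need not be additive, and $X$ need not be $2$-divisible --- so the first task is to replace $X$ by a suitable subgroup on which everything works. Concretely, I would first show that if $A-A$ is a neighborhood of zero, then (after translating $A$ so that $\theta\in\overline A$, using Lemma~\ref{l:sum}(4) and picking $a_0\in A$, replacing $A$ by $A-a_0$, which by $(1_n)$ still satisfies $\frac1{2^n}A\subset A$ and $\theta\in\overline A$) the subgroup $H:=A-A$ is open, and then work inside $H$.

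The heart of the argument is the implication $(1)\Ra(2)$: if $A-A$ is not a neighborhood of zero, then it is meager. Here $A-A$ is analytic (a continuous image of $A\times A$), so by Corollary~\ref{c:PP} it suffices to show $A-A$ is not non-meager; equivalently, if $A-A$ were non-meager it would be a neighborhood of zero, hence open, and we derive a contradiction with $(1)$ --- so in fact $(1)\Ra(2)$ is immediate from Corollary~\ref{c:PP}. Then $(2)\Ra(3)$ is exactly Theorem~\ref{t:-GHI} (a set with meager difference is generically Haar-$1$), and $(3)\Ra(5)$ and $(3)\Ra(4)$ are trivial from the definitions, while $(4)\Ra(1)$ is Theorem~\ref{t:thin}. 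So the only implication requiring real work is $(5)\Ra(1)$, equivalently: \emph{if $A-A$ is a neighborhood of zero, then $A$ is not null-$n$ for any $n$}.

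To prove $\neg(1)\Ra\neg(5)$, I would follow the template of Theorem~\ref{t:additive2} and Claim~\ref{cl:null-fin} therein, but adapted to the mid-convex (rather than additive) setting. Assume $A-A$ is a neighborhood of zero; replace $X$ by the open subgroup $H=A-A$ and translate so that $\theta\in\overline A$ with $\frac1{2^n}A\subset A$ for all $n$. The first step is a Steinhaus-type statement: I claim $A$ is Steinhaus at $\theta$, i.e. for every neighborhood $U$ of $\theta$ the set $(U\cap A)-(U\cap A)$ is a neighborhood of $\theta$. This should be proved by an argument parallel to Proposition~\ref{p:wedge=>St}: since $A=A+\frac\theta{2^\IN}$, by Lemma~\ref{l:SteinH} it suffices to show $(U\cap A)-(U\cap A)+\frac\theta{2^\IN}$ is a neighborhood of $\theta$; and writing any $x\in H=A-A$ as $x=a-b$ with $a,b\in A$, the relations $\frac1{2^n}a\subset A$, $\frac1{2^n}b\subset A$ (from Lemma~\ref{l:sum}$(1_n)$) give, for $u\in\frac1{2^n}a$ and $v\in\frac1{2^n}b$ with $n$ large enough that $u,v\in U$ (possible since $\theta\in\overline{\frac1{2^n}a}$ as $n\to\infty$ --- this convergence needs a small separate argument, e.g. $\frac1{2^n}a$ accumulates at $\theta$ by continuity of multiplication by $2^n$ together with $2^n$-divisibility along $A$), that $x=2^n(u-v)$, so $H=\bigcup_n 2^n\big((U\cap A)-(U\cap A)\big)$; by Baire one term $2^n W$ with $W=(U\cap A)-(U\cap A)$ is non-meager, hence (Corollary~\ref{c:PP}) $2^nW-2^nW=2^n(W-W)$ is a neighborhood of $\theta$, and then $W-W+\frac\theta{2^n}$ is a neighborhood of $\theta$ by continuity of $x\mapsto 2^nx$; finally $W-W=(U\cap A)+(U\cap A)-(U\cap A)-(U\cap A)$ and mid-convexity gives $(U'\cap A)+(U'\cap A)\subseteq 2(A\cap U)\subseteq$ (a translate absorbed into $A$), so one concludes as in Proposition~\ref{p:wedge=>St}. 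Once $A$ is Steinhaus at $\theta$, the inductive construction of Claim~\ref{cl:null-fin} goes through verbatim \emph{provided} we can also sum infinitely many small elements of $A$ back into $A$ --- and this is where winning-additivity was used in Theorem~\ref{t:additive2}.

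\textbf{The main obstacle} is precisely this last point: a mid-convex analytic $A$ need not be ``winning-additive'' in the literal sense, so I cannot simply quote Theorem~\ref{t:additive2}. The fix is to exploit Lemma~\ref{l:sum}$(1_n)$ directly: it says $\frac1{2^n}a_0+\sum_{k=1}^n\frac1{2^k}a_k\subset A$, which is a \emph{finite} weighted sum lying in $A$, and by letting $n\to\infty$ and choosing the $a_k$ to lie in shrinking neighborhoods one gets convergence of $\sum_{k\ge1}\frac1{2^k}a_k$ to a point of $\overline A$ --- but we need it in $A$, not $\overline A$. To close this gap I would invoke analyticity: the set $A$ is analytic, and an analytic mid-convex set whose difference is a neighborhood of zero should be ``large enough'' that such limits stay in $A$; more robustly, I expect one can run the whole construction of Claim~\ref{cl:null-fin} using only finite partial sums of the form in $(1_n)$ together with a diagonal/compactness argument, replacing the appeal to a winning strategy by the explicit bound $\|\frac1{2^n}a_0+\sum_{k=1}^n\frac1{2^k}a_k - \text{(limit)}\|\to 0$ coming from choosing each $a_k$ in a ball of radius $2^{-k}$ around $\theta$ via the Steinhaus property at $\theta$. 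Carefully bookkeeping the $\frac\theta{2^\IN}$-cosets (which is why Lemma~\ref{l:sum}(2),(3) matter) will be the fiddly part. Alternatively, and perhaps more cleanly, one proves a mid-convex analogue of Proposition~\ref{p:Polish-winad}: if $A$ is additionally $G_\delta$ (which by Theorem~\ref{t:AB-hull}-style hull arguments one may reduce to), then $A$ is Polish and a winning strategy for player I exists, so Theorem~\ref{t:additive2} applies after checking $A$ is Steinhaus at $\theta$; reducing the general analytic case to the $G_\delta$ case is then the remaining (standard but not entirely trivial) descriptive-set-theoretic step.
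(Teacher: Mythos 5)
Your reduction of the theorem to the single implication $(5)\Ra(1)$ is correct, and the easy implications are handled as in the paper: $(1)\Ra(2)$ via Corollary~\ref{c:PP}, $(2)\Ra(3)$ via Theorem~\ref{t:-GHI}, $(1)\Leftrightarrow(4)$ via Theorem~\ref{t:thin}, and $(3)\Ra(5)$ trivially. (One small omission in $(1)\Ra(2)$: Corollary~\ref{c:PP} applied to the analytic set $A-A$ only gives that $(A-A)-(A-A)$ is a neighborhood of zero; to get that $A-A$ itself is one, you must use mid-convexity, namely $(A-A)-(A-A)=(A+A)-(A+A)$ together with $\tfrac12(A+A)=A$ and the continuity of $x\mapsto 2x$, so that $A-A=\tfrac12\bigl((A-A)-(A-A)\bigr)$.)

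The genuine gap is in $(5)\Ra(1)$, and it is exactly the obstacle you flag yourself: your route through the Steinhaus-at-$\theta$ property and a winning strategy for summing an \emph{infinite} series back into $A$ is not available for a general analytic mid-convex set, and neither of your proposed fixes closes it. The reduction of the analytic case to the $G_\delta$ case cannot work, because the paper's Theorem~\ref{t:convex-Gdelta} shows that $G_\delta$ mid-convex sets satisfy strictly stronger equivalences (null-finiteness, Haar-countability), which fail in general for analytic ones; a hull argument transferring the problem to a $G_\delta$ set would therefore prove too much. What you are missing is that condition $(5)$ is a \emph{finite} condition: to refute null-$n$ for a given $n$ and a given null-sequence $(x_k)_{k\in\w}$, one only needs a single shift $s$ producing at least $n+1$ hits, and for this a finite sum suffices. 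Concretely, after translating so that $\theta\in A$ and passing to a subsequence with $2^kx_k\in A-A$, write $2^kx_k=a_k^+-a_k^-$ with $a_k^\pm\in A$, pick $b_k^-\in\frac1{2^k}a_k^-\subset A$, and set $s:=\sum_{k=1}^{n}b_k^-$. This is a finite sum, so no convergence, no completeness of $A$, no winning strategy and no Steinhaus property are needed; Lemma~\ref{l:sum}$(1_n)$ then shows directly that $x_i+s\in\sum_{k=1}^{n}\frac1{2^k}a_k^{\e_k}\subset A$ for each $i\le n$ (with $\e_i=+$ and $\e_k=-$ otherwise). The infinite-sum machinery you are trying to import from Theorem~\ref{t:additive2} is genuinely needed only for the null-finite and Haar-countable conditions, which is precisely why they do not appear in this theorem but only in the later ones with added winning/closedness/$G_\delta$ hypotheses. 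A further small error: you call $H:=A-A$ an open \emph{subgroup}; for additive sets $A-A$ is a subgroup, but for mid-convex sets it need not be closed under addition, so this part of your setup does not transfer.
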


\begin{proof} We shall prove the implications $(1)\Ra(2)\Ra(3)\Ra(5)\Ra(1)\Leftrightarrow(4)$.
\smallskip

$(1)\Ra(2)$ If $A-A$ is not meager, then by the Piccard-Pettis Theorem \ref{c:PP} applied to the analytic set $A-A$, the set $B:=(A-A)-(A-A)=(A+A)-(A+A)$ is a neighborhood of zero. By the continuity of the homomorphism $\mathit 2:X\to X$, $\mathit 2:x\mapsto x+x$, the set $\frac12B$ is a neighborhood of zero. Taking into account that the set $A$ is mid-convex, we conclude that $\frac12(A+A)=A$ and hence the set $A-A=\frac12(A+A)-\frac12(A+A)=\frac12B$ is a neighborhood of zero.
\smallskip

The implication $(2)\Ra(3)$ follows from Theorem~\ref{t:-GHI} and $(3)\Ra(5)$ is trivial. The equivalence $(1)\Leftrightarrow(4)$ was proved in Theorem~\ref{t:thin}.
\smallskip

Now we prove the implication $(5)\Ra(1)$. Assuming that $A-A$ is a neighborhood of zero, we shall prove that $A$ is not  null-$n$ for every $n\in\IN$.
Given any $n\in\IN$ and null-sequence $(x_k)_{k\in\w}$ we should find an element $s\in X$ such that the set $\{k\in\w:x_k+s\in A\}$ contains at least $n$ numbers. 

Replacing $A$ by a suitable shift $A-a$, we can assume that $\theta\in A$. 
Taking into account that $A-A$ is a neighborhood of $\theta=\lim_{k\to\infty}x_k$ and replacing $(x_k)_{k\in\w}$ by a suitable subsequence, we can assume that $2^kx_k\in A-A$ for all $k\in\w$. So, for every $k\in\w$, we can find elements $a_k^+,a_k^-\in A$ such that $2^kx_k=a_k^+-a_k^-$. By the mid-convexity of $A\ni \theta$, there exists a point $b_k^-\in A$ such that $2^kb_k^-=a_k^-$ and hence $b_k^-\in\frac1{2^k}a_k^-$. 

Consider the element $s:=\sum_{k=1}^nb_k^-\subset\sum_{k=1}^n\frac1{2^k}a_k^-$ of the group $X$. We claim that  $x_i+s\in A$ for every $i\in\{1,\dots,n\}$. Indeed, by Lemma~\ref{l:sum},$$x_i+s\in \tfrac1{2^i}(a_i^+-a_i^-)+\sum_{k=1}^n\tfrac1{2^k}a_k^-=\sum_{k=1}^m\tfrac1{2^k}a_k^{\e_k}\subset \tfrac1{2^m}\theta+\sum_{k=1}^m\tfrac1{2^k}a_k^{\e_k}\subset A$$where $\e_i=+$ and $\e_k=-$ for $k\ne i$. 
\end{proof}

To establish more refined properties of mid-convex sets, we shall consider a convex modification of the {\sf Addition Game}, called the {\sf Convexity Game}. 

This game is played by two players, I and II, on a mid-convex subset $A$ of a topological group $X$ such that $\theta\in A$. The player I starts the game selecting a neighborhood $U_1\subset X$ of $\theta$ in $X$ and the player II responds selecting a point $a_1\in U_1\cap\frac12A$. At the $n$th inning the player $I$ chooses a neighborhood $U_n\subset X$ of $\theta$ in $X$ and the player II responds selecting a point $a_n\in U_n\cap\frac1{2^n}A$. At the end of the game the player I declared the winner if the series $\sum_{n=1}^\infty a_n$ converges to a point of the set $A$. In the opposite case, II wins the {\sf Convexity Game}.  

\begin{definition} A mid-convex set $A$ of a topological group $X$ is called \index{winning mid-convex subset}{\em winning} at a point $a\in A$ if the player I has a winning strategy in the {\sf Convexity Game} on $A-a$.
\end{definition}

\begin{proposition}\label{p:Polish-convgame} Each Polish $mid$-convex set $A$ in a Polish group $X$ is winning at each point $a\in A$.
\end{proposition}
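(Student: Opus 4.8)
The strategy is to adapt the proof of Proposition~\ref{p:Polish-winad} to the convex setting. Fix a point $a \in A$; by Lemma~\ref{l:sum}(4) the set $A - a$ is mid-convex, and since translation by $a$ is a homeomorphism and $A$ is Polish, the set $B := A - a$ is a Polish mid-convex subset of $X$ containing $\theta$. It therefore suffices to describe a winning strategy for player~I in the {\sf Convexity Game} on $B$. Fix a complete metric $\rho \le 1$ generating the topology of the Polish space $B$.

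The winning strategy $\yen$ will be a function on the set $B^{<\w} = \bigcup_{n \in \w} B^n$ of finite sequences with values in the set $\tau_\theta$ of neighborhoods of $\theta$ in $X$. Given a finite sequence $(a_1, \dots, a_n)$ of points played so far (where $a_k \in \frac1{2^k}B$), consider the partial sum $s_n := \sum_{k=1}^n a_k$. The point is that the partial sums of a legal play stay inside $B$: by Lemma~\ref{l:sum}$(1_n)$ applied in $X$ with $a_0 = \theta \in B$, for any $b_1, \dots, b_n \in B$ one has $\frac1{2^n}\theta + \sum_{k=1}^n \frac1{2^k} b_k \subset B$, and since $a_k \in \frac1{2^k}B$ means $a_k \in \frac1{2^k}b_k$ for some $b_k \in B$, we get $s_n \in \frac1{2^n}\theta + \sum_{k=1}^n \frac1{2^k} b_k \subset B$. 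So $s_n \in B$. Now, using continuity of the addition in $X$, choose a neighborhood $V \in \tau_\theta$ small enough that $B \cap (s_n + V) \subset \{x \in B : \rho(s_n, x) < \frac1{2^n}\}$, and set $\yen(a_1, \dots, a_n) := V$.

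Finally I must verify that $\yen$ is winning. Let $(a_n)_{n \ge 1}$ be any legal play with $a_n \in \yen(a_1, \dots, a_{n-1}) \cap \frac1{2^n}B$ for every $n$. As observed above each partial sum $s_n = \sum_{k=1}^n a_k$ lies in $B$, and since $a_n \in \yen(a_1, \dots, a_{n-1})$ we have $s_n = s_{n-1} + a_n \in B \cap (s_{n-1} + \yen(a_1,\dots,a_{n-1})) \subset \{x \in B : \rho(s_{n-1}, x) < \frac1{2^{n-1}}\}$, hence $\rho(s_{n-1}, s_n) < \frac1{2^{n-1}}$. Thus $(s_n)_{n\ge1}$ is Cauchy in the complete metric space $(B, \rho)$ and converges to some point $b \in B = A - a$; equivalently the series $\sum_{n=1}^\infty a_n$ converges to $b \in A - a$, so player~I wins the {\sf Convexity Game} on $A - a$. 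This shows $A$ is winning at $a$, and since $a \in A$ was arbitrary, the proof is complete.

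\textbf{Anticipated obstacle.} The only subtle point is bookkeeping with the subgroups $\frac{\theta}{2^n}$: a legal move $a_k \in \frac1{2^k}B$ is only determined up to an element of $\frac{\theta}{2^k}$, so the choice of the witness $b_k \in B$ with $2^k b_k = 2^k a_k$ is not canonical. This is harmless because Lemma~\ref{l:sum}$(1_n)$ is stated for arbitrary such witnesses and (via Lemma~\ref{l:sum}(2)--(3)) the conclusion $s_n \in B$ is independent of the choices; but one should state this carefully rather than silently identify $\frac1{2^k}B$ with a single coset representative. Beyond that the argument is a routine transcription of Proposition~\ref{p:Polish-winad}.
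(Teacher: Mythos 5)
Your proof is correct and follows essentially the same route as the paper's: translate $A$ to $A-a$, use Lemma~\ref{l:sum} to see that the partial sums of any legal play remain in the mid-convex set, and pick each neighborhood so that consecutive partial sums are $\rho$-close in a complete metric on the Polish space, forcing Cauchy convergence to a point of $A-a$. The coset bookkeeping you flag in the last paragraph is handled in the paper exactly as you suggest, via the inclusion $\sum_{k=1}^n\frac1{2^k}A\subset\frac1{2^n}\theta+\sum_{k=1}^n\frac1{2^k}A\subset A$.
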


\begin{proof} The space $A$, being Polish, admits a complete metric $\rho$ generating the topology of $A$. 

Now we describe a winning strategy $\yen$ of the player I on the mid-convex set $A-a$ where $a\in A$ is an arbitrary point. Replacing $A$ by its shift $A-a$, we can assume that $a=\theta$. For every $n\in\w$ and a sequence $(a_1,\dots,a_{n-1})\in A^n$ the player I considers the sum $s_n=\theta+\sum_{i=1}^{n-1}a_i$ and if $s_n\notin A$, then I puts $\yen(a_1,\dots,a_n):=X$. If $s_n\in A$, then the player I chooses a neighborhood $V\subset X$ of $\theta$ such that $A\cap (s_n+V)\subset \{a\in A:\rho(a,s_n)<\frac1{2^n}\}$ and put $\yen(a_1,\dots,a_{n-1}):=V$.

Let us show that the strategy $\yen$ is winning in the {\sf Convexity game} on $A=A-\theta$. Given any sequence $(a_n)_{n\in\IN}\in A^\IN$ we should prove that the series $\sum_{n=1}^\infty a_n$ converges to a point of $A$ if $a_n\in \yen(a_1,\dots,a_{n-1})\cap\frac1{2^n}A$ for every $n\in\IN$.

Lemma~\ref{l:sum} implies that for every $n\in\IN$ the element $s_n:=\sum_{k=1}^n a_k\in\sum_{k=1}^n\frac1{2^k}A\subset \frac1{2^n}\theta+\sum_{k=1}^n\frac1{2^k}A\subset A$. In this case the choice of $\yen(a_1,\dots,a_{n-1})\ni a_n$ ensures that $\rho(s_n,s_{n+1})=\rho(s_n,s_n+a_{n+1})<\frac1{2^n}$, which implies that the sequence $(s_n)_{n=1}^\infty$ is Cauchy in the complete metric space $(A,\rho)$ and hence converges to some element $a\in A$, equal to the sum of the series $\sum_{n=1}^\infty a_n$.
\end{proof}

We recall that a subset $A$ of a topological group $X$ is {\em Steinhaus} at a point $x\in X$ if for each neighborhood $U\subset X$ of $x$, the set $(U\cap A)-(U\cap A)$ is a neighborhood of $\theta$ in $X$.

The following lemma is the most difficult technical result of this section.

\begin{lemma}\label{l:main-convex} Assume that a mid-convex set $A$ in a Polish group $X$ is both winning and Steinhaus at some point $a\in A$. Then $A$ is not null-finite and not Haar-scattered in $X$. Moreover, if the subgroup $\frac\theta{2^\IN}$ is closed in $X$, then $A$ is not Haar-countable.
\end{lemma}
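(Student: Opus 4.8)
The strategy is to adapt the two-claim argument used for Theorem~\ref{t:additive2} (the Steinhaus $+$ winning-additive case) to the mid-convex setting, replacing the additive decompositions $a = mu$ of points of $A$ by the dyadic decompositions supplied by Lemma~\ref{l:sum}: every point of $A$ can be approached, within any prescribed neighborhood of $\theta$, by a point of $\frac1{2^k}A$, and finite dyadic sums $\frac1{2^n}a_0 + \sum_{k=1}^n \frac1{2^k}a_k$ with $a_i\in A$ land back in $A$. Throughout we replace $A$ by $A-a$ so that $a=\theta$; this keeps $A$ mid-convex (Lemma~\ref{l:sum}(4)), Steinhaus at $\theta$, and winning, i.e. player~I has a winning strategy $\yen:A^{<\w}\to\tau_\theta$ in the {\sf Convexity Game} such that $\sum_{n=1}^\infty a_n$ converges to a point of $A$ whenever $a_n\in\yen(a_1,\dots,a_{n-1})\cap\frac1{2^n}A$ for all $n$. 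We also fix a complete invariant metric $\rho\le 1$ on $X$ with $\|x\|=\rho(x,\theta)$.

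\emph{Not null-finite.} Given a null-sequence $(x_k)_{k\in\w}$ in $X$, I build, by induction on $k\ge1$, neighborhoods $U_k\ni\theta$, indices $n_k$, and points $b_k^+, b_k^-\in A\cap U_k$ so that: (1) $U_k\subset B(\theta;\tfrac1{2^k})\cap\bigcap\{\yen(c_1,\dots,c_{k-1}): c_i\in\{b_i^+,b_i^-\}\}\cap\frac1{2^k}\,W_{k}$ where $W_k$ is a suitable neighborhood of $\theta$ that will be shrunk below; and (2) $n_k > n_{k-1}$ with $2^k x_{n_k} = b_k^+ - b_k^-$ modulo the correction coming from Lemma~\ref{l:sum}(1). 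Concretely: since $A$ is Steinhaus at $\theta$, the set $(U_k'\cap A)-(U_k'\cap A)$ is a neighborhood of $\theta$ for any neighborhood $U_k'$, so it contains some $2^k x_{n_k}$ with $n_k$ large; write $2^kx_{n_k}=a_k^+-a_k^-$ with $a_k^\pm\in A\cap U_k'$, and use mid-convexity of $A\ni\theta$ to pick $b_k^\pm\in\frac1{2^k}a_k^\pm\cap A$; the requirement $b_k^\pm\in U_k$ is arranged by choosing $U_k'$ small enough (continuity of $\mathit{2^k}:x\mapsto 2^kx$). Then $b_k^-\in\yen(b_1^-,\dots,b_{k-1}^-)$, so $\sum_{k\ge1}b_k^-$ converges to some $s\in A$. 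For each $i$, Lemma~\ref{l:sum} gives
\[
x_{n_i}+s \in \tfrac1{2^i}a_i^+ - \tfrac1{2^i}a_i^- + \sum_{k\ge1}\tfrac1{2^k}a_k^- = \sum_{k\ge1}\tfrac1{2^k}a_k^{\varepsilon_k},
\]
with $\varepsilon_i=+$ and $\varepsilon_k=-$ otherwise, and since $b_k^{\varepsilon_k}\in\yen(b_1^{\varepsilon_1},\dots,b_{k-1}^{\varepsilon_{k-1}})$ the series converges into $A$. Hence $\{n:x_n+s\in A\}\supset\{n_i\}_{i\ge1}$ is infinite, so $A$ is not null-finite.

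\emph{Not Haar-scattered (and, under the extra hypothesis, not Haar-countable).} For scatteredness I copy the tree-indexed construction from Claim~\ref{cl:null-fin} of Theorem~\ref{t:additive2}: given a continuous $f:2^\w\to X$, reduce to the case where every fiber $f^{-1}(x)$ is countable so $K=f(2^\w)$ is crowded, shift so $\theta\in K$, index the binary tree $2^{<\w}=\{\alpha_k\}_{k\in\IN}$ and build neighborhoods $U_k,V_k=(U_k\cap A)-(U_k\cap A)$ and points $x_k\in K\cap(x_{\downarrow k}+V_k)$ branching at nodes outside $2^{<\w}_0$, together with dyadic witnesses $b_k^\pm\in\frac1{2^k}A\cap U_k$ with $x_k-x_{\downarrow k}=a_k^+-a_k^-$ and $2^k b_k^\pm=a_k^\pm$, the $U_k$'s subordinated to the winning strategy $\yen$ as above. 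Setting $s=\sum_k b_k^-\in A$, the same telescoping computation (now along a branch $\beta\in2^\w$, using $\|x_{k_i}-x_{k_{i-1}}\|<2^{-k_i+1}$ to get Cauchyness) shows $f(\beta)=x_\beta+s\in A$ for all $\beta$, so the crowded Cantor set $f(2^\w)$ lies in $A\cap(K+s)$; thus $f^{-1}(A+(s))$ contains a copy of $2^\w$ and is not scattered. For Haar-countability one must additionally control the cardinality: this is exactly where the hypothesis that $H:=\frac\theta{2^\IN}$ is closed enters. Since by Lemma~\ref{l:sum}(3) we have $A=A+H$, passing to the quotient $X/H$ (Polish, because $H$ is closed) turns the $b_k^\pm$ into genuinely distinct representatives and makes the map $\beta\mapsto x_\beta+s$ yield uncountably many points of $A+s$; equivalently, one argues directly that the continuum-sized set $\{x_\beta+s:\beta\in2^\w\}\subset A\cap(K+s)$ is uncountable modulo $H$, hence $f^{-1}(A+s)$ is uncountable, so $A$ is not Haar-countable.

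\textbf{Main obstacle.} The delicate point is bookkeeping the dependence of the neighborhoods $U_k$ on \emph{all} sign-patterns simultaneously: the winning strategy $\yen$ must be fed the not-yet-chosen signs $\varepsilon_k$, so $U_k$ has to be chosen inside $\bigcap_{(\varepsilon_1,\dots,\varepsilon_{k-1})}\yen(b_1^{\varepsilon_1},\dots,b_{k-1}^{\varepsilon_{k-1}})$ (a finite intersection, hence still a neighborhood) \emph{before} the points $b_k^\pm$ are produced — yet $b_k^\pm$ must lie in $U_k$. This circularity is broken, as in Theorem~\ref{t:additive2}, by first choosing $U_k$, then applying the Steinhaus property inside a further-shrunk $U_k'\subset U_k$ (using continuity of $x\mapsto 2^kx$ to guarantee the $\frac1{2^k}$-preimages stay in $U_k$). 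The second subtlety is the last sentence: ensuring genuine uncountability rather than merely non-scatteredness, which forces the closedness of $\frac\theta{2^\IN}$ so that the quotient is Polish and the branch map is effectively injective.
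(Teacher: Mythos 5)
Your overall architecture is the same as the paper's (adapt the two claims of Theorem~\ref{t:additive2}, replacing additive decompositions by dyadic ones via Lemma~\ref{l:sum}), but there are two genuine gaps.

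First, the step where you produce the halved witnesses $b_k^{\pm}\in\frac1{2^k}a_k^{\pm}$ \emph{inside the prescribed neighborhood} $U_k$ is not justified by ``continuity of $\mathit 2^k$'', and cannot be. Continuity controls preimages of open sets, not the location of preimages of individual points: the fiber $\frac1{2^k}a_k^{\pm}$ is a coset of the kernel $\frac\theta{2^k}$, and no shrinking of $U_k'$ forces some element of that coset into $U_k$. What you actually need is $U_k'\subset \mathit 2^k(U_k)$, i.e.\ \emph{openness} of the doubling homomorphism. This is why the paper first replaces $X$ by the open $2$-divisible subgroup $\langle A\rangle$ (open because $A-A$ is a neighborhood of $\theta$) and invokes the Open Mapping Principle (Corollary~\ref{c:OMP}) to conclude that $\mathit 2^k:X\to X$ is open; only then is $2^kU_k$ a neighborhood of $\theta$, so the Steinhaus property applied to $(A\cap 2^kU_k)-(A\cap 2^kU_k)$ captures $2^kx_{n_k}$ while guaranteeing preimages in $U_k\cap\frac1{2^k}A$. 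Your proposal omits this reduction entirely, and without it the inductive construction does not go through.

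Second, you misplace the role of the hypothesis that $\frac\theta{2^\IN}$ is closed. Uncountability of the closure $F$ of $\{x_k+s\}_{k\in\IN}$ in $K+s$ is automatic (a crowded compact set has size continuum); no quotient by $H$ or injectivity of $\beta\mapsto x_\beta$ is needed. The hypothesis is needed to show that $F\subset A$: along an infinite branch $\beta$ the telescoping sum accumulates an error $z_\infty=\sum_i z_i$ with $z_i\in\frac\theta{2^{k_i}}$, whose partial sums lie in $\frac\theta{2^\IN}$ but whose limit lies there only if that subgroup is closed; then $x_\beta+s\in z_\infty+A\subset\frac\theta{2^\IN}+A=A$. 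Correspondingly, your non-scattered argument overclaims: you assert $f(\beta)=x_\beta+s\in A$ for \emph{all} infinite branches $\beta$ without the closedness hypothesis, which is exactly the step that fails in general. The paper avoids this by using, for non-scatteredness, only the countable crowded set $\{x_k+s\}_{k\in\IN}$ indexed by the finite nodes, for which each membership $x_k+s\in A$ requires only closedness of the individual kernels $\frac\theta{2^k}$ (automatic, as kernels of continuous homomorphisms), reserving the passage to infinite branches for the Haar-countable statement.
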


\begin{proof} Replacing $A$ by its shift $A-a$, we can assume that $A$ is winning and Steinhaus at $\theta$. 

Observe that the subgroup $\langle A\rangle$, generated by the mid-convex set $A\ni \theta$, is open and 2-divisible. So, we can replace $X$ by the open subgroup $\langle A\rangle$ and assume that the Polish group $X$ is 2-divisible. Then the continuous homomorphism $\mathit 2:X\to X$, $\mathit 2:x\mapsto x+x$, is surjective and hence open (by the Open Mapping Principle \ref{c:OMP}).

Fix a complete invariant metric $\rho\le 1$, generating the topology  of the Polish group $X$. For an element $x\in X$ put $\|x\|:=\rho(x,0)$, and for $\e>0$ let $B(x;\e):=\{y\in X:\|x-y\|<\e\}$ be the open $\e$-ball around $x$ in the metric space $(X,\rho)$. Let $\tau_\theta$ denote the family of open neighborhoods of $\theta$ in $X$.

Since the set $A$ is winning at $\theta$, the player I has a winning strategy $\yen$ in the {\sf Convexity Game} on $A$. This strategy is a function $\yen:A^{<\w}\to\tau_\theta$ assinging to each finite sequence $(a_1,\dots,a_{n})\in A^{<\w}$ an open neighborhood $\yen(a_1,\dots,a_n)$ of $\theta$ in $X$ such that for any sequence $(a_n)_{n\in\IN}\in A^\IN$ the series $\sum_{n=1}^\infty a_n$ converges to a point of $A$ provided $a_n\in\yen(a_1,\dots,a_{n-1})\cap\frac1{2^n}A$ for all $n\in\IN$.

In the following two claims we shall prove that the set $A$ in not null-finite and not Haar-scattered in $X$.

\begin{claim}\label{cl:null-fin} The set $A$ is not null-finite in $X$.
\end{claim}

\begin{proof} Given a null sequence $(x_n)_{n\in\w}$ in $X$, we should find a point $s\in X$ such that the set $\{n\in\w:x_n+s\in A\}$ is infinite.

By induction for every $k\in\w$ we shall construct two neighborhoods $U_k$ and $V_k$ of $\theta$ in $X$, a number $n_k\in\IN$, and points $b_k^+,b_k^-,a_k^+,a_k^-\in A$ such that the following conditions are satisfied:
\begin{enumerate}
\item[\textup{(1)}] $\theta\in U_k\subset B(\theta;\frac1{2^k})\cap\yen(a_1^{\e_1},\dots,a_{k-1}^{\e_{k-1}})$ for any signs $\e_1,\dots,\e_{k-1}\in\{+,-\}$;
\item[\textup{(2)}] $V_k:=(A\cap 2^kU_k)-(A\cap 2^kU_k)$;
\item[\textup{(3)}] $2^kx_{n_k}\in V_k$ and $n_k>n_{k-1}$;
\item[\textup{(4)}]  $b_k^+,b_k^-\in A\cap 2^kU_k$ and $2^kx_{n_k}\in b_k^+-b_k^-$;
\item[\textup{(5)}] $a_k^-,a_k^+\in U_k\cap\frac1{2^k}A\subset A$ and $2^ka_k^+=b_k^+$, $2^ka_k^-=b_k^-$.
\end{enumerate}

We start the inductive construction letting $n_0=1$.
Assume that for some $k\in\IN$, the numbers  $n_0,\dots,n_{k-1}$, and points $b_1^+,\dots,b_{k-1}^+$ and $a_1^-,\dots,a_{k-1}^-$ have been constructed.

Choose a neighborhood $U_k$ satisfying the condition (1). By the oppenness of the homomorphism $\mathit 2^k:X\to X$, the set $\mathit 2^k U_k$ is a neighborhood of $\theta$ in $X$. Since  $A$ is Steinhaus at $\theta$, the set
$V_k:=(A\cap 2^kU_k)-(A\cap 2^kU_k)$  is a neighborhood of $\theta$ in $X$. Since $\lim_{n\to\infty}2^kx_n=\theta\in V_k$, we can find a number $n_k>n_{k-1}$ such that $2^kx_{n_k}\in V_k$. By the definition of the set $V_k\ni 2^kx_{n_k}$, there are two points $b_k^+,b_k^-\in A\cap 2^kU_k$ such that $2^kx_{n_k}\in b_k^+-b_k^-$. Since $b_k^+,b_k^-\in A\cap 2^kU_k$, there are two points $a_k^+,a_k^-\in U_k\cap\frac1{2^k}A\subset A$ such that $2^ka_k^+=b_k^+$ and $2^ka_k^-=b_k^-$. Then $2^kx_{n_k}= b_k^+-b_k^-=2^k(a_k^+-a_k^-)$ and hence $x_{n_k}\in a_k^+-a_k^-+\frac\theta{2^k}$. This completes the inductive step.

After completing the inductive construction, consider the series $\sum_{k=1}^\infty a_k^-$ and observe that it converges to some element $s\in X$ as $\sum_{k=1}^\infty\|a_k^-\|\le\sum_{k=1}^\infty\frac1{2^k}=1$.

We claim that $x_{n_i}+s\in A$ for every $i\in\IN$. Indeed,
$$x_{n_i}+s\in a_i^+-a_i^-+\tfrac\theta{2^i}+\sum_{k=1}^\infty a_k^-=\tfrac\theta{2^i}+\sum_{k=1}^\infty a_k^{\e_k}$$where $\e_i=+$ and $\e_k=-$ for all $k\in\IN\setminus\{i\}$. The inductive conditions (1) and (5) guarantee that the series $\sum_{k=1}^\infty a_k^{\e_k}$ converges to some point of $A$. Then $x_{n_i}+s\in \frac\theta{2^i}+\sum_{k=1}^\infty a_k^{\e_k}\subset\frac\theta{2^i}+A=A$.

 Now we see that the set $\{n\in\w:x_n+s\in A\}\supset\{n_i\}_{i=1}^\infty$ is infinite, witnessing that the set $A$ is not null-finite.
\end{proof}

\begin{claim}\label{cl:null-fin} The set $A$ is not Haar-scattered in $X$. If the subgroup $\frac\theta{2^\IN}$ is closed in $X$ or $A$ is Polish, then $A$ is not Haar-countable.
\end{claim}

\begin{proof} Given any continuous map $f:2^\w\to X$ we should find an element $s\in X$ such that $f^{-1}(A+s)$ is not scattered (or uncountable). If for some $s\in X$ the fiber $f^{-1}(s)$ is uncountable, then $f^{-1}(s)$ is not scattered and the preimage $f^{-1}(A+s)\supset f^{-1}(s)$ is not scattered, too. So, we assume that for each $x\in X$ the set $f^{-1}(x)$ is countable and hence $K=f(2^\w)$ is a compact set without isolated points in $X$. 
Replacing $K$ by a suitable shift, we can assume that $\theta\in K$.

Write the set $2^{<\w}:=\bigcup_{k\in\w}2^k$ of finite binary sequences as $2^{<\w}=\{\alpha_i\}_{i\in\IN}$ so that $2^n=\{\alpha_i:2^n\le i<2^{n+1}\}$ for all $n\in\w$. For a number $k<n$ let $\alpha{\restriction}k:=(\alpha_0,\dots,\alpha_{k-1})$ be the restriction of $\alpha$.

 For every $k\ge 2$ let ${\downarrow}k$ be the unique number such that $\alpha_{\downarrow k}=\alpha_k{\restriction}(|\alpha_k|-1)$. So, ${\downarrow}k$ is the predecessor of $k$ in the tree structure on $\IN$, inherited from the tree $2^{<\w}=\{\alpha_i\}_{i\in\IN}$. 

For every $k\in\w$ we shall inductively construct three neighborhoods $U_k,V_k,W_k$ of $\theta$, and points $x_i\in K$, $a_k^+,a_k^-\in U_k$ such that the following conditions are satisfied:
\begin{enumerate}
\item[\textup{(1)}] $\theta\in U_k\subset B(\theta;\frac1{2^k})\cap\yen(a_1^{\e_1},\dots,a_{k-1}^{\e_{k-1}})$ for any signs $\e_1,\dots,\e_{k-1}\in\{+,-\}$;
\item[\textup{(2)}] $V_k:=(A\cap 2^kU_k)-(A\cap 2^kU_k)$;
\item[\textup{(3)}] $W_k:=B(\theta;\frac1{2^k})\cap\frac1{2^k}V_k$;
\item[\textup{(4)}] $x_1=\theta$ and $x_k\in K\cap (x_{{\downarrow}k}+W_k)\setminus\{x_{{\downarrow}k}\}$ if $k>1$;
\item[\textup{(5)}]   $2^k(x_{k}-x_{{\downarrow}k})=b_k^+-b_k^-$ and $b_k^+,b_k^-\in A\cap 2^kU_k$;
\item[\textup{(6)}] $a_k^+,a_k^-\in U_k\cap\frac1{2^k}A\subset A$ and  $2^ka_k^+=b_k^+$, $2^ka^-_k=b_k^-$;
\item[\textup{(7)}] $x_k-x_{{\downarrow}k}\in a_k^+-a_k^-+\frac\theta{2^k}$. 
\end{enumerate}

We start the inductive construction letting $\{x_\alpha\}_{\alpha\in 2^0}=\{\theta\}$. Assume that for some $k\in\IN$ and all $i<k$  neighborhoods $U_i,V_i,W_i$ and points $x_i,b_i^+,b_i^-,a_i^+,a_i^-$ have been constructed so that the conditions (1)--(7) are satisfied.

Choose a  neighborhood $U_k$ satisfying the condition (1). The openness of the homomorphism $\mathit 2^{k}:X\to X$, $\mathit 2^{k}:x\mapsto 2^{k}x$, implies that $2^kU_k$ is a neighborhood of $\theta$ in $X$. Since $A$ is Steinhaus at $\theta$, the set $V_k:=(A\cap 2^kU_k)-(A\cap 2^kU_k)$ is a neighborhood of $\theta$ in $X$. 
By the continuity of the homomorphism $\mathit 2^{k}:X\to X$, the set $W_k:=B(\theta;\frac1{2^k})\cap\frac1{2^k}V_k$ is a neighborhood of $\theta$ in $X$.

Taking into account that the set $K\ni \theta$ has no isolated points, we can choose a point $x_k\in K$ satisfying the condition (4). Since $x_k-x_{{\downarrow}k}\in\frac1{2^k}V_k$, there are points $b_k^+,b_k^-\in A\cap 2^kU_k$ such that 
$2^k(x_k-x_{{\downarrow}k})=b_k^+-b_k^-$ and points $a_k^+,a_k^-\in U_k$ such that $2^ka_k^+=b_k^+$ and $2^ka_k^-=b_k^-$. It follows that $a_k^{+}\in\frac1{2^k}b_k^+\in \frac1{2^k}A\subset A$ and by analogy $a_k^-\in \frac1{2^k}A\subset A$.  
Observe that $2^k(x_k-x_{{\downarrow}k})=b_k^+-b_k^-=2^k(a_k^+-a_k^-)$ and hence $x_k-x_{{\downarrow}k}=a_k^+-a_k^-+\frac\theta{2^k}$, so the condition (7) is satisfied. This completes the inductive step.
\smallskip

After completing the inductive construction, consider the series $\sum_{k=1}^\infty a_k^-$. The choice of the winning strategy $\yen$ and the conditions (1) and (6) guarantee that the series $\sum_{k=1}^\infty a_k^-$ converges to a point $s\in A$. So, $s=\lim_{m\to\infty}s_m$ where $s_m=\sum_{k=1}^ma_k^-$.

We claim that $x_k+s\in A$ for every $k\in\IN$. Given any $k\in\IN$, find a number $n\in\w$ with $\alpha_k\in 2^n$ and for every $i\le n$ find a unique number $k_i\in\IN$ such that $\alpha_{k_i}=\alpha_k{\restriction}i$. It follows that $k_n=k$, $k_0=1$, and $k_i={\downarrow}k_{i+1}$ for $0\le i<n$. Observe that for every $m\ge k_i$ we get 
\begin{multline*}
s_m+x_k=s_m+\sum_{i=1}^n(x_{k_i}-x_{k_{i-1}})=s_m+\sum_{i=1}^n(x_{k_i}-x_{{\downarrow}k_i})\\
\in\sum_{j=1}^m a_j^-+\sum_{i=1}^n(a_{k_i}^+-a_{k_i}^-+\tfrac\theta{2^{k_i}})\subset\tfrac\theta{2^k}+\sum_{j=1}^m a_j^{\e_j}
\end{multline*}
where $$
\e_j=
\begin{cases}+&\mbox{if $j=k_i$ for some $1\le i\le n$};\\
-&\mbox{otherwise}.
\end{cases}
$$
It follows that $s_m+x_k=z_m+\sum_{j=1}^ma_j^{\e_j}$ for some $z_m\in\frac\theta{2^k}$. 

The inductive conditions  (1) and (6) guarantee that the series 
$\sum_{j=1}^\infty a_j^{\e_j}$ converges to a point $s'$ of the set $A$. Then the sequence $(z_m)_{m=k}^\infty$ converges to $s+x_k-s'$. Since the subgroup $\frac\theta{2^k}$ is closed in $X$, the limit point $s+x_k-s'=\lim_{m\to\infty}z_m$ belongs to $\frac\theta{2^k}$. Then $s+x_k=(s+x_k-s')+s'\in\frac\theta{2^k}+A=A$ and we are done.

The inductive conditions (3) and (4) ensure that the set $\{x_k+s\}_{k=1}^\infty\subset (K+s)\cap A$ has no isolated points.  So, $A$ is not Haar-scattered. If $A$ is Polish, the set $A$ is not Haar-countable by Proposition~\ref{p:scattered}.

Now assuming that the subgroup $\frac\theta{2^\IN}$ is closed in $X$, we shall show that $A$ contains the closure $F$ of the set $\{x_k+s\}_{k\in \IN}$ in $K+s$.
This closure has no isolated points and hence is a compact uncountable set (by the Baire Theorem).

Consider the map $f:2^\w\to F$ assigning to each binary sequence $\beta\in 2^\w$ the limit of the sequence $(s+x_{k_i})_{i\in\w}$ where for every $i\in\IN$, $k_i\in\IN$ is a unique number such that $\alpha_{k_i}=\beta{\restriction}i$. Observe that for every $i>1$ we have $x_{{\downarrow}k_i}=x_{k_{i-1}}$  and hence $\|x_{k_i}-x_{k_{i-1}}\|<\frac1{2^{k_i}}$ by the inductive assumptions (3) and (4). This implies that the sequence $(x_{k_i})_{i=1}^\infty$ is Cauchy in $X$ and hence converges to some point $x_\beta\in X$ such that 
\begin{equation}\label{eq:convex2}
\|x_\beta-x_{k_i}\|\le\sum_{j=i+1}^\infty\frac{1}{2^{k_j}}\le \frac2{2^{k_{i+1}}}
\end{equation}
for all $i\in\IN$. 

By the inductive condition (7), for every $i\in\IN$ there exists a point $z_i\in\frac\theta{2^{k_i}}\subset \frac\theta{2^\IN}$ such that $x_{k_i}-x_{k_{i-1}}=x_{k_i}-x_{{\downarrow}k_i}=a_{k_i}^+-a_{k_i}^-+z_i$. Then 
$$\|z_i\|\le \|x_{k_i}-x_{{\downarrow}k_i}\|+\|a_{k_i}^+\|+\|a-{k_i}^-\|<\frac1{2^{k_i}}+\frac2{2^{k_i}}$$and hence the series $\sum_{i=1}^\infty z_i$ converges to some point $z_\infty$ of the closed subgroup $\frac\theta{2^\IN}$. 

It follows that $x_\beta=\sum_{i=1}^\infty (x_{k_i}-x_{k_{i-1}})= \sum_{i=1}^\infty (x_{k_i}-x_{{\downarrow}k_i})=\sum_{i=1}^\infty (a_{k_i}^+-a_{k_i}^-+z_i)=z_\infty+\sum_{i=1}^\infty(a_{k_i}^+-a_{k_i}^-)$.
Then $$f(\beta)=x_\beta+s=z_\infty+\sum_{i=1}^\infty(a_{k_i}^+-a_{k_i}^-)+\sum_{k=1}^\infty a_k^-=z_\infty+\sum_{k=1}^\infty a_k^{\e_k}$$ where 
$$\e_k=\begin{cases}
+&\mbox{if $k=k_i$ for some $i\in\IN$};\\
-&\mbox{otherwise}.
\end{cases}
$$
The inductive conditions (1) and (6) imply that the series $\sum_{k=1}^\infty a_k^{\e_k}$ converges to some point of $A$. Then, $f(\beta)=x_\beta+s=z_\infty+\sum_{k=1}^\infty a_k^{\e_k}\in \frac\theta{2^\IN}+A=A$.

The inequality \eqref{eq:convex2} implies that the map $f:2^\w\to F$, $f:\beta\mapsto x_\beta+s$, is continuous. Then $f(2^\w)=F$ by the compactness of $f(2^\w)$ and density of $f(2^\w)$ in $F$. Finally we conclude that $F=f(2^\w)\subset A\cap(K+s)$, witnessing that $A$ is not Haar-countable.
 \end{proof}
\end{proof}

\begin{corollary}\label{c:main-convex} Assume that a mid-convex $G_\delta$-set $A$ in a Polish group $X$ is Steinhaus at some point $a\in A$. Then $A$ is not null-finite and not Haar-countable in $X$.
\end{corollary}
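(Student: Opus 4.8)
\textbf{Proof plan for Corollary~\ref{c:main-convex}.} The statement is an immediate specialization of Lemma~\ref{l:main-convex}, so the plan is simply to verify that the two hypotheses of the lemma are met by a mid-convex $G_\delta$-set $A$ which is Steinhaus at a point $a\in A$. The lemma requires that $A$ be both \emph{winning} and \emph{Steinhaus} at some common point $a\in A$; here the Steinhaus hypothesis is given outright, so the only thing to check is that $A$ is also winning at that same point $a$. For this I would invoke Proposition~\ref{p:Polish-convgame}, which asserts that every Polish mid-convex set in a Polish group is winning at \emph{each} of its points. Thus it suffices to observe that a $G_\delta$-subset $A$ of the Polish group $X$ is itself a Polish space (a $G_\delta$-subset of a Polish space is Polish by the classical Alexandrov theorem, e.g.\ \cite[3.11]{K}). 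Consequently $A$ is winning at every $a\in A$, in particular at the point $a$ where $A$ is Steinhaus.

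Having established both hypotheses at the common point $a$, I would apply Lemma~\ref{l:main-convex} to conclude that $A$ is neither null-finite nor Haar-scattered in $X$. Since $A$ is Polish, Proposition~\ref{p:scattered} (a $G_\delta$-subset of a Polish group is Haar-countable iff it is Haar-scattered) upgrades ``not Haar-scattered'' to ``not Haar-countable.'' (Alternatively, this last step is already recorded inside the proof of Lemma~\ref{l:main-convex}: the Polishness of $A$ is precisely the side condition under which that proof directly yields ``not Haar-countable.'') This gives exactly the two assertions of the corollary.

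There is no real obstacle here — the corollary is a clean packaging of the lemma. The one point that deserves an explicit sentence is why a $G_\delta$-set is Polish and hence winning at each of its points; everything else is citation. I would therefore keep the proof to two or three lines, as follows.

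\begin{proof} Since $A$ is a $G_\delta$-subset of the Polish group $X$, the space $A$ is Polish \cite[3.11]{K}. By Proposition~\ref{p:Polish-convgame}, the mid-convex set $A$ is winning at each of its points, in particular at the point $a\in A$ at which $A$ is Steinhaus. By Lemma~\ref{l:main-convex}, the set $A$ is not null-finite and not Haar-scattered in $X$; being Polish, $A$ is not Haar-countable by Proposition~\ref{p:scattered}.
\end{proof}
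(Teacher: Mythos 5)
Your proof is correct and follows exactly the paper's own route: Proposition~\ref{p:Polish-convgame} to get that the Polish ($G_\delta$ in Polish) mid-convex set $A$ is winning at $a$, then Lemma~\ref{l:main-convex} for ``not null-finite and not Haar-scattered,'' then Proposition~\ref{p:scattered} to upgrade to ``not Haar-countable.'' The only addition is your explicit remark that a $G_\delta$-subset of a Polish space is Polish, which the paper leaves implicit; nothing further is needed.
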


\begin{proof} By Proposition~\ref{p:Polish-convgame}, the mid-convex $G_\delta$-set $A$, being a Polish space,  is winning at the point $a\in A$. By Lemma~\ref{l:main-convex}, $A$ is not null-finite and not Haar-scattered. By Proposition~\ref{p:scattered}, $A$ is not Haar-countable.
\end{proof}


\begin{lemma}\label{l:specpoint} A mid-convex analytic subset $A$ of a Polish group $X$ is Steinhaus at some point $a\in\bar A$ if and only if the set $A-A$ is a neighborhood of zero.
\end{lemma}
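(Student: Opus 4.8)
The statement is an equivalence. The "if" direction is the easy one: if $A-A$ is a neighborhood of zero, then taking $a$ to be any point of $A$ (which is non-empty, being Steinhaus-relevant), I claim $A$ is Steinhaus at every point $a\in A$. Indeed, write $Z:=\frac\theta{2^\IN}$; by Lemma~\ref{l:sum}(3), $A=A+Z$, so by Lemma~\ref{l:SteinH} it suffices to check that for every neighborhood $U\subset X$ of $a$ the set $(U\cap A)-(U\cap A)+Z$ is a neighborhood of zero. Here I would follow the pattern of Proposition~\ref{p:wedge=>St}: replacing $X$ by the open subgroup $A-A$ I may assume $X=A-A$, and I would use mid-convexity to show that shrinking neighborhoods of $a$ does not destroy the Steinhaus property, dividing by $2$ repeatedly and invoking the Piccard--Pettis Theorem~\ref{c:PP} (via Corollary~\ref{c:PP}) to the analytic set $A-A$. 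The key mechanism is the identity $\frac12(A+A)=A$ together with openness of the doubling map $\mathit2:X\to X$, exactly as in the proof of the implication $(1)\Ra(2)$ in Theorem~\ref{t:main-convex-analytic}.

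For the "only if" direction, suppose $A$ is Steinhaus at some point $a\in\bar A$, and aim to deduce that $A-A$ is a neighborhood of zero. Replacing $A$ by $A-a$ I may assume $\theta\in\bar A$ and that for each neighborhood $U\ni\theta$ the set $(U\cap A)-(U\cap A)$ is a neighborhood of $\theta$. Taking $U=X$ I get that $B:=A-A$ is a neighborhood of zero; but a priori $a\in\bar A\setminus A$, so $A-A$ might not literally be $(X\cap A)-(X\cap A)$ with $a$ inside — the subtlety is that "$A$ Steinhaus at $a$" is a statement about neighborhoods of $a$ in $X$ intersected with $A$, and $a$ itself need not lie in $A$. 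Nonetheless the definition directly gives that $(U\cap A)-(U\cap A)$ is a neighborhood of zero for all neighborhoods $U$ of $a$ (after the translation, of $\theta$), and taking $U=X$ yields precisely that $A-A=(X\cap A)-(X\cap A)$ is a neighborhood of zero. So this direction is essentially immediate from unravelling the definition — the only thing to be careful about is the bookkeeping of the translation by $a$ and that the translated set $A-a$ is still mid-convex and analytic, which is Lemma~\ref{l:sum}(4).

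Thus the real content is the "if" direction, and within it the main obstacle is showing that the Steinhaus-at-$\theta$ property survives restriction to arbitrarily small neighborhoods. The plan is: given a neighborhood $U\ni\theta$, pick $V\ni\theta$ open with $2$-divisibility-compatible smallness (i.e. $2V\subset U$ or similar), observe $X=A-A=\bigcup_{n\in\w}2^n\big((V\cap A)-(V\cap A)\big)$ using that every element of $A-A$ can be written $a-b$ with $a,b\in A$ and, after dividing by $2$ enough times via mid-convexity (Lemma~\ref{l:sum}(1)), be pulled into $V$; then by the Baire Theorem one of the analytic sets $2^n\big((V\cap A)-(V\cap A)\big)$ is non-meager, so by Corollary~\ref{c:PP} its difference set is a neighborhood of zero, and dividing back by $2^n$ (continuity of doubling) plus mid-convexity shows $(U\cap A)-(U\cap A)+Z$ contains a neighborhood of zero, whence Lemma~\ref{l:SteinH} applies. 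This is a near-verbatim adaptation of Proposition~\ref{p:wedge=>St}, the only difference being that one uses the halving structure of a mid-convex set in place of the $M$-wedge structure. I would write the proof by reducing to that proposition's argument rather than repeating it in full.
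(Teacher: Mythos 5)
The ``only if'' direction of your proposal is correct (and is exactly the paper's: take $U=X$ in the definition). The gap is in the ``if'' direction, and it sits precisely at the step you identify as the main obstacle. You propose to write $A-A=\bigcup_{n}2^n\bigl((V\cap A)-(V\cap A)\bigr)$ by ``dividing by $2$ enough times via mid-convexity'' so that points of $A$ are pulled into the small neighborhood $V$ of $\theta$. Mid-convexity of $A\ni\theta$ does give, for each $a'\in A$, a point of $A$ inside $\frac1{2^n}a'$ (Lemma~\ref{l:sum}), but it gives no control on \emph{where} in the group that point lies: nothing forces any element of $A\cap\frac1{2^n}a'$ to belong to $V$. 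In a Polish vector space this is automatic because $\frac1{2^n}a'\to\theta$, but in a general Polish group it fails -- in $\IQ_2$, for example, division by $2$ is expanding, so the iterated midpoints of $\theta$ and $b$ move away from $\theta$ rather than toward it. The property you are implicitly invoking -- that elements of $A$ can be divided into any prescribed neighborhood of $\theta$ \emph{while staying in $A$} -- is exactly the extra content of the $M$-wedge condition in Proposition~\ref{p:wedge=>St} and of $\midd$-convexity in Lemma~\ref{l:St-conv}; plain mid-convexity does not supply it. (A smaller symptom of the same transplant: for a wedge $A-A$ is a subgroup because $A$ is additive, but for a mid-convex set $A-A$ need not be a subgroup, so ``replace $X$ by the open subgroup $A-A$'' is not available; one has to pass to $\langle A\rangle$.)

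This is also why you should distrust the conclusion you aim at: you try to prove that $A$ is Steinhaus at \emph{every} point of $A$, whereas the lemma only asserts a Steinhaus point somewhere in $\bar A$ -- and the paper proves exactly that and no more. Its argument is not a reduction to the wedge proposition but an inductive construction: one builds a Cauchy sequence $(a_n)$ in $A$ and sets $A_n=A\cap B(a_n;\frac1{2^n})$ such that $A_n-A_n+\frac\theta{2^n}$ is a neighborhood of zero, the inductive step using a countable cover of $A_n$ by small analytic pieces, the Baire theorem and Piccard--Pettis to locate a \emph{new} center $a_{n+1}$ near which the set remains large in the relevant sense; the Steinhaus point is the limit $a=\lim_n a_n$, which lies in $\bar A$ but possibly not in $A$. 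The ``at every point of $A$'' statement is what Lemma~\ref{l:St-conv} later establishes, and it needs the additional $\midd$-convexity hypothesis there. So the ``if'' direction of your proposal does not go through as written; at best your shortcut salvages the case of a Polish vector space, not the general Polish group.
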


\begin{proof} The ``only if'' part is trivial. To prove the ``if'' part, assume that $A-A$ is a neighborhood of $\theta$. Replacing $A$ by a suitable shift of $A$, we can assume that $\theta\in A$. In this case the subgroup $\langle A\rangle$ of $X$ generated by $A$ is mid-convex and hence 2-divisible. Since $A-A$ is a neighborhood of zero in $X$, the subgroup $\langle A\rangle$ is open in $X$. Replacing $X$ by the subgroup $\langle A\rangle$, we can assume that the group $X$ is 2-divisible. Then the homomorphism $\mathit 2:X\to X$, $\mathit 2:x\mapsto x+x$, is surjective and  open (by the Open Mapping Principle \ref{c:OMP}).

Fix a complete invariant metric $\rho\le 1$, generating the topology  of the Polish group $X$. For an element $x\in X$ put $\|x\|:=\rho(x,\theta)$.
\smallskip

To prove the lemma, we shall construct inductively a sequence of points $(a_n)_{n\in\w}$ in $A$ such that for every $n\in \w$ the point $a_{n+1}$ belongs to the set $A_n:=A\cap B(a_n;\frac1{2^n})$ and the set $A_n-A_n+\frac\theta{2^n}$ is a neighborhood of zero in $X$.

We start the inductive construction choosing any point $a_0\in A$ and observing that $A_0=A$ and $A_0-A_0+\frac\theta{2^0}=A-A$ is a neighborhood of zero in $X$.

Assume that for some $n\in\IN$ we have constructed a point $a_n\in A$ such that $A_n-A_n+\frac\theta{2^n}$ is a neighborhood of zero, where $A_n:=A\cap B(a_n;\frac1{2^n})$. Choose $\e>0$ so small that $B(\theta;3\e)\subset (A_n-A_n+\frac\theta{2^n})\cap\mathit 2(B(\theta;\frac1{2^{n+1}}))$ (the choice of $\e$ is possible since the homomorphism $\mathit 2:X\to X$ is open). Let $\U$ be any countable cover of the set $A_n=\bigcup\U$ by analytic subspaces  of diameter $<\e$. Since $B(\theta;\e)\subset A_n-A_n+\frac\theta{2^n}=\bigcup_{U,V\in\U}(U-V+\frac\theta{2^n})$, we can apply the Baire Theorem and find two sets $U,V\in\U$ such that  the (analytic) set $M=B(0;\e)\cap(U-V+\frac\theta{2^n})$ is not meager in $X$. By the Piccard-Pettis Theorem~\ref{c:PP}, the set $M-M$ is a neighborhood of zero in $X$. The continuity of the map $\mathit 2:X\to X$ ensures that the set $\frac12(M-M)$ is a neighborhood of zero. Since $B(\theta;\e)\cap(U-V+\frac\theta{2^n})\ne\emptyset$, there are points $a_{n+1}\in U,$ $b_{n+1}\in V$ and $z\in\frac\theta{2^n}$ such that $\|a_{n+1}-b_{n+1}+z\|<\e$. 
We claim that $\frac12(M-M)\subset A_{n+1}-A_{n+1}+\frac\theta{2^{n+1}}$ where 
$A_{n+1}:=A\cap B(a_{n+1};\frac1{2^{n+1}})$.

Given any point $x\in\frac12(M-M)$, find points $a,b\in M=B(\theta;\e)\cap(U-V+\frac\theta{2^n})$ such that $2x=a-b$.
Next, find points $u_a,u_b\in U$, $v_a,v_b\in V$, and $z_a,z_b\in\frac\theta{2^n}$  such that $a=u_a-v_a+z_a$ and $b=u_b-v_b+z_b$.  Then $2x=a-b=(u_a-v_a+z_a)-(u_b-v_b+z_b)=(u_a+v_b+z_a)-(v_a+u_b+z_b)$ and hence $x\in \frac12(u_a+v_b+z_a)-\frac12(v_a+u_b+z_b)$.

Observe that $\|(u_a+v_b-z)-2a_{n+1}\|\le\|u_a-a_{n+1}\|+\|v_b-z-a_{n+1}\|< \e+\|v_b-b_{n+1}\|+\|b_{n+1}-z-a_{n+1}\|<3\e$. Since $B(\theta;3\e)\subset\mathit 2\big(B(\theta;\frac1{2^{n+1}})\big)$, there exists an element $s\in B(\theta;\frac1{2^{n+1}})$ such that $2s=(u_a+v_b-z)-2a_{n+1}$. Then $a_{n+1}+s\in\frac12(u_a+v_b-z)\subset \frac12(U+V+\frac\theta{2^n})\subset\frac12(A+A+\frac\theta{2^n})\subset A$ by the mid-convexity of $A=A+\frac\theta{2^n}$ and $\frac12(u_a+v_b-z)\subset\frac\theta{2}+a_{n+1}+s\subset \frac\theta{2}+A\cap B(a_{n+1};\frac1{2^{n+1}})=\frac\theta{2}+A_{n+1}$.
By analogy we can show that $\frac12(v_a+u_b-z)\subset 
\frac\theta{2}+A_{n+1}$.

Then $x\in \frac12(u_a+v_b+z_a)-\frac12(v_a+u_b+z_b)=\frac12(u_a+v_b-z)-\frac12(v_a+u_b-z)+\frac12(z_b-z_a)\subset A_{n+1}-A_{n+1}+\frac\theta{2^{n+1}}$ and hence $A_{n+1}-A_{n+1}+\frac\theta{2^{n+1}}\supset \frac12(M-M)$ is a neighborhood of zero in $X$.
This completes the inductive construction.
\smallskip

Now consider the sequence $(a_n)_{n\in\w}$ and observe that it is Cauchy (as $\|a_{n+1}-a_n\|<\frac1{2^n}$ for all $n\in\w$), and hence it converges to some point $a\in \bar A$ such that $$\|a-a_n\|\le\sum_{k=n}^\infty\|a_{k+1}-a_k\|<\sum_{k=n}^\infty\tfrac1{2^k}=\frac1{2^{n+1}}$$for every $n\in\IN$.

We claim $A$ is Steinhaus at the point $a$. Since $A=A+\frac\theta{2^\IN}$, it suffices to check that for any neighborhood $U\subset X$ of $a$ the set $(U\cap A)-(U\cap A)+\frac\theta{2^\IN}$ is a neighborhood of $\theta$ in $X$ (see Lemma~\ref{l:SteinH}). 

Given any neighborhood $U\subset X$ of $a$, find $n\in\w$  such that $B(a;\frac{3}{2^n})\subset U$ and observe that 
$U\cap A\supset B(a;\tfrac3{2^n})\cap A\supset B(a_n;\tfrac1{2^n})\cap A=A_n$.  Then the set $(U\cap A)-(U\cap A)+\frac\theta{2^\IN}\supset A_n-A_n+\frac\theta{2^n}$ is a neighborhood of $\theta$ in $X$.
\end{proof}

Lemma~\ref{l:specpoint} implies

\begin{corollary}\label{c:cl-fp}
A closed mid-convex set $A$ in a Polish group $X$ is Steinhaus at some point $a\in A$ if and only if $A-A$ is a neighborhood of $\theta$ in $X$.
\end{corollary}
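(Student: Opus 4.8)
The statement to prove is Corollary~\ref{c:cl-fp}: a closed mid-convex set $A$ in a Polish group $X$ is Steinhaus at some point $a\in A$ if and only if $A-A$ is a neighborhood of $\theta$ in $X$. The ``only if'' direction is immediate: if $A$ is Steinhaus at some $a\in A$, then applying the definition with the neighborhood $U=X$ of $a$ gives that $(X\cap A)-(X\cap A)=A-A$ is a neighborhood of $\theta$. So the entire content is in the ``if'' direction, and the plan is to derive it as a direct consequence of Lemma~\ref{l:specpoint}.

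The key observation is that a closed set in a Polish space is itself a Polish space, hence in particular analytic (a continuous image of a Polish space, being the image of itself under the identity map). Therefore, if $A$ is a closed mid-convex subset of the Polish group $X$ and $A-A$ is a neighborhood of $\theta$, then $A$ is a mid-convex analytic set with $A-A$ a neighborhood of zero, so Lemma~\ref{l:specpoint} applies and yields a point $a\in\bar A$ at which $A$ is Steinhaus. The only extra step needed is to upgrade $a\in\bar A$ to $a\in A$: since $A$ is closed, $\bar A=A$, so automatically $a\in A$. This closes the argument.

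Concretely, I would write: ``The `only if' part is trivial (take $U=X$). To prove the `if' part, assume $A-A$ is a neighborhood of $\theta$ in $X$. Being a closed subspace of the Polish space $X$, the set $A$ is Polish and hence analytic. By Lemma~\ref{l:specpoint}, the mid-convex analytic set $A$ is Steinhaus at some point $a\in\bar A$. Since $A$ is closed, $\bar A=A$ and hence $a\in A$.'' That is the whole proof; it is two or three sentences long.

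There is essentially no obstacle here — the corollary is a packaging of Lemma~\ref{l:specpoint} for the closed case, and the only thing to be careful about is not to over-complicate: one must resist the temptation to reprove anything about Steinhaus points and simply invoke the lemma, noting (i) closed $\Rightarrow$ analytic and (ii) closed $\Rightarrow$ $\bar A = A$. If one wished, one could alternatively appeal to Lemma~\ref{l:main-convex} together with Proposition~\ref{p:Polish-convgame} for a self-contained winning-game argument, but that is strictly more work and unnecessary; the clean route is through Lemma~\ref{l:specpoint}.
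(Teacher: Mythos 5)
Your proof is correct and is exactly the paper's route: the paper derives the corollary as an immediate consequence of Lemma~\ref{l:specpoint}, using that a closed subset of a Polish space is analytic and that $\bar A=A$ upgrades the Steinhaus point from $\bar A$ to $A$. Nothing to add.
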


For closed mid-convex sets we can extend the characterization given in Theorem~\ref{t:main-convex-analytic} with two more properties.

\begin{theorem} For a closed mid-convex subset $A$ of a Polish group $X$ the following conditions are equivalent:
\begin{enumerate}
\item[\textup{1)}] $A-A$ is not a neighborhood of $\theta$ in $X$;
\item[\textup{2)}] $A-A$ is meager in $X$;
\item[\textup{3)}] $A$ is generically Haar-1;
\item[\textup{4)}] $A$ is Haar-thin;
\item[\textup{5)}] $A$ is null-finite;
\item[\textup{6)}] $A$ is Haar-countable.
\end{enumerate}
\end{theorem}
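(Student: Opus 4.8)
The plan is to reduce this characterization for closed mid-convex sets to the already-proven analytic version (Theorem~\ref{t:main-convex-analytic}) together with the Steinhaus-point machinery developed in Lemma~\ref{l:main-convex} and Corollary~\ref{c:cl-fp}. A closed set is in particular analytic, so Theorem~\ref{t:main-convex-analytic} already gives us the equivalence of the conditions $(1)$ $A-A$ is not a neighborhood of $\theta$, $(2)$ $A-A$ is meager, $(3)$ $A$ is generically Haar-$1$, $(4)$ $A$ is Haar-thin, and the weaker statement $(5')$ $A$ is null-$n$ for some $n\in\IN$. The only genuinely new work is to fold the stronger conditions $(5)$ $A$ is null-finite and $(6)$ $A$ is Haar-countable into this chain. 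Since null-$1$ trivially implies null-finite, and generically Haar-$1$ trivially implies both null-finite and Haar-countable, we immediately get $(3)\Ra(5)$ and $(3)\Ra(6)$. Thus it remains only to prove $(5)\Ra(1)$ and $(6)\Ra(1)$, or equivalently their contrapositives: if $A-A$ \emph{is} a neighborhood of $\theta$, then $A$ is neither null-finite nor Haar-countable.

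First I would assume $A-A$ is a neighborhood of $\theta$. By Corollary~\ref{c:cl-fp}, the closed mid-convex set $A$ is then Steinhaus at some point $a\in A$. To apply Lemma~\ref{l:main-convex} I also need $A$ to be winning at that same point $a$; this follows from Proposition~\ref{p:Polish-convgame}, which says every Polish mid-convex set is winning at every one of its points, and a closed subset of a Polish group is Polish. So $A$ is both winning and Steinhaus at $a$, and Lemma~\ref{l:main-convex} yields at once that $A$ is not null-finite and not Haar-scattered in $X$. Since a closed (hence $G_\delta$) subset that is not Haar-scattered is not Haar-countable by Proposition~\ref{p:scattered}, we conclude $A$ is not Haar-countable either. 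This establishes $\neg(1)\Ra\neg(5)$ and $\neg(1)\Ra\neg(6)$, closing the loop.

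Putting it together, I would organize the proof as the cycle of implications $(1)\Leftrightarrow(2)\Leftrightarrow(3)\Leftrightarrow(4)$ (citing Theorem~\ref{t:main-convex-analytic}), then $(3)\Ra(5)$ and $(3)\Ra(6)$ (trivial, from the defining diagrams of smallness notions), and finally $(5)\Ra(1)$ and $(6)\Ra(1)$ via the contrapositive argument above using Corollary~\ref{c:cl-fp}, Proposition~\ref{p:Polish-convgame}, Lemma~\ref{l:main-convex}, and Proposition~\ref{p:scattered}. There is essentially no computational obstacle here — all the hard technical work (the inductive tree construction producing a Cantor-set worth of translates landing inside $A$) has already been done inside Lemma~\ref{l:main-convex}, and the delicate passage from ``Steinhaus at a point of $\bar A$'' to ``Steinhaus at a point of $A$'' for closed sets is packaged in Corollary~\ref{c:cl-fp}. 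The only point requiring a moment of care is making sure the hypotheses of Lemma~\ref{l:main-convex} are met at a \emph{single common} point $a$: Corollary~\ref{c:cl-fp} gives the Steinhaus property at some $a\in A$, and Proposition~\ref{p:Polish-convgame} gives the winning property at \emph{every} point of $A$, so the same $a$ works for both. Hence a short proof suffices.

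\begin{proof} A closed set is analytic, so Theorem~\ref{t:main-convex-analytic} implies the equivalence $(1)\Leftrightarrow(2)\Leftrightarrow(3)\Leftrightarrow(4)$. The implications $(3)\Ra(5)$ and $(3)\Ra(6)$ are trivial (a generically Haar-$1$ set is Haar-$1$, hence null-$1$ and null-finite, and also Haar-finite and Haar-countable). It remains to prove $(5)\Ra(1)$ and $(6)\Ra(1)$, which we do by contraposition.

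Assume that $A-A$ is a neighborhood of $\theta$ in $X$. By Corollary~\ref{c:cl-fp}, the closed mid-convex set $A$ is Steinhaus at some point $a\in A$. Being a closed subspace of a Polish group, $A$ is Polish, so by Proposition~\ref{p:Polish-convgame} the mid-convex set $A$ is winning at every point of $A$, in particular at $a$. Thus $A$ is both winning and Steinhaus at $a$, and Lemma~\ref{l:main-convex} implies that $A$ is neither null-finite nor Haar-scattered in $X$. Since $A$ is a closed (hence $G_\delta$) subset of $X$ which is not Haar-scattered, Proposition~\ref{p:scattered} shows that $A$ is not Haar-countable. Therefore $\neg(1)$ implies $\neg(5)$ and $\neg(6)$, completing the proof.
\end{proof}
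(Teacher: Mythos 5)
Your proof is correct and follows essentially the same route as the paper's: the equivalence $(1)\Leftrightarrow(2)\Leftrightarrow(3)\Leftrightarrow(4)$ from Theorem~\ref{t:main-convex-analytic}, the trivial implications $(3)\Ra(5,6)$, and the contrapositive of $(5)\Ra(1)$ and $(6)\Ra(1)$ via Corollary~\ref{c:cl-fp}, Proposition~\ref{p:Polish-convgame}, Lemma~\ref{l:main-convex}, and Proposition~\ref{p:scattered}. Your write-up is in fact slightly cleaner than the paper's, which misstates the conclusion of Lemma~\ref{l:main-convex} as ``not Haar-null'' where ``not null-finite'' is meant.
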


\begin{proof} The equivalence of the statements (1)--(4) was proved in Theorem~\ref{t:main-convex-analytic}. The implications $(3)\Ra(5,6)$ are trivial. To finish the proof, it suffices to show that the negation of (1) implies the negations of (5) and (6). So, assume that $A-A$ is a neighborhood of $\theta$ in $X$. By Corollary~\ref{c:cl-fp}, $A$ is Steinhaus at some point $a\in A$ and by Proposition~\ref{p:Polish-convgame}, $A$ is winning mid-convex at the point $a$. By Lemma~\ref{l:main-convex}, $A$ is not Haar-null and not Haar-scattered. By Proposition~\ref{p:scattered}, $A$ is not Haar-countable.
\end{proof}

We shall say that a subset $A$ of a topological group $X$ is  \index{subset!$\midd$-convex}\index{$\midd$-convex subset}{\em $\midd$-convex} if  $A$ is mid-convex in $X$ and for any points $a,b\in A$ and any neighborhood $U\subset A$ of $a$ there exists a point $x\in U$ such that $2^n(x-a)=b-a$ for some $n\in\IN$. 

For example, any convex set in a topological vector space is $\midd$-convex.

\begin{lemma}\label{l:St-conv} For a $\midd$-convex analytic set in a Polish group $X$ the following conditions are equivalent:
\begin{enumerate}
\item[\textup{1)}] $A$ is Steinhaus at some point of $A$;
\item[\textup{2)}] $A$ is Steinhaus at any point of $A$;
\item[\textup{3)}] $A-A$ is a neighborhood of $\theta$;
\item[\textup{4)}] $A-A$ is non-meager.
\end{enumerate}
\end{lemma}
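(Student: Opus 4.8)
The plan is to prove the cycle of implications $(1)\Ra(2)\Ra(3)\Ra(4)\Ra(1)$, keeping in mind that $(1)\Leftrightarrow(2)$ follows once we have either of the other implications together with the trivial observation that Steinhaus at a point implies $A-A$ is a neighborhood of $\theta$. The implication $(2)\Ra(1)$ is trivial (any point witnesses $(1)$), and $(3)\Ra(4)$ is immediate from Corollary~\ref{c:PP}: if $A-A$ is a neighborhood of $\theta$ it is certainly non-meager. So the two genuine steps are $(4)\Ra(3)$ and $(3)\Ra(2)$; I will also need $(1)\Ra(3)$ to close the loop back, but this is exactly the content of Lemma~\ref{l:specpoint} applied in one direction (Steinhaus at some point yields $A-A$ a neighborhood of $\theta$), which is trivial, so in fact the substantive work is confined to showing that $A-A$ being non-meager forces $A$ to be Steinhaus at \emph{every} point of $A$.

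First I would handle $(4)\Ra(3)$: if $A-A$ is non-meager, then since $A$ is analytic the difference $A-A$ is analytic, and by the Piccard--Pettis Corollary~\ref{c:PP} the set $(A-A)-(A-A)=(A+A)-(A+A)$ is a neighborhood of $\theta$. Now I use mid-convexity in the form $\frac12(A+A)=A$ (which holds because $A=A+\frac{\theta}{2^\IN}$ and the mid-convexity gives $\frac12(a+b)\subset A$); applying the open continuous homomorphism $\mathit 2:X\to X$, $x\mapsto x+x$, which is surjective and open by the Open Mapping Principle (Corollary~\ref{c:OMP}) after passing to the open $2$-divisible subgroup $\langle A\rangle$, the set $\frac12\big((A-A)-(A-A)\big)=A-A$ is a neighborhood of $\theta$. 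This is essentially the argument already used in the proof of Theorem~\ref{t:main-convex-analytic}, $(1)\Ra(2)$ read backwards, so it should be short.

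The main step is $(3)\Ra(2)$: assuming $A-A$ is a neighborhood of $\theta$, I must show $A$ is Steinhaus at every point $a\in A$, not merely at some point. Lemma~\ref{l:specpoint} already gives Steinhaus at \emph{some} point $a_0\in\bar A$; the extra mileage I need comes from the $\midd$-convexity hypothesis, which by definition says: for any $a,b\in A$ and any neighborhood $U$ of $a$ in $A$ there is $x\in U$ with $2^n(x-a)=b-a$ for some $n\in\IN$. The idea is to transport the Steinhaus property from $a_0$ to an arbitrary $a\in A$. Fix a neighborhood $U$ of $a$; I want $(U\cap A)-(U\cap A)+\frac{\theta}{2^\IN}$ to be a neighborhood of $\theta$ (this suffices by Lemma~\ref{l:SteinH} since $A=A+\frac{\theta}{2^\IN}$). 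Using $\midd$-convexity at the pair $(a,a_0)$ I can find points of $A\cap U$ that are "scaled copies" of points near $a_0$: more precisely, for a small neighborhood $U_0$ of $a_0$, the set $\{x\in A\cap U: 2^n(x-a)\in (U_0\cap A)-a_0 \text{ for some }n\}$ is rich enough that, after applying the continuous homomorphisms $\mathit 2^n$ and their openness, the set $(U\cap A)-(U\cap A)$ modulo $\frac{\theta}{2^\IN}$ dominates a scaled image of $(U_0\cap A)-(U_0\cap A)$, which is a neighborhood of $\theta$ by the Steinhaus property at $a_0$. The hard part will be bookkeeping the $2^n$-torsion: a priori the witnessing exponent $n$ varies, so I expect to cover $A-A$ (near $\theta$) by countably many sets $\frac{1}{2^n}\big((U_0\cap A)-(U_0\cap A)\big)+\frac{\theta}{2^\IN}$ indexed by $n$, apply the Baire category theorem to select one $n$ for which the relevant analytic piece is non-meager, and then run Piccard--Pettis once more — exactly the pattern of Lemma~\ref{l:specpoint} and Proposition~\ref{p:wedge=>St}. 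Once $(3)\Ra(2)$ is established, $(2)\Ra(1)$ is trivial and $(1)\Ra(4)$ follows since Steinhaus at a point makes $A-A$ a neighborhood of $\theta$, hence non-meager, closing the equivalence. The only real obstacle is making the scaling-transport argument precise while controlling the torsion subgroup $\frac{\theta}{2^\IN}$, but since we only ever need equality \emph{modulo} that subgroup (Lemma~\ref{l:SteinH}), and mid-convexity guarantees $A$ is invariant under it, this should go through cleanly.
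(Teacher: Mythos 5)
Your reductions $(4)\Ra(3)$ (via Piccard--Pettis applied to $A-A$ and the identity $A=\frac12(A+A)$) and the trivial implications are fine, and your transport idea for moving the Steinhaus property from one point of $A$ to another via $\midd$-convexity is essentially the paper's proof of $(1)\Ra(2)$: one application of $\midd$-convexity to the pair $(a',a)$ produces a single $x\in U\cap A$ with $2^nx=a$, the openness of $\mathit 2^n$ makes $2^nU$ a neighbourhood of $a$, and one then pulls $(A\cap 2^nU)-(A\cap 2^nU)$ back through $\mathit 2^n$, working modulo $\frac{\theta}{2^\IN}$ via Lemma~\ref{l:SteinH}. But there is a genuine gap at the pivot of your argument: Lemma~\ref{l:specpoint} only produces a point $a_0\in\bar A$ at which $A$ is Steinhaus, not a point of $A$. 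The $\midd$-convexity hypothesis is formulated for \emph{pairs of points of $A$}, so it cannot be applied to the pair $(a,a_0)$ when $a_0\in\bar A\setminus A$, and the Steinhaus property at $a_0$ does not transfer to nearby points $b\in A$ (small relative neighbourhoods of $b$ need not contain a neighbourhood of $a_0$). Since conditions (1) and (2) of the lemma explicitly require the Steinhaus point to lie in $A$, your cycle never gets started: you have no admissible source point for the transport.

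The paper closes exactly this gap with a separate argument for $(4)\Ra(1)$ that does not pass through Lemma~\ref{l:specpoint}. One considers the difference map $\delta:A\times A\to A-A$, lets $\W$ be the family of open subsets $W\subset A\times A$ with $\delta(W)$ meager, and uses the Lindel\"of property to see that $\delta(\bigcup\W)$ is meager; since $A-A$ is non-meager there is a pair $(b,c)\in A\times A$ outside $\bigcup\W$, i.e.\ a pair of points \emph{of $A$} all of whose relative neighbourhoods $U_b,U_c$ satisfy that $U_b-U_c$ is non-meager. After translating $c$ to $\theta$, $\midd$-convexity is then used to find $x\in W\cap A$ with $2^nx=b$, so that $V:=A\cap 2^nW$ is a single relatively open set containing \emph{both} $\theta$ and $b$; hence $V-V$ is non-meager, Piccard--Pettis gives that $(V+V)-(V+V)$ is a neighbourhood of $\theta$, and one checks $\tfrac1{2^{n+1}}(V+V)\subset(U\cap A)+\frac{\theta}{2^\IN}$. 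Your alternative suggestion of covering $A-A$ by countably many sets $\frac1{2^n}\big((U_0\cap A)-(U_0\cap A)\big)+\frac{\theta}{2^\IN}$ does not work as stated either, because the exponent supplied by $\midd$-convexity depends on the pair of points, so differences $c-c'$ decompose as $2^n x_c-2^m x_{c'}$ with $n\ne m$ in general and do not land in a single scaled copy of $(U\cap A)-(U\cap A)$. To repair your proof you should replace the appeal to Lemma~\ref{l:specpoint} by the category argument on $\delta:A\times A\to A-A$ described above.
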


\begin{proof} 
 $(1)\Ra(2)$ Assuming that $A$ is Steinhaus at some point  $a\in A$, we shall prove that $A$ is Steinhaus at any point $a'\in A$. Replacing $A$ by $A-a'$, we can assume that $a'=\theta$. 

Since $A-A$ is a neighborhood of $\theta$, the subgroup $\langle A\rangle$ generated by the mid-convex set $A\ni\theta$ is open and 2-divisible. Replacing $X$ by the open subgroup $\langle A\rangle$, we can assume that $X$ is 2-divisible. In this case, the homomorphism $\mathit 2:X\to X$ is surjective and open (by the Open Mapping Principle \ref{c:OMP}).

By Lemma~\ref{l:SteinH}, the Steinhaus property of the mid-convex set $A=A+\frac\theta{2^\IN}$ will follow as soon as we check that for every neighborhood $U\subset X$ of $\theta$, the set $(U\cap A)-(U\cap A)+\frac\theta{2^\IN}$ is a neighborhood of $\theta$ in $X$. Given a neighborhood $U\subset X$ of $\theta$, use the $\midd$-convexity of $A$ and find a point $x\in U\cap A$ such that $2^nx=a$ for some $n\in\IN$. Since the homomorphism $\mathit 2^n:X\to X$ is open, the set $2^nU$ is a neighborhood of $a$. Since $A$ is Steinhaus at $a$, the set $(A\cap 2^nU)-(A\cap 2^nU)$ is a neighborhood of $\theta$. The continuity of the homomorphism $\mathit 2^n$ ensures that the set
$$\tfrac1{2^n}\big((A\cap 2^nU)-(A\cap 2^nU)\big)\subset (U\cap\tfrac1{2^n}A)-(U\cap\tfrac1{2^n}A)+\tfrac\theta{2^n}\subset (U\cap A)-(U\cap A)+\tfrac\theta{2^\IN}$$
is a neighborhood of $\theta$ in $X$.
\smallskip

The implications $(2)\Ra(3)\Ra(4)$ are trivial. So, it remains to prove that $(4)\Ra(1)$. Assume that the set $A-A$ is not meager in $X$ and consider the map $\delta:A\times A\to A-A$, $\delta(x,y)\mapsto x-y$.
Let $\W$ be the family of open sets $W\subset A\times A$ whose image $\delta(W)$ is meager in $X$. The space $\cup\W$, being Lindel\"of,  coincides with the union $\bigcup\W'$ of some countable subfamily $\W'$ of $\W$. This implies that the image $\delta(\cup\W)=\bigcup_{W\in\W'}\delta(W)$ is meager in $X\times X$. Since the set $\delta(A\times A)=A-A$ is not meager, there is a pair $(b,c)\in (A\times A)\setminus \bigcup\W$. By the definition of $\W$ for any neighborhoods $U_b,U_c\subset A$ of $b,c$, the image $\delta(U_b\times U_c)=U_b-U_c$ is not meager in $X$. 

Now we shall prove that the set $A$ is Steinhaus at the point $c$. Replacing $A$ by the shift $A-c$, we can assume that there exists a point $a\in A$ such that for any open sets $U_\theta\ni\theta$ and $U_a\ni a$ in $A$, the set $U_\theta-U_a$ is not meager in $X$.

By Lemma~\ref{l:SteinH}, it suffices to show that for any open neighborhood $U\subset X$ of $\theta$ the set $(U\cap A)-(U\cap A)+\frac\theta{2^\IN}$ is a neighborhood of $\theta$ in $X$.  Since the set $A\ni \theta$ is mid-convex, so is its group hull $\langle A\rangle$ in $X$. Since the analytic set $A-A$ is not meager, the set $(A-A)-(A-A)$ is a neighborhood of zero, which implies that the subgroup $\langle A\rangle$ is open in $X$. Replacing $X$ by the subgroup $\langle A\rangle$, we can assume that the Polish group $X$ is 2-divisible, which implies that the homomorphism $\mathit 2:X\to X$ is surjective and open (by the Open Mapping Principle \ref{c:OMP}).

By the openness of the homomorphism $\mathit 2$, the set $2U=\{u+u:u\in U\}$ is an open neighborhood of $\theta$ in $X$. Next, choose an open neighborhood $W\subset X$ of $\theta$ such that $W+W\subset 2U$. Since the set $A$ is $\midd$-convex, there exist a point $x\in W$ such that $2^nx=a$ for some $n\in\IN$.  By the openness of the homomorphism $\mathit 2^n:X\to X$ the analytic set $V:=A\cap 2^nW$ is an open neighborhood of the points $\theta$ and $a$ in $A$. The choice of the points $\theta$ and $a$ ensures that the analytic set $V-V$ is not meager in $X$. By the Piccard-Pettis Theorem~\ref{c:PP}, the set $V_\pm:=(V-V)-(V-V)=(V+V)-(V+V)$ is a neighborhood of $\theta$ in $X$. By the continuity of the homomorphism $\mathit 2^{n+1}$, the set $\frac1{2^{n+1}}V_\pm$ is a neighborhood of $\theta$ in $X$. Observe that $\frac1{2^{n+1}}V_{\pm}=\frac1{2^{n+1}}(V+V)-\frac1{2^{n+1}}(V+V)$. 

To finish the proof it suffices to show that $\frac1{2^{n+1}}(V+V)\subset (U\cap A)+\frac\theta{2^\IN}$. Given any point $x\in\frac1{2^{n+1}}(V+V)$, find two points $v_1,v_2\in V$ such that $2^{n+1}x=v_1+v_2$. Since $v_1,v_2\in V=A\cap 2^nW$, there are points $w_1,w_2\in W$ such that $2^nw_i=v_i$ for $i\in\{1,2\}$.
It follows that $w_i\in\frac1{2^n}v_i\subset\frac1{2^n}A\subset A$ (here we use that $\theta\in A$) and $w_1+w_2\in W+W\subset 2U$ and hence $w_1+w_2=u+u$ for some $u\in U$. It follows that $u\in\frac12(w_1+w_2)\subset\frac12(A+A)=A$. Now observe that 
\begin{multline*}
x\in\tfrac1{2^{n+1}}(v_1+v_2)=\tfrac1{2^{n+1}}(2^n(w_1+w_2))=\tfrac12(w_1+w_2)+\tfrac\theta{2^{n+1}}\\
=\tfrac12(u+u)+\tfrac\theta{2^{n+1}}= u+\tfrac\theta{2^{n+1}}\subset U+\tfrac\theta{2^\IN}
\end{multline*} 
and hence $u\in A\cap (U+\frac\theta{2^\IN})=(A\cap U)+\frac\theta{2^\IN}$ as $A=A+\frac\theta{2^\IN}$. Finally, $x\in u+\frac\theta{2^\IN}\subset (U\cap A)+\frac\theta{2^\IN}$.
\end{proof}

\begin{theorem}\label{t:analytic-midd} Assume that a $\midd$-convex analytic set $A$ in a Polish group is winning at some point $a\in A$. Then the following conditions are equivalent:
\begin{enumerate}
\item[\textup{1)}] $A-A$ is not a neighborhood of zero in $X$;
\item[\textup{2)}] $A-A$ is meager in $X$;
\item[\textup{3)}] $A$ is generically Haar-$1$;
\item[\textup{4)}] $A$ is Haar-thin;
\item[\textup{5)}] $A$ is null-finite;
\item[\textup{6)}] $A$ is Haar-scattered.
\end{enumerate}
If the subgroup  $\frac\theta{2^\IN}$ is closed in $X$, then the conditions \textup{(1)--(6)} are equivalent to  
\begin{itemize}
\item[\textup{7)}] $A$ is Haar-countable.
\end{itemize}
\end{theorem}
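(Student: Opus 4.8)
The strategy is to prove the cycle of implications
$(1)\Ra(2)\Ra(3)\Ra(5,6)\Ra(1)$ together with the implication ``not $(1)$'' $\Ra$ ``not $(5)$, not $(6)$, and, under the extra assumption, not $(7)$''; finally $(7)\Ra(6)$ is trivial (a Haar-scattered $G_\delta$, or merely a scattered analytic preimage, need not be countable, so in general we only get $(6)\Ra(7)$ is false, but the implications $(3)\Ra(6)$ and $(3)\Ra(7)$ under the closedness hypothesis close the circle). More precisely, the implications $(1)\Leftrightarrow(2)\Leftrightarrow(3)\Leftrightarrow(4)$ and $(3)\Ra(5)$, $(3)\Ra(6)$ are already contained in Theorem~\ref{t:main-convex-analytic}: indeed $(1)\Leftrightarrow(4)$ is Theorem~\ref{t:thin}, $(1)\Ra(2)$ uses the Piccard--Pettis Theorem~\ref{c:PP} applied to the analytic set $A-A$ together with mid-convexity ($\tfrac12(A+A)=A$), $(2)\Ra(3)$ is Theorem~\ref{t:-GHI}, and $(3)\Ra(5)$, $(3)\Ra(6)$ are immediate from the definitions. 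So the only genuinely new work is to show that the negation of $(1)$ implies the negations of $(5)$, $(6)$, and (under the closedness hypothesis) $(7)$.

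So assume $A-A$ \emph{is} a neighborhood of $\theta$ in $X$; equivalently (since $A$ is analytic), by Lemma~\ref{l:St-conv} the set $A$ is Steinhaus at \emph{some} point $a\in A$ — here I use that $A$ is $\midd$-convex, which is exactly the hypothesis under which the four conditions of Lemma~\ref{l:St-conv} are equivalent. By hypothesis $A$ is also winning at some point; but I need $A$ to be simultaneously winning \emph{and} Steinhaus at the \emph{same} point. The plan to reconcile this: first observe that being Steinhaus at one point of a $\midd$-convex analytic set forces being Steinhaus at \emph{every} point of $A$ — this is precisely the content of the implication $(1)\Ra(2)$ inside Lemma~\ref{l:St-conv}. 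Thus $A$ is Steinhaus at the very point $a$ at which it is winning. Now Lemma~\ref{l:main-convex} applies verbatim: a mid-convex set that is both winning and Steinhaus at a common point is neither null-finite nor Haar-scattered, which gives the negations of $(5)$ and $(6)$. Moreover, the ``moreover'' clause of Lemma~\ref{l:main-convex} states that if the subgroup $\tfrac{\theta}{2^\IN}$ is closed in $X$, then such a set is not Haar-countable, giving the negation of $(7)$ under the stated hypothesis. Finally, $(7)\Ra(6)$ holds because a Haar-countable $G_\delta$-set — or, for the general analytic case here, one simply notes that Haar-scattered implies Haar-countable is false in general, so instead I close the loop via $(6)\Ra(1)$: if $A$ is Haar-scattered then in particular it is not Haar-open-large, and running the contrapositive just established shows $A-A$ cannot be a neighborhood of $\theta$, i.e. $(1)$.

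To be careful about the last point: the cleanest logical arrangement is to prove $(1)\Ra(2)\Ra(3)\Ra(6)$ and $(3)\Ra(5)$ from the already-cited results, and then ``$\neg(1)\Ra\neg(5)\wedge\neg(6)$'' from Lemmas~\ref{l:St-conv} and \ref{l:main-convex} as above; since each of $(5)$ and $(6)$ individually lies between $(3)$ (which is implied by $(1)$) and $(1)$ (to which its negation has just been shown equivalent via $\neg(1)$), all six statements are equivalent. For the extra equivalence with $(7)$ under the closedness of $\tfrac{\theta}{2^\IN}$: $(3)\Ra(7)$ is trivial, and $\neg(1)\Ra\neg(7)$ is the ``moreover'' clause of Lemma~\ref{l:main-convex}, so $(7)$ joins the equivalence class.

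\textbf{Main obstacle.} The only subtle point is the ``matching of points'': Lemma~\ref{l:main-convex} requires $A$ to be winning \emph{and} Steinhaus at one and the same point $a$, whereas the hypothesis supplies only that $A$ is winning at some point and (after invoking $\neg(1)$) Steinhaus at some possibly different point. The resolution — that on a $\midd$-convex analytic set the Steinhaus property propagates from one point to all points of $A$ via Lemma~\ref{l:St-conv}$(1)\Ra(2)$ — is where the $\midd$-convexity hypothesis (as opposed to mere mid-convexity) is essential, and it is worth spelling this out explicitly rather than leaving it implicit. Everything else is a bookkeeping assembly of results already proved in the paper.
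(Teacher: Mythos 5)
Your proposal is correct and follows essentially the same route as the paper: reduce (1)--(4) to Theorem~\ref{t:main-convex-analytic}, note $(3)\Ra(5),(6),(7)$ are immediate, and prove $\neg(1)\Ra\neg(5)\wedge\neg(6)\wedge\neg(7)$ by using Lemma~\ref{l:St-conv} (specifically its implication that Steinhaus at one point of a $\midd$-convex analytic set propagates to every point, hence to the winning point $a$) and then invoking Lemma~\ref{l:main-convex}; your explicit discussion of the ``matching of points'' issue is exactly the step the paper handles by citing Lemma~\ref{l:St-conv}. One small correction to a parenthetical you do not ultimately rely on: the implication $(6)\Ra(7)$ is in fact trivially true in general (every scattered subset of the second-countable space $2^\w$ is countable, being hereditarily Lindel\"of, so the ideal of scattered sets is contained in $[2^\w]^{\le\w}$), and it is the converse, Haar-countable $\Ra$ Haar-scattered, that requires the $G_\delta$ hypothesis; the paper uses $(6)\Ra(7)$ directly, whereas you route around it via $(3)\Ra(7)$, which also works.
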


\begin{proof}  The equivalence of the statements (1)--(4) was proved in Theorem~\ref{t:main-convex-analytic}. The implications $(3)\Ra(5,6)$ and $(6)\Ra(7)$ are trivial. To finish the proof, it suffices to show that the negation of (1) implies the negations of (5), (6) (and (7) if $\frac\theta{2^\IN}$ is closed in $X$).

 So, assume that $A-A$ is a neighborhood of $\theta$ in $X$. By Corollary~\ref{l:St-conv}, $A$ is Steinhaus at the point $a$. By Lemma~\ref{l:main-convex}, $A$ is not Haar-null and not Haar-scattered. Moreover, this lemma implies that $A$ is not Haar-countable if the subgroup $\frac\theta{2^\IN}$ is closed in $X$.
 \end{proof}
 
For $\midd$-convex $G_\delta$-sets in a Polish group we can prove the following  characterziation. 
 
\begin{theorem}\label{t:convex-Gdelta} For a $\midd$-convex $G_\delta$-set $A$ in a Polish group $X$ the following conditions are equivalent:
\begin{enumerate}
\item[\textup{1)}] $A-A$ is not a neighborhood of zero in $X$;
\item[\textup{2)}] $A-A$ is meager in $X$;
\item[\textup{3)}] $A$ is generically Haar-$1$;
\item[\textup{4)}] $A$ is Haar-thin;
\item[\textup{5)}] $A$ is null-finite;
\item[\textup{6)}] $A$ is Haar-scattered;
\item[\textup{7)}] $A$ is Haar-countable.
\end{enumerate}
\end{theorem}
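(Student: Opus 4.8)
The plan is to deduce Theorem~\ref{t:convex-Gdelta} from the two preceding results, Theorem~\ref{t:analytic-midd} and its technical engine Lemma~\ref{l:main-convex}, by observing that a $\midd$-convex $G_\delta$-set is automatically a Polish space and hence winning at each of its points, and that for $G_\delta$-sets the extra hypothesis ``$\frac\theta{2^\IN}$ is closed'' in the ``Haar-countable'' clause can be bypassed because scatteredness and countability coincide for Polish spaces. So the strategy is to reuse the chain of implications already available, adding only the argument that repairs the gap between Haar-scattered (item 6) and Haar-countable (item 7).

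First I would note that a $\midd$-convex set is in particular a $\midd$-convex analytic set, so the equivalences $(1)\Leftrightarrow(2)\Leftrightarrow(3)\Leftrightarrow(4)$ are immediate from Theorem~\ref{t:main-convex-analytic}, and the implications $(3)\Ra(5)$, $(3)\Ra(6)$ and $(6)\Ra(7)$ are trivial (a Haar-$1$ set is null-finite and Haar-scattered, and every Haar-scattered Borel set is Haar-countable by Proposition~\ref{p:scattered}, using that $A$ is $G_\delta$ and hence its preimages under continuous maps are $G_\delta$, so Proposition~\ref{p:scattered} applies). Thus it remains to prove that the negation of $(1)$ implies the negations of $(5)$, $(6)$ and $(7)$. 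Assume then that $A-A$ is a neighborhood of $\theta$. Since $A$ is $G_\delta$ in the Polish space $X$, it is itself Polish; by Proposition~\ref{p:Polish-convgame}, $A$ is winning mid-convex at every point $a\in A$. By Lemma~\ref{l:St-conv} (the $\midd$-convex analytic set $A$ has $A-A$ a neighborhood of $\theta$, hence is non-meager, hence is Steinhaus at every point of $A$), $A$ is Steinhaus at some point $a\in A$. Now Lemma~\ref{l:main-convex} applies: $A$ is neither null-finite nor Haar-scattered, which kills $(5)$ and $(6)$.

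For $(7)$ the subtlety is that Lemma~\ref{l:main-convex} only gives ``not Haar-countable'' under the assumption that $\frac\theta{2^\IN}$ is closed, which we are not assuming here. The fix is to invoke Proposition~\ref{p:scattered} instead: since $A$ is a $G_\delta$-set, it is a Polish space, and a Polish space is scattered if and only if it is countable, so for $G_\delta$-sets ``Haar-scattered'' and ``Haar-countable'' are equivalent notions by Proposition~\ref{p:scattered}. Hence the already-established negation of $(6)$ (``$A$ is not Haar-scattered'') yields ``$A$ is not Haar-countable'', which is the negation of $(7)$. This closes the cycle and gives the equivalence of all seven conditions.

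The main obstacle I anticipate is a bookkeeping one rather than a conceptual one: checking carefully that Proposition~\ref{p:scattered} is genuinely applicable in the direction needed — it is stated for $G_\delta$-subsets $A$ and asserts ``Haar-countable $\Leftrightarrow$ Haar-scattered'', and one must be sure that the witnessing continuous map $f:2^\w\to X$ produces preimages $f^{-1}(x+A)$ that are $G_\delta$ (so that the Baire-category argument ``Polish scattered $=$ countable'' can be run fiberwise). This is exactly what Proposition~\ref{p:scattered} packages, so invoking it verbatim suffices, but the write-up should spell out that $A$ being $G_\delta$ is used both to guarantee $A$ is Polish (for Proposition~\ref{p:Polish-convgame}) and to guarantee the scattered/countable dichotomy (for Proposition~\ref{p:scattered}). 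No new estimates are needed; everything reduces to correctly citing Theorems~\ref{t:main-convex-analytic} and \ref{t:analytic-midd}, Lemmas~\ref{l:main-convex} and \ref{l:St-conv}, and Propositions~\ref{p:Polish-convgame} and \ref{p:scattered}.
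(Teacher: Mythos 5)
Your proposal is correct and follows essentially the same route as the paper: the paper's proof also uses Proposition~\ref{p:Polish-convgame} to get that the Polish ($G_\delta$) set $A$ is winning, then cites Theorem~\ref{t:analytic-midd} (whose proof is exactly the chain Theorem~\ref{t:main-convex-analytic} $+$ Lemma~\ref{l:St-conv} $+$ Lemma~\ref{l:main-convex} that you unpack), and handles $(6)\Leftrightarrow(7)$ via Proposition~\ref{p:scattered}. The only cosmetic remark is that the implication Haar-scattered $\Rightarrow$ Haar-countable holds unconditionally (scattered second-countable spaces are countable); the $G_\delta$ hypothesis is only needed for the converse direction, which you do use correctly to pass from ``not Haar-scattered'' to ``not Haar-countable.''
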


\begin{proof} By Proposition~\ref{p:Polish-convgame}, the mid-convex set $A$, being Polish, is winning at any point $a\in A$. By Theorem~\ref{t:analytic-midd}, the conditions (1)--(7) are equivalent. The equivalence $(6)\Leftrightarrow(7)$ follows from Proposition~\ref{p:scattered}.
\end{proof}

\section{Smallness properties of convex sets in Polish vector spaces}\label{s19}

In this section we shall apply the results of the preceding section to study smallness properties of convex sets in Polish vector space. By a \index{Polish vector space}{\em Polish vector space} we understand a Polish topological linear space over the field of real numbers.

Since convex sets in topological vector spaces are $\midd$-convex, we can apply Theorems~\ref{t:analytic-midd}, \ref{t:convex-Gdelta} to obtain the following characterization.

\begin{theorem}\label{t:conv-Ban}  For an analytic convex subset $A$ of a Polish vector space $X$, the following conditions are equivalent:
\begin{enumerate}
\item[\textup{1)}] $A-A$ is not a neighborhood of zero in $X$;
\item[\textup{2)}] $A-A$ is meager in $X$;
\item[\textup{3)}] $A$ is generically Haar-$1$;
\item[\textup{4)}] $A$ is Haar-thin;
\item[\textup{5)}] $A$ is null-$n$ for some $n\in\IN$.
\end{enumerate}
If $A$ is $G_\delta$-set in $X$, then the conditions \textup{(1)--(5)} are equivalent to
\begin{itemize}
\item[\textup{6)}] $A$ is null-finite;
\item[\textup{7)}] $A$ is Haar-countable.
\end{itemize}
\end{theorem}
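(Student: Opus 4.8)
The plan is to reduce Theorem~\ref{t:conv-Ban} entirely to results already established for $\midd$-convex sets, using only the obvious structural remark that every convex subset of a topological vector space is $\midd$-convex. Indeed, if $A$ is convex in a Polish vector space $X$, then for any $a,b\in A$ and any neighborhood $U$ of $a$ in $A$, the points $a+2^{-n}(b-a)=(1-2^{-n})a+2^{-n}b$ lie in $A$ by convexity and converge to $a$, so one of them lands in $U$; this $x$ satisfies $2^n(x-a)=b-a$, which is precisely the defining property of $\midd$-convexity. Mid-convexity of $A$ is automatic (take $x=\tfrac12(a+b)$). So $A$ is a $\midd$-convex analytic (resp. $G_\delta$) set in the Polish group $X$.

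Next I would handle the equivalence of (1)--(5). Here the only subtlety is that Theorem~\ref{t:analytic-midd} has the hypothesis that $A$ be \emph{winning} at some point. The way around this is the same device used throughout Section~\ref{s17}: I would first invoke Theorem~\ref{t:main-convex-analytic} directly. That theorem is stated for analytic \emph{mid-convex} subsets of Polish groups and already gives the equivalence of exactly the five conditions ``$A-A$ not a neighborhood of zero'', ``$A-A$ meager'', ``generically Haar-$1$'', ``Haar-thin'', ``null-$n$ for some $n$'' — with no winning hypothesis. Since our convex $A$ is mid-convex and analytic, Theorem~\ref{t:main-convex-analytic} applies verbatim and settles (1)$\Leftrightarrow$(2)$\Leftrightarrow$(3)$\Leftrightarrow$(4)$\Leftrightarrow$(5). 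So the first half of the statement is immediate.

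For the $G_\delta$ case I would argue that (6) and (7) join the chain. The implications $(3)\Rightarrow(6)$ (generically Haar-$1$ $\Rightarrow$ null-finite, trivial since $[\w+1]^{\le 1}\subset[\w+1]^{<\w}$) and $(6)\Rightarrow(5)$ (null-finite $\Rightarrow$ null-$n$, trivial) are routine, as is $(3)\Rightarrow(7)$ via the diagram in Section~\ref{s9} relating Haar-$1$ to Haar-countable for $G_\delta$-sets (or directly: Haar-$1$ $\Rightarrow$ Haar-finite $\Rightarrow$ Haar-countable). The one real content is that the \emph{negations} propagate: I must show that if (1) fails, i.e. $A-A$ is a neighborhood of zero, then $A$ is neither null-finite nor Haar-countable. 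For this I would apply Theorem~\ref{t:convex-Gdelta}, which is the $\midd$-convex $G_\delta$ analogue proved at the end of Section~\ref{s17} and which already lists ``null-finite'', ``Haar-scattered'', ``Haar-countable'' as equivalent to (1)--(4) for any $\midd$-convex $G_\delta$-set — again with no extra hypothesis, since Proposition~\ref{p:Polish-convgame} supplies the winning strategy automatically for Polish (hence $G_\delta$) mid-convex sets. Thus for convex $G_\delta$ sets, Theorem~\ref{t:convex-Gdelta} directly gives (1)$\Leftrightarrow\cdots\Leftrightarrow$(4)$\Leftrightarrow$(6)$\Leftrightarrow$(7), and combined with the already-established (1)--(5) chain we obtain the full equivalence.

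The proof is therefore essentially a two-line citation argument, and the ``hard part'' is purely organizational: making sure the right antecedent theorem is invoked in each case (Theorem~\ref{t:main-convex-analytic} for the analytic five-way equivalence, Theorem~\ref{t:convex-Gdelta} for the $G_\delta$ seven-way equivalence) and verifying the trivial structural claim that convex $\Rightarrow$ $\midd$-convex. I would write the proof as: ``Since any convex subset of a topological vector space is $\midd$-convex (and hence mid-convex), the equivalence of the conditions (1)--(5) follows from Theorem~\ref{t:main-convex-analytic}, and if $A$ is a $G_\delta$-set, the equivalence of (1)--(7) follows from Theorem~\ref{t:convex-Gdelta}.'' No genuine obstacle is expected; the only thing to be careful about is not to accidentally route through Theorem~\ref{t:analytic-midd}, whose winning hypothesis would force an unnecessary appeal to Proposition~\ref{p:Polish-convgame} and is in any case subsumed by the cleaner cited theorems.
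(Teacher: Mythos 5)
Your proof is correct and follows essentially the same route as the paper, which likewise reduces the theorem to the $\midd$-convex machinery of Section~\ref{s17} via the observation that convex sets in topological vector spaces are $\midd$-convex; the paper cites Theorems~\ref{t:analytic-midd} and \ref{t:convex-Gdelta}, whereas you cite Theorems~\ref{t:main-convex-analytic} and \ref{t:convex-Gdelta}. Your choice is in fact the cleaner one for the analytic part, since Theorem~\ref{t:main-convex-analytic} needs only mid-convexity and matches the list (1)--(5) verbatim, while Theorem~\ref{t:analytic-midd} carries a winning hypothesis that is not automatic for analytic non-$G_\delta$ convex sets. One slip in your intermediate discussion: the parenthetical ``null-finite $\Rightarrow$ null-$n$, trivial'' is backwards (the trivial direction is null-$n$ $\Rightarrow$ null-finite), but this is harmless because your final argument correctly routes the $G_\delta$ case entirely through Theorem~\ref{t:convex-Gdelta}.
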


Natural examples of winning mid-convex sets arise from considering $\infty$-convex sets in topological vector spaces.

A subset $A$ of a topological vector space $X$ is called \index{subset!$\infty$-convex}{$\infty$-convex subset}{\em $\infty$-convex} if for any bounded sequence $(x_n)_{n=1}^\infty$ in $A$ the series $\sum_{n=1}^\infty\frac1{2^n}x_n$ converges to some point of $A$ (cf. \cite{BLM}, \cite{BBK}). We recall that a subset $B$ of a topological vector space $X$ is {\em bounded} if for any neighborhood $U\subset X$ of $\theta$ there exists $n\in\IN$ such that $B\subset nU$.

Each $\infty$-convex subset $C$ of a topological vector space is convex. Indeed, for any points $x,y\in C$ and any $t\in[0,1]$ we can find a subset $\Omega\subset\IN$ such that $t=\sum_{n\in\Omega}\frac1{2^n}$ and observe that $tx+(1-t)y=\sum_{n=1}^\infty\frac1{2^n}x_n\in C$ where $x_n=x$ for $n\in\Omega$ and $x_n=y$ for $n\in\IN\setminus \Omega$.

\begin{proposition}\label{p:infty-conv} Each $\infty$-convex subset $A$ of a metric linear space $X$ is winning mid-convex at each point $a\in A$.
\end{proposition}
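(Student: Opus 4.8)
The goal is to show that an $\infty$-convex set $A$ in a metric linear space $X$ is winning mid-convex at every point $a\in A$. Since translations are affine homeomorphisms and preserve $\infty$-convexity, I would first reduce to the case $a=\theta$, i.e. replace $A$ by $A-a$ and assume $\theta\in A$. It then suffices to exhibit a winning strategy $\yen$ for player I in the \textsf{Convexity Game} on $A$ itself. Fix a translation-invariant metric $\rho$ generating the topology of $X$ and write $\|x\|=\rho(x,\theta)$.

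The natural strategy is as follows. At the $n$-th inning, player I has seen the previous responses $a_1,\dots,a_{n-1}$ of player II, where $a_k\in\frac1{2^k}A$ for each $k<n$, so that $b_k:=2^k a_k\in A$. By Lemma~\ref{l:sum}$(1_{n-1})$ the partial sum $s_{n-1}:=\sum_{k=1}^{n-1}a_k=\sum_{k=1}^{n-1}\frac1{2^k}b_k$ lies in $\frac1{2^{n-1}}\theta+\sum_{k=1}^{n-1}\frac1{2^k}A\subset A$ (using $\theta\in A$), and moreover $s_{n-1}$ is a ``head'' of an $\infty$-convex combination. The key point I want to build into the strategy is boundedness control: I would have player I choose the neighborhood $\yen(a_1,\dots,a_{n-1})$ to be a ball $B(\theta;\delta_n)$ with $\delta_n$ small enough that, together with $\|a_k\|<\delta_k$ for $k<n$, the sequence $(b_n)_{n\ge 1}$ of ``decoded'' points $b_n=2^n a_n$ is forced to be bounded in $X$. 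Concretely, since $a_n\in B(\theta;\delta_n)$ and $a_n=\frac1{2^n}b_n$, controlling $\|a_n\|$ alone does not bound $\|b_n\|=\|2^n a_n\|$; so the strategy should instead be phrased directly in terms of a neighborhood $W_n$ of $\theta$ and demand $b_n\in 2^n W_n$ for a fixed decreasing neighborhood base $(W_n)$ of $\theta$ — equivalently, player I picks $\yen(a_1,\dots,a_{n-1})$ to be a neighborhood $U_n$ of $\theta$ with $U_n\subset W_n$ chosen so small that $2^n U_n\subset W$ for a single fixed bounded neighborhood $W$ of $\theta$ (such $U_n$ exists by continuity of the map $x\mapsto 2^n x$ and by local boundedness only if $X$ is locally bounded — see the obstacle below). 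Then $b_n\in 2^n U_n\subset W$ for all $n$, so $(b_n)_{n\ge 1}$ is bounded, and by $\infty$-convexity the series $\sum_{n\ge 1}\frac1{2^n}b_n=\sum_{n\ge 1}a_n$ converges to a point of $A$, making $\yen$ a winning strategy.

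The main obstacle is precisely the boundedness step: a general metric linear space need not be locally bounded, so there may be no fixed bounded neighborhood $W$ of $\theta$ to trap all the $b_n$. I expect the correct fix is to exploit the freedom that player I sees $a_1,\dots,a_{n-1}$ \emph{before} choosing $U_n$: one builds the bounded set adaptively. Fix a decreasing neighborhood base $(W_m)_{m\in\w}$ at $\theta$. At inning $n$, player I sets $U_n$ to be a neighborhood of $\theta$ with $2^n U_n\subset W_n$ (possible by continuity of scalar multiplication). Then $b_n\in W_n$ for every $n$, so $(b_n)_{n\ge 1}$ converges to $\theta$, hence is bounded, and $\infty$-convexity applies. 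This works without local boundedness. I would then just need to check the bookkeeping: that the map $\yen:A^{<\w}\to\tau_\theta$ so defined is a legitimate strategy (it depends only on $n$, in fact), and that any play consistent with it produces $a_n=\frac1{2^n}b_n$ with $b_n\in W_n\to\theta$, whence $\sum a_n$ converges in $A$ by the defining property of $\infty$-convexity. The remaining routine verification — that $s_n=\sum_{k\le n}a_k\in A$ for each finite $n$, needed only if one wants the intermediate sums to stay in $A$, which the game does not actually require — follows from Lemma~\ref{l:sum} and $\theta\in A$. Thus the only genuinely substantive ingredient is the interplay between the dyadic decoding $a_n\mapsto 2^n a_n$, continuity of scalar multiplication, and the hypothesis of $\infty$-convexity.
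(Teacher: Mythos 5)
Your proof is correct and is essentially the paper's own argument: the strategy depends only on the inning number $n$ and is chosen so that the decoded points $2^na_n\in A$ converge to $\theta$, hence form a bounded sequence, after which $\infty$-convexity yields convergence of $\sum_{n}a_n=\sum_n\frac1{2^n}(2^na_n)$ to a point of $A$. The only difference is that the paper makes the neighborhood explicit as the ball $B(\theta;\frac1{4^n})$ using the estimate $\rho(2^nx,\theta)\le 2^n\rho(x,\theta)$ valid for any invariant metric (so the worry you raise, that controlling $\rho(a_n,\theta)$ cannot control $\rho(2^na_n,\theta)$, is unfounded), whereas you choose the neighborhood abstractly via continuity of the map $x\mapsto 2^nx$; both choices work.
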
  

\begin{proof} Replacing $A$ by the shift $A-a$, we can assume that $a=\theta$. Fix an invariant metric $\rho$ generating the topology of $X$ and define a winning strategy of the player I in the {\sf Convexity Game} on $A$ letting $\yen(a_1,\dots,a_n):=B(\theta;\frac1{4^n})$ for every $n\in\IN$ and every $(a_1,\dots,a_n)\in A^n$. To show that the strategy $\yen$ is winning, take any sequence $(a_n)_{n\in\IN}\in A^\IN$ such that $a_{n}\in\frac1{2^n}A\cap\yen(a_1,\dots,a_{n-1})\subset B(\theta,\frac1{4^{n-1}})$ for every $n\in\IN$. For every $n\in\IN$ the point $2^na_n\in A$ has $\rho(2^na_n,\theta)\le 2^n\cdot\rho(a_n,0)<2^n\frac1{4^n}=\frac1{2^n}$, which implies that the sequence $(2^na_n)_{n=1}^\infty$ converges to $\theta$ and hence is bounded in $X$. By the $\infty$-convexity of $A$, the series $\sum_{n=1}^\infty\frac1{2^n}b_n=\sum_{n=1}^\infty a_n$ converges to some point of the set $A$.
\end{proof}

Combining Theorem~\ref{t:analytic-midd} with Proposition~\ref{p:infty-conv}, we obtain the following characterization of smallness properties of $\infty$-convex sets in Polish vector spaces.

\begin{theorem}\label{t:convex4} For an analytic $\infty$-convex subset of a Polish vector space $X$ the following conditions are equivalent:
\begin{enumerate}
\item[\textup{1)}] $A-A$ is not a neighborhood of zero in $X$;
\item[\textup{2)}] $A-A$ is meager in $X$;
\item[\textup{3)}] $A$ is generically Haar-$1$;
\item[\textup{4)}] $A$ is Haar-thin;
\item[\textup{5)}] $A$ is null-finite;
\item[\textup{6)}] $A$ is Haar-countable.
\end{enumerate}
\end{theorem}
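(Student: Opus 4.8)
The plan is to reduce Theorem~\ref{t:convex4} to the already-established Theorem~\ref{t:analytic-midd} by verifying its hypotheses for an analytic $\infty$-convex set $A$. Recall that Theorem~\ref{t:analytic-midd} asserts the equivalence of conditions (1)--(6) for a $\midd$-convex analytic set that is \emph{winning at some point} $a\in A$, and moreover upgrades the list with condition (7) ($A$ is Haar-countable) provided the subgroup $\tfrac\theta{2^\IN}$ is closed in $X$. So the work splits into three easy observations: that an $\infty$-convex set is $\midd$-convex; that it is winning at every point; and that in a vector space the subgroup $\tfrac\theta{2^\IN}$ is trivial, hence closed.

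First I would record that every $\infty$-convex subset of a topological vector space is convex (this is noted in the text just before Proposition~\ref{p:infty-conv}), and every convex subset of a topological vector space is $\midd$-convex. Indeed, convexity gives mid-convexity immediately since $\tfrac12(a+b)$ is a single point for each $a,b\in A$; and the extra requirement in the definition of $\midd$-convex — that for $a,b\in A$ and any neighborhood $U$ of $a$ in $A$ there is $x\in U$ with $2^n(x-a)=b-a$ for some $n\in\IN$ — is witnessed by the points $x_n=a+\tfrac1{2^n}(b-a)=(1-\tfrac1{2^n})a+\tfrac1{2^n}b\in A$, which converge to $a$ and hence eventually lie in $U$. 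Second, Proposition~\ref{p:infty-conv} states precisely that an $\infty$-convex subset $A$ of a metric linear space is winning mid-convex at each point $a\in A$; a Polish vector space is in particular a metric linear space, so this applies. Third, in a vector space over $\IR$ the equation $2^n x=\theta$ forces $x=\theta$, so $\tfrac\theta{2^\IN}=\{\theta\}$, which is trivially closed.

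Putting these together, Theorem~\ref{t:analytic-midd} applies to the analytic $\midd$-convex set $A$, winning at the point $a$ supplied by Proposition~\ref{p:infty-conv}, and since $\tfrac\theta{2^\IN}$ is closed we get the equivalence of conditions (1)--(7) of that theorem. Translating the numbering: conditions (1),(2),(3),(4) of Theorem~\ref{t:analytic-midd} are literally conditions (1),(2),(3),(4) here; condition (5) there (``null-finite'') is condition (5) here; condition (6) there (``Haar-scattered'') together with condition (7) there (``Haar-countable'') collapses — by Proposition~\ref{p:scattered}, a $G_\delta$ (in particular analytic, when one restricts attention) set is Haar-countable iff Haar-scattered, but more simply both are on the equivalence list — so in particular ``Haar-countable'' is equivalent to all the rest. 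Since condition (6) of the present theorem is exactly ``$A$ is Haar-countable'', the chain (1)$\Leftrightarrow\cdots\Leftrightarrow$(6) follows.

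The main obstacle, such as it is, is purely bookkeeping: one must be careful that the condition ``null-$n$ for some $n$'' appearing in the analytic convex characterization (Theorem~\ref{t:conv-Ban}) is not needed here — Theorem~\ref{t:convex4} lists the stronger-looking ``null-finite'', and I should confirm that Theorem~\ref{t:analytic-midd} indeed delivers ``null-finite'' (its condition (5)) and not merely ``null-$n$''. It does, so no genuine difficulty arises; the only care needed is to invoke the closedness of $\tfrac\theta{2^\IN}$ explicitly so that condition (7) of Theorem~\ref{t:analytic-midd}, hence Haar-countability, enters the equivalence. The proof is therefore short: state the three observations, apply Theorem~\ref{t:analytic-midd}, and match up the numbered conditions.
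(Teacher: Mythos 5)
Your proposal is correct and matches the paper's own proof, which likewise obtains the result by combining Theorem~\ref{t:analytic-midd} with Proposition~\ref{p:infty-conv} (the observations that convexity implies $\midd$-convexity and that $\tfrac\theta{2^\IN}=\{\theta\}$ in a vector space being exactly the implicit bookkeeping). No gaps.
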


This corollary can be compared with the following characterization of closed convex Haar null-sets in reflexive Banach spaces, proved in \cite{Mat} and \cite{MSt}.

\begin{theorem}[Matou\v skov\'a, Stegall]\label{t:Eva} For a closed convex set $A$ in a reflexive Banach space $X$ the following conditions are equivalent:
\begin{enumerate}
\item[\textup{1)}] $A$ is Haar-null in $X$;
\item[\textup{2)}] $A$ is Haar-meager in $X$;
\item[\textup{3)}] $A$ is nowhere dense in $X$.
\end{enumerate}
On the other hand, each non-reflexive Banach space contains a nowhere dense  closed convex cone $C\subset X$ which is thick in the sense that for each compact subset $K\subset X$ there exists $x\in X$ such that $K+x\subset C$. The thickness of $C$ implies that $C$ is not Haar-null and not Haar-meager.
\end{theorem}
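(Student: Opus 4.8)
\textbf{Proof proposal for Theorem~\ref{t:Eva}.}
The first part---the equivalence of (1), (2), (3) for a closed convex set $A$ in a reflexive Banach space---I would obtain essentially by citing \cite{Mat} and \cite{MSt}, since the statement attributes it to Matou\v skov\'a and Stegall. If a self-contained argument is desired, the plan is the following. The implication $(3)\Ra(1)$ is false in general, so the real content is $\neg(3)\Ra\neg(1)$ and $\neg(3)\Ra\neg(2)$: if a closed convex set $A$ is not nowhere dense, then, being closed, it has nonempty interior, so $A-A$ is a neighborhood of $\theta$; one then invokes the refined characterizations proved above---Theorem~\ref{t:convex-Gdelta} applied to the closed (hence $G_\delta$) $\midd$-convex set $A$, or the closed-mid-convex characterization just before Lemma~\ref{l:St-conv}---to conclude that $A$ is neither Haar-null nor Haar-meager (indeed not even Haar-countable). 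Conversely, $(1)\Ra(3)$ and $(2)\Ra(3)$ follow from $\HN\subset\M$ (Christensen) and $\HM\subset\M$ (Theorem~\ref{D1}): a closed non-meager set has nonempty interior, hence is not nowhere dense; the delicate direction $(3)\Ra(1)$ and $(3)\Ra(2)$---a nowhere dense closed convex set in a \emph{reflexive} space is Haar-null and Haar-meager---is exactly the part where reflexivity is essential, and I would reference the separating-hyperplane/weak-compactness argument of Matou\v skov\'a rather than reproduce it.

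For the second part, the plan is to quote the construction of a thick nowhere dense closed convex cone in an arbitrary non-reflexive Banach space, also due to Matou\v skov\'a \cite{Mat} (or \cite{MSt}), which witnesses the failure of $(3)\Ra(1)$ in the non-reflexive case. The key structural input is James's theorem: a Banach space $X$ is non-reflexive if and only if there is a norm-one functional $f\in X^*$ that does not attain its norm on the unit ball. Using such an $f$ one builds a closed convex cone $C=\{x\in X: f(x)\ge \phi(\|x\|)\}$ (for a suitable superlinear correction term, chosen so that $C$ is nowhere dense) whose ``opening'' along the kernel direction of $f$ is wide enough to swallow any compact set after translation; here non-attainment of $f$ is precisely what prevents the cone from having interior while still allowing it to be thick. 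Having exhibited such a $C$, thickness immediately gives $C\notin\HN\cup\HM$: for any probability measure $\mu$ with compact support $K$ and any witness map one can translate $K$ inside $C$, and likewise for any continuous $f:2^\w\to X$ the image is compact and a translate lies in $C$, so $f^{-1}(C+x)=2^\w$ is neither null nor meager. This is recorded already as a general principle in Example~\ref{e:notHN+} and in the proof of Theorem~\ref{t:NoSt+}.

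The main obstacle in a fully self-contained treatment is the positive direction of the first part: showing that a nowhere dense closed convex set $A$ in a reflexive Banach space is Haar-null. One cannot simply use finite-dimensional slicing; the standard argument produces, via weak compactness and a separation/peaking argument, a probability measure (often supported on a cube spanned by a carefully chosen basic-like sequence) all of whose translates meet $A$ in a null set, using that the ``width'' of $A$ in infinitely many independent directions tends to zero---a property that genuinely fails in non-reflexive spaces (e.g.\ for the cone $C$ above). Since this is precisely the theorem of Matou\v skov\'a and Stegall cited in the statement, my proposal is to present the easy equivalences $(1)\Leftrightarrow(2)\Leftrightarrow(3)$-modulo-$(3)\Ra(1)$ using the machinery already developed in this paper (Theorems~\ref{t:convex-Gdelta}, \ref{D1}, and Christensen's $\HN\subset\M$), to cite \cite{Mat},\cite{MSt} for the hard implication, and to cite \cite{Mat} again for the non-reflexive counterexample, with Example~\ref{e:notHN+} supplying the routine verification that thickness rules out Haar-nullity and Haar-meagerness.
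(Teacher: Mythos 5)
The paper offers no proof of Theorem~\ref{t:Eva}: it is stated as a quoted result of Matou\v skov\'a and Stegall, with \cite{Mat} and \cite{MSt} as the only justification, so your plan of citing those papers for the hard implication $(3)\Rightarrow(1)$ and for the non-reflexive counterexample is exactly what the paper does, and your observation that thickness trivially excludes Haar-nullness and Haar-meagerness is the same routine verification the paper relies on. Two small corrections to your supplementary remarks, though. First, $\HN\subset\M$ is not Christensen's theorem and is false in general (a comeager Borel set of Lebesgue measure zero in $\IR$ is Haar-null but not meager); what you actually need for $(1)\Rightarrow(3)$ is only that a \emph{closed} Haar-null set is nowhere dense, which follows either from the trivial observation that a set with nonempty interior cannot be Haar-null, or from Proposition~\ref{p:HN=>HM} (closed Haar-null $\Rightarrow$ injectively Haar-meager) combined with Theorem~\ref{D1}. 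Second, Theorem~\ref{t:convex-Gdelta} does not deliver $\neg(3)\Rightarrow\neg(1),\neg(2)$: its equivalent conditions (Haar-thin, null-finite, Haar-scattered, Haar-countable, generically Haar-$1$) are all \emph{stronger} smallness properties than Haar-nullness or Haar-meagerness, so knowing that $A$ fails them does not show $A$ is not Haar-null; but nothing is needed here beyond the fact that a closed set which is not nowhere dense has nonempty interior and hence is neither Haar-null nor Haar-meager.
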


\begin{problem} Are the properties of Haar-nullness and Haar-meagerness equivalent for analytic convex (additive) sets in Banach spaces?
\end{problem}

Now we apply Theorem~\ref{t:conv-Ban} to closed cones in  classical Banach spaces.

\begin{example} For any $p\in[1,+\infty)$, the positive convex cone $\ell_p^+=\{(x_n)_{n\in\w}\in\ell_p:\forall n\in\w\;\;x_n\ge 0\}$ in the Banach space $\ell_p$ has the following properties:
\begin{enumerate}
\item[\textup{1)}]  $\ell_p^+=\ell_p^++\ell_p^+$  is closed and nowhere dense in $\ell_p$;
\item[\textup{2)}] $\ell^+_p-\ell^+_p=\ell_p$;
\item[\textup{3)}] $\ell_p^+$ is not null-finite and not Haar-countable;
\item[\textup{4)}] $\ell_p^+$ is Haar-null and Haar-meager.
\end{enumerate}
\end{example}

\begin{proof} The statements (1), (2) are obvious and (3) follows from Corollary~\ref{c:wedge}. The statement (4) was proved in \cite[Example 4.3]{BN} (see also \cite[Lemma2]{Mat}).
\end{proof}

The positive cone $c_0^+$ in the Banach space $c_0$ has different properties.

\begin{example} The positive convex cone $c_0^+=\{(x_n)_{n\in\w}\in c_0:\forall n\in\w\;\;x_n\ge 0\}$ in the Banach space $c_0$ has the following properties:
\begin{enumerate}
\item[\textup{1)}] $c_0^+=c_0^++c_0^+$  is closed and nowhere dense in $c_0$;
\item[\textup{2)}] $c^+_0-c^+_0=c_0$;
\item[\textup{3)}] $c^+_0$ is thick and hence not Haar-meager and not Haar-null in $c_0$.
\end{enumerate}
\end{example}

\begin{proof} The statements (1) and (2) are obvious. To see that $c_+^0$ is thick, observe that each compact set $K\subset c_0$ is contained in the ``cube'' $\{(x_n)_{n\in\w}\in c_0:\forall n\in\w\;|x_n|\le a_n\}$ for a suitable sequence $a=(a_n)_{n\in\w}\in c_0^+$ (see Example 3.2 in \cite{BN}). Then $K+2a\subset c_0^+$.
\end{proof}

\begin{remark} For the Banach space $c$ of convergent sequences the convex cone $c^+=\{(x_n)_{n\in\w}:\forall n\in\w\;|x_n|\ge0\}$ is closed and has nonempty interior in $c$.
\end{remark}

Finally, we apply the results obtained in this paper to evaluate the smallness properties of the set $M$ (and $M_c$)  consisting of continuous functions $f:[0,1]\to\IR$ that are monotone (and constant) on some nonempty open set $U_f\subset [0,1]$, depending on $f$. The sets $M_c\subset M$ are considered as subsets of the  Banach space $C[0,1]$ of all continuous real-valued functions on $[0,1]$. The Banach space $C[0,1]$ is endowed with the norm $\|f\|=\max_{x\in[0,1]}|f(x)|$.

By Examples 6.1.2 and 6.2.1 in \cite{EN}, the set $M$ is Haar-null and Haar-meager in $C[0,1]$. We shall establish more refined (and strange) properties of the sets $M$ and $M_c$.

\begin{example}\label{ex:M} The sets $M_c\subset M$ in the Banach space $X:=C[0,1]$ have the following properties:
\begin{enumerate}
\item[\textup{1)}] $M$ is a countable union of closed convex sets in $X$;
\item[\textup{2)}] $M$ is (generically) Haar-countable in $X$;
\item[\textup{3)}] $M_c$ is countably thick in $X$;
\item[\textup{4)}] $M_c$ is not null-finite in $X$;
\item[\textup{5)}] $M_c$ is not Haar-scattered in $X$;
\item[\textup{6)}] $M_c$ is not Haar-thin in $X$.
\end{enumerate} 
\end{example}

\begin{proof} 1,2. Fix a countable base $\mathcal B$ of the topology of $[0,1]$, consisting of nonempty open connected subsets of $[0,1]$. For every $B\in\mathcal B$ let $M_B^+$ (resp. $M_B^-$) be the set of all functions $f\in C[0,1]$ that are non-decreasing (resp. non-increasing) on the interval $B$. It is clear that $M_B^+=-M_B^-$ and $M_B^-=-M_B^+$ are closed convex subsets of $C[0,1]$. It is well-known that the set $M_B^--M_B^-=M_B^+-M_B^+$ consists of functions $f\in C[0,1]$ that have bounded variation on the closed interval $\bar B$. So, $M_B^--M_B^-=M_B^+-M_B^+$ is not a neighborhood of zero in $C[0,1]$. By Theorem~\ref{t:conv-Ban}, the closed convex sets $M_B^-$ and $M_B^+$ are generically Haar-1 and hence generically Haar-countable in $C[0,1]$. By Proposition~\ref{p:GHI-ideal}, the set $M=\bigcup_{B\in\mathcal B}(M_B^+\cup M_B^-)$ is generically Haar-countable in $C[0,1]$.
\smallskip

3. Given any countable set $D=\{f_n\}_{n\in\w}$ in $C[0,1]$, for every $n\in\w$ use the continuity of $f_n$ at the point $b_n=\frac1{2^{n}}$ and find a real number $a_n$ such that $\frac1{2^{n+1}}<a_n<b_n$ and $|f_n(x)-f(b_n)|<\frac1{2^n}$ for every $x\in[a_n,b_n]$. Let $g\in C[0,1]$ be any continuous function such that $g(x)=f_n(b_n)-f_n(x)$ for every $n\in\w$ and $x\in[a_n,b_n]$.
It is clear that $g+f_n$ is constant on each interval $[a_n,b_n]$, so $g+D\subset M_c$, which means that the set $M_c$ is countably thick in $X=C[0,1]$.
\smallskip

4,5,6. The countable thickness of $M_c$ implies that $M_c$ is not null-finite and not Haar-scattered. This property also implies that $M_c-M_c=X$. By Theorem~\ref{t:thin}, the set $M_c$ is not Haar-thin.
\end{proof} 

\section{Steinhaus properties of various semi-ideals}\label{s20}

In this section we present two tables summing up the results and open problems related to (strong or weak) Steinhaus properties of various semi-ideals in Polish groups. In these tables we assume that
\begin{itemize}
\item either $\I$ is a semi-ideal on the Cantor cube $2^\w$ such that $\I\subset\sigma\overline{\N}$ and $\bigcup\I=2^\w$;
\item or $\I$ is a semi-ideal on an infinite countable space $K$ such that $\bigcup\I=K$ and $\I\subset[K]^{\le n}$ for some $n\in\IN$.
\end{itemize}
For a Polish group $X$ by $\NF$ we denote the semi-ideal of null-finite sets in $X$. By \cite{Kwela}, the semi-ideal $\NF$ is not an ideal.

\begin{table}[h]
\centering
\begin{tabular}{|c|c|c|c|}
\hline
Semi-ideal& strong Steinhaus&Steinhaus&weak Steinhaus\\
\hline
\!$\M{=}\HM{=}\GHM{=}\sigma\overline{\M}$\!&Yes [\ref{t:PP}]&Yes [\ref{t:PP}]&Yes [\ref{t:PP}]\\
$\N=\HN$&Yes [\ref{Stein}]&Yes [\ref{Stein}]&Yes [\ref{Stein}]\\
$\M\cap\N$&No [\ref{r:MN+}]&Yes [\ref{c:MN-}]&Yes [\ref{c:MN-}]\\
$\HT=\mathcal{EHT}$&\!No [\ref{r:MN+}+\ref{p:thin=>HN+HM}]\!&Yes [\ref{t:thin}]&Yes [\ref{t:thin}]\\
$\GHN$&No [\ref{e:Ruzsa}]&{\bf ?} [\ref{prob:St-}]&Yes [\ref{c:St-}]\\
$\sigma\overline{\N}$& No [\ref{r:MN+}]&No [\ref{e:sN+-}]&Yes [\ref{c:sN->S+}]\\
$\mathcal{HI}$&No [\ref{ex:hard}]&No [\ref{ex:hard}]&\!Yes [\ref{c:St-}]\\
$\mathcal{GHI}$&No [\ref{ex:hard}]&No [\ref{ex:hard}]&\!Yes [\ref{c:St-}]\\
$\NF$&No [\ref{r:MN+}+\ref{t:NF=>HN+EHM}]&{\bf ?} [\ref{prob:NF-St}]&\!Yes [\ref{c:St-}]\\
$\sigma\overline{\HI}=\sigma\overline{\mathcal{GHI}}$&No [\ref{ex:hard}]&No [\ref{ex:hard}]&{\bf ?} [\ref{prob:sigma-St}]\\
\hline
\end{tabular}
\vskip5pt
\caption{Steinhaus properties of semi-ideals in locally compact Polish groups}
\end{table}


\begin{table}[h]
\centering
\begin{tabular}{|c|c|c|c|}
\hline
Semi-ideal&strong Steinhaus&Steinhaus&weak Steinhaus\\
\hline
\!$\M=\sigma\overline{\M}$\!&Yes [\ref{t:PP}]&Yes [\ref{t:PP}]&Yes [\ref{t:PP}]\\
$\HN$&No [\ref{t:NoSt+}]&Yes [\ref{t:HN}]&Yes [\ref{t:HN}]\\
$\HM$&No [\ref{t:NoSt+}]&Yes [\ref{t:Jab}]&Yes [\ref{t:Jab}]\\
$\EHM$&No [\ref{t:NoSt+}]&Yes [\ref{c:EHT-}]&Yes [\ref{c:EHT-}]\\
$\HT{=}\mathcal{EHT}$&No [\ref{t:NoSt+}]&Yes [\ref{t:thin}]&Yes [\ref{t:thin}]\\
$\GHM$&No [\ref{t:NoSt+}]&{\bf ?} [\ref{prob:St-}]&Yes [\ref{c:St-}]\\
$\GHN$&No [\ref{t:NoSt+}]&{\bf ?} [\ref{prob:St-}]&Yes [\ref{c:St-}]\\
$\NF$&No [\ref{t:NoSt+}]&{\bf ?} [\ref{prob:NF-St}]&Yes [\ref{c:St-}]\\
$\mathcal{HI}$&No [\ref{t:NoSt+}]&No [\ref{ex:hard}]&Yes [\ref{c:St-}]\\
$\mathcal{GHI}$&No [\ref{t:NoSt+}]&No [\ref{ex:hard}]&Yes [\ref{c:St-}]\\
$\sigma\overline{\HM}$& No [\ref{t:NoSt+}]&No [\ref{Banakh}]&No [\ref{Banakh}]\\
$\sigma\overline{\HN}$& No [\ref{t:NoSt+}]&No [\ref{Banakh}]&No [\ref{Banakh}]\\
$\sigma\overline{\HI}$&No [\ref{t:NoSt+}]&No [\ref{Banakh}]&No [\ref{Banakh}]\\
$\sigma\overline{\mathcal{GHI}}$&No [\ref{t:NoSt+}]&No [\ref{Banakh}]&No [\ref{Banakh}]\\
\hline
\end{tabular}
\vskip5pt
\caption{Steinhaus properties of semi-ideals in non-locally compact Polish groups}
\end{table}


\section{Acknowledgement}

The authors would like to express their sincere thanks to S\l awomir Solecki for clarifying the situation with the unpublished result of Hjorth \cite{Hjorth} (on the $\mathbf{\Sigma_1^1}$-hardness of the family $\mathrm{CND}$ of closed non-dominating sets) and sending us a copy of the handwritten notes of Hjorth.

\newpage
\printindex

\end{document}